\documentclass[11pt]{article}

\usepackage[T1]{fontenc}
\usepackage{lmodern}
\usepackage[margin=1in]{geometry}
\usepackage{amsmath,amssymb,mathtools,amsthm}
\usepackage{graphicx}
\usepackage{xcolor}
\usepackage{enumitem, verbatim}
\usepackage{caption}
\usepackage{tikz}
\usetikzlibrary{arrows.meta,calc,shapes.geometric}
\usepackage{microtype}
\usepackage[colorlinks=true,linkcolor=blue!55!black,
  citecolor=blue!55!black,urlcolor=blue!55!black]{hyperref}
\hypersetup{
  pdftitle={Limit of the octahedron recurrence with GUE boundary data},
  pdfauthor={Hariharan Narayanan}
}

\newtheorem{theorem}{Theorem}
\newtheorem{proposition}[theorem]{Proposition}
\newtheorem{lemma}[theorem]{Lemma}
\newtheorem{corollary}[theorem]{Corollary}
\newtheorem*{supportreciprocity}{Support reciprocity}
\newtheorem*{targetbinarity}{Binarity of the target denominator vector}
\theoremstyle{definition}
\newtheorem{definition}[theorem]{Definition}
\theoremstyle{remark}
\newtheorem{remark}[theorem]{Remark}
\newtheorem*{partiicorollary}{%
  Corollary~\ref{cor:rf-rational-approximants}}
\newtheorem*{partiimain}{%
  Main theorem of Part~II, Theorem~\ref{thm:rf-uniform-continuum}}

\newcommand{\1}{\mathbf 1}
\newcommand{\D}{\mathcal D}
\newcommand{\Z}{\mathbb Z}

\newcommand{\edge}{{\diamond}}

\graphicspath{{./}}
\allowdisplaybreaks

\title{Limit of the octahedron recurrence with GUE boundary data}
\author{Hariharan Narayanan\\
\small School of Technology and Computer Science\\
\small Tata Institute of Fundamental Research, Mumbai}
\date{}

\begin{document}
\maketitle

\begin{abstract}
We study the tropical octahedron recurrence on the lattice tetrahedron
$\{(x_1,x_2,x_3,x_4)\in\mathbb Z_{\geq0}^4:
x_1+x_2+x_3+x_4=n\}$ with an intrinsic double-hive boundary law.
Let $M_1^{(n)},M_2^{(n)},M_3^{(n)}$ be independent standard
$n\times n$ GUE matrices and, for fixed $\ell_1,\ell_2,\ell_3>0$, set
\[
 X_r^{(n)}=\ell_r\sqrt n\,M_r^{(n)},\qquad r=1,2,3.
\]
The boundary law on the two upper panels is the Gibbs density on the
cone of double hives whose four exterior sides correspond, in the
prescribed orientations, to the spectra of
$X_1^{(n)},X_2^{(n)},X_3^{(n)}$, and
$X_1^{(n)}+X_2^{(n)}+X_3^{(n)}$.  We then perform an octahedron-recurrence
sweep to obtain the value at every lattice point of the tetrahedron.
After scaling lattice positions by $n^{-1}$ and recurrence values by
$n^{-2}$, we prove that the resulting random field converges uniformly
in probability to a deterministic Lipschitz function on
$\{(x_1,x_2,x_3,x_4)\in\mathbb R_{\geq0}^4:
x_1+x_2+x_3+x_4=1\}$.
For every strictly interior target lattice point we construct a canonical planar
bipartite graph supported on a union of three triangular panels and
prove an exact perfect-matching formula for the recurrence value.  Its
variable weights are explicit interlacing gaps from the minor processes
of \(X_1^{(n)},X_2^{(n)},X_3^{(n)}\), together with explicit seam and
matching-independent terms.  The asymptotic analysis of this formula
yields a surface-tension variational characterization of the
deterministic limit. This matching
rule supplies, in the present four-simplex setting, the kind of
higher-dimensional combinatorial formula identified as a major
cluster-algebra problem by Henriques and Speyer.
\end{abstract}
\tableofcontents
\section{Introduction}
\label{sec:introduction}

We consider the $d$-dimensional birational octahedron recurrence on
the lattice simplex
\[
 \{(x_1,\ldots,x_{d+1})\in\Z_{\geq0}^{d+1}:
                 x_1+\cdots+x_{d+1}=N\}.
\]
Its initial variables $\{X_q:q\in\mathtt{seed}\}$ are indexed by a
two-dimensional membrane.  The recurrence takes values in Laurent
polynomials in these variables and is given by

\begin{equation*}
 X_{\lambda+e_i+e_k}X_{\lambda+e_j+e_\ell}
 =
 X_{\lambda+e_i+e_j}X_{\lambda+e_k+e_\ell}
 +
 X_{\lambda+e_i+e_\ell}X_{\lambda+e_j+e_k},
 \qquad i<j<k<\ell.
\end{equation*}
The associated tropical octahedron
recurrence takes values in the real numbers and is given by
\begin{equation*}
 h_{\lambda+e_i+e_k}+h_{\lambda+e_j+e_\ell}
 =
 \max\!\left\{
 h_{\lambda+e_i+e_j}+h_{\lambda+e_k+e_\ell},
 h_{\lambda+e_i+e_\ell}+h_{\lambda+e_j+e_k}
 \right\}.
\end{equation*}

Our main result proves a uniform scaling limit for a random field on a
three-dimensional tetrahedron.  The field is obtained by sampling a GUE
double hive on two initial faces and applying the tropical octahedron
recurrence; the boundary law is defined precisely below.  This extends
the two-source limit theorem of \cite{NarLimit} from a triangular hive
to a tetrahedral field.

In the two-source setting, two independent GUE minor processes provide
hives on a two-panel membrane in a tetrahedron.  Speyer's matching
formula \cite{Speyer} expresses each value on an output triangle as a
maximum of weights determined by those two input hives.  Here a third
independent GUE minor process provides a third input panel.  The initial
membrane is still two-dimensional, but it lies in a four-dimensional
simplex, and the output occupies the tetrahedron opposite the common
vertex of the three input panels.  To analyze this output, we construct
for each interior target a single planar matching graph directly on
the initial fan, with explicit weights and boundary corrections.  This
exact formula is the starting point of the probabilistic limit proof.

Henriques and Speyer highlighted the problem of finding combinatorial
formulas for Laurent polynomials of the multidimensional octahedron
recurrence \cite[Introduction]{HenriquesSpeyer}.  The case treated here
is the three-panel fan in a four-dimensional simplex; our matching
formula and its tropical version are stated in
Theorem~\ref{thm:partI-main}.

Throughout, \(u_0,\ldots,u_4\) denote the five vertices of the ambient
simplex.  Compact index strings abbreviate the corresponding indexed
faces: for example, \(02\), \(012\), and \(1234\) denote the edge,
panel, and tetrahedron spanned by the indicated \(u_i\)'s.  In figures,
the label \(u_i\) is abbreviated to \(i\).

The probabilistic object in this paper is a random field on the lattice
tetrahedron \(1234\).  Its law is defined intrinsically: on the top
membrane \(123\cup_{13}134\), we sample a double hive obtained by restricting a quadruply augmented double
hive from an explicit maximum-entropy Gibbs density and then evolve it
through \(1234\) by the tropical octahedron recurrence.  The precise
definition is given below, after the requisite double-hive notation has
been introduced.

Theorem~\ref{thm:rf-top-bottom-double-hive} gives a concrete random-matrix
realization of this intrinsic law.  Start with three hives (derived using the "large-gap" construction) from independent scaled
GUE minor processes on the three-panel fan
\[
   \Sigma_0=012\cup023\cup034
\]
and perform the ordinary octahedron sweeps through \(0123\), \(0134\),
and \(1234\).  The field thereby produced on \(1234\) has exactly the
maximum-entropy law above.  In this realization the large-gap
construction serves only to transform the three Gelfand--Tsetlin patterns
into hives.  The top pair \(123\cup_{13}134\) and the bottom pair
\(124\cup_{24}234\) are the double-hive marginals of the Gibbs
law in its two triangulations and respective boundary conditions.  Consequently the two top faces are
conditionally independent given their \(13\)-diagonal, and the two
bottom faces are conditionally independent given their
\(24\)-diagonal.

The deterministic part of the argument applies to arbitrary admissible
initial data on \(\Sigma_0\).  Its target is a  interior lattice
point in a $3$-dimensional face (of the full $4$-simplex)
\[
   p=(0,a,b,c,d)\in1234,
   \qquad a,b,c,d\geq1,
   \qquad a+b+c+d=N.
\]

A target $p'=(\theta,a,b,c,d)$ with $\theta>0$ is treated by
restricting the recurrence to the translated $4$-simplex
\[
 \{(\theta',a',b',c',d'):\theta'\geq\theta,\quad
                    \theta'+a'+b'+c'+d'=N\}.
\]
The point $p'$ lies on a three-dimensional face of this simplex, so
the same construction applies, and so there is no essential loss of generality even if $\theta > 0$..

\begin{figure}[htbp]
\centering
\includegraphics[width=0.64\textwidth]
  {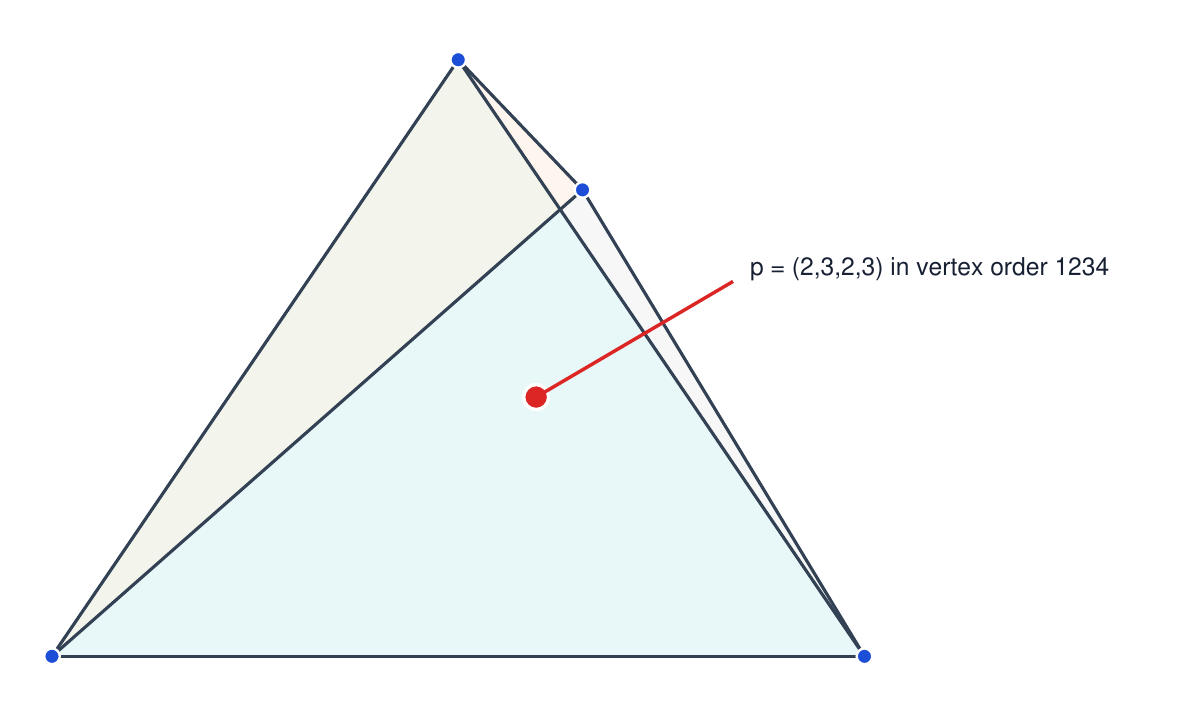}
\caption{A concrete interior target in the tetrahedron \(1234\).
The displayed point has barycentric coordinates
\((a,b,c,d)=(2,3,2,3)\); it illustrates the location of the target
exposed by the final sweep through \(1234\).}
\label{fig:interior-target-1234}
\end{figure}

Applying a single-tetrahedron
Speyer formula successively does not by itself produce a canonical ordinary
tiling on a flattened subset of the initial fan.  In addition, the
frozen boundary monomial is not common to all matching terms.  Each matching
has a further boundary-incidence contribution.

\subsection*{The fixed graph}

Our first main result gives a single graph on the initial fan.  A
reverse Boolean calculation identifies the positive denominator support
as the explicit set \(S_p\), which is connected and has no hole
(Theorem~\ref{thm:light-cone} and
Corollary~\ref{cor:support-disk}).  The matching formula later shows
that every positive denominator coordinate is one.  The boundaries of the selected dual
faces form a connected planar bipartite graph \(\overline G_p\).
Cutting open the two visible seams and inserting canonical alternating
zigzags produces \(G_p\) without changing its matching lattice.

For the running example \(p=(2,3,2,3)_{1234}\),
Figure~\ref{fig:intro-three-panel-tiling} shows the resulting fixed
three-panel region together with one legal lozenge--triangle tiling.
The corresponding lozenge-height values are introduced later, in
Section~\ref{sec:aligned-heights-intro}.

\begin{figure}[htbp]
\centering
\includegraphics[width=0.78\textwidth]{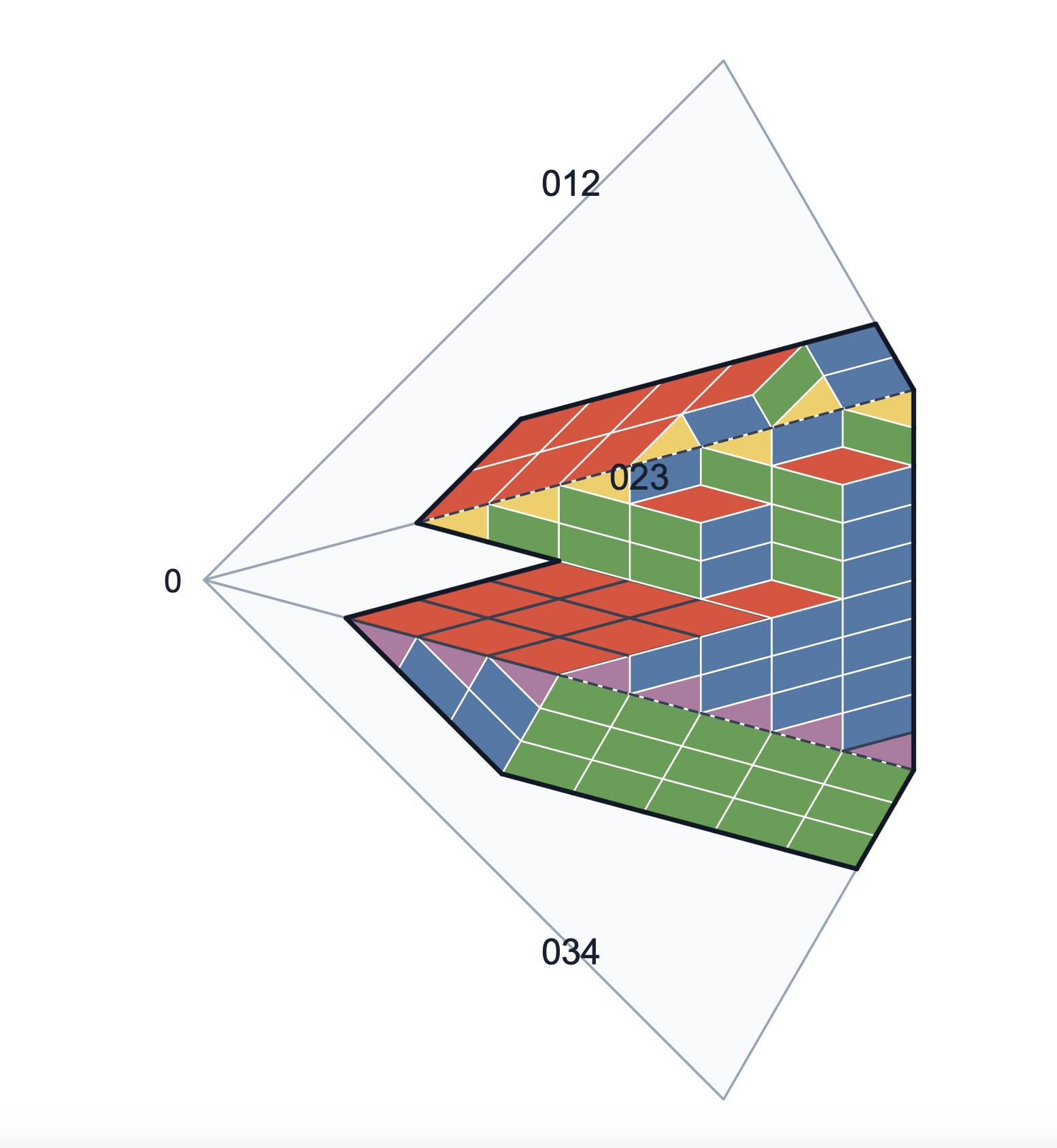}
\caption{A legal lozenge--triangle tiling of the fixed three-panel
region associated with \(p=(2,3,2,3)_{1234}\) on
\(\Sigma_0=012\cup023\cup034\).  The three principal colors
distinguish the ordinary lozenge orientations, while the yellow and
purple triangles are singleton seam triangles created by the cut-open
seams.}
\label{fig:intro-three-panel-tiling}
\end{figure}

The resulting perfect-matching formula contains the full frozen
correction.  For a matching \(M\), let \(m_v(M)\) count the matching
edges crossing primal edges incident to \(v\).
Theorem~\ref{thm:partI-main} proves
\[
       h(p)=\max_{M\in\mathcal D(G_p)}
                  \Lambda_M^{\mathrm{corr}}(h),
\]
where \(\Lambda_M^{\mathrm{corr}}\) contains the selected-face
exponents, the unselected mutable incidences, and the
matching-dependent frozen incidences.

\subsection*{The red-lozenge-free tiling formula}

For the probabilistic problem, the full matching formula is converted
by an explicit dimer gauge into a flip-symmetric red-free normalization.
Red lozenges have weight zero, while green and blue lozenges both have
weight \(3/2\) times their ordinary rhombus weight.  The same gauge gives
the two facing seam triangles the symmetric weights
\(-\lambda_{\min}/2\) and \(\lambda_{\max}/2\).  These seam weights are part of the gauge and
cannot be chosen independently of the bulk weights.  Theorem~\ref{thm:rf-exact-tiling} gives
\[
  h(p)=F_p^{\mathrm{rf}}(h)
       +\max_{\Xi\in\mathcal T(G_p)}W_p^{\mathrm{rf}}(\Xi;h).
\]
A complete local calculation shows that the matching-independent term collapses to
only two boundary values:
\[
   F_p^{\mathrm{rf}}(h)
       =\frac12h(v_0)+\frac12h(v_9)
       \qquad\text{(Theorem~\ref{thm:rf-explicit-fixed-term}).}
\]
This gives a tiling formula of exactly the form needed for the surface-tension
analysis: a blue--green bulk term, two one-dimensional seam terms, and a
fixed boundary statistic.

\subsection*{The intrinsic field and its GUE realization}

We now make the preceding intrinsic construction precise.

\begin{definition}[The maximum-entropy field on \(1234\)]
\label{def:rf-maximum-entropy-field}
Fix \(n\geq1\) and \(\ell_1,\ell_2,\ell_3>0\).  Write
\[
 \mathsf W_n=\{x\in\mathbb R^n:x_1\geq\cdots\geq x_n\},
 \qquad
 x^\vee=(-x_n,\ldots,-x_1).
\]
For \(\rho\in\mathsf W_n\), let
\(\operatorname{GT}_n(\rho)\) be the real Gelfand--Tsetlin polytope
with top row \(\rho\), and let
\(H_n(\rho,\sigma;\tau)\) be the real hive polytope with boundary
increment lists \(\rho,\sigma,\tau\), in the indicated orientations
and with one corner value normalized to zero.
The quadruply augmented double-hive cone in the \(13\)-triangulation is
\begin{align}
 \mathbb D_{n,13}^{4}
 :=\bigsqcup_{\alpha,\beta,\eta,\phi,\nu}
 &\operatorname{GT}_n(\alpha)\times
  \operatorname{GT}_n(\beta)\times
  \operatorname{GT}_n(\eta)\times
  \operatorname{GT}_n(\phi) \notag\\[-2mm]
 &{}\times H_n(\alpha,\beta;\nu)
  \times H_n(\eta,\phi^\vee;\nu^\vee),
\label{eq:rf-double-hive-cone}
\end{align}
where \(\alpha,\beta,\eta,\phi,\nu\in\mathsf W_n\), the two hives are
glued along their oppositely oriented copies of \(\nu\), the second
hive factor is read as the face \(134\) after cyclic relabelling, and
the two trace constraints are
\[
 \sum_i\nu_i=\sum_i\alpha_i+\sum_i\beta_i,
 \qquad
 \sum_i\phi_i=\sum_i\alpha_i+\sum_i\beta_i+\sum_i\eta_i.
\]
The first equality is the trace relation for the first hive; together
with the second equality it gives exactly the oppositely oriented trace
relation required by the second hive.  We define the reference
measure \(d\operatorname{Leb}_{13}\) in explicit independent
coordinates: use all coordinates of \(\alpha,\beta,\eta\), the first
\(n-1\) coordinates of each of \(\phi\) and \(\nu\), and set
\[
 \phi_n=\sum_i\alpha_i+\sum_i\beta_i+\sum_i\eta_i
        -\sum_{i<n}\phi_i,
 \qquad
 \nu_n=\sum_i\alpha_i+\sum_i\beta_i-\sum_{i<n}\nu_i.
\]
Take Euclidean Lebesgue measure in these boundary coordinates and, on
each resulting fiber, the product of the coordinate Lebesgue measures
in the free Gelfand--Tsetlin and hive entries.  Omitting a different
trace coordinate changes this reference measure only by a constant
Jacobian, which is absorbed into \(Z_n\).  Put
\begin{equation}
 Z_n(\ell_1,\ell_2,\ell_3)
 :=
 \int_{\mathbb D_{n,13}^{4}}
 \exp\!\left\{
 -\frac1{2n}\left(
   \frac{|\alpha(x)|^2}{\ell_1^2}
  +\frac{|\beta(x)|^2}{\ell_2^2}
  +\frac{|\eta(x)|^2}{\ell_3^2}
 \right)\right\}
 \operatorname{Leb}_{13}(dx).
\label{eq:rf-double-hive-partition-function}
\end{equation}

Define \(\mathbb Q_{n,13}^{\ell_1,\ell_2,\ell_3}\) on
\(\mathbb D_{n,13}^{4}\) by
\begin{equation}
 \frac{d\mathbb Q_{n,13}^{\ell_1,\ell_2,\ell_3}}
      {d\operatorname{Leb}_{13}}(x)
 =
 \frac1{Z_n(\ell_1,\ell_2,\ell_3)}
 \exp\!\left\{
 -\frac1{2n}\left(
   \frac{|\alpha(x)|^2}{\ell_1^2}
  +\frac{|\beta(x)|^2}{\ell_2^2}
  +\frac{|\eta(x)|^2}{\ell_3^2}
 \right)\right\}.
\label{eq:rf-double-hive-density}
\end{equation}
Thus the fourth exterior side \(\phi\) carries no quadratic penalty.
Proposition~\ref{prop:rf-unpenalized-fourth-side} below proves directly
that the normalizing constant is finite and positive.  Formula
\eqref{eq:rf-double-hive-density} is the definition of the law used
here.  The notation \(\bar f=\infty\) means the limit in which the
quadratic penalty on the fourth exterior side is removed; it is not a
literal substitution into the finite-parameter density.

Sample an augmented double hive from
\(\mathbb Q_{n,13}^{\ell_1,\ell_2,\ell_3}\), forget its four exterior
Gelfand--Tsetlin augmentations, and denote its two hive faces by
\[
 \mathsf H_n^{\rm top}
   =h_{123}\underset{13}{\cup}h_{134}.
\]
Starting from this top membrane, perform a complete legal tropical
octahedron-recurrence sweep through the tetrahedron \(1234\).  The
values exposed at all lattice points of \(1234\), including the
resulting bottom membrane
\[
 \mathsf H_n^{\rm bot}
   =h_{124}\underset{24}{\cup}h_{234},
\]
form the random field
\(\mathcal H_n^{1234}(\ell_1,\ell_2,\ell_3)\).  This is our definition of the
random field on \(1234\).
\end{definition}

\begin{proposition}[Removal of the fourth-side penalty]
\label{prop:rf-unpenalized-fourth-side}
Put
\[
 A=n\ell_1^2,\qquad B=n\ell_2^2,\qquad D=n\ell_3^2.
\]
For \(F>0\), let \(Z_{n,F}\) and \(\mathbb Q_{n,13;F}\) be obtained
from \eqref{eq:rf-double-hive-partition-function} and
\eqref{eq:rf-double-hive-density} by inserting the additional factor
\[
                         \exp\{-|\phi|^2/(2F)\}.
\]
Then, as \(F\to\infty\),
\[
 Z_{n,F}\uparrow Z_n(\ell_1,\ell_2,\ell_3)<\infty,
 \qquad
 \mathbb Q_{n,13;F}\longrightarrow
 \mathbb Q_{n,13}^{\ell_1,\ell_2,\ell_3}
 \quad\text{in total variation}.
\]
More precisely, for a constant \(C_n>0\) depending only on \(n\) and
the Lebesgue normalization,
\begin{equation}
 Z_{n,F}
  =C_n\left(\frac{ABDF}{A+B+D+F}\right)^{n^2/2},
 \qquad
 Z_n(\ell_1,\ell_2,\ell_3)=C_n(ABD)^{n^2/2}.
\label{eq:rf-fourth-side-partition-limit}
\end{equation}
\end{proposition}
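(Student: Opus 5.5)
The plan is to integrate out the fibres of \(\mathbb D^4_{n,13}\) in stages. This reduces \(Z_{n,F}\) to a Gaussian integral over Hermitian matrices, which can then be evaluated exactly.

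\textbf{Step 1 (Gelfand--Tsetlin fibres).} For fixed \(\alpha,\beta,\eta,\phi,\nu\), the fibre is the product of four Gelfand--Tsetlin polytopes and two hive polytopes. The Gelfand--Tsetlin volumes are classical: \(\operatorname{vol}\operatorname{GT}_n(\rho)=\Delta(\rho)/\prod_{k<n}k!\), where \(\Delta\) is the Vandermonde determinant. Integrating them out leaves
\[
 \Delta(\alpha)\Delta(\beta)\Delta(\eta)\Delta(\phi)\,
 \operatorname{vol}H_n(\alpha,\beta;\nu)\,
 \operatorname{vol}H_n(\eta,\phi^\vee;\nu^\vee),
\]
times the Gaussian weight.

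\textbf{Step 2 (hive fibres via the Horn density).} I would invoke the known identity between hive volumes and the Horn problem (Knutson--Tao; Coquereaux--Zuber). Take \(U,V\) Haar-distributed, let \(\alpha,\beta\) be regular spectra, and let \(\nu\) be the spectrum of \(U\operatorname{diag}(\alpha)U^*+V\operatorname{diag}(\beta)V^*\). Then the density of \(\nu\), with respect to Lebesgue measure on the trace hyperplane in the first \(n-1\) coordinates, is
\[
 p(\nu\mid\alpha,\beta)=c_n\,\frac{\Delta(\nu)}{\Delta(\alpha)\Delta(\beta)}\,\operatorname{vol}H_n(\alpha,\beta;\nu).
\]
Apply this twice, the second time to the face \(134\) with \(\phi\) the spectrum of \(\nu+\eta\); the orientation reversal \(\phi^\vee,\nu^\vee\) does not change the volume. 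The fibre integrand then becomes
\[
 c_n'\,\Delta(\alpha)^2\Delta(\beta)^2\Delta(\eta)^2\,
 p(\nu\mid\alpha,\beta)\,p(\phi\mid\nu,\eta).
\]
The \(\Delta(\nu)\) factors cancel between the two applications. The reference measure \(\operatorname{Leb}_{13}\) omits exactly the trace coordinates \(\nu_n,\phi_n\), which matches the hyperplane normalisation of \(p\). I expect this bookkeeping of constants and normalisations to be the main obstacle. Only its \(F\)-independence matters, though, and it can be absorbed into \(C_n\).

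\textbf{Step 3 (Gaussian evaluation).} By the Weyl integration formula, \(\Delta(\alpha)^2e^{-|\alpha|^2/2A}\,d\alpha\) is, up to a constant, the push-forward of \(e^{-\operatorname{tr}X^2/2A}\,dX\) on Hermitian matrices. Hence
\[
 Z_{n,F}=C_n\,(ABD)^{n^2/2}\,
 \mathbb E\bigl[e^{-\operatorname{tr}Y^2/2F}\bigr],
 \qquad Y=X_1+X_2+X_3,
\]
with \(X_1,X_2,X_3\) independent GUE matrices of variances \(A,B,D\). This uses the fact that \(\phi\) is then distributed as the spectrum of \(Y\), and \(|\phi|^2=\operatorname{tr}Y^2\).

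The matrix \(Y\) is GUE with variance \(S=A+B+D\). Since the Hermitian matrices form an \(n^2\)-dimensional real space, a direct computation gives
\[
 \mathbb E\bigl[e^{-\operatorname{tr}Y^2/2F}\bigr]=\Bigl(\frac{F}{S+F}\Bigr)^{n^2/2}.
\]
This yields \eqref{eq:rf-fourth-side-partition-limit}.

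\textbf{Step 4 (the limit \(F\to\infty\)).} The extra factor \(e^{-|\phi|^2/2F}\) increases pointwise to \(1\). Monotone convergence therefore gives \(Z_{n,F}\uparrow Z_n\). The same computation with \(F=\infty\) shows \(Z_n=C_n(ABD)^{n^2/2}\), which is finite and positive. The densities of \(\mathbb Q_{n,13;F}\) converge pointwise to that of \(\mathbb Q^{\ell_1,\ell_2,\ell_3}_{n,13}\), so Scheffé's lemma gives convergence in total variation.
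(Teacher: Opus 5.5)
Your argument is correct. Its last step is exactly the paper's: the factor \(e^{-|\phi|^2/(2F)}\) increases pointwise to \(1\), monotone convergence gives \(Z_{n,F}\uparrow Z_n\), and Scheff\'e's lemma gives total variation once \(Z_n\in(0,\infty)\) is known.

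The two routes differ only in how the closed form \eqref{eq:rf-fourth-side-partition-limit} is obtained. The paper does not derive it; it imports the finite-\(F\) formula from \cite[Section~3, Lemma~8]{NarDeformed} and lets \(F\to\infty\). You compute it from scratch by integrating out the fibres:
\begin{itemize}
\item the Gelfand--Tsetlin volumes \(\Delta(\rho)/\prod_{k<n}k!\);
\item the Horn-density/hive-volume identity applied on \(123\) and on \(134\), where the \(\Delta(\nu)\) and \(\Delta(\phi)\) factors cancel and leave \(\Delta(\alpha)^2\Delta(\beta)^2\Delta(\eta)^2\);
\item Weyl integration;
\item the Gaussian identity \(\mathbb E\,e^{-\operatorname{tr}Y^2/(2F)}=(F/(S+F))^{n^2/2}\) for \(Y\) GUE of variance \(S=A+B+D\).
\end{itemize}

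What your route buys is self-containedness and an explanation of the formula. The \(F\)-dependence is visibly the Laplace transform of \(\operatorname{tr}Y^2\) for the sum of the three independent sources. Finiteness of \(Z_n\) is simply the \(F=\infty\) case of the same computation, not a consequence of passing to a limit.

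The cost is an external input of comparable depth, the Knutson--Tao / Coquereaux--McSwiggen--Zuber identity. Within this paper you could instead use the measure-preserving iterated associator \eqref{eq:rf-iterated-associator-bijection} from the proof of Theorem~\ref{thm:rf-top-bottom-double-hive}. Together with the boundary identity \eqref{eq:rf-four-boundary-joint-law}, it gives the same reduction to three GUE-weighted Gelfand--Tsetlin patterns with \(\phi\) distributed as the spectrum of their sum. That would tie your computation to the paper's own machinery.

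As you note, all normalisation choices are \(F\)-independent and go into \(C_n\). These are the hive-volume convention, the choice of omitted trace coordinates \(\nu_n,\phi_n\), and the unimodular reorientation \(H_n(\eta,\phi^\vee;\nu^\vee)\cong H_n(\nu,\eta;\phi)\).
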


\begin{proof}
The finite-\(F\) partition-function calculation is
\cite[Section~3, Lemma~8 and its proof]{NarDeformed}; with the present
variance convention its four raw parameters are \(A,B,D,F\).  This
gives the first formula in
\eqref{eq:rf-fourth-side-partition-limit}, and
\[
             \frac{ABDF}{A+B+D+F}\longrightarrow ABD.
\]
Independently of the formula, the unnormalized finite-\(F\) densities
increase pointwise to the unpenalized integrand in
\eqref{eq:rf-double-hive-partition-function}.  Monotone convergence
therefore identifies its integral with the finite limit in
\eqref{eq:rf-fourth-side-partition-limit}.  The normalized densities
converge pointwise, and both their integrals and the integral of the
limit equal one. We have convergence in \(L^1\) by Scheff\'e's lemma. This
is equivalent to total-variation convergence.
\end{proof}

The terminology ``maximum entropy'' is intrinsic to this definition.
Proposition~\ref{prop:rf-unpenalized-fourth-side} makes precise the
unpenalized-fourth-side, or \(\bar f=\infty\), limit of the
finite-parameter maximum-entropy law in
\cite[Section~3, Lemma~6]{NarDeformed}.  The exponential-family/KL
argument is recorded in \cite[Remark~1]{NarDeformed}; for completeness
we restate it here.  If \(q\) is a
probability density with respect to this measure, has the same three
boundary second moments as
\(\mathbb Q_{n,13}^{\ell_1,\ell_2,\ell_3}\), and the displayed entropy and
relative entropy are finite, then
\[
 \operatorname{Ent}(q)
 -\operatorname{Ent}(\mathbb Q_{n,13}^{\ell_1,\ell_2,\ell_3})
 =
 -D_{\rm KL}\!\left(
 q\,\middle\|\,\mathbb Q_{n,13}^{\ell_1,\ell_2,\ell_3}\right)\leq0.
\]
Thus \(\mathbb Q_{n,13}^{\ell_1,\ell_2,\ell_3}\) is the unique entropy
maximizer among feasible densities for which the displayed quantities
are finite; this is the entropy class used below.

\begin{definition}[The centered large-gap construction]
\label{def:rf-centered-large-gap}
Let \(X_1,X_2,X_3\) be \(n\times n\) Hermitian matrices.  For
\(\varnothing\ne I\subseteq\{1,2,3\}\), put
\[
 X_I=\sum_{r\in I}X_r,
 \qquad
 \operatorname{width}(X_I)
   =\lambda_1(X_I)-\lambda_n(X_I).
\]
A number \(K_{\rm gap}>0\) is \emph{admissible} for this triple if
\begin{equation}
 nK_{\rm gap}>
 2\max_{\varnothing\ne I\subseteq\{1,2,3\}}
       \operatorname{width}(X_I).
\label{eq:rf-intro-admissible-gap}
\end{equation}
Define the centered gap vector
\begin{equation}
 \widehat\Lambda_q
  =nK_{\rm gap}\left(\frac{n+1}{2}-q\right),
 \qquad 1\leq q\leq n.
\label{eq:rf-intro-centered-gap}
\end{equation}
Its consecutive gaps are \(nK_{\rm gap}\), and
\(\sum_q\widehat\Lambda_q=0\).  Regard
\(\operatorname{diag}(X_r)\) as the vector of diagonal entries and put
\[
 B_0=\widehat\Lambda,
 \qquad
 B_r=B_{r-1}+\operatorname{diag}(X_r),
 \qquad r=1,2,3.
\]
On the panel \(P_r=\Delta_{0,r,r+1}(n)\), use the coordinates
\[
 [i,j]_r=(n-j)e_0+(j-i)e_r+i e_{r+1},
 \qquad 0\leq i\leq j\leq n,
\]
and define
\begin{equation}
 H_r(i,j)
  =\sum_{q=1}^{j}(B_{r-1})_q
   +\sum_{q=1}^{i}\lambda_q(X_r^{[j]}),
 \qquad
 h([i,j]_r)=H_r(i,j),
\label{eq:rf-intro-large-gap-hive}
\end{equation}
where \(X_r^{[j]}\) is the leading \(j\times j\) principal minor and
its eigenvalues are in decreasing order.  The three formulas agree on
the common seams and hence define an array on
\(012\cup023\cup034\).  We call this the centered large-gap realization
of the three minor processes.
\end{definition}

\begin{lemma}[Validity of the centered large-gap construction]
\label{lem:rf-centered-large-gap-valid}
If \(K_{\rm gap}\) is admissible in the sense of
\eqref{eq:rf-intro-admissible-gap}, then each array \(H_r\) in
\eqref{eq:rf-intro-large-gap-hive} is a genuine hive.  Consequently the
compatible initial fan is a genuine hive fan, and every face array
obtained from it by the hive associator is a genuine hive.
\end{lemma}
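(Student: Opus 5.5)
The plan is to verify the rhombus inequalities for each \(H_r\) directly from \eqref{eq:rf-intro-large-gap-hive}, sorting the three rhombus orientations into two groups. Two of the orientations should reduce to the Cauchy interlacing inequalities of the minor process of \(X_r\). The third should reduce to a comparison between the large gap \(nK_{\rm gap}\) and widths of partial sums, which is where admissibility enters. Write \(H_r(i,j)=g(j)+s(i,j)\), with \(g(j)=\sum_{q\le j}(B_{r-1})_q\) and \(s(i,j)=\sum_{q\le i}\lambda_q(X_r^{[j]})\). For each unit rhombus in the coordinates \([i,j]_r\), I would write the inequality as a second difference of \(H_r\) across the short diagonal being dominated by the one across the long diagonal, and then express both in terms of \(g\) and \(s\).

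For the two orientations in which \(j\) changes by at most one along the relevant diagonals, the \(g\)-contributions should cancel. The inequalities then involve only first differences \(s(i,j)-s(i-1,j)=\lambda_i(X_r^{[j]})\) at neighbouring levels. I expect them to read exactly as
\[
\lambda_i(X_r^{[j+1]})\ge\lambda_i(X_r^{[j]})\ge\lambda_{i+1}(X_r^{[j+1]}),
\]
which is Cauchy interlacing for principal minors of a Hermitian matrix. These inequalities need no hypothesis on \(K_{\rm gap}\); equivalently, \(s\) is the standard Gelfand--Tsetlin-to-hive transform.

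The third orientation is the main obstacle. Its inequality should take the form
\[
(B_{r-1})_j-(B_{r-1})_{j+1}\ \ge\ \text{(a signed combination of at most two eigenvalues of \(X_r^{[j]}\) and \(X_r^{[j+1]}\))}.
\]
On the left, \((\widehat\Lambda)_j-(\widehat\Lambda)_{j+1}=nK_{\rm gap}\). The remainder is \(d_j-d_{j+1}\), where \(d=\operatorname{diag}(X_I)\) and \(I=\{1,\dots,r-1\}\); for \(r=1\) this remainder vanishes. Every diagonal entry of a Hermitian matrix lies in \([\lambda_n,\lambda_1]\), so \(|d_j-d_{j+1}|\le\operatorname{width}(X_I)\). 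On the right, interlacing places all eigenvalues of every principal minor of \(X_r\) in \([\lambda_n(X_r),\lambda_1(X_r)]\), so the eigenvalue combination is at most \(\operatorname{width}(X_r)\). The inequality therefore holds as soon as
\[
nK_{\rm gap}>\operatorname{width}(X_I)+\operatorname{width}(X_r),
\]
and this follows from \eqref{eq:rf-intro-admissible-gap}. The delicate part is the exact bookkeeping of which eigenvalues appear in this orientation, including boundary rhombi at \(i=0\) or \(i=j\), so that the bound uses only one width each; the factor \(2\) in \eqref{eq:rf-intro-admissible-gap} leaves exactly that room. I would also treat rhombi touching the edges \(j=0\) and \(j=n\) separately.

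For the consequences, agreement on the seams is already asserted in Definition~\ref{def:rf-centered-large-gap}. Hence the three hives glue into a hive fan on \(012\cup023\cup034\). Finally, I would invoke the standard fact (Knutson--Tao--Woodward, Henriques--Kamnitzer) that the hive associator, i.e.\ a tropical octahedron sweep through a tetrahedron with hive data on two adjacent faces, outputs hives on the other two faces. Applying it successively through \(0123\), \(0134\) and \(1234\) shows that every face array so obtained is a genuine hive.
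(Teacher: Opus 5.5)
Your overall plan matches the paper's proof. The two non-red rhombus families are exactly Cauchy interlacing. The red family carries the gauge term
\[
(B_{r-1})_j-(B_{r-1})_{j+1}=nK_{\rm gap}+(Y_{r-1})_{jj}-(Y_{r-1})_{j+1,j+1}\ge nK_{\rm gap}-\operatorname{width}(Y_{r-1}),
\qquad Y_{r-1}=\sum_{s<r}X_s,
\]
and admissibility leaves room for exactly one more width. The gap is in the eigenvalue side of the red inequality, which is the only nontrivial estimate in the lemma.

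That side is not a combination of at most two eigenvalues of two minors. Take the red rhombus with acute vertices $[i,j-1]_r,[i+1,j+1]_r$ and obtuse vertices $[i,j]_r,[i+1,j]_r$, and write $S_i^{(j)}=\sum_{q\le i}\lambda_q(X_r^{[j]})$. Its slack is
\[
\bigl[(B_{r-1})_j-(B_{r-1})_{j+1}\bigr]+\bigl[S_i^{(j)}-S_i^{(j-1)}\bigr]-\bigl[S_{i+1}^{(j+1)}-S_{i+1}^{(j)}\bigr].
\]
The eigenvalue part involves $3i+2$ distinct eigenvalues from three consecutive minors. Already for $i=1$ the quantity the gap must dominate is $\lambda_1^{(j+1)}+\lambda_2^{(j+1)}-2\lambda_1^{(j)}-\lambda_2^{(j)}+\lambda_1^{(j-1)}$, with no cancellation.

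Your bounding mechanism only uses that every eigenvalue of every minor lies in $[\lambda_n(X_r),\lambda_1(X_r)]$. Applied to this expression it gives a bound of order $i\,\operatorname{width}(X_r)$, which $nK_{\rm gap}>2\max_I\operatorname{width}(X_I)$ cannot absorb. So the red inequality is not proved as written; the "bookkeeping" you defer is the missing content.

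What is needed is interlacing across levels, not just containment in the spectral interval. Termwise $\lambda_q^{(j)}\ge\lambda_q^{(j-1)}$ gives $S_i^{(j)}-S_i^{(j-1)}\ge0$. The chain $\lambda_q^{(j+1)}\ge\lambda_q^{(j)}\ge\lambda_{q+1}^{(j+1)}$ telescopes:
\[
0\le S_{i+1}^{(j+1)}-S_{i+1}^{(j)}\le\sum_{q=1}^{i+1}\bigl(\lambda_q^{(j+1)}-\lambda_{q+1}^{(j+1)}\bigr)=\lambda_1^{(j+1)}-\lambda_{i+2}^{(j+1)}\le\operatorname{width}(X_r).
\]
This holds uniformly for $0\le i\le j-1$, so boundary rhombi need no separate treatment. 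The slack is then at least $nK_{\rm gap}-\operatorname{width}(Y_{r-1})-\operatorname{width}(X_r)>0$, which is the paper's argument.

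A smaller point: you should not cite seam agreement from the definition, since the lemma is what justifies that claim. It takes one line: $\operatorname{Tr}X_r^{[j]}$ is the sum of the first $j$ diagonal entries, so $H_r(j,j)=\sum_{q\le j}(B_r)_q=H_{r+1}(0,j)$. Your final appeal to the associator preserving compatible hives is as in the paper.
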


\begin{proof}
Set
\[
 D=nK_{\rm gap},\qquad
 W=\max_{\varnothing\ne I\subseteq\{1,2,3\}}
       \operatorname{width}(X_I),
 \qquad
 Y_{r-1}=\sum_{s<r}X_s,
\]
where \(Y_0=0\), and write
\[
 S_i^{(j)}=\sum_{q=1}^i\lambda_q(X_r^{[j]}).
\]
The two non-red rhombus families are precisely the two Cauchy
interlacing inequalities for consecutive principal minors of \(X_r\).
For the remaining, red family, direct substitution in
\eqref{eq:rf-intro-large-gap-hive} gives the slack
\begin{align*}
 &\bigl[(B_{r-1})_j-(B_{r-1})_{j+1}\bigr]
 +\bigl[S_i^{(j)}-S_i^{(j-1)}\bigr]\\
 &\hspace{38mm}
 -\bigl[S_{i+1}^{(j+1)}-S_{i+1}^{(j)}\bigr].
\end{align*}
Cauchy interlacing makes the first bracketed \(S\)-difference
nonnegative and places the second in
\([0,\operatorname{width}(X_r)]\).  Moreover,
\[
 (B_{r-1})_j-(B_{r-1})_{j+1}
 =D+(Y_{r-1})_{jj}-(Y_{r-1})_{j+1,j+1}
 \geq D-\operatorname{width}(Y_{r-1}).
\]
For \(r>1\), both \(Y_{r-1}\) and \(X_r\) are nonempty subset sums
among those defining \(W\); for \(r=1\), the first width is zero.
The red slack is therefore at least
\[
 D-\operatorname{width}(Y_{r-1})-\operatorname{width}(X_r)
 \geq D-2W>0.
\]
Thus all three rhombus families hold.

For seam compatibility, \(\operatorname{Tr}(X_r^{[j]})\) is the sum
of the first \(j\) diagonal entries of \(X_r\), and therefore
\[
 H_r(j,j)=\sum_{q=1}^j(B_r)_q=H_{r+1}(0,j).
\]
The three hives consequently form a compatible initial fan.  Finally,
the hive associator sends compatible genuine hives to compatible
genuine hives \cite{NSTGUE}, which proves the last assertion.
\end{proof}

\paragraph{GUE normalization.}
Throughout the paper, a \emph{standard GUE matrix} \(M\) has density
proportional to
\(\exp\{-\tfrac12\operatorname{Tr}(M^2)\}\) with respect to Lebesgue
measure on the independent real coordinates of the Hermitian matrix.
Equivalently, the diagonal entries are real \(N(0,1)\) variables and
the real and imaginary parts of the strict upper-triangular entries are
independent \(N(0,\tfrac12)\) variables.  With this convention,
\(s\sqrt n\,M\) has density proportional to
\(\exp\{-\operatorname{Tr}(X^2)/(2ns^2)\}\).

\begin{theorem}[GUE-minor realization of the field on \(1234\)]
\label{thm:rf-top-bottom-double-hive}
Let \(M_1^{(n)},M_2^{(n)},M_3^{(n)}\) be independent GUE
matrices in the normalization fixed above and put
\[
 \widehat X_r^{(n)}=\ell_r\sqrt n\,M_r^{(n)},
 \qquad r=1,2,3.
\]
Put the three corresponding GUE minor processes on
\(012,023,034\), choose any \(K_{\rm gap}\) admissible for the realized
matrix triple, apply
Definition~\ref{def:rf-centered-large-gap}, and then perform the
associator/octahedron-recurrence sweeps.  Let
\(\pi_{13}^{\rm dh}\) forget the four exterior Gelfand--Tsetlin
augmentations in \(\mathbb D_{n,13}^{4}\) and retain the two pasted
hives.  Then the top double hive produced by the three minor processes
satisfies
\begin{equation}
 \mathcal L(\mathsf H_n^{\rm top})
  =(\pi_{13}^{\rm dh})_\#
    \mathbb Q_{n,13}^{\ell_1,\ell_2,\ell_3}.
\label{eq:rf-top-double-hive-law}
\end{equation}
Consequently the resulting field on \(1234\) has law
\(\mathcal H_n^{1234}(\ell_1,\ell_2,\ell_3)\) from
Definition~\ref{def:rf-maximum-entropy-field}, and is exactly
independent of the chosen admissible value of \(K_{\rm gap}\).

Let \(\mathbb Q_{n,24}^{\ell_1,\ell_2,\ell_3}\) denote the pushforward of the
\emph{abstract} law \(\mathbb Q_{n,13}^{\ell_1,\ell_2,\ell_3}\) under the
octahedron recurrence from diagonal \(13\) to diagonal \(24\), acting
on the two hive factors and retaining the four exterior augmentations.
This pushforward has density \eqref{eq:rf-double-hive-density} with
respect to the natural Lebesgue measure in the \(24\)-triangulation.
If \(\pi_{24}^{\rm dh}\) again forgets the four exterior augmentations,
then
\begin{equation}
 \mathcal L(\mathsf H_n^{\rm bot})
  =(\pi_{24}^{\rm dh})_\#
    \mathbb Q_{n,24}^{\ell_1,\ell_2,\ell_3}.
\label{eq:rf-top-bottom-double-hive-laws}
\end{equation}
In particular, if
\(\nu_{13}\) and \(\nu_{24}\) are the respective middle diagonals,
\begin{equation}
 h_{123}\ \perp\!\!\!\perp\ h_{134}\mid\nu_{13},
 \qquad
 h_{124}\ \perp\!\!\!\perp\ h_{234}\mid\nu_{24}.
\label{eq:rf-top-bottom-conditional-independence}
\end{equation}
\end{theorem}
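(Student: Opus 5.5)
The plan is to compute the law of the augmented top double hive from the GUE side directly, as an explicit density. We then identify that density with \eqref{eq:rf-double-hive-density} via a volume-preserving change of variables. The equality \eqref{eq:rf-top-double-hive-law} follows by forgetting the augmentations.

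\textbf{Step 1: the three minor processes.} Each scaled GUE matrix \(\widehat X_r^{(n)}\) has a minor process whose joint law is explicit. By unitary invariance, conditional on the spectrum \(\lambda(\widehat X_r)\), the Gelfand--Tsetlin pattern is uniform on \(\operatorname{GT}_n(\lambda)\). The spectrum itself has density proportional to
\[
 \Delta(\lambda)^2 e^{-|\lambda|^2/(2n\ell_r^2)}.
\]
The volume of \(\operatorname{GT}_n(\lambda)\) is a multiple of \(\Delta(\lambda)\). Hence the full pattern has density proportional to \(\Delta(\lambda)\,e^{-|\lambda|^2/(2n\ell_r^2)}\) times Lebesgue measure on the pattern. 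The three patterns are independent. The diagonal entries, and therefore the seam data \(B_r\), are linear functions of these patterns: each is a difference of row traces.

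\textbf{Step 2: transport to the top membrane.} The large-gap map of Definition~\ref{def:rf-centered-large-gap} sends the three patterns to a hive fan on \(\Sigma_0\). By Lemma~\ref{lem:rf-centered-large-gap-valid}, every associator output is a genuine hive. The sweeps through \(0123\) and \(0134\) are compositions of tropical octahedron moves. Each move is piecewise linear with unit Jacobian (\(h\mapsto \max(\cdot,\cdot)-h\) locally). So the map from the fan data to the top membrane \(h_{123}\cup_{13}h_{134}\), together with the remaining faces, is a piecewise-linear measure-preserving bijection between polyhedral cones. This is the Knutson--Tao--Woodward/\cite{NSTGUE} associator statement.

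We carry the data along as augmentations. The three spectra become \(\alpha,\beta,\eta\); the spectrum of the sum becomes \(\phi\); and \(\nu\) becomes the spectrum of \(X_1+X_2\). We then need extra Gelfand--Tsetlin fibres over \(\phi\) and \(\nu\). Here the key input is the computation in \cite[Section~3, Lemmas~6 and~8]{NarDeformed}. It shows that the pushforward of the product of uniform GT-fibre measures under the associator equals the quadruply augmented cone measure \(\operatorname{Leb}_{13}\), up to a constant.

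Counting factors: each of the three Vandermonde factors \(\Delta(\alpha),\Delta(\beta),\Delta(\eta)\) is exactly cancelled by trading it for a GT-fibre volume in the augmentation. The unpenalized fourth side \(\phi\) arises as the \(F\to\infty\) limit, consistent with Proposition~\ref{prop:rf-unpenalized-fourth-side}. The surviving weight is \(\exp\{-\frac1{2n}(|\alpha|^2/\ell_1^2+|\beta|^2/\ell_2^2+|\eta|^2/\ell_3^2)\}\), which is \eqref{eq:rf-double-hive-density}. The constant is pinned by \eqref{eq:rf-fourth-side-partition-limit}.

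\textbf{Step 3: \(K_{\rm gap}\) independence.} Changing an admissible \(K_{\rm gap}\) adds a piecewise-linear function to the whole fan. This function is linear on each face and of the form \((i,j)\mapsto\) const\(\cdot\binom{j}{2}\)-type, i.e.\ a sum of Gelfand--Tsetlin-independent terms. The tropical octahedron recurrence is equivariant under adding a globally affine-per-lattice quadratic of this form. On the top membrane, after the normalization with one corner set to zero, the shift cancels. We must check that the shift leaves the faces \(123,134\) unchanged. We expect this to follow because the gap vector enters only through \(B_0\) on the panels containing \(u_0\), which are swept away.

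\textbf{Step 4: the bottom membrane and conditional independence.} The octahedron sweep \(13\to24\) on the two hive factors is again a unit-Jacobian piecewise-linear bijection of the double-hive cones. It preserves the four exterior sides, so it preserves the weight. This gives the density statement for \(\mathbb Q_{n,24}\) and hence \eqref{eq:rf-top-bottom-double-hive-laws}. Fix \(\nu\) in \eqref{eq:rf-double-hive-cone}. The product density then factors into a function of \((\alpha,\beta,h_{123})\) times a function of \((\eta,\phi,h_{134})\), which gives \eqref{eq:rf-top-bottom-conditional-independence}.

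\textbf{Main obstacle.} The hardest step is Step 2's Jacobian and volume bookkeeping. We must verify that the augmentation by GT fibres over \(\nu\) and \(\phi\) exactly accounts for the loss of information in passing from matrices to hives, so that no residual Vandermonde factor in \(\nu\) or \(\phi\) survives. The approach will be to first do the two-source case, matching \cite{NarLimit}, and then iterate once.
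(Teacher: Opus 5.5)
Your Steps 1 and 4 agree with the paper. Step 2, which carries the whole identification, has a genuine gap: the fibre structure is wrong. Iterating the binary associator gives a Lebesgue-measure-preserving piecewise-linear bijection. You first apply it to the patterns with top rows $\alpha,\beta$. You then apply it to the resulting residual pattern with top row $\nu$ together with the $\eta$-pattern. The result is
\[
 \operatorname{GT}_n(\alpha)\times\operatorname{GT}_n(\beta)\times\operatorname{GT}_n(\eta)
 \longleftrightarrow
 \bigsqcup_{\nu,\phi}H_n(\alpha,\beta;\nu)\times H_n(\eta,\phi^\vee;\nu^\vee)\times\operatorname{GT}_n(\phi).
\]
The three input patterns are consumed. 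The intermediate residual pattern over $\nu$ (carried by $013$) is consumed by the second sweep. Exactly one Gelfand--Tsetlin pattern $\Gamma_\phi$ survives, over the fourth side, on the face $014$.

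This breaks Step 2 in three places.
\begin{itemize}
\item There is no fibre over $\nu$ in $\mathbb D_{n,13}^4$, and the associator output is not quadruply augmented. So your claim that the pushforward of the fibre measures equals $\operatorname{Leb}_{13}$ is ill-typed. It is also not what the cited partition-function computation of \cite{NarDeformed} provides.
\item The $\phi$-augmentation does not come from an $F\to\infty$ limit. On the GUE side $\phi$ is unpenalized simply because the weight involves only $\alpha,\beta,\eta$, and the $\phi$-fibre is $\Gamma_\phi$ itself.
\item $\nu$ and $\phi$ are not pathwise the spectra of $X_1+X_2$ and $X_1+X_2+X_3$. The minor processes do not determine these sums, so the boundary identification holds only in distribution.
\end{itemize}

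With the correct structure, the bookkeeping is two lines. Put $w=\exp\{-\frac1{2n}(|\alpha|^2/\ell_1^2+|\beta|^2/\ell_2^2+|\eta|^2/\ell_3^2)\}$.
\begin{itemize}
\item By Step 1 and the bijection, $(h_{123},h_{134},\Gamma_\phi)$ has density proportional to $w\,\Delta(\alpha)\Delta(\beta)\Delta(\eta)$. This is constant in $\Gamma_\phi$, so integrating out $\Gamma_\phi$ multiplies it by $V_n(\phi)=c_n\Delta(\phi)$.
\item Projecting $\mathbb Q_{n,13}^{\ell_1,\ell_2,\ell_3}$ multiplies $w$ by $V_n(\alpha)V_n(\beta)V_n(\eta)V_n(\phi)$. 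This is the same density, and normalization finishes the identification.
\end{itemize}
Proposition~\ref{prop:rf-unpenalized-fourth-side} is not needed. Equivalently, attach fresh uniform fibres over $\alpha,\beta,\eta$, not over $\phi$ and $\nu$, and keep $\Gamma_\phi$ as the fourth augmentation. That is the correct reading of your Vandermonde-trading sentence.

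So the worry in your ``main obstacle'' is misplaced. After forgetting augmentations, a residual $\Delta(\phi)$ must survive, and it is exactly what matches the fourth augmentation of $\mathbb Q_{n,13}^{\ell_1,\ell_2,\ell_3}$. No $\Delta(\nu)$ occurs on either side.

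In Step 3, the gap contribution is not piecewise linear. It is $G_n(q)=\tfrac{nK_{\rm gap}}2q_0(n-q_0)$, a function of $q_0$ alone. In every elementary octahedron the three opposite pairs carry the same multiset of $q_0$-values, so any function of $q_0$ is an exact gauge of the recurrence. Since $G_n=0$ on $1234$, this proves, rather than merely suggests, the pathwise independence of $K_{\rm gap}$.
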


The proof of this theorem appears in the beginning of Part II.
\subsection*{The uniform variational limit}

Having identified the GUE-minor sampler with the intrinsic field, we
use that realization to analyze its large-\(n\) limit.  For a
macroscopic interior point
\[
   p=(\alpha,\beta,\gamma,\delta),
   \qquad \alpha+\beta+\gamma+\delta=1,
\]
the limiting canonical region is a polygonal complex
$\mathsf R_p$.  Its admissible continuum heights form the compact space
$\operatorname{AHT}(\mathsf R_p)$.  The variational functional is
\[
   \mathcal S_p(f)
      =S_{p,\mathrm{edge}}(f)
       +S_{p,\Delta}^{\mathrm{rf}}(f)
       +S_{p,0}.
\]
Here $S_{p,\mathrm{edge}}$ is the panelwise integral of the
flip-symmetric surface tension $\sigma_\edge^{\mathrm{sym}}$ defined by
the \(0,3/2,3/2\) red--green--blue weights;
$S_{p,\Delta}^{\mathrm{rf}}$ is an explicit affine line energy on the two
seams; and $S_{p,0}$ is a fixed term independent of $f$.
The finite-panel localization theorem
(Theorem~\ref{thm:rf-finite-panel-localization}) proves convergence of the
normalized expected maximum and an $o(n^4)$ variance bound.  Consequently,
for lattice approximants $p_n/n\to p$,
\[
   \frac{\widehat O_n(p_n)}{n^2}
       \xrightarrow{\mathbb P}
       \Phi_{\ell_1,\ell_2,\ell_3}(p)
       :=\sup_{f\in\operatorname{AHT}(\mathsf R_p)}\mathcal S_p(f).
\]
The constant-coordinate hive slices give a uniform lattice Lipschitz bound.
Using this fact, Theorem~\ref{thm:rf-uniform-continuum} upgrades the pointwise law of
large numbers to uniform convergence on the closed tetrahedron $1234$.

The proof follows the architecture of the two-source argument but requires
three new ingredients: a matching-dependent correction, an exact
red-free seam gauge, and a seam-compatible global patching theorem.  The
global variational upper bound uses a quantitative-differentiation theorem
communicated by C. Fefferman \cite{FeffermanQD}, whose proof is
recorded in \cite[Theorem~7.6 and Appendix~B]{NarLimit}.  Applied
separately on the three panel cores, it produces a function-dependent
GOOD/BAD decomposition on which the local
surface-tension estimates can be summed uniformly over all candidate
heights.  The lower bound uses a different, fixed-target two-scale
construction.  At a large lattice scale \(C_0\), the prescribed continuum
height is interpolated on a deterministic triangular mesh.  A differentiation
lemma retains the triangles on which the target is relatively close to its
affine interpolant; the discarded triangles have vanishing area.  Local
optimizers are inserted only in the retained triangles, and the
seam-compatible global patching theorem fills the collars.
The bulk random-matrix input ultimately comes from the fixed-index
theory of the GUE minor process developed by T. Tao
\cite[Theorems~1.2--1.4]{TaoGUEFixedIndex}.  In particular, the
fixed-index universality of the minor process and the logarithmically
improved variance estimate for linear statistics of bulk interlacing
gaps are principal inputs to the GUE-hive localization and
concentration results of \cite{NarLimit}, within the framework
developed in \cite{NSTGUE}.  The fixed and seam terms are controlled
separately by extreme-eigenvalue estimates.

\subsection*{Dependence on earlier work}
Part~I uses Lam--Postnikov's identification of membrane
moves with cluster mutations \cite[Theorem~28.5]{LamPostnikov}; Propp's
distributive lattice of perfect matchings \cite[Theorem~2]{ProppLattice},
in the form recorded in \cite[Theorem~2.6]{MMSBV}; the dimer
\(F\)-polynomial theorem and full-quiver lifting result
\cite[Theorem~6.8 and Proposition~5.11]{MMSBV}; the separation formula
\cite[Theorem~3.7 and Corollary~6.3]{FZIV}; the constant-term theorem
for \(F\)-polynomials of skew-symmetric cluster algebras
\cite[Theorem~1.7]{DWZII}; the \(g\)-vector
injectivity of Cao--Li
\cite[Corollary~5.7(i) and Remark~5.8(i)]{CaoLi}; and their
denominator-vector transport and compatibility criterion
\cite[Proposition~2.8 and Theorem~6.3(i),(ii)]{CaoLiDenominators}.  The
support geometry, the proof that the graph is well-matched, the polygonal
extremal matchings, and the frozen correction are proved in this paper.

Part~II imports the GUE-hive associator and the bulk covariance and
spectral-edge estimates of \cite{NSTGUE}, together with the local
surface-tension, concentration, and two-source localization theory of
\cite{NarLimit}.  The latter ultimately uses Tao's fixed-index GUE-minor
theory \cite[Theorems~1.2--1.4]{TaoGUEFixedIndex}.  The upper bound uses
Fefferman's theorem \cite{FeffermanQD}, the lower bound uses the
fixed-target interpolation argument of Cohn--Kenyon--Propp
\cite[Lemma~2.2]{CKP}, the variance of the maximum uses Klartag's KLS
estimate \cite{KlartagKLS} through \cite[Lemma~3]{NSTGUE}, and the
uniform extension to the closed tetrahedron uses the local hive property
of Knutson--Tao--Woodward \cite[\S5]{KTWOR}.  The new ingredients here are
the flip-symmetric red-free gauge, the fixed and seam terms, the global
cut-compatible admissible space, and the seam-compatible scaffold
patching theorem.

\subsection*{Organization}

The introduction defines the maximum-entropy field and states its
GUE-minor realization; the proof of that realization is given at the
beginning of Part~II.  Part~I develops the deterministic theory: it
gives the cluster-algebra dictionary, constructs the denominator
support and fixed graph, proves well-matchedness by height forcing, and
derives the perfect-matching formula.  Part~II then converts
that formula to red-lozenge-free tiling weights, evaluates the fixed
and seam contributions, and proves the finite-panel and uniform
continuum limit theorems.

\part{The exact matching formula}

\section{Setup, terminology, and the main theorem}
\label{sec:partI-roadmap}

At lattice scale \(N\), the ambient vertex \(u_i\) is the corner
\(Ne_i\).  For \(I\subseteq\{0,1,2,3,4\}\), put
\[
 \Delta_I(N):=
 \left\{q=(q_0,\ldots,q_4)\in\Z_{\geq0}^{5}:
       \sum_{i=0}^{4}q_i=N,\quad q_j=0\text{ for }j\notin I\right\}.
\]
Thus \(\Delta_I(N)\) is the face spanned by the vertices \(u_i\),
\(i\in I\).  We abbreviate \(\Delta_{\{i,j,k\}}(N)\) to \(ijk\).  Our initial
membrane is the three-panel fan
\[
 \Sigma _0=\Delta_{012}(N)\cup\Delta_{023}(N)\cup\Delta_{034}(N).
\]
The octahedron recurrence is
\begin{equation}
 X_{\lambda+e_i+e_k}X_{\lambda+e_j+e_\ell}
 =
 X_{\lambda+e_i+e_j}X_{\lambda+e_k+e_\ell}
 +
 X_{\lambda+e_i+e_\ell}X_{\lambda+e_j+e_k},
 \qquad i<j<k<\ell.
 \label{eq:higher-OR}
\end{equation}
Its max-plus, or \emph{tropical}, form is obtained by writing
\(X_q=\exp(t h_q)\), taking \(t^{-1}\log\), and letting
\(t\to\infty\).  Formally, on every elementary octahedron indexed by
\(\lambda\) and \(i<j<k<\ell\), the \emph{tropical octahedron
recurrence} is
\begin{equation}
 h_{\lambda+e_i+e_k}+h_{\lambda+e_j+e_\ell}
 =
 \max\!\left\{
 h_{\lambda+e_i+e_j}+h_{\lambda+e_k+e_\ell},
 h_{\lambda+e_i+e_\ell}+h_{\lambda+e_j+e_k}
 \right\}.
\label{eq:tropical-higher-OR}
\end{equation}
Thus, if \(h_{\lambda+e_i+e_k}\) is the value on the old membrane, the
membrane move replaces it by the opposite value
\begin{equation}
 h_{\lambda+e_j+e_\ell}
 =
 \max\!\left\{
 h_{\lambda+e_i+e_j}+h_{\lambda+e_k+e_\ell},
 h_{\lambda+e_i+e_\ell}+h_{\lambda+e_j+e_k}
 \right\}
 -h_{\lambda+e_i+e_k},
\label{eq:tropical-higher-OR-solved}
\end{equation}
while the four equatorial values remain fixed.  We use the max-plus
convention throughout; the min-plus convention found elsewhere is
obtained by replacing \(h\) with \(-h\).

\begin{figure}[htbp]
\centering
\includegraphics[width=0.88\textwidth]
  {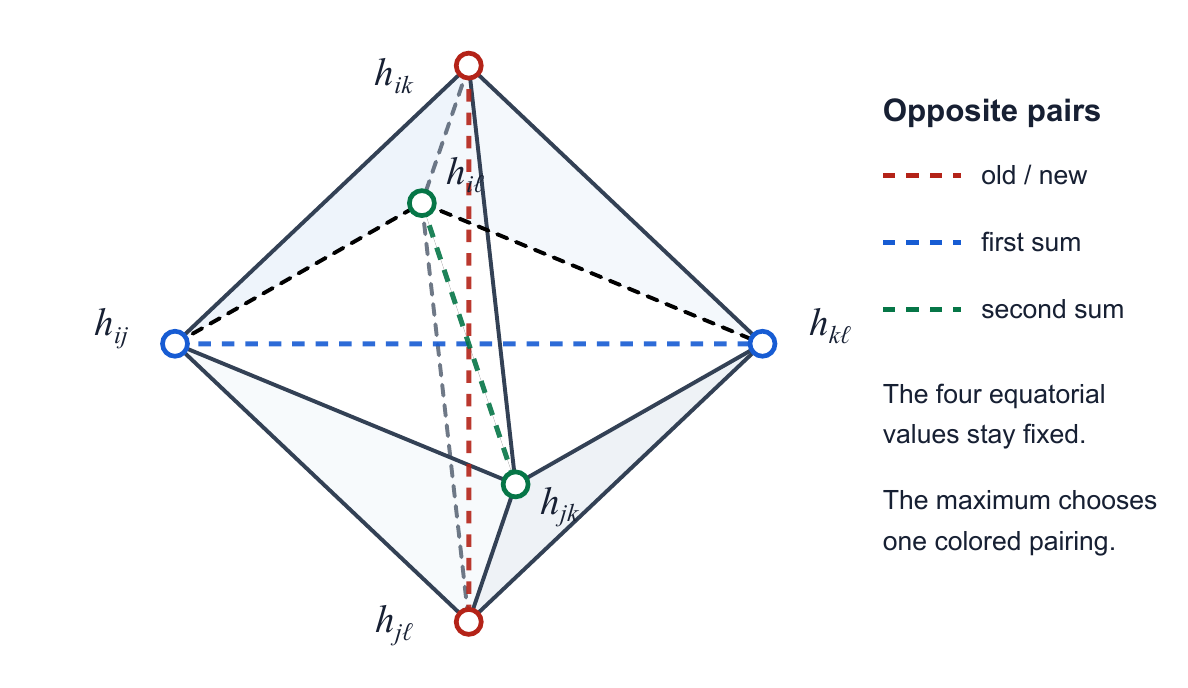}
\caption{The six arguments of the tropical octahedron recurrence.
The red dashed segment joins the old and new membrane values.  The
blue (EW) and green (NS) dashed segments join the two pairs of opposite
equatorial vertices whose sums compete in
\eqref{eq:tropical-higher-OR-solved}.}
\label{fig:tropical-octahedron-recurrence}
\end{figure}

Lam and Postnikov identify the corresponding membrane moves with
cluster mutations \cite[Theorem~28.5]{LamPostnikov}.  We use the short
path
\[
 (012,023,034)
 \longrightarrow(013,123,034)
 \longrightarrow(014,134,123)
 \longrightarrow(014,124,234),
\]
that is, complete sweeps through \(0123\) and \(0134\), followed by the
sweep through \(1234\) that exposes the target.  Strong pentagon
compatibility guarantees that the resulting cluster variable does not
depend on the chosen legal flip path
\cite[Theorem~28.5]{LamPostnikov}.

\begin{figure}[htbp]
\centering
\begin{tikzpicture}[
  scale=0.82,
  every node/.style={font=\small},
  outer/.style={draw=black!75,line width=0.9pt},
  membrane/.style={draw=blue!70!black,line width=1.2pt},
  vertex/.style={circle,fill=black,inner sep=1.25pt},
  move/.style={-{Latex[length=2.4mm]},line width=0.9pt}
]
\newcommand{\membranevertices}[1]{%
  \coordinate (#1-0) at (-2.0,0);
  \coordinate (#1-1) at (0,1.75);
  \coordinate (#1-2) at (1.35,0.75);
  \coordinate (#1-3) at (1.35,-0.75);
  \coordinate (#1-4) at (0,-1.75);
  \draw[outer] (#1-0)--(#1-1)--(#1-2)--(#1-3)--(#1-4)--cycle;
  \foreach \i/\pos in {0/left,1/above,2/right,3/right,4/below}{
    \node[vertex] at (#1-\i) {};
    \node[\pos=2pt] at (#1-\i) {\(\i\)};
  }
}
\begin{scope}[xshift=-6.0cm]
  \membranevertices{A}
  \draw[membrane] (A-0)--(A-2) (A-0)--(A-3);
  \node at (-0.55,0.80) {\(012\)};
  \node at (-0.35,0.00) {\(023\)};
  \node at (-0.50,-0.82) {\(034\)};
  \node[font=\small\bfseries] at (0,2.35) {\(012,023,034\)};
\end{scope}
\begin{scope}
  \membranevertices{B}
  \draw[membrane] (B-0)--(B-3) (B-1)--(B-3);
  \node at (-0.55,0.55) {\(013\)};
  \node at (0.72,0.57) {\(123\)};
  \node at (-0.15,-0.75) {\(034\)};
  \node[font=\small\bfseries] at (0,2.35) {\(013,123,034\)};
\end{scope}
\begin{scope}[xshift=6.0cm]
  \membranevertices{C}
  \draw[membrane] (C-1)--(C-4) (C-1)--(C-3);
  \node at (-0.65,0.00) {\(014\)};
  \node at (0.78,0.70) {\(123\)};
  \node at (0.76,-0.52) {\(134\)};
  \node[font=\small\bfseries] at (0,2.35) {\(014,134,123\)};
\end{scope}
\draw[move] (-3.45,0)--(-2.55,0);
\draw[move] (2.55,0)--(3.45,0);
\end{tikzpicture}
\caption{The first three membranes in the chosen legal flip path.
Starting from the fan \(012\cup023\cup034\), the first sweep produces
\(013\cup123\cup034\), and the second produces
\(014\cup134\cup123\).}
\label{fig:three-triangulations}
\end{figure}

\subsection*{Cluster-algebra dictionary}
\label{sec:cluster-dictionary}

We recall the cluster-algebra terminology used in Part~I; no other
cluster-algebra formalism is needed.  A \emph{seed} consists of a
family of variables and a directed graph, or \emph{quiver}, describing
how those variables are exchanged.  The vertices of the quiver are
divided into \emph{mutable} vertices, at which exchanges may be
performed, and \emph{frozen} vertices, which are never mutated and
serve as coefficients.  For the initial fan seed, the variables are
\(X_v\), \(v\in\Sigma _0\).  If
\(\partial\Sigma _0\) denotes the frozen outer boundary, then
\[
 I_{\rm mut}:=\Sigma _0\setminus\partial\Sigma _0,
 \qquad I_{\rm fr}:=\partial\Sigma _0
\]
are the mutable and frozen index sets.

We shall use two globally opposite orientations of the initial fan
quiver, and distinguish them whenever signs matter.  The quiver drawn
in the figures and obtained directly from the colored membrane is
denoted by \(Q_0^{\rm LP}\).  It uses the convention of
Lam--Postnikov \cite[Definition~28.4]{LamPostnikov}: an arrow has the
adjacent black elementary triangle on its right and the adjacent white
triangle on its left.  Equivalently, when the membrane triangles are
viewed as vertices of the plane bipartite dual graph, its white endpoint
lies on the left.  Mészáros--Musiker--Sherman-Bennett--Vidinas use
the opposite convention \cite[Definition~6.1]{MMSBV}, with the white
endpoint on the right.  We therefore put
\begin{equation}
                 Q_0^{\rm R}:=(Q_0^{\rm LP})^{\rm op}.
\label{eq:two-quiver-conventions}
\end{equation}
All principal-coefficient, \(F\)-polynomial, and \(g\)-vector
calculations below use \(Q_0^{\rm R}\), so that the results of
\cite{MMSBV} apply with their stated signs.  The graph figures and the
membrane-dual assertions use \(Q_0^{\rm LP}\).  Coefficient-free
octahedron variables are the same for the two orientations, as recorded
below.

The \emph{extended exchange matrix}
\(B=(b_{vu})\) has one row for every
\(v\in I_{\rm mut}\cup I_{\rm fr}\) and one column for every mutable
\(u\in I_{\rm mut}\).  Our sign convention is
\(b_{vu}>0\) when the quiver has \(b_{vu}\) arrows from \(v\) to
\(u\), after cancellation of oriented \(2\)-cycles.  Unless explicitly
marked \({\rm LP}\), \(B\) denotes the matrix of \(Q_0^{\rm R}\).
Its square mutable part is
\[
             B_{\rm mut}:=(b_{vu})_{v,u\in I_{\rm mut}}.
\]
Mutation at a
mutable index \(k\) replaces \(X_k\) by \(X_k'\), where
\begin{equation}
 X_kX_k'
  =\prod_{b_{vk}>0}X_v^{\,b_{vk}}
   +\prod_{b_{vk}<0}X_v^{-b_{vk}},
\label{eq:cluster-exchange-dictionary}
\end{equation}
The quiver is mutated at the same time as follows.  For every directed
two-step path \(i\longrightarrow k\longrightarrow j\), add an arrow
\(i\longrightarrow j\); then reverse every arrow incident to \(k\);
finally, cancel all oriented \(2\)-cycles.  Parallel arrows are retained
with their multiplicities.  In the
fan seeds used here, \eqref{eq:cluster-exchange-dictionary} is exactly
an octahedron-recurrence relation.  A variable obtained by a sequence
of such exchanges is a \emph{cluster variable}; a product of variables
belonging to one seed is a \emph{cluster monomial}.

\begin{lemma}[Global arrow reversal]
\label{lem:global-arrow-reversal}
For every mutation sequence, the coefficient-free cluster variables
obtained from \(Q_0^{\rm LP}\) and \(Q_0^{\rm R}\) are identical Laurent
polynomials in the correspondingly labelled initial variables.
\end{lemma}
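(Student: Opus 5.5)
The plan is to prove, by induction on the length of the mutation sequence, a statement slightly stronger than the lemma: if two seeds have identical variables and exchange matrices that are negatives of each other, \(B'=-B\), then mutating both at the same mutable index \(k\) again produces identical variables and exchange matrices that are negatives of each other. The base case is exactly \eqref{eq:two-quiver-conventions}. Both seeds start from the same correspondingly labelled initial variables \(X_v\), and the extended exchange matrix of \(Q_0^{\rm R}=(Q_0^{\rm LP})^{\rm op}\) is \(-B^{\rm LP}\), since reversing every arrow swaps \(b_{vu}\) and \(b_{uv}\), i.e.\ changes the sign of each entry under the convention above.

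For the inductive step on variables, look at the exchange relation \eqref{eq:cluster-exchange-dictionary}. Replacing \(b_{vk}\) by \(-b_{vk}\) turns the set \(\{v:b_{vk}>0\}\) into \(\{v:b_{vk}<0\}\) and leaves each exponent \(|b_{vk}|\) unchanged. The two monomials on the right-hand side are therefore simply interchanged. Because the right-hand side is a sum, it is unchanged, so the two mutations produce the same \(X_k'\). All other variables are untouched, so the new clusters agree.

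For the inductive step on quivers, check that quiver mutation commutes with global arrow reversal: \(\mu_k(Q^{\rm op})=\mu_k(Q)^{\rm op}\). Take the three steps in order.
\begin{enumerate}
\item A two-step path \(i\to k\to j\) in \(Q^{\rm op}\) is the same thing as a path \(j\to k\to i\) in \(Q\). The arrow added in \(Q^{\rm op}\) is therefore \(i\to j\), which is the reverse of the arrow \(j\to i\) added in \(Q\).
\item Reversing the arrows incident to \(k\) commutes with reversing all arrows.
\item Cancelling oriented \(2\)-cycles also commutes with global reversal, since a \(2\)-cycle reverses to a \(2\)-cycle and multiplicities are preserved.
\end{enumerate}
Equivalently, in matrix form one can use the Fomin--Zelevinsky rule
\[
b'_{ij}=-b_{ij}\ (i=k\text{ or }j=k),\qquad b'_{ij}=b_{ij}+\operatorname{sgn}(b_{ik})\max(b_{ik}b_{kj},0)\ \text{otherwise}.
\]
This rule is odd under \(B\mapsto -B\), because \(b_{ik}b_{kj}\) is invariant and \(\operatorname{sgn}\) flips.

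Frozen vertices are never mutated, so the statement covers the extended matrix, including its frozen rows. Induction then gives the lemma for every mutation sequence.

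The only delicate point is the convention bookkeeping. The paper's quiver-mutation description uses arrow addition followed by cancellation. I would verify once that this description agrees with the matrix formula applied to the full extended matrix, rows indexed by \(I_{\rm mut}\cup I_{\rm fr}\) and columns by \(I_{\rm mut}\), so that the frozen arrows are reversed consistently. After that check, the argument is purely formal.
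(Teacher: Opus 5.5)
Your proposal is correct and follows the same route as the paper: an induction along the mutation sequence, using that matrix mutation is odd under \(B\mapsto -B\) and that negating \(B\) only interchanges the two monomials in the exchange relation \eqref{eq:cluster-exchange-dictionary}. Your quiver-level check of \(\mu_k(Q^{\rm op})=\mu_k(Q)^{\rm op}\) and your explicit treatment of the frozen rows spell out details that the paper states in one line.
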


\begin{proof}
If \(B^{\rm LP}=-B\), then matrix mutation satisfies
\(\mu_k(B^{\rm LP})=-\mu_k(B)\).  In
\eqref{eq:cluster-exchange-dictionary}, replacing \(B\) by \(-B\)
only interchanges the two product summands.  The first mutated variable
is therefore unchanged, and induction along the mutation sequence gives
the assertion.
\end{proof}

The Laurent phenomenon \cite[Theorem~3.1]{FZI} says that every cluster variable \(Z\) is a
Laurent polynomial in the initial variables.  We write \(D_Z(q)\) for
the exponent of \(X_q\) in its reduced denominator, allowing negative
coordinates and using
\[
                      D_{X_r}(q)=-\delta_{rq}
\]
for an initial variable.  The resulting fan-indexed field
\((D_Z(q))_{q\in\Sigma _0}\) will be called the
\emph{denominator vector}; the  cluster \(d\)-vector is its
mutable restriction.  Thus a positive coordinate records an actual
denominator factor, while the convention above remembers an initial
numerator variable.

Two further invariants organize the numerator.  The cluster algebra
with \emph{principal coefficients} has extended exchange matrix
\[
                  \begin{pmatrix}B_{\rm mut}\\ I\end{pmatrix}.
\]
Its initial mutable variables are \(X_v\), \(v\in I_{\rm mut}\),
and its frozen variables are the independent coefficients \(y_u\),
\(u\in I_{\rm mut}\).  This entire matrix is mutated by the same
matrix-mutation rule.  Thus the initial exchange at \(k\) is
\[
 X_kX_k'
   =y_k\prod_{\substack{v\in I_{\rm mut}\\b_{vk}>0}}X_v^{\,b_{vk}}
      +\prod_{\substack{v\in I_{\rm mut}\\b_{vk}<0}}X_v^{-b_{vk}},
\]
and subsequent exchanges use the coefficient monomials recorded by
the mutated lower block.  Given a cluster variable \(Z\) obtained by
a mutation sequence in the fan, use the same sequence in this
principal-coefficient algebra to define its principal-coefficient lift.
The \emph{\(F\)-polynomial} \(F_Z(y)\) is obtained from that lift by
setting all initial mutable variables equal to \(1\).  Since
\(B_{\rm mut}\) is skew-symmetric, every such \(F\)-polynomial has constant term \(1\)
\cite[Theorem~1.7]{DWZII}.  For the dimer variable constructed in
Step~IV, this term is represented concretely by the bottom matching,
whose face-height vector is zero.  The original frozen boundary pattern
is then introduced by the normalized coefficient specialization
\[
             y_u\longmapsto
             \prod_{v\in I_{\rm fr}}X_v^{b_{vu}}.
\]
For the resulting fan variable \(Z\), the separation formula of
\cite[Theorem~3.7 and Corollary~6.3]{FZIV} takes the form
\begin{equation}
 Z=X_\partial^{\beta_Z}
   \left(\prod_{v\in I_{\rm mut}}X_v^{g_Z(v)}\right)
   F_Z(\widehat y),
 \qquad
 \widehat y_u=\prod_{v\in\Sigma _0}X_v^{b_{vu}}.
\label{eq:cluster-separation-dictionary}
\end{equation}
Here
\[
                     g_Z\in\mathbb Z^{I_{\rm mut}}
\]
is the ordinary \emph{\(g\)-vector}.  It is indexed only by mutable
variables: equivalently, it is the mutable exponent of the
distinguished monomial multiplying the constant term \(1\) of
\(F_Z\).  The  \(g\)-vector mutation rule and the injectivity
theorem for \(g\)-vectors concern this mutable vector.

The exponent
\(\beta_Z\in\mathbb Z^{I_{\rm fr}}\) in
\eqref{eq:cluster-separation-dictionary} is different.  It is the
\emph{frozen normalization} produced by the inverse tropical
\(F\)-polynomial denominator after coefficient specialization.  It is
not a frozen coordinate of the  \(g\)-vector.  If \(g_Z\) is
padded by zeros on \(I_{\rm fr}\), we call
\begin{equation}
                    \widetilde g_Z:=\beta_Z+g_Z
\label{eq:extended-bottom-exponent-dictionary}
\end{equation}
the \emph{extended bottom exponent}.  This is convenient terminology
for the exponent of the distinguished constant-term monomial, but it
is deliberately not called an extended \(g\)-vector: \(\beta_Z\)
comes from a separate coefficient normalization, and the usual
\(g\)-vector theory applies only to \(g_Z\).

More generally, if \(y^r\) is a monomial of \(F_Z\), substitution of
\(\widehat y\) gives the full initial-variable exponent
\begin{equation}
                       \epsilon_Z(r)
                         =\beta_Z+g_Z+Br,
\label{eq:matching-exponent-dictionary}
\end{equation}
where \(g_Z\) is again padded by zero frozen coordinates.  Thus neither
\(\widetilde g_Z\) nor \(g_Z\) is intended to be the exponent of every
Laurent monomial.  In the dimer model, \(r\) is the face-height vector
of a matching, \(r=0\) is the bottom matching, and
\eqref{eq:matching-exponent-dictionary} is converted into the visible
incidence formula in Step~IV.

Finally, \emph{tropicalization} sends multiplication to addition,
division to subtraction, and a positive sum to a maximum.  Equivalently,
one may substitute \(X_v=\exp(t h(v))\), divide the logarithm by
\(t\), and let \(t\to\infty\).  The Laurent sums tropicalized below are
the explicit matching sums obtained in Step~IV; their coefficients are
nonnegative, so there is no cancellation and tropicalization gives the
maximum of the linear forms defined by their monomial exponent vectors.

\subsection*{The fixed graph and its matching statistics}

We now introduce the graph-theoretic objects needed to state the
conclusion of Part~I.

For purposes of the theorem statement, define \(S_p\subseteq\Sigma _0\)
by
\begin{equation}
\boxed{
\begin{aligned}
S_p\cap\Delta_{012}
 &=\{q:q_0\geq1,\ q_1\leq a-1,\ q_2\geq b+1\},\\
S_p\cap\Delta_{023}
 &=\{q:q_0\geq1,\ (q_2\geq b+1\ \text{or}\ q_3\geq c+1)\},\\
S_p\cap\Delta_{034}
 &=\{q:q_0\geq1,\ q_3\geq c+1,\ q_4\leq d-1\}.
\end{aligned}}
\label{eq:Sp-formula}
\end{equation}
Step~I proves that this explicitly defined set is exactly the positive
denominator support of \(X_p\).

Let \(\mathcal T_0\) be the subdivision of \(\Sigma _0\) into
elementary lattice triangles.  Within each panel, color an elementary
triangle black if it has the same orientation as the ambient triangular
panel, and white if it has the opposite orientation.  Put a dual vertex in every
elementary triangle and a dual edge across every elementary lattice
edge whose incident triangles have opposite colors.  Along either
seam \(02\) or \(03\), identify same-colored dual vertices belonging
to incident panels.  Denote the resulting plane bipartite graph by
\(\Gamma _0\).  Every mutable lattice vertex \(v\) labels a bounded
face \(F_v\) of \(\Gamma _0\).  Directing a crossing arrow so that its
black endpoint is on the right and its white endpoint is on the left
gives the Lam--Postnikov quiver \(Q_0^{\rm LP}\); this is the
orientation displayed below.  Reversing every arrow gives the
white-on-the-right quiver \(Q_0^{\rm R}\) used for the dimer
\(F\)-polynomial calculation.  We write \(Q_0^{\rm LP}[S_p]\) and
\(Q_0^{\rm R}[S_p]\) for the corresponding induced quivers.

\begin{definition}[Unreduced graph]
\label{def:unreduced}
Define
\begin{equation}
\boxed{\qquad
 \overline G_p:=\bigcup_{v\in S_p}\partial F_v ,
\qquad}
\label{eq:explicit-Gp}
\end{equation}
deleting isolated vertices.  Thus a dual edge is retained exactly
when it bounds at least one selected face.
\end{definition}

\begin{figure}[htbp]
\centering
\begin{minipage}[t]{0.48\textwidth}
\vspace{0pt}\centering
\textbf{(a)}~The induced quiver \(Q_0^{\rm LP}[S_p]\).\par\smallskip
\begin{tikzpicture}[
  x=0.58cm,y=0.58cm,
  quiver arrow/.style={-{Stealth[length=2.15pt,width=2.35pt]},
    draw=orange!85!black,line width=0.42pt,
    shorten <=1.35pt,shorten >=1.35pt},
  quiver vertex/.style={circle,draw=white,line width=0.28pt,
    fill=blue!55!black,inner sep=0pt,minimum size=2.45pt},
  panel edge/.style={draw=black!38,line width=0.6pt},
  grid line/.style={draw=black!10,line width=0.22pt},
  polygon corner/.style={circle,draw=black!62,line width=0.65pt,
    fill=white,inner sep=0pt,minimum size=4.2pt}]

\begin{scope}[rotate=-90]

\coordinate (quiverambientzero) at (0,0);
\coordinate (quiverambientone) at (-10,0);
\coordinate (quiverambienttwo) at (-5,8.660254);
\coordinate (quiverambientthree) at (5,8.660254);
\coordinate (quiverambientfour) at (10,0);

\def\quiverpanelgrid#1#2#3{%
  \foreach \t in {1,...,9}{%
    \pgfmathsetmacro{\f}{\t/10}%
    \draw[grid line] ($(#1)!\f!(#2)$) -- ($(#3)!\f!(#2)$);%
    \draw[grid line] ($(#1)!\f!(#3)$) -- ($(#2)!\f!(#3)$);%
    \draw[grid line] ($(#2)!\f!(#1)$) -- ($(#3)!\f!(#1)$);%
  }%
}
\quiverpanelgrid{quiverambientzero}{quiverambientone}{quiverambienttwo}
\quiverpanelgrid{quiverambientzero}{quiverambienttwo}{quiverambientthree}
\quiverpanelgrid{quiverambientzero}{quiverambientthree}{quiverambientfour}

\draw[panel edge]
  (quiverambientzero)--(quiverambientone)--(quiverambienttwo)--cycle;
\draw[panel edge]
  (quiverambientzero)--(quiverambienttwo)--(quiverambientthree)--cycle;
\draw[panel edge]
  (quiverambientzero)--(quiverambientthree)--(quiverambientfour)--cycle;

\foreach \i in {0,...,10}{\foreach \j in {0,...,10}{
  \pgfmathtruncatemacro{\k}{10-\i-\j}
  \pgfmathtruncatemacro{\selected}
    {(\k>=0)&&(\i>=1)&&(\j<=1)&&(\k>=4)}
  \ifnum\selected=1
    \pgfmathtruncatemacro{\czerotarget}{(\j>=1)}
    \ifnum\czerotarget=1
      \draw[quiver arrow]
        ({-\j-.5*\k},{.8660254*\k}) --
        ({-(\j-1)-.5*(\k+1)},{.8660254*(\k+1)});
    \fi
    \pgfmathtruncatemacro{\conetarget}{(\j>=1)&&(\k>=5)}
    \ifnum\conetarget=1
      \draw[quiver arrow]
        ({-\j-.5*\k},{.8660254*\k}) --
        ({-\j-.5*(\k-1)},{.8660254*(\k-1)});
    \fi
    \pgfmathtruncatemacro{\ctwotarget}{(\i>=2)&&(\j==0)}
    \ifnum\ctwotarget=1
      \draw[quiver arrow]
        ({-\j-.5*\k},{.8660254*\k}) --
        ({-(\j+1)-.5*\k},{.8660254*\k});
    \fi
  \fi
}}

\foreach \i in {0,...,10}{\foreach \j in {0,...,10}{
  \pgfmathtruncatemacro{\k}{10-\i-\j}
  \pgfmathtruncatemacro{\selected}
    {(\k>=0)&&(\i>=1)&&((\j>=4)||(\k>=3))}
  \ifnum\selected=1
    \pgfmathtruncatemacro{\czerotarget}
      {(\j>=1)&&(((\j-1)>=4)||((\k+1)>=3))}
    \ifnum\czerotarget=1
      \draw[quiver arrow]
        ({-.5*\j+.5*\k},{.8660254*(\j+\k)}) --
        ({-.5*(\j-1)+.5*(\k+1)},{.8660254*(\j+\k)});
    \fi
    \pgfmathtruncatemacro{\conetarget}
      {(\j>=1)&&(\k>=1)&&((\j>=4)||((\k-1)>=3))}
    \ifnum\conetarget=1
      \draw[quiver arrow]
        ({-.5*\j+.5*\k},{.8660254*(\j+\k)}) --
        ({-.5*\j+.5*(\k-1)},{.8660254*(\j+\k-1)});
    \fi
    \pgfmathtruncatemacro{\ctwotarget}
      {(\i>=2)&&(\k>=1)&&(((\j+1)>=4)||(\k>=3))}
    \ifnum\ctwotarget=1
      \draw[quiver arrow]
        ({-.5*\j+.5*\k},{.8660254*(\j+\k)}) --
        ({-.5*(\j+1)+.5*\k},{.8660254*(\j+\k+1)});
    \fi
  \fi
}}

\foreach \i in {0,...,10}{\foreach \j in {0,...,10}{
  \pgfmathtruncatemacro{\k}{10-\i-\j}
  \pgfmathtruncatemacro{\selected}
    {(\k>=0)&&(\i>=1)&&(\j>=3)&&(\k<=2)}
  \ifnum\selected=1
    \pgfmathtruncatemacro{\czerotarget}{(\j>=4)&&((\k+1)<=2)}
    \ifnum\czerotarget=1
      \draw[quiver arrow]
        ({.5*\j+\k},{.8660254*\j}) --
        ({.5*(\j-1)+(\k+1)},{.8660254*(\j-1)});
    \fi
    \pgfmathtruncatemacro{\conetarget}{(\j>=1)&&(\k>=1)}
    \ifnum\conetarget=1
      \draw[quiver arrow]
        ({.5*\j+\k},{.8660254*\j}) --
        ({.5*\j+(\k-1)},{.8660254*\j});
    \fi
    \pgfmathtruncatemacro{\ctwotarget}{(\i>=2)&&(\k>=1)}
    \ifnum\ctwotarget=1
      \draw[quiver arrow]
        ({.5*\j+\k},{.8660254*\j}) --
        ({.5*(\j+1)+\k},{.8660254*(\j+1)});
    \fi
  \fi
}}

\foreach \i in {0,...,10}{\foreach \j in {0,...,10}{
  \pgfmathtruncatemacro{\k}{10-\i-\j}
  \pgfmathtruncatemacro{\selected}
    {(\k>=0)&&(\i>=1)&&(\j<=1)&&(\k>=4)}
  \ifnum\selected=1
    \node[quiver vertex] at
      ({-\j-.5*\k},{.8660254*\k}) {};
  \fi
  \pgfmathtruncatemacro{\selected}
    {(\k>=0)&&(\i>=1)&&((\j>=4)||(\k>=3))}
  \ifnum\selected=1
    \node[quiver vertex] at
      ({-.5*\j+.5*\k},{.8660254*(\j+\k)}) {};
  \fi
  \pgfmathtruncatemacro{\selected}
    {(\k>=0)&&(\i>=1)&&(\j>=3)&&(\k<=2)}
  \ifnum\selected=1
    \node[quiver vertex] at
      ({.5*\j+\k},{.8660254*\j}) {};
  \fi
}}

\end{scope}

\node[left=4pt] at (quiverambientzero) {$0$};
\node[above=3pt] at (quiverambientone) {$1$};
\node[right=3pt] at (quiverambienttwo) {$2$};
\node[right=3pt] at (quiverambientthree) {$3$};
\node[below=3pt] at (quiverambientfour) {$4$};
\node[black!45] at (2.15,5.2) {$012$};
\node[black!45] at (1.55,0) {$023$};
\node[black!45] at (2.15,-5.2) {$034$};

\coordinate (quiverpolyvfour) at (2.598076,3.5);
\coordinate (quiverpolyveight) at (2.598076,1.5);
\coordinate (quiverpolyvsix) at (1.732051,-1);
\coordinate (quiverpolyvfive) at (1.732051,-4);
\node[polygon corner] at (quiverpolyvfour) {};
\node[polygon corner] at (quiverpolyveight) {};
\node[polygon corner] at (quiverpolyvsix) {};
\node[polygon corner] at (quiverpolyvfive) {};
\node[left=3pt,font=\scriptsize] at (quiverpolyvfour) {$v_4$};
\node[left=3pt,font=\scriptsize] at (quiverpolyveight) {$v_8$};
\node[left=3pt,font=\scriptsize] at (quiverpolyvsix) {$v_6$};
\node[left=3pt,font=\scriptsize] at (quiverpolyvfive) {$v_5$};

\end{tikzpicture}
\end{minipage}\hfill
\begin{minipage}[t]{0.48\textwidth}
\vspace{0pt}\centering
\textbf{(b)}~The unreduced graph \(\overline G_p\).\par\smallskip
\pgfdeclarelayer{gbar grid}
\pgfdeclarelayer{gbar edges}
\pgfsetlayers{gbar grid,gbar edges,main}
\begin{tikzpicture}[
  x=0.58cm,y=0.58cm,
  gbar edge/.style={draw=blue!42!black,line width=0.46pt},
  black dual/.style={circle,draw=black,fill=black,
    inner sep=0pt,minimum size=2.8pt},
  white dual/.style={circle,draw=black,line width=0.48pt,fill=white,
    inner sep=0pt,minimum size=3.0pt},
  panel edge/.style={draw=black!38,line width=0.6pt},
  grid line/.style={draw=black!9,line width=0.20pt}]

\begin{scope}[rotate=-90]

\coordinate (gbarzero) at (0,0);
\coordinate (gbarone) at (-10,0);
\coordinate (gbartwo) at (-5,8.660254);
\coordinate (gbarthree) at (5,8.660254);
\coordinate (gbarfour) at (10,0);

\begin{pgfonlayer}{gbar grid}
  \def\gbarpanelgrid#1#2#3{%
    \foreach \t in {1,...,9}{%
      \pgfmathsetmacro{\f}{\t/10}%
      \draw[grid line] ($(#1)!\f!(#2)$) -- ($(#3)!\f!(#2)$);%
      \draw[grid line] ($(#1)!\f!(#3)$) -- ($(#2)!\f!(#3)$);%
      \draw[grid line] ($(#2)!\f!(#1)$) -- ($(#3)!\f!(#1)$);%
    }%
  }
  \gbarpanelgrid{gbarzero}{gbarone}{gbartwo}
  \gbarpanelgrid{gbarzero}{gbartwo}{gbarthree}
  \gbarpanelgrid{gbarzero}{gbarthree}{gbarfour}
  \draw[panel edge] (gbarzero)--(gbarone)--(gbartwo)--cycle;
  \draw[panel edge] (gbarzero)--(gbartwo)--(gbarthree)--cycle;
  \draw[panel edge] (gbarzero)--(gbarthree)--(gbarfour)--cycle;
\end{pgfonlayer}

\foreach \panel in {1,2,3}{
  \foreach \i in {0,...,8}{
    \foreach \j in {0,...,8}{
      \pgfmathtruncatemacro{\k}{8-\i-\j}
      \foreach \c in {0,1,2}{

        \ifcase\c
          \pgfmathtruncatemacro{\ai}{\i+1}
          \pgfmathtruncatemacro{\aj}{\j}
          \pgfmathtruncatemacro{\ak}{\k}
          \pgfmathtruncatemacro{\qai}{\i+1}
          \pgfmathtruncatemacro{\qaj}{\j+1}
          \pgfmathtruncatemacro{\qak}{\k}
          \pgfmathtruncatemacro{\qbi}{\i+1}
          \pgfmathtruncatemacro{\qbj}{\j}
          \pgfmathtruncatemacro{\qbk}{\k+1}
        \or
          \pgfmathtruncatemacro{\ai}{\i}
          \pgfmathtruncatemacro{\aj}{\j+1}
          \pgfmathtruncatemacro{\ak}{\k}
          \pgfmathtruncatemacro{\qai}{\i}
          \pgfmathtruncatemacro{\qaj}{\j+1}
          \pgfmathtruncatemacro{\qak}{\k+1}
          \pgfmathtruncatemacro{\qbi}{\i+1}
          \pgfmathtruncatemacro{\qbj}{\j+1}
          \pgfmathtruncatemacro{\qbk}{\k}
        \or
          \pgfmathtruncatemacro{\ai}{\i}
          \pgfmathtruncatemacro{\aj}{\j}
          \pgfmathtruncatemacro{\ak}{\k+1}
          \pgfmathtruncatemacro{\qai}{\i+1}
          \pgfmathtruncatemacro{\qaj}{\j}
          \pgfmathtruncatemacro{\qak}{\k+1}
          \pgfmathtruncatemacro{\qbi}{\i}
          \pgfmathtruncatemacro{\qbj}{\j+1}
          \pgfmathtruncatemacro{\qbk}{\k+1}
        \fi

        \ifcase\panel
        \or
          \pgfmathtruncatemacro{\sela}
            {(\qai>=1)&&(\qaj<=1)&&(\qak>=4)}
          \pgfmathtruncatemacro{\selb}
            {(\qbi>=1)&&(\qbj<=1)&&(\qbk>=4)}
        \or
          \pgfmathtruncatemacro{\sela}
            {(\qai>=1)&&((\qaj>=4)||(\qak>=3))}
          \pgfmathtruncatemacro{\selb}
            {(\qbi>=1)&&((\qbj>=4)||(\qbk>=3))}
        \or
          \pgfmathtruncatemacro{\sela}
            {(\qai>=1)&&(\qaj>=3)&&(\qak<=2)}
          \pgfmathtruncatemacro{\selb}
            {(\qbi>=1)&&(\qbj>=3)&&(\qbk<=2)}
        \fi
        \pgfmathtruncatemacro{\retained}
          {(\k>=0)&&((\sela==1)||(\selb==1))}

        \ifnum\retained=1
          \pgfmathsetmacro{\wcj}{\j+0.6666667}
          \pgfmathsetmacro{\wck}{\k+0.6666667}

          \ifcase\panel
          \or
            \pgfmathsetmacro{\wx}{-\wcj-.5*\wck}
            \pgfmathsetmacro{\wy}{.8660254*\wck}
            \ifnum\aj=0
              \pgfmathsetmacro{\bx}{-.5*(\ak+.5)}
              \pgfmathsetmacro{\by}{.8660254*(\ak+.5)}
            \else
              \pgfmathsetmacro{\bx}
                {-(\aj+.3333333)-.5*(\ak+.3333333)}
              \pgfmathsetmacro{\by}{.8660254*(\ak+.3333333)}
            \fi
          \or
            \pgfmathsetmacro{\wx}{-.5*\wcj+.5*\wck}
            \pgfmathsetmacro{\wy}{.8660254*(\wcj+\wck)}
            \ifnum\ak=0
              \pgfmathsetmacro{\bx}{-.5*(\aj+.5)}
              \pgfmathsetmacro{\by}{.8660254*(\aj+.5)}
            \else
              \ifnum\aj=0
                \pgfmathsetmacro{\bx}{.5*(\ak+.5)}
                \pgfmathsetmacro{\by}{.8660254*(\ak+.5)}
              \else
                \pgfmathsetmacro{\bx}
                  {-.5*(\aj+.3333333)+.5*(\ak+.3333333)}
                \pgfmathsetmacro{\by}
                  {.8660254*(\aj+\ak+.6666667)}
              \fi
            \fi
          \or
            \pgfmathsetmacro{\wx}{.5*\wcj+\wck}
            \pgfmathsetmacro{\wy}{.8660254*\wcj}
            \ifnum\ak=0
              \pgfmathsetmacro{\bx}{.5*(\aj+.5)}
              \pgfmathsetmacro{\by}{.8660254*(\aj+.5)}
            \else
              \pgfmathsetmacro{\bx}
                {.5*(\aj+.3333333)+(\ak+.3333333)}
              \pgfmathsetmacro{\by}{.8660254*(\aj+.3333333)}
            \fi
          \fi

          \begin{pgfonlayer}{gbar edges}
            \draw[gbar edge] (\wx,\wy)--(\bx,\by);
          \end{pgfonlayer}
          \node[white dual] at (\wx,\wy) {};
          \node[black dual] at (\bx,\by) {};
        \fi
      }
    }
  }
}

\end{scope}

\node[left=4pt] at (gbarzero) {$0$};
\node[above=3pt] at (gbarone) {$1$};
\node[right=3pt] at (gbartwo) {$2$};
\node[right=3pt] at (gbarthree) {$3$};
\node[below=3pt] at (gbarfour) {$4$};
\node[black!45] at (2.15,5.2) {$012$};
\node[black!45] at (1.55,0) {$023$};
\node[black!45] at (2.15,-5.2) {$034$};

\end{tikzpicture}
\end{minipage}
\caption{The quiver--graph correspondence for
\(p=(2,3,2,3)_{1234}\), in the common three-panel orientation.
\textup{(a)} The induced Lam--Postnikov quiver
\(Q_0^{\rm LP}[S_p]\), with \(59\) blue
selected vertices and \(134\) orange arrows; the hollow points
\(v_4,v_8,v_6,v_5\) are unselected comparison points.
\textup{(b)} The unreduced bipartite graph \(\overline G_p\), with
\(68\) vertices of each color and \(194\) edges: \(134\) internal
edges dual to the arrows in \textup{(a)}, and \(60\) exterior-boundary
edges.  Each orange arrow is directed with the adjacent black dual
vertex on its right and the adjacent white dual vertex on its left,
which is the convention of \cite[Definition~28.4]{LamPostnikov}.
Retained black seam pairs are identified at their common
midpoint, and the \(59\) bounded faces are indexed by \(S_p\).}
\label{fig:fixed-graph-running-example}
\label{fig:induced-quiver-running-example}
\label{fig:unreduced-graph-running-example}
\end{figure}

\begin{definition}[Cut-open seam rule]
\label{def:cut-open-seam}
The only possible three-sector identification occurs at the
\(u_0\)-corner.  Every primal vertex incident to that three-sector dual
vertex has \(q_2\leq1\) and \(q_3\leq1\), whereas
\(b+1\geq2\) and \(c+1\geq2\).  Hence none of these incident vertices
lies in \(S_p\), and the three-sector dual vertex is deleted.
Consequently every retained seam vertex has at most two sectors, so
the number \(r\) of sectors at such a vertex satisfies \(r\leq2\).  If \(r=1\), no modification is needed.  If a vertex
\(x\) of \(\overline G_p\) identifies two same-colored sector vertices,
replace it by \(x_{\rm L},x_{\rm R}\), reattach the two sector edge sets
to their corresponding copies, and insert the alternating path
\[
                         x_{\rm L}-z-x_{\rm R} ,
\]
where \(z\) has the color opposite to \(x_{\rm L},x_{\rm R}\); see
Figure~\ref{fig:canonical-seam-zigzag}.  Apply this at every
two-sector seam identification.  The resulting plane bipartite graph,
embedded in the cut-open fan, is \(G_p\).  Its inserted seam edges have
weight one.
\end{definition}

\begin{figure}[htbp]
\centering
\includegraphics[width=0.90\textwidth]
  {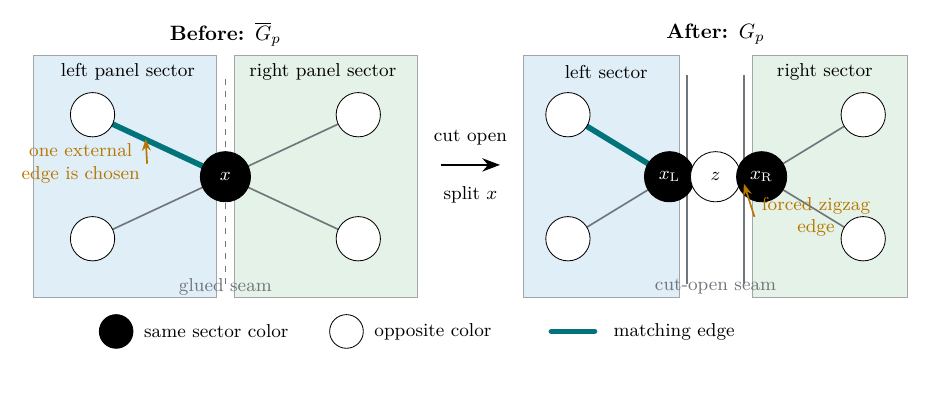}
\caption{The cut-open seam rule in the only nontrivial case \(r=2\).
Before cutting, the two panel sectors share the same-colored vertex
\(x\).  After cutting, \(x\) is replaced by \(x_{\rm L},x_{\rm R}\), and
the opposite-colored vertex \(z\) is inserted to form
\(x_{\rm L}-z-x_{\rm R}\).  The thick edges show one local matching state:
if \(x_{\rm L}\) is matched externally in the left sector, then
\(zx_{\rm R}\) is forced.  The reflected state, with \(x_{\rm R}\)
matched externally, forces \(x_{\rm L}z\).}
\label{fig:canonical-seam-zigzag}
\end{figure}

Write \(\D(G_p)\) for the set of perfect matchings of \(G_p\).
Let \(E_v\) be the set of edges of \(\overline G_p\) crossing
elementary lattice edges incident to \(v\in\Sigma _0\).  For
\(M\in\D(G_p)\), contract the inserted seam zigzags and put
\[
 m_v(M):=|\overline M\cap E_v|,\qquad
 w_v:=\frac{|\partial_{\overline G_p}F_v|}{2}\quad(v\in S_p),
\]
where \(\overline M\) denotes the contracted edge set.
The theorem below proves that \(\D(G_p)\) is nonempty, that the seam
contraction is a bijection, and that its matching lattice has a unique
bottom element.  It then makes the maximum incidence statistics used
in the formula well defined.

Finally, introduce the five initial fan vertices
\begin{equation}
\begin{aligned}
v_0&=(0,0,0,N-d,d),&
v_1&=(d,0,0,N-d,0),\\
v_2&=(d,0,a+b,c,0),&
v_3&=(c+d,0,a+b,0,0),\\
v_4&=(c+d,a,b,0,0).
\end{aligned}
\label{eq:five-g-points}
\end{equation}
Here \(v_0\in I_{\rm fr}\) and
\(v_1,v_2,v_3,v_4\in I_{\rm mut}\).

\begin{theorem}[Main theorem of Part~I: the corrected fixed-graph formula]
\label{thm:partI-main}
Let \(N\geq4\) and let
\[
 p=(0,a,b,c,d)\in\Delta_{1234}(N),
 \qquad a,b,c,d\geq1.
\]
For the set \(S_p\) and the graphs \(\overline G_p,G_p\) defined
above, the following statements hold.
\begin{enumerate}[label=\textup{(\roman*)}]
\item The denominator vector of \(X_p\), relative to the initial fan
seed, is the indicator of \(S_p\).  In particular it is binary.

\item The graph \(\overline G_p\) is connected and plane bipartite,
its bounded faces are precisely \(F_v\), \(v\in S_p\), and its
Lam--Postnikov oriented dual is \(Q_0^{\rm LP}[S_p]\); its opposite,
white-on-the-right dual is \(Q_0^{\rm R}[S_p]\).  Cutting open the
seams gives a canonical
weight-preserving bijection
\[
          \D(\overline G_p)\longleftrightarrow\D(G_p).
\]
Every edge of either graph belongs to a perfect matching.  The
matching set is ordered as follows.  Fix any reference matching
\(M_\ast\) of \(\overline G_p\), orient every primal edge from white to
black, and let \(H_M\) be the dual potential of the divergence-free
oriented flow \(\mathbf 1_M-\mathbf 1_{M_\ast}\), normalized to vanish
on the unbounded face.  Thus crossing an oriented primal edge changes
\(H_M\) by the signed multiplicity of that edge in
\(M-M_\ast\).  Define
\[
 M\leq M'
 \quad\Longleftrightarrow\quad
 H_M(F)\leq H_{M'}(F)
 \quad\hbox{for every bounded face \(F\)}.
\]
This order is independent of \(M_\ast\), and it is transported to
\(\D(G_p)\) by the seam-cut bijection above.  Its cover relations are
the positive face rotations, which increase the height at one face by
\(1\).  With this order the matching set is a finite distributive
lattice and has a unique bottom matching \(M_{\min}\).  Put
\[
             m_v^{\max}:=\max_{M\in\D(G_p)}m_v(M).
\]

\item The mutable \(g\)-vector and the separate frozen normalization
are
\begin{equation}
\boxed{\qquad
 \beta_p=\mathbf e_{v_0},\qquad
 g_p=g^{G_p}
      =-\mathbf e_{v_1}+\mathbf e_{v_2}
      -\mathbf e_{v_3}+\mathbf e_{v_4}.
\qquad}
\label{eq:five-point-g}
\end{equation}
Thus the  \(g\)-vector is the four-term mutable vector, while
\(\beta_p\) is the separately computed frozen normalization.  Their
sum
\[
 \widetilde g_p:=\beta_p+g_p
 =\mathbf e_{v_0}-\mathbf e_{v_1}+\mathbf e_{v_2}
  -\mathbf e_{v_3}+\mathbf e_{v_4}
\]
is the extended bottom exponent in the terminology of
\eqref{eq:extended-bottom-exponent-dictionary}.  It is not an
extended \(g\)-vector.

\item Put \(X_\partial^{\beta_p}=X_{v_0}\) and, for initial tropical
heights \(h=(h(v))_{v\in\Sigma _0}\), put
\(\mathsf b_p(h)=h(v_0)\) and
\begin{align}
\Lambda_M^{\rm corr}(h):={}&\mathsf b_p(h)
 +\sum_{v\in S_p}\bigl(w_v-m_v(M)-1\bigr)h(v)
\notag\\
&+\sum_{\substack{v\in I_{\rm mut}\setminus S_p\\E_v\ne\varnothing}}
       \bigl(m_v^{\max}-m_v(M)\bigr)h(v)
\notag\\
&+\sum_{v\in I_{\rm fr}}
       \bigl(m_v(M_{\min})-m_v(M)\bigr)h(v).
\label{eq:linear-form}
\end{align}
Then the octahedron variable and its tropicalization are
\begin{equation}
\boxed{
\begin{aligned}
X_p=X_\partial^{\beta_p}\sum_{M\in\D(G_p)}
&\prod_{v\in S_p}X_v^{\,w_v-m_v(M)-1}\\
{}\times{}&
\prod_{\substack{v\in I_{\rm mut}\setminus S_p\\E_v\ne\varnothing}}
 X_v^{\,m_v^{\max}-m_v(M)}\\
{}\times{}&
\prod_{v\in I_{\rm fr}}
 X_v^{\,m_v(M_{\min})-m_v(M)} ,
\end{aligned}}
\label{eq:laurent-matching-formula}
\end{equation}
and
\begin{equation}
\boxed{\qquad
 h(p)=\max_{M\in\D(G_p)}\Lambda_M^{\rm corr}(h).
\qquad}
\label{eq:tropical-matching-formula}
\end{equation}
In particular, the frozen contribution is
matching-dependent and cannot be replaced by a common boundary
monomial.
\end{enumerate}
\end{theorem}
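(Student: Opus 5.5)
The proof runs in four steps, as in the paper's own roadmap: Step~I (denominator support), Step~II (graph and lattice), Step~III ($g$-vector and frozen normalization), Step~IV (matching formula and tropicalization).

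\emph{Step~I.} We compute the denominator vector of $X_p$ along the short flip path $(012,023,034)\to(013,123,034)\to(014,134,123)\to(014,124,234)$. For octahedron-type mutations, positivity of denominator coordinates propagates by a Boolean rule: $q$ lies in the support of a new variable exactly when it lies in the support of at least one of the competing equatorial products but is not cancelled by the old membrane value. We will run this ``light-cone'' calculation backwards from $p$ through the three sweeps. This should give that the positive support is exactly the set $S_p$ of \eqref{eq:Sp-formula}.

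Binarity is then obtained by compatibility rather than by computation. Using the Cao--Li denominator transport and compatibility criterion, each $D_{X_p}(q)$ with $q\in S_p$ is at most the compatibility degree between $X_p$ and $X_q$. We will argue that this degree is at most $1$, via the $g$-vector comparison in Step~III, or by completing the argument once the matching formula exhibits a monomial with exponent $-1$ at each $q\in S_p$. Connectedness and the no-hole property of $S_p$ are read off from the explicit inequalities.

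\emph{Step~II.} Since $S_p$ is a disk, $\overline G_p=\bigcup_{v\in S_p}\partial F_v$ is connected and plane, with bounded faces exactly $F_v$ for $v\in S_p$. Its dual is the induced quiver $Q_0^{\rm LP}[S_p]$ by construction.

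The three-sector corner vertex is excluded as in Definition~\ref{def:cut-open-seam}. At each two-sector seam vertex, exactly one of $x_{\rm L},x_{\rm R}$ is matched externally, and the zigzag edge is then forced. This gives the weight-preserving bijection $\D(\overline G_p)\leftrightarrow\D(G_p)$.

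Nonemptiness and the fact that every edge lies in some perfect matching (well-matchedness) will be proved by height forcing. We define candidate height functions on the dual faces of the three panels, bounded above and below by the piecewise-linear extremal profiles determined by the polygon vertices. Every edge is then realized by interpolating between the two extremal matchings.

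Given well-matchedness, Propp's theorem (in the form of \cite[Theorem~2.6]{MMSBV}) yields a distributive lattice under the face-height order, with cover relations the face rotations. Independence from $M_\ast$ holds because changing $M_\ast$ shifts all heights by a common function.

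\emph{Steps~III and~IV.} The dimer $F$-polynomial theorem \cite[Theorem~6.8, Proposition~5.11]{MMSBV} applied to $Q_0^{\rm R}[S_p]$ gives a cluster variable whose $F$-polynomial is $\sum_M y^{H_M-H_{M_{\min}}}$. Its constant term corresponds to $M_{\min}$, as required by \cite{DWZII}.

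To identify this dimer variable with $X_p$, we compute its $g$-vector from the bottom matching. The boundary of $M_{\min}$ turns only at the polygon corners, and this produces $-\mathbf e_{v_1}+\mathbf e_{v_2}-\mathbf e_{v_3}+\mathbf e_{v_4}$. We then compute the $g$-vector of $X_p$ by tracking the $g$-vector mutation rule along the flip path. Cao--Li injectivity identifies the two variables.

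The frozen normalization $\beta_p=\mathbf e_{v_0}$ comes from evaluating the tropical $F$-polynomial under $y_u\mapsto\prod_{v\in I_{\rm fr}}X_v^{b_{vu}}$. The minimum is attained at $M_{\min}$ except at the single frozen corner $v_0$.

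Substituting into \eqref{eq:cluster-separation-dictionary} and \eqref{eq:matching-exponent-dictionary} gives the exponent $\beta_p+g_p+B(H_M-H_{M_{\min}})$. The quantity $(BH)(v)$ is a signed sum of heights of faces adjacent to $v$, and this converts into boundary incidences: $(BH)(v)=w_v-m_v(M)-1-(\text{bottom value})$ for $v\in S_p$, and $m_v(M_{\min})-m_v(M)$ or $m_v^{\max}-m_v(M)$ otherwise. This is a local Euler-type calculation around each primal vertex. Reconciling the constants with $g_p$ yields \eqref{eq:laurent-matching-formula}.

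All coefficients are positive, so tropicalization gives \eqref{eq:tropical-matching-formula} directly. The frozen term is matching-dependent because $m_v(M)$ varies at frozen vertices adjacent to the boundary faces.

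\emph{Main obstacle.} The hardest step is Step~III: matching the two $g$-vector computations and determining $\beta_p$. Reconciling the boundary-incidence constants at unselected mutable vertices (the $m_v^{\max}$ terms) with the four-term $g$-vector requires a precise local analysis at the four polygon corners and the two seams. I would first attack this by an explicit check on the running example $p=(2,3,2,3)$, and then make the corner analysis uniform in $a,b,c,d$.
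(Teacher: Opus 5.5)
\textbf{Main gap: the recurrence-side $g$-vector.} The identification $Z_p=X_p$ through \cite{CaoLi} needs the recurrence-side computation of $g(X_p)$, and your proposal does not supply it. ``Tracking the $g$-vector mutation rule along the flip path'' does not work as stated. At each of the $\Theta(N^3)$ exchanges the rule chooses between the two exchange monomials according to the sign of the current $c$-vector. You give no way to determine these signs uniformly in $N,a,b,c,d$, and your closing paragraph essentially concedes this.

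The idea you are missing is a chamber witness in place of $c$-vectors:
\begin{itemize}
\item The all-right large-gap height $h_R$ of Proposition~\ref{prop:all-right-chamber} satisfies $B^{\mathsf T}h_R\le 0$. Hence the constant term of $F_{X_p}$, which equals $1$ by \cite{DWZII}, is a tropical maximizer at $h_R$.
\item The three sweeps have strict branch words $\mathsf{NS},\mathsf{NS},\mathsf{EW}$, so this maximizer is unique.
\item Formally propagating the selected branches (the identities for $L_1,L_2,L_3$) telescopes to $\widetilde g_p=\mathbf e_{v_0}-\mathbf e_{v_1}+\mathbf e_{v_2}-\mathbf e_{v_3}+\mathbf e_{v_4}$. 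This gives $g(X_p)$ and $\beta_p$ at once.
\end{itemize}
Your graph-side route to $\beta_p$ is legitimate once $Z_p=X_p$ is known. By Lemma~\ref{lem:incidence-change}, $-\mathrm{Trop}\,F$ at the frozen specialization gives $\beta(v)=m_v^{\max}-m_v(M_{\min})$ at each frozen $v$, and the boundary incidence words make this $\delta_{v,v_0}$. It cannot, however, replace the missing input for the mutable $g$-vector of $X_p$.

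\textbf{Gaps on the graph side.}
\begin{itemize}
\item \emph{The coordinate at $v_4$.} The term $+\mathbf e_{v_4}$ sits at an unselected vertex, where \cite[Theorem~6.8]{MMSBV} says nothing. You also conflate $(Br)(v)$, which always equals $m_v(M_{\min})-m_v(M)$, with $m_v^{\max}-m_v(M)$. What is needed is the argument of Proposition~\ref{prop:lifted-graph-g}. Since $X_v$ is never mutated along the lifted sequence, \cite[Proposition~2.8]{CaoLiDenominators} gives $d_v(Z_p)=0$. This forces $g(v)=m_v^{\max}-m_v(M_{\min})$, which at $v_4$ equals $1-0$ and uses well-matchedness.
\item \emph{Well-matchedness.} ``Interpolating between the two extremal matchings'' is not an argument. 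It is also circular if you take the polygonal tilings as the extremes, unless you prove their extremality independently: the paper obtains it only from flip connectivity, which presupposes well-matchedness. The paper instead defines the extremes intrinsically as the directed envelopes $L_p,U_p$. It then proves the boundary layer (Lemma~\ref{lem:bottom-boundary-layer}) and $U_p-L_p\ge 3$ (Lemma~\ref{lem:strict-extremal-width}). Finally it realizes each edge by the upper envelope of the raised datum $x\mapsto L_p(x)+3$ (Proposition~\ref{prop:ordinary-edge-allowed}).
\item \emph{Binarity.} Your argument runs the wrong way. A monomial with $X_v$-exponent $-1$ only gives $D_p(v)\ge 1$. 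The bound $D_p(v)\le 1$ comes from $w_v-m_v(M)-1\ge -1$ for every $M$, because a matching uses at most $w_v$ of the $2w_v$ edges of $\partial F_v$, together with positivity.
\item \emph{Step~I.} Running the light cone backwards from a fixed terminal seed computes $d(X_p,X_r)$, not $d(X_r,X_p)$. You therefore also need support reciprocity, i.e.\ Cao--Li compatibility after unfreezing the boundary. Moreover, the propagation is the numerical max-minus-old recurrence; it becomes a Boolean rule only because the reverse field turns out to be binary.
\end{itemize}
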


The proof is organized into Steps~I--V; their logical dependencies are
summarized next.

\subsection*{Proof strategy}
The proof of Theorem~\ref{thm:partI-main} has three stages.  Steps~I
and~II determine the positive denominator support and turn it into one
fixed cut-open planar graph.  Step~III constructs the extremal heights
as directed boundary envelopes and uses a raised-endpoint partial datum
to prove directly that every edge of this graph occurs in a perfect
matching.  It then invokes flip-connectivity and identifies the
polygonal descriptions with the two extremal matchings.  Steps~IV
and~V apply the dimer
\(F\)-polynomial theorem, compute the complete incidence correction,
and identify the resulting cluster variable with \(X_p\) by equality
of the four mutable \(g\)-coordinates.  The matching exponents then
complete the proof that the positive denominator coordinates are all
one.  The frozen normalization is
computed separately from the recurrence and is inserted only after
this principal-coefficient identification.

\section{Step I: The denominator support}
\label{sec:step1-support}
\label{sec:tropical-refraction}

\subsection{Backward dependence, reversal, and support reciprocity}

For cluster variables \(X,Y\), write \(d(X,Y)\) for the
\(X\)-coordinate of the denominator vector of \(Y\).  Thus the
denominator coordinate required in Theorem~\ref{thm:partI-main} is
\[
                         D_p(q):=d(X_q,X_p),
                         \qquad q\in\Sigma _0.
\]
We will find it more convenient to keep a terminal seed containing \(X_p\) fixed and
reverse the three sweeps.  On every membrane encountered in this
reverse circuit, put
\begin{equation}
                         \delta_p(r):=d(X_p,X_r).
 \label{eq:reverse-d-field}
\end{equation}
At the terminal membrane this scalar field is \(-1\) at \(p\) and
zero at every other seed vertex.  The octahedron exchange relation is
time-reversible.  Consequently, reversing a mutation replaces the old
opposite variable by the new one while the terminal seed remains fixed,
and its \(X_p\)-denominator coordinate obeys
\begin{equation}
 \delta_{\mathrm{new}}
   =\max\{\delta_1+\delta_3,\delta_2+\delta_4\}
      -\delta_{\mathrm{old}}.
 \label{eq:tropical-d}
\end{equation}
This is an ordinary fixed-seed denominator calculation, not a rule for
changing the seed relative to which denominators are measured.

Indeed, relative to the fixed terminal seed, regard all variables other
than \(X_p\) as nonzero formal coefficients and take the least
\(X_p\)-exponent.  Every exchange expression is subtraction-free, so
least exponents add under products and take their minimum under sums.
Negating them gives exactly \eqref{eq:tropical-d}.  Time reversibility
therefore identifies the successive scalar fields in Figure~7 with the
numerical coordinates \(d(X_p,X_r)\).

The following observation transfers their positive support to the
denominator vector in the required order.

\begin{supportreciprocity}
For every initial fan variable \(X_q\ne X_p\), including a frozen
boundary variable,
\begin{equation}
                         d(X_p,X_q)>0
                 \quad\Longleftrightarrow\quad
                         d(X_q,X_p)>0.
 \label{eq:positive-d-support-reciprocity}
\end{equation}
\end{supportreciprocity}

\begin{proof}
First temporarily unfreeze the boundary vertices.  If the ice exchange
matrix is \(\widetilde B=\binom{B}{C}\), replace it by the square
skew-symmetric matrix
\[
                 \overline B=
                 \begin{pmatrix}B&-C^{\mathsf T}\\ C&0\end{pmatrix}.
\]
Mutate only in the original mutable directions.  Induction under matrix
mutation shows that the columns in those directions are exactly the
columns of the mutated ice matrix.  Hence every exchange relation and
every cluster variable used in the three sweeps is unchanged, and the
ordinary denominator coordinates in the enlarged algebra are the
extended denominator coordinates used here.

By \cite[Theorem~6.3(i),(ii)]{CaoLiDenominators}, for two distinct
cluster variables the corresponding denominator coordinate is zero
exactly when the variables occur together in a cluster, and is positive
otherwise.  Compatibility is symmetric, so positivity is symmetric as
well.  Applying this in the enlarged algebra proves
\eqref{eq:positive-d-support-reciprocity}, including for the formerly
frozen vertices.
\end{proof}

\begin{definition}[Backward light cone and seam-compatible extension]
\label{def:tropical-refraction}
For a target variable \(X_p\), the positive support of
\(q\mapsto D_p(q)\) on an earlier membrane is its \emph{backward light
cone}.  By support reciprocity, this is also
the positive support of the reverse field \(q\mapsto\delta_p(q)\) that
is propagated in Figure~7.

Suppose two input panels \(ijk\) and \(ik\ell\) meet along the seam
\(ik\).  We say that a binary light-cone contribution
\emph{extends compatibly over \(ijk\cup_{ik}ik\ell\)} if its two panel
pieces have the same trace on the seam, the threshold tests in the
tangential coordinates \(q_i,q_k\) agree, and the side-dependent
transverse cap is written in \(q_j\) on one panel and in \(q_\ell\) on
the other.  This is the piecewise-affine seam-compatibility condition
used below.
\end{definition}

Figure~\ref{fig:step-I-backward-light-cone}
gives a chronological guide to the support masks for the running example
\(p=(2,3,2,3)_{1234}\), read opposite to the forward sweep order.
Neutral markers show the total intermediate support.  The colored sets
in the last two pictures record the two clauses of the final Boolean
support, not independently propagated objects and not multiplicities.
To make the coupled transition from \textup{(c)} to \textup{(d)}
visible, Figure~\ref{fig:step-I-c-to-d-excavation-square} develops the
same reverse field in square coordinates and displays its scalar
max--minus--old recurrence.

\begin{figure}[p!]
\centering
\definecolor{frontamber}{RGB}{222,139,20}
\definecolor{frontblue}{RGB}{38,108,176}
\definecolor{frontpurple}{RGB}{123,65,148}

\tikzset{
  lc grid/.style={draw=black!14,line width=.28pt},
  lc boundary/.style={draw=black!58,line width=.75pt},
  lc seam/.style={draw=black!68,line width=1pt},
  lc interim/.style={circle,fill=black!64,draw=white,line width=.28pt,
                     inner sep=0pt,minimum size=3.8pt},
  lc front02/.style={circle,fill=frontamber,draw=white,line width=.28pt,
                     inner sep=0pt,minimum size=3.8pt},
  lc front03/.style={rectangle,fill=frontblue,draw=white,line width=.28pt,
                     inner sep=0pt,minimum size=3.8pt},
  lc overlap/.style={diamond,aspect=1,fill=frontpurple,draw=white,
                     line width=.28pt,inner sep=0pt,minimum size=4.2pt},
  lc commontrace/.style={regular polygon,regular polygon sides=6,
                     fill=black!62,draw=white,line width=.28pt,
                     inner sep=0pt,minimum size=4.2pt},
  lc panelname/.style={font=\small,text=black!54},
  lc vertexname/.style={font=\small,fill=white,inner sep=1pt},
  lc state/.style={font=\normalsize\bfseries,align=center},
  lc transmit/.style={-{Latex[length=1.9mm]},line width=.95pt}
}

\newcommand{\lctrigrid}[3]{%
  \foreach \t in {1,...,9}{%
    \pgfmathsetmacro{\f}{\t/10}%
    \draw[lc grid] ($(#1)!\f!(#2)$)--($(#3)!\f!(#2)$);
    \draw[lc grid] ($(#1)!\f!(#3)$)--($(#2)!\f!(#3)$);
    \draw[lc grid] ($(#2)!\f!(#1)$)--($(#3)!\f!(#1)$);
  }%
}

\newcommand{\lcfanbase}[5]{%
  \coordinate (C)  at (0,0);
  \coordinate (O1) at (0,2.6);
  \coordinate (O2) at (2.251666,1.3);
  \coordinate (O3) at (2.251666,-1.3);
  \coordinate (O4) at (0,-2.6);
  \lctrigrid{C}{O1}{O2}\lctrigrid{C}{O2}{O3}\lctrigrid{C}{O3}{O4}
  \draw[lc boundary] (C)--(O1)--(O2)--(O3)--(O4)--cycle;
  \draw[lc seam] (C)--(O2) (C)--(O3);
  \node[lc vertexname,left]  at (C)  {$#1$};
  \node[lc vertexname,above] at (O1) {$#2$};
  \node[lc vertexname,right] at (O2) {$#3$};
  \node[lc vertexname,right] at (O3) {$#4$};
  \node[lc vertexname,below] at (O4) {$#5$};
}

\newcommand{\lcfirstdot}[4]{%
  \node[#4] at ({.2251666*(#3)},{.26*(#2)+.13*(#3)}) {};
}
\newcommand{\lcmiddledot}[4]{%
  \node[#4] at ({.2251666*((#2)+(#3))},{.13*(#2)-.13*(#3)}) {};
}
\newcommand{\lcthirddot}[4]{%
  \node[#4] at ({.2251666*(#2)},{-.13*(#2)-.26*(#3)}) {};
}

\centering

\begin{minipage}[c][5.85cm][c]{.46\textwidth}
\centering
\begin{tikzpicture}[line cap=round,line join=round]
  \node[lc state] at (0,2.02) {(a) Target\\[-1pt]
    $p=(2,3,2,3)_{1234}$};
  \coordinate (T1) at (-.72,1.42);
  \coordinate (T2) at (-.72,-1.42);
  \coordinate (T3) at (1.42,0);
  \coordinate (T4) at (.10,-.60);
  \fill[black!2] (T1)--(T2)--(T3)--cycle;
  \draw[lc boundary] (T1)--(T2)--(T3)--cycle;
  \draw[black!38,densely dashed] (T4)--(T1) (T4)--(T3);
  \draw[lc boundary] (T4)--(T2);
  \node[lc vertexname,above] at (T1) {$1$};
  \node[lc vertexname,below] at (T2) {$2$};
  \node[lc vertexname,right] at (T3) {$3$};
  \node[lc vertexname,below right] at (T4) {$4$};
  \coordinate (p) at ($.2*(T1)+.3*(T2)+.2*(T3)+.3*(T4)$);
  \node[star,star points=5,fill=black,draw=white,line width=.4pt,
        minimum size=8pt,inner sep=0pt] at (p) {};
  \node[font=\small,left=3pt] at (p) {$p$};
\end{tikzpicture}
\end{minipage}
\hfill
\begin{minipage}[c][5.85cm][c]{.46\textwidth}
\centering
\begin{tikzpicture}[line cap=round,line join=round]
  \node[lc state] at (1.12,3.16) {(b) Undo $1234$};
  \lcfanbase{1}{2}{3}{4}{0}
  \node[lc panelname] at (.76,1.52) {$123$};
  \node[lc panelname] at (1.70,0) {$134$};
  \node[lc panelname] at (.76,-1.52) {$014$};
  \foreach \qone in {0,...,10}{\foreach \qtwo in {0,...,10}{%
    \pgfmathtruncatemacro{\qthree}{10-\qone-\qtwo}
    \pgfmathtruncatemacro{\hit}{(\qthree>=0)&&(\qone>=3)&&(\qtwo<=2)&&(\qthree>=3)}
    \ifnum\hit=1 \lcfirstdot{\qone}{\qtwo}{\qthree}{lc interim}\fi
  }}
  \foreach \qone in {0,...,10}{\foreach \qthree in {0,...,10}{%
    \pgfmathtruncatemacro{\qfour}{10-\qone-\qthree}
    \pgfmathtruncatemacro{\hit}{(\qfour>=0)&&(\qone>=3)&&(\qthree>=3)&&(\qfour<=2)}
    \ifnum\hit=1 \lcmiddledot{\qone}{\qthree}{\qfour}{lc interim}\fi
  }}
  \foreach \qone in {3,...,7}{%
    \pgfmathtruncatemacro{\qthree}{10-\qone}
    \lcmiddledot{\qone}{\qthree}{0}{lc commontrace}
  }
  \node[font=\scriptsize,text=black!62,align=center]
    (tracelabel) at (-.78,-.48) {common trace\\on $13$};
  \draw[draw=black!48,-{Latex[length=1.5mm]},line width=.55pt]
    (tracelabel.east) to[out=8,in=200] (1.06,.61);
\end{tikzpicture}
\end{minipage}

\vspace{2mm}

\begin{minipage}[c][5.85cm][c]{.46\textwidth}
\centering
\begin{tikzpicture}[line cap=round,line join=round]
  \node[lc state] at (1.12,3.52) {(c) Undo $0134$};
  \node[font=\scriptsize,text=black!64,align=center] at (1.12,3.08)
    {$134\cup_{14}014\Longrightarrow013\cup_{03}034$\\
     $123$ unchanged};
  \lcfanbase{3}{2}{1}{0}{4}
  \node[lc panelname] at (.76,1.52) {$123$};
  \node[lc panelname] at (1.70,0) {$013$};
  \node[lc panelname] at (.76,-1.52) {$034$};
  \foreach \qthree in {0,...,10}{\foreach \qtwo in {0,...,10}{%
    \pgfmathtruncatemacro{\qone}{10-\qthree-\qtwo}
    \pgfmathtruncatemacro{\hit}{(\qone>=0)&&(\qone>=3)&&(\qtwo<=2)&&(\qthree>=3)}
    \ifnum\hit=1 \lcfirstdot{\qthree}{\qtwo}{\qone}{lc interim}\fi
  }}
  \foreach \qthree in {0,...,10}{\foreach \qone in {0,...,10}{%
    \pgfmathtruncatemacro{\qzero}{10-\qthree-\qone}
    \pgfmathtruncatemacro{\hit}{(\qzero>=1)&&(\qthree>=3)}
    \ifnum\hit=1 \lcmiddledot{\qthree}{\qone}{\qzero}{lc interim}\fi
  }}
  \foreach \qthree in {0,...,10}{\foreach \qzero in {0,...,10}{%
    \pgfmathtruncatemacro{\qfour}{10-\qthree-\qzero}
    \pgfmathtruncatemacro{\hit}{(\qfour>=0)&&(\qzero>=1)&&(\qthree>=3)&&(\qfour<=2)}
    \ifnum\hit=1 \lcthirddot{\qthree}{\qzero}{\qfour}{lc interim}\fi
  }}
\end{tikzpicture}
\end{minipage}
\hfill
\begin{minipage}[c][5.85cm][c]{.46\textwidth}
\centering
\begin{tikzpicture}[line cap=round,line join=round]
  \node[lc state] at (1.12,3.52) {(d) Coupled inverse $0123$ sweep};
  \node[font=\small,align=center,text=black!68] at (1.12,3.05)
    {$123\cup_{13}013\ \Longrightarrow\ 012\cup_{02}023$};
  \coordinate (C)  at (0,0);
  \coordinate (O1) at (0,2.6);
  \coordinate (O2) at (2.251666,1.3);
  \coordinate (O3) at (2.251666,-1.3);
  \lctrigrid{C}{O1}{O2}\lctrigrid{C}{O2}{O3}
  \draw[lc boundary] (C)--(O1)--(O2)--(O3)--cycle;
  \draw[lc seam] (C)--(O2);
  \node[lc vertexname,left]  at (C)  {$0$};
  \node[lc vertexname,above] at (O1) {$1$};
  \node[lc vertexname,right] at (O2) {$2$};
  \node[lc vertexname,right] at (O3) {$3$};
  \node[lc panelname] at (.76,1.52) {$012$};
  \node[lc panelname] at (1.70,0) {$023$};
  \foreach \qzero in {0,...,10}{\foreach \qone in {0,...,10}{%
    \pgfmathtruncatemacro{\qtwo}{10-\qzero-\qone}
    \pgfmathtruncatemacro{\hit}{(\qtwo>=0)&&(\qzero>=1)&&(\qone<=1)&&(\qtwo>=4)}
    \ifnum\hit=1 \lcfirstdot{\qzero}{\qone}{\qtwo}{lc front02}\fi
  }}
  \foreach \qzero in {0,...,10}{\foreach \qtwo in {0,...,10}{%
    \pgfmathtruncatemacro{\qthree}{10-\qzero-\qtwo}
    \pgfmathtruncatemacro{\ha}{(\qthree>=0)&&(\qzero>=1)&&(\qtwo>=4)}
    \pgfmathtruncatemacro{\hb}{(\qthree>=0)&&(\qzero>=1)&&(\qthree>=3)}
    \ifnum\ha=1 \lcmiddledot{\qzero}{\qtwo}{\qthree}{lc front02}\fi
    \ifnum\hb=1 \lcmiddledot{\qzero}{\qtwo}{\qthree}{lc front03}\fi
    \pgfmathtruncatemacro{\hab}{\ha*\hb}
    \ifnum\hab=1 \lcmiddledot{\qzero}{\qtwo}{\qthree}{lc overlap}\fi
  }}
  \node[font=\scriptsize,text=black!62,align=center] at (.72,-1.72)
    {the $034$ panel is unchanged};
\end{tikzpicture}
\end{minipage}

\vspace{0mm}

\begin{minipage}[c][5.85cm][c]{.58\textwidth}
\centering
\begin{tikzpicture}[line cap=round,line join=round]
  \node[lc state] at (1.12,3.16) {(e) Initial-fan support\\[-1pt]
    $S_p=S_p^{02}\cup S_p^{03}$};
  \lcfanbase{0}{1}{2}{3}{4}
  \node[lc panelname] at (.76,1.52) {$012$};
  \node[lc panelname] at (1.70,0) {$023$};
  \node[lc panelname] at (.76,-1.52) {$034$};
  \foreach \qzero in {0,...,10}{\foreach \qone in {0,...,10}{%
    \pgfmathtruncatemacro{\qtwo}{10-\qzero-\qone}
    \pgfmathtruncatemacro{\hit}{(\qtwo>=0)&&(\qzero>=1)&&(\qone<=1)&&(\qtwo>=4)}
    \ifnum\hit=1 \lcfirstdot{\qzero}{\qone}{\qtwo}{lc front02}\fi
  }}
  \foreach \qzero in {0,...,10}{\foreach \qtwo in {0,...,10}{%
    \pgfmathtruncatemacro{\qthree}{10-\qzero-\qtwo}
    \pgfmathtruncatemacro{\ha}{(\qthree>=0)&&(\qzero>=1)&&(\qtwo>=4)}
    \pgfmathtruncatemacro{\hb}{(\qthree>=0)&&(\qzero>=1)&&(\qthree>=3)}
    \ifnum\ha=1 \lcmiddledot{\qzero}{\qtwo}{\qthree}{lc front02}\fi
    \ifnum\hb=1 \lcmiddledot{\qzero}{\qtwo}{\qthree}{lc front03}\fi
    \pgfmathtruncatemacro{\hab}{\ha*\hb}
    \ifnum\hab=1 \lcmiddledot{\qzero}{\qtwo}{\qthree}{lc overlap}\fi
  }}
  \foreach \qzero in {0,...,10}{\foreach \qthree in {0,...,10}{%
    \pgfmathtruncatemacro{\qfour}{10-\qzero-\qthree}
    \pgfmathtruncatemacro{\hit}{(\qfour>=0)&&(\qzero>=1)&&(\qthree>=3)&&(\qfour<=2)}
    \ifnum\hit=1 \lcthirddot{\qzero}{\qthree}{\qfour}{lc front03}\fi
  }}
\end{tikzpicture}
\end{minipage}

\vspace{3mm}

\begin{tikzpicture}[baseline=-.5ex]
  \node[lc interim,label={[font=\scriptsize]right:intermediate total support}]
    at (0,0) {};
  \node[lc front02,label={[font=\scriptsize]right:$S_p^{02}$}]
    at (3.9,0) {};
  \node[lc front03,label={[font=\scriptsize]right:$S_p^{03}$}]
    at (6.1,0) {};
  \node[lc overlap,label={[font=\scriptsize]right:Boolean overlap}]
    at (8.3,0) {};
\end{tikzpicture}
\vspace{-3pt}
\caption{Backward support masks for \(N=10\) and
\(p=(2,3,2,3)_{1234}\), read in reverse sweep order.  Neutral markers
in \textup{(b)}--\textup{(c)} show total intermediate support.
The transition \textup{(b)}--\textup{(c)} is the inverse \(0134\)
sweep whose Boolean verification is given in the middle reverse-sweep
part of the proof of Theorem~\ref{thm:light-cone}.
The moving parts in panels \textup{(c)} and \textup{(d)} lie on the
endpoint membranes \(123\cup_{13}013\) and
\(012\cup_{02}023\); \(034\) is unchanged.
Figure~\ref{fig:step-I-c-to-d-excavation-square} gives the
max--minus--old calculation connecting them.  Amber and blue in
\textup{(d)}--\textup{(e)} are the two clauses of one Boolean support;
dark hexagons mark a common seam trace, and purple diamonds an overlap
of value \(1\), not \(2\).  The exact final inequalities are
\eqref{eq:Sp-formula}, equivalently
Proposition~\ref{prop:refracted-lobes}.}
\label{fig:step-I-backward-light-cone}
\end{figure}

\begin{figure}[p!]
\centering
\input{backward-light-cone-c-to-d-detail-tikz-input.tex}
\caption{Excavation connecting the exact reverse-field masks in
Figure~\ref{fig:step-I-backward-light-cone}\textup{(c)}--\textup{(d)};
the unchanged \(034\) panel is suppressed.  \textup{(a)} The incoming
mask on \(123\cup_{13}013\).  \textup{(b)} Its affine development in the
NST square, with no change of values.  The scalar circuit applies
\(T=\max\{N+S,E+W\}-O\) in excavation order; the central cards show
deletion of a weak arrival and reduction of two arrivals to one.
\textup{(c)} The bottom mask, cut along \(I+J=10\).  The last
reverse-sweep part of the proof of Theorem~\ref{thm:light-cone} proves
the corresponding general-\(N\) formula on the tetrahedron \(0123\).
\textup{(d)} Inverse development gives Figure~\ref{fig:step-I-backward-light-cone}
\textup{(d)}.  The calculation acts on the whole mask, not pointwise on
its markers.}
\label{fig:step-I-c-to-d-excavation-square}
\end{figure}
\clearpage

Figure~\ref{fig:step-I-c-to-d-excavation-square} isolates the only
coupled operation hidden between
Figure~\ref{fig:step-I-backward-light-cone}\textup{(c)} and
Figure~\ref{fig:step-I-backward-light-cone}\textup{(d)}.
The starting object in panel~\textup{(a)} is the single binary reverse
field \(\delta_p^{(c)}\) on \(123\cup_{13}013\), not a collection of
independent sources.  A neutral marker means \(\delta_p^{(c)}(r)=1\),
and an unmarked lattice point means \(\delta_p^{(c)}(r)=0\).  Its
general-\(N\) formula is
\eqref{eq:incoming-square-field} below.  Panel~\textup{(b)} merely
develops this marked rhombus affinely into a square.  In its coordinates
the incoming panels are
\[
 \begin{array}{ll}
  123:\ I\leq J,
    &(q_0,q_1,q_2,q_3)=(0,N-J,J-I,I),\\
  013:\ J\leq I,
    &(q_0,q_1,q_2,q_3)=(I-J,N-I,0,J),
 \end{array}
\]
and their single identified seam is \(I=J\).  The outgoing development is
\[
 \begin{array}{ll}
  012:\ I+J\leq N,
    &(q_0,q_1,q_2,q_3)=(I,N-I-J,J,0),\\
  023:\ I+J\geq N,
    &(q_0,q_1,q_2,q_3)=(N-J,0,N-I,I+J-N),
 \end{array}
\]
with output seam \(I+J=N\).  Seam markers drawn only once in
Figure~\ref{fig:step-I-backward-light-cone}\textup{(c)} must not be
replaced by zero on a second copy when the two incoming panels are
developed into the square.

\paragraph{The general-\(N\) incoming field.}
Write \(Q_N:=\{0,1,\ldots,N\}^2\).  For
\(p=(0,a,b,c,d)\), the incoming membrane
\(123\cup_{13}013\) is cut along \(I=J\), and its binary reverse field
is given exactly by
\begin{equation}
\delta_p^{(c)}(I,J)=1
\quad\Longleftrightarrow\quad
\begin{cases}
I<J,\quad I\geq c+1,\quad J\leq N-a-1,\quad J-I\leq b-1,\\
I=J,\quad c+1\leq I\leq N-a-1,\\
I>J,\quad J\geq c+1.
\end{cases}
\label{eq:incoming-square-field}
\end{equation}
At every other site of \(Q_N\), the field has value \(0\).  Equivalently,
in panel coordinates its support is
\[
\begin{array}{ll}
123:&q_1\geq a+1,\quad q_2\leq b-1,\quad q_3\geq c+1,\\
013:&q_3\geq c+1,\quad
      (q_0\geq1\ \text{or}\ q_1\geq a+1).
\end{array}
\]
The middle reverse-sweep part of the proof of
Theorem~\ref{thm:light-cone} proves the required Boolean formula
pointwise on the tetrahedron \(0134\).

\paragraph{The column hidden by the square development.}
The precise relation between panels~\textup{(b)} and~\textup{(c)} of
Figure~\ref{fig:step-I-c-to-d-excavation-square} is most transparent
if one restores the coordinate suppressed by the planar development.
For each square site \((I,J)\), put
\begin{equation}
 \xi_t(I,J):=(I-t,\,N-I-J+t,\,J-t,\,t),
 \qquad
 t_-(I,J)\leq t\leq t_+(I,J),
 \label{eq:excavation-column}
\end{equation}
where
\[
 t_-(I,J):=\max\{0,I+J-N\},
 \qquad
 t_+(I,J):=\min\{I,J\}.
\]
Thus a square site represents an entire finite octahedral column, not
a single tetrahedral vertex.  Substitution of \(t=t_+\) gives exactly
the two incoming coordinate formulas displayed above, on
\(123\cup_{13}013\); substitution of \(t=t_-\) gives the two output
formulas, on \(012\cup_{02}023\).  Thus the sites displayed in
panel~\textup{(b)} occupy the top endpoints \(\xi_{t_+}(I,J)\), whereas
the sites displayed in panel~\textup{(c)} occupy the bottom endpoints
\(\xi_{t_-}(I,J)\).

Write \(\delta_t(I,J):=\delta_p(\xi_t(I,J))\) when the corresponding
vertex lies on the current reverse membrane.  Its top values are the
binary field in
panel~\textup{(b)}, and it is filled from \(t_+\) down to \(t_-\).  At
every interior exchange impose the tropical octahedron recurrence
\begin{equation}
 \begin{split}
 \delta_{t-1}(I,J)=\max\bigl\{&
 \delta_{t-1}(I,J-1)+\delta_t(I,J+1),\\
 &\delta_{t-1}(I-1,J)+\delta_t(I+1,J)\bigr\}
 -\delta_t(I,J).
 \end{split}
 \label{eq:excavation-column-recurrence}
\end{equation}
For a fixed new layer \(t-1\), the sites are exposed in increasing
\(I+J\), so the two terms already carrying the subscript \(t-1\) are
known when the displayed value is computed; the remaining two side
values and the old opposite value come from layer \(t\).  Columns with
\(t_+=t_-\) lie on the boundary of the square and remain unchanged.
Consequently, panel~\textup{(c)} is not obtained by moving or recoloring
the markers of panel~\textup{(b)}: the max--minus--old operation acts on
the entire reverse field.  Equation~\eqref{eq:excavation-column-recurrence}
is legitimate because the terminal seed containing \(X_p\) remains
fixed throughout the reverse circuit.  The two Boolean formulas in the
proof below solve it pointwise on \(0134\) and \(0123\).  The formulas
below specialize those fields to \(N=10\).

For the running example \(N=10\), panel~\textup{(b)} is therefore the
indicator of
\[
 \begin{split}
 &\{I<J:\ I\geq3,\ J\leq7,\ J-I\leq2\}
 \ \cup\ \{I=J:\ 3\leq I\leq7\}\\
 &\hspace{38mm}\cup\ \{I>J:\ J\geq3\}.
 \end{split}
\]
In particular, the top of a column does \emph{not} always have value
\(1\): it starts with \(1\) exactly at a neutral marker in
panel~\textup{(b)}, and with \(0\) at every unmarked square site.
After the coupled sweep, panel~\textup{(c)} is the indicator of
\[
 \begin{array}{ll}
  012:\ I+J\leq10,
   &I\geq1,\quad J\geq4,\quad I+J\geq9,\\
  023:\ I+J\geq10,
   &J\leq9,\quad\bigl(I\leq6\ \text{or}\ I+J\geq13\bigr).
 \end{array}
\]
The amber and blue conditions on \(023\) are the two alternatives in
the last parenthesis.  Their overlap still has reverse-field value one,
which is why it is marked by a single purple diamond.
For example, three complete columns of the running example have the
value strings
\[
 \begin{aligned}
  (I,J)=(9,9):&\quad
      (\delta_9,\delta_8)=(0,1),\\
  (I,J)=(7,5):&\quad
      (\delta_5,\delta_4,\delta_3,\delta_2)
       =(1,1,1,0),\\
  (I,J)=(6,8):&\quad
      (\delta_6,\delta_5,\delta_4)=(0,1,1).
 \end{aligned}
\]
They exhibit, respectively, creation of a new value, removal by the
subtracted old value, and two arrivals reduced by that subtraction to
the binary value \(1\).

For an interior output point \(q=(q_0,q_1,q_2,q_3)\), and an input
square site \((I,J)\), put
\[
 \widehat I=I-q_3,\qquad \widehat J=N-J-q_1,\qquad
 n_q=N-q_1-q_3=q_0+q_2,\qquad s_q=q_0.
\]
Its closed input dependency region is the NST excavation hexagon
\[
 \mathsf H_q
 =\{(I,J):0\leq\widehat I,\widehat J\leq n_q,\quad
              s_q\leq\widehat I+\widehat J\leq n_q+s_q\}.
\]
This hexagon is a dependency region, not a region whose marked sites are
simply unioned.  Its octahedra are processed in excavation order, and at
every local step
\[
 \delta_{\rm new}
 =\max\{\delta_{\rm N}+\delta_{\rm S},
         \delta_{\rm E}+\delta_{\rm W}\}
   -\delta_{\rm O}.
\]
The three examples in the figure separate the possible effects.  At
\(q_C=(5,0,3,2)_{023}\), the four side values are \(0,1,1,0\) and the
old value is \(1\), so the winning pre-division value \(1\) is removed:
\[
                         \max\{0+1,1+0\}-1=0.
\]
At the blue-only point \(q_B=(1,0,1,8)_{023}\), the same two pair sums
occur but the old value is \(0\), and hence the output is \(1\).  At the
overlap point \(q_{\cap}=(2,0,4,4)_{023}\), all four side values and the
old value are \(1\), whence
\[
                         \max\{1+1,1+1\}-1=1,
\]
not \(2\).  These are the reverse-field values present when the indicated
site is exposed; a later stencil may use outputs computed in earlier
excavation layers.  Applying this operation at every output site fills
the reverse field.  Its bottom mask is the mask of
\(\delta_p^{(d)}\) on \(012\cup_{02}023\), while \(034\) is
unchanged.  This is the precise content of the optical metaphor:
subtracting the old opposite value removes duplicated tropical
contributions.  It is neither pointwise motion of the marked sites nor
cancellation between Laurent monomials.

The elementary building block is a single tetrahedral sweep.  Its
closed dependency region is the excavation hexagon of
Narayanan--Sheffield--Tao
\cite[Definition~3, Theorem~4, and Appendix~A]{NSTGUE}, and Speyer's
matching formula expresses the corresponding recurrence variable as a
positive sum over the tilings of that hexagon
\cite[Main Theorem]{Speyer}.  To pass from this closed dependency
region to the exact denominator support, we shall use the following
elementary completion observation.

\begin{lemma}[Alternating-star completion]
\label{lem:excavation-star-completion}
Let \(0<s<n\), and let \(\mathsf H_{(s,s)}^n\) be the NST excavation
hexagon centred at the diagonal point \((s,s)\), with its two halves
glued along the equator.  At every lattice vertex \(q\) in the
lattice-strict interior of \(\mathsf H_{(s,s)}^n\), there is a
lozenge--border triangle tiling whose dual perfect matching uses a
maximal alternating set of edges around the face corresponding to
\(q\): three edges when \(q\) is off the equator, and two edges when
\(q\) lies on the equator.
\end{lemma}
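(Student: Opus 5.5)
We use the standard dictionary between perfect matchings of the dual graph of the excavation hexagon and lozenge--border-triangle tilings, together with the face-rotation (Propp) lattice structure. Around the face \(F_q\) dual to a lattice vertex \(q\), the edges of the dual graph are those crossing the elementary lattice edges incident to \(q\). Off the equator \(q\) has six incident edges, so \(\partial F_q\) is a hexagon. On the equator the two halves are glued and the face has four boundary edges. A matching using a maximal alternating set of edges of \(\partial F_q\) (three, respectively two) is exactly one in which \(F_q\) is rotatable. In tiling language, \(q\) is the centre of a unit hexagon covered by three lozenges. On the equator, \(q\) is the common vertex of a configuration of two lozenges or border triangles that admits the local flip across the seam.

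We construct the tiling explicitly rather than invoking the lattice abstractly. Fix \(q\) in the lattice-strict interior. First place the local rotatable configuration around \(q\): the three lozenges of a unit hexagon, or the equatorial two-cell configuration. Then extend it to all of \(\mathsf H^n_{(s,s)}\).

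For the extension we would use heights. A tiling corresponds to a height function on the lattice vertices with prescribed boundary values and the local Lipschitz (lozenge) constraints. The rotatable configuration at \(q\) prescribes the heights at \(q\) and its six (or four) neighbours, namely a local cube corner. Extension then reduces to the existence of an admissible height function with these prescribed values at interior points. By the standard min/max envelope argument, such a function exists if and only if the prescribed values are compatible with the boundary data, that is, lie between the minimal and maximal admissible heights at those points. So the key inequality is
\[
 h_{\min}(x)+\text{slack}\le h(x)\le h_{\max}(x)
\]
for the local configuration. Because \(q\) lies in the strict interior, the gap \(h_{\max}-h_{\min}\) is at least one unit at \(q\) and at its neighbours. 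This lets us choose the local cube either "up" or "down" consistently.

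Equivalently, we can take the bottom matching and apply a monotone sequence of face rotations until \(F_q\) becomes rotatable. This is possible because the heights at \(F_q\) in the top and bottom matchings differ, which follows from strict interiority, and along any maximal chain face \(F_q\) must flip at some step. At the moment just before that flip, the matching is alternating around \(F_q\). This chain argument is the cleanest route, and we would present it as the main argument: the distributive lattice structure gives the chain, and the only thing to check is \(H_{\max}(F_q)>H_{\min}(F_q)\).

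The main obstacle is the equator, where the two halves of the hexagon are glued with border triangles. There we must verify that the face has exactly four boundary edges and that the lattice-theoretic flip there is the two-edge rotation. We must also show that the top and bottom heights differ at equatorial strict-interior points. The step \(0<s<n\) is needed for this, since it ensures both halves are nondegenerate. We would check this by computing the extremal heights explicitly as directed boundary envelopes (distance functions to the two boundary arcs) and comparing them at the equator.
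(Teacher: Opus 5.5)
Your main argument has the same skeleton as the paper's. You run a saturated chain from the bottom to the top tiling, note that the face at \(q\) must be flipped somewhere along it, and read off the alternating configuration just before that flip. The paper, however, realizes the lattice concretely as the order ideals of the poset of unit thresholds of the two coupled Gelfand--Tsetlin patterns, with the equatorial rows identified. That construction supplies the two inputs your proposal takes for granted or defers:
\begin{enumerate}
\item Each cover adds one threshold. Adding a threshold is, by construction, a single hexagon flip at its projected vertex or a single two-edge flip at an equatorial rhombus.
\item The row computation shows that the non-forced thresholds in row \(I=m\) project exactly onto \(\max\{1,s+1-m\}\le J\le\min\{n-1,n+s-1-m\}\). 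This is the explicit proof of your inequality \(H_{\max}(F_q)>H_{\min}(F_q)\), which you only promise to check.
\end{enumerate}

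The first input is not automatic from the theorem you cite, so the claim that the height gap is the only thing to check is too strong. In Propp's theorem for a general plane bipartite graph, the covering rotations are taken around faces of the graph obtained after discarding edges lying in no, or in every, perfect matching. A cover raising the height at \(F_q\) could therefore rotate a larger merged cycle, and the matching just before it would not be alternating around \(F_q\) itself. The form in which covers are rotations at the original faces, as in \cite[Theorem~2.6]{MMSBV}, needs every edge of \(\partial F_q\) to lie in a perfect matching. This paper invokes that form only after proving well-matchedness, and for \(\partial F_q\) it is essentially the lemma's own conclusion.

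You can close this gap in either of two ways.
\begin{itemize}
\item Use the height-function fact that every cover changes a single vertex height by \(3\). This is standard for lozenge tilings, but it needs a separate check for the equatorial border-triangle move.
\item Repair your third paragraph. As stated, its criterion is false on two counts. Values prescribed at several points, each lying between \(h_{\min}\) and \(h_{\max}\), need not extend, since they must also be mutually directed-Lipschitz compatible. Also, the gap is zero, not positive, at neighbours of \(q\) lying on the boundary.
\end{itemize}
For the repair, prescribe only the single value \(h_{\min}(q)+3\) at \(q\); this is admissible once \(h_{\max}(q)\ge h_{\min}(q)+3\). Then take the lower directed envelope. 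The envelope inequalities, together with the exact adjacent distances, force every positive edge leaving \(q\) to be selected and every positive edge entering \(q\) to be unselected. Lowering \(q\) by \(3\) is then legal, so the face at \(q\) is rotatable, and no lattice theorem is needed. This is the mechanism of Proposition~\ref{prop:ordinary-edge-allowed}.

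On either route, the substantive step you still owe is the explicit extremal-height computation. It must show the gap at every strict-interior vertex, including those on the equator.
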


\begin{figure}[htbp]
\centering
\includegraphics[width=0.50\textwidth]{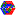}
\caption{An NST excavation-region tiling of \(\mathsf H_{(5,5)}^{10}\) for \(N=10\), in the coloring
convention of \cite[Figure~14, p.~1144]{NSTGUE}.  The horizontal
equator glues the two half-regions.  Reading the positions of the green
lozenges row by row in the two halves gives the pair of coupled
Gelfand--Tsetlin patterns used below; the white singleton triangles
along the equator are the border triangles.}
\label{fig:nst-excavation-n10}
\end{figure}
\newpage

\begin{proof}
Record, row by row in each half of \(\mathsf H_{(s,s)}^n\), the positions of
the green lozenges.  Figure~\ref{fig:nst-excavation-n10} displays the
resulting two-half geometry for \(N=10\); the corresponding published
NST picture is \cite[Figure~14, p.~1144]{NSTGUE}.  These positions form two integral
Gelfand--Tsetlin patterns; this is the usual particle encoding of
lozenge tilings by interlacing arrays, as reviewed in
\cite[Lecture~20]{GorinLozengeTilings}.  Measure the positions in the second half
from the reflected endpoint of the equator and replace particles by
holes.  With this convention both arrays obey the usual interlacing
inequalities, and the rule that exactly one of the two triangles facing
each equatorial edge is a border triangle says precisely that their
equatorial rows agree.  Conversely, such a coupled pair determines the
tiling: between two consecutive green rows, the remaining elementary
triangles occur in alternating intervals and are therefore paired
uniquely in their red and blue directions; the common row determines
the complementary border triangles.

There is a useful order-ideal formulation of this standard particle
encoding.  Replace every bounded integral GT entry \(z\) by its unit
thresholds \(\mathbf 1_{\{z\geq t\}}\).  The interlacing inequalities
become implications among these thresholds, and equality of the two
equatorial rows identifies the corresponding thresholds.  Hence the
coupled GT patterns are precisely the order ideals of a finite poset.
The two extremal, all-left and all-right patterns are its empty and full
ideals, after thresholds forced by the boundary have been omitted.

Here is the coordinate check that no interior face is missed.  In the
square coordinates \((I,J)\) of \cite[Definition~3]{NSTGUE},
\[
 \mathsf H_{(s,s)}^n
 =\{0\leq I,J\leq n:\ s\leq I+J\leq n+s\},
 \qquad \text{equator } I+J=n.
\]
The non-forced threshold elements are the unit cubes between the
all-left and all-right stepped surfaces.  In each row
\(I=m\), \(1\leq m\leq n-1\), their cube-diagonal projections occupy,
possibly with multiplicity, exactly the consecutive positions
\[
 \max\{1,s+1-m\}\leq J\leq
 \min\{n-1,n+s-1-m\}.
\]
This is the usual row-by-row GT construction: the two endpoints are
the lower and upper boundary values of the corresponding particle,
and each integral position strictly between them is one of its unit
thresholds.  As \(m\) varies, the displayed inequalities say precisely
\[
 1\leq I,J\leq n-1,
 \qquad s+1\leq I+J\leq n+s-1,
\]
which is the lattice-strict interior of the hexagon.  On the row
\(I+J=n\), the identified thresholds of the two patterns give the
same conclusion for the equatorial rhombi.  Projection need not be
one-to-one, but every strict-interior vertex has at least one threshold
above it.

Adding an off-equator threshold is the elementary hexagon flip at its
projected vertex; adding an identified equatorial threshold is the
two-edge flip at the corresponding rhombus.  Fix a threshold above
\(q\), choose a linear extension of the poset, and follow the associated
saturated chain of order ideals from the all-left to the all-right
pattern.  When this threshold is inserted, the tilings immediately
before and after that step display the two maximal alternating
configurations at \(q\).  The same row calculation includes the
width-one cases: if the displayed interval is empty, that row simply
contains no strict-interior site.
\end{proof}

\begin{lemma}[Single-sweep compatible-extension formula]
\label{lem:single-sweep-refraction}
Let \(i<j<k<\ell\), and sweep the tetrahedron \(ijk\ell\) from the
input panels \(ijk\cup ik\ell\).  Let
\(r\in\Delta_{ijk\ell}(N)\) satisfy \(r_j,r_\ell\geq1\).  Relative to
this input membrane, the backward denominator field of \(r\) is binary
and has support
\begin{align}
 \mathcal C_r\cap\Delta_{ijk}
 &=\{q:q_i\geq r_i+1,\ q_j\leq r_j-1,\ q_k\geq r_k+1\},
 \notag\\
 \mathcal C_r\cap\Delta_{ik\ell}
 &=\{q:q_i\geq r_i+1,\ q_k\geq r_k+1,\ q_\ell\leq r_\ell-1\}.
 \label{eq:single-sweep-refraction}
\end{align}
Both pieces restrict on the seam \(ik\) to
\[
                  q_i\geq r_i+1,\qquad q_k\geq r_k+1.
\]
In other words, the two panel formulas have the same tangential
thresholds and the same seam trace; only the side-dependent transverse
cap changes from \(q_j\) to \(q_\ell\).
\end{lemma}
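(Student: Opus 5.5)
The plan is to read the backward denominator field off Speyer's single-tetrahedron matching formula \cite[Main Theorem]{Speyer}, using the excavation-hexagon geometry of \cite{NSTGUE} and Lemma~\ref{lem:excavation-star-completion}. Relabel so that the sweep of \(ijk\ell\) from \(ijk\cup ik\ell\) is an NST excavation. The input seam \(ik\) is then the equator of the square picture. The dependency region of \(r\) is the hexagon \(\mathsf H_r\), with parameters \(n_r=r_j+r_\ell\) and \(s_r\) computed as in the displayed formulas for \(\widehat I,\widehat J\) (with \(i,j,k,\ell\) in place of \(0,1,2,3\)).

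Speyer's formula writes \(X_r\) as a positive sum over perfect matchings \(M\) of the dual graph of \(\mathsf H_r\). Each input vertex \(q\) enters the term of \(M\) with exponent \(w_q-a_q(M)-1\), where \(a_q(M)\) is the number of matched edges on \(\partial F_q\) and \(w_q\) is half the length of that boundary. Vertices outside \(\mathsf H_r\) do not occur. I will check that on the boundary of \(\mathsf H_r\) the exponent is nonnegative for all \(M\).

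Because all coefficients are positive, no cancellation occurs. Hence the reduced denominator exponent \(D_r(q)\) equals \(\max_M\bigl(a_q(M)+1-w_q\bigr)\), or \(0\) if that maximum is negative.

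The steps, in order, are as follows.

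\emph{(1) Upper bound.} At any face \(q\) we have \(a_q(M)\leq w_q\), since at most every other boundary edge can be matched. This gives \(D_r(q)\leq 1\), so the field is binary.

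\emph{(2) Interior faces.} At a lattice-strict interior vertex of \(\mathsf H_r\) the face is a hexagon (\(w_q=3\)) off the equator, or a seam square (\(w_q=2\)) on the equator. Lemma~\ref{lem:excavation-star-completion} supplies a matching with \(a_q(M)=w_q\). Therefore \(D_r(q)=1\) there.

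\emph{(3) Boundary and exterior faces.} For a vertex on the boundary of \(\mathsf H_r\), the boundary constraints of the tiling force \(a_q(M)\leq w_q-1\) for every \(M\). Equivalently, in Speyer's boundary normalization the exponent is \(\geq0\). So \(D_r(q)=0\) there, and also for \(q\notin\mathsf H_r\). This identifies the positive support of \(q\mapsto D_r(q)\) with the strict interior of \(\mathsf H_r\). Support reciprocity then transfers it to the reverse field \(\delta_r\) of \eqref{eq:reverse-d-field}, which is what Definition~\ref{def:tropical-refraction} calls the light cone.

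\emph{(4) Coordinate translation.} Rewrite the strict-interior inequalities \(1\leq\widehat I,\widehat J\leq n_r-1\) and \(s_r+1\leq\widehat I+\widehat J\leq n_r+s_r-1\) in panel coordinates. On the \(ijk\) half of the square (\(t=t_+\)), the inequalities \(\widehat I\geq1\) and \(\widehat J\geq1\) become the thresholds in \(q_i\) and \(q_k\). The remaining conditions collapse to the cap \(q_j\leq r_j-1\), because \(q_i+q_j+q_k=N\) and \(r_i+r_j+r_k+r_\ell=N\). On the \(ik\ell\) half the same computation yields the cap \(q_\ell\leq r_\ell-1\). At the seam \(q_j=q_\ell=0\) both caps hold automatically, so each piece reduces to \(q_i\geq r_i+1\), \(q_k\geq r_k+1\). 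This gives the common trace in \eqref{eq:single-sweep-refraction}.

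The hypothesis \(r_j,r_\ell\geq1\) is exactly what makes \(0<s_r<n_r\), so that Lemma~\ref{lem:excavation-star-completion} applies. The degenerate thin cases, where a row interval is empty, give an empty support consistently with the formula.

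I expect step (3) to be the main obstacle. It requires matching Speyer's normalization of exponents at boundary faces and at the equatorial border triangles with the convention \(D_{X_q}(q)=-1\), and showing that a boundary face never attains a full alternating set. I would first try the order-ideal description in the proof of Lemma~\ref{lem:excavation-star-completion}: a boundary vertex has no non-forced threshold above it, so its local configuration is the same in every tiling. That matching is a common monomial factor whose exponent at \(q\) is then computed directly from the all-left tiling.
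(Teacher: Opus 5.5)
Your steps (1), (2) and (4) follow the paper's proof. These are: positivity of Speyer's sum in the lozenge--border-triangle form of \cite[Appendix~A]{NSTGUE}; the bound that no exponent is below $-1$; Lemma~\ref{lem:excavation-star-completion} to produce exponent $-1$ at every lattice-strict interior site; and the translation $x=q_i-r_i$, $y=q_k-r_k$, $n'=r_j+r_\ell$. In that translation $r_j,r_\ell\geq1$ puts the centre $(r_\ell,r_\ell)$ strictly inside. A literal substitution of $i,j,k,\ell$ for $0,1,2,3$ in the displayed $n_q,s_q$ would give $r_i+r_k$, so use the translation directly.

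The gap is step~(3), which your proposed argument does not close, for two reasons. First, with $w_q$ defined as half the retained boundary length, the inequality $a_q(M)\leq w_q-1$ is false at a boundary face. At a vertex in the relative interior of a straight side of $\mathsf H_r$, the retained dual edges form a path of two edges through the middle triangle. So $w_q=1$, while matchings using one of those edges generally exist. The interior rule $w_q-a_q(M)-1$ does not extend to open faces, so ``exponent $\geq0$'' is not equivalent to your inequality. Second, the order-ideal fallback is wrong. A boundary vertex has no threshold above it, so its \emph{height} is the same in every tiling, but its local \emph{configuration} is not. A hexagon flip at an adjacent strict-interior vertex $u$ toggles whether the lattice edge $[q,u]$ is selected. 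Hence the power of $X_q$ varies from term to term, and there is no common monomial factor to read off from the all-left tiling.

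What closes the case in the paper is the explicit boundary rule of Speyer's formula as recorded in \cite[Appendix~A]{NSTGUE}; this is the external-face rule of \cite{KTWOR}. The exponent of $X_q$ is $1-o_q(M)$ at a non-equatorial boundary face, and $0$ at the two exterior equatorial rhombi. At a non-equatorial boundary face, the retained incident dual edges number at most two and share a vertex, so $o_q(M)\leq1$. Hence every boundary exponent is $\geq0$ term by term, with no invariance of the local configuration needed. With this replacement for step~(3), your argument gives the binary support and the common seam trace as in the paper.
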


\begin{proof}
Translate the backward light cone by putting
\[
 x=q_i-r_i,\qquad y=q_k-r_k,
 \qquad n'=N-r_i-r_k=r_j+r_\ell.
\]
In the tetrahedral coordinates of Appendix~A of~\cite{NSTGUE}, this is
the excavation of the smaller tetrahedron of size \(n'\) from the
target
\[
                         [0,0,r_j,r_\ell].
\]
The associated point in the developed square is
\(v=(r_\ell,r_\ell)\).  Since \(r_j,r_\ell\geq1\), it is an interior
point, including when either transverse width is one.  Its excavation
hexagon has vertices
\[
 (0,n'),\ (0,r_\ell),\ (r_\ell,0),\ (n',0),\
 (n',r_\ell),\ (r_\ell,n'),
\]
and its equator is the common copy of the seam \(ik\).

Speyer's matching formula, in the lozenge--border triangle form of
\cite[Appendix~A]{NSTGUE}, is a positive Laurent sum over tilings of
this hexagon.  If \(o_q(M)\) denotes the number of matching edges
incident to the face indexed by an input site \(q\), the exponent of
that input variable in the term indexed by \(M\) is
\begin{align*}
 c_q(M)&=2-o_q(M)
   &&\text{for a non-equatorial interior hexagon},\\
 c_q(M)&=1-o_q(M)
   &&\text{for an equatorial interior rhombus}.
\end{align*}
For a non-equatorial boundary face it is \(1-o_q(M)\), and for either
exterior equatorial rhombus it is zero.  A matching uses at most three,
two, and one incident edges in these respective cases.  Consequently
no exponent is below \(-1\), and a boundary variable never has a
negative exponent.

By Lemma~\ref{lem:excavation-star-completion}, at every
lattice-strict interior site some tiling uses three alternating
incident edges off the equator, or two on it.  Its exponent is therefore
\(-1\).  Positivity of Speyer's sum prevents cancellation, so the
denominator coordinate is exactly \(1\) at every such site and \(0\)
elsewhere.  In the original coordinates, the lattice-strict interior
on the two input panels is precisely
\[
\begin{array}{ll}
q_i\geq r_i+1,\ q_k\geq r_k+1,\ q_j\leq r_j-1,
   &q\in\Delta_{ijk},\\[2pt]
q_i\geq r_i+1,\ q_k\geq r_k+1,\ q_\ell\leq r_\ell-1,
   &q\in\Delta_{ik\ell}.
\end{array}
\]
This proves \eqref{eq:single-sweep-refraction}.  On the seam
\(q_j=q_\ell=0\), the transverse inequalities are automatic on both
sides, leaving the common trace
\(q_i\geq r_i+1\), \(q_k\geq r_k+1\).
\end{proof}

\subsection{The composite support}

We now follow Figure~\ref{fig:step-I-backward-light-cone} in its
displayed order.  The first inverse sweep is the NST excavation
hexagon.  The next two inverse sweeps are encoded by one Boolean
formula on \(0134\) and one on \(0123\), respectively.

\begin{theorem}[Explicit positive support]
\label{thm:light-cone}
Let
\[
 p=(0,a,b,c,d)\in\Delta_{1234}(N),
 \qquad a,b,c,d\geq1,\qquad a+b+c+d=N.
\]
Then
\begin{equation}
                         D_p(q)>0
                 \quad\Longleftrightarrow\quad q\in S_p,
 \label{eq:target-positive-support}
\end{equation}
where \(S_p\) is the three-panel set in \eqref{eq:Sp-formula}.  The
reverse field \(\delta_p\) used to prove this assertion is binary on
each preceding membrane displayed after panel~\textup{(a)} of
Figure~\ref{fig:step-I-backward-light-cone}, and its two panel formulas
agree on every seam.  The assertion here concerns
the positive support of \(D_p\); its numerical binarity is proved after
the matching variable is identified with \(X_p\) in Step~V.
\end{theorem}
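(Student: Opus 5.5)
The plan is to propagate the reverse field \(\delta_p\) backwards through the three sweeps, \(1234\), then \(0134\), then \(0123\), keeping the terminal seed containing \(X_p\) fixed. At each stage I would verify a binary, seam-compatible Boolean formula. Support reciprocity \eqref{eq:positive-d-support-reciprocity} then converts the final positive support of \(\delta_p\) on \(\Sigma_0\) into the positive support of \(D_p\). Each stage is a pointwise check that a candidate indicator solves the max--minus--old recurrence \eqref{eq:tropical-d} column by column, in excavation order. Since the recurrence has a unique solution given the top layer, exhibiting a solution identifies \(\delta_p\).

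\emph{First reverse sweep (undo \(1234\)).} Apply Lemma~\ref{lem:single-sweep-refraction} with \((i,j,k,\ell)=(1,2,3,4)\) and \(r=p\). The hypothesis holds because \(r_2=b\geq1\) and \(r_4=d\geq1\). This gives a binary field on \(123\cup_{13}134\) with support
\begin{itemize}[label=]
\item \(\{q_1\geq a+1,\ q_2\leq b-1,\ q_3\geq c+1\}\) on \(123\),
\item \(\{q_1\geq a+1,\ q_3\geq c+1,\ q_4\leq d-1\}\) on \(134\).
\end{itemize}
Panel \(014\) is untouched and carries \(\delta_p=0\), since these are seed variables other than \(X_p\). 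This is Figure~\ref{fig:step-I-backward-light-cone}(b).

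\emph{Second reverse sweep (undo \(0134\), from \(134\cup_{14}014\) to \(013\cup_{03}034\)).} Panel \(123\) is unchanged, so the task is to show that the output field is the indicator of
\begin{itemize}[label=]
\item \(\{q_3\geq c+1,\ (q_0\geq1\text{ or }q_1\geq a+1)\}\) on \(013\),
\item \(\{q_0\geq1,\ q_3\geq c+1,\ q_4\leq d-1\}\) on \(034\).
\end{itemize}
I would guess the formula for \(\delta_p\) at every interior lattice point of the tetrahedron \(0134\), not only at the output endpoints. The natural candidate is: \(\delta_p(q)=1\) iff \(q_3\geq c+1\), \(q_4\leq d-1\), and (\(q_0\geq1\) or \(q_1\geq a+1\)), with suitable corrections near the faces. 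I would then verify \eqref{eq:tropical-d} on each elementary octahedron by case analysis on which thresholds hold at the six vertices. Because all values are \(0/1\) and the threshold conditions are monotone in the coordinates, only a handful of configurations occur. The expected shape of the verification is the one in the displayed examples: create when the old value is \(0\) and one pair sum is \(1\); annihilate when the old value is \(1\) and both sums are at most \(1\); and reduce \(2-1=1\) at overlaps.

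\emph{Third reverse sweep (undo \(0123\)), the coupled step.} Here both incoming panels carry nonzero data, given by \eqref{eq:incoming-square-field}, and there is no single-target lemma to invoke. I would work in the square coordinates with the columns \(\xi_t(I,J)\) of \eqref{eq:excavation-column}. I would propose the interior formula on \(0123\) as \(\delta_t(I,J)=\mathbf 1\{\text{clause }02\}\vee\mathbf 1\{\text{clause }03\}\), with each clause a threshold set interpolating between the top mask \(t=t_+\) and the target bottom mask at \(t=t_-\). The target bottom mask is \eqref{eq:Sp-formula} restricted to \(012\cup023\): the clause \(q_0\geq1\), \(q_2\geq b+1\), \(q_1\leq a-1\), together with the clause \(q_3\geq c+1\). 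I would verify \eqref{eq:excavation-column-recurrence} pointwise. The case split is by whether the site lies in clause \(02\) only, in clause \(03\) only, in both, or in neither, together with the position of the site relative to the hexagon boundary where \(t_+=t_-\).

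The main obstacle is this last verification. The two clauses interact, so the overlap must come out as \(1\) rather than \(2\), and the removal at the old top values (e.g.\ the column \((7,5)\)) must occur exactly on the boundary of the \(13\)-cap. The first thing I would try is to write each clause as a monotone union of translated single-sweep cones from Lemma~\ref{lem:single-sweep-refraction}, and to show that \(\max\{N+S,E+W\}-O\) commutes with taking the Boolean OR of such cones whenever their overlap is itself an order-convex threshold set. If that fails, I would fall back on direct case analysis. The last step is to check seam compatibility in the sense of Definition~\ref{def:tropical-refraction}: the traces of the two clauses on \(02\) and \(03\) agree with the formulas on the neighbouring panels, including the unchanged panel \(034\). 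Finally, invoke support reciprocity.
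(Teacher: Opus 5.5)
\textbf{There is a genuine gap: the field on the coupled $0123$ sweep is missing.} Your outline has the paper's architecture. But the substance of the theorem is the explicit reverse field on the coupled $0123$ sweep and its verification, and you give no formula for it.

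The shortcut you try first does not work. Max--minus--old does not commute with Boolean OR. Take the local $0/1$ solutions $(\delta_{\mathrm O};\delta_{\mathrm N},\delta_{\mathrm S},\delta_{\mathrm E},\delta_{\mathrm W})=(0;1,0,0,0)$, with output $1$, and $(1;0,0,1,0)$, with output $0$. Their OR is $(1;1,0,1,0)$, whose output is $\max\{1,1\}-1=0\neq 1$. Any restricted commutation statement would need exactly the local case analysis it was meant to avoid, and that analysis requires knowing the interior field. The paper's field is
\[
 \mathsf C_p=AB(C\vee E)\ \vee\ EL(A\vee H),
\]
with $A=\mathbf 1[r_0\ge1]$, $B=\mathbf 1[r_1\le a-1]$, $H=\mathbf 1[r_1\ge a+1]$, $C=\mathbf 1[r_2\ge b+1]$, $L=\mathbf 1[r_2\le b-1]$, and $E=\mathbf 1[r_3\ge c+1]$. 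Its two parts behave differently:
\begin{itemize}
\item the incoming field $EL(A\vee H)$ persists unchanged as a Boolean function on all of $0123$;
\item the $02$-clause $ABC$ vanishes on both incoming faces, so it is generated inside $0123$ by the recurrence rather than carried down from the top mask.
\end{itemize}
The paper verifies the local identity by splitting on the old $13$-vertex relative to the wall $r_3=c+1$: the cases $r_3\le c$, $r_3=c+1$, and $r_3\ge c+2$. In each case the identity fails only at nominal assignments whose coordinate sum is at most $a+b+c<N$, which is where $d\ge1$ enters.

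\textbf{The $0134$ sweep.} Your candidate is exactly the paper's $EL(A\vee H)$, now with $L=\mathbf 1[r_4\le d-1]$. It needs no corrections near the faces. What your sketch does not identify is that the local identity has one nominal failure: an old vertex with $r_0=0$, $r_1\le a$, $r_3=c$, $r_4\le d$. That case is excluded only because its coordinate sum would be at most $a+c+d<N$, which uses $b\ge1$.

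\textbf{Two smaller points.}
\begin{itemize}
\item In the first step, Lemma~\ref{lem:single-sweep-refraction} with $r=p$ computes $d(X_q,X_p)$ relative to the membrane $123\cup134\cup014$. The recurrence you run afterwards needs the numerical values $\delta_p(q)=d(X_p,X_q)$ relative to the fixed terminal seed. Reciprocity identifies only the supports of these two fields, not their values. You therefore need the time-reversed form of the lemma (the reversed partial sweep of $1234$ has the same excavation hexagon), as the paper uses.
\item Support reciprocity requires $X_q\neq X_p$. The paper obtains this from the fact that the computed reverse coordinates are never $-1$.
\end{itemize}
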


\begin{proof}
All scalar recurrences below are evaluated relative to the one fixed
terminal seed containing \(X_p\), as in
\eqref{eq:reverse-d-field}.  Hence time reversal gives the genuine
denominator coordinate \(\delta_p(r)=d(X_p,X_r)\) at every exposed
vertex.

\paragraph{Figure~\ref{fig:step-I-backward-light-cone}\textup{(a)} to
\textup{(b)}: undo \(1234\).}
The time-reversed form of
Lemma~\ref{lem:single-sweep-refraction} is the same NST excavation
hexagon.  It gives the binary field
\begin{equation}
\begin{array}{ll}
123:&\1[q_1\geq a+1,\ q_2\leq b-1,\ q_3\geq c+1],\\
134:&\1[q_1\geq a+1,\ q_3\geq c+1,\ q_4\leq d-1],\\
014:&0.
\end{array}
\label{eq:reverse-stage-b}
\end{equation}
The two nonzero lines have the same trace on \(13\).  Equivalently,
one may apply the proof of Lemma~\ref{lem:single-sweep-refraction} to
the reversed stepped surfaces: the recurrence and the strict interior
of the excavation hexagon are unchanged.

\paragraph{Figure~\ref{fig:step-I-backward-light-cone}\textup{(b)} to
\textup{(c)}: one Boolean formula on \(0134\).}
For \(r=(r_0,r_1,0,r_3,r_4)\in\Delta_{0134}(N)\), put
\[
 A=\1[r_0\geq1],\qquad H=\1[r_1\geq a+1],\qquad
 E=\1[r_3\geq c+1],\qquad L=\1[r_4\leq d-1].
\]
The whole inverse sweep is described by the single Boolean function
\begin{equation}
                         \mathsf B_p(r):=EL(A\vee H).
\label{eq:reverse-0134-boolean-field}
\end{equation}
On the input faces \(134\) and \(014\), it restricts to \(ELH\) and
zero, exactly as in \eqref{eq:reverse-stage-b}; on the output faces
\(013\) and \(034\), it restricts to \(E(A\vee H)\) and \(ELA\).

We verify \eqref{eq:reverse-0134-boolean-field} pointwise.  Let
\(r=(x,y,z,t)\) be the old \(14\)-vertex, in coordinate order
\(0,1,3,4\), and let
\(r'=r+e_0-e_1+e_3-e_4\) be the new \(03\)-vertex.  Write
\[
 E_+=\1[z\geq c],\qquad L_+=\1[t\leq d],\qquad
 H_-=\1[y\geq a+2].
\]
The old value, proposed new value, and the two side-pair sums are
\begin{align*}
 O&=EL(A\vee H),& T&=E_+L_+,\\
 P_1&=EL_+ + E_+L(A\vee H_-),&
 P_2&=E_+L_+(A\vee H)+EL.
\end{align*}
If \(A=1\), then
\(P_2-P_1=(E_+-E)(L_+-L)\geq0\) and \(P_2=O+T\).
The same calculation applies when \(A=0\) and \(y\geq a+2\).  If
\(A=0,y=a+1\), direct substitution again gives
\(P_2=O+T\) and \(P_2\geq P_1\).  Finally, if
\(A=0,y\leq a\), then \(O=0\) and
\(\max(P_1,P_2)=EL_+\).  This can differ from \(T\) only when
\(x=0,y\leq a,z=c,t\leq d\), which would imply
\(x+y+z+t\leq a+c+d=N-b<N\).  Since every tetrahedral vertex has
coordinate sum \(N\) and \(b\geq1\), this case is impossible.  Hence
\(\max(P_1,P_2)-O=T\) at every octahedron.  Induction in the inverse
excavation order (or on \(r_0+r_3\)) now proves
\eqref{eq:reverse-0134-boolean-field} throughout \(0134\).

Thus panel \(123\) remains unchanged and the field in
Figure~\ref{fig:step-I-backward-light-cone}\textup{(c)} is
\begin{equation}
\begin{array}{ll}
123:&\1[q_1\geq a+1,\ q_2\leq b-1,\ q_3\geq c+1],\\
013:&\1[q_3\geq c+1]\,
       \1[q_0\geq1\ \text{or}\ q_1\geq a+1],\\
034:&\1[q_0\geq1,\ q_3\geq c+1,\ q_4\leq d-1].
\end{array}
\label{eq:reverse-stage-c}
\end{equation}
The traces agree on both \(13\) and \(03\).  Substitution of the square
coordinates gives exactly \eqref{eq:incoming-square-field}.

\paragraph{Figure~\ref{fig:step-I-backward-light-cone}\textup{(c)} to
\textup{(d)}: one Boolean formula on \(0123\).}
For \(r=(r_0,r_1,r_2,r_3,0)\in\Delta_{0123}(N)\), put
\[
\begin{gathered}
 A=\1[r_0\geq1],\qquad
 B=\1[r_1\leq a-1],\qquad H=\1[r_1\geq a+1],\\
 C=\1[r_2\geq b+1],\qquad L=\1[r_2\leq b-1],\qquad
 E=\1[r_3\geq c+1].
\end{gathered}
\]
The full coupled inverse sweep is the single Boolean function
\begin{equation}
 \mathsf C_p(r):=AB(C\vee E)\ \vee\ EL(A\vee H).
\label{eq:reverse-0123-boolean-field}
\end{equation}
On the two input faces \(123\) and \(013\), this becomes \(ELH\) and
\(E(A\vee H)\), the first two lines of
\eqref{eq:reverse-stage-c}.  On the output faces \(012\) and \(023\),
it becomes
\begin{equation}
              ABC,\qquad A(C\vee E),
\label{eq:reverse-0123-output-faces}
\end{equation}
respectively.

It remains to verify \eqref{eq:reverse-0123-boolean-field} locally.
Let \(r=(x,y,z,t)\) be the old \(13\)-vertex and
\(r'=r+e_0-e_1+e_2-e_3\) the new \(02\)-vertex.  Put
\[
 B_+=\1[y\leq a],\qquad C_+=\1[z\geq b],\qquad
 H_-=\1[y\geq a+2],\qquad L_-=\1[z\leq b-2].
\]
There are three possible positions relative to the wall \(t=c+1\).

If \(t\leq c\), the old and new \(E\)-states vanish.  The old value,
new value, and pair sums are
\[
 O=ABC,\quad T=B_+C_+,\quad
 P_1=BC+AB_+C_+,\quad P_2=ABC_++B_+C.
\]
A direct Boolean check gives \(\max(P_1,P_2)=O+T\), except nominally
when \(A=0,B_+=1,C=0,C_+=1\).  That assignment says
\(x=0,y\leq a,z=b,t\leq c\), and hence has coordinate sum at most
\(a+b+c<N\), so it is inadmissible.

If \(t\geq c+2\), both \(E\)-states equal one and
\[
\begin{aligned}
 O&=A(B\vee L)\vee HL,&T&=B_+\vee L_-,\\
 P_1&=(B\vee L)+[A(B_+\vee L_-)\vee H_-L_-],\\
 P_2&=[A(B\vee L_-)\vee HL_-]+(B_+\vee L).
\end{aligned}
\]
For \(y\leq a-1\), both pair sums are \(1+A=O+T\).
For \(y=a\), the three possibilities
\(z\leq b-2,z=b-1,z\geq b\) give
\(\max(P_1,P_2)=1+AL=O+T\).  For \(y=a+1\), one has
\(P_2=L+L_-=O+T\) and \(P_2\geq P_1\); for \(y\geq a+2\), both
pair sums equal \(L+L_-=O+T\).

Finally let \(t=c+1\), so the old \(E\)-state is one and the new one
is zero.  Then
\[
\begin{aligned}
 O&=A(B\vee L)\vee HL,&T&=B_+C_+,\\
 P_1&=BC+[A(B_+\vee L_-)\vee H_-L_-],\\
 P_2&=ABC_+ +(B_+\vee L).
\end{aligned}
\]
For \(y\leq a-1\), one has
\(P_1=C+A\) and \(P_2=1+AC_+\); for \(y=a\), one has
\(P_1=A\) and \(P_2=1\).  In either regime the only nominal exception
is \(A=0,C_+=0\).  For \(y\geq a+1\), one has
\(P_2=L=O+T\) and \(P_2\geq P_1\).  The nominal exception says
\(x=0,y\leq a,z\leq b-1,t=c+1\).  Its coordinate sum is at most
\(a+b+c<N\), again impossible because \(d\geq1\).  This proves the
octahedron identity everywhere.  The inverse sweep is triangular in
excavation order---equivalently, one inducts on \(r_0+r_2\)---so its
two input faces determine it uniquely.  Hence the reverse field is
\eqref{eq:reverse-0123-boolean-field} throughout \(0123\).

Combining \eqref{eq:reverse-0123-output-faces} with the unchanged
\(034\)-line of \eqref{eq:reverse-stage-c}, the bottom reverse field is
\[
\begin{array}{ll}
012:&\1[q_0\geq1,\ q_1\leq a-1,\ q_2\geq b+1],\\
023:&\1[q_0\geq1]\,
       \1[q_2\geq b+1\ \text{or}\ q_3\geq c+1],\\
034:&\1[q_0\geq1,\ q_3\geq c+1,\ q_4\leq d-1].
\end{array}
\]
This is the indicator of \(S_p\), with common traces
\(q_0\geq1,q_2\geq b+1\) on \(02\) and
\(q_0\geq1,q_3\geq c+1\) on \(03\).  Thus
\(d(X_p,X_q)>0\) exactly for \(q\in S_p\).  Support reciprocity now gives
\(D_p(q)=d(X_q,X_p)>0\) exactly on the same set.  Indeed, the computed
reverse coordinate is always \(0\) or \(1\), never \(-1\); by
\cite[Theorem~6.3(ii)]{CaoLiDenominators}, this also verifies that no
initial variable \(X_q\) coincides with \(X_p\), so the distinctness
hypothesis in support reciprocity applies at every initial site.  This proves
\eqref{eq:target-positive-support}.
\end{proof}

\subsection{Geometry of the final support}

A geometric restatement will be useful.  It separates
the contributions extending compatibly over
\(012\cup_{02}023\) and \(023\cup_{03}034\). 

\begin{proposition}[Two seam-compatible light-cone pieces]
\label{prop:refracted-lobes}
Define \(S_p^{02},S_p^{03}\subseteq\Sigma_0\) by
\begin{align*}
S_p^{02}\cap012
 &=\{q:q_0\ge1,\ q_1\le a-1,\ q_2\ge b+1\},\\
S_p^{02}\cap023
 &=\{q:q_0\ge1,\ q_2\ge b+1\},
 &S_p^{02}\cap034&=\varnothing,\\[2mm]
S_p^{03}\cap012&=\varnothing,
 &S_p^{03}\cap023
 &=\{q:q_0\ge1,\ q_3\ge c+1\},\\
S_p^{03}\cap034
 &=\{q:q_0\ge1,\ q_3\ge c+1,\ q_4\le d-1\}.
\end{align*}
Then
\[
                     S_p=S_p^{02}\cup S_p^{03}.
\]
The first piece extends compatibly over \(012\cup_{02}023\), the second
extends compatibly over \(023\cup_{03}034\), and on the middle panel they
overlap in
\[
 S_p^{02}\cap S_p^{03}
 =\{q\in023:q_0\ge1,\ q_2\ge b+1,\ q_3\ge c+1\}.
\]
\end{proposition}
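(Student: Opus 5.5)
This is a direct set-theoretic verification against the explicit formula \eqref{eq:Sp-formula}, with no recurrence input needed. The plan is to compare panel by panel, then check the two compatibility assertions in the sense of Definition~\ref{def:tropical-refraction}, and finally compute the overlap.

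\emph{Union.} On \(012\) the set \(S_p^{03}\) is empty and \(S_p^{02}\cap012\) is literally the first line of \eqref{eq:Sp-formula}. On \(034\) the set \(S_p^{02}\) is empty and \(S_p^{03}\cap034\) is the third line. On \(023\) the union is
\[\{q_0\ge1,\ q_2\ge b+1\}\cup\{q_0\ge1,\ q_3\ge c+1\}=\{q_0\ge1,\ (q_2\ge b+1\ \text{or}\ q_3\ge c+1)\},\]
which is the middle line. One point needs care: lattice points on the seams \(02\) and \(03\) belong to two panels. For the union to be unambiguous, the panel descriptions must agree there, and this is exactly the trace check below.

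\emph{Compatibility of \(S_p^{02}\).} This is an extension over \(012\cup_{02}023\), with tangential coordinates \(q_0,q_2\) and transverse coordinates \(q_1\) (on \(012\)) and \(q_3\) (on \(023\)). The tangential thresholds \(q_0\ge1\) and \(q_2\ge b+1\) appear identically on both pieces. On the seam we have \(q_1=q_3=0\). The cap \(q_1\le a-1\) then holds automatically because \(a\ge1\), and the \(023\) piece has no transverse cap at all (a vacuous cap in \(q_3\)). So both traces equal \(\{q_0\ge1,\ q_2\ge b+1\}\). Moreover \(S_p^{02}\) meets the seam \(03\) only where \(q_2=0\), which contradicts \(q_2\ge b+1\). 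Hence setting \(S_p^{02}\cap034=\varnothing\) is consistent.

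\emph{Compatibility of \(S_p^{03}\).} Over \(023\cup_{03}034\) the tangential coordinates are \(q_0,q_3\). The caps are vacuous in \(q_2\) on \(023\) and are \(q_4\le d-1\) on \(034\). On the seam, \(q_4=0\le d-1\) holds since \(d\ge1\), so the common trace is \(\{q_0\ge1,\ q_3\ge c+1\}\). On the seam \(02\) we have \(q_3=0<c+1\), so this piece vanishes there, consistent with \(S_p^{03}\cap012=\varnothing\).

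\emph{Overlap.} The intersection is nonempty only on \(023\), where it is the conjunction \(q_0\ge1,\ q_2\ge b+1,\ q_3\ge c+1\). The only step needing any care is the seam bookkeeping: one must verify that seam points are assigned consistently by both panel descriptions, using \(a,d\ge1\) for the automatic caps and \(b,c\ge1\) for the vanishing on the opposite seams. The remaining steps are immediate from the definitions.
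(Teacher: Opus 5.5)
Your proposal is correct and takes the same route as the paper, which proves the statement as a regrouping of the three lines of \eqref{eq:Sp-formula} plus the observation that on each seam the middle-panel trace equals the trace of the adjacent outer-panel piece. You spell out the seam bookkeeping that the paper leaves implicit: \(a,d\ge1\) makes the caps automatic on the seams, and each piece vanishes on the opposite seam.
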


\begin{proof}
This is a regrouping of the three lines of
\eqref{eq:Sp-formula}.  On each seam the middle-panel trace is exactly
the trace of the adjacent outer-panel piece, as in
Lemma~\ref{lem:single-sweep-refraction}.
\end{proof}

Panels~\textup{(d)}--\textup{(e)} of
Figure~\ref{fig:step-I-backward-light-cone} visualize precisely this
regrouping and its Boolean overlap.

\begin{corollary}[Topology of the support]
\label{cor:support-disk}
The selected faces indexed by \(S_p\) form a connected region of the
three-panel fan, and their complement has no bounded component.
\end{corollary}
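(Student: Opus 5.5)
The plan is to treat \(S_p\) as a union of lattice points and to establish connectivity and the absence of holes panel by panel. Each panel piece is convex, and the pieces are glued along seam traces that are themselves intervals. I would work with the decomposition \(S_p=S_p^{02}\cup S_p^{03}\) of Proposition~\ref{prop:refracted-lobes}, proving the two assertions first for each lobe and then for their union.

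\textbf{Step 1: each panel piece is a lattice triangle or trapezoid.} On \(012\), the piece \(S_p\cap012=\{q_0\ge1,\ q_1\le a-1,\ q_2\ge b+1\}\) is cut out of the triangular panel by three half-planes. It is therefore the set of lattice points of a convex lattice polygon, nonempty because \(q=(N-b-1,0,b+1)\) lies in it. The same holds for \(S_p\cap034\) by the symmetric argument. Each of the sets \(S_p^{02}\cap023\) and \(S_p^{03}\cap023\) is likewise a convex lattice polygon, namely a sub-triangle of \(023\) cut off by the line \(q_2=b+1\) or \(q_3=c+1\) and by \(q_0\ge1\). For a convex lattice polygon all of whose edges lie on lattice lines, the union of the closed dual faces \(F_v\) is a topological disk. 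The reason is that consecutive lattice points along any lattice line are adjacent, and the faces around them share dual edges. Hence each piece is connected and hole-free.

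\textbf{Step 2: gluing along seams.} By Lemma~\ref{lem:single-sweep-refraction} and Proposition~\ref{prop:refracted-lobes}, the two pieces of \(S_p^{02}\) have the same trace on \(02\), namely \(\{q_0\ge1,\ q_2\ge b+1\}\). This trace is a nonempty segment of lattice points on the seam. Gluing two disks along a common connected boundary arc gives a disk, so \(S_p^{02}\) is connected and simply connected. The same argument applies to \(S_p^{03}\) along \(03\). Finally, \(S_p^{02}\cap S_p^{03}\) lies inside \(023\) and is the convex triangle \(\{q_0\ge1,\ q_2\ge b+1,\ q_3\ge c+1\}\). This triangle is nonempty precisely when \(N-1\ge b+c+2\). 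By Mayer--Vietoris, or by van Kampen for planar regions, the union of two disks meeting in a connected set is a disk. This proves both claims in that case.

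\textbf{Step 3: the degenerate case, which I expect to be the main obstacle.} When \(b+c+2>N-1\), the overlap is empty. The two middle pieces \(\{q_2\ge b+1\}\) and \(\{q_3\ge c+1\}\) of \(023\) are then disjoint convex regions, and I must show that their union is still connected at the level of dual faces. On \(023\) the condition \(q_0\ge1\) together with \(q_2\le b\) and \(q_3\le c\) would force \(q_0\ge N-b-c=a+d\ge2\). So the gap between the two pieces is a strip. Its width is controlled by \(N-b-c-1=a+d-1\), but its thickness along the direction \(q_2-q_3\) is what matters.

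Here I would argue with the two-panel geometry rather than hope that the strip vanishes. The points \((1,b+1,c')\) and \((1,b',c+1)\) near the outer edge, with \(q_0=1\), bound the two pieces. Along the row \(q_0=1\) one has \(q_2+q_3=N-1\ge b+c+1\), so at least one of \(q_2\ge b+1\) or \(q_3\ge c+1\) holds at every point of that row. That row therefore lies entirely in \(S_p\), and it connects the two pieces.

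More generally, the complement of \(S_p\cap023\) is \(\{q_0=0\}\cup\{q_2\le b,\ q_3\le c\}\). This is a convex corner region attached to the vertex \(u_0\) side, together with the outer boundary edge \(q_0=0\). The edge \(q_0=0\) is adjacent to the unbounded face. The corner region \(\{q_2\le b,\ q_3\le c\}\) contains \(u_0\). Since \(u_0\) is incident to the boundary edges \(01\) and \(04\), and their unselected points continue through \(012\) and \(034\) to the outer boundary, this region also reaches the unbounded component. So no complementary component is bounded, which completes the argument.

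The one check I would carry out carefully in this step is that "connected" and "no bounded complementary component" refer to the closed union of dual faces. Two selected lattice points must therefore be joined through shared dual edges and not merely through a shared vertex. I would handle this by noting that adjacency along the row \(q_0=1\) is lattice adjacency, whose dual faces share an edge.
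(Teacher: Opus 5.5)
Your two decisive observations are the paper's own. The row \(q_0=1\) of \(023\) lies in \(S_p\) because \(q_2+q_3=N-1\ge b+c+1\). The complement of \(S_p\) in \(023\) is \(\{q_0=0\}\cup\{q_2\le b,\ q_3\le c\}\), and its second part is pushed to \(u_0\).

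The gap is in the topology of Steps~1--2. Your proof that there are no holes rests on it, because your direct complement check treats only the part of the complement lying in \(023\). Everything in \(012\) and \(034\), and the whole case \(a+d\ge3\), goes through the disk claims.

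\begin{itemize}
\item Step~1 claims the closed face union of each convex panel piece is a disk. The reason offered (consecutive lattice points are adjacent and their faces share dual edges) gives at most connectivity, never the absence of holes.
\item That reason is also false along a seam. A seam segment separates two black triangles that are identified in \(\Gamma_0\), so no dual edge crosses it, and the rhombi \(F_v\) of consecutive seam vertices meet in a single vertex.
\item The disk claim itself fails when \(a=1\). Then \(S_p\cap012=\{q_0\ge1,\ q_1\le0,\ q_2\ge b+1\}\) is just the seam trace, and its face union is a chain of rhombi joined at points, not a disk. The same happens on \(034\) when \(d=1\).
\item Step~2 glues ``along a common boundary arc'' and applies Mayer--Vietoris to intersections of lattice-point sets. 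What matters is the intersection of the closed face unions. The two panel pieces of a lobe overlap in the whole chain of seam rhombi, not along an arc.
\item The lobes \(S_p^{02}\) and \(S_p^{03}\) also meet along an extra dual edge. It is shared by the faces of \((N-b-c-1,b+1,c)\in S_p^{02}\setminus S_p^{03}\) and \((N-b-c-1,b,c+1)\in S_p^{03}\setminus S_p^{02}\), in coordinates \((q_0,q_2,q_3)\) on \(023\). This edge is not contained in the faces of your overlap triangle.
\item These intersections turn out to be connected, so an \(H_1=0\) argument can be rescued, but none of this is checked. Moreover, ``two disks meeting in a connected set form a disk'' is false in general. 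Mayer--Vietoris only gives \(H_1=0\) for the union, which is all you need.
\end{itemize}

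The simplest repair is to drop the homological detour and argue as the paper does, with no case split.

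\emph{Connectivity.} Use the two lobes, each connected along its seam trace, together with the row \(q_0=1\). That row lies in \(S_p\) whenever \(a+d\ge2\), which always holds.

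\emph{No holes.} Move every unselected vertex of each panel to the outer boundary while it keeps satisfying its complement alternative; points with \(q_0=0\) are already on the outer boundary.
\begin{itemize}
\item On \(012\), move toward \(u_1\) if \(q_1\ge a\), and toward the edge \(01\) if \(q_2\le b\).
\item On \(034\), move toward the edge \(04\) if \(q_3\le c\), and toward \(u_4\) if \(q_4\ge d\).
\item On \(023\), move toward \(u_0\), exactly as you did.
\end{itemize}
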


\begin{proof}
By Proposition~\ref{prop:refracted-lobes}, the support is the union of
two connected light-cone pieces.  On \(023\), the union of the two wedges
\[
 q_2\ge b+1,\qquad q_3\ge c+1
\]
contains the entire row \(q_0=1\).  Indeed, if neither inequality held
there, then
\[
 N-1=q_2+q_3\le b+c=N-a-d\le N-2,
\]
a contradiction.  Each outer piece meets its middle-panel wedge along
the corresponding seam, so \(S_p\) is connected.

For the absence of holes, the complements are
\[
\begin{array}{ll}
012:&q_0=0\ \text{or}\ q_1\ge a\ \text{or}\ q_2\le b,\\
023:&q_0=0\ \text{or}\ (q_2\le b\ \text{and}\ q_3\le c),\\
034:&q_0=0\ \text{or}\ q_3\le c\ \text{or}\ q_4\ge d.
\end{array}
\]
Every point in the complement can be moved to the outer boundary while
preserving one of these alternatives.  On \(012\), move toward the
\(u_1\)-corner when \(q_1\ge a\), or toward the edge \(01\) when
\(q_2\le b\).  The reflected moves work on \(034\).  On \(023\), if
\(q_2\le b\) and \(q_3\le c\), replace units of \(e_2,e_3\) by
\(e_0\) until the \(u_0\)-corner is reached.  Hence every complementary
component meets the outer face.
\end{proof}

\paragraph{Output of Step~I.}
The target has positive denominator support \(S_p\), explicitly given
by \eqref{eq:Sp-formula}.  It is the union of two seam-compatible
light-cone pieces, is connected, and has no hole.  For the running
example, the backward support propagation and final support are shown in
Figure~\ref{fig:step-I-backward-light-cone}; the associated
lozenge--triangle tiling appears in
Figure~\ref{fig:intro-three-panel-tiling}.  These are exactly the properties needed in
Step~II: retaining the boundaries of the dual faces indexed by \(S_p\)
will produce one connected planar graph whose bounded faces are
precisely the selected faces.

\section{Step II: The canonical fixed graph}
\label{sec:step2-fixed-graph}

\paragraph{Goal of Step~II.}
Step~I tells us exactly which primal vertices are selected.  We now
turn that selected region into a planar bipartite graph whose bounded
faces are precisely the selected dual faces, and then cut open the two
seams without changing the matching lattice.  No matching formula is
used yet; this step is purely the construction of the graph.

The initial dual graph \(\Gamma _0\), the unreduced graph
\(\overline G_p\), and the cut-open graph \(G_p\) were defined before
Theorem~\ref{thm:partI-main}.  We now prove the graph assertions made
there from the support properties established in Step~I.

\begin{theorem}[Fixed support graph]
\label{thm:fixed-graph}
For every strictly interior \(p=(0,a,b,c,d)\):
\begin{enumerate}
 \item \(\overline G_p\) is a connected plane bipartite graph whose
 bounded faces are exactly \(F_v\), \(v\in S_p\).
 \item With the Lam--Postnikov orientation, its dual quiver is the
 induced subquiver
 \[
              Q^{\rm LP}_{\overline G_p}=Q_0^{\rm LP}[S_p].
 \]
 With the white-on-the-right orientation it is the opposite quiver
 \(Q_0^{\rm R}[S_p]\).
 \item Cutting open the seams gives a canonical weight-preserving
 bijection
 \[
          \D(\overline G_p)\longleftrightarrow\D(G_p)
 \]
 between perfect matchings.
\end{enumerate}
\end{theorem}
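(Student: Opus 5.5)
The proof is planar topology on the dual graph \(\Gamma_0\), driven by Corollary~\ref{cor:support-disk}. I would begin by fixing the embedding. Each panel, subdivided into elementary triangles, carries its standard hexagonal dual. Gluing the three panels along the seams \(02\) and \(03\), and identifying same-colored seam dual vertices, gives a plane bipartite graph \(\Gamma_0\) embedded in the flattened fan, which is a disk. In \(\Gamma_0\), every mutable lattice vertex \(v\) is surrounded by a bounded face \(F_v\). This face is a hexagon for an interior panel vertex, and a polygon with the appropriate number of sides for a seam vertex. Two faces \(F_v,F_{v'}\) share an edge exactly when \(v,v'\) are joined by an elementary lattice edge whose two incident triangles have opposite colors.

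Next I would set \(U=\bigcup_{v\in S_p}\overline{F_v}\). By Corollary~\ref{cor:support-disk}, the selected lattice vertices are connected through lattice edges of the kind just described. Interior panel edges always have opposite-colored neighbouring triangles, and along a seam the two incident same-panel triangles also have opposite colors. So \(U\) is connected. The complement of \(S_p\) has no bounded component, so \(U\) has no holes. Hence \(U\) is a closed topological disk, possibly pinched, and a pinch would have to be ruled out by the explicit inequalities \eqref{eq:Sp-formula}. Its boundary is a closed walk in \(\Gamma_0\), and \(\overline G_p=\bigcup_{v\in S_p}\partial F_v\) is exactly the \(1\)-skeleton of \(U\) after deleting isolated vertices. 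It follows that:
\begin{itemize}
\item \(\overline G_p\) is connected and plane bipartite, since it is a subgraph of \(\Gamma_0\);
\item its bounded faces are the cells of \(U\), namely \(F_v\) for \(v\in S_p\);
\item no further bounded face appears, because any bounded face of \(\overline G_p\) not among the \(F_v\) would enclose a complementary component of \(U\), and none exists.
\end{itemize}
This proves item~1.

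For item~2, the dual arrows of \(\overline G_p\) are indexed by edges separating two bounded faces \(F_v,F_{v'}\) with \(v,v'\in S_p\). These are exactly the arrows of \(Q_0^{\rm LP}\) between selected vertices. The black-on-the-right rule is local to the edge, so each arrow keeps its orientation. Edges on the outer boundary of \(\overline G_p\) bound only one selected face and contribute no arrow. I would also check that the oriented \(2\)-cycles cancelled in \(Q_0^{\rm LP}\) do not create discrepancies. These can only arise at seam vertices with two sectors, where the local count matches cancellation on both sides. The statement for \(Q_0^{\rm R}\) then follows by reversing every arrow.

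For item~3, Definition~\ref{def:cut-open-seam} already shows that the three-sector vertex at \(u_0\) is absent. At each two-sector vertex \(x\), any perfect matching of \(G_p\) matches \(z\) to exactly one of \(x_{\rm L},x_{\rm R}\); the other copy must then be matched externally in its own sector. Contracting \(x_{\rm L}zx_{\rm R}\) back to \(x\) therefore sends a matching of \(G_p\) to a matching of \(\overline G_p\). Conversely, given a matching of \(\overline G_p\), the edge at \(x\) lies in exactly one sector, and this determines the unique lift. The zigzag edges have weight one, so the correspondence is weight-preserving. I expect the main obstacle to be the pinch-free and seam bookkeeping: verifying that near the seams and the \(u_0\)-corner the faces of \(U\) meet along edges and not only at vertices, so that the dual and face statements hold. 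I would check this directly from \eqref{eq:Sp-formula} in the row \(q_0=1\) and along the seam traces.
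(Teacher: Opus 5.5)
Your proposal is correct and is essentially the paper's proof: Corollary~\ref{cor:support-disk} gives connectivity and rules out any extra bounded face, the arrows of the dual are exactly those crossing edges between two selected faces, and the degree-two vertex \(z\) in each zigzag gives the matching bijection (the paper writes this for general \(r\) sectors, though \(r\le2\) by Definition~\ref{def:cut-open-seam}). One small correction: a seam-parallel lattice edge has two \emph{same}-colored (black) incident triangles, which are identified rather than separated by a dual edge, so consecutive seam faces meet only at a vertex and not along an edge; this is harmless, because graph connectivity only needs touching faces (and each selected seam vertex also has a selected middle-panel neighbour across an opposite-colored edge), and a pinch could not create an extra bounded face once holes are excluded, so the pinch bookkeeping you flag is not actually needed.
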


\begin{proof}
By Theorem~\ref{thm:light-cone} and
Corollary~\ref{cor:support-disk}, the selected faces form a connected
region with no hole.

Every retained edge belongs to the boundary of a selected face.
Face-connectivity therefore makes \(\overline G_p\) connected, and
it is bipartite because it is a subgraph of \(\Gamma _0\).  The
boundary of each selected face is retained in full.  Conversely, an
extra bounded face would be a bounded complementary component of the
selected region, contrary to Corollary~\ref{cor:support-disk}.  This
proves part~1.

An arrow of \(Q^{\rm LP}_{\overline G_p}\) comes exactly from an edge separating
two selected bounded faces \(F_v,F_{v'}\), with the orientation
inherited from \(Q_0^{\rm LP}\).  An edge between a selected and an unselected
face now borders the infinite face and contributes no mutable arrow.
After cancellation of oriented \(2\)-cycles, this proves the first
assertion in part~2; globally reversing its arrows gives the second.

Consider one identified seam vertex \(x\), say black.  A perfect
matching of \(\overline G_p\) uses exactly one edge incident to \(x\);
suppose it lies in sector \(i\).  In the cut-open graph use that
external edge at \(x_i\).  Removing \(x_i\) from
\[
 x_1-z_1-x_2-\cdots-z_{r-1}-x_r
\]
leaves two even paths, each with a unique perfect matching.
Conversely, the zigzag has \(r\) black sector vertices and \(r-1\)
white auxiliary vertices, so exactly one sector vertex must be
matched outside the zigzag.  Contracting the zigzag recovers a unique
matching of \(\overline G_p\).  These local bijections commute, and
all inserted edges have weight one.  This proves part~3.

\end{proof}

\begin{lemma}[Seam-cut invariance]
\label{lem:seam-cut-invariance}
Under the matching bijection of
Theorem~\ref{thm:fixed-graph}, part~3:
\begin{enumerate}
 \item the bounded faces of \(G_p\) are canonically indexed by the
 same set \(S_p\) as those of \(\overline G_p\);
 \item after cancellation of oriented \(2\)-cycles, the
 Lam--Postnikov oriented dual quiver of \(G_p\) is
 \(Q_0^{\rm LP}[S_p]\), and its opposite is \(Q_0^{\rm R}[S_p]\);
 \item face rotations correspond under the matching bijection; and
 \item the face-height vector is unchanged.
\end{enumerate}
\end{lemma}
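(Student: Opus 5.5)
The proof is local: the cut-open graph \(G_p\) differs from \(\overline G_p\) only at the two-sector seam vertices. At each such black vertex \(x\), say, \(x\) is replaced by the path \(x_{\rm L}-z-x_{\rm R}\). We verify the four assertions one seam vertex at a time and then use that the local modifications are disjoint and commute, exactly as in the proof of Theorem~\ref{thm:fixed-graph}, part~3.

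For part~1, we compare faces. Cutting \(x\) open and inserting the zigzag adds no bounded face: the new path \(x_{\rm L}-z-x_{\rm R}\) is drawn inside the region formerly occupied by the single vertex \(x\). Under planar embedding in the cut-open fan, the two selected faces that met at \(x\) from the two sectors each acquire the extra edges \(x_{\rm L}z\) or \(zx_{\rm R}\) on their boundary. No new region is enclosed, since the path is a tree attached along the seam.

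There is one exception to check. At a seam vertex the two sector copies may border a common face straddling the seam. Inserting the path then lengthens that face's boundary by two edges but does not split it, because \(x_{\rm L}\) and \(x_{\rm R}\) remain on its boundary cycle. Thus the bounded faces of \(G_p\) are in canonical bijection with those of \(\overline G_p\), hence indexed by \(S_p\) by Theorem~\ref{thm:fixed-graph}, part~1.

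For part~2, the arrows of the dual quiver arise from edges separating two bounded faces. The old edges keep their incident faces and their white/black endpoint colors, so they produce the same arrows as before. The inserted edges \(x_{\rm L}z\) and \(zx_{\rm R}\) either border the unbounded face, contributing nothing, or separate the same pair of faces with opposite orientations: the colors of \(z\) and \(x\) swap roles along the path. Such a pair forms an oriented \(2\)-cycle and cancels. This yields \(Q_0^{\rm LP}[S_p]\), and its opposite \(Q_0^{\rm R}[S_p]\) is obtained by global reversal.

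For parts~3 and~4 we use the explicit bijection from Theorem~\ref{thm:fixed-graph}, part~3. Under it, a matching of \(\overline G_p\) whose edge at \(x\) lies in sector L corresponds to the matching using that edge at \(x_{\rm L}\) together with \(zx_{\rm R}\); the case of sector R is symmetric. The heights \(H_M\) are computed from \(\mathbf 1_M-\mathbf 1_{M_\ast}\) across primal edges, so the key local check is that crossing a zigzag edge gives the same height jump as crossing the corresponding old edges at \(x\). Taking the same reference matching transported by the bijection, each zigzag edge carries multiplicity in \(M-M_\ast\) equal to the signed change of the sector of \(x\). This change is exactly the flux that the old graph routes through \(x\) between the two sectors. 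The dual potential therefore agrees on every bounded face, which is part~4.

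Part~3 follows from part~4. A face rotation at \(F_v\) is the unique cover move changing \(H\) at \(F_v\) by \(\pm1\) and nowhere else. Since the bijection preserves the whole height vector, it preserves these cover relations. One can also check directly that an alternating boundary of \(F_v\) in \(\overline G_p\) lifts to an alternating boundary in \(G_p\), with the zigzag edges inserted in the forced way.

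The main obstacle is the flux bookkeeping at the zigzag. One must show that the forced edge \(zx_{\rm R}\) or \(x_{\rm L}z\) contributes exactly the sector-change sign. Otherwise \(H\) could shift by a constant on faces adjacent to the seam, and distinguishing this from a change of reference matching requires careful handling.
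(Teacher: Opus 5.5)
Your proof is correct. Parts~1 and~2 follow the paper: no new region is enclosed, and the two inserted crossings form a cancelling oriented \(2\)-cycle. Parts~3--4, however, go a different way.

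\emph{The paper's route.} It never compares dual potentials. It counts that each affected face gains two boundary edges, exactly one of which the alternating completion uses, so \(|\partial F|/2-|M\cap\partial F|-1\) is invariant. It then checks that rotating and cutting commute, and reads off preservation of height differences from that.

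\emph{Your route.} You compare \(H_M\) itself, and recover rotations from the characterization \(H_M-H_{M'}=\pm\mathbf e_v\). This characterization holds in any plane bipartite graph, since the flow is recovered edgewise from the potential. What your route buys is invariance of the height for every pair of matchings. Commutation of rotations alone controls height differences only along rotation paths, and connectivity of the rotation graph is proved only later, using this lemma.

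The ``flux bookkeeping'' you single out as the main obstacle is in fact automatic. The flow \(\mathbf 1_{M'}-\mathbf 1_{M'_\ast}\) is divergence-free on \(G_p\), so its potential is path-independent. A dual path of \(\overline G_p\) crosses only old edges, and these keep their colours, their two adjacent faces and their flow values. Hence the potentials agree face by face with no computation at the zigzag.

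Conversely, the paper's count yields invariance of the exponent \(w_v-m_v(M)-1\). This is later quoted from this lemma for the cut-open form of Proposition~\ref{prop:selected-face-ktw-form}, and your argument does not give it.

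To carry out that count you would need to correct your local picture in part~1. The faces coming from the two sectors are untouched. Both \(x_{\rm L}z\) and \(zx_{\rm R}\) separate the same two seam-straddling faces \(F_{s_{r,k-1}}\) and \(F_{s_{r,k}}\) (one of these may be the unbounded face at a seam end). Each of these two faces gains \emph{both} inserted edges; this is the generic situation, not an exception. The face count itself is unaffected, as Euler's formula confirms: the cut adds two vertices and two edges.
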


\begin{proof}
It is enough to check one insertion between two consecutive sectors
at an identified seam vertex.  Splitting the vertex and inserting the
two-edge part of the zigzag does not enclose a new region: it only
replaces a boundary corner of each incident old face by a three-edge
detour.  Thus the old bounded faces persist and no new bounded face is
created.

Locally, the detour contributes two additional boundary edges to the
relevant face.  Its unique alternating completion uses exactly one of
them.  Hence, for every corresponding matching,
\[
 \frac{|\partial F|}{2}\longmapsto
 \frac{|\partial F|}{2}+1,
 \qquad
 |M\cap\partial F|\longmapsto |M\cap\partial F|+1.
\]
The quantity
\[
 h_F^G(M):=\frac{|\partial F|}{2}
            -|M\cap\partial F|-1
\]
is therefore unchanged.  Performing a face rotation before the cut
and then taking the alternating zigzag completion gives the same
matching as first cutting and then rotating the corresponding face.
Thus face rotations and face-height differences are preserved.

Finally, the old dual arrows away from the seam are unchanged.  At
the inserted detour, its two new crossings contribute arrows with
opposite orientations between the same pair of face vertices; they
form an oriented \(2\)-cycle and cancel.  The reduced dual quiver is
therefore \(Q_0^{\rm LP}[S_p]\) in the Lam--Postnikov orientation;
reversing all arrows gives \(Q_0^{\rm R}[S_p]\).
\end{proof}

\section{Step III: Extremal heights and well-matchedness}
\label{sec:step3-well-matched}

\paragraph{Goal of Step~III.}
We prove that every ordinary edge and every inserted seam edge of the
fixed graph is contained in a perfect matching.  The canonical objects
in the proof are the lower and upper directed envelopes of the forced
boundary height.  They give the bottom and top height functions.  A compatible partial datum obtained by raising
one interior vertex by three then gives, directly, a matching containing
any prescribed ordinary lozenge.  After well-matchedness has been
proved do we invoke flip-connectivity and identify the two explicit
polygonal tilings with the bottom and top matchings.
Let $\mathcal A_p$ be the triangular cell complex obtained from the
three panel pieces incident to $S_p$, with the two copies of every seam
vertex identified but with the two incident panel triangles kept as
distinct cells.  By Corollary~\ref{cor:support-disk},
$\mathcal A_p$ is a finite topological disk.  Put
\[
                 B_p:=\partial\mathcal A_p.
\]

\begin{definition}[Legal lozenge--triangle tiling]
\label{def:legal-lozenge-triangle-tiling}
A \emph{legal lozenge--triangle tiling} of \(\mathcal A_p\) is a
partition of its elementary triangles into tiles satisfying the
following two conditions:
\begin{enumerate}
\item every tile is either an ordinary lozenge formed from two adjacent
      triangles in one panel, or a single triangle facing a cut seam;
\item at each retained seam segment, exactly one of its two facing
      panel triangles is used as a singleton seam triangle, while the
      other is paired into an ordinary lozenge.
\end{enumerate}
Write \(\mathcal T_p\) for the set of legal tilings and
\(\mathcal S_p^{\rm seam}\) for the set of retained seam segments.
The complementary-singleton condition is precisely the cut-open seam
rule: taking the dual edge across every ordinary lozenge and completing
the corresponding alternating seam zigzags gives a canonical bijection
\[
                         \mathcal T_p\longleftrightarrow\D(G_p).
\]
We therefore use the same symbol for a tiling and its perfect
matching when no confusion can result.
\end{definition}

\subsection{Aligned heights and edge increments}
\label{sec:aligned-heights-intro}

On the panel $P_r=\Delta_{0,r,r+1}(N)$ use the coordinates
\[
 [i,j]_r=(N-j)e_0+(j-i)e_r+ie_{r+1},
 \qquad 0\le i\le j\le N.
\]
Orient the three families of unit lattice edges by declaring the
following directions to be positive, whenever the displayed endpoints
belong to the panel:
\begin{equation}
 [i,j]_r\longrightarrow[i-1,j]_r,
 \qquad
 [i,j]_r\longrightarrow[i,j-1]_r,
 \qquad
 [i,j]_r\longrightarrow[i+1,j+1]_r.
 \label{eq:positive-panel-directions}
\end{equation}
On the reference triangle \(0r(r+1)\), these are respectively the
directions (see Figure~\ref{fig:three-panel-lozenge-height})
\[
 u_{r+1}\longrightarrow u_r,
 \qquad u_r\longrightarrow u_0,
 \qquad u_0\longrightarrow u_{r+1}.
\]
\newcommand{\positivepaneldiagram}{%
\begin{tikzpicture}[
  panel/.style={draw=gray!70,line width=0.8pt},
  positive/.style={
    draw=blue!65!black,line width=1.2pt,
    -{Stealth[length=2.2mm,width=1.7mm]},
    shorten <=3pt,shorten >=3pt
  },
  vertex/.style={font=\small},
  direction/.style={
    font=\small,fill=white,inner sep=1.2pt,text=blue!55!black
  },
  panelname/.style={font=\small}
]
\begin{scope}
  \coordinate (O) at (0,0);
  \coordinate (R) at (0,2.6);
  \coordinate (S) at (2.25,1.3);
  \draw[panel] (O)--(R)--(S)--cycle;
  \draw[positive] (S)--node[direction,above left] {\(d_1\)} (R);
  \draw[positive] (R)--node[direction,left] {\(d_2\)} (O);
  \draw[positive] (O)--node[direction,below right] {\(d_3\)} (S);
  \node[vertex,below left] at (O) {\(0\)};
  \node[vertex,above left] at (R) {\(1\)};
  \node[vertex,right] at (S) {\(2\)};
  \node[panelname] at (1.125,-0.48) {\(P_1=012\)};
\end{scope}
\begin{scope}[xshift=4.3cm]
  \coordinate (O) at (0,1.3);
  \coordinate (R) at (2.25,2.6);
  \coordinate (S) at (2.25,0);
  \draw[panel] (O)--(R)--(S)--cycle;
  \draw[positive] (S)--node[direction,right] {\(d_1\)} (R);
  \draw[positive] (R)--node[direction,above left] {\(d_2\)} (O);
  \draw[positive] (O)--node[direction,below left] {\(d_3\)} (S);
  \node[vertex,left] at (O) {\(0\)};
  \node[vertex,above right] at (R) {\(2\)};
  \node[vertex,below right] at (S) {\(3\)};
  \node[panelname] at (1.125,-0.48) {\(P_2=023\)};
\end{scope}
\begin{scope}[xshift=8.6cm]
  \coordinate (O) at (0,2.6);
  \coordinate (R) at (2.25,1.3);
  \coordinate (S) at (0,0);
  \draw[panel] (O)--(R)--(S)--cycle;
  \draw[positive] (S)--node[direction,below right] {\(d_1\)} (R);
  \draw[positive] (R)--node[direction,above right] {\(d_2\)} (O);
  \draw[positive] (O)--node[direction,left] {\(d_3\)} (S);
  \node[vertex,above left] at (O) {\(0\)};
  \node[vertex,right] at (R) {\(3\)};
  \node[vertex,below left] at (S) {\(4\)};
  \node[panelname] at (1.125,-0.48) {\(P_3=034\)};
\end{scope}
\end{tikzpicture}%
}

Equivalently, in the equilateral development choose unit vectors
\(\widehat\imath,\widehat\jmath\) so that decreasing the first array
index points along \(\widehat\imath\), while increasing the second
array index points along \(\widehat\jmath\).  The three positive unit
directions are then
\[
 d_1=\widehat\imath,
 \qquad d_2=-\widehat\jmath,
 \qquad d_3=-\widehat\imath+\widehat\jmath;
\]
they are separated by angles of \(2\pi/3\).  The corresponding convex
set of admissible macroscopic gradients is
\begin{equation}
 K=\operatorname{conv}\{-2d_1,-2d_2,-2d_3\}
  =\operatorname{conv}\bigl\{-2\widehat\imath,
       2\widehat\jmath,2(\widehat\imath-\widehat\jmath)\bigr\}.
 \label{eq:panel-slope-triangle}
\end{equation}
Indeed, if \(\theta_k\) is the local proportion of lozenges whose
selected edge is parallel to \(d_k\), then
\[
 \theta_k\ge0,
 \qquad \sum_{k=1}^3\theta_k=1,
 \qquad
 \nabla f=-2\sum_{k=1}^3\theta_kd_k\in K,
 \qquad
 \partial_{d_k}f=1-3\theta_k.
\]
Thus the vertices of \(K\) are the three pure lozenge phases, while
its interior describes mixtures of all three orientations.  The set
\(K\) is used in Part~II for the ordinary sheetwise heights; the
aligned height introduced below differs by a panel-dependent affine
term and consequently has the corresponding translated slope set.

For a legal tiling $M$, let $F^M_r$ be its ordinary lozenge height
on the $r$-th developed panel: along a positive unit edge its
increment is $+1$, except on the selected edge of a lozenge, where it
is $-2$.

At the seam between $P_r$ and $P_{r+1}$, let
$\epsilon_-,\epsilon_+\in\{0,1\}$ record the two possible singleton
triangles.  When the seam is read in increasing level $k$, the two
increments are
\[
 \Delta F_- =1-3\epsilon_-,
 \qquad
 \Delta F_+ =-1+3\epsilon_+.
\]
Hence the complementary-singleton rule
$\epsilon_-+\epsilon_+=1$ is equivalent, after choosing additive
constants, to
\[
 F^M_{r+1}([0,k]_{r+1})=F^M_r([k,k]_r)+k.
\]
Figure~\ref{fig:three-panel-lozenge-height} shows these sheetwise
height functions for the two polygonal matchings.  At a retained
seam level, the two nearby labels belong to the two cut-open copies of
the same seam vertex before affine alignment.

\begin{figure}[htbp]
\centering
\positivepaneldiagram
\par\medskip
\begin{minipage}[t]{0.46\textwidth}
\centering
\includegraphics[width=\linewidth]
  {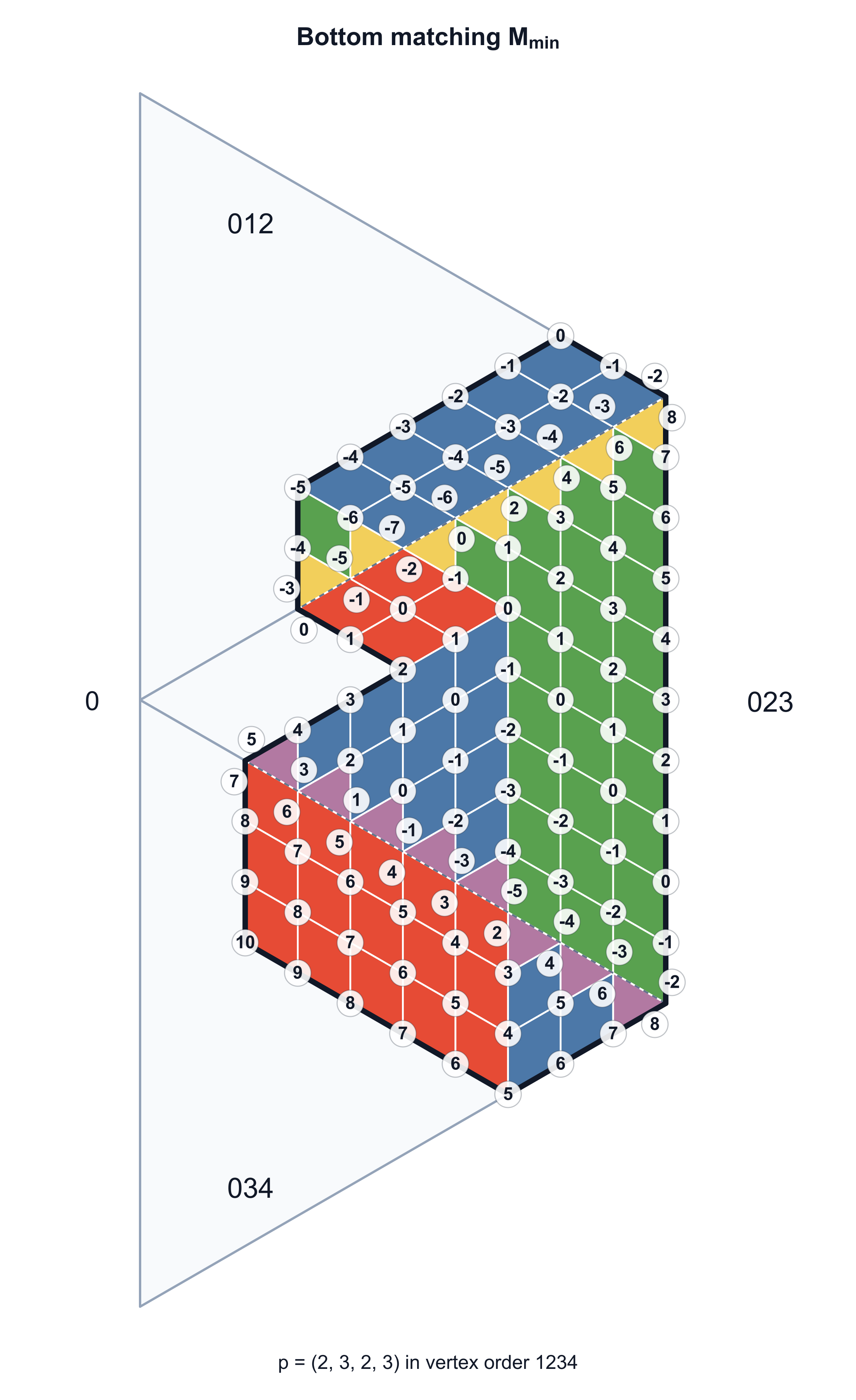}
\end{minipage}\hfill
\begin{minipage}[t]{0.46\textwidth}
\centering
\includegraphics[width=\linewidth]
  {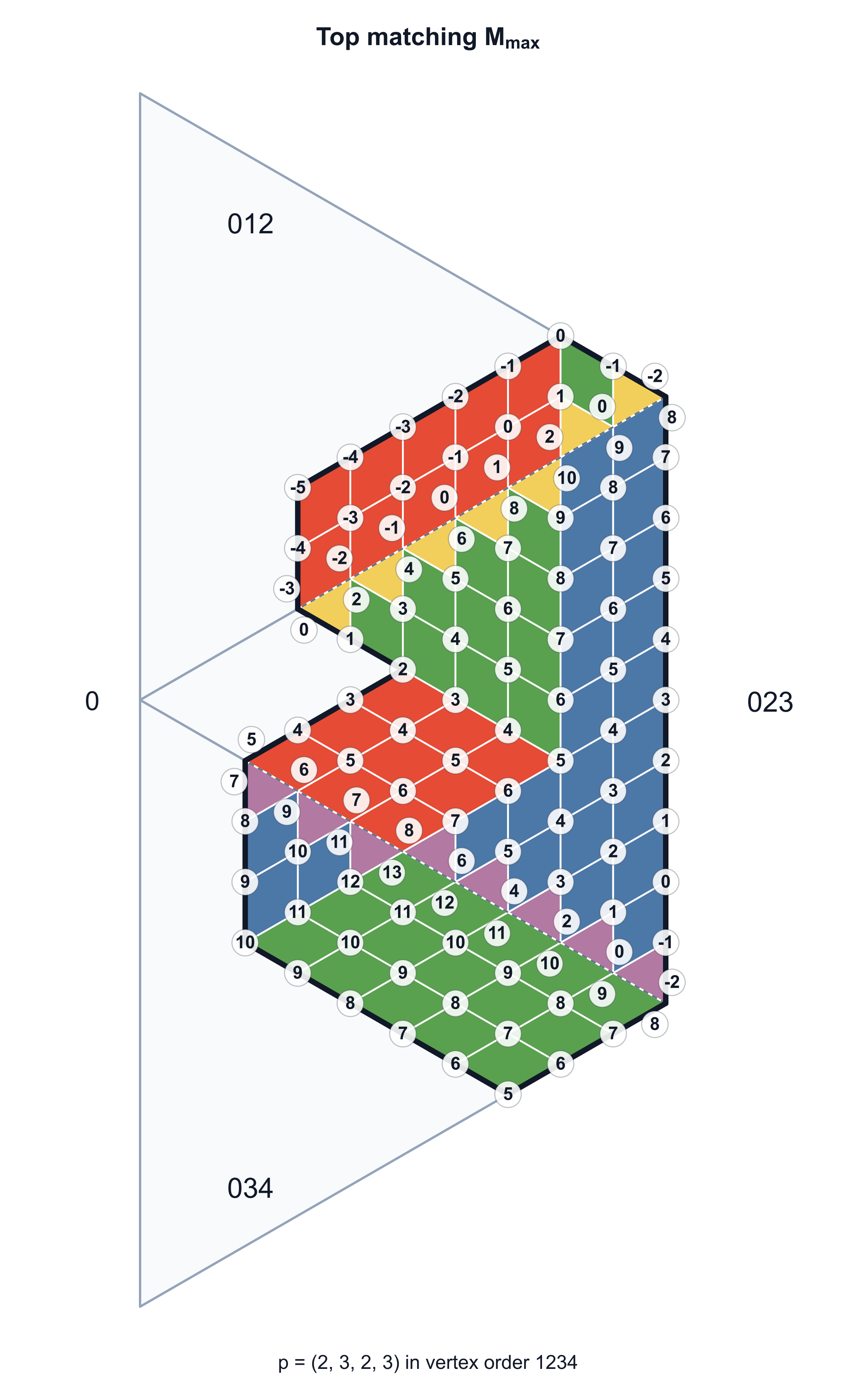}
\end{minipage}
\caption{\emph{Top:} the positive unit-edge directions on the three
developed panel frames, in the same relative orientations as
Figure~\ref{fig:step-v-bottom-matching}.  On \(P_r=0r(r+1)\), the
arrows \(d_1,d_2,d_3\) point respectively from \(u_{r+1}\) to \(u_r\),
from \(u_r\) to \(u_0\), and from \(u_0\) to \(u_{r+1}\); each arrow represents the
parallel orientation of every lattice edge in that family.
\emph{Bottom:} the ordinary lozenge-height functions for the two
polygonal tilings of \(\mathcal A_p\), with
\(p=(2,3,2,3)_{1234}\).  The left panel shows \(M_{\rm poly}\), and
the right panel shows
\(M_{\rm poly}^{+}\); in each picture the three developed
panels are congruent equilateral triangles.  On each developed panel,
a positive edge has increment \(+1\), except when it is the selected
edge of a lozenge, in which case its increment is \(-2\).  At a retained
seam of level \(k\), the two displayed values belong to the two cut-open
copies and satisfy
\(F^M_{r+1}([0,k]_{r+1})=F^M_r([k,k]_r)+k\).  After subtracting the
panel-dependent affine term in~\eqref{eq:aligned-global-height}, the two
pictures give single-valued aligned heights on \(\mathcal A_p\).  They
are identified below with \(h_{\rm bot}\) and \(h_{\rm top}\), after
well-matchedness has been proved.}
\label{fig:three-panel-lozenge-height}
\end{figure}
Consequently
\begin{equation}
 h_M([i,j]_r):=F^M_r([i,j]_r)-(r-1)j
 \label{eq:aligned-global-height}
\end{equation}
is a single-valued integer function on the vertices of
$\mathcal A_p$.

For a positive edge occurrence
\[
 e:x=[i,j]_r\longrightarrow y=[i',j']_r
\]
put
\[
     \omega(e):=1-(r-1)(j'-j).
\]
Then every legal tiling satisfies
\begin{equation}
 h_M(y)-h_M(x)\in\{\omega(e),\omega(e)-3\},
 \label{eq:two-edge-increments}
\end{equation}
and the second value occurs exactly when $e$ is selected by the
tiling.  Thus two aligned heights differ by a multiple of three at
every vertex.  In particular, after a common normalization, a positive
face rotation raises the aligned height at its central lattice vertex
by three and leaves every other vertex unchanged.

\subsection{Upper and lower envelopes}
\label{sec:directed-envelopes}

Let \(\mathcal V_p\) be the vertex set of \(\mathcal A_p\).  A positive
edge path is a composable sequence of positive edge occurrences, with
the two developed occurrences of a retained seam edge kept distinct.
Give such a path \(\gamma\) the weight
\[
                         |\gamma|_\omega
                 :=\sum_{e\in\gamma}\omega(e),
\]
and let
\begin{equation}
 d_p(x,y):=\min_{\gamma:x\rightsquigarrow y}|\gamma|_\omega.
\label{eq:directed-distance}
\end{equation}

On panel \(P_r\), for $r = 1, 2, 3$, the three positive edge directions \(d_1,d_2,d_3\)
have weights
\[
                         1,\qquad r,\qquad 2-r.
\]
Thus the three panels have weights
\[
\begin{array}{c|ccc}
 &d_1&d_2&d_3\\ \hline
P_1&1&1&1\\
P_2&1&2&0\\
P_3&1&3&-1.
\end{array}
\]
So \(d_p\) is not an ordinary nonnegative graph distance: \(P_2\) has
zero-weight edges and \(P_3\) has negative-weight edges.  A priori, one
might worry about zero or negative-weight directed cycles.
Lemma~\ref{lem:positive-cycle-gap} rules those out.

\begin{lemma}[Positive-cycle gap and exact adjacent distances]
\label{lem:positive-cycle-gap}
For a positive path $\gamma$ in $\mathcal A_p$, let $n_3(\gamma)$ denote the
number of its edge occurrences parallel to $d_3$, counted with
multiplicity and with the two developed occurrences of a retained seam
edge distinguished.  The function
\[
   \widetilde\kappa_p([i,j]_r):=-i-rj+2N-a,
   \qquad r=1,2,3,
\]
is a single-valued integer function on $\mathcal V_p$, and every positive path
$\gamma:x\rightsquigarrow y$ satisfies
\[
   |\gamma|_\omega
      =\widetilde\kappa_p(y)-\widetilde\kappa_p(x)
       +3n_3(\gamma).
\]
Consequently, every nonempty closed positive path $\gamma$ satisfies
\[
                         |\gamma|_\omega\in 3\mathbb Z_{>0}.
\]
In particular, there are no negative- or zero-weight closed positive
paths.  The minimum in \eqref{eq:directed-distance} is finite and
attained, and for every
retained positive edge occurrence $e:x\to y$,
\begin{equation}
\label{eq:exact-adjacent-distances}
                  d_p(x,y)=\omega(e),
                  \qquad
                  d_p(y,x)=3-\omega(e).
\end{equation}
This includes exterior-boundary occurrences and either developed
occurrence of a retained seam edge.
\end{lemma}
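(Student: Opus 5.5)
The plan is to establish the telescoping identity by a direct edge-by-edge computation and then to read off every other assertion from it, using one auxiliary potential and the elementary triangles of \(\mathcal A_p\).

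\emph{Single-valuedness.} The only vertices with two developed names are the seam vertices
\[
  [k,k]_r=(N-k)e_0+ke_{r+1}=[0,k]_{r+1}.
\]
On \(P_r\) the formula gives \(\widetilde\kappa_p=-k-rk+2N-a\). On \(P_{r+1}\) it gives \(-(r+1)k+2N-a\). These agree, so \(\widetilde\kappa_p\) is well defined on \(\mathcal V_p\). The same computation shows that
\[
  \varphi([i,j]_r):=i+rj=2N-a-\widetilde\kappa_p
\]
is single-valued.

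\emph{Telescoping identity.} It suffices to compare \(\omega(e)\) with the increment of \(\widetilde\kappa_p\) along each of the three positive directions on \(P_r\), and then sum along \(\gamma\).
\begin{itemize}
\item For \(d_1\colon[i,j]\to[i-1,j]\), the increment of \(\widetilde\kappa_p\) is \(+1\) and \(\omega=1\).
\item For \(d_2\colon[i,j]\to[i,j-1]\), the increment is \(r\) and \(\omega=1+(r-1)=r\).
\item For \(d_3\colon[i,j]\to[i+1,j+1]\), the increment is \(-1-r\) and \(\omega=2-r\), so \(\omega\) exceeds the increment by exactly \(3\).
\end{itemize}
Summing over the edge occurrences of \(\gamma\) gives \(|\gamma|_\omega=\widetilde\kappa_p(y)-\widetilde\kappa_p(x)+3n_3(\gamma)\). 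Seam occurrences need no separate treatment, because each developed occurrence is an edge of a single panel and the potential is single-valued at its endpoints.

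\emph{Closed paths.} For a closed path the identity gives \(|\gamma|_\omega=3n_3(\gamma)\in3\mathbb Z_{\ge0}\). The key point is to exclude a nonempty closed path with \(n_3=0\). Along \(d_1\) the potential \(\varphi\) drops by \(1\), and along \(d_2\) it drops by \(r\geq1\). Hence any nonempty path using only \(d_1,d_2\) strictly decreases the single-valued function \(\varphi\) and cannot close up. Therefore \(|\gamma|_\omega\in3\mathbb Z_{>0}\). With no negative cycles on a finite digraph, the minimum in \eqref{eq:directed-distance} is attained by a simple path as soon as \(y\) is reachable from \(x\). Reachability is handled next.

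\emph{Adjacent distances and reachability.} Every retained edge occurrence \(e\colon x\to y\), including exterior-boundary occurrences and either developed seam occurrence, lies in some elementary triangle of \(\mathcal A_p\). The three edges of that triangle are parallel to \(d_1,d_2,d_3\), and since \(d_1+d_2+d_3=0\) they form a directed positive cycle. Its weight is \(3\), since \(n_3=1\) and \(\widetilde\kappa_p\) telescopes to zero. The two remaining edges of this triangle therefore give a positive path \(y\rightsquigarrow x\) of weight \(3-\omega(e)\).

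Two consequences follow.
\begin{enumerate}
\item Reversibility of every edge, together with connectivity of the disk \(\mathcal A_p\) (Corollary~\ref{cor:support-disk}), makes the positive digraph strongly connected, so \(d_p\) is finite everywhere.
\item \(d_p(x,y)\le\omega(e)\) and \(d_p(y,x)\le3-\omega(e)\).
\end{enumerate}

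For the matching lower bounds, first suppose \(e\) is parallel to \(d_1\) or \(d_2\). Then the identity gives \(d_p(x,y)\ge\widetilde\kappa_p(y)-\widetilde\kappa_p(x)=\omega(e)\). Now suppose \(e\) is parallel to \(d_3\). Then \(\varphi(y)-\varphi(x)=1+r>0\), whereas any path with \(n_3=0\) strictly decreases \(\varphi\). Hence every path \(x\rightsquigarrow y\) has \(n_3\ge1\), and so \(d_p(x,y)\ge\widetilde\kappa_p(y)-\widetilde\kappa_p(x)+3=\omega(e)\).

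Finally, concatenating a minimizing path \(y\rightsquigarrow x\) with \(e\) produces a nonempty closed path, whose weight is at least \(3\). This gives \(d_p(y,x)\ge3-\omega(e)\) and proves \eqref{eq:exact-adjacent-distances}.

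The only delicate point is the exclusion of closed \(n_3=0\) paths across seams. It is resolved by checking that \(\varphi=i+rj\) is single-valued on seam vertices, which is the same computation as for \(\widetilde\kappa_p\).
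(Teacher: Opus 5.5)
Your proposal is correct and follows essentially the same route as the paper: the same seam check for $\widetilde\kappa_p$, the same edgewise telescoping, the same monotonicity argument excluding nonempty closed paths with $n_3=0$, and the same two-edge return path around an elementary triangle for reachability and for $d_p(y,x)$. The only difference is cosmetic. You prove $d_p(x,y)\ge\omega(e)$ by splitting on the direction of $e$: for a $d_3$ edge, $\varphi$ increases, which forces $n_3\ge1$ on every path from $x$ to $y$. The paper instead obtains this bound uniformly by closing any $\eta\colon x\rightsquigarrow y$ with the return path $\overline e$ and applying the cycle gap.
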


\begin{proof}
At the seam between $P_r$ and $P_{r+1}$, the vertex at level $k$ has
the two coordinate names $[k,k]_r$ and $[0,k]_{r+1}$.  They give
\[
 \widetilde\kappa_p([k,k]_r)
     =-(r+1)k+2N-a
     =\widetilde\kappa_p([0,k]_{r+1}).
\]
Thus $\widetilde\kappa_p$ has no jump at either seam and descends to a
single-valued function on the vertices of the glued complex.  In
particular, its increments telescope even along a path that changes
panels at a seam vertex; changing panels itself contributes neither an
edge nor a cost.  Figure~\ref{fig:positive-cycle-local}~(a) depicts the
two oppositely oriented developed occurrences of a seam edge.

Let $e:x\to y$ be a positive edge occurrence in $P_r$.  Direct
substitution in the definition of $\widetilde\kappa_p$ gives
\[
\begin{aligned}
 e\parallel d_1&:
   &\widetilde\kappa_p(y)-\widetilde\kappa_p(x)&=1=\omega(e),\\
 e\parallel d_2&:
   &\widetilde\kappa_p(y)-\widetilde\kappa_p(x)&=r=\omega(e),\\
 e\parallel d_3&:
   &\widetilde\kappa_p(y)-\widetilde\kappa_p(x)&=-(r+1)
       =\omega(e)-3.
\end{aligned}
\]
Equivalently, all three cases are expressed by the single identity
\[
   \omega(e)=\widetilde\kappa_p(y)-\widetilde\kappa_p(x)
              +3\mathbf 1_{\{e\parallel d_3\}}.
\]
Summing this identity along $\gamma$ proves the asserted path formula.

If $\gamma$ is closed, the potential term telescopes to zero, so
\[
                         |\gamma|_\omega=3n_3(\gamma).
\]
A nonempty closed positive path cannot have $n_3(\gamma)=0$: along
every $d_1$- or $d_2$-edge the function
$\widetilde\kappa_p$ strictly increases, while changing panels at a
seam leaves it unchanged.  A nonempty path using only $d_1$- and
$d_2$-edges therefore cannot return to its initial vertex.  Hence
$n_3(\gamma)\geq1$, proving the positive-cycle gap.  Notice that this
argument permits arbitrarily many changes of panel and also covers the
edge of weight $-1$ in $P_3$: a negative individual edge is harmless
because every closed positive path still has weight at least $3$.

We next justify the assertion about the minimum in
\eqref{eq:directed-distance}.  The
underlying triangular complex is connected.  An unoriented edge can be
traversed in its positive direction by the edge itself and in the
opposite direction by the other two positive sides of an elementary
triangle incident to that edge occurrence.  Positive paths therefore
exist between every two vertices.  If a positive path repeats a
vertex, deleting the intervening closed subpath lowers its weight by at
least $3$.  A minimizing path can consequently be chosen
vertex-simple.  Since $\mathcal A_p$ is finite, there are only finitely many
such paths, and the minimum is attained.

Finally, let $e:x\to y$ be a retained positive edge occurrence.  By
the definition of the triangular cell complex $\mathcal A_p$, this occurrence
is a side of at least one retained elementary panel triangle.  The
other two sides give a positive path
$\overline e:y\rightsquigarrow x$.  The three positive sides of an
elementary triangle consist of one edge in each of the directions
$d_1,d_2,d_3$, so their total weight is
\[
                         1+r+(2-r)=3.
\]
It follows that $|\overline e|_\omega=3-\omega(e)$; see
Figure~\ref{fig:positive-cycle-local}~(b).

If $\eta:x\rightsquigarrow y$ is any positive path, then
$\eta\overline e$ is a nonempty closed positive path.  The cycle gap
therefore gives
\[
  3\leq |\eta|_\omega+3-\omega(e),
  \qquad\text{and hence}\qquad
  |\eta|_\omega\geq\omega(e).
\]
The one-edge path $e$ realizes equality.  Similarly, if
$\zeta:y\rightsquigarrow x$ is any positive path, the closed path
$e\zeta$ gives $|\zeta|_\omega\geq3-\omega(e)$, and
$\overline e$ realizes equality.  This proves
\eqref{eq:exact-adjacent-distances}.
\end{proof}

\begin{figure}[t]
\centering
\begin{tikzpicture}[>=stealth,font=\footnotesize]
  \begin{scope}[xshift=-3.25cm]
    \coordinate (z) at (0,0);
    \coordinate (s) at (0,2.55);
    \coordinate (l) at (-2.05,1.275);
    \coordinate (q) at ( 2.05,1.275);

    \draw (z)--(l)--(s)--cycle;
    \draw (z)--(q)--(s)--cycle;
    \draw[densely dashed] (z)--(s);

    \node[below] at (z) {$0$};
    \node[above] at (s) {$r+1$};
    \node[left]  at (l) {$r$};
    \node[right] at (q) {$r+2$};
    \node at (-1.15,1.78) {$P_r$};
    \node at ( 1.18,.72) {$P_{r+1}$};

    \fill (0,1.3) circle (1.2pt);
    \node[fill=white,inner sep=1.2pt,anchor=west] at (.18,1.3)
       {level $k$};

    \draw[->,semithick] (-.12,.33)--(-.12,1.03)
       node[midway,left] {$d_3,\;2-r$};
    \draw[->,semithick] (.12,2.22)--(.12,1.55)
       node[midway,right] {$d_2,\;r+1$};

    \node[align=center] at (0,-.62)
       {$\widetilde\kappa_p([k,k]_r)
         =\widetilde\kappa_p([0,k]_{r+1})$};
    \node at (0,-1.08) {(a)};
  \end{scope}

  \begin{scope}[xshift=4.15cm,yshift=.18cm]
    \coordinate (x) at (-1.55,0);
    \coordinate (y) at ( 1.55,0);
    \coordinate (u) at (0,2.18);

    \draw[->,semithick] (x)--(y)
       node[midway,below=3pt] {$e:x\to y$}
       node[midway,above=2pt] {$\omega(e)$};
    \draw[->,semithick] (y)--(u);
    \draw[->,semithick] (u)--(x);

    \fill (x) circle (1.2pt) node[below left] {$x$};
    \fill (y) circle (1.2pt) node[below right] {$y$};
    \fill (u) circle (1.2pt) node[above] {$u$};

    \node[align=center,fill=white,inner sep=1.4pt] at (0,1.28)
       {$\overline e:y\rightsquigarrow x$\\[-1pt]
        has weight $3-\omega(e)$};
    \node at (0,-.9) {(b)};
  \end{scope}
\end{tikzpicture}
\caption{The two local features used in
Lemma~\ref{lem:positive-cycle-gap}.  (a) The integer potential agrees
under the two coordinate names of a seam vertex.  The two developed
occurrences of a seam edge have opposite positive orientations and
weights $(2-r)$ and $(r+1)$, whose sum is $3$.  (b) The complementary
two-edge return path around an elementary panel triangle.}
\label{fig:positive-cycle-local}
\end{figure}

Reducing the integer lift modulo three, put
\[
       \kappa_p(z):=\widetilde\kappa_p(z)\pmod 3.
\]
The chosen constant normalizes this residue potential at $v_9$, since
\[
       v_9=[N-a,N]_1,
       \qquad
       \widetilde\kappa_p(v_9)=-(N-a)-N+2N-a=0.
\]
Concatenation of minimizing paths and the path identity in
Lemma~\ref{lem:positive-cycle-gap} now give
\begin{equation}
\label{eq:directed-triangle-and-residue}
 \begin{split}
 d_p(x,y)&\leq d_p(x,z)+d_p(z,y),\\
 d_p(x,y)&\equiv \kappa_p(y)-\kappa_p(x)\pmod 3.
 \end{split}
\end{equation}
Equivalently, on $P_r$ one has
\[
       \kappa_p([i,j]_r)\equiv-i-rj+2N-a\pmod 3.
\]
Thus the panel constants are automatically seam-compatible and equal
to $2N-a$ modulo three; no separate seam case is needed.

\medskip
\noindent\textbf{An explicit reference tiling.}
Recall the five vertices \(v_0,\ldots,v_4\) from
\eqref{eq:five-g-points}, and put
\[
\begin{aligned}
v_5&=(a+b,0,0,c,d),&
v_6&=(N-c,0,0,c,0),\\
v_7&=(a+d,0,b,c,0),&
v_8&=(N-b,0,b,0,0),\\
v_9&=(0,a,N-a,0,0).
\end{aligned}
\]
These are the five remaining corners of the support polygon, listed
clockwise from bottom to top in the developed picture.  Here and below
the ambient fan vertices are denoted \(u_0,\ldots,u_4\), to distinguish
them from the polygon vertices \(v_i\).  As declared above, figures
abbreviate \(u_i\) to \(i\).

Let \(M_{\rm ref}\) be the following simultaneous tiling of the three
developed panels; a degenerate region is omitted.
\begin{enumerate}
\item The quadrilateral \(v_0v_5v_6v_1\) is tiled by red lozenges.
The triangle \(v_0v_1u_3\) is tiled by blue lozenges, apart from the
purple singleton triangles adjacent to the seam segment \(v_1-u_3\).
\item The triangle \(v_1v_6v_2\) is tiled by blue lozenges, apart from
the purple singleton triangles adjacent to \(v_6-v_1\).  The
quadrilateral \(v_2v_3v_8v_7\) is tiled by red lozenges.  The pentagon
\(v_1v_2v_3u_2u_3\) is tiled by green lozenges, apart from the yellow
singleton triangles adjacent to \(v_3-u_2\).
\item The triangle \(v_3v_8v_4\) is tiled by green lozenges, apart from
the yellow singleton triangles adjacent to \(v_8-v_3\).  The
quadrilateral \(v_3v_4v_9u_2\) is tiled by blue lozenges.
\end{enumerate}
A labeled instance of this seven-region tiling is shown in
Figure~\ref{fig:step-v-bottom-matching}.
To make the color convention intrinsic, let \(d_1,d_2,d_3\) be the
positive directions in \eqref{eq:positive-panel-directions}; thus, on
\(P_r\),

\[
 d_1=e_r-e_{r+1},\qquad d_2=e_0-e_r,\qquad
 d_3=e_{r+1}-e_0.
\]
A color means that every lozenge in that region has its selected edge
parallel to the corresponding direction: on \(P_1\) and \(P_3\), red,
blue, and green mean \(d_1,d_2,d_3\), respectively; on the reflected
middle panel \(P_2\), they mean \(d_1,d_3,d_2\), respectively.
Thus the color and the polygon vertices determine every tile.

\medskip
\noindent\textbf{Legality of the reference tiling.}
This verification uses only the support inequalities, not an extremal
extension.  The inequalities in \eqref{eq:Sp-formula} determine the
outer sides of \(\mathcal A_p\).  The additional internal interfaces of
the seven-region decomposition are
\begin{align*}
 P_1=012:&\qquad v_3v_4\subset\{q\in P_1:q_0=c+d\},\\
 P_2=023:&\qquad v_1v_2\subset\{q\in P_2:q_0=d\},\quad
                    v_7v_2\subset\{q\in P_2:q_3=c\},\\
          &\qquad v_2v_3\subset\{q\in P_2:q_2=a+b\},\\
 P_3=034:&\qquad v_0v_1\subset\{q\in P_3:q_3=N-d\}.
\end{align*}
Together with the retained seams, these segments partition
\(\mathcal A_p\) into precisely the polygons listed above: their
interiors are disjoint and their union is \(\mathcal A_p\).  This
follows directly by subtracting the displayed endpoint coordinates.
The same subtraction shows that every non-seam side of each piece lies
in one of the two lattice directions complementary to its selected
lozenge direction.  Pairing consecutive elementary triangles in the
third direction therefore gives the asserted monochromatic lozenges.

On seam \(02\), the singleton intervals \([v_8,v_3]\) and
\([v_3,u_2]\) on the two developed occurrences are complementary; on
seam \(03\), the corresponding complementary intervals are
\([v_6,v_1]\) and \([v_1,u_3]\).
Hence every elementary triangle belongs to exactly one ordinary
lozenge or exactly one singleton seam triangle.  Thus \(M_{\rm ref}\)
is a legal tiling in the sense of
Definition~\ref{def:legal-lozenge-triangle-tiling}, and therefore a
perfect matching of \(G_p\).

Let
\[
                         H_{\rm ref}:=h_{M_{\rm ref}},
             \qquad      H_{\rm ref}(v_9)=0.
\]
No exterior boundary edge can be selected: it is not the common side
of two retained elementary triangles.  Consequently
\(H_{\rm ref}(y)-H_{\rm ref}(x)=\omega(e)\) on every positively
oriented boundary edge \(e:x\to y\).  Define
\[
                         f_p:=H_{\rm ref}|_{B_p}.
\]
Equivalently, \(f_p(v_9)=0\) and \(f_p\) is obtained by integrating the
forced upper increments around the boundary.  The reference potential
makes the consistency of this prescription automatic.  The same
exterior-edge observation shows that every legal tiling, after the
normalization \(h_M(v_9)=0\), has boundary trace \(f_p\).

\begin{lemma}
\label{lem:boundary-datum-compatible}
For all \(b,b'\in B_p\),
\begin{equation}
 f_p(b')-f_p(b)\le d_p(b,b'),
 \qquad
 f_p(b)\equiv\kappa_p(b)\pmod3.
\label{eq:boundary-datum-compatible}
\end{equation}
\end{lemma}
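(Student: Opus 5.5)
The plan is to derive both assertions from the fact that \(f_p\) is the restriction of the aligned height \(H_{\rm ref}\) of the legal tiling \(M_{\rm ref}\). No separate analysis of the boundary polygon should be needed.

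\emph{The Lipschitz inequality.} First I would show that \(H_{\rm ref}\) satisfies the one-edge bound \(H_{\rm ref}(y)-H_{\rm ref}(x)\le\omega(e)\) for every retained positive edge occurrence \(e:x\to y\) of \(\mathcal A_p\). This follows from \eqref{eq:two-edge-increments}: the increment is \(\omega(e)\) or \(\omega(e)-3\), and both are at most \(\omega(e)\). At a retained seam edge the two developed occurrences are distinct edges of the path complex. Each still obeys \eqref{eq:two-edge-increments}, because \(h_M\) is single-valued by \eqref{eq:aligned-global-height} and the complementary-singleton rule. Now take a minimizing positive path \(\gamma:b\rightsquigarrow b'\); it exists by Lemma~\ref{lem:positive-cycle-gap}. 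Changes of panel at seam vertices cost nothing, and \(H_{\rm ref}\) takes a single value at each glued vertex. Summing the one-edge bound along \(\gamma\) therefore telescopes to
\[
 f_p(b')-f_p(b)=H_{\rm ref}(b')-H_{\rm ref}(b)\le|\gamma|_\omega=d_p(b,b').
\]
Interior vertices of \(\gamma\) need not lie on \(B_p\). This causes no difficulty, because \(H_{\rm ref}\) is defined on all of \(\mathcal V_p\).

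\emph{The residue condition.} Combine \eqref{eq:two-edge-increments} with the potential identity of Lemma~\ref{lem:positive-cycle-gap}. On each positive edge occurrence, \(H_{\rm ref}(y)-H_{\rm ref}(x)\equiv\omega(e)\equiv\widetilde\kappa_p(y)-\widetilde\kappa_p(x)\pmod 3\). The complex \(\mathcal A_p\) is connected, so integrating along any positive path from \(v_9\) gives \(H_{\rm ref}(z)-H_{\rm ref}(v_9)\equiv\kappa_p(z)-\kappa_p(v_9)\). The normalizations \(H_{\rm ref}(v_9)=0=\widetilde\kappa_p(v_9)\) then give \(H_{\rm ref}\equiv\kappa_p\) on all of \(\mathcal V_p\), and in particular \(f_p\equiv\kappa_p\) on \(B_p\).

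\emph{Main obstacle.} The only delicate point is that \(H_{\rm ref}\) is a genuine single-valued aligned height satisfying \eqref{eq:two-edge-increments} on \emph{both} developed copies of each seam edge. This rests on the legality of \(M_{\rm ref}\), in particular the complementary singleton intervals on seams \(02\) and \(03\), together with the seam-alignment identity \(F_{r+1}([0,k]_{r+1})=F_r([k,k]_r)+k\). If that were in doubt, I would verify the seam increments directly from \(\Delta F_\pm\) and the affine correction \((r-1)j\). I would also check that the boundary trace is independent of which legal tiling is used, which holds because exterior edges are never selected.
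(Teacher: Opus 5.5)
Your proposal is correct and follows the paper's proof essentially step for step. You sum the one-edge bound \(H_{\rm ref}(y)-H_{\rm ref}(x)\le\omega(e)\) along a positive path, taking a minimizer directly where the paper takes an arbitrary path and then minimizes, and you derive the residue from \eqref{eq:two-edge-increments}, the potential identity of Lemma~\ref{lem:positive-cycle-gap}, connectedness, and the normalization at \(v_9\).
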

\begin{proof}
For a positive edge occurrence \(e:x\to y\), let
\(\chi_{\rm ref}(e)\) indicate whether \(e\) is selected by
\(M_{\rm ref}\).  The two-increment rule
\eqref{eq:two-edge-increments} gives
\begin{equation}
 H_{\rm ref}(y)-H_{\rm ref}(x)
   =\omega(e)-3\chi_{\rm ref}(e)\le\omega(e).
\label{eq:reference-potential-edge-bound}
\end{equation}
Let \(\eta:b\rightsquigarrow b'\) be any positive edge path in
\(\mathcal A_p\).  Only its endpoints are required to lie in \(B_p\);
its intermediate vertices may lie anywhere in \(\mathcal A_p\).  Then
\[
\begin{aligned}
 f_p(b')-f_p(b)
 &=H_{\rm ref}(b')-H_{\rm ref}(b)\\
 &=\sum_{e\in\eta}
      \bigl(\omega(e)-3\chi_{\rm ref}(e)\bigr)
 \le \sum_{e\in\eta}\omega(e)
 =|\eta|_\omega.
\end{aligned}
\]
The aligned height \(H_{\rm ref}\) is single-valued across the seams,
so the telescoping identity is unchanged when \(\eta\) changes panels.
Since \(\eta\) was arbitrary, taking the minimum over all positive paths
from \(b\) to \(b'\) and using the definition
\eqref{eq:directed-distance} gives
\[
 f_p(b')-f_p(b)
 \le \min_{\eta:b\rightsquigarrow b'}|\eta|_\omega
 =d_p(b,b').
\]
This proves the first assertion.
The identical summation with arbitrary endpoints \(u,v\) shows, more
generally, that
\[
                    H_{\rm ref}(v)-H_{\rm ref}(u)\le d_p(u,v);
\]
thus \(H_{\rm ref}\) is a directed Lipschitz extension of \(f_p\).

Modulo three, \eqref{eq:reference-potential-edge-bound} and
Lemma~\ref{lem:positive-cycle-gap} give
\[
 H_{\rm ref}(y)-H_{\rm ref}(x)
 \equiv\omega(e)
 \equiv\kappa_p(y)-\kappa_p(x)\pmod3
\]
on every positive edge occurrence.  Since the glued complex is
connected and
\(H_{\rm ref}(v_9)=0\equiv\kappa_p(v_9)\pmod3\), it follows that
\(H_{\rm ref}\equiv\kappa_p\pmod3\) everywhere.  Restriction to
\(B_p\) proves the second assertion.
\end{proof}

\begin{lemma}[Extremal extension]
\label{lem:directed-extremal-extension}
Define
\begin{equation}
 \begin{split}
 L_p(z)&:=\max_{b\in B_p}\{f_p(b)-d_p(z,b)\},\\
 U_p(z)&:=\min_{b\in B_p}\{f_p(b)+d_p(b,z)\}.
 \end{split}
\label{eq:directed-envelopes}
\end{equation}
Then \(L_p\) and \(U_p\) extend \(f_p\), have residue \(\kappa_p\)
modulo three, and satisfy
\begin{equation}
 H(y)-H(x)\le d_p(x,y)
 \qquad(H=L_p,U_p).
\label{eq:envelope-directed-lipschitz}
\end{equation}
Every function \(H:\mathcal V_p\to\mathbb Z\) with the same boundary
values, residue, and directed Lipschitz property satisfies
\begin{equation}
                         L_p\le H\le U_p.
\label{eq:directed-envelope-sandwich}
\end{equation}
Moreover, each such function is the aligned height of a unique legal
tiling of \(\mathcal A_p\), hence of a perfect matching of \(G_p\).
\end{lemma}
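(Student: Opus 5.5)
The first three properties follow from the triangle inequality and positivity facts already established, and the substantive work is the final assertion that every sandwiched function comes from a tiling.

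\emph{Extension and residue.} For \(z\in B_p\), choosing \(b=z\) in both envelopes gives \(L_p(z)\ge f_p(z)\) and \(U_p(z)\le f_p(z)\), since \(d_p(z,z)=0\) (the empty path; nonempty closed paths have positive weight by Lemma~\ref{lem:positive-cycle-gap}). The reverse inequalities are exactly the compatibility bound \eqref{eq:boundary-datum-compatible}: \(f_p(b)-d_p(z,b)\le f_p(z)\) and \(f_p(b)+d_p(b,z)\ge f_p(z)\). Hence both envelopes extend \(f_p\).

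For the residue, each candidate \(f_p(b)-d_p(z,b)\) is congruent to
\[
 \kappa_p(b)-\bigl(\kappa_p(b)-\kappa_p(z)\bigr)=\kappa_p(z)
\]
modulo three, by \eqref{eq:directed-triangle-and-residue} and Lemma~\ref{lem:boundary-datum-compatible}. Since all candidates are congruent, so is their maximum; the same holds for \(U_p\).

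\emph{Directed Lipschitz property.} For \(L_p\), use \(d_p(x,b)\le d_p(x,y)+d_p(y,b)\) inside the maximum to get \(L_p(y)-d_p(x,y)\le L_p(x)\). The symmetric triangle-inequality argument handles \(U_p\).

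\emph{Sandwich.} If \(H\) has the same boundary values and is directed Lipschitz, then for every \(b\in B_p\),
\[
 H(z)\ge f_p(b)-d_p(z,b)
 \qquad\text{and}\qquad
 H(z)\le f_p(b)+d_p(b,z).
\]
Taking the maximum, respectively minimum, over \(b\) gives \eqref{eq:directed-envelope-sandwich}.

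\emph{Every admissible \(H\) is a tiling height.} This is the main step. By \eqref{eq:exact-adjacent-distances}, along each retained positive edge occurrence \(e:x\to y\) we have
\[
 H(y)-H(x)\le\omega(e)
 \qquad\text{and}\qquad
 H(x)-H(y)\le3-\omega(e),
\]
so \(H(y)-H(x)\in[\omega(e)-3,\omega(e)]\). The residue condition forces this difference to be congruent to \(\omega(e)\) modulo three, hence it equals \(\omega(e)\) or \(\omega(e)-3\). Declare \(e\) \emph{selected} in the second case.

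On an elementary triangle the three positive edges carry total weight \(3\), and the increments of \(H\) around it sum to zero. Therefore exactly one edge of each retained elementary triangle is selected. Two triangles sharing a selected interior edge pair into a lozenge; this is the standard height-function/lozenge dictionary. Boundary edges of \(B_p\) are never selected, because \(H\) agrees with \(f_p\) there and \(f_p\) has the unselected increments \(\omega(e)\).

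\emph{Seams.} A retained seam edge has two developed occurrences, with opposite orientations and weights \(2-r\) and \(r+1\) summing to \(3\). Since \(H\) is single-valued on the glued complex, exactly one of the two occurrences is selected. Consequently the triangle facing that occurrence pairs across the seam into an ordinary lozenge, while the triangle facing the unselected occurrence becomes the singleton seam triangle. This is exactly condition~2 of Definition~\ref{def:legal-lozenge-triangle-tiling}.

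\emph{Uniqueness and conclusion.} The resulting tiling \(M\) satisfies \(h_M=H\): the two functions have the same increments by \eqref{eq:two-edge-increments}, and they agree at \(v_9\). Uniqueness holds because a tiling is determined by its set of selected edges, which is read off from its height. The bijection \(\mathcal T_p\leftrightarrow\D(G_p)\) then produces the perfect matching.

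The delicate point is the seam bookkeeping. One must check that "exactly one selected occurrence" corresponds to the complementary-singleton rule with the correct orientation, rather than the reversed one. I would verify this directly using the increments \(\Delta F_\pm\) and the alignment \eqref{eq:aligned-global-height}.
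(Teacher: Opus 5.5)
Your argument follows the paper's proof step for step. Extension comes from the compatibility inequality \eqref{eq:boundary-datum-compatible}, the residue from \eqref{eq:directed-triangle-and-residue}, and the Lipschitz property and the sandwich from the directed triangle inequality. The two-value increment dichotomy comes from \eqref{eq:exact-adjacent-distances} plus the residue, one selected side per elementary triangle from telescoping, and uniqueness from the increment test. Your remark that $d_p(z,z)=0$ relies on the positive-cycle gap is a point the paper leaves implicit.

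The one step that is wrong as written is the seam paragraph. It is exactly the orientation question you deferred, and the correspondence is the reverse of what you state.

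\begin{itemize}
\item By item~1 of Definition~\ref{def:legal-lozenge-triangle-tiling}, no lozenge crosses a seam.
\item Each developed occurrence of a seam edge is a side of only one retained triangle, namely the facing triangle in its own panel.
\item If that occurrence is selected, it is the unique selected side of its triangle. That triangle therefore has no partner and is the singleton seam triangle.
\item The facing triangle on the other side then has an unselected seam occurrence. Its selected side lies inside its own panel, so it belongs to an ordinary lozenge there.
\end{itemize}

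This agrees with $\Delta F_-=1-3\epsilon_-$ and $\Delta F_+=-1+3\epsilon_+$ from Lemma~\ref{lem:rf-seam-compatibility}. The increment in the positive direction is $-2$ precisely when the facing triangle is the singleton. The aligned shift does not change which of the two values in \eqref{eq:two-edge-increments} occurs.

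Your version makes the ``singleton'' triangle simultaneously paired through its interior selected side, and it creates a tile straddling the seam. Neither is legal. The count is unaffected: exactly one selected occurrence still means exactly one singleton among the two facing triangles. So the complementary-singleton rule, and hence the lemma, follow once this sentence is corrected.
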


\begin{proof}
Compatibility~\eqref{eq:boundary-datum-compatible} implies that both
envelopes agree with \(f_p\) on \(B_p\).  The directed triangle
inequality proves~\eqref{eq:envelope-directed-lipschitz}.  Conversely,
if \(H\) is any directed Lipschitz extension, then for every \(b\in B_p\)
\[
 f_p(b)-d_p(z,b)\le H(z)\le f_p(b)+d_p(b,z);
\]
maximizing the left side and minimizing the right side proves
\eqref{eq:directed-envelope-sandwich}.  The congruences in
\eqref{eq:directed-triangle-and-residue} show that all candidates in each
envelope have residue \(\kappa_p(z)\).

For a positive edge occurrence \(e:x\to y\), apply
\eqref{eq:envelope-directed-lipschitz} to \(e\) and to the positive
return path of weight \(3-\omega(e)\).  Thus
\[
 \omega(e)-3\le H(y)-H(x)\le\omega(e).
\]
The residue condition leaves exactly the two values in
\eqref{eq:two-edge-increments}.  Select \(e\) precisely for the lower
one.  Around each elementary panel triangle, telescoping the three
increments shows that exactly one of its three sides is selected.
On a retained seam the single-valued aligned height gives the
complementary-singleton equation above.  Hence the selected ordinary
sides and singleton choices form a legal tiling.  The increment test
also recovers them uniquely from \(H\).
\end{proof}

Pointwise minimum and maximum preserve the two legal increments and the
common boundary normalization.  Consequently these height functions are
closed under \(\min\) and \(\max\) and form a finite distributive lattice.
This conclusion does not assert that every cover relation in that lattice is a
single-face rotation; no such assertion is used below.

\begin{definition}[Bottom and top heights]
\label{def:bottom-top-matchings}
Let \(M_{\min}\) and \(M_{\max}\) be the legal tilings supplied by
Lemma~\ref{lem:directed-extremal-extension} from \(L_p\) and \(U_p\),
and put
\[
                         h_{\rm bot}:=L_p,
             \qquad      h_{\rm top}:=U_p.
\]
For every matching \(M\), Lemma~\ref{lem:directed-extremal-extension}
gives
\begin{equation}
                 h_{\rm bot}\le h_M\le h_{\rm top}
                 \qquad(M\in\mathcal D(G_p)).
\label{eq:bottom-top-sandwich}
\end{equation}
Thus these are genuinely the bottom and top elements of the matching
lattice, established without flip-connectivity.
\end{definition}

\subsection{The two polygonal candidates}

The first candidate has already appeared as the explicit reference
tiling used in Lemma~\ref{lem:boundary-datum-compatible}.  We now
record the notation needed when the
target varies.  For
\(p'=(0,a',b',c',d')\), write
\[
        v_9(p')=(0,a',N'-a',0,0),
        \qquad N'=a'+b'+c'+d'.
\]
If \(p=(\alpha,\beta,\gamma,\delta)\) is a continuum target, then
\(v_9(np)\) abbreviates
\(v_9(0,n\alpha,n\beta,n\gamma,n\delta)\).
For a fixed target \(p\), the argument is omitted.

\begin{definition}[The polygonal bottom matching]
\label{def:polygonal-matching}
The explicit reference tiling \(M_{\rm ref}\) constructed before
Lemma~\ref{lem:boundary-datum-compatible} is denoted by
\(M_{\rm poly}\).  At this stage this is only a
legal polygonal candidate; its identification with \(M_{\min}\) is
proved later.
\end{definition}

\begin{lemma}[No negative rotation in the polygonal matching]
\label{lem:polygonal-bottom-no-negative}
The perfect matching \(M_{\rm poly}\) of
Definition~\ref{def:polygonal-matching} admits no negative face
rotation.
\end{lemma}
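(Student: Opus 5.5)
The plan is to turn the absence of a negative rotation into a purely local condition on the aligned height \(H_{\rm ref}=h_{M_{\rm poly}}\), and then to check that condition region by region in the seven-region description of \(M_{\rm ref}\).

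\emph{Local criterion.} By \eqref{eq:two-edge-increments}, a face rotation at a selected vertex \(v\in S_p\) changes \(h_M\) only at \(v\), by \(\pm3\). Lowering \(h_M(v)\) by \(3\) must again give a legal aligned height. Every positive edge \(x\to v\) entering \(v\) then loses \(3\) from its increment, so it must currently carry the upper value \(\omega(e)\), i.e.\ be unselected. Every positive edge \(v\to y\) leaving \(v\) gains \(3\), so it must currently carry \(\omega(e)-3\), i.e.\ be selected. (This is the converse direction of Lemma~\ref{lem:directed-extremal-extension}.)

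\begin{itemize}
\item At an off-seam interior vertex, the outgoing edges are one in each direction \(d_1,d_2,d_3\). A negative rotation therefore requires all three outgoing edges to be selected, so the three lozenges around \(v\) carry all three colours, arranged as the ``downward'' alternating star.
\item At a retained seam vertex, two of its developed edge occurrences lie on the seam, with opposite orientations (Figure~\ref{fig:positive-cycle-local}(a)). The same test reads off the lozenge choices together with the complementary-singleton choices \(\epsilon_\pm\), which is the two-edge flip of Lemma~\ref{lem:excavation-star-completion}.
\end{itemize}

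\emph{Elimination.} In the interior of any one of the seven monochromatic regions, every selected edge is parallel to a single direction. Hence no vertex there has all three outgoing directions selected, and no negative rotation occurs. Vertices on an internal interface between two regions see only two colours. The interfaces listed in the legality check are \(v_3v_4\) (\(q_0=c+d\) on \(P_1\)), \(v_1v_2\) and \(v_7v_2\) and \(v_2v_3\) (on \(P_2\)), and \(v_0v_1\) (\(q_3=N-d\) on \(P_3\)). Since each interface side lies in one of the two directions complementary to the selected direction on either side, the third selected direction is again missing. The only candidates left are:
\begin{itemize}
\item the finitely many corner vertices where three or more regions meet, namely \(v_1,v_2,v_3\), and \(v_6,v_8\) where a singleton interval ends;
\item the seam vertices on the singleton intervals \([v_8,v_3]\), \([v_3,u_2]\), \([v_6,v_1]\) and \([v_1,u_3]\).
\end{itemize}

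\emph{Corner and seam checks.} For each corner I would write the six edges, in the explicit panel coordinates \eqref{eq:five-g-points}, and read off from the colour convention which are selected. The aim is to show that at least one outgoing edge is unselected or one incoming edge is selected. Using the intrinsic dictionary (\(d_1,d_2,d_3\) on \(P_1,P_3\); \(d_1,d_3,d_2\) on \(P_2\)), I expect the local picture to be the ``upward'' star, or no full star at all: e.g.\ at \(v_1\) red, blue and the purple singletons meet with the blue selected edge pointing into \(v_1\). On the seam segments the singleton side is constant along each interval, so the seam test reduces to a check at the interval endpoints.

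The main obstacle is this bookkeeping at the triple points \(v_1,v_2,v_3\), which lie on or near the seams, where orientation conventions flip between \(P_r\) and \(P_{r+1}\). I would first run it on the running example \((2,3,2,3)\) against Figure~\ref{fig:three-panel-lozenge-height}, then verify the sign pattern symbolically using only \(a,b,c,d\ge1\).
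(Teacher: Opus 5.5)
Your proposal is correct and follows essentially the paper's proof: the same region-by-region local inspection (monochromatic interiors, two-colour interfaces, constant singleton strips, then the corners \(v_1,v_2,v_3\)). Your aligned-height lowering test is an equivalent, sign-explicit form of the paper's alternating-word criterion.

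The corner checks you defer do close as you expect. At \(v_1\) and \(v_3\), going up in level, the singleton side switches from the \(-\)-side to the \(+\)-side. That is the raising pattern, so lowering is impossible there. At \(v_2\) the only selected incident edge is the incoming green (\(d_2\)) one.

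Your illustrative example at \(v_1\) is slightly off, though. The selected incoming edges there are the green one on \(P_2\) and the red one on \(P_3\), not a blue one; the blue regions contribute only the purple singletons at \(v_1\).
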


\begin{proof}
Contract the inserted seam zigzags and put
\(\mu=\overline M_{\rm poly}\).  In this proof, write \(1\) for an
occupied boundary edge of a contracted face and \(0\) for an
unoccupied one, reading cyclically.  Because every region in the
seven-region prescription defining \(M_{\rm ref}=M_{\rm poly}\) is filled by one fixed
lozenge orientation, the local matching pattern is translation
invariant in the relative interior of that region.  At a selected
six-sided face in such a monochromatic interior, the cyclic
occupied/unoccupied boundary pattern is \(100100\), up to cyclic
shift.  In the relative interior of a straight interface between two
regions, the two prescribed lozenge orientations likewise give exactly
two occupied sides, but these need not be opposite: the cyclic pattern
can be \(101000\), up to cyclic shift.  In either case the pattern is
nonalternating and therefore supports no face rotation.

At a straight selected face adjacent to a seam-triangle strip, the
contracted face is four-sided and its boundary pattern is
\[
                              1000.
\]
This is again nonalternating.  Thus only the three mutable corners
\(v_1,v_2,v_3\) of the internal polygonal interfaces require separate
inspection.  At \(v_1\) and \(v_3\) the pattern is \(1010\).  With the
white-on-the-right convention of the auxiliary quiver
\(Q_0^{\rm R}\) from Section~\ref{sec:cluster-dictionary}, the occupied opposite pair is
the positive alternating class; the negative class would occupy the
other pair.  Hence neither face admits a negative rotation.  At
\(v_2\) the six-sided pattern is \(100000\), which is not alternating.

There are no further selected corner types.  Indeed the five additional
polygon vertices satisfy
\[
\begin{array}{c|ccccc}
 &v_5&v_6&v_7&v_8&v_9\\ \hline
\text{failed strict boundary}&q_3=c&q_3=c&
 (q_2,q_3)=(b,c)&q_2=b&q_0=0,
\end{array}
\]
so~\eqref{eq:Sp-formula} places all of them outside \(S_p\).
The endpoint \(v_4\) is also unselected, \(v_0\) is frozen, and the
ambient fan vertices do not give rotatable bounded selected faces.
This exhausts the relative interiors, straight interfaces, seam
strips, and polygon corners.

Consequently \(M_{\rm poly}\) has no negative face rotation.
\end{proof}

For the top matching, introduce the three new interface vertices
\begin{equation}
\begin{aligned}
w_1&=(a,0,N-a,0,0),\\
w_2&=(a,0,b,c+d,0),\\
w_3&=(a+b,0,0,c+d,0).
\end{aligned}
\label{eq:polygonal-top-vertices}
\end{equation}
Thus \(w_1\) lies on the retained seam \(02\), \(w_2\) lies in the
middle panel \(023\), and \(w_3\) lies on the retained seam \(03\).
They are the top-matching analogues of \(v_1,v_2,v_3\).

\begin{definition}[The polygonal top matching]
\label{def:polygonal-top-matching}
The matching \(M_{\rm poly}^{+}\) is the following simultaneous tiling.
As above, degenerate regions are omitted.
\begin{enumerate}
\item On panel \(012\), the quadrilateral \(v_8w_1v_9v_4\) is tiled
by red lozenges.  The triangle \(w_1v_9u_2\) is tiled by green
lozenges, apart from the yellow singleton triangles adjacent to the
seam segment \(w_1-u_2\).
\item On panel \(023\), the quadrilateral \(v_6w_3w_2v_7\) is tiled
by red lozenges.  The quadrilateral \(v_7w_2w_1v_8\) is tiled by
green lozenges, apart from the yellow singleton triangles adjacent to
\(v_8-w_1\).  The pentagon \(w_3w_2w_1u_2u_3\) is tiled by blue
lozenges, apart from the purple singleton triangles adjacent to
\(w_3-u_3\).
\item On panel \(034\), the triangle \(v_6w_3v_5\) is tiled by blue
lozenges, apart from the purple singleton triangles adjacent to
\(v_6-w_3\).  The quadrilateral \(w_3u_3v_0v_5\) is tiled by green
lozenges.
\end{enumerate}
\end{definition}

These polygons partition \(\mathcal A_p\).
Their internal straight sides are \(q_2=N-a\) on panel \(012\),
\(q_2=b\), \(q_3=c+d\), and \(q_0=a\) on panel \(023\), and
\(q_0=a+b\) on panel \(034\).  The yellow strips on the two sides of
seam \(02\) are complementary, as are the purple strips on the two
sides of seam \(03\).  Hence \(M_{\rm poly}^{+}\) is a legal perfect
matching.

\begin{lemma}[No positive rotation in the polygonal top matching]
\label{lem:polygonal-top-no-positive}
The perfect matching \(M_{\rm poly}^{+}\) of
Definition~\ref{def:polygonal-top-matching} admits no positive face
rotation.
\end{lemma}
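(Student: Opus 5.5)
The plan is to mirror the proof of Lemma~\ref{lem:polygonal-bottom-no-negative}. First contract the inserted seam zigzags and write \(\mu^+=\overline M{}^{+}_{\rm poly}\). Then classify every selected face \(F_v\), \(v\in S_p\), by its position relative to the seven-region decomposition of Definition~\ref{def:polygonal-top-matching}. There are five possible positions:
\begin{itemize}
\item the relative interior of a monochromatic region;
\item the relative interior of a straight internal interface (\(q_2=N-a\) on \(012\); \(q_2=b\), \(q_3=c+d\), \(q_0=a\) on \(023\); \(q_0=a+b\) on \(034\));
\item a face adjacent to a yellow or purple seam strip;
\item an interface corner;
\item a polygon vertex of the support.
\end{itemize}
In each case I would read off the cyclic occupied/unoccupied boundary word of \(\mu^+\) and decide whether it is alternating. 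Only an alternating word can support a rotation, and then one must check which alternating class it is.

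The generic cases go exactly as before. A monochromatic interior has translation-invariant matching pattern \(100100\), which is nonalternating. A straight interface between two fixed orientations gives two occupied sides, with pattern \(100100\) or \(101000\) up to cyclic shift. A straight face along a seam strip is four-sided with pattern \(1000\). None of these is alternating, so none admits any rotation.

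This leaves the three new interface corners \(w_1,w_2,w_3\) of \eqref{eq:polygonal-top-vertices}, together with the interface endpoints \(v_6,v_7,v_8\) and the other outer vertices.
\begin{itemize}
\item For the outer polygon corners, use the table in the proof of Lemma~\ref{lem:polygonal-bottom-no-negative}: \(v_4,\dots,v_9\) are outside \(S_p\), and \(v_0\) is frozen.
\item For \(w_1\) and \(w_3\): one must first check membership in \(S_p\). Since \(w_1=(a,0,N-a,0,0)\) has \(q_0=a\ge1\) and \(q_2=N-a\ge b+1\), it is selected and lies on seam \(02\). Similarly \(w_3\) is selected on seam \(03\), since \(q_3=c+d\ge c+1\). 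Both are contracted four-sided faces. I expect the pattern \(1010\), with the occupied pair being the \emph{negative} alternating class in the white-on-the-right convention of \(Q_0^{\rm R}\). So only a negative rotation is available there, which is the mirror of the situation at \(v_1,v_3\) for \(M_{\rm poly}\).
\item For \(w_2=(a,0,b,c+d,0)\in023\): it is selected because \(q_3\ge c+1\). It is six-sided, and I expect a pattern with a single occupied edge, such as \(100000\), which is nonalternating.
\end{itemize}

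The main obstacle is the orientation bookkeeping at \(w_1\) and \(w_3\). There one must verify that the occupied opposite pair is the negative class, i.e.\ that the \emph{other} pair is the positive class. I would settle this by a symmetry argument rather than a fresh local computation. The top tiling is obtained from the bottom one by the same recipe with the roles of the red/green/blue regions exchanged across the two seams: green and blue swap on the seam strips, and yellow and purple strips are attached to the opposite seam sides. Under this exchange the occupied pair at the corresponding corner switches to the complementary opposite pair. Since at \(v_1,v_3\) the occupied pair was the positive class, at \(w_1,w_3\) it is the negative class. As a sanity check, I would confirm this on the running example \(p=(2,3,2,3)\) using Figure~\ref{fig:three-panel-lozenge-height}. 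There a positive rotation at \(w_1\) would raise \(h_{M^+_{\rm poly}}(w_1)\) by three. That violates the directed Lipschitz bound \eqref{eq:envelope-directed-lipschitz} along one of the two adjacent seam edges, whose increments are already at their upper value \(\omega(e)\). This local height criterion gives an alternative proof at every corner, so I would use it wherever the word reading is ambiguous. Exhausting all face types then shows that \(M^+_{\rm poly}\) admits no positive face rotation.
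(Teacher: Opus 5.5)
Your proposal is correct and follows the paper's proof. It runs the same face-type inspection as for $M_{\rm poly}$, and only $w_1,w_2,w_3$ need separate treatment: the pattern is $1010$ in the negative class at $w_1,w_3$ and the nonalternating $100000$ at $w_2$, where the paper simply asserts the negative-class claim. Of your two justifications for that claim, keep the local height check and drop the symmetry heuristic, whose description is inaccurate: the strip regions are green along $02$ and blue along $03$ in both tilings, and what is reversed is only which side carries the singletons below and above the corner. Because the singleton strip changes sides at $w_1$ and at $w_3$, both incoming seam-edge occurrences there are unselected, so raising the aligned height by three would produce an incoming increment $\omega(e)+3$, contradicting \eqref{eq:two-edge-increments}; this argument holds for general $p$, not just the running example.
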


\begin{proof}
The local inspection is the top analogue of the proof of
Lemma~\ref{lem:polygonal-bottom-no-negative}.  At a selected six-sided
face in a monochromatic interior, the cyclic boundary pattern is
\(100100\), up to cyclic shift.  At a straight interface the two
prescribed orientations again give exactly two occupied sides, which
need not be opposite and can have cyclic pattern \(101000\), up to
cyclic shift.  Beside a seam-triangle strip the contracted four-sided pattern
is \(1000\).  These patterns are nonalternating.

Only \(w_1,w_2,w_3\) require a separate inspection.  At \(w_1\) and
\(w_3\) the pattern is \(1010\), but now the occupied opposite pair is
the negative alternating class, so neither face admits a positive
rotation.  At \(w_2\) the pattern is \(100000\), hence is nonalternating.
All remaining corners are unselected, frozen, or ambient boundary
vertices.  Thus \(M_{\rm poly}^{+}\) admits no positive face rotation.
\end{proof}

\subsection{The boundary layer of the bottom tiling}

\begin{figure}[htbp]
\centering
\begin{tikzpicture}[
  >=Stealth,
  every node/.style={font=\small},
  leaving/.style={->,very thick,blue!70!black},
  entering/.style={->,thick,dashed,orange!85!black},
  boundarypoint/.style={fill=white,draw=black,thick}]
  \begin{scope}[x={(1.022cm,0.59cm)},y={(1.022cm,-0.59cm)}]
    \clip (-3.15,-3.05) -- (3.15,-3.05) -- (3.15,3.05) --
          (-3.15,3.05) -- cycle;
    \foreach \j in {-3,...,3}
      \draw[gray!32] (-3.2,\j) -- (3.2,\j);
    \foreach \i in {-3,...,3}
      \draw[gray!32] (\i,-3.2) -- (\i,3.2);
    \foreach \s in {-6,...,6}
      \draw[gray!32] (-3.2,{\s+3.2}) -- (3.2,{\s-3.2});

    \coordinate (q)  at (0,0);
    \coordinate (As) at (0,-2);
    \coordinate (Bt) at (-2,0);
    \coordinate (z1) at (1,0);
    \coordinate (z2) at (-1,1);
    \coordinate (z3) at (1,-1);
    \coordinate (z4) at (0,1);

    \fill[red!10] (q) -- (-3,0) -- (-3,-3) -- (0,-3) -- cycle;
    \draw[red!55!black,dashed,thick]
         (q) -- (-3,0) -- (-3,-3) -- (0,-3) -- cycle;
    \draw[black,very thick] (-3,0) -- (q) -- (0,-3);

    \draw[entering] (z1) -- node[below] {\(+1\)} (q);
    \draw[entering] (z2) -- node[left] {\(+1\)} (q);
    \draw[leaving] (q) -- node[right] {\(-2\)} (z3);
    \draw[leaving] (q) -- node[right] {\(-2\)} (z4);

    \filldraw[boundarypoint] (q) circle (2.3pt);
    \filldraw[boundarypoint] (As) circle (2pt);
    \filldraw[boundarypoint] (Bt) circle (2pt);
    \filldraw[fill=orange!15,draw=orange!85!black,thick]
      (z1) circle (2.3pt);
    \filldraw[fill=orange!15,draw=orange!85!black,thick]
      (z2) circle (2.3pt);
    \filldraw[fill=blue!12,draw=blue!70!black,thick]
      (z3) circle (2.3pt);
    \filldraw[fill=blue!12,draw=blue!70!black,thick]
      (z4) circle (2.3pt);

    \node[left,xshift=-3pt] at (q) {\(v_7\)};
    \node[above left] at (As) {\(A_s\)};
    \node[below left] at (Bt) {\(B_t\)};
    \node[above right] at (z1) {\(z_1\)};
    \node[below] at (z2) {\(z_2\)};
    \node[above] at (z3) {\(z_3\)};
    \node[below right] at (z4) {\(z_4\)};

    \node[rotate=30,above] at (-1.25,0) {\(Y=0\)};
    \node[rotate=-30,below] at (0,-1.25) {\(X=0\)};
    \node[align=center,text=red!55!black] at (-1.90,-1.70)
         {excluded sector\\\(X<0,\ Y<0\)};
  \end{scope}

  \draw[leaving] (4.02,-1.10) -- +(0.72,0)
       node[right,black] {selected: lower increment};
  \draw[entering] (4.02,-1.58) -- +(0.72,0)
       node[right,black] {unselected: upper increment};
\end{tikzpicture}
\caption{The reentrant boundary configuration at \(q_*=v_7\), drawn
in exact triangular-lattice coordinates and in the same orientation as
the \(P_2=023\) frame in Figure~\ref{fig:three-panel-lozenge-height}:
increasing \(X=q_2-b\) points toward vertex \(u_2\), and increasing
\(Y=q_3-c\) points toward vertex \(u_3\).  Thus \(Y=0\) is parallel to
side \(02\), while \(X=0\) is parallel to side \(03\).  The red sector is outside
the support, and \(A_s,B_t\) lie on its two boundary arms.  The four marked
vertices \(z_1,\ldots,z_4\) are the adjacent interior neighbors.  The
positive blue arrows leave \(v_7\), while the positive dashed orange
arrows enter it.  The numbers beside the arrows are increments of the
ordinary lozenge-height function \(F\): \(+1\) on the two unselected
incoming edges and \(-2\) on the two selected outgoing edges.  The
corresponding aligned increments, obtained from
\(\Delta h=\Delta F-\Delta j\), are respectively
\(2,1,-2,-3\).  The colors display the local pattern of the explicit
reference tiling used in the proof of
Lemma~\ref{lem:bottom-boundary-layer}: it selects the blue outgoing
edges and not the orange incoming edges.}
\label{fig:reentrant-boundary-cone}
\end{figure}

\begin{lemma}[Boundary layer of the bottom tiling]
\label{lem:bottom-boundary-layer}
Let \(e:x\to y\) be a positive edge occurrence, ordinary or on a
retained seam, with exactly one endpoint on \(B_p\).  Then:
\begin{enumerate}[label=\textup{(\roman*)}]
\item if \(x\in B_p\) and \(y\notin B_p\), then \(e\) is selected by
      the bottom tiling \(M_{\min}\);
\item if \(x\notin B_p\) and \(y\in B_p\), then \(e\) is not selected
      by \(M_{\min}\).
\end{enumerate}
\end{lemma}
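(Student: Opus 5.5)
The plan is to work directly with the lower envelope \(h_{\rm bot}=L_p\) of Lemma~\ref{lem:directed-extremal-extension}. By that lemma and the two-increment rule \eqref{eq:two-edge-increments}, an occurrence \(e:x\to y\) is selected by \(M_{\min}\) exactly when \(L_p(y)-L_p(x)=\omega(e)-3\). Otherwise the increment is \(\omega(e)\). Thus both assertions reduce to evaluating one increment of \(L_p\) at the boundary, where \(L_p\) equals \(f_p\).

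For (i), take \(x\in B_p\) and \(y\notin B_p\). From the definition \(L_p(y)=\max_b\{f_p(b)-d_p(y,b)\}\), choosing \(b=x\) gives
\[
 L_p(y)\geq f_p(x)-d_p(y,x)=L_p(x)-(3-\omega(e)),
\]
by \eqref{eq:exact-adjacent-distances}. This is only the wrong-direction bound, since \(\omega(e)-3\) is the smaller value. So I need the reverse: \(L_p(y)\leq L_p(x)+\omega(e)-3\). That is, for every \(b\in B_p\),
\[
 f_p(b)-d_p(y,b)\leq f_p(x)+\omega(e)-3 .
\]
Equivalently, every positive path \(\eta:y\rightsquigarrow b\) must satisfy \(|\eta|_\omega\geq f_p(b)-f_p(x)-\omega(e)+3\).

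My approach is to prepend \(e\): the path \(e\eta\) runs from \(x\) to \(b\). By Lemma~\ref{lem:boundary-datum-compatible} it has weight at least \(f_p(b)-f_p(x)\), which falls short by exactly \(3\). The gain of \(3\) has to come from the reference tiling. Following the proof of Lemma~\ref{lem:boundary-datum-compatible}, for any path \(\gamma\) one has
\[
 |\gamma|_\omega=H_{\rm ref}(\mathrm{end})-H_{\rm ref}(\mathrm{start})+3\,\#\{e'\in\gamma:\chi_{\rm ref}(e')=1\}.
\]
So it suffices to show two things: that \(e\) itself is selected by \(M_{\rm ref}\), and that \(H_{\rm ref}(y)\) already equals the lower envelope value. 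If both hold, the bound follows, because then \(L_p(y)\geq H_{\rm ref}(y)\) is forced in the other direction by minimality.

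A cleaner route is to prove directly that \(\min(H,\cdot)\)-type lowering is impossible. That is, show that for every legal height \(H\) the edge \(e\) with tail on the boundary is selected. The argument: if \(H(y)=H(x)+\omega(e)\), then since \(y\) is interior, I would perform a local lowering at \(y\) by \(3\). I would check that this keeps the directed Lipschitz property for the edges into and out of \(y\) whose other endpoints lie on \(B_p\). Here the geometry enters: for a boundary-adjacent interior vertex, the incident boundary edges come in the specific patterns fixed by the seven-region picture.

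In practice I would do a case check along \(B_p\). The outer sides of \(\mathcal A_p\) are the explicit lines from \eqref{eq:Sp-formula}, and the two reentrant corners are handled as in Figure~\ref{fig:reentrant-boundary-cone}. On each side I would verify, using the explicit weights \(1,r,2-r\) on \(P_r\), that the potential \(\widetilde\kappa_p\) and the straight-line shape of the side force the value of \(d_p(y,b)\) to equal \(d_p(x,b)+3-\omega(e)\) or more. Concretely, the idea is that a geodesic from an interior neighbour to the boundary can be taken to pass through \(x\), because of convexity of each side in the directed metric. Part (ii) is the mirror statement: a geodesic from the boundary to \(y\in B_p\) reaches \(y\) last through \(x\), so \(L_p(y)-L_p(x)=\omega(e)\).

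The main obstacle is this convexity or geodesic claim near the nonconvex corners \(v_7\) and \(v_3,v_1\) and along the seams. There paths may leave a panel, and the negative weight on \(P_3\) makes naive shortcuts plausible. At those points I would fall back on comparing directly with \(M_{\rm ref}\). The figure shows that at \(v_7\), \(M_{\rm ref}\) selects the outgoing edges and not the incoming ones. I would combine this with \(M_{\rm ref}=M_{\min}\) near the boundary, obtained by the forced-boundary argument on each straight side.
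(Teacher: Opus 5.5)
Your first route is the right one, and it is simpler than you make it. The path identity you write gives \(|\eta|_\omega\geq f_p(b)-H_{\rm ref}(y)\) for every positive path \(\eta:y\rightsquigarrow b\). So the single fact that \(M_{\rm ref}\) selects \(e\), i.e.\ \(H_{\rm ref}(y)=f_p(x)+\omega(e)-3\), already gives \(L_p(y)\leq f_p(x)+\omega(e)-3\). The candidate \(b=x\), together with \eqref{eq:exact-adjacent-distances}, gives the reverse inequality. The condition \(H_{\rm ref}(y)=L_p(y)\) is a consequence, not a second hypothesis. Also, what minimality supplies is \(L_p\leq H_{\rm ref}\), from \eqref{eq:directed-envelope-sandwich}, not \(\geq\).

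Part~(ii) is the same with the roles exchanged. If \(M_{\rm ref}\) leaves \(e\) unselected, then \(H_{\rm ref}(x)=f_p(y)-\omega(e)\) bounds \(L_p(x)\) from above, and the candidate \(b=y\) bounds it from below. This is the paper's proof. The real content, which your proposal never undertakes, is the explicit check that \(M_{\rm ref}\) selects every positive occurrence from \(B_p\) into the interior and none from the interior into \(B_p\). This must be checked on each straight side, by reading the active inequality of \eqref{eq:Sp-formula} against the colour--direction convention of the seven regions. It must also be checked at the convex endpoints, at the seam endpoints through the complementary singleton strips, and at the unique reentrant corner \(v_7\). The vertices \(v_1,v_3\) lie in \(S_p\) and are not boundary corners.

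What you propose instead does not work.
\begin{enumerate}
\item The claim that every legal height selects a boundary-to-interior edge is false. By the independent computation of Lemma~\ref{lem:exact-polygonal-height-gap}, \(h_{M^{+}_{\rm poly}}\geq h_{M_{\rm poly}}+3\) at interior vertices. Hence \(h_{M^{+}_{\rm poly}}(y)-f_p(x)\geq\omega(e)\), and \(M^{+}_{\rm poly}\) leaves every such edge unselected. Moreover, lowering a height by \(3\) at the single vertex \(y\) is legal only if every positive edge out of \(y\) is selected and every edge into \(y\) is not, which is a negative face rotation. That does not follow from \(e\) being unselected.
\item The straight-side inequality \(d_p(y,b)\geq d_p(x,b)+3-\omega(e)\) fails in the most basic configuration. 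Let \(x'\in B_p\) be the third vertex of the boundary triangle containing \(e\). Its positive sides form the cycle \(x\to y\to x'\to x\). Then \eqref{eq:exact-adjacent-distances} gives \(d_p(y,x')=\omega(y\to x')\), but \(d_p(x,x')+3-\omega(e)=\omega(y\to x')+3\). The needed bound does hold at \(b=x'\), but only with equality, because \(f_p(x')-f_p(x)\) falls short of \(d_p(x,x')\) by exactly \(3\). Your sufficient condition discards this slack. What certifies an upper bound on \(L_p(y)\) is a competing directed-Lipschitz extension of \(f_p\), and \(H_{\rm ref}\) is the explicit one available.
\item Appealing to \(M_{\rm ref}=M_{\min}\) near the boundary is circular. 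Proposition~\ref{prop:polygonal-bottom-matching} uses flip connectivity, which rests on well-matchedness, which in turn uses this lemma.
\item In~(ii), geodesics from the boundary to \(y\) are irrelevant. \(L_p(x)\) is governed by positive paths from \(x\) to \(B_p\).
\end{enumerate}
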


\begin{proof}
Write \(H=H_{\rm ref}=h_{M_{\rm poly}}\), with the normalization already
chosen above, so that \(H|_{B_p}=f_p\).  Since \(H\) is a directed
Lipschitz extension of \(f_p\), the extremal sandwich
\eqref{eq:directed-envelope-sandwich} gives
\begin{equation}
                              L_p\le H.
\label{eq:reference-dominates-lower-envelope}
\end{equation}

We first read the inward boundary pattern directly from the explicit
reference tiling.  If a positive edge occurrence has one endpoint on
\(B_p\) and the other in the interior, then
\begin{equation}
 \begin{split}
 B_p\longrightarrow\mathcal A_p^\circ
     &\quad\Longrightarrow\quad \text{selected by }M_{\rm poly},\\
 \mathcal A_p^\circ\longrightarrow B_p
     &\quad\Longrightarrow\quad \text{unselected by }M_{\rm poly}.
 \end{split}
\label{eq:reference-inward-boundary-pattern}
\end{equation}
Indeed, order the straight boundary sides of \(P_1\) by
\(q_0=0,q_1=a,q_2=b\), those of \(P_2\) by
\(q_0=0,q_2=b,q_3=c\), and those of \(P_3\) by
\(q_0=0,q_3=c,q_4=d\).  In these orders, the positive directions from
the boundary into the interior are
\[
 P_1:(d_2,d_2,d_3),\qquad
 P_2:(d_2,d_1,d_3),\qquad
 P_3:(d_2,d_1,d_1),
\]
and the reference tiling selects them.  The positive directions from
the interior toward the boundary are, in the same orders,
\[
 P_1:(d_3,d_1,d_1),\qquad
 P_2:(d_3,d_2,d_1),\qquad
 P_3:(d_3,d_2,d_3),
\]
and the reference tiling leaves them unselected.  To obtain these
lists, use the active inequality in \eqref{eq:Sp-formula} to determine
which of the two incident directions points into the support, and then
apply the color--direction convention defining \(M_{\rm ref}\).  This
proves \eqref{eq:reference-inward-boundary-pattern} in the relative
interior of every straight side.  At a convex endpoint, the same
calculation applies simultaneously to the two active equalities.  At a
seam endpoint, the complementary singleton strips give the two required
statuses on the two developed occurrences.  These checks are unchanged
under the three cyclic panel orientations.  The only reentrant endpoint
is \(v_7\); its four incident occurrences are shown in
Figure~\ref{fig:reentrant-boundary-cone}, with the two positive outgoing
edges selected and the two positive incoming edges unselected.  The
polygonal decomposition exhausts the straight sides, convex endpoints,
seam endpoints, and this single reentrant endpoint, so
\eqref{eq:reference-inward-boundary-pattern} follows.

Now let \(e:x\to y\) be as in the statement.  Suppose first that
\(x\in B_p\) and \(y\notin B_p\).  By
\eqref{eq:reference-inward-boundary-pattern},
\[
                 H(y)-H(x)=\omega(e)-3.
\]
The exact adjacent distance in Lemma~\ref{lem:positive-cycle-gap} is
\(d_p(y,x)=3-\omega(e)\).  Taking \(x\) as a candidate in the
lower-envelope formula and then using
\eqref{eq:reference-dominates-lower-envelope} gives
\[
 L_p(y)\ge f_p(x)-d_p(y,x)
         =H(x)-(3-\omega(e))
         =H(y)\ge L_p(y).
\]
Thus equality holds throughout, and since \(L_p(x)=f_p(x)=H(x)\),
\[
                       L_p(y)-L_p(x)=\omega(e)-3.
\]
Hence the bottom tiling selects \(e\).

Suppose instead that \(x\notin B_p\) and \(y\in B_p\).  The reference
tiling leaves \(e\) unselected, so
\(H(y)-H(x)=\omega(e)\), while
\(d_p(x,y)=\omega(e)\).  Using \(y\) as the boundary candidate gives
\[
 L_p(x)\ge f_p(y)-d_p(x,y)
         =H(y)-\omega(e)
         =H(x)\ge L_p(x).
\]
Again equality holds throughout.  Since \(L_p(y)=H(y)\), we obtain
\[
                         L_p(y)-L_p(x)=\omega(e),
\]
and the bottom tiling does not select \(e\).
\end{proof}

\subsection{Strict width of the extremal interval}

The following is the geometric input needed for height forcing.  It is
proved directly from the two envelopes and the preceding boundary layer.

\begin{lemma}[One full interior height step]
\label{lem:strict-extremal-width}
\label{lem:one-layer-slack}
For every interior lattice vertex $z$,
\begin{equation}
             h_{\rm top}(z)-h_{\rm bot}(z)\ge3.
\label{eq:one-layer-slack}
\end{equation}
\end{lemma}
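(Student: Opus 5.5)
Since \(h_{\rm top}-h_{\rm bot}\in3\mathbb Z_{\ge0}\) by the common residue \(\kappa_p\) in Lemma~\ref{lem:directed-extremal-extension}, it suffices to show \(h_{\rm top}(z)\neq h_{\rm bot}(z)\) at every interior vertex \(z\). Suppose, for contradiction, that \(U_p(z)=L_p(z)\) at some interior \(z\).

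The plan is to write both envelopes through minimizing paths. Choose \(b,b'\in B_p\) realizing the extrema in \eqref{eq:directed-envelopes}, so that \(f_p(b)+d_p(b,z)=f_p(b')-d_p(z,b')\). Concatenating a minimizing path \(b\rightsquigarrow z\) with one \(z\rightsquigarrow b'\) gives a positive path \(\gamma\) with \(|\gamma|_\omega\le f_p(b')-f_p(b)\). By Lemma~\ref{lem:boundary-datum-compatible} the reverse inequality holds, so \(\gamma\) is a geodesic with \(f_p(b')-f_p(b)=d_p(b,b')\), and it passes through the interior vertex \(z\). Every directed Lipschitz extension \(H\) with boundary trace \(f_p\) is then forced along \(\gamma\): each edge of \(\gamma\) takes its upper increment \(\omega(e)\), so no edge of \(\gamma\) is selected by any legal tiling.

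The contradiction comes from the boundary layer. Move along \(\gamma\) from \(b\) to the first vertex not on \(B_p\). The edge \(e:x\to y\) with \(x\in B_p\) and \(y\notin B_p\) exists because \(z\) is interior, and it is unselected by \(M_{\min}\) by the preceding paragraph. However, Lemma~\ref{lem:bottom-boundary-layer}(i) asserts that it is selected by \(M_{\min}\). One subtlety is that \(\gamma\) might first run along boundary edges. This is harmless, since we take the first edge leaving \(B_p\) into the interior. If the path instead stays on \(B_p\) but jumps across a chord, then both endpoints lie on \(B_p\) and the lemma does not apply directly. In that case I would apply the same argument at the last step, using Lemma~\ref{lem:bottom-boundary-layer}(ii) for the edge entering \(B_p\) toward \(b'\). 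There, forcing gives \(h_{\rm top}\)-type tightness, so the edge takes the lower increment in every extension, whereas geodesicity forces the upper increment.

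The main obstacle is a degenerate configuration: \(\gamma\) could reach \(z\) using only edges whose endpoints alternate between \(B_p\) and the interior in such a way that neither boundary-layer statement is triggered, for example when \(z\) is adjacent to the boundary only through seam occurrences. I would handle this case by checking that the first interior-entering edge of any geodesic is an ordinary or seam occurrence covered by Lemma~\ref{lem:bottom-boundary-layer}, whose hypotheses explicitly include seam occurrences. The one-sided structure of the two envelopes then gives the required slack of \(3\).
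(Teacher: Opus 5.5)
Your argument is correct and is essentially the paper's proof run by contradiction. The paper fixes, for each $b\in B_p$, a shortest positive path $b\rightsquigarrow z$, applies Lemma~\ref{lem:bottom-boundary-layer}(i) to its first edge leaving $B_p$ to get $L_p(z)\le f_p(b)+d_p(b,z)-3$, and minimizes over $b$, so the second half-path to $b'$, the appeal to Lemma~\ref{lem:boundary-datum-compatible}, and the residue step are not needed.

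Your worries about chords and degenerate configurations are vacuous. Any path from $b\in B_p$ through the interior vertex $z$ contains an edge from a vertex of $B_p$ to a vertex off $B_p$, and part~(i) explicitly covers seam occurrences. This is just as well, because your proposed fallback via part~(ii) would not give a contradiction: (ii) says $M_{\min}$ leaves that edge unselected, which agrees with the upper increment forced by geodesicity.
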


\begin{proof}
Fix \(z\notin B_p\).  For each \(b\in B_p\), choose a shortest
positive path \(\gamma_b:b\rightsquigarrow z\).  It has a first edge
leaving \(B_p\), and Lemma~\ref{lem:bottom-boundary-layer} says that
\(L_p\) uses the lower increment on that edge.  Telescoping along
\(\gamma_b\) therefore gives
\[
 L_p(z)-f_p(b)\le |\gamma_b|_\omega-3
                    =d_p(b,z)-3.
\]
Hence \(L_p(z)+3\le f_p(b)+d_p(b,z)\) for every \(b\in B_p\).
Taking the minimum over \(b\) and using
\eqref{eq:directed-envelopes} gives \(L_p(z)+3\le U_p(z)\), as
required.
\end{proof}

The following calculation is retained for later use.  At this point it
is only a comparison of the two explicit polygonal tilings; their
identification with the extremal envelopes will be made after
well-matchedness.

\begin{lemma}[Exact polygonal height gap]
\label{lem:exact-polygonal-height-gap}
Normalize the aligned heights of \(M_{\rm poly}\) and
\(M_{\rm poly}^{+}\) to agree on \(B_p\), and put
\[
 K(q):=\frac{h_{M_{\rm poly}^{+}}(q)-h_{M_{\rm poly}}(q)}{3}.
\]
Then \(K\) is given by
\eqref{eq:gap-panel-012}, \eqref{eq:gap-panel-034}, and
\eqref{eq:gap-panel-023} below.  In particular \(K(q)\ge1\) at every
interior vertex.
\end{lemma}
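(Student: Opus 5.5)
The plan is to compute \(K\) by working directly with edge increments, since both polygonal tilings are explicit and piecewise monochromatic. By \eqref{eq:two-edge-increments}, along any positive edge occurrence \(e:x\to y\) we have
\[
 3\bigl(K(y)-K(x)\bigr)=3\bigl(\chi_{\rm poly}(e)-\chi^{+}_{\rm poly}(e)\bigr),
\]
where \(\chi\) indicates selection. So \(K(y)-K(x)\in\{-1,0,1\}\), with value \(+1\) exactly when \(e\) is selected by \(M_{\rm poly}\) but not by \(M_{\rm poly}^{+}\). Both aligned heights have boundary trace \(f_p\), because exterior edges are never selected. Hence \(K\vert_{B_p}=0\), and \(K\) is obtained by integrating the difference of selection indicators inward from the boundary.

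On each of the overlaid polygonal pieces (the common refinement of the seven-region decomposition of \(M_{\rm poly}\) and the seven-region decomposition of \(M_{\rm poly}^{+}\)), both tilings are monochromatic. There the increment of \(K\) along each of \(d_1,d_2,d_3\) is constant, so \(K\) is affine on each piece. I would determine \(K\) panel by panel, in this order.
\begin{enumerate}
\item On \(012\), the refinement is cut by the lines \(q_0=c+d\) and \(q_2=N-a\).
\item On \(034\), it is cut by \(q_3=N-d\) and \(q_0=a+b\).
\item On the middle panel \(023\), it is cut by \(q_0=d\), \(q_3=c\), \(q_2=a+b\), \(q_2=b\), \(q_3=c+d\), and \(q_0=a\).
\end{enumerate}
On each piece I would start from a boundary edge where \(K=0\), read off the three color pairs, and write \(K\) as an explicit minimum or maximum of affine forms in \((q_0,q_1,q_2)\), \((q_0,q_3,q_4)\), and \((q_0,q_2,q_3)\) respectively. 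These are the formulas \eqref{eq:gap-panel-012}, \eqref{eq:gap-panel-034}, \eqref{eq:gap-panel-023}. I would then check seam consistency: on \(02\) and \(03\) the single-valuedness of the aligned height forces the two panel formulas to agree. This agreement also serves as a check on the complementary singleton strips \([v_8,v_3]\) versus \([v_8,w_1]\), and \([v_6,v_1]\) versus \([v_6,w_3]\).

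For the positivity claim \(K\ge1\) at interior vertices, I would not rely only on the formulas. The argument is this.
\begin{enumerate}
\item Lemma~\ref{lem:bottom-boundary-layer} is proved via \(H_{\rm ref}=h_{M_{\rm poly}}\), and its displayed inward pattern \eqref{eq:reference-inward-boundary-pattern} says that \(M_{\rm poly}\) selects every boundary-to-interior edge and no interior-to-boundary edge.
\item The analogous inspection of Definition~\ref{def:polygonal-top-matching} should show that \(M_{\rm poly}^{+}\) selects no boundary-to-interior edge. This is the reflected pattern, because its colors on each boundary side are the complementary ones.
\item Then every interior vertex adjacent to \(B_p\) through an outgoing boundary edge has \(K=1\).
\item Finally, the explicit affine formulas show that \(K\) is nondecreasing inward, i.e.\ each piecewise affine expression is at least its value on the collar.
\end{enumerate}
Equivalently, one can verify that each affine form in the minimum is \(\ge1\) using the strict support inequalities from \eqref{eq:Sp-formula}, such as \(q_0\ge1\), \(q_2\ge b+1\), \(q_3\ge c+1\), and \(q_1\le a-1\).

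I expect the main obstacle to be the middle panel \(023\). There the two decompositions overlap in many pieces, with the reentrant corner \(v_7\) and the interior corners \(v_2\) and \(w_2\). The Boolean ``or'' in the support, which is the union of the two wedges, makes \(K\) a genuine max/min of several forms. I would organize that computation by the two seam-compatible lobes of Proposition~\ref{prop:refracted-lobes}: compute \(K\) separately on \(S_p^{02}\cap023\) and \(S_p^{03}\cap023\), then verify that the two expressions agree on the overlap \(q_2\ge b+1,\ q_3\ge c+1\).
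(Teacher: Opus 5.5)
Your proposal follows essentially the same route as the paper: the edge identity $K(y)-K(x)=\chi_{\rm poly}(e)-\chi^{+}_{\rm poly}(e)$, constant directional increments on the overlaid monochromatic pieces, zero boundary values on $B_p$ plus connectedness of $\mathcal A_p$, the seam-agreement check, and $K\ge1$ read off termwise from the strict support inequalities. Some overlay pieces do not meet $B_p$, so the formulas are really checked globally rather than piece by piece from the boundary. Your boundary-layer detour for positivity is unnecessary, and its step~4 is not a proof as stated; the termwise check you give as the equivalent alternative is exactly the paper's argument and suffices.
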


\begin{proof}
Put
\[
 K(q):=\frac{h_{M_{\rm poly}^{+}}(q)-h_{M_{\rm poly}}(q)}{3}.
\]
For a positive edge occurrence \(e:x\to y\), let
\(\chi_{\rm poly}(e)\) and \(\chi_{\rm poly}^{+}(e)\) indicate whether
\(e\) is selected by \(M_{\rm poly}\) and \(M_{\rm poly}^{+}\),
respectively.  The two
possible increments in~\eqref{eq:two-edge-increments} give
\begin{equation}
 K(y)-K(x)=\chi_{\rm poly}(e)-\chi_{\rm poly}^{+}(e).
\label{eq:gap-edge-increment}
\end{equation}

Overlay Definitions~\ref{def:polygonal-matching}
and~\ref{def:polygonal-top-matching}.  Every resulting polygon is
filled by a fixed lozenge orientation, so the right side of
\eqref{eq:gap-edge-increment} is read directly from the two polygonal
patterns.  Checking the three positive lattice-edge directions in
each panel gives the following formulas.  On panel \(012\),
\begin{equation}
 K(q)=\min\{q_0,\ a-q_1,\ q_2-b,\ c+d\}.
\label{eq:gap-panel-012}
\end{equation}
On panel \(034\),
\begin{equation}
 K(q)=\min\{q_0,\ q_3-c,\ d-q_4,\ a+b\}.
\label{eq:gap-panel-034}
\end{equation}
For panel \(023\), put
\[
       x=q_2-b,\qquad y=q_3-c,\qquad t_+=\max\{t,0\}.
\]
Then
\begin{equation}
 K(q)=\min\bigl\{q_0,\ x_++y_+,\
             a+(-x)_+,\ d+(-y)_+\bigr\}.
\label{eq:gap-panel-023}
\end{equation}
The exhaustive middle-panel check can equivalently be written
\begin{equation}
K(q)=
\begin{cases}
 \min\{q_0,y,a-x,d\},&x\le0,\ y\ge1,\\
 \min\{q_0,x,a,d-y\},&x\ge1,\ y\le0,\\
 \min\{q_0,x+y,a,d\},&x\ge1,\ y\ge1.
\end{cases}
\label{eq:gap-middle-cases}
\end{equation}
These are all the cases because the condition defining
\(S_p\cap\Delta_{023}\) excludes \(x\le0\) and \(y\le0\)
simultaneously.

For completeness, the verification of
\eqref{eq:gap-panel-012}--\eqref{eq:gap-middle-cases} is edgewise.
Start with value zero on \(B_p\).  In the relative interior of each
monochromatic region, subtract the selected-edge indicator of the top
orientation from that of the bottom orientation.  This gives the
directional differences of the displayed minimum.  The same
calculation on a straight color interface gives the common value of
the two adjacent linear expressions.  Beside a seam strip, the
complementary-singleton rule gives the same difference on the two
cut-open occurrences.  There are no other local types: the remaining
ones are the interface vertices \(v_1,v_2,v_3,w_1,w_2,w_3\), where
direct substitution gives~\eqref{eq:gap-edge-increment}.  Thus the
displayed formulas and \(K\) have the same edge increments and the
same boundary values, so connectedness of \(\mathcal A_p\) proves
them.  Notice in particular that the formulas agree on the seams:
on \(02\) they both reduce to
\[
       \min\{q_0,a,q_2-b,c+d\},
\]
and on \(03\) they both reduce to
\[
       \min\{q_0,q_3-c,d,a+b\}.
\]

It remains only to read positivity.  If
\(q\in S_p\cap\Delta_{012}\), the first three entries of
\eqref{eq:gap-panel-012} are positive integers and \(c+d\ge2\).
The same argument, with \(a+b\ge2\), applies to
\eqref{eq:gap-panel-034}.  In the middle panel, \(q_0\ge1\), at least
one of \(x,y\) is positive, and \(a,d\ge1\).  Hence every entry in
the minimum in~\eqref{eq:gap-panel-023} is at least one.  Therefore
\(K(q)\ge1\) at every interior vertex, as asserted.
\end{proof}

\subsection{Forcing one prescribed lozenge}

\begin{proposition}[Every ordinary edge is allowed]
\label{prop:ordinary-edge-allowed}
Every noninserted edge of $G_p$, equivalently every admissible
ordinary lozenge of $\mathcal A_p$, belongs to a perfect matching.
\end{proposition}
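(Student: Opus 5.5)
Fix an admissible ordinary lozenge, equivalently a positive edge occurrence \(e:x\to y\) inside one panel that is the common side of two retained elementary triangles; we must produce a legal tiling selecting \(e\). By Lemma~\ref{lem:directed-extremal-extension}, it suffices to build an integer function \(H\) on \(\mathcal V_p\) with boundary values \(f_p\), residue \(\kappa_p\) modulo three, the directed Lipschitz property \(H(v)-H(u)\le d_p(u,v)\), and \(H(y)-H(x)=\omega(e)-3\). The plan follows the roadmap of Step~III: prescribe a \emph{raised-endpoint partial datum} and take its lower directed envelope.

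Concretely, if \(x\) is interior, raise it by three above the bottom height: set \(g=f_p\) on \(B_p\) and \(g(x)=h_{\rm bot}(x)+3\). By Lemma~\ref{lem:strict-extremal-width}, \(h_{\rm bot}(x)+3\le h_{\rm top}(x)\), so \(g(x)\) lies in the admissible window. Define
\[
 H(z):=\max_{b\in B_p\cup\{x\}}\{g(b)-d_p(z,b)\}.
\]
Compatibility of the datum reduces to two inequalities: \(g(x)-f_p(b)\le d_p(b,x)\), which holds because \(g(x)\le U_p(x)\); and \(f_p(b)-g(x)\le d_p(x,b)\), which holds because \(g(x)\ge L_p(x)\). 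With compatibility and the congruence \(g\equiv\kappa_p\pmod 3\) (because \(h_{\rm bot}\equiv\kappa_p\)), the same argument as in Lemma~\ref{lem:directed-extremal-extension} shows that \(H\) is a directed Lipschitz extension with the correct boundary values and residue. Hence \(H\) is the height of a legal tiling, with \(H(x)=g(x)\). Then \(H(y)\ge g(x)-d_p(y,x)=H(x)-(3-\omega(e))\) by \eqref{eq:exact-adjacent-distances}, while Lipschitz gives \(H(y)\le H(x)+\omega(e)\). So the two residue-allowed values are \(\omega(e)\) and \(\omega(e)-3\), and we need the lower one. For that I would instead raise at the head: lower the datum at \(y\), i.e.\ set \(g(y)=h_{\rm top}(y)-3\ge h_{\rm bot}(y)\) and take the upper envelope \(\min_b\{g(b)+d_p(b,z)\}\). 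Alternatively, combine the two: raise at \(x\) to \(h_{\rm bot}(x)+3\) and use the lower envelope, which keeps \(y\) as low as possible. The key claim is then \(H(y)=L_p(y)\) whenever \(L_p(y)-L_p(x)\) is already the lower increment, and otherwise the raise at \(x\) forces the drop.

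The cleanest case split is therefore the following. \textbf{Case 1:} \(e\) is already selected by \(M_{\min}\); this is trivial. \textbf{Case 2:} \(L_p(y)=L_p(x)+\omega(e)\). Then raise \(x\) by exactly three, so that \(H(x)=L_p(x)+3\). We check that \(H(y)=L_p(y)\), giving increment \(\omega(e)-3\). This amounts to showing that the new candidate \(x\) does not lift \(y\), i.e.\ \(L_p(x)+3-d_p(y,x)\le L_p(y)\), which is equality by the adjacent-distance formula, so \(H(y)=\max\{L_p(y),L_p(y)\}=L_p(y)\). \textbf{Boundary endpoints:} if \(x\in B_p\) or \(y\in B_p\), no raising is possible at that endpoint. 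If \(x\in B_p\) and \(y\notin B_p\), Lemma~\ref{lem:bottom-boundary-layer}(i) already gives selection by \(M_{\min}\). If \(y\in B_p\) and \(x\notin B_p\), raising \(x\) is still legitimate and Case~2 applies. If both endpoints lie on \(B_p\), the edge is interior to the complex but both endpoints are fixed. Their difference is then \(f_p(y)-f_p(x)\), which is the same for every tiling, so \(e\) is selected by all tilings or by none. Here I would check from \(M_{\rm ref}\) that admissible such edges are selected, or use a third vertex of an incident triangle.

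The main obstacle I expect is the all-boundary chord case together with seam occurrences, where the two developed occurrences and the singleton rule interact. For seam occurrences I would apply the same raise-by-three construction at the interior endpoint, relying on the fact that \(d_p\) and \(\kappa_p\) are single-valued across seams (Lemma~\ref{lem:positive-cycle-gap}).
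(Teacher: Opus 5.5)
Your Case~2 is correct and is essentially the paper's proof. You raise the tail to \(L_p(x)+3\), take the only new boundary-to-\(x\) inequality from Lemma~\ref{lem:strict-extremal-width}, and extend by a directed envelope via Lemma~\ref{lem:directed-extremal-extension}. The difference is cosmetic. The paper puts \(y\) into the partial datum with value \(L_p(y)\), takes the upper envelope, and therefore also checks the pair \(x,y\). You keep the datum on \(B_p\cup\{x\}\) and take the lower envelope; then \(H(y)=L_p(y)\) follows from the exact tie \(L_p(x)+3-d_p(y,x)=L_p(y)\) supplied by \eqref{eq:exact-adjacent-distances}. The exploratory alternatives before your case split can simply be deleted.

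The subcase you leave open, with both endpoints of \(e\) on \(B_p\), cannot occur, and you should prove that rather than propose checking it against \(M_{\rm ref}\). The dual edge crossing \([x,y]\) lies on no face boundary other than \(\partial F_x\) and \(\partial F_y\). So, by Definition~\ref{def:unreduced}, it is retained in \(\overline G_p\) only if \(x\in S_p\) or \(y\in S_p\). Every vertex of \(S_p\) lies in \(I_{\rm mut}\) and has its whole star retained, hence is not on \(B_p\). So at least one endpoint is interior. The paper's proof uses this fact implicitly when it concludes from Lemma~\ref{lem:bottom-boundary-layer} that \(x\) is interior.

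Your seam worry is also moot. A seam edge is never the shared side of two triangles in one panel, so it is not an ordinary lozenge edge. The inserted seam edges are handled separately in Theorem~\ref{thm:well-matched}.
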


\begin{proof}
Let $\ell$ be an admissible lozenge, and let
\[
       e:x\longrightarrow y
\]
be the positive orientation of its shared lattice edge.

If the bottom tiling already contains $\ell$, there is nothing to prove.
Assume otherwise.  Lemma~\ref{lem:bottom-boundary-layer} then implies
that \(x\) is interior: if one endpoint is on \(B_p\), the only
alternative compatible with an unselected positive edge places \(y\),
not \(x\), on the boundary.

Since the bottom tiling does not contain \(\ell\),
\begin{equation}
                         L_p(y)-L_p(x)=\omega(e).
\label{eq:bottom-upper-increment-forcing}
\end{equation}
On the partial domain \(B_p\cup\{x,y\}\), prescribe
\begin{equation}
 g_e(x)=L_p(x)+3,\qquad g_e(y)=L_p(y),\qquad
 g_e(b)=f_p(b)\quad(b\in B_p).
\label{eq:forced-edge-partial-datum}
\end{equation}
This is well defined even when \(y\in B_p\), because then
\(L_p(y)=f_p(y)\).

We check the directed Lipschitz inequalities for this partial datum.
Lemma~\ref{lem:strict-extremal-width} and the upper-envelope formula give,
for every \(b\in B_p\),
\[
 L_p(x)+3\le U_p(x)\le f_p(b)+d_p(b,x).
\]
This is the only new boundary-to-\(x\) inequality.  The inequality from
\(x\) to \(b\) follows from the directed Lipschitz property of \(L_p\)
and is only strengthened by subtracting three.  All inequalities involving
\(y\) and a boundary point are those already satisfied by \(L_p\).
Finally, using~\eqref{eq:bottom-upper-increment-forcing} and the two
adjacent directed distances,
\[
 \begin{split}
 g_e(y)-g_e(x)&=\omega(e)-3\le d_p(x,y)=\omega(e),\\
 g_e(x)-g_e(y)&=3-\omega(e)=d_p(y,x).
 \end{split}
\]
Thus \(g_e\) is a compatible partial datum.  Its values have the required
residue, since the only change from \(L_p\) is by three.

Take the upper directed envelope of \(g_e\).  The proof of
Lemma~\ref{lem:directed-extremal-extension}, applied to this enlarged
partial domain, shows that the envelope agrees with \(g_e\), is a legal
aligned height function, and has the original boundary values.  At the
two prescribed vertices its increment is
\[
 g_e(y)-g_e(x)=L_p(y)-L_p(x)-3=\omega(e)-3.
\]
Therefore the resulting perfect matching selects \(e\) and contains
\(\ell\).
\end{proof}

\begin{theorem}[Well-matchedness]
\label{thm:well-matched}
Every edge of $\overline G_p$ lies in a perfect matching.  Moreover,
every edge of the cut-open graph $G_p$, including every inserted seam
edge, lies in a perfect matching.
\end{theorem}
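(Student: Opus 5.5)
The plan is to reduce both assertions to Proposition~\ref{prop:ordinary-edge-allowed}, using the tiling--matching dictionary of Definition~\ref{def:legal-lozenge-triangle-tiling} and the seam-cut bijection of Theorem~\ref{thm:fixed-graph}, part~3.

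\emph{Edges of \(\overline G_p\).} Every such edge is a dual edge crossing an elementary lattice edge of a retained panel, and it bounds a selected face.
\begin{itemize}[label={}]
\item \emph{Interior edge.} If the lattice edge is shared by two retained elementary triangles of the same panel, the dual edge is exactly an admissible ordinary lozenge. Proposition~\ref{prop:ordinary-edge-allowed} gives a legal tiling containing it, hence a matching of \(G_p\) containing it. Contracting the seam zigzags gives a matching of \(\overline G_p\) containing it.
\item \emph{Edge at an identified seam vertex.} Here the edge is incident to \(x\), but in \(G_p\) it is attached to one sector copy \(x_{\rm L}\) or \(x_{\rm R}\). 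The contraction in Theorem~\ref{thm:fixed-graph} sends an external edge at \(x_{\rm L}\) to the same edge at \(x\). Such an edge, crossing a non-seam lattice edge, is again an ordinary lozenge, so the same argument applies.
\item \emph{Exterior-boundary edge.} Such an edge would correspond to a lozenge using a triangle outside \(\mathcal A_p\). By the construction of \(\overline G_p\), however, every retained dual edge joins two dual vertices of retained triangles. An edge crossing an exterior lattice edge does not do this, so I would check from Definition~\ref{def:unreduced} that it does not occur among the ones needing treatment.
\end{itemize}

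\emph{Inserted seam edges.} Each inserted edge \(x_{\rm L}z\) or \(zx_{\rm R}\) is selected exactly when the opposite sector copy is matched externally, as in Figure~\ref{fig:canonical-seam-zigzag}. In tiling language, the two edges at a retained seam segment correspond to the two choices of which facing triangle is the singleton. So it suffices to exhibit, for each retained seam segment, legal tilings realizing both singleton choices. Equivalently, we need tilings in which the seam edge occurrence is selected on either developed side.

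Both developed occurrences of the seam edge are positive edge occurrences, and Lemma~\ref{lem:bottom-boundary-layer} already treats them. The forcing argument of Proposition~\ref{prop:ordinary-edge-allowed} applies verbatim to such an occurrence \(e:x\to y\): raise \(L_p(x)\) by three and take the upper envelope. This uses only Lemmas~\ref{lem:positive-cycle-gap}, \ref{lem:directed-extremal-extension} and \ref{lem:strict-extremal-width}, which explicitly include seam occurrences. The resulting tiling selects \(e\) on the chosen developed side, that is, it makes the facing triangle on that side part of a lozenge across the seam edge.

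The main obstacle is that argument's initial step: showing that \(x\) is interior when the bottom tiling does not select \(e\). Two cases remain.
\begin{itemize}[label={}]
\item \emph{Both endpoints on \(B_p\).} This can happen at seam segments adjacent to the exterior boundary. Here I would argue directly: the envelope increment on \(e\) is forced by \(f_p\). The complementary-singleton rule then fixes which side is the singleton, and only the realized configuration is an edge of \(G_p\). I would verify this by showing that the edge is deleted together with unselected faces.
\item \emph{One endpoint on \(B_p\).} This is handled as before by Lemma~\ref{lem:bottom-boundary-layer}.
\end{itemize}

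Finally, a retained seam segment has the reflected state as well. The second zigzag edge is obtained either from \(M_{\min}\) or \(M_{\max}\) if it already realizes that state, or from forcing the other developed occurrence.
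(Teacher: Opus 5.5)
Your treatment of the edges of $\overline G_p$ is the paper's. For the inserted seam edges you take a different route. You rerun the envelope forcing of Proposition~\ref{prop:ordinary-edge-allowed} on a seam edge occurrence itself.

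The paper instead uses that proposition as a black box. To force $x_{\rm L}z$, it picks an ordinary edge of $G_p$ at the opposite sector copy $x_{\rm R}$; such edges exist because zigzags are inserted only where both sectors carry an ordinary incident edge. It takes a matching containing that edge. Then $x_{\rm R}$ is the sector vertex matched outside the zigzag, so the alternating completion contains $x_{\rm L}z$. No boundary case analysis is needed.

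Your route can be completed: the exact adjacent distances, the strict width, and Lemma~\ref{lem:bottom-boundary-layer} all cover seam occurrences. The gap is your handling of the case ``both endpoints on $B_p$''.

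If a retained seam segment carrying a zigzag had both endpoints on $B_p$, then $f_p$ would fix the increment on it. That would fix which facing triangle is the singleton in every legal tiling. One of the two inserted edges would then lie in no perfect matching, and the theorem would be false there. Nothing in Definition~\ref{def:cut-open-seam} deletes one edge of a zigzag, so your ``deleted together with unselected faces'' argument is not available.

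What you must show is that this case is empty. The retained segments of seam $02$ have endpoints at levels $b,\ldots,N$. For $b+1\le k\le N-1$ the vertex $(N-k,0,k,0,0)$ lies in $S_p$ by \eqref{eq:Sp-formula}. Since $N\ge b+3$, every retained segment has an endpoint in $S_p$, and that endpoint is interior to $\mathcal A_p$ because its whole star is retained. The same holds on seam $03$ with $c$ in place of $b$. Lemma~\ref{lem:bottom-boundary-layer} then disposes of the one-boundary-endpoint case exactly as you say.

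Two smaller corrections.
\begin{itemize}
\item A selected seam occurrence means the facing triangle on that side is the singleton, so its sector copy is matched to $z$. There is no lozenge across a cut seam, so your gloss ``part of a lozenge across the seam edge'' has this backwards. It is harmless only because you produce both states.
\item The edges the paper calls exterior-boundary edges of $\overline G_p$ do exist. They cross lattice edges $[v,u]$ with $v\in S_p$ and $u\notin S_p$, join two retained triangles, and are already covered by your first bullet.
\end{itemize}
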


\begin{proof}
Proposition~\ref{prop:ordinary-edge-allowed} proves the assertion for
every ordinary edge of $G_p$.  Contracting the seam zigzags under the
bijection of Theorem~\ref{thm:fixed-graph}, part~3, proves the same
assertion for every edge of $\overline G_p$.

It remains to force an inserted seam edge.  Every retained zigzag has
the form
\[
             x_{\rm L}-z-x_{\rm R}.
\]
To force $x_{\rm L}z$, choose an ordinary panel edge incident to the
sector vertex $x_{\rm R}$.  Such an edge is part of the two-sector seam datum:
zigzags are inserted only at retained seam levels having an ordinary
incident sector edge on both developed sides.  By
Proposition~\ref{prop:ordinary-edge-allowed}, there is a tiling
containing that ordinary edge, so $x_{\rm R}$ is the unique sector vertex
matched outside the zigzag.  Its alternating completion therefore
contains $x_{\rm L}z$.  Interchanging the two sectors gives the same
argument for $zx_{\rm R}$.  Hence every inserted edge is allowed.
\end{proof}

\subsection{Flip connectivity and the polygonal identifications}

Now, and only now, we use the dimer-lattice theorem.  Let
\(\mathcal R_p\) be the graph whose vertices are the perfect matchings
of \(\overline G_p\), with two matchings adjacent when they differ by a
rotation around one bounded face.  In the cut-open lozenge--triangle
picture this is an elementary lozenge rotation or the corresponding
shift of a seam triangle.

\begin{lemma}[Flip connectivity]
\label{lem:flip-connectivity}
The graph \(\mathcal R_p\) is connected.  With the positive-rotation
convention fixed above, it is the Hasse diagram of the finite
distributive lattice of normalized height functions.  Its bottom and
top elements have heights \(h_{\rm bot}=L_p\) and \(h_{\rm top}=U_p\).
\end{lemma}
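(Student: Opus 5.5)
The plan is to deduce connectivity from Propp's distributive-lattice theorem for plane bipartite graphs, as recorded in \cite[Theorem~2]{ProppLattice} and \cite[Theorem~2.6]{MMSBV}. Its hypotheses are that \(\overline G_p\) is connected and plane bipartite (Theorem~\ref{thm:fixed-graph}, part~1) and that every edge lies in a perfect matching (Theorem~\ref{thm:well-matched}). Its conclusion is that the set \(\D(\overline G_p)\), ordered by the face-height potential \(H_M\) relative to a reference matching, is a finite distributive lattice whose cover relations are exactly single bounded-face rotations. A Hasse diagram of a finite lattice is connected, so \(\mathcal R_p\) is connected.

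Two compatibility checks are then needed. First, the potential \(H_M\) of Theorem~\ref{thm:partI-main}(ii) must match the aligned height \(h_M\) of \eqref{eq:aligned-global-height}. For every bounded face I would show that \(h_M=3H_M+\mathrm{const}\), with the constant fixed by the boundary normalization. This follows from the two-increment rule \eqref{eq:two-edge-increments}: crossing a selected lattice edge changes \(h_M\) by \(-3\) relative to the upper increment, which is exactly the unit jump of the flow \(\mathbf 1_M-\mathbf 1_{M_\ast}\). Lemma~\ref{lem:seam-cut-invariance} transports this across the cut seams.

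Second, the positive-rotation convention must agree. With the white-on-the-right orientation of \(Q_0^{\rm R}\), a positive rotation at \(F_v\) raises \(h_M(v)\) by \(3\) and leaves all other vertices fixed, as noted after \eqref{eq:two-edge-increments}. It follows that the lattice order on matchings coincides with the pointwise order on normalized height functions. Propp's Hasse diagram is therefore the Hasse diagram of the min/max lattice of height functions described after Lemma~\ref{lem:directed-extremal-extension}.

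The extremal identification is then immediate. By Lemma~\ref{lem:directed-extremal-extension} every legal height satisfies \(L_p\le h_M\le U_p\), and both \(L_p\) and \(U_p\) are themselves heights of matchings. Hence they are the unique bottom and top elements, so \(h_{\rm bot}=L_p\) and \(h_{\rm top}=U_p\) as in \eqref{eq:bottom-top-sandwich}.

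The main obstacle is the sign bookkeeping in the second check. The Lam--Postnikov and MMSBV conventions are opposite by \eqref{eq:two-quiver-conventions}, and at the seam zigzags a face's boundary length changes. To settle it, I would verify the sign locally on one hexagonal face and on one seam-adjacent four-sided face, using the patterns \(1010\) at \(v_1\), where Lemma~\ref{lem:polygonal-bottom-no-negative} fixes which class is positive. I would then extend by the invariance in Lemma~\ref{lem:seam-cut-invariance}(3),(4).
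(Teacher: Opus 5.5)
Your proposal follows the paper's proof. Propp's theorem in the form of \cite[Theorem~2.6]{MMSBV}, made applicable by Theorem~\ref{thm:well-matched}, gives the Hasse-diagram structure. Lemma~\ref{lem:directed-extremal-extension} identifies the extremes with \(L_p\) and \(U_p\), and Lemma~\ref{lem:seam-cut-invariance} transports order and rotations between \(\overline G_p\) and the cut-open tilings. Your explicit height comparison and sign check spell out what the paper leaves implicit.

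One small correction: the edgewise two-increment argument gives \(h_M-h_{M_\ast}=3H_M\) exactly, not \(h_M=3H_M+\mathrm{const}\). Both sides vanish on \(B_p\), which is the unbounded face, and the reference height \(h_{M_\ast}\) is not constant.
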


\begin{proof}
Theorem~\ref{thm:well-matched} proves that every edge of
\(\overline G_p\) is allowed.  Propp's dimer-lattice theorem
\cite[Theorem~2]{ProppLattice}, in the formulation
\cite[Theorem~2.6]{MMSBV}, therefore applies.  It says that the
face-rotation graph is the Hasse diagram of the finite distributive
lattice of height functions; see also \cite{Desreux} in the lozenge
setting.  The least and greatest heights in that lattice are
\(L_p\) and \(U_p\) by
Lemma~\ref{lem:directed-extremal-extension} and
Definition~\ref{def:bottom-top-matchings}.
By Lemma~\ref{lem:seam-cut-invariance}, the seam-cut matching
bijection preserves this order and its face rotations; it is therefore
an isomorphism of the two distributive lattices.
\end{proof}

\begin{proposition}[The polygonal matching is the bottom matching]
\label{prop:polygonal-bottom-matching}
The perfect matching \(M_{\rm poly}\) of
Definition~\ref{def:polygonal-matching} is \(M_{\min}\).
\end{proposition}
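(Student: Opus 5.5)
The plan is to combine Lemma~\ref{lem:polygonal-bottom-no-negative} with the lattice structure of Lemma~\ref{lem:flip-connectivity}. By Lemma~\ref{lem:flip-connectivity}, the face-rotation graph \(\mathcal R_p\) is the Hasse diagram of the finite distributive lattice of normalized height functions, and its cover relations are exactly the positive face rotations. Hence a matching \(M\) with \(M\neq M_{\min}\) lies strictly above the bottom element. In a finite lattice, any element other than the bottom covers at least one element. By the Hasse-diagram description, that cover is realized by a single face rotation lowering the height at one face by three in aligned units. So every \(M\neq M_{\min}\) admits a negative face rotation.

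Next, I would transport this to \(M_{\rm poly}\). By Definition~\ref{def:polygonal-matching} and the legality verification of \(M_{\rm ref}\), \(M_{\rm poly}\) is a legal tiling of \(\mathcal A_p\), and therefore a perfect matching of \(G_p\). Its aligned height has boundary trace \(f_p\) under the normalization \(h(v_9)=0\), so it is an element of the lattice above. Lemma~\ref{lem:polygonal-bottom-no-negative} states that \(M_{\rm poly}\) admits no negative face rotation. This statement is made after contracting the seam zigzags. Lemma~\ref{lem:seam-cut-invariance} guarantees that face rotations and the order are identical on \(\overline G_p\) and \(G_p\), so no sign or orientation mismatch arises. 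By the previous paragraph, \(M_{\rm poly}=M_{\min}\), and consequently \(h_{M_{\rm poly}}=L_p=h_{\rm bot}\).

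The main obstacle is a sign convention. A ``negative rotation'' in Lemma~\ref{lem:polygonal-bottom-no-negative} was phrased with the white-on-the-right convention of \(Q_0^{\rm R}\). In contrast, the order in Theorem~\ref{thm:partI-main}(ii) and Lemma~\ref{lem:flip-connectivity} is defined by the dual potential \(H_M\), or equivalently by aligned height. I would check at one face that a rotation lowering \(h_M\) at the central vertex by three is precisely the rotation called negative there. The local \(1010\) patterns at \(v_1,v_3\) would serve as the test case, with the increment rule \eqref{eq:two-edge-increments} used to compute the change. If this check failed, the fallback would be a direct argument. Lemma~\ref{lem:bottom-boundary-layer} and the inequality \(L_p\le H_{\rm ref}\) already give equality along the boundary layer. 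One could then propagate the equality \(H_{\rm ref}=L_p\) inward, by exhibiting for each interior vertex a boundary point \(b\) and a positive path to it along which \(M_{\rm poly}\) is unselected. Such a path would realize \(H_{\rm ref}(z)=f_p(b)-d_p(z,b)\), which forces \(H_{\rm ref}(z)=L_p(z)\).
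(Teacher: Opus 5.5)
Your argument is correct and is the paper's proof. Lemma~\ref{lem:polygonal-bottom-no-negative} rules out a negative rotation at \(M_{\rm poly}\), while Lemma~\ref{lem:flip-connectivity} shows that every non-bottom matching has a Hasse predecessor reached by a negative face rotation, so \(M_{\rm poly}=M_{\min}\). Your sign-convention check is a sensible consistency verification that the paper leaves implicit, and your fallback (a positive path of unselected edges from each interior vertex to \(B_p\)) is a valid direct characterization of \(L_p\), but neither is needed.
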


\begin{proof}
By Lemma~\ref{lem:polygonal-bottom-no-negative},
\(M_{\rm poly}\) admits no negative face rotation.  By
Lemma~\ref{lem:flip-connectivity}, every nonbottom element has an
immediate predecessor, and the corresponding Hasse edge is a negative
face rotation.  Therefore \(M_{\rm poly}=M_{\min}\).
\end{proof}

\begin{proposition}[The polygonal top matching is the top matching]
\label{prop:polygonal-top-matching}
The perfect matching \(M_{\rm poly}^{+}\) of
Definition~\ref{def:polygonal-top-matching} is \(M_{\max}\).
\end{proposition}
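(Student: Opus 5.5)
The argument will mirror the proof of Proposition~\ref{prop:polygonal-bottom-matching}, now read from the top of the lattice. First, by Lemma~\ref{lem:polygonal-top-no-positive}, the perfect matching \(M_{\rm poly}^{+}\) of Definition~\ref{def:polygonal-top-matching} admits no positive face rotation. Here we must be careful that the seam-strip and interface checks in that lemma are carried out in the contracted graph \(\overline G_p\), where face rotations are defined. By Lemma~\ref{lem:seam-cut-invariance}, rotations in \(G_p\) and in \(\overline G_p\) correspond under the seam-cut bijection, so the absence of positive rotations transfers in either direction.

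Second, invoke Lemma~\ref{lem:flip-connectivity}. The rotation graph \(\mathcal R_p\) is the Hasse diagram of the finite distributive lattice of normalized height functions. Its top element has height \(h_{\rm top}=U_p\), which is the height of \(M_{\max}\) in Definition~\ref{def:bottom-top-matchings}. In a finite lattice, every element other than the top has an immediate successor. The corresponding Hasse edge is a single-face rotation that raises the height at one face, that is, a positive rotation. Since \(M_{\rm poly}^{+}\) has no such rotation, it has no successor, and so it is the top element. Uniqueness of matchings with a prescribed height function (Lemma~\ref{lem:directed-extremal-extension}) then gives \(M_{\rm poly}^{+}=M_{\max}\).

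The only delicate point is the sign convention: the rotations excluded in Lemma~\ref{lem:polygonal-top-no-positive} must be exactly those that increase \(H_M\), the up-covers in the order of Theorem~\ref{thm:partI-main}(ii). I would check this at the corner \(w_1\) or \(w_3\), where the occupied pattern is \(1010\). Those occupied pairs are the reverse of the ones at \(v_1,v_3\) in \(M_{\rm poly}\), which is already known to be the bottom and admits only positive rotations there. As a consistency check, Lemma~\ref{lem:exact-polygonal-height-gap} gives \(h_{M^{+}_{\rm poly}}-h_{M_{\rm poly}}=3K\ge3\) in the interior. This confirms that \(M^{+}_{\rm poly}\) lies strictly above the bottom, in the orientation where rotations at \(w_1\) and \(w_3\) would lower the height.
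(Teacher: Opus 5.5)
Your proposal is correct and is essentially the paper's proof. Lemma~\ref{lem:polygonal-top-no-positive} excludes any positive rotation at \(M_{\rm poly}^{+}\), and by Lemma~\ref{lem:flip-connectivity} every non-top matching has a cover given by a positive face rotation, so \(M_{\rm poly}^{+}\) is the top element with height \(U_p\), namely \(M_{\max}\); your sign-convention and height-gap remarks are only consistency checks that the argument does not need.
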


\begin{proof}
Lemma~\ref{lem:polygonal-top-no-positive} rules out a positive face
rotation from \(M_{\rm poly}^{+}\).  Every nontop element has an
immediate successor, which is a positive face rotation by
Lemma~\ref{lem:flip-connectivity}.  Hence
\(M_{\rm poly}^{+}=M_{\max}\).
\end{proof}

Combining these two propositions with
Lemma~\ref{lem:exact-polygonal-height-gap} gives the exact formulas
\eqref{eq:gap-panel-012}--\eqref{eq:gap-middle-cases} for
\((h_{\rm top}-h_{\rm bot})/3\).  Thus the explicit calculation is
consistent with, but is no longer needed to prove, the strict-width
Lemma~\ref{lem:strict-extremal-width}.

\section{Step IV: The dimer \texorpdfstring{\(F\)-polynomial}{F-polynomial}
  and boundary incidences}
\label{sec:corrected-boundary-weights}
\label{sec:step4-fpoly}

\paragraph{Goal of Step~IV.}
Well-matchedness allows us to use the dimer face polynomial.  We identify
it with the cluster \(F\)-polynomial and track how each face rotation
changes the incidences at every initial fan vertex.  This produces the
matching-dependent exponents, including the frozen boundary terms.

We use the separation formula
\eqref{eq:cluster-separation-dictionary} and the exponent dictionary
\eqref{eq:matching-exponent-dictionary}.  Thus \(B=(b_{vu})\) has
rows indexed by all initial fan variables and mutable columns;
\(g(Z):=g_Z\) is the mutable \(g\)-vector, and \(\beta_Z\) is the
separate frozen normalization defined in
Section~\ref{sec:cluster-dictionary}.  We apply this notation to the
dimer variable \(Z_p\) constructed below, without presupposing the
identification \(Z_p=X_p\), which is the task of Step~V.

\paragraph{Canonical height functions.}
Flip connectivity and the identification of the polygonal bottom and
top matchings were established, after well-matchedness, in
Lemma~\ref{lem:flip-connectivity} and
Propositions~\ref{prop:polygonal-bottom-matching} and
\ref{prop:polygonal-top-matching}.

By Theorem~\ref{thm:well-matched}, \(G_p\) is well-matched.  Let
\(r_u(M)\) be the height of a perfect matching \(M\) above
\(M_{\min}\) at the face \(F_u\).  The dimer face polynomial is
\[
 D_{G_p}(y)=
   \sum_{M\in\D(G_p)}\prod_{u\in S_p}y_u^{r_u(M)}.
\]
Theorem~6.8 of \cite{MMSBV} says that this is an \(F\)-polynomial for
the cluster algebra with the white-on-the-right initial quiver
\(Q_0^{\rm R}[S_p]\), and determines
the corresponding \(g\)-vector on \(S_p\).

From this point onward all raw incidences are measured on the
\emph{uncut} graph \(\overline G_p\).  We transport a matching of
\(G_p\) to \(\overline G_p\) by
Theorem~\ref{thm:fixed-graph}.  For any lattice vertex
\(v\in\Sigma _0\), let \(E_v\) be the set of edges of
\(\overline G_p\) crossing elementary lattice edges incident to \(v\),
and put
\[
 m_v(M):=|M\cap E_v|.
\]
For a mutable \(v\), this is
\(|M\cap\partial F_v|\), with only the retained part of the boundary
present when \(v\notin S_p\).  For a frozen boundary vertex it is the
number of matching edges crossing lattice edges incident to \(v\).
Put
\[
 w_v:=\frac{|\partial_{\overline G_p}F_v|}{2}\quad(v\in S_p),
 \qquad
 m_v^{\max}:=\max_{M\in\D(\overline G_p)}m_v(M).
\]
Thus \(w_v\) and \(m_v\) are never mixed between the cut and uncut
graphs.  Lemma~\ref{lem:seam-cut-invariance} identifies only the
invariant combination \(w_v-m_v(M)-1\), the face heights, and the
matching lattice with their cut-open counterparts.

\begin{lemma}[Incidence change under flips]
\label{lem:incidence-change}
For every \(v\in\Sigma _0\) and every perfect matching \(M\),
\begin{equation}
\sum_{u\in S_p}b_{vu}r_u(M)
       =m_v(M_{\min})-m_v(M).
\label{eq:incidence-height}
\end{equation}
\end{lemma}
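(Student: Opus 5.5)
The plan is to prove \eqref{eq:incidence-height} by induction along the distributive lattice of Lemma~\ref{lem:flip-connectivity}. At \(M=M_{\min}\) both sides vanish, since \(r_u(M_{\min})=0\) for all \(u\). Every matching is reached from \(M_{\min}\) by a chain of positive face rotations. So it suffices to show that both sides change by the same amount under one positive rotation at a face \(F_k\), \(k\in S_p\). That rotation increases \(r_k\) by one and leaves every other \(r_u\) unchanged. The left side therefore changes by exactly \(b_{vk}\), and the claim reduces to the local identity
\[
 m_v(M)-m_v(M')=b_{vk},
\]
where \(M'\) is obtained from \(M\) by a positive rotation at \(F_k\).

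To verify this local identity, I would use that a rotation at \(F_k\) exchanges the two alternating classes of edges on \(\partial F_k\). The positive rotation removes the positive class and inserts the negative one. Only edges of \(\partial F_k\) change, so \(m_v\) changes only through edges of \(E_v\cap\partial F_k\). Each such edge crosses a lattice edge incident to \(v\) that is also incident to \(k\). For \(v=k\), every boundary edge lies in \(E_k\), the two classes have equal size, and the change is \(0=b_{kk}\).

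For \(v\ne k\), the edges of \(\partial F_k\) crossing lattice edges incident to \(v\) are exactly the dual edges separating \(F_k\) from \(F_v\). For an unselected or frozen \(v\), these separate \(F_k\) from the outer face, but they are still present in \(\Gamma_0\). Each such edge is dual to one arrow between \(v\) and \(k\) in the full quiver \(Q_0^{\rm R}\), oriented according to the white-on-the-right convention. I would check that an edge in the positive alternating class of \(F_k\) corresponds to an arrow \(v\to k\), and an edge in the negative class to an arrow \(k\to v\). Then the positive rotation changes \(m_v\) by \(-(\#\{v\to k\}-\#\{k\to v\})\), which equals \(-b_{vk}\) after cancellation of \(2\)-cycles. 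This gives \(m_v(M)-m_v(M')=b_{vk}\), as required.

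The main obstacle is this sign and orientation check: matching the ``positive rotation'' convention fixed in Theorem~\ref{thm:partI-main}(ii) with the \(Q_0^{\rm R}\) orientation. I would calibrate it at \(v_1\) and \(v_3\), where Lemma~\ref{lem:polygonal-bottom-no-negative} already identifies the occupied pair of \(M_{\rm poly}=M_{\min}\) as the positive class. Two further cases need separate attention. The first is the seam faces, where I would work on \(\overline G_p\) after contraction and use Lemma~\ref{lem:seam-cut-invariance} to ensure that seam-inserted \(2\)-cycles contribute nothing. The second is a frozen \(v\) adjacent to \(F_k\) through several boundary edges, which is handled by the same edge-by-edge count.
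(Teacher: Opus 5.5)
Your proposal is correct and is the paper's argument: both sides vanish at \(M_{\min}\), and along each cover relation a positive rotation at \(F_k\) raises the left side by \(b_{vk}\). That rotation removes exactly the boundary edges dual to arrows \(v\to k\) in the white-on-the-right quiver and adds those dual to \(k\to v\), while the seam zigzags cross no lattice edge and so contribute nothing.

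Your separate treatment of \(v=k\) makes explicit a case the paper leaves implicit. Calibrating the sign via Lemma~\ref{lem:polygonal-bottom-no-negative} is not an independent check, since that lemma invokes the same convention. However, the class--arrow correspondence is fixed by the same local planar data at every face, so one direct orientation check suffices.
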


\begin{proof}
Both sides vanish for \(M=M_{\min}\).  Suppose that \(M'\) covers
\(M\) in the dimer lattice, so \(M'\) is obtained by a positive
rotation at \(F_u\) and
\(r(M')=r(M)+\mathbf e_u\).  With the white-on-the-right convention
defining \(Q_0^{\rm R}\),
the local rule is as follows.  If \(v\longrightarrow u\), then
\(b_{vu}=1\), and the positive rotation removes one crossing, so
\(m_v\) decreases by \(1\).  If \(u\longrightarrow v\), then
\(b_{vu}=-1\), and the rotation adds one crossing, so \(m_v\)
increases by \(1\).  Otherwise the dual incidence and the change in
\(m_v\) are both zero.  Therefore
\[
                 m_v(M')-m_v(M)=-b_{vu},
\]
including for a frozen row \(v\).
At a cut seam the inserted zigzag edges cross no lattice edge, while
the two opposite dual arrows cancel, so both quantities are again
zero.  Thus both sides of
\eqref{eq:incidence-height} change by \(b_{vu}\).  Flip connectivity
now proves the identity.
\end{proof}

\begin{proposition}[Lift to the full mutable fan quiver]
\label{prop:lifted-graph-g}
The mutation sequence producing \(D_{G_p}\) on the induced quiver
\(Q_0^{\rm R}[S_p]\) lifts to the full mutable fan quiver
\(Q_0^{\rm R}\).  The resulting
cluster variable \(Z_p\) has \(F\)-polynomial \(D_{G_p}\) and mutable
\(g\)-vector
\begin{equation}
g^{G_p}(v)=
\begin{cases}
w_v-m_v(M_{\min})-1,&v\in S_p,\\
m_v^{\max}-m_v(M_{\min}),
  &v\in I_{\rm mut}\setminus S_p,\ E_v\ne\varnothing,\\
0,&v\in I_{\rm mut}\setminus S_p,\ E_v=\varnothing.
\end{cases}
\label{eq:predicted-g}
\end{equation}
\end{proposition}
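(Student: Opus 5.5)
We propose to obtain the lift and the first line of \eqref{eq:predicted-g} from the results of \cite{MMSBV}, and then to compute the remaining coordinates with Lemma~\ref{lem:incidence-change}.

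\emph{The lift.} By Theorem~\ref{thm:well-matched} and Theorem~\ref{thm:fixed-graph}, \(\overline G_p\) is a connected, well-matched plane bipartite graph. Its white-on-the-right dual is \(Q_0^{\rm R}[S_p]\). Theorem~6.8 of \cite{MMSBV} therefore gives a mutation sequence in the directions \(S_p\) of the induced quiver. This sequence produces a cluster variable with \(F\)-polynomial \(D_{G_p}\). The same theorem gives its \(g\)-vector on \(S_p\) as \(w_v-m_v(M_{\min})-1\), which is the dimer face-height normalization at the bottom matching; Lemma~\ref{lem:seam-cut-invariance} guarantees that this combination is the same on the cut and uncut graphs.

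We then apply \cite[Proposition~5.11]{MMSBV} to the full mutable quiver \(Q_0^{\rm R}\). The vertices of \(I_{\rm mut}\setminus S_p\) are treated as frozen for the mutation sequence, so the \(F\)-polynomial is unchanged: it depends only on the mutable part \(Q_0^{\rm R}[S_p]\) and the principal block. The resulting variable is \(Z_p\).

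\emph{The coordinates outside \(S_p\).} The extra \(g\)-coordinates at \(v\in I_{\rm mut}\setminus S_p\) are governed by the tropical denominator of the separation formula. Temporarily treating \(v\) as frozen, the variable is a monomial times \(D_{G_p}(\widehat y)\) divided by its tropical evaluation. Substituting \eqref{eq:matching-exponent-dictionary}, the exponent of \(X_v\) in the monomial indexed by \(M\) is \(\sum_u b_{vu}r_u(M)\) plus a constant. Lemma~\ref{lem:incidence-change} rewrites this sum as \(m_v(M_{\min})-m_v(M)\).

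The normalization forced by \cite[Proposition~5.11]{MMSBV} is that the minimal exponent be zero, that is, that the lifted variable be a genuine Laurent polynomial with no common factor of \(X_v\). This shifts the constant by \(\max_M m_v(M)-m_v(M_{\min})=m_v^{\max}-m_v(M_{\min})\), which is exactly the asserted coordinate. When \(E_v=\varnothing\), no edge of \(\overline G_p\) touches \(v\), every \(b_{vu}\) with \(u\in S_p\) vanishes, and the coordinate is \(0\).

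\emph{Main obstacle.} The delicate step is matching the sign and normalization conventions of \cite[Proposition~5.11]{MMSBV} with the white-on-the-right quiver \(Q_0^{\rm R}\) and with our incidence statistic \(m_v\). In particular we must check that the lifted \(g\)-coordinate at an unselected vertex is the maximum rather than the minimum of the incidence difference. We would verify this first on a single-face example, an unselected neighbour of one selected hexagon, where both matchings can be written down. We would then confirm that seam vertices, whose incidences are measured on \(\overline G_p\), introduce no extra correction, using the cancelling \(2\)-cycles in Lemma~\ref{lem:seam-cut-invariance}.
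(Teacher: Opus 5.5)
Your lift and the first line of \eqref{eq:predicted-g} follow the paper: both use \cite[Theorem~6.8 and Proposition~5.11]{MMSBV}. Both arguments also finish the unselected coordinates the same way. By Lemma~\ref{lem:incidence-change}, the \(X_v\)-exponent of the term indexed by \(M\) is \(g^{G_p}(v)+m_v(M_{\min})-m_v(M)\), and one then shows that the minimal \(X_v\)-exponent of \(Z_p\) is exactly zero. You and the paper differ only in how that zero is justified.

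The paper argues through compatibility. Since \(v\) is never mutated, \(X_v\) and \(Z_p\) lie in the terminal cluster, so the \(v\)-denominator coordinate vanishes there. \cite[Proposition~2.8]{CaoLiDenominators} then transports this vanishing back to the initial seed.

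Your sentence about freezing \(v\) gives a different and more elementary route. Mutations in the directions \(S_p\) involve only the columns of the extended exchange matrix indexed by \(S_p\). Those columns evolve identically whether the vertices of \(I_{\rm mut}\setminus S_p\) are mutable or frozen. Hence \(Z_p\) is the same rational function in both algebras. In the frozen algebra, \cite[Theorem~3.7]{FZIV} writes it as \(\prod_{u\in S_p}X_u^{g^{G_p}(u)}\,D_{G_p}(\widehat y)/D_{G_p}|_{\mathbb P}(y)\), and the tropical denominator makes the minimal \(X_v\)-exponent zero. Comparing this with \eqref{eq:cluster-separation-dictionary} in the full algebra, whose \(F\)-polynomial is still \(D_{G_p}\), identifies \(g^{G_p}(v)=m_v^{\max}-m_v(M_{\min})\).

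This route avoids Cao--Li entirely. It also fixes the sign automatically, so the example check in your ``main obstacle'' paragraph is unnecessary.

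You should, however, rest the normalization on this frozen-algebra separation formula, not on \cite[Proposition~5.11]{MMSBV} or on Laurentness. Being a Laurent polynomial does not stop \(X_v\) from appearing in a denominator. It is precisely the division by the tropical evaluation that rules out both a negative power of \(X_v\) and a common positive power.
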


\begin{proof}
Theorem~6.8 of \cite{MMSBV} gives a mutation sequence on
\(Q_0^{\rm R}[S_p]\), the \(F\)-polynomial \(D_{G_p}\), and the first line of
\eqref{eq:predicted-g}.  Proposition~5.11 of \cite{MMSBV} applies the
same sequence in the full mutable quiver, preserves the
\(F\)-polynomial, and preserves these \(g\)-coordinates.

Let \(v\in I_{\rm mut}\setminus S_p\), and let \(\Sigma_m\) be the
terminal seed of the mutation sequence.  Since the sequence mutates
only in directions belonging to \(S_p\), it never mutates in direction
\(v\), so \(X_v\) remains in every intervening cluster.  In particular,
the terminal cluster contains both \(X_v\) and the distinct cluster
variable \(Z_p\).  Equivalently, the \(v\)-component of the
\(d\)-vector of \(Z_p\) relative to \(\Sigma_m\) is zero.  Changing the
reference seed by a mutation in a direction different from \(v\) does
not change this component \cite[Proposition~2.8]{CaoLiDenominators}.
Reversing the mutation sequence therefore gives
\[
 d_v^{\Sigma_0}(Z_p)=0.
\]
Thus the reduced numerator is not divisible by \(X_v\).  By
Lemma~\ref{lem:incidence-change}, the powers of \(X_v\) occurring in
\(D_{G_p}(\widehat y)\) are
\[
             m_v(M_{\min})-m_v(M).
\]
Their minimum is
\(m_v(M_{\min})-m_v^{\max}\).  In the reduced Laurent expression the
distinguished mutable monomial must cancel precisely this greatest
common monomial factor; otherwise either a negative denominator power
or a common numerator factor of \(X_v\) would remain.  Hence
\[
             g^{G_p}(v)=m_v^{\max}-m_v(M_{\min}).
\]
When \(E_v=\varnothing\), all the displayed powers are zero and so is
the \(g\)-coordinate.  This is the lowest-term argument used in the
proof of Theorem~7.2 of \cite{MMSBV}.
\end{proof}

After substituting \(\widehat y\), the mutable exponent attached to a
matching is
\begin{equation}
\epsilon^{G_p}_v(M)
   =g^{G_p}(v)+\sum_{u\in S_p}b_{vu}r_u(M)
 =
\begin{cases}
w_v-m_v(M)-1,&v\in S_p,\\
m_v^{\max}-m_v(M),
  &v\in I_{\rm mut}\setminus S_p,\ E_v\ne\varnothing,\\
0,&v\in I_{\rm mut}\setminus S_p,\ E_v=\varnothing.
\end{cases}
\label{eq:explicit-exponent}
\end{equation}
For a frozen boundary vertex there is no \(g\)-coordinate.  Its full
matching exponent is instead
\begin{equation}
\epsilon^{\partial}_v(M)
 =\beta_{Z_p}(v)+m_v(M_{\min})-m_v(M),
 \qquad v\in I_{\rm fr}.
\label{eq:boundary-exponent}
\end{equation}

\paragraph{Output of Step~IV.}
For every matching, the dimer height vector now determines the complete
mutable and frozen exponent field through the local incidence identity.
What remains is to identify which cluster variable this dimer polynomial
represents.  That is the purpose of the five-point extended-exponent calculation.

\section{Step V: Identifying the matching formula with the target variable}
\label{sec:step5-five-point}

\paragraph{What remains after Step~IV.}
Step~IV produced from \(G_p\) a cluster variable \(Z_p\) whose
\(F\)-polynomial is the perfect-matching generating polynomial.
We have not yet proved that \(Z_p\) is the target
octahedron variable \(X_p\).

There are two logically separate calculations.  On the recurrence
side, we compute the mutable \(g\)-vector of \(X_p\), together with
its separate frozen normalization.  On the graph side, we compute
the mutable \(g\)-vector of \(Z_p\) from the incidences of the
polygonal bottom matching constructed in Step~III.

Recall the five initial vertices \(v_0,\ldots,v_4\) from
\eqref{eq:five-g-points}.  We shall prove
\begin{equation}
 g(X_p)=g(Z_p)
   =-\mathbf e_{v_1}+\mathbf e_{v_2}
      -\mathbf e_{v_3}+\mathbf e_{v_4},
 \qquad
 \beta_p=\mathbf e_{v_0}.
\label{eq:step-v-two-fingerprints}
\end{equation}
The equality of the mutable \(g\)-vectors identifies \(Z_p\) with
\(X_p\) in the principal-coefficient algebra.  Only after that
identification do we specialize the coefficients; the frozen
normalization is therefore needed only on the recurrence side.

\subsection{Pictorial roadmap and the KTW face-counting convention}

Before carrying out the two calculations, we give the complete picture
in one worked example.  The picture is the polygonal matching used on
the graph side and also displays the five surviving exponents computed on the recurrence
side.  It also fixes a point of terminology that is easy to misread in
a lozenge picture.  A \emph{dual matching edge} is an elementary
triangular-lattice side separating the two triangles of a lozenge; the
corresponding \emph{matching edge} joins the centers of those triangles
and crosses that side.  Incidence counts below count the latter
center-to-center edges.

\paragraph{Cyclic incidence words.}
Recall that every mutable lattice vertex \(v\) labels a face \(F_v\)
of the full dual graph \(\Gamma_0\).  We call \(F_v\)
\emph{selected} when \(v\in S_p\), equivalently when \(v\) lies in
the positive denominator support of \(X_p\).  In that case the whole
boundary \(\partial F_v\) is retained in \(\overline G_p\), and
\(F_v\) remains a bounded face of the matching graph.  If
\(v\notin S_p\), we call the ambient face \(F_v\) \emph{unselected}:
it is not retained as a face of the matching graph, although some of
its sides may remain because they also bound neighboring selected
faces.  Those surviving sides form the incidence set \(E_v\), which
may be a path rather than a cycle.

Let \(M\in\D(G_p)\), let \(\overline M\) be its image after contracting
the inserted seam zigzags, and let \(v\in S_p\).  List the edges of
\(E_v\) in counterclockwise cyclic order around the selected face
\(F_v\):
\[
                 E_v=(e_1,e_2,\ldots,e_{2w_v}).
\]
The notation \(2w_v\) is justified by bipartiteness.  As one travels
around the closed boundary \(\partial F_v\), the graph vertices
alternate black, white, black, white.  One can return to the starting
vertex only after an even number of boundary edges, so
\(|E_v|=|\partial F_v|=2w_v\).  Before the seams are opened, the
selected faces are four-sided rhombi or six-sided hexagons; opening a
seam adds two boundary edges to the relevant face and therefore
preserves this parity.
The \emph{cyclic incidence word} of \(M\) at \(v\) is the binary word
\[
 \omega_v(M)=\omega_1\omega_2\cdots\omega_{2w_v},
 \qquad
 \omega_j=
 \begin{cases}
  1,&e_j\in\overline M,\\
  0,&e_j\notin\overline M.
 \end{cases}
\]
There is no distinguished first edge of \(F_v\), so this word is
regarded up to cyclic shift.  Its number of \(1\)'s is exactly the
incidence count:
\[
                       m_v(M)=\sum_{j=1}^{2w_v}\omega_j.
\]
Thus a digit records whether the center-to-center matching edge crossing
the corresponding retained lattice side is present; it does not record
the color or orientation of the lozenge.  If \(v\notin S_p\) and
\(E_v\) is a path rather than the boundary of a face, we use the
analogous \emph{linear} incidence word, ordered from the row-entry end,
and do not take it modulo cyclic shift.  Because this is a path rather
than a closed boundary, its length need not be even.  Inserted
seam-zigzag edges are excluded in both conventions.

The reference matching \(M_{\rm poly}=M_{\min}\) was defined and
identified as the bottom matching in
Definition~\ref{def:polygonal-matching} and
Proposition~\ref{prop:polygonal-bottom-matching}.  The following
figure records that polygonal matching together with the five vertices
used in the recurrence calculation.

\begin{figure}[htbp]
\centering
\includegraphics[height=0.78\textheight]
  {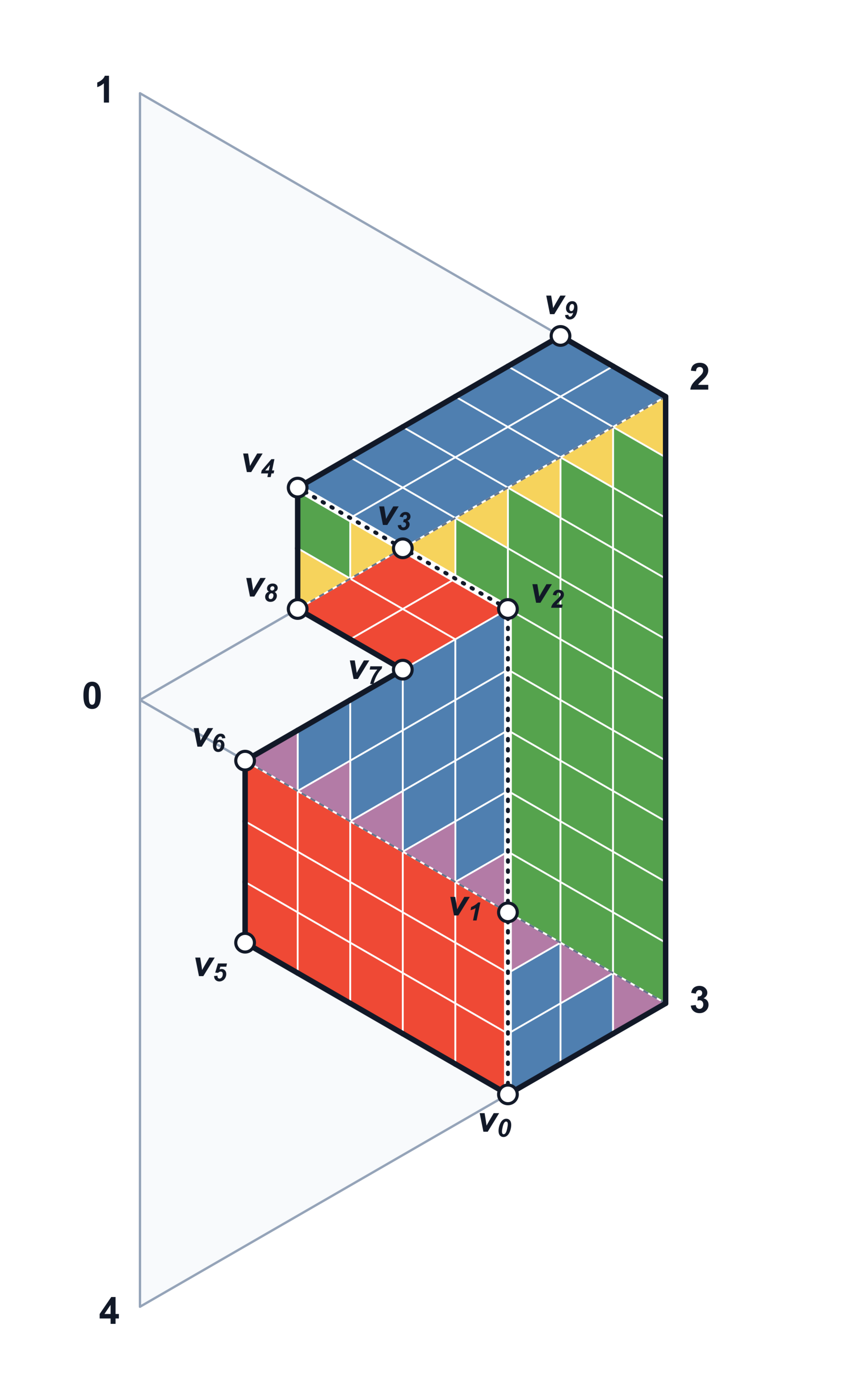}
\caption{The polygonal matching \(M_{\rm poly}\) for
\(p=(2,3,2,3)_{1234}\).  The ambient vertices
\(u_0,\ldots,u_4\) are abbreviated in the figure to
\(0,\ldots,4\), and all polygon vertices \(v_0,\ldots,v_9\) are shown.
The red, blue, and green regions are filled simultaneously by the
corresponding lozenge orientations; yellow and purple denote the
singleton seam triangles.  The dotted path
\(\mathcal P_p:v_0\to v_1\to v_2\to v_3\to v_4\) is not part of the
matching.  Proposition~\ref{prop:polygonal-bottom-matching} proves
that \(M_{\rm poly}=M_{\min}\).}
\label{fig:step-v-bottom-matching}
\end{figure}

The broken path drawn on the matching,
\[
 \mathcal P_p:\quad
 v_0\longrightarrow v_1\longrightarrow v_2
       \longrightarrow v_3\longrightarrow v_4,
\]
is the union of the four internal color interfaces.  It is not a
matching path.  Its endpoint \(v_0\) is frozen, whereas
\(v_1,v_2,v_3,v_4\) are mutable.  Thus the same picture separates the location of the frozen endpoint
\(v_0\) from the four mutable vertices that form the \(g\)-vector;
the coefficient at \(v_0\) is computed only on the recurrence side.

\begin{proposition}[Selected-face KTW form]
\label{prop:selected-face-ktw-form}
Let \(M\in\D(G_p)\), transport it to the uncut graph
\(\overline G_p\), and write \(\mu=\overline M\).  For
\(v\in S_p\), let \(a_\mu(F_v)\) be the number of matching edges of
\(\mu\) adjacent to \(F_v\).  Then the exponent of \(X_v\) in the
matching term indexed by \(M\) is
\begin{equation}
 \epsilon^{G_p}_v(M)=
 \begin{cases}
  1-a_\mu(F_v),&F_v\text{ is four-sided},\\
  2-a_\mu(F_v),&F_v\text{ is six-sided}.
 \end{cases}
\label{eq:selected-face-ktw-form}
\end{equation}
The same exponent is obtained after cutting open the seams and inserting
the alternating seam zigzags.
\end{proposition}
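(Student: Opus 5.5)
The plan is to start from the explicit exponent formula \eqref{eq:explicit-exponent}. For \(v\in S_p\) it already gives \(\epsilon^{G_p}_v(M)=w_v-m_v(M)-1\), where \(w_v=|\partial_{\overline G_p}F_v|/2\) and \(m_v(M)=|\overline M\cap E_v|\) are both measured on the uncut graph. The proof then reduces to two identifications: \(m_v(M)=a_\mu(F_v)\), and \(w_v\in\{2,3\}\) according as \(F_v\) is four- or six-sided. Substituting these into \(w_v-m_v(M)-1\) gives \(1-a_\mu(F_v)\) and \(2-a_\mu(F_v)\), respectively.

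For the first identification, I would use the fact that \(v\in S_p\) means the whole boundary \(\partial F_v\) is retained in \(\overline G_p\) (Definition~\ref{def:unreduced}). Hence \(E_v\), the set of retained dual edges crossing lattice edges incident to \(v\), is exactly \(\partial F_v\): a dual edge lies on \(\partial F_v\) precisely when it crosses a lattice edge at \(v\) separating oppositely coloured triangles. Then \(m_v(M)=|\mu\cap\partial F_v|\), which is the number of matching edges adjacent to \(F_v\) in the sense intended by \(a_\mu(F_v)\). This is the KTW count of edges lying on the face boundary.

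For the second identification, I would note that in the uncut graph a selected face is either a hexagon or a four-sided face. It is a hexagon when \(v\) is interior to a panel, since six elementary triangles alternate in colour. It is four-sided when \(v\) lies on a seam: the two panels each contribute a sector of triangles, and the identification of same-coloured seam dual vertices collapses the face to four sides. Bipartiteness gives the parity \(|\partial F_v|=2w_v\), so \(w_v=2\) or \(3\). The point needing the most care is checking that no selected face has any other size. For this I would appeal to Definition~\ref{def:cut-open-seam}: the only three-sector vertex, at the \(u_0\)-corner, has no selected neighbours, so every selected seam face comes from exactly two sectors. I would also check that frozen boundary vertices are never in \(S_p\), so every \(v\in S_p\) is mutable with a closed face.

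Finally, for the cut-open statement, I would invoke Lemma~\ref{lem:seam-cut-invariance}. Opening a seam raises \(|\partial F|/2\) by one and \(|M\cap\partial F|\) by one, so the combination \(|\partial F|/2-|M\cap\partial F|-1\) is unchanged. The exponent computed on \(G_p\) therefore agrees with \eqref{eq:selected-face-ktw-form}. As a consistency check against Speyer's single-sweep exponents used in Lemma~\ref{lem:single-sweep-refraction}, the values \(2-o_q\) and \(1-o_q\) match this form.
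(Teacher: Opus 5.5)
Your proposal is correct and follows the paper's proof essentially step for step. Both start from \eqref{eq:explicit-exponent}, use full retention of $\partial F_v$ for $v\in S_p$ to get $E_v=\partial F_v$ and $m_v(M)=a_\mu(F_v)$, read off $w_v\in\{2,3\}$ from the rhombus/hexagon dichotomy, and cite Lemma~\ref{lem:seam-cut-invariance} for the cut-open statement; your extra checks (the collapse of a two-sector seam face to four sides, and that every selected vertex is mutable and away from the $u_0$ corner) only make explicit what the paper asserts in one line.
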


\begin{proof}
Equation~\eqref{eq:explicit-exponent}, proved in Step~IV from the
separation formula, Proposition~\ref{prop:lifted-graph-g}, and
Lemma~\ref{lem:incidence-change}, gives
\[
                 \epsilon^{G_p}_v(M)=w_v-m_v(M)-1
                 \qquad(v\in S_p).
\]
By Theorem~\ref{thm:fixed-graph}, the complete boundary of every
selected face is retained.  Hence
\[
                         E_v=\partial F_v,
                 \qquad m_v(M)=a_\mu(F_v).
\]
The faces of the initial fan dual graph are four-sided rhombi or
six-sided hexagons, and retaining a complete boundary introduces no
other selected-face type.  Thus a four-sided face has \(w_v=2\), while
a six-sided face has \(w_v=3\).  Substitution in the preceding formula
gives respectively
\[
             2-a_\mu(F_v)-1=1-a_\mu(F_v),
       \qquad
             3-a_\mu(F_v)-1=2-a_\mu(F_v),
\]
which proves \eqref{eq:selected-face-ktw-form} on \(\overline G_p\).

The cut-open assertion follows from
Lemma~\ref{lem:seam-cut-invariance}, which preserves
\(w_v-m_v(M)-1\) under the alternating seam-zigzag completion.
\end{proof}

We now compare this independently proved formula with the notation of
Knutson--Tao--Woodward \cite[\S7, pp.~15--16]{KTWOR}.  Contract the
inserted seam zigzags and write
\[
                         \mu:=\overline{M}_{\rm poly}
\]
for the resulting matching on the uncut graph.  In their graph with
open faces, let \(m_\mu\) denote the matching Laurent monomial and let
\(a_\mu(F)\) be the number of matching edges of \(\mu\) adjacent to a
face \(F\).  The exponent of the face variable in \(m_\mu\) is
\begin{equation}
 \operatorname{exp}_{F}(m_\mu)=
 \begin{cases}
  1-a_\mu(F),&F\text{ a rhombus or external hexagon},\\
  2-a_\mu(F),&F\text{ an interior hexagon}.
 \end{cases}
\label{eq:ktw-matching-monomial-exponent}
\end{equation}
Proposition~\ref{prop:selected-face-ktw-form} proves that the
selected-face exponents in our graph have exactly this KTW form; it is
not being assumed from the special graph treated in
\cite{KTWOR}.  The proposition makes no assertion about an unselected
mutable vertex or a frozen variable, whose exponents are instead given
by \eqref{eq:explicit-exponent} and \eqref{eq:boundary-exponent}.
Thus there is no separate ``neutral count'' to compare with an
``actual count'': the face-dependent offset \(1\) or \(2\) and the
adjacent-edge count are packaged into a single exponent.  For a
selected face \(F_v\) in our graph this is exactly
\[
 \operatorname{exp}_{F_v}(m_\mu)
       =w_v-m_v(M_{\rm poly})-1.
\]

\begin{remark}[The reentrant corner is not a sixth exceptional vertex]
The only point at which the outline of \(\mathcal A_p\) is not locally
convex is the middle-panel corner \(v_7\), namely
\[
             q_*=(a+d,0,b,c,0)\in\Delta_{023}(N)
\]
from the coordinate definition of the reference tiling preceding
Lemma~\ref{lem:boundary-datum-compatible}.  This point must be classified
before choosing between the selected-face formula
\eqref{eq:selected-face-ktw-form} and the unselected-vertex line of
\eqref{eq:explicit-exponent}.  By
\eqref{eq:Sp-formula}, a point \(q\in\Delta_{023}\) belongs to \(S_p\)
precisely when \(q_0\geq1\) and
\(q_2\geq b+1\) or \(q_3\geq c+1\).  At \(q_*\) one has
\(q_2=b\) and \(q_3=c\).  Hence
\[
                    q_*\in I_{\rm mut}\setminus S_p.
\]
In particular, \(q_*\) is not an interior selected hexagon.  Its
mutable exponent is governed by the unselected-vertex line of
\eqref{eq:explicit-exponent}:
\begin{equation}
 \epsilon^{G_p}_{q_*}(M)
       =m_{q_*}^{\max}-m_{q_*}(M).
\label{eq:reentrant-corner-matching-exponent}
\end{equation}

The local support and occupancy calculation in
Lemma~\ref{lem:reference-matching-incidence-calculation} gives
\(m_{q_*}(M_{\rm poly})=m_{q_*}^{\max}=2\).  Thus its bottom
exponent is zero, and the reentrant corner contributes no additional
basis vector to \eqref{eq:step-v-two-fingerprints}.

In the terminology of Knutson--Tao--Woodward, the corresponding
open/exterior face uses the \(1-a_\mu(F)\) rule rather than the
interior-hexagon \(2-a_\mu(F)\) rule.  That raw external-face factor
is not, by itself, the exponent of the unselected initial fan variable
in our lifted cluster expression: the latter is normalized by the
unselected line of \eqref{eq:explicit-exponent}, namely
\eqref{eq:reentrant-corner-matching-exponent}.

Finally, under the standing assumption \(a,b,c,d\geq1\), \(q_*\)
cannot coincide with any of \(v_0,\ldots,v_4\).  Indeed, respectively,
the separating coordinates are
\[
 q_4:0\ne d,\qquad q_2:b\ne0,\qquad
 q_0:a+d\ne d,\qquad q_3:c\ne0,\qquad q_1:0\ne a.
\]
Coincidences can occur only after leaving the strictly interior
setting; for example, if \(a=0\), then \(q_*=v_2\).  In such a
degeneration the local graph must be recounted after the faces merge,
rather than adding the two nominal contributions from the
nondegenerate formula.
\end{remark}

\subsection{The graph-side reading of the five marked vertices}

We now justify directly what was drawn in
Figure~\ref{fig:step-v-bottom-matching}.  Step~III already proved that
the displayed polygonal matching is \(M_{\min}\).  It remains here to
read its cyclic and linear incidence words.

\begin{lemma}[Incidences of the polygonal bottom matching]
\label{lem:reference-matching-incidence-calculation}
\label{lem:local-boundary-classification}
For selected vertices,
\begin{equation}
 w_v-m_v(M_{\rm poly})-1=
 \begin{cases}
  -1,&v=v_1\ \text{or}\ v=v_3,\\
  +1,&v=v_2,\\
  0,&v\in S_p\setminus\{v_1,v_2,v_3\}.
 \end{cases}
\label{eq:reference-selected-incidences}
\end{equation}
For unselected mutable vertices meeting the graph,
\begin{equation}
 m_v^{\max}-m_v(M_{\rm poly})=
 \begin{cases}
  +1,&v=v_4,\\
  0,&v\in I_{\rm mut}\setminus(S_p\cup\{v_4\}),
       \quad E_v\ne\varnothing.
 \end{cases}
\label{eq:reference-unselected-incidences}
\end{equation}
Moreover,
\begin{equation}
                         E_{v_6}=E_{v_8}=\varnothing.
\label{eq:seam-endpoint-empty-incidences}
\end{equation}
\end{lemma}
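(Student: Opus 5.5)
The plan is a local case analysis of the explicit seven-region tiling \(M_{\rm poly}=M_{\rm ref}\), read face by face. First, for \(v\in S_p\) in the relative interior of a monochromatic region, I use the cyclic patterns already recorded in the proof of Lemma~\ref{lem:polygonal-bottom-no-negative}. A six-sided face has pattern \(100100\), so \(m_v=2=w_v-1\). A four-sided face beside a seam strip has contracted pattern \(1000\), so \(m_v=1=w_v-1\). A face on a straight interface has two occupied sides (pattern \(101000\) or \(100100\)), so again \(m_v=2\). All of these give exponent \(0\). This leaves the three selected interface corners. At \(v_1\) and \(v_3\), which lie on the seams and are four-sided, the pattern is \(1010\), so \(m_v=2\), \(w_v=2\), and the exponent is \(-1\). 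At \(v_2\), six-sided with pattern \(100000\), we get \(m_v=1\), \(w_v=3\), and the exponent is \(+1\). This proves \eqref{eq:reference-selected-incidences}. It remains to confirm, by reading the selection conditions \eqref{eq:Sp-formula} and the coordinates \eqref{eq:five-g-points}, that \(v_1,v_2,v_3\in S_p\) and that no other selected corner type occurs. That exhaustion was already given in the proof of Lemma~\ref{lem:polygonal-bottom-no-negative}.

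Second, for unselected mutable \(v\) with \(E_v\ne\varnothing\), I need \(m_v^{\max}\), not just \(m_v(M_{\rm poly})\). Such \(v\) lie on \(B_p\), and \(E_v\) is a path of retained sides, each separating \(F_v\) from a selected face. I will bound \(m_v^{\max}\) by an independence count on the linear incidence word. Consecutive edges of \(E_v\) share a graph vertex, so at most \(\lceil |E_v|/2\rceil\) of them can be occupied. Where \(E_v\) has length one or two this count is tight and immediate. I will then go through the boundary types: straight outer sides, convex endpoints, seam endpoints, the reentrant corner \(v_7\), and the corners \(v_4,v_5,v_9\). For each I compare \(m_v(M_{\rm poly})\) with the bound, using the inward boundary pattern \eqref{eq:reference-inward-boundary-pattern}.

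On straight sides and at convex corners, the selected inward edges should saturate the bound, giving exponent \(0\). At \(v_7\), Figure~\ref{fig:reentrant-boundary-cone} gives \(m=2\) on a length-three path, which is maximal. At \(v_4\) I expect \(m_{v_4}(M_{\rm poly})\) to fall one below the bound, and I will exhibit a matching attaining the bound to show \(m^{\max}_{v_4}\) is exactly one more. The natural candidate is the top matching \(M^+_{\rm poly}\), read near \(v_4\) on the red/green interface of Definition~\ref{def:polygonal-top-matching}, or a single positive rotation at an adjacent selected face. This gives the \(+1\) in \eqref{eq:reference-unselected-incidences}.

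Third, \eqref{eq:seam-endpoint-empty-incidences} comes from the failed strict inequalities tabulated in the proof of Lemma~\ref{lem:polygonal-bottom-no-negative}. At \(v_6\) (\(q_3=c\)) and \(v_8\) (\(q_2=b\)), which lie on the seams, I will check that every neighbouring lattice vertex in all incident panels also fails \eqref{eq:Sp-formula}. Then no side incident to \(v_6\) or \(v_8\) bounds a selected face, so \(E_v=\varnothing\).

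The main obstacle is the second step. Computing \(m_v^{\max}\) requires an upper bound valid over all matchings. For boundary vertices whose incidence path is longer than two, the naive independence bound may not be attained. Where it is not, I would instead use the forced boundary layer. Lemma~\ref{lem:bottom-boundary-layer} and the boundary values \(f_p\) shared by every legal tiling fix the status of edges with one endpoint on \(B_p\), so most of \(E_v\) is determined for every matching. Only \(v_4\) should retain genuine freedom.
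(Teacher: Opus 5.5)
Your first two steps follow the paper's proof. The selected-face exponents come from the occupancy words \(100100\), \(101000\), \(1000\), \(1010\), \(100000\) of the seven-region tiling. For unselected boundary vertices, the path bound \(m_v\le\lceil|E_v|/2\rceil\) is compared with the words of \(M_{\rm poly}\). Using \(M_{\rm poly}^{+}\) at \(v_4\) instead of Theorem~\ref{thm:well-matched} also works. The set \(E_{v_4}\) is the single edge crossing \([v_4,v_4+e_2-e_1]\). That lattice edge lies on the green/blue interface \(v_3v_4\) of \(M_{\rm poly}\), so it is unoccupied there. In \(M_{\rm poly}^{+}\), the two elementary triangles at the \(120^\circ\) corner \(v_4\) of the red quadrilateral \(v_8w_1v_9v_4\) pair across exactly that edge. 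You should record \(|E_{v_4}|=1\) explicitly, since that is what makes the difference exactly \(+1\).

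The genuine gap is in your third step. You propose to prove \eqref{eq:seam-endpoint-empty-incidences} by checking that every lattice neighbour of \(v_6\) and \(v_8\) fails \eqref{eq:Sp-formula}. That is false: \(v_6+e_3-e_0=(N-c-1,0,0,c+1,0)\) and \(v_8+e_2-e_0=(N-b-1,0,b+1,0,0)\) both lie in \(S_p\). Each lower seam foot has exactly one selected neighbour, reached along its seam, so your check cannot succeed.

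The missing idea is the seam identification built into \(\Gamma_0\). The two triangles facing a seam segment, \(U^r_{k-1,k-1}\) and \(U^{r+1}_{0,k-1}\), are both black, and \(\Gamma_0\) identifies their dual vertices rather than joining them by a dual edge. Hence no edge of \(\overline G_p\) crosses a seam-parallel lattice edge, and \(E_v\) is determined by the non-seam directions from \(v\) toward \(S_p\) alone. At \(v_6\) and \(v_8\) the \emph{only} direction toward \(S_p\) is the seam direction \(e_3-e_0\), respectively \(e_2-e_0\), and every other neighbour is unselected; that is why both incidence sets are empty.

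Two smaller corrections:
\begin{itemize}
\item At \(v_7\) the incidence path has four edges (directions \(e_2-e_3,e_2-e_0,e_3-e_0,e_3-e_2\)), not three, although the bound is still \(2\).
\item Your fallback misreads Lemma~\ref{lem:bottom-boundary-layer}, which describes only \(M_{\min}\). The boundary datum \(f_p\) shared by all legal tilings fixes increments between boundary vertices. It does not fix the status of edges joining \(B_p\) to the interior: the edge in \(E_{v_4}\) is unoccupied in \(M_{\rm poly}\) and occupied in \(M_{\rm poly}^{+}\). Since \(M_{\rm poly}\) attains the path bound at every unselected mutable vertex other than \(v_4\), including \(v_7\), the fallback is not needed anyway.
\end{itemize}
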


\begin{proof}
The local occupancy counts were established in the proof of
Lemma~\ref{lem:polygonal-bottom-no-negative}.  At a generic selected
six-sided face, whether in a monochromatic interior or on a straight
interface, \(w_v=3\) and exactly two sides are occupied, so \(m_v=2\).
At a generic selected four-sided seam face, \(w_v=2\) and the word
\(1000\) gives \(m_v=1\).  Both yield \(w_v-m_v-1=0\).

At \(v_1\) and \(v_3\), the positive word \(1010\) gives
\[
m_{v_1}(M_{\rm poly})=m_{v_3}(M_{\rm poly})=2,
\]
so \(w_v-m_v-1=2-2-1=-1\).  At \(v_3\), the red and the blue
lozenge each contribute an occupied incidence: color specifies orientation,
not whether the matching edge is counted.  At \(v_2\), the word
\(100000\) gives
\[
                  m_{v_2}(M_{\rm poly})=1,
\]
and \(w_{v_2}-m_{v_2}-1=3-1-1=+1\).  This proves
\eqref{eq:reference-selected-incidences}.

We make the unselected-boundary calculation explicit.  For an
unselected lattice vertex \(v\), let
\[
 D_v^{\rm ord}:=\{u-v:u\sim v,\ u\in S_p,\
                 [v,u]\not\subset
                 \Delta_{02}(N)\cup\Delta_{03}(N)\},
\]
where \(u\sim v\) means that \(u\) and \(v\) are joined by a unit
lattice edge.  If \([v,u]\) is not contained in a seam, then, since
\(v\notin S_p\), the dual edge crossing \([v,u]\) belongs to \(E_v\)
if and only if \(u\in S_p\).  Along a seam, however, the adjacent
same-colored dual vertices are identified in \(\Gamma_0\), rather than
joined by a dual edge.  Thus a seam-parallel direction contributes no
edge to \(E_v\), and \(D_v^{\rm ord}\) determines \(E_v\).

Suppose first that \(v\) lies in the relative interior of a straight
side of the support.  Substitution in \eqref{eq:Sp-formula} gives the
following complete list of inward neighbor directions; rows
corresponding to sides which are absent in a degenerate panel are
simply omitted:
\[
\begin{array}{c|c|c}
\text{panel}&\text{active equality at }v&D_v^{\rm ord}\\ \hline
012&q_0=0&\{e_0-e_1,e_0-e_2\}\\
012&q_1=a&\{e_0-e_1,e_2-e_1\}\\
012&q_2=b&\{e_2-e_0,e_2-e_1\}\\
023&q_0=0&\{e_0-e_2,e_0-e_3\}\\
023&q_2=b&\{e_2-e_0,e_2-e_3\}\\
023&q_3=c&\{e_3-e_0,e_3-e_2\}\\
034&q_0=0&\{e_0-e_3,e_0-e_4\}\\
034&q_3=c&\{e_3-e_0,e_3-e_4\}\\
034&q_4=d&\{e_0-e_4,e_3-e_4\}.
\end{array}
\]
Thus \(E_v\) consists of two consecutive edges.  Reading the fixed
lozenge orientation prescribed in the seven-region definition of
\(M_{\rm ref}=M_{\rm poly}\), exactly one of these two
edges is occupied by \(M_{\rm poly}\).  Equivalently, its linear
incidence word is \(10\), up to reversal.

At the five corners, the same substitution gives
\[
\begin{array}{c|c|c|c}
v&D_v^{\rm ord}&|E_v|&
 \text{linear incidence word of }M_{\rm poly}\\ \hline
v_5&\{e_3-e_4\}&1&1\\
v_6&\varnothing&0&\varnothing\\
v_7&
 \{e_2-e_3,e_2-e_0,e_3-e_0,e_3-e_2\}&4&
 1010\ \text{up to reversal}\\
v_8&\varnothing&0&\varnothing\\
v_9&\{e_0-e_1\}&1&1.
\end{array}
\]
The final column is read directly from the seven fixed-orientation
regions defining \(M_{\rm ref}=M_{\rm poly}\): at
\(v_5,v_9\) the unique retained ordinary side is crossed by the corner
lozenge, while at the reentrant corner \(v_7\) the red and blue
fillings cross two nonconsecutive retained sides.  At \(v_6\), the sole
direction toward \(S_p\) is \(e_3-e_0\), and at \(v_8\) it is
\(e_2-e_0\).  Both directions lie along an identified seam.  They
therefore give no edge of \(\overline G_p\), proving
\eqref{eq:seam-endpoint-empty-incidences}.  The inserted zigzag edges
in the cut-open graph cross no lattice side and do not alter this
conclusion.

For every straight-side type and for \(v_5,v_7,v_9\), it follows that
\[
 m_v(M_{\rm poly})=\left\lceil\frac{|E_v|}{2}\right\rceil .
\]
On the other hand, \(E_v\) is a path, and two consecutive edges of
that path share a graph vertex.  The intersection of any perfect
matching with \(E_v\) is therefore a matching of a path with
\(|E_v|\) edges, and hence has cardinality at most
\(\lceil |E_v|/2\rceil\).  Thus
\[
 m_v(M_{\rm poly})=m_v^{\max}.
\]
This proves the zero line of
\eqref{eq:reference-unselected-incidences}.
The vertices \(v_6,v_8\) instead fall under the empty-incidence line of
\eqref{eq:predicted-g} and \eqref{eq:explicit-exponent}, so their
mutable \(g\)-coordinates and matching exponents are zero.

Finally, \(v_4\notin S_p\).  Its sole retained incidence is absent from
\(M_{\rm poly}\).  By
Theorem~\ref{thm:well-matched}, that edge occurs in some perfect
matching.  Consequently
\[
 m_{v_4}(M_{\rm poly})=0,\qquad m_{v_4}^{\max}=1,
\]
which proves the remaining line.
\end{proof}

\begin{corollary}[The four mutable signs read from the bottom matching]
\label{cor:graph-five-point-g}
The cluster variable \(Z_p\) produced by the dimer theorem and lifted
to the full fan quiver satisfies
\[
 g(Z_p)
   =-\mathbf e_{v_1}+\mathbf e_{v_2}
      -\mathbf e_{v_3}+\mathbf e_{v_4}.
\]
\end{corollary}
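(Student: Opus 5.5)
The plan is to read off the \(g\)-vector from Proposition~\ref{prop:lifted-graph-g}, which expresses \(g^{G_p}=g(Z_p)\) coordinatewise through the bottom matching \(M_{\min}\). The first step is to invoke Proposition~\ref{prop:polygonal-bottom-matching}, which replaces \(M_{\min}\) by the explicit polygonal matching \(M_{\rm poly}\). Each line of \eqref{eq:predicted-g} then becomes a quantity already computed in Lemma~\ref{lem:reference-matching-incidence-calculation}.

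I would go through the three cases of \eqref{eq:predicted-g} in order.

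\emph{Selected vertices.} For \(v\in S_p\), the coordinate \(g^{G_p}(v)=w_v-m_v(M_{\rm poly})-1\) is given by \eqref{eq:reference-selected-incidences}. It equals \(-1\) at \(v_1\) and \(v_3\), \(+1\) at \(v_2\), and \(0\) at every other selected vertex.

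\emph{Unselected vertices with \(E_v\ne\varnothing\).} For these, \(g^{G_p}(v)=m_v^{\max}-m_v(M_{\rm poly})\). By \eqref{eq:reference-unselected-incidences} this is \(+1\) at \(v_4\) and \(0\) elsewhere. This case covers the reentrant corner \(v_7\), the corners \(v_5,v_9\) when they are mutable, and the straight-side vertices.

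\emph{Unselected vertices with \(E_v=\varnothing\).} These have coordinate \(0\) by definition. Among them are \(v_6\) and \(v_8\), by \eqref{eq:seam-endpoint-empty-incidences}.

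Before summing, I would check that each of \(v_1,v_2,v_3\) really lies in \(S_p\) and that \(v_4\) lies in \(I_{\rm mut}\setminus S_p\) with \(E_{v_4}\ne\varnothing\). This is a direct substitution into \eqref{eq:Sp-formula} and uses \(a,b,c,d\ge1\).

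\emph{Case \(v_1=(d,0,0,N-d,0)\).} This point lies on seam \(03\). It has \(q_0=d\ge1\) and \(q_3=N-d\ge c+1\), and on panel \(034\) it has \(q_4=0\le d-1\).

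\emph{Case \(v_2=(d,0,a+b,c,0)\).} This point lies in panel \(023\) with \(q_2=a+b\ge b+1\).

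\emph{Case \(v_3=(c+d,0,a+b,0,0)\).} This point lies on seam \(02\). It has \(q_0\ge1\), \(q_2=a+b\ge b+1\) and \(q_1=0\le a-1\).

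\emph{Case \(v_4=(c+d,a,b,0,0)\).} This point lies in panel \(012\) and fails the condition \(q_2\ge b+1\), so it is unselected. Its retained incidence is the one used at the end of the proof of Lemma~\ref{lem:reference-matching-incidence-calculation}, so \(E_{v_4}\ne\varnothing\).

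I would also confirm that the four points are pairwise distinct, so that no coordinate receives two contributions. Since \(a\ge1\) and \(b\ge1\), the coordinates \(q_1,q_2,q_3\) already separate them.

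The main obstacle is bookkeeping rather than any new idea. I need to make sure that every mutable vertex not in \(\{v_1,\dots,v_4\}\) falls into one of the zero cases above, in particular the vertices of the support boundary and the seam-endpoint vertices. Lemma~\ref{lem:reference-matching-incidence-calculation} already covers exactly this by classifying selected faces (generic, interface, seam) and unselected boundary vertices (straight sides and corners). So I would cite that exhaustiveness and note that the frozen vertex \(v_0\) carries no \(g\)-coordinate. Summing the nonzero coordinates then gives
\[
g(Z_p)=-\mathbf e_{v_1}+\mathbf e_{v_2}-\mathbf e_{v_3}+\mathbf e_{v_4}.
\]
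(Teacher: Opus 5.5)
Your proposal is correct and follows essentially the same route as the paper: it combines the bottom-matching formula of Proposition~\ref{prop:lifted-graph-g} with the identification \(M_{\rm poly}=M_{\min}\), reads each coordinate off Lemma~\ref{lem:reference-matching-incidence-calculation}, and lets empty incidence sets such as those at \(v_6,v_8\) contribute zero. Your additional membership and distinctness checks for \(v_1,\dots,v_4\) are correct; the one small correction is that \(v_9\) lies on the outer edge \(12\) and is therefore frozen, so it never contributes a \(g\)-coordinate.
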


\begin{proof}
Proposition~\ref{prop:lifted-graph-g} says that these local matching
exponents are the coordinates of \(g(Z_p)\).  The selected-face rule
reads \(1-2=-1\) at \(v_1\), \(2-1=+1\) at \(v_2\), and
\(1-2=-1\) at \(v_3\).  The unselected-vertex rule reads
\(1-0=+1\) at \(v_4\).  All other nonempty incidence words give zero by
Lemma~\ref{lem:reference-matching-incidence-calculation}; empty
incidence sets, including those at \(v_6,v_8\), give zero by
\eqref{eq:predicted-g}.  Reading the
four mutable signs from left to right along \(\mathcal P_p\) therefore
gives the displayed vector.
\end{proof}

\subsection{The recurrence-side reading of the same five vertices}

We now compute the distinguished exponent without propagating
\(c\)-vectors.  The geometric idea is to place an extremal
Gelfand--Tsetlin pattern on each initial panel, use the resulting
strict tropical branch choices, and then run those choices backwards.
The backward dependence telescopes to the five vertices marked in
Figure~\ref{fig:step-v-bottom-matching}.

\paragraph{An explicit all-right test.}
A point of the closed coefficient chamber at which all recurrence branches are strict suffices to read
the intrinsic \(g\)-exponent.  We now choose such a point explicitly,
including the large-gap offsets that turn the three
Gelfand--Tsetlin patterns into compatible hives.

\begin{proposition}[The all-right chamber]
\label{prop:all-right-chamber}
There is an explicit centered large-gap initial fan height \(h_R\),
made from three all-right Gelfand--Tsetlin patterns, for which
\[
                         B^{\mathsf T}h_R\leq0
\]
and for which the three complete tropical sweeps have strict branch
word
\[
                    \mathsf{NS},\mathsf{NS},\mathsf{EW}.
\]
At this specialization the Laurent monomial arising from the constant
term of \(F_{X_p}\) is the unique tropical maximizer at \(p\), and its
complete exponent is
\[
 \widetilde g_p
 =\mathbf e_{v_0}-\mathbf e_{v_1}
  +\mathbf e_{v_2}-\mathbf e_{v_3}+\mathbf e_{v_4}.
\]
Consequently
\[
 \beta_p=\mathbf e_{v_0},\qquad
 g(X_p)=-\mathbf e_{v_1}+\mathbf e_{v_2}
        -\mathbf e_{v_3}+\mathbf e_{v_4}.
\]
\end{proposition}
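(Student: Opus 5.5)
The plan is to prove the proposition by a tropical chamber argument. We exhibit one explicit initial height $h_R$ at which two things hold at once. First, the tropical $F$-polynomial of $X_p$ is maximized uniquely by its constant term. Second, the forward recurrence is linear near $h_R$, because every branch choice is strict. Comparing the two linear forms near $h_R$ then reads off $\widetilde g_p=\beta_p+g_p$ as the gradient of $h\mapsto h(p)$.

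\emph{Step 1: the test point.} For the panel $P_r$, take the large-gap array of Definition~\ref{def:rf-centered-large-gap}, with the minor spectra replaced by an integral all-right Gelfand--Tsetlin pattern. In this pattern each entry of row $j$ sits at its upper interlacing bound, so $\lambda_i^{(j)}=\lambda_i^{(j+1)}$, and the top row is strictly decreasing with widely separated gaps. The offsets $K_{\rm gap}$ are chosen admissible, so that by the argument of Lemma~\ref{lem:rf-centered-large-gap-valid} the red rhombus slacks are strictly positive. I will then check $B^{\mathsf T}h_R\le0$ column by column, using the local form of the exchange matrix of $Q_0^{\rm R}$. Each coordinate $(B^{\mathsf T}h_R)_u$ is a signed rhombus (or seam-rhombus) combination at $u$. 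On an all-right pattern, the two Cauchy families are tight or have a fixed sign, and the red family is negative by the gap. This gives the inequality in the closed chamber. To get strict inequality on the faces of $S_p$, I would perturb inside the open part of the chamber. For this it suffices that the all-right top rows have strictly separated entries, and I would choose them so.

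\emph{Step 2: strict branches.} Next I run the three sweeps through $0123$, $0134$, $1234$ on $h_R$. By induction in excavation order, I show that at each octahedron the NS pair strictly beats the EW pair in the first two sweeps, and the EW pair strictly wins in the last. Concretely, each sweep is a hive associator applied to all-right data (\cite{NSTGUE}), so the output faces are again extremal hives with explicit rhombus slacks. The comparison of the two pair sums equals one such slack, and it is nonzero by the large gaps. Strictness persists on a neighbourhood $U$ of $h_R$. Hence $h(p)$ is a linear function $\ell(h)$ on $U$. Composing the chosen branches backwards, the coefficient vector of $\ell$ is a signed count of backward dependencies. The same Boolean bookkeeping as in the proof of Theorem~\ref{thm:light-cone}, now run with a single branch, makes this vector telescope to $\mathbf e_{v_0}-\mathbf e_{v_1}+\mathbf e_{v_2}-\mathbf e_{v_3}+\mathbf e_{v_4}$. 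I expect this telescoping to be the main obstacle. It requires tracking which equatorial pair survives on each of the three tetrahedra and checking that all contributions other than those at the five corner vertices of $\mathcal P_p$ cancel. I would first do this on the single-sweep NST excavation hexagon, where a strict branch word gives a single lozenge tiling, namely the extremal one. Then I would compose the three hexagons, using the seam-compatible traces of Lemma~\ref{lem:single-sweep-refraction}.

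\emph{Step 3: identification.} By the separation formula \eqref{eq:cluster-separation-dictionary}, tropicalized, we have
\[
h(p)=\langle\beta_p+g_p,h\rangle+\max_{r}\langle r,B^{\mathsf T}h\rangle ,
\]
where $r$ ranges over exponents of $F_{X_p}$. This expression involves no cancellation, since $F$ has nonnegative coefficients and constant term $1$ by \cite[Theorem~1.7]{DWZII}. On the open chamber where $B^{\mathsf T}h<0$ on the support, the maximum is attained uniquely at $r=0$. So there $h(p)=\langle\widetilde g_p,h\rangle$, and comparing with $\ell$ on an open set gives $\widetilde g_p=\mathbf e_{v_0}-\mathbf e_{v_1}+\mathbf e_{v_2}-\mathbf e_{v_3}+\mathbf e_{v_4}$. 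Finally, $\beta_p$ is supported on $I_{\rm fr}$ and $g_p$ on $I_{\rm mut}$. Since $v_0$ is frozen and $v_1,\dots,v_4$ are mutable, splitting the vector gives the two stated formulas.
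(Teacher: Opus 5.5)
Your overall strategy is the same as the paper's: pick an explicit point of the closed chamber, show the branches are strict there, and read off the constant-term exponent. However, three steps fail as written.

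\textbf{The test point lies in the wrong chamber.} The pattern you specify, \(\lambda_i^{(j)}=\lambda_i^{(j+1)}\), is realized for example by \(\operatorname{diag}(N,\dots,1)\).

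\emph{Seam columns.} At the base octahedron over \(z_k^{02}=[k,k]_1=[0,k]_2\), the column of \(B^{\mathsf T}h\) for \(Q_0^{\rm R}\) is \(\mathsf{EW}-\mathsf{NS}\). After the background terms cancel, it equals
\[
 \lambda_k(X_1^{[k]})-\lambda_{k+1}(X_1^{[k+1]})
 +\lambda_1(X_2^{[k]})-\lambda_1(X_2^{[k+1]}).
\]
For your pattern this is \(\mu^{(1)}_k-\mu^{(1)}_{k+1}>0\). So \(B^{\mathsf T}h_R\le0\) fails. The constant term of \(F_{X_p}\) is then a tropical minimizer rather than the maximizer you need, and the first-sweep branch at these sites is \(\mathsf{EW}\), not \(\mathsf{NS}\).

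\emph{Interior columns.} Your stated mechanism is also wrong. Up to sign, every interior column is \(\lambda_i^{(j-1)}-\lambda_i^{(j)}-\lambda_{i+1}^{(j)}+\lambda_{i+1}^{(j+1)}\). The large-gap terms cancel from this expression identically, and it vanishes for \emph{both} extremal patterns. So the red slack plays no role, and separating the top rows cannot make these columns negative.

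\emph{The correct witness.} Use \(\lambda_i^{(j)}=\lambda_{i+1}^{(j+1)}\), namely the leading minors of \(D_N=\operatorname{diag}(1,\dots,N)\). This gives interior columns \(0\) and seam columns \(-1\).

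\emph{Step~3.} Because the interior columns vanish, your comparison on an open subset of \(\{B^{\mathsf T}h<0\}\) needs an unproved nonemptiness statement. You can avoid it. Strict branches give a unique maximizing Laurent monomial at the single point \(h_R\). The inequality \(B^{\mathsf T}h_R\le0\) makes the constant-term monomial a maximizer, so it must be that unique monomial.

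\textbf{Large gaps cannot produce strict branches.} The background contributes \(\tfrac{NK_{\rm gap}}2q_0(N-q_0)\), a function of \(q_0\) alone. In every octahedron of the sweeps through \(0123\) and \(0134\), each opposite pair has \(q_0\)-coordinates \(\{r_0,r_0+1\}\), and \(q_0=0\) on \(1234\). So all three pair sums shift by the same amount, and every comparison is independent of \(K_{\rm gap}\). Strictness has to come from the Gelfand--Tsetlin data itself. The paper writes the reduced heights in closed form:
\[
\Phi_1=2T_{N-r_0}-T_{r_1}-T_{r_1+r_2},\qquad
\Phi_2=3T_{N-r_0}-2T_{r_1}-T_{r_1+r_3},\qquad
\Phi_3=3T_N-T_{r_1}-T_{r_1+r_2}-T_{r_1+r_2+r_3}.
\]
It then proves, by induction on depth, that \(\mathsf{NS}-\mathsf{EW}=\pm(T_{v-1}+T_{v+1}-2T_v)=\pm1\).

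\textbf{The deferred telescoping is the substance of the proposition.} The Boolean recursion of Theorem~\ref{thm:light-cone} computes a different, max-minus-old, quantity and will not produce it. With the branches fixed, the paper propagates formal symbols instead. On \(0123\) it uses
\[
L_1(r)=z(r_0,0,r_1+r_2,r_3,0)-z(r_0+r_3,0,r_1+r_2,0,0)+z(r_0+r_3,r_1,r_2,0,0),
\]
together with analogous three-term forms \(L_2,L_3\) built from \(L_1\). Each is verified by the same depth induction. In \(L_3(p)\) the auxiliary symbol \(z(c+d,a+b,0,0,0)\) cancels, leaving \(z(v_0)-z(v_1)+z(v_2)-z(v_3)+z(v_4)\).
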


\begin{proof}
We first construct the promised numerical witness.  In
Definition~\ref{def:rf-centered-large-gap}, take \(n=N\) and
\[
 X_1=X_2=X_3=D_N:=\operatorname{diag}(1,2,\ldots,N),
 \qquad K_{\rm gap}=6.
\]
Indeed,
\[
 2\max_{\varnothing\ne I\subseteq\{1,2,3\}}
       \operatorname{width}(X_I)
 =6(N-1)<6N=NK_{\rm gap},
\]
so this choice is admissible.  The centered background and its prefix
sums are
\[
 \widehat\Lambda_q=3N(N+1-2q),
 \qquad
 C_j^{(0)}:=\sum_{q=1}^j\widehat\Lambda_q=3Nj(N-j).
\]
Put
\[
                         T_m:=\frac{m(m+1)}2,
                         \qquad T_0=0.
\]
The full decreasing top row is
\(\lambda_k^{(s)}=N-k+1\), while the leading \(j\)-minor of \(D_N\)
has row \((j,j-1,\ldots,1)\).  This is exactly the all-right pattern,
and its reversed top row is
\[
 \xi_m^{(s)}:=\lambda_{N-m+1}^{(s)}=m,
 \qquad 1\leq m\leq N.
\]
The successive incoming prefixes are
\[
                         C_j^{(s)}=C_j^{(0)}+sT_j,
                         \qquad s=1,2,3,
\]
and the all-right panel formula is
\begin{equation}
 h_R([i,j]_s)=G_N([i,j]_s)+sT_j-T_{j-i},
 \qquad
 G_N(q):=3Nq_0(N-q_0).
\label{eq:all-right-explicit-panel}
\end{equation}
Indeed, \(q_0=N-j\) at \([i,j]_s\), so
\(C_j^{(0)}=G_N([i,j]_s)\).  The formulas glue: the value at
\([j,j]_s\) and the value at \([0,j]_{s+1}\) are both
\(G_N+sT_j\).  The admissibility just checked and
Lemma~\ref{lem:rf-centered-large-gap-valid} therefore make \(h_R\) a
genuine compatible initial hive.

We first place this \(h_R\) in the closed initial coefficient chamber for
the white-on-the-right quiver \(Q_0^{\rm R}\).  At a mutable vertex in
the relative interior of one panel, its exchange-matrix column is the
alternating six-neighbor sum.  In the
coordinates above it is, up to simultaneously reversing the displayed
order,
\[
\begin{aligned}
 (B^{\mathsf T}h_R)_{[i,j]_r}
={}&h_R([i,j-1]_r)+h_R([i-1,j]_r)
   +h_R([i+1,j+1]_r)\\
 &-h_R([i,j+1]_r)-h_R([i+1,j]_r)
   -h_R([i-1,j-1]_r).
\end{aligned}
\]
Substitution of \eqref{eq:all-right-explicit-panel} cancels the six
large-gap gauge terms and the six triangular-number terms.  Hence
\begin{equation}
 (B^{\mathsf T}h_R)_u=0
 \qquad
 \text{for \(u\) in the relative interior of a panel.}
\label{eq:all-right-interior-chamber}
\end{equation}

The only remaining mutable vertices of the initial fan lie on the
two internal seams.  Write
\[
 z_k^{02}=(N-k,0,k,0,0)\in02,\qquad
 z_k^{03}=(N-k,0,0,k,0)\in03,
 \qquad 1\leq k\leq N-1.
\]
At the base octahedron over \(z_k^{02}\), the \(Q_0^{\rm R}\) convention says that
the column weight is the east--west product weight minus the
north--south product weight.  Formula
\eqref{eq:all-right-explicit-panel} gives
\[
 (B^{\mathsf T}h_R)_{z_k^{02}}
 =-\bigl(\xi^{(2)}_{k+1}-\xi^{(2)}_k\bigr)=-1.
\]
The identical calculation at \(z_k^{03}\) gives
\[
 (B^{\mathsf T}h_R)_{z_k^{03}}
 =-\bigl(\xi^{(3)}_{k+1}-\xi^{(3)}_k\bigr)=-1.
\]
Every other boundary vertex is frozen and does not index a column of
\(B\).  Together with
\eqref{eq:all-right-interior-chamber}, these formulas prove
\(B^{\mathsf T}h_R\leq0\).

Write the principal-coefficient \(F\)-polynomial of the target cluster
variable as
\[
 F_{X_p}(y)=\sum_{\rho\in\mathbb N^{I_{\rm mut}}}c_\rho y^\rho,
 \qquad c_0=1.
\]
Here \(c_0=1\) is supplied independently of the later identification
\(X_p=Z_p\) by the skew-symmetric constant-term theorem
\cite[Theorem~1.7]{DWZII}.
After the substitution \(y\mapsto\widehat y\), the term indexed by
\(\rho\) has exponent \(\widetilde g_p+B\rho\).  Its tropical weight
relative to the constant term is
\[
 \langle B\rho,h_R\rangle
 =\langle\rho,B^{\mathsf T}h_R\rangle\leq0.
\]
Thus the constant term is a tropical maximizer.

We next establish strictness of the actual recurrence choices, using
the five ambient coordinates throughout.  For a sweep through
\(\Delta_{ijk\ell}(N)\) from the input faces \(r_j=0\) and
\(r_\ell=0\), an interior output site \(r\) has
\(r_j,r_\ell\geq1\).  Put
\[
\begin{array}{lll}
 r_A=r+e_i-e_\ell,&
 r_B=r-e_j+e_k,&
 r^\vee=r+e_i-e_j+e_k-e_\ell,\\
 r_C=r+e_i-e_j,&
 r_D=r+e_k-e_\ell.&
\end{array}
\]
Then the solved recurrence is
\begin{equation}
 H(r)=\max\{H(r_A)+H(r_B),H(r_C)+H(r_D)\}-H(r^\vee),
\label{eq:all-right-ambient-recurrence}
\end{equation}
where the first and second sums are denoted by \(\mathsf{NS}\) and
\(\mathsf{EW}\), respectively.

The large-gap contribution is only a gauge.  In each of the first two
sweeps, the three opposite-pair sums in
\eqref{eq:all-right-ambient-recurrence} all contain one value of
\(G_N\) with zeroth coordinate \(r_0\) and one with zeroth coordinate
\(r_0+1\).  In the third sweep \(r_0=0\) throughout, so \(G_N=0\).
Consequently
\[
 G_N(r)+G_N(r^\vee)
 =G_N(r_A)+G_N(r_B)=G_N(r_C)+G_N(r_D),
\]
and subtracting \(G_N\) changes neither branch comparison nor
recurrence.  We may therefore use the reduced initial height
\(\bar h_R=h_R-G_N\).

Define the following candidate reduced arrays:
\begin{align}
 \Phi_1(r)
 &=2T_{N-r_0}-T_{r_1}-T_{r_1+r_2},
 &&r\in\Delta_{0123}(N),
\label{eq:all-right-first-array}\\
 \Phi_2(r)
 &=3T_{N-r_0}-2T_{r_1}-T_{r_1+r_3},
 &&r\in\Delta_{0134}(N),
\label{eq:all-right-second-array}\\
 \Phi_3(r)
 &=3T_N-T_{r_1}-T_{r_1+r_2}-T_{r_1+r_2+r_3},
 &&r\in\Delta_{1234}(N).
\label{eq:all-right-third-array}
\end{align}
For each sweep, the value formula and the branch sign are proved
simultaneously.  More precisely, if the two new coordinates are
\(r_j,r_\ell\), define the excavation depth by
\[
                              d(r):=r_j+r_\ell.
\]
The two input faces are known from the outset.  In the strong
induction step, assume the candidate formula at every interior point
of depth less than \(d\), together with the asserted branch sign at
every such point.  The four equatorial arguments in
\eqref{eq:all-right-ambient-recurrence} have depth \(d-1\), and
\(r^\vee\) has depth \(d-2\), so every required value is already
known.  We first compute the branch difference and only then use its
sign to select the update.

For the sweep through \(0123\), the depth is \(r_1+r_3\).  On the
input face \(r_3=0\), formula
\eqref{eq:all-right-first-array} reduces to
\(T_{r_1+r_2}-T_{r_1}=\bar h_R|_{012}\); on \(r_1=0\), it reduces to
\(2T_{r_2+r_3}-T_{r_2}=\bar h_R|_{023}\).  At an interior site put
\[
 u=N-r_0=r_1+r_2+r_3,
 \qquad v=r_1+r_2,
 \qquad b=r_1.
\]
Substitution of the induction hypothesis gives
\[
\begin{aligned}
 &\Phi_1(r_A)+\Phi_1(r_B)-\Phi_1(r^\vee)\\
 &\quad=(2T_{u-1}-T_v-T_b)+(2T_u-T_v-T_{b-1})
        -(2T_{u-1}-T_v-T_{b-1})
 =\Phi_1(r),
\end{aligned}
\]
while, before selecting a branch,
\begin{equation}
 \mathsf{NS}-\mathsf{EW}
 =T_{v-1}+T_{v+1}-2T_v=1.
\label{eq:all-right-first-gap}
\end{equation}
Thus the actual branch is \(\mathsf{NS}\), its output is \(\Phi_1(r)\),
and the simultaneous induction closes.

For the sweep through \(0134\), the depth is \(r_1+r_4\).  The face
\(r_4=0\) carries the already proved value
\[
 \Phi_1|_{013}=2T_{r_1+r_3}-2T_{r_1}
               =\Phi_2|_{r_4=0},
\]
and on \(r_1=0\),
\(\Phi_2=3T_{r_3+r_4}-T_{r_3}=\bar h_R|_{034}\).  At an interior
site put
\[
 u=N-r_0=r_1+r_3+r_4,
 \qquad v=r_1+r_3,
 \qquad b=r_1.
\]
Direct substitution gives
\[
 \Phi_2(r_A)+\Phi_2(r_B)-\Phi_2(r^\vee)=\Phi_2(r)
\]
and
\begin{equation}
 \mathsf{NS}-\mathsf{EW}
 =T_{v-1}+T_{v+1}-2T_v=1.
\label{eq:all-right-second-gap}
\end{equation}
Hence every update in the second sweep is strictly \(\mathsf{NS}\),
and the resulting reduced height is \(\Phi_2\).

For the sweep through \(1234\), the depth is \(r_2+r_4\).  Its input
faces carry the already proved values because
\[
 \Phi_3|_{r_4=0}=2T_N-T_{r_1}-T_{r_1+r_2}=\Phi_1|_{123},
 \qquad
 \Phi_3|_{r_2=0}=3T_N-2T_{r_1}-T_{r_1+r_3}=\Phi_2|_{134}.
\]
At an interior site, substitution gives the east--west recurrence
identity
\[
 \Phi_3(r_C)+\Phi_3(r_D)-\Phi_3(r^\vee)=\Phi_3(r),
\]
whereas, with \(v=r_1+r_2\), the competing difference is
\begin{equation}
 \mathsf{NS}-\mathsf{EW}
 =-\bigl(T_{v-1}+T_{v+1}-2T_v\bigr)=-1.
\label{eq:all-right-third-gap}
\end{equation}
Thus every third-sweep update is strictly \(\mathsf{EW}\), and the
resulting height is \(\Phi_3\).  At each interior site the relevant
index \(v\) lies between \(1\) and \(N-1\), so all three adjacent
second differences are defined, including in the boundary-adjacent
layers.  This proves the strict branch word
\(\mathsf{NS},\mathsf{NS},\mathsf{EW}\).

Initially each variable is a single Laurent monomial.  At every
subsequent exchange, the selected product is strictly heavier than
the other product, and its factors already have unique maximizing
monomials.  Induction through the sweeps therefore shows that the
tropical value at \(p\) has a unique maximizing Laurent monomial.
The constant term was already shown to be a maximizer, so it is this
unique monomial.

It remains to read the exponent.  A numerical equality at the chosen
witness would not by itself determine a Laurent exponent, so we now
propagate the selected branches formally.  Work in the free abelian
group on symbols \(z(q)\), one for each vertex \(q\) of the initial
fan.  The fixed north--south sweep through \(0123\) has the
three-boundary form
\begin{align}
 L_1(r)={}&z(r_0,0,r_1+r_2,r_3,0)
 -z(r_0+r_3,0,r_1+r_2,0,0)\notag\\
 &+z(r_0+r_3,r_1,r_2,0,0),
 \qquad r\in\Delta_{0123}(N).
\label{eq:all-right-formal-first}
\end{align}
The second fixed north--south sweep and the third fixed east--west
sweep have the corresponding forms
\begin{align}
 L_2(r)={}&z(r_0,0,0,r_1+r_3,r_4)
 -z(r_0+r_4,0,0,r_1+r_3,0)\notag\\
 &+L_1(r_0+r_4,r_1,0,r_3,0),
 \qquad r\in\Delta_{0134}(N),
\label{eq:all-right-formal-second}\\
 L_3(r)={}&L_1(0,r_1,r_2,r_3+r_4,0)
 -L_1(0,r_1+r_2,0,r_3+r_4,0)\notag\\
 &+L_2(0,r_1+r_2,0,r_3,r_4),
 \qquad r\in\Delta_{1234}(N).
\label{eq:all-right-formal-third}
\end{align}
Each identity follows by strong induction on the same depth as its
numerical counterpart: on either input face it collapses to the
given boundary symbol, and at an interior site substitution in the
selected linear update cancels the common three boundary terms.
Thus these are identities of coefficient vectors, not merely
equalities after evaluating at \(h_R\).

Set \(p=(0,a,b,c,d)\).  Expanding
\eqref{eq:all-right-formal-second} inside
\eqref{eq:all-right-formal-third} gives
\[
\begin{aligned}
 L_2(0,a+b,0,c,d)
 ={}&z(c+d,a+b,0,0,0)+z(v_2)-z(v_3)\\
    &+z(v_0)-z(v_1),
\end{aligned}
\]
whereas \eqref{eq:all-right-formal-first} gives
\[
\begin{aligned}
 &L_1(0,a,b,c+d,0)-L_1(0,a+b,0,c+d,0)\\
 &\qquad=z(v_4)-z(c+d,a+b,0,0,0).
\end{aligned}
\]
The auxiliary edge symbol cancels, leaving
\[
 L_3(p)=z(v_0)-z(v_1)+z(v_2)-z(v_3)+z(v_4).
\]
Taking \(z(q)=\mathbf e_q\) gives the complete exponent
\[
 \widetilde g_p
 =\mathbf e_{v_0}-\mathbf e_{v_1}
  +\mathbf e_{v_2}-\mathbf e_{v_3}+\mathbf e_{v_4}.
\]
Because the unique maximizing monomial is the constant-\(F\) term,
this is \(\widetilde g_p=\beta_p+g(X_p)\).  Finally \(v_0\) is frozen
and \(v_1,\ldots,v_4\) are mutable, so
\[
 \beta_p=\mathbf e_{v_0},\qquad
 g(X_p)=-\mathbf e_{v_1}+\mathbf e_{v_2}
        -\mathbf e_{v_3}+\mathbf e_{v_4}.
\]
\end{proof}

Only one numerical witness is required here because it isolates the
intrinsic constant-\(F\) exponent.  It does not identify the bottom
matching: independently, the dimer polynomial is normalized so that
its constant monomial is \(M_{\min}\), and
Proposition~\ref{prop:polygonal-bottom-matching} has already proved
\(M_{\rm poly}=M_{\min}\).

\subsection{Putting the two readings together}

The recurrence trace and the bottom-matching count have now produced
the same four mutable signs \(-1,+1,-1,+1\) at
\(v_1,v_2,v_3,v_4\).  Equivalently,
\[
 g(X_p)=g(Z_p)
   =-\mathbf e_{v_1}+\mathbf e_{v_2}
      -\mathbf e_{v_3}+\mathbf e_{v_4}.
\]
The mutable exchange matrix of the fan is skew-symmetric.  Hence its
principal-coefficient cluster algebra is a positive GLP algebra pointed
at the initial seed \cite[Remark~5.8(i)]{CaoLi}.  By
\cite[Corollary~5.7(i)]{CaoLi}, distinct cluster monomials in this
algebra have distinct mutable \(g\)-vectors.  Since \(X_p\) and \(Z_p\)
are cluster variables with the same mutable \(g\)-vector, it follows
that
\[
                              Z_p=X_p.
\]
The frozen endpoint \(v_0\) is deliberately absent from this
injectivity step.  It was carried separately by the recurrence trace.
After identifying the two principal-coefficient variables, specialize
to the frozen boundary pattern; the positive label at the left endpoint
of \(\mathcal P_p\) gives
\[
 \beta_{Z_p}=\beta_p=\mathbf e_{v_0}.
\]
Consequently
\[
 \widetilde g_p
   =\mathbf e_{v_0}-\mathbf e_{v_1}
      +\mathbf e_{v_2}-\mathbf e_{v_3}+\mathbf e_{v_4},
\]
which is \eqref{eq:five-point-g}.

\begin{targetbinarity}
The denominator vector of \(X_p\) relative to the initial fan is
\[
                              d_p=\1_{S_p}.
\]
\end{targetbinarity}

\begin{proof}
Theorem~\ref{thm:light-cone} already gives
\(D_p(v)=0\) for \(v\notin S_p\) and \(D_p(v)>0\) for
\(v\in S_p\).  The inequalities in \eqref{eq:Sp-formula} place every
selected vertex in \(I_{\rm mut}\), so no frozen coordinate is involved
in what follows.

After the identification \(Z_p=X_p\), the matching expansion and
\eqref{eq:explicit-exponent} give, for \(v\in S_p\), the exponent
\[
                         \epsilon_v(M)=w_v-m_v(M)-1
\]
in the term indexed by \(M\).  The face \(F_v\) has \(2w_v\) boundary
edges, and the boundary edges belonging to a matching are pairwise
vertex-disjoint.  Hence \(m_v(M)\leq w_v\) for every \(M\), and every
matching term has \(X_v\)-exponent at least \(-1\).  The Laurent sum is
positive, so its least exponent cannot be cancelled.  Therefore
\(D_p(v)\leq1\).  Since Theorem~\ref{thm:light-cone} says that this
integer is positive, it equals one.  This holds for every
\(v\in S_p\), while every coordinate outside \(S_p\), including every
frozen boundary coordinate, is already zero.
\end{proof}

\paragraph{Conclusion of Step~V.}
The bottom matching in Figure~\ref{fig:step-v-bottom-matching} carries
all the graph-side information needed in this step.  Its incidence
words give the four mutable signs \(-,+,-,+\), while the broken path
also marks the frozen endpoint \(+\mathbf e_{v_0}\).  The recurrence
trace produces the same five signed locations.  Equality of the four
mutable signs identifies the dimer variable with the octahedron
variable; the first sign supplies the separate frozen normalization.

\begin{proof}[Completion of the proof of
Theorem~\ref{thm:partI-main}]
Theorem~\ref{thm:light-cone} identifies the positive support, and the
binarity argument after the identification \(Z_p=X_p\) proves
item~\textup{(i)}.
Theorem~\ref{thm:fixed-graph}, the flip-connectivity lemma, and
Theorem~\ref{thm:well-matched} prove item~\textup{(ii)}.
The recurrence-side calculation, together with
Lemma~\ref{lem:reference-matching-incidence-calculation} and
Corollary~\ref{cor:graph-five-point-g}, proves item~\textup{(iii)}
and identifies \(Z_p=X_p\).

By Theorem~\ref{thm:well-matched} and Theorem~6.8 of \cite{MMSBV},
the dimer face polynomial is the \(F\)-polynomial of \(Z_p\).
Proposition~\ref{prop:lifted-graph-g} lifts this variable to the full
mutable fan quiver.  After the identification \(Z_p=X_p\),
substitution in the separation formula gives
\eqref{eq:laurent-matching-formula}, with mutable exponents
\eqref{eq:explicit-exponent} and frozen exponents
\eqref{eq:boundary-exponent}.  Tropicalizing the positive Laurent
sum gives \eqref{eq:tropical-matching-formula}.
\end{proof}

\section*{Conclusion and dependencies}

The reverse tropical calculation gives the positive support \(S_p\)
and hence the fixed graph.  After the dimer variable is identified with
\(X_p\), the matching exponents prove numerical binarity.  The directed extremal-extension criterion,
the boundary-layer lemma, and the height-forcing argument make that
graph well-matched.  The dimer theorem of \cite{MMSBV} supplies the
\(F\)-polynomial on the induced quiver, and
Proposition~\ref{prop:lifted-graph-g} lifts the same mutation sequence
to the full fan quiver.  Finally, the recurrence-side and
bottom-matching calculations give the same four-point mutable
\(g\)-vector, while the recurrence computes the frozen normalization
separately.  Principal-coefficient injectivity identifies the dimer
variable with \(X_p\), after which coefficient specialization yields
the Laurent and tropical matching formulas.

\part{The variational limit}

\section{Terminology and the main theorem}
\label{sec:partII-main-theorem}

\paragraph{Three-source GUE data and the discrete output.}
Fix \(\ell_1,\ell_2,\ell_3>0\).  For every \(n\), let
\(M_1^{(n)},M_2^{(n)},M_3^{(n)}\) be independent standard
\(n\times n\) GUE matrices and put
\[
             \widehat X_r^{(n)}=\ell_r\sqrt n\,M_r^{(n)},
             \qquad r=1,2,3.
\]
Use the centered large-gap construction of
Definition~\ref{def:rf-centered-large-gap}, written again in
\eqref{eq:rf-B-boundaries}--\eqref{eq:rf-gue-hive-map} below, with a
fixed gap parameter \(K_{\rm gap}\).  For
\(\varnothing\ne I\subseteq\{1,2,3\}\), put
\(\widehat X_I^{(n)}=\sum_{r\in I}\widehat X_r^{(n)}\), and define
\begin{equation}
 \mathcal E_n=
 \left\{\max_{\varnothing\ne I\subseteq\{1,2,3\}}
 \|\widehat X_I^{(n)}\|_{\rm op}
 \leq \frac18K_{\rm gap}n\right\}.
\label{eq:rf-upfront-large-gap-event}
\end{equation}
Choose \(K_{\rm gap}\) large enough that
\(\mathbb P(\mathcal E_n^c)\leq Ce^{-cn}\).
On \(\mathcal E_n\),
\[
 2\max_{\varnothing\ne I\subseteq\{1,2,3\}}
       \operatorname{width}(\widehat X_I^{(n)})
 \leq4\max_{\varnothing\ne I\subseteq\{1,2,3\}}
       \|\widehat X_I^{(n)}\|_{\rm op}
 \leq\frac12K_{\rm gap}n<K_{\rm gap}n.
\]
Thus Lemma~\ref{lem:rf-centered-large-gap-valid} shows that the three
initial arrays \(H_r^{(n)}\) on \(012,023,034\), and all four
boundary-face arrays subsequently used, are genuine hives.  On
\(\mathcal E_n^c\) we keep the same arrays and
define the output by the algebraic octahedron recurrence; no hive
inequality is needed for that definition.  The exceptional event and
the exact independence of the recurrence value from \(K_{\rm gap}\)
are treated explicitly in Section~\ref{sec:rf-gue-boundary}.

If \(v\) is a lattice vertex of the size-\(n\) tetrahedron \(1234\), write
\[
                         \widehat O_n(v)
\]
for the octahedron-recurrence value at \(v\) obtained from these three
initial arrays.  The flip-independence result of Part~I makes this
notation independent of the legal sequence of sweeps.

\begin{proof}[Proof of Theorem~\ref{thm:rf-top-bottom-double-hive}]\label{pf:thm3}
All coordinate Lebesgue measures and fiber normalizations in this proof
are those fixed in Definition~\ref{def:rf-maximum-entropy-field}.

\smallskip
\noindent\emph{The dimension-matched associator.}
For \(\lambda\in\mathsf W_n\), write
\[
 \Delta(\lambda)=\prod_{1\le i<j\le n}(\lambda_i-\lambda_j),
 \qquad
 V_n(\lambda)=\operatorname{Leb}(\operatorname{GT}_n(\lambda)).
\]
The Gelfand--Tsetlin volume formula gives
\begin{equation}
                         V_n(\lambda)=c_n\Delta(\lambda),
\label{eq:rf-GT-volume}
\end{equation}
for a positive constant \(c_n\) depending only on the coordinate
normalization.  A complete GUE minor process with top row \(\lambda\)
is conditionally uniform on \(\operatorname{GT}_n(\lambda)\).  Hence,
with respect to Lebesgue measure on the full Gelfand--Tsetlin cone, its
density is proportional to
\begin{equation}
       \exp\{-|\lambda|^2/(2ns^2)\}\Delta(\lambda).
\label{eq:rf-complete-GUE-minor-density}
\end{equation}
Indeed, the ordered GUE eigenvalue density contains
\(\Delta(\lambda)^2\), while normalization of the conditional uniform
law divides by \(V_n(\lambda)\).  These are the standard GUE-corners
and Gelfand--Tsetlin volume formulas; see
\cite{NSTGUE,NarDeformed}.

Apply the measure-preserving binary hive associator first to the
complete minor patterns with top rows \(\alpha,\beta\), and then to
the residual pattern from that operation together with the pattern
having top row \(\eta\).  This gives a Lebesgue-measure-preserving
piecewise-linear bijection
\begin{equation}
\begin{aligned}
 &\operatorname{GT}_n(\alpha)\times
  \operatorname{GT}_n(\beta)\times
  \operatorname{GT}_n(\eta)\\
 &\quad\longleftrightarrow
 \bigsqcup_{\nu,\phi}
 H_n(\alpha,\beta;\nu)
 \times H_n(\eta,\phi^\vee;\nu^\vee)
 \times\operatorname{GT}_n(\phi).
\end{aligned}
\label{eq:rf-iterated-associator-bijection}
\end{equation}
The second hive has merely been cyclically reoriented from the usual
form \(H_n(\nu,\eta;\phi)\).  Thus the three input patterns produce the
two pasted hives and exactly one residual Gelfand--Tsetlin pattern, on
the fourth exterior side.  They do not produce four simultaneously
retained exterior augmentations.  The binary bijection and its
measure-preserving property are given in
\cite[equation~(1.19) and Section~3]{NSTGUE}, where the fixed-diagonal
map is extended by Fubini--Tonelli to allow the residual
Gelfand--Tsetlin pattern to vary over its full polytope.  Iterating this
map gives \eqref{eq:rf-iterated-associator-bijection}.

By \eqref{eq:rf-complete-GUE-minor-density}, the product density of the
three input patterns is proportional to
\begin{equation}
 \exp\!\left\{-\frac1{2n}\left(
   \frac{|\alpha|^2}{\ell_1^2}
  +\frac{|\beta|^2}{\ell_2^2}
  +\frac{|\eta|^2}{\ell_3^2}\right)\right\}
 \Delta(\alpha)\Delta(\beta)\Delta(\eta).
\label{eq:rf-three-minor-density}
\end{equation}
The same is therefore the density of
\((h_{123},h_{134},\Gamma_\phi)\) on the right side of
\eqref{eq:rf-iterated-associator-bijection}.  Forgetting the residual
pattern \(\Gamma_\phi\) multiplies this density by
\(V_n(\phi)=c_n\Delta(\phi)\).  Consequently the top double hive has,
with respect to the natural Lebesgue measure on the unaugmented
double-hive cone, density proportional to
\begin{equation}
 \exp\!\left\{-\frac1{2n}\left(
   \frac{|\alpha|^2}{\ell_1^2}
  +\frac{|\beta|^2}{\ell_2^2}
  +\frac{|\eta|^2}{\ell_3^2}\right)\right\}
 \Delta(\alpha)\Delta(\beta)\Delta(\eta)\Delta(\phi).
\label{eq:rf-projected-double-hive-density}
\end{equation}

On the other hand, projecting
\(\mathbb Q_{n,13}^{\ell_1,\ell_2,\ell_3}\) forgets its four exterior
augmentations and therefore multiplies
\eqref{eq:rf-double-hive-density} by
\[
 V_n(\alpha)V_n(\beta)V_n(\eta)V_n(\phi).
\]
By \eqref{eq:rf-GT-volume}, its projected density is precisely
\eqref{eq:rf-projected-double-hive-density}, up to a constant depending
only on \(n\).  Normalization proves
\eqref{eq:rf-top-double-hive-law}, and hence proves that the complete
field produced by the GUE-minor construction has the intrinsic law in
Definition~\ref{def:rf-maximum-entropy-field}.

For later reference, if
\[
 \mathbf B_n=(\alpha_n,\beta_n,\eta_n,\phi_n)
\]
denotes the four exterior side-increment lists and
\(\widehat X_4^{(n)}=\widehat X_1^{(n)}+
\widehat X_2^{(n)}+\widehat X_3^{(n)}\), the GUE-hive associator also
gives the joint boundary identity
\begin{equation}
 \mathbf B_n\ \stackrel{\rm d}{=}\
 \bigl(
   \lambda(\widehat X_1^{(n)}),
   \lambda(\widehat X_2^{(n)}),
   \lambda(\widehat X_3^{(n)}),
   \lambda(\widehat X_4^{(n)})
 \bigr),
\label{eq:rf-four-boundary-joint-law}
\end{equation}
with reversal of the last list when it is read in the opposite boundary
orientation.  This is an equality in distribution of the four lists,
not a pathwise identification of all minor arrays.

\smallskip
\noindent\emph{Transport to the second triangulation.}
The octahedron recurrence changes the diagonal from \(13\) to \(24\),
preserves the four exterior sides, and is a
Lebesgue-measure-preserving piecewise-linear bijection on each
double-hive fiber.  Projection commutes with this recurrence.  Therefore
\[
 \mathcal L(\mathsf H_n^{\rm bot})
 =({\rm OR}_{13\to24})_\#
      (\pi_{13}^{\rm dh})_\#\mathbb Q_{n,13}^{\ell_1,\ell_2,\ell_3}
 =(\pi_{24}^{\rm dh})_\#\mathbb Q_{n,24}^{\ell_1,\ell_2,\ell_3},
\]
which proves \eqref{eq:rf-top-bottom-double-hive-laws}.  The same
measure-preserving property shows, independently, that the abstract
pushforward \(\mathbb Q_{n,24}^{\ell_1,\ell_2,\ell_3}\) has density
\eqref{eq:rf-double-hive-density} in the \(24\)-coordinates.

\smallskip
\noindent\emph{Conditional independence.}
It remains to record the conditional-independence consequence of the
projected law.  Once \(\nu_{13}\) is fixed, the cone
\eqref{eq:rf-double-hive-cone} is the Cartesian product of the union
of augmented-hive fibers on \(123\) and the corresponding union on
\(134\).  The density \eqref{eq:rf-double-hive-density} factors
between these two groups of variables.  The two augmented hives are
therefore conditionally independent under the abstract law
\(\mathbb Q_{n,13}^{\ell_1,\ell_2,\ell_3}\).  Projecting their exterior
augmentations preserves this conditional independence, and
\eqref{eq:rf-top-double-hive-law} gives the first assertion in
\eqref{eq:rf-top-bottom-conditional-independence}.  The identical
argument for the abstract law in the \(24\)-triangulation, followed by
\eqref{eq:rf-top-bottom-double-hive-laws}, gives the second.

The distinction between projected and augmented laws is essential.
The original three minor patterns are transformed by
\eqref{eq:rf-iterated-associator-bijection}; they are not four
independent exterior augmentations of its output.  If, after producing
the top double hive, one attaches four \emph{fresh}, conditionally
independent uniform Gelfand--Tsetlin patterns on its exterior sides,
then the resulting artificially augmented object does have law
\(\mathbb Q_{n,13}^{\ell_1,\ell_2,\ell_3}\).  No such additional randomization
is part of the field construction or is needed for the projected law.

The centered gap vector of
Definition~\ref{def:rf-centered-large-gap} is used only to realize the
three input Gelfand--Tsetlin patterns as hives.  Its deterministic
contribution at a size-\(n\) tetrahedral vertex \(q\) is
\begin{equation}
 G_n(q)=\frac{nK_{\rm gap}}2\,q_0(n-q_0).
\label{eq:rf-large-gap-gauge}
\end{equation}
This is a one-coordinate octahedron-recurrence gauge: in every
elementary octahedron, each of the three opposite vertex pairs has the
same multiset of \(q_0\)-coordinates, so all three pair sums acquire
the same total contribution.  Since \(q_0=0\) on the tetrahedron
\(1234\), \(G_n\)
vanishes there.  This proves the asserted exact independence of the
admissible gap size.
\end{proof}

By
Theorem~\ref{thm:rf-top-bottom-double-hive}, the joint field
\((\widehat O_n(v))_{v\in1234}\) is a GUE-minor realization of the
intrinsically defined maximum-entropy field
\(\mathcal H_n^{1234}(\ell_1,\ell_2,\ell_3)\) from
Definition~\ref{def:rf-maximum-entropy-field}.

\paragraph{The cut-open continuum region.}
Let
\[
 \mathsf T_{1234}
 =\{(\alpha,\beta,\gamma,\delta)\in\mathbb R_{\geq0}^4:
       \alpha+\beta+\gamma+\delta=1\}
\]
be the closed continuum tetrahedron.  For a strictly interior point
\(p=(\alpha,\beta,\gamma,\delta)\), put
\begin{align*}
 \mathsf R_{p,1}
 &=\{x\in\Delta_{012}(1):x_1\leq\alpha,\ x_2\geq\beta\},\\
 \mathsf R_{p,2}
 &=\{x\in\Delta_{023}(1):x_2\geq\beta
             \text{ or }x_3\geq\gamma\},\\
 \mathsf R_{p,3}
 &=\{x\in\Delta_{034}(1):x_3\geq\gamma,\ x_4\leq\delta\}.
\end{align*}
The retained seams are
\[
 \Gamma_{02}
  =\{x\in\Delta_{02}(1):x_2\in[\beta,1]\},
 \qquad
 \Gamma_{03}
  =\{x\in\Delta_{03}(1):x_3\in[\gamma,1]\}.
\]
Regard the three sets \(\mathsf R_{p,r}\) as separate sheets, keeping
two labelled copies of each retained seam.  Their cut-open union is
\(\mathsf R_p\).

\paragraph{Admissible continuum heights.}
We use geometric gradients in the equilateral development, as in
\eqref{eq:panel-slope-triangle} and \cite[\S3, Definition~9]{NarLimit}.
The triangular-lattice unit vectors
\(\widehat\imath,\widehat\jmath\) satisfy
\(\langle\widehat\imath,\widehat\jmath\rangle=1/2\), and
\[
 K=\operatorname{conv}\bigl\{
  2(\widehat\imath-\widehat\jmath),\ 2\widehat\jmath,\
  -2\widehat\imath\bigr\}.
\]
Throughout Part~II, \(\nabla f\), slopes \(a\in K\), and the affine
pairings \(\langle a,z-z_T\rangle\) refer to this Euclidean geometry.
Square array coordinates only parametrize it: up to translation their
embedding is
\(E(I,J)=-I\widehat\imath+J\widehat\jmath\).
Thus, if \(\nabla f=x\widehat\imath+y\widehat\jmath\), the array
derivatives of \(\widetilde f=f\circ E\) are
\[
 \partial_I\widetilde f=-x-\tfrac12y,
 \qquad
 \partial_J\widetilde f=\tfrac12x+y.
\]
The corresponding triangle of array derivatives is
\(\operatorname{conv}\{(2,-1),(-1,2),(-1,-1)\}\).
The space \(\operatorname{AHT}(\mathsf R_p)\) consists of triples
\(f=(f_1,f_2,f_3)\) such that each \(f_r\) is Lipschitz on
\(\mathsf R_{p,r}\), with \(\nabla f_r\in K\) almost everywhere;
the outer boundary traces are the exact deterministic traces
\(\gamma_{p,r}\) defined below, with \(f_1(v_9)=0\); and the two
labelled seam traces satisfy
\begin{align*}
 f_2([0,x]_2)&=f_1([x,x]_1)+x,
              &&x\in[\beta,1],\\
 f_3([0,x]_3)&=f_2([x,x]_2)+x,
              &&x\in[\gamma,1].
\end{align*}
Write
\[
 f_{02}(x)=f_2([0,x]_2),
 \qquad
 f_{03}(x)=f_3([0,x]_3)
\]
for the common aligned seam traces.  The precise discrete
approximation statement behind this definition is given in
Definition~\ref{def:rf-AHT} and
Lemma~\ref{lem:rf-AHT-compact}.

\paragraph{Global colors and local direction types.}
We retain throughout Part~II the global color convention fixed in
Part~I.  On \(P_r=\Delta_{0,r,r+1}(N)\), write
\[
 d_1^{(r)}=e_r-e_{r+1},\qquad
 d_2^{(r)}=e_0-e_r,\qquad
 d_3^{(r)}=e_{r+1}-e_0.
\]
A lozenge is of \emph{type \(k\)} in the local panel frame when its
selected edge is parallel to \(d_k^{(r)}\).  Type \(1\) is red on every
panel.  Types \(2\) and \(3\) are respectively blue and green on
\(P_1,P_3\), but green and blue on the reflected middle panel \(P_2\).
Thus the words red, green, and blue always refer to the global displayed
colors, whereas the neutral type labels record the local
Gelfand--Tsetlin coordinate directions.  Across seam \(02\), green is
preserved while red and blue exchange; across seam \(03\), blue is
preserved while red and green exchange.

\paragraph{The variational energy.}
For a bulk point \(x\) on the \(r\)-th panel, write the two
nonnegative interlacing gaps in the local type order as
\[
 g_2(i,j)=\lambda_i^{(j)}-\lambda_{i+1}^{(j+1)},\qquad
 g_3(i,j)=\lambda_{i+1}^{(j+1)}-\lambda_{i+1}^{(j)}.
\]
Here \(\lambda\) denotes the already source-scaled local minor array.
Thus the corresponding ordinary lozenge weights are
\(\omega_2=-g_2/3\) and \(\omega_3=-g_3/3\), including the fixed
source scale \(\ell_r\) and, on \(P_2\), the reflected-rank
convention.  We use the raw-gap parameters
\[
 \rho_r(x)=(\rho_{r,1}(x),\rho_{r,2}(x))
 :=\bigl(\mathbb E g_2,\mathbb E g_3\bigr)
\]
of the corresponding marked GUE-minor local limit, with the scaling
convention of \cite[Definition~17]{NarLimit}.  In particular, no
factor \(1/3\) is absorbed into \(\rho_r\).  If \(\Xi(a_m)\) is the lozenge
tiling corresponding to a finite-patch height \(a_m\), define the
flip-symmetric local pairing by
\[
 \langle\partial M_{m,x}(\rho),\partial a_m\rangle_{\rm sym}
 :=\frac32\sum_{\substack{\ell\in\Xi(a_m)\\\ell\ {\rm blue}}}
       \operatorname{wt}(\ell)
   +\frac32\sum_{\substack{\ell\in\Xi(a_m)\\\ell\ {\rm green}}}
       \operatorname{wt}(\ell).
\]
Let \(D_m\) be the underlying standard \(m\)-by-\(m\) patch.  With
the height neighborhood
\(\mathcal B_m(a,m\varepsilon)\) of
\cite[Definition~13]{NarLimit}, put
\[
 \sigma_{m,\edge}^{\rm sym}(\rho,a,\varepsilon)
 :=-\frac1{|D_m|}\,
   \mathbb E\max_{a_m\in\mathcal B_m(a,m\varepsilon)}
   \langle\partial M_{m,x}(\rho),\partial a_m\rangle_{\rm sym}.
\]
Define \(\sigma_\edge^{\rm sym}(\rho,a)\) from these finite-volume
quantities by the limiting and boundary-extension procedure of
\cite[Definitions~14--16]{NarLimit}.  Thus
\(\sigma_\edge^{\rm sym}\) is the thermodynamic limit of the negative
expected maximal \(0,3/2,3/2\) red--green--blue weight per unit area.
Here and below normalized lattice area assigns area \(1/2\) to an
elementary triangle and area \(1\) to a lozenge; hence a standard
\(m\)-by-\(m\) patch has area \(m^2\), while a triangular half-patch
has area \(m^2/2\).  Every finite-volume score is divided by the
actual area of its patch.  The continuum measure \(dx\) below is this
same normalized lattice measure: in array coordinates
\(dx=dI\,dJ=(2/\sqrt3)\,dA_{\rm Euclidean}\).
The function
\(\sigma_\edge^{\rm sym}(\rho,a)\) is continuous in
\((\rho,a)\in\mathbb R_{>0}^2\times K\), and, for each fixed
\(\rho\), it is convex in the slope \(a\).  The transfer of the cited
surface-tension results to this normalization is recorded explicitly
in Lemma~\ref{lem:rf-local-transfer}.
Define
\[
 S_{p,\mathrm{edge}}(f)
 =-\sum_{r=1}^3\int_{\mathsf R_{p,r}^{\circ}}
       \sigma_\edge^{\rm sym}(\rho_r(x),\nabla f_r(x))\,dx.
\]
For \(x\in(0,1]\) and \(\xi\in[-1,2]\), put
\begin{align*}
 \mathcal E_{12}(x,\xi)
 &=\frac{1+\xi}{3}\ell_2\sqrt x
   +\frac{2-\xi}{3}\ell_1\sqrt x,\\
 \mathcal E_{23}(x,\xi)
 &=\frac{1+\xi}{3}\ell_3\sqrt x
   +\frac{2-\xi}{3}\ell_2\sqrt x,
\end{align*}
and
\[
 S_{p,\Delta}^{\rm rf}(f)
 =\int_\beta^1\mathcal E_{12}(x,f_{02}'(x))\,dx
  +\int_\gamma^1\mathcal E_{23}(x,f_{03}'(x))\,dx.
\]
If \(\gamma_{\rm sc}\) is the decreasing semicircle quantile, set
\[
 S_{p,0}
 =\frac{\ell_3}{2}\int_0^\delta\gamma_{\rm sc}(u)\,du
  +\frac{\ell_1}{2}\int_0^{1-\alpha}
                  \gamma_{\rm sc}(u)\,du,
\qquad
 \mathcal S_p(f)
 =S_{p,\mathrm{edge}}(f)
  +S_{p,\Delta}^{\rm rf}(f)+S_{p,0}.
\]
For strictly interior rational \(p\), define
\begin{equation}
 \Phi_{\ell_1,\ell_2,\ell_3}(p)
 :=\sup_{f\in\operatorname{AHT}(\mathsf R_p)}
       \mathcal S_p(f).
\label{eq:partII-upfront-variational-value}
\end{equation}
The admissible space is compact and \(\mathcal S_p\) is upper
semicontinuous, so this supremum is attained.  No uniqueness of the
maximizing height is asserted.

\paragraph{Continuum interpolation.}
On each tetrahedral cell of the rescaled
tetrahedral--octahedral cellulation, interpolate affinely from the four
lattice-vertex values.  On an octahedral cell \(Q\), assign to its
center the average of its six vertex values, cone the center to the
eight triangular faces, and interpolate affinely on the resulting
eight tetrahedra.  Denote the resulting continuous random function by
\(\mathcal F_n\), normalized at lattice vertices by
\[
          \mathcal F_n(v/n)=\frac{\widehat O_n(v)}{n^2}.
\]

\begin{partiicorollary}
Let \(p\) be a strictly interior rational point and let
\(p_n/n\to p\) be any deterministic sequence of lattice
approximants.  Then
\[
 \frac{\widehat O_n(p_n)}{n^2}
 \xrightarrow{\mathbb P}\Phi_{\ell_1,\ell_2,\ell_3}(p).
\]
\end{partiicorollary}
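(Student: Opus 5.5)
The proof will run through the exact red-free tiling formula of Theorem~\ref{thm:rf-exact-tiling}, with the fixed term evaluated by Theorem~\ref{thm:rf-explicit-fixed-term}. On \(\mathcal E_n\) this gives
\[
 \widehat O_n(p_n)=\tfrac12 h(v_0)+\tfrac12 h(v_9)
   +\max_{\Xi\in\mathcal T(G_{p_n})}W^{\rm rf}_{p_n}(\Xi;h).
\]
I then split \(n^{-2}\widehat O_n(p_n)\) into three contributions: the fixed boundary statistic, the two seam-triangle energies, and the blue--green bulk maximum. The exceptional event \(\mathcal E_n^c\) has probability at most \(Ce^{-cn}\), so it does not affect convergence in probability.

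\emph{The fixed and seam terms.} Since \(G_n\) vanishes on \(1234\), the gauge of \eqref{eq:rf-large-gap-gauge} drops out. The values \(h(v_0)\) and \(h(v_9)\) are then partial sums of extreme-side eigenvalues of scaled GUE minors of sizes \(\approx n\delta\) and \(\approx n(1-\alpha)\). By the semicircle law for linear eigenvalue statistics, \(n^{-2}\) times these quantities converges in probability to the two quantile integrals in \(S_{p,0}\).

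The seam weights \(-\lambda_{\min}/2\) and \(\lambda_{\max}/2\) along \(\Gamma_{02}\) and \(\Gamma_{03}\) are extreme eigenvalues of the minors \(X_r^{[j]}\). These concentrate at \(\pm2\ell_r\sqrt{j}\). Summing them against the singleton indicator, whose local density is encoded by the seam slope \(f'_{02}\) or \(f'_{03}\), gives \(\mathcal E_{12}\) and \(\mathcal E_{23}\). The convergence is uniform over tilings, because each seam term is a one-dimensional sum of deterministic-order quantities plus \(o(n^2)\) fluctuations.

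\emph{The bulk term.} The main step is the finite-panel localization theorem (Theorem~\ref{thm:rf-finite-panel-localization}). I will invoke it for two conclusions: the normalized expected maximum \(n^{-2}\mathbb E\max_\Xi W^{\rm rf}\), together with the seam terms already handled, converges to \(\sup_{f\in{\rm AHT}(\mathsf R_p)}(S_{p,{\rm edge}}+S^{\rm rf}_{p,\Delta})(f)\); and \(\operatorname{Var}(\widehat O_n(p_n))=o(n^4)\). This theorem carries the two nontrivial inequalities. The upper bound comes from the quantitative-differentiation GOOD/BAD decomposition on the three panel cores. The lower bound comes from interpolation on a mesh at scale \(C_0\), followed by seam-compatible patching of the collars. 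Continuity of \(\sigma^{\rm sym}_\edge\) and compactness of \({\rm AHT}(\mathsf R_p)\) let me pass from \(p_n/n\) to \(p\). For this I will use that the regions \(\mathsf R_{p_n/n}\) converge to \(\mathsf R_p\) in Hausdorff distance and that the boundary traces converge uniformly.

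\emph{Assembling the result.} Chebyshev's inequality with the \(o(n^4)\) variance bound turns convergence of the mean into convergence in probability. Adding the fixed and seam limits then identifies the limit as \(\Phi_{\ell_1,\ell_2,\ell_3}(p)\), as defined in \eqref{eq:partII-upfront-variational-value}.

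I expect the main obstacle to be justifying that the seam energies can be separated from the maximization. The maximizing tiling couples the seam singleton positions to the bulk, so the seam terms cannot simply be taken out as constants. I would handle this by proving seam concentration uniformly over all admissible seam traces, via a union bound over the \(\exp(O(n))\) monotone singleton configurations. This absorbs the seam contribution into a deterministic functional of \(f_{02}\) and \(f_{03}\) up to an \(o(n^2)\) error, before the localization theorem is applied.
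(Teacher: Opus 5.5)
There is a genuine gap where you apply Theorem~\ref{thm:rf-finite-panel-localization} at the targets $p_n$ and then ``pass from $p_n/n$ to $p$''.

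That theorem is an asymptotic statement without a rate, for one fixed rational $p$, along the exact dilations $np$ with $n\in q\mathbb N$. It says nothing at sizes $n\notin q\mathbb N$, where $np$ is not even a lattice point. It also cannot be applied to a moving target: applied to $p'=p_n/n$, it describes $\widehat O_N(Np')$ as $N\to\infty$ along multiples of the denominator of $p'$, not $\widehat O_n(p_n)$ at the single size $n$.

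For a general sequence, $p_n$ differs from $np$ by $o(n)$ lattice steps. So $\mathcal A_{p_n}$, its forced boundary trace, the seam ranges $b_n,c_n$, and the ranks $d_n,n-a_n$ in the fixed term all move with $n$. Your tools do not control this motion:
\begin{itemize}
\item continuity of $\sigma_\edge^{\rm sym}$ in $(\rho,a)$ says nothing about a moving domain;
\item compactness of $\operatorname{AHT}(\mathsf R_p)$ concerns a single domain;
\item $\Phi_{\ell_1,\ell_2,\ell_3}$ is not yet known to be continuous in $p$; the paper gets its Lipschitz property only afterwards, from the same estimate that proves this corollary.
\end{itemize}

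To make your route rigorous you would need a moving-domain version of the whole localization theorem. In particular you would need a recovery statement: legal tilings of $\mathcal A_{p_n}$ with exact forced boundary data and exact aligned seam relations, lying within $O(1)$ of a prescribed strictified height on the cores. That precision is required because the lower-bound patching works at a fixed lattice scale $C_0$. Even for $O(1)$ wall motion this is the content of Lemmas~\ref{lem:rf-aligned-envelope-stability} and~\ref{lem:rf-moving-domain-stability}. For $o(n)$ motion the pullback maps satisfy only $\|D\psi_n-I\|_\infty=o(1)$. The directed-Lipschitz defect in the argument of those lemmas then becomes $o(n)$ rather than $O(1)$, which the fixed-$C_0$ patching cannot absorb.

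The missing idea is the lattice Lipschitz estimate, Lemma~\ref{lem:rf-target-lipschitz}. With probability tending to one, $|\widehat O_n(x)-\widehat O_n(y)|\le Cn$ for all adjacent lattice vertices of $1234$. It follows from three facts: the Knutson--Tao--Woodward local hive property on constant-coordinate slices, monotonicity of hive increments between opposite sides, and operator-norm control of the boundary-face spectra. It makes any $o(n)$ displacement of the target cost only $o(n^2)$.

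The paper's proof uses this as follows. It proves the law of large numbers only for one canonical sequence $p^\circ_{qm+\varrho}=m(q\bar p)+e^{(\varrho)}$, with a fixed bounded correction $e^{(\varrho)}$, in each residue class $\varrho\bmod q$. This is Lemma~\ref{lem:rf-bounded-perturbation}, where the wall motion is $O(1)$ and the moving-domain lemmas suffice. It then compares an arbitrary $p_n$ with $p_n^\circ$ along a lattice path of length $o(n)$.

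Two smaller points. First, the seam ``obstacle'' you single out is not one, so no union bound is needed.
\begin{itemize}
\item The centered seam term is a bounded-coefficient combination of $O(n)$ extreme-eigenvalue deviations, so Lemma~\ref{lem:rf-seam-limit} gives concentration uniformly over all tilings.
\item The coupling to the bulk is already handled inside the theorem you invoke. On the neighborhoods $\mathcal T_{np}(f,\eta n)$, $\max(W_{\rm edge}+W_\Delta)$ is bounded above by $\max W_{\rm edge}+\max W_\Delta$ and below by $\max W_{\rm edge}+\min W_\Delta$, followed by a finite subcover.
\end{itemize}
Second, $h(v_0)$ and $h(v_9)$ are partial sums of the top $d_n$ and $n-a_n$ eigenvalues of the full matrices $\widehat X_3^{(n)}$ and $\widehat X_1^{(n)}$, plus a centered trace. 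They are not sums over minors of sizes $\approx n\delta$ and $\approx n(1-\alpha)$.
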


\begin{partiimain}
The function \(\Phi_{\ell_1,\ell_2,\ell_3}\) has a unique Lipschitz extension
from the strictly interior rational points to
\(\mathsf T_{1234}\), and
\[
 \boxed{
 \sup_{p\in\mathsf T_{1234}}
 \left|\mathcal F_n(p)-\Phi_{\ell_1,\ell_2,\ell_3}(p)\right|
 \xrightarrow{\mathbb P}0.}
\]
Consequently, for every real
\(p\in\mathsf T_{1234}\) and every deterministic sequence of lattice
vertices \(p_n\) with \(p_n/n\to p\),
\[
 \frac{\widehat O_n(p_n)}{n^2}
 \xrightarrow{\mathbb P}\Phi_{\ell_1,\ell_2,\ell_3}(p).
\]
\end{partiimain}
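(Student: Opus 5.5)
The proof assumes the pointwise law of large numbers of Corollary~\ref{cor:rf-rational-approximants} and upgrades it to uniform convergence through a deterministic Lipschitz bound and a compactness argument. There are four steps.

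\emph{Step 1: a uniform lattice Lipschitz bound.} We show there is a constant \(C\), depending only on \(\ell_1,\ell_2,\ell_3\), with the following property. On an event whose probability tends to one, every pair of adjacent lattice vertices \(v,v'\) of the size-\(n\) tetrahedron \(1234\) satisfies
\[
 |\widehat O_n(v)-\widehat O_n(v')|\le Cn.
\]
The tool is the local hive property of Knutson--Tao--Woodward \cite[\S5]{KTWOR}. Every constant-coordinate slice of the output field is a hive, namely the slices \(\{q_i=k\}\cap1234\). Its boundary increments are interlacing data controlled by the spectra of \(\widehat X_I^{(n)}\).

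The rhombus inequalities make the increments monotone along each lattice direction of the slice. Hence every increment is sandwiched between two boundary increments. These in turn are bounded by \(\max_I\|\widehat X_I^{(n)}\|_{\rm op}\). That norm is \(O(n)\) on the event \(\mathcal E_n\) of \eqref{eq:rf-upfront-large-gap-event}, whose probability is at least \(1-Ce^{-cn}\).

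After rescaling, \(\mathcal F_n\) is therefore \(C\)-Lipschitz on \(\mathsf T_{1234}\) on \(\mathcal E_n\). The piecewise-affine interpolation, including the coning of each octahedral cell, preserves a lattice Lipschitz constant up to a universal factor.

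\emph{Step 2: \(\Phi\) is Lipschitz on the strictly interior rational points.} Take two such points \(p,p'\) and choose common lattice approximants along a subsequence of \(n\). Corollary~\ref{cor:rf-rational-approximants} gives convergence in probability of \(\widehat O_n(p_n)/n^2\) and \(\widehat O_n(p'_n)/n^2\). Step 1 bounds their difference by \(C|p_n-p'_n|/n+o(1)\) with high probability. Passing to the limit gives
\[
 |\Phi(p)-\Phi(p')|\le C|p-p'|.
\]
Rational interior points are dense, so \(\Phi\) has a unique Lipschitz extension to the closed tetrahedron \(\mathsf T_{1234}\).

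\emph{Step 3: uniform convergence.} Fix \(\varepsilon>0\) and choose a finite \(\varepsilon\)-net \(\mathcal N\) of strictly interior rational points. Every point of \(\mathsf T_{1234}\), including boundary points, lies within \(\varepsilon\) of \(\mathcal N\).

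Since \(\mathcal N\) is finite, the pointwise convergence of Corollary~\ref{cor:rf-rational-approximants} holds jointly on \(\mathcal N\). The discrepancy between \(\mathcal F_n(q)\) and \(\widehat O_n(q_n)/n^2\) at a nearest lattice vertex \(q_n\) is \(O(1/n)\) by Step 1. So with high probability \(|\mathcal F_n-\Phi|\le\varepsilon\) on \(\mathcal N\).

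At a general point we interpolate using the two Lipschitz bounds, for \(\mathcal F_n\) from Step 1 and for \(\Phi\) from Step 2. This gives \(\sup|\mathcal F_n-\Phi|\le(2C+1)\varepsilon\) with probability tending to one, which proves the boxed statement.

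\emph{Step 4: the final consequence.} For real \(p\) and lattice vertices \(p_n/n\to p\), we have \(\widehat O_n(p_n)/n^2=\mathcal F_n(p_n/n)\). Combining uniform convergence with the continuity of \(\Phi\) gives the last assertion.

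\emph{Main obstacle.} The main obstacle is Step 1. We must confirm that every constant-coordinate slice of the output on \(1234\) is a genuine hive, with boundary data of size \(O(n)\) uniformly across all slices, including those meeting the faces of \(1234\). I would first try to obtain the slices from the hive associator, as in Lemma~\ref{lem:rf-centered-large-gap-valid}: the octahedron recurrence preserves hives on every face obtained from a genuine hive fan. Their boundary increments would then be partial sums of the spectra of the sums \(\widehat X_I^{(n)}\), or of their interlacing minors. Each of these is bounded by \(\max_I\|\widehat X_I^{(n)}\|_{\rm op}\) on \(\mathcal E_n\).
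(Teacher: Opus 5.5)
Your plan follows the paper's proof. You bound adjacent-vertex differences via the Knutson--Tao--Woodward slice hives, as in Lemma~\ref{lem:rf-target-lipschitz}. You extend \(\Phi_{\ell_1,\ell_2,\ell_3}\) by Lipschitz continuity from Corollary~\ref{cor:rf-rational-approximants}, as in Lemma~\ref{lem:rf-boundary-point-limit}. You then finish with a finite net of interior rational points, as in the proof of Theorem~\ref{thm:rf-uniform-continuum}. The subsequence in Step~2 is unnecessary, since the corollary allows arbitrary approximants.

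Steps~2--4 are fine. The flaw is in how Step~1 bounds the increments on the sides of the four boundary faces. Only the edges \(12\), \(23\), \(34\) of the tetrahedron lie in the initial fan, and only along them are the increments pathwise the eigenvalues of \(\widehat X_1^{(n)},\widehat X_2^{(n)},\widehat X_3^{(n)}\). The new sides \(13\), \(24\), \(14\) are piecewise-linear functions of the three Gelfand--Tsetlin patterns. Their increment lists only have the laws of \(\lambda(\widehat X_{12}^{(n)})\), \(\lambda(\widehat X_{23}^{(n)})\), \(\lambda(\widehat X_{123}^{(n)})\), and the paper stresses that this is not a pathwise identification. So the claim that these increments are spectra of the sums (or of their minors), and hence bounded by \(\max_I\|\widehat X_I^{(n)}\|_{\rm op}\) on \(\mathcal E_n\), is false as stated.

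Either of two repairs works.
\begin{itemize}
\item The paper's repair is probabilistic. It uses the one-face marginal dictionary \eqref{eq:rf-boundary-face-dictionary}, Gaussian operator-norm tails, and a union bound over the four faces to define the event \(\mathcal B_n\), and then works on \(\mathcal E_n\cap\mathcal B_n\).
\item Alternatively, you can stay pathwise on \(\mathcal E_n\) using the hive inequalities themselves. In a genuine hive, the increment along a side edge equals the sum of the increments along the other two sides of the adjacent elementary triangle. Each of those is sandwiched between two increments of the outer side parallel to it. Hence the \(13\)-increments of \(h_{123}\) have absolute value at most \(\|\widehat X_1^{(n)}\|_{\rm op}+\|\widehat X_2^{(n)}\|_{\rm op}\); this is Weyl's inequality for the hive's boundary triple. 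The faces \(h_{134}\) and \(h_{234}\) then bound the \(14\)- and \(24\)-increments by sums of the three operator norms.
\end{itemize}
With either correction, the rest of your argument goes through unchanged.
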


The proofs of Corollary~\ref{cor:rf-rational-approximants}
and Theorem~\ref{thm:rf-uniform-continuum} appear in
Section~\ref{sec:rf-real-extension}.  Their principal input is the
rational variational and variance theorem,
Theorem~\ref{thm:rf-finite-panel-localization}.  The intervening
sections derive the exact red-free formula and identify the bulk,
seam, and fixed terms appearing in \(\mathcal S_p\).

\section{The red-lozenge-free tile gauge}
\label{sec:red-free-gauge}

\begin{remark}[Notation used in Part~II]
The discrete cut-open cell complex is \(\mathcal A_p\), its legal
lozenge--triangle tilings form \(\mathcal T_p\), and
\(F_{\Xi,r}\) is the lozenge height of a tiling \(\Xi\) on sheet
\(r\).  The normalized piecewise-affine height on the fixed continuum
region is \(\overline F_{\Xi,n}\).  The variable contribution is split
as
\[
 W_p^{\rm rf}=W_{p,\mathrm{edge}}^{\rm rf}
                 +W_{p,\Delta}^{\rm rf},
\]
where the first term is the panel bulk weight and the second is the
sum over the two cut seams.  Their continuum limits are
\(S_{p,\mathrm{edge}}\) and \(S_{p,\Delta}^{\rm rf}\); the remaining
matching-independent statistic has limit \(S_{p,0}\).  The final
variational value is denoted \(\Phi_{\ell_1,\ell_2,\ell_3}\), whereas the local
gauge potential below is denoted \(\mathcal G_p^{\rm rf}\).
\end{remark}

Part~I proves the corrected formula and identifies its terms with
lozenge--triangle tilings of one fixed cut-open region.  From this
point onward those tilings are the primary objects.  The
boundary-incidence correction from
Part~I must not be removed by hand.  Instead we absorb it by an explicit
tile gauge.  We use the flip-symmetric member of the red-lozenge-free
gauge family: red lozenges have weight zero, while blue and green
lozenges both have \(3/2\) times their ordinary rhombus weight.  At a
cut seam the same gauge fixes both singleton-triangle weights.  These
bulk, seam, and fixed-term changes must be made together; their sum is
the original corrected affine form.

Recall the legal-tiling convention and the notation
\(\mathcal T_p\) and \(\mathcal S_p^{\rm seam}\) from
Definition~\ref{def:legal-lozenge-triangle-tiling}.  In particular,
the two panel copies of a triangle facing a retained seam segment are
kept distinct, and the complementary-singleton rule is the complete
tile version of the cut-open seam condition.

\begin{remark}[Tile language used in Part~II]
\label{rem:rf-tile-language}
All maximizations and compatibility conditions below are stated in
terms of legal lozenge--triangle tilings.  A ``shared lattice edge''
always means the common side of the two triangles forming an ordinary
lozenge.  The subscript in
\(\sigma_\edge^{\rm sym}\) is retained to agree with the notation of
\cite{NarLimit}; the superscript records the flip-symmetric
\(0,3/2,3/2\) normalization used here.
\end{remark}

If an ordinary lozenge \(\ell\) is formed across the elementary lattice
edge \(\underline\ell=[x_\ell,y_\ell]\), define its raw weight, and the
raw weight of a seam triangle, by
\begin{equation}
 \rho_h^{\rm raw}(\tau)=
 \begin{cases}
  -h(x_\ell)-h(y_\ell),&\tau=\ell
       \text{ is an ordinary lozenge},\\
  0,&\tau\text{ is a seam triangle}.
 \end{cases}
 \label{eq:rf-raw-edge-weight}
\end{equation}
For a tiling \(\Xi\), let \(m_v(\Xi)\) be the number of its ordinary
lozenges whose shared lattice edge is incident to \(v\), and put
\[
 m_v^{\max}=\max_{\Xi\in\mathcal T_p}m_v(\Xi).
\]
Let \(\Xi_{M_{\min}}\) be the canonical bottom tiling constructed in
Part~I.  Thus all incidence quantities below have a direct tile
interpretation.

\begin{definition}[Raw tiling-independent term]
\label{def:rf-raw-fixed-term}
Put
\begin{align}
 C_p^{\rm raw}(h):={}&\mathsf b_p(h)
 +\sum_{v\in S_p}(w_v-1)h(v)
 \notag\\
 &+\sum_{\substack{v\in I_{\rm mut}\setminus S_p\\m_v^{\max}>0}}
       m_v^{\max}h(v)
 +\sum_{v\in I_{\rm fr}}m_v(\Xi_{M_{\min}})h(v).
\label{eq:rf-raw-fixed-term}
\end{align}
\end{definition}

\begin{proposition}[Raw tile form]
\label{prop:rf-raw-edge-form}
For every \(\Xi\in\mathcal T_p\), let
\(\Lambda_\Xi^{\rm corr}\) be its corrected affine form from Part~I.
Then
\begin{equation}
 \boxed{
 \Lambda_\Xi^{\rm corr}(h)
 =C_p^{\rm raw}(h)+\sum_{\tau\in\Xi}\rho_h^{\rm raw}(\tau).}
\label{eq:rf-raw-edge-form}
\end{equation}
\end{proposition}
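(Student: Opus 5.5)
The identity is a regrouping of the linear form \eqref{eq:linear-form} of Theorem~\ref{thm:partI-main}(iv), with the matching \(M\) read as the legal tiling \(\Xi\) via the bijection \(\mathcal T_p\leftrightarrow\D(G_p)\) of Definition~\ref{def:legal-lozenge-triangle-tiling}. I would split \(\Lambda_\Xi^{\rm corr}(h)\) into a matching-independent part and a part linear in the incidence counts \(m_v(\Xi)\):
\[
 \Lambda_\Xi^{\rm corr}(h)
 =\Bigl[\mathsf b_p(h)+\sum_{v\in S_p}(w_v-1)h(v)
   +\sum_{v\in I_{\rm mut}\setminus S_p,\,E_v\ne\varnothing}m_v^{\max}h(v)
   +\sum_{v\in I_{\rm fr}}m_v(M_{\min})h(v)\Bigr]
   -\sum_{v}m_v(\Xi)h(v),
\]
where the last sum runs over all \(v\) with \(E_v\ne\varnothing\). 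The first bracket should then be matched with \(C_p^{\rm raw}(h)\). Two identifications are needed for this. The condition \(m_v^{\max}>0\) in \eqref{eq:rf-raw-fixed-term} is equivalent to \(E_v\ne\varnothing\): by Theorem~\ref{thm:well-matched} every edge of \(E_v\) lies in some matching, while vertices with \(E_v=\varnothing\) contribute zero in both versions. In addition, \(\Xi_{M_{\min}}\) corresponds to \(M_{\min}\), and its tile incidences agree with the matching incidences \(m_v\).

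It remains to show that \(-\sum_v m_v(\Xi)h(v)=\sum_{\tau\in\Xi}\rho^{\rm raw}_h(\tau)\). I would exchange the order of summation: \(\sum_v m_v(\Xi)h(v)=\sum_v\sum_{e\in\overline M\cap E_v}h(v)\) becomes a sum over edges \(e\) of the contracted matching \(\overline M\), with each edge weighted by the \(h\)-values of the lattice vertices \(v\) with \(e\in E_v\). By definition \(e\in E_v\) exactly when the lattice edge crossed by \(e\) is incident to \(v\). Every matching edge crosses exactly one elementary lattice edge \([x_\ell,y_\ell]\), namely the shared edge of the lozenge \(\ell\) it represents, so it lies in \(E_{x_\ell}\) and \(E_{y_\ell}\) and in no other \(E_v\). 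Its contribution is therefore \(h(x_\ell)+h(y_\ell)\), which is \(-\rho_h^{\rm raw}(\ell)\).

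The inserted seam zigzag edges have been contracted and cross no lattice side, as noted in Lemma~\ref{lem:incidence-change}. They therefore correspond to seam triangles, whose raw weight is zero. The one point needing care is this bookkeeping at the seams. I must check that, under the contraction, each ordinary lozenge of \(\Xi\) corresponds to exactly one edge of \(\overline M\) crossing a non-seam lattice edge, and that the seam singletons correspond exactly to the zigzag-completed edges and to nothing counted in \(E_v\). I would verify this from the description in the proof of Theorem~\ref{thm:fixed-graph}, part~3. In that description exactly one sector vertex is matched externally by an ordinary edge, and the zigzag edges carry no incidence.

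Both endpoints \(x_\ell,y_\ell\) lie in \(\mathcal A_p\). Their vertex classes (selected, unselected mutable, or frozen) are all covered by the three sums above, so no terms are lost. Adding the two pieces gives \eqref{eq:rf-raw-edge-form}.
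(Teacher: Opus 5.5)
Your proposal is correct and follows the paper's proof: you split \eqref{eq:linear-form} into \(C_p^{\rm raw}(h)-\sum_v m_v(\Xi)h(v)\), then double count each ordinary lozenge once at each endpoint of its shared lattice edge, with seam singletons contributing zero. Your appeal to well-matchedness for \(m_v^{\max}>0\iff E_v\ne\varnothing\) is harmless but unnecessary, since any \(v\) with \(m_v^{\max}=0\) has \(m_v(\Xi)=0\) for every \(\Xi\) and so contributes zero on both sides.
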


\begin{proof}
Collect the three occurrences of the incidence term in
\eqref{eq:linear-form}, using the identification with tilings proved
in Part~I.  Every ordinary lozenge across
\([x_\ell,y_\ell]\) is counted once at \(x_\ell\), once at \(y_\ell\),
and nowhere else.  A single seam triangle has no shared lattice edge
and contributes zero raw weight.  Double counting gives
\eqref{eq:rf-raw-edge-form}.
\end{proof}

For \(r=1,2,3\), write \(P_r=\Delta_{0,r,r+1}(N)\), and use triangular
coordinates
\begin{equation}
 [i,j]_r=(N-j)e_0+(j-i)e_r+i e_{r+1},
 \qquad 0\le i\le j\le N.
\label{eq:rf-panel-coordinate}
\end{equation}
Thus \(H_r(i,j):=h([i,j]_r)\).  The two elementary triangle orientations
are
\begin{align}
 U^r_{i,j}&=\{[i,j]_r,[i,j+1]_r,[i+1,j+1]_r\},
       &&0\le i\le j<N,\notag\\
 D^r_{i,j}&=\{[i,j]_r,[i+1,j]_r,[i+1,j+1]_r\},
       &&0\le i<j<N.
\label{eq:rf-atomic-triangles}
\end{align}
These \(U/D\)-triangles are the cells of \(\mathcal A_p\).  In
particular, both facing panel triangles are retained as separate cells
at every cut seam.

\begin{definition}[Flip-symmetric red-free triangle potentials]
\label{def:rf-triangle-potential}
For an elementary triangle put
\begin{align}
 \phi_h(U^r_{i,j})
   &=\frac12\bigl(H_r(i,j+1)+H_r(i+1,j+1)\bigr),\notag\\
 \phi_h(D^r_{i,j})
   &=\frac12\bigl(H_r(i,j)+H_r(i+1,j)\bigr).
\label{eq:rf-triangle-potential}
\end{align}
\end{definition}

The seam \(0(r+1)\), between \(P_r\) and \(P_{r+1}\), has at level
\(k\) the segment with endpoints
\begin{equation}
 s_{r,k-1}=[k-1,k-1]_r=[0,k-1]_{r+1},
 \qquad
 s_{r,k}=[k,k]_r=[0,k]_{r+1}.
\label{eq:rf-seam-segment}
\end{equation}
Its two facing triangles are \(U^r_{k-1,k-1}\) and
\(U^{r+1}_{0,k-1}\).  When this is one of the seam segments retained in
the cut-open support, it belongs to
\(\mathcal S_p^{\rm seam}\).  Put
\begin{equation}
                         \psi_h(r,k)=-h(s_{r,k}).
\label{eq:rf-aux-potential}
\end{equation}
For a tile \(\tau\), define its gauged weight by
\begin{equation}
 \rho_h^{\rm rf}(\tau)=
 \begin{cases}
  \rho_h^{\rm raw}(\ell)+\phi_h(T_1)+\phi_h(T_2),
     &\tau=\ell=T_1\cup T_2\text{ is an ordinary lozenge},\\
  \phi_h(\Delta)+\psi_h(r,k),
     &\tau=\Delta\text{ is a seam triangle at }(r,k).
 \end{cases}
\label{eq:rf-gauged-edge-weight}
\end{equation}

The following calculation realizes the global color convention in the
local panel coordinates.  Whenever the indicated triangles exist, set
\begin{align*}
 R^r_{i,j}&:=U^r_{i,j}\cup D^r_{i,j+1},\\
 G^r_{i,j}&:=
 \begin{cases}
  U^r_{i,j}\cup D^r_{i,j},&r\in\{1,3\},\\
  U^r_{i,j}\cup D^r_{i-1,j},&r=2,
 \end{cases}\\
 B^r_{i,j}&:=
 \begin{cases}
  U^r_{i,j}\cup D^r_{i-1,j},&r\in\{1,3\},\\
  U^r_{i,j}\cup D^r_{i,j},&r=2.
 \end{cases}
\end{align*}
These are, respectively, the red, green, and blue lozenge orientations.
In the developed panel frames, seam \(02\) identifies the unoriented
geometric families
\[
 d_3^{(1)}\parallel d_2^{(2)},\qquad
 d_1^{(1)}\parallel d_3^{(2)},\qquad
 d_2^{(1)}\parallel d_1^{(2)},
\]
so green is preserved and red and blue exchange.  Seam \(03\) identifies
\[
 d_3^{(2)}\parallel d_2^{(3)},\qquad
 d_1^{(2)}\parallel d_3^{(3)},\qquad
 d_2^{(2)}\parallel d_1^{(3)},
\]
so blue is preserved and red and green exchange.

\begin{lemma}[Local flip-symmetric red-free tile weights]
\label{lem:rf-local-weights}
Let
\[
 \omega_h(\ell)=\frac13\bigl(h(a_1)+h(a_2)-h(o_1)-h(o_2)\bigr)
\]
be the ordinary rhombus weight, with \(a_1,a_2\) acute and \(o_1,o_2\)
obtuse.  Then an ordinary lozenge has gauged weight
\begin{equation}
 \rho_h^{\rm rf}(\ell)=
 \begin{cases}
  0,&\ell\text{ red},\\
  \frac32\omega_h(\ell),&\ell\text{ green},\\
  \frac32\omega_h(\ell),&\ell\text{ blue}.
 \end{cases}
\label{eq:rf-lozenge-weights}
\end{equation}
At a retained seam segment \eqref{eq:rf-seam-segment}, the two possible
singleton triangles have weights
\begin{align}
 \vartheta^-_{r,k}(h)
   &=\frac12\bigl(H_r(k-1,k)-H_r(k,k)\bigr),\notag\\
 \vartheta^+_{r,k}(h)
   &=\frac12\bigl(H_{r+1}(1,k)-H_{r+1}(0,k)\bigr).
\label{eq:rf-seam-weights}
\end{align}
Here \(-\) denotes the new-boundary side of \(P_r\), and \(+\) the
old-boundary side of \(P_{r+1}\).
\end{lemma}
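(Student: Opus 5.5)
The proof is a direct substitution into \eqref{eq:rf-gauged-edge-weight}, one lozenge orientation at a time, followed by the two seam triangles.

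\textbf{Red lozenges.} Take \(R^r_{i,j}=U^r_{i,j}\cup D^r_{i,j+1}\). Its shared lattice edge is the common side of the two triangles, namely \([[i,j+1]_r,[i+1,j+1]_r]\). The raw weight is therefore \(-H_r(i,j+1)-H_r(i+1,j+1)\). By \eqref{eq:rf-triangle-potential},
\[
 \phi_h(U^r_{i,j})=\tfrac12\bigl(H_r(i,j+1)+H_r(i+1,j+1)\bigr),
\]
and \(\phi_h(D^r_{i,j+1})\) is given by the same expression. The two potentials add up to exactly the negative of the raw weight, so the gauged weight is \(0\).

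\textbf{Green and blue lozenges on \(P_1,P_3\).} For \(U^r_{i,j}\cup D^r_{i,j}\) the shared edge is \([[i,j]_r,[i+1,j+1]_r]\). The four vertices are \([i,j],[i,j+1],[i+1,j+1],[i+1,j]\), with the obtuse pair on the shared edge and the acute pair \([i,j+1],[i+1,j]\). The gauged sum is
\[
 -H(i,j)-H(i+1,j+1)+\tfrac12\bigl(H(i,j+1)+H(i+1,j+1)\bigr)+\tfrac12\bigl(H(i,j)+H(i+1,j)\bigr).
\]
This collapses to
\[
 \tfrac12\bigl(H(i,j+1)+H(i+1,j)-H(i,j)-H(i+1,j+1)\bigr)=\tfrac32\omega_h(\ell).
\]

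For \(U^r_{i,j}\cup D^r_{i-1,j}\) the shared edge is \([[i,j]_r,[i,j+1]_r]\). The acute vertices are \([i-1,j]\) and \([i+1,j+1]\). The same cancellation leaves
\[
 \tfrac12\bigl(H(i-1,j)+H(i+1,j+1)-H(i,j)-H(i,j+1)\bigr)=\tfrac32\omega_h(\ell).
\]

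On \(P_2\) the labels green and blue are merely interchanged between these two shapes. The computation is unchanged, so both colors receive \(\tfrac32\omega_h\) in every panel, which proves \eqref{eq:rf-lozenge-weights}.

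\textbf{Seam triangles.}
\begin{itemize}
\item On the \(-\) side the triangle is \(U^r_{k-1,k-1}\). Here \(\phi_h=\tfrac12\bigl(H_r(k-1,k)+H_r(k,k)\bigr)\) and \(\psi_h(r,k)=-h(s_{r,k})=-H_r(k,k)\). Their sum is \(\tfrac12\bigl(H_r(k-1,k)-H_r(k,k)\bigr)\).
\item On the \(+\) side the triangle is \(U^{r+1}_{0,k-1}\). Here \(\phi_h=\tfrac12\bigl(H_{r+1}(0,k)+H_{r+1}(1,k)\bigr)\) and \(\psi_h=-H_{r+1}(0,k)\), using \(s_{r,k}=[0,k]_{r+1}\). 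Their sum is \(\tfrac12\bigl(H_{r+1}(1,k)-H_{r+1}(0,k)\bigr)\).
\end{itemize}
This is \eqref{eq:rf-seam-weights}.

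\textbf{Main obstacle.} The only delicate point is bookkeeping. One must correctly identify which vertices of each shape are acute and which are obtuse. One must also confirm that the colour-to-shape assignment in the definitions of \(R,G,B\) matches the local type convention \(d_k^{(r)}\) fixed in Part~I, including the reflection on \(P_2\). I would check this against the positive directions \eqref{eq:positive-panel-directions} by confirming that the shared edge of \(R^r_{i,j}\) is parallel to \(d_1\).
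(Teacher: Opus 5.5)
Your proposal is correct and follows the paper's proof. You substitute the triangle potentials \eqref{eq:rf-triangle-potential} and the seam potential \eqref{eq:rf-aux-potential} into \eqref{eq:rf-gauged-edge-weight} for the three adjacent pairs \(R^r_{i,j},G^r_{i,j},B^r_{i,j}\) and for the two facing seam triangles, noting that on \(P_2\) the green and blue shapes are merely interchanged. Your explicit acute/obtuse bookkeeping, and your check that the shared edge of \(R^r_{i,j}\) is parallel to \(d_1\), are consistent with the paper's color convention; they spell out what the paper calls ``the same four-term substitution.''
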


\begin{proof}
For an ordinary lozenge, substitute
\eqref{eq:rf-triangle-potential} into
\eqref{eq:rf-gauged-edge-weight}.  The three possible adjacent pairs give
\begin{center}
\begin{tabular}{c|c|c}
color & atomic lozenge & gauged weight\\ \hline
red   & \(R^r_{i,j}\) & \(0\)\\
green & \(G^r_{i,j}\) & \(\frac32\omega_h(\ell)\)\\
blue  & \(B^r_{i,j}\) & \(\frac32\omega_h(\ell)\).
\end{tabular}
\end{center}
For example, \(R^r_{i,j}\) has shared vertices
\([i,j+1]_r,[i+1,j+1]_r\), and all four coefficients cancel.  The other
two rows follow by the same four-term substitution.  On \(P_2\) their
two atomic pairs are interchanged, but their common coefficient
\(3/2\) is unchanged.

For a seam triangle, use \eqref{eq:rf-triangle-potential} and
\eqref{eq:rf-aux-potential}.  On the \(-\) side,
\[
 \phi_h(U^r_{k-1,k-1})-h(s_{r,k})
 =\frac12\bigl(H_r(k-1,k)-H_r(k,k)\bigr).
\]
On the \(+\) side,
\[
 \phi_h(U^{r+1}_{0,k-1})-h(s_{r,k})
 =\frac12\bigl(H_{r+1}(1,k)-H_{r+1}(0,k)\bigr).
\]
This proves \eqref{eq:rf-seam-weights}.
\end{proof}

For a legal tiling \(\Xi\in\mathcal T_p\), define
\begin{equation}
 \begin{split}
 W_p^{\rm rf}(\Xi;h):={}&
 \frac32\sum_{\substack{\ell\in\Xi\\\ell\text{ blue}}}\omega_h(\ell)
 +\frac32\sum_{\substack{\ell\in\Xi\\\ell\text{ green}}}\omega_h(\ell)\\
 &+\sum_{\substack{\Delta\in\Xi\\\Delta\text{ on a cut seam}}}
       \vartheta_h(\Delta),
 \end{split}
\label{eq:rf-tiling-weight}
\end{equation}
where \(\vartheta_h\) is the appropriate one of
\eqref{eq:rf-seam-weights}.  Let
\begin{equation}
 \mathcal G_p^{\rm rf}(h)=
 \sum_{T\in\mathcal A_p}\phi_h(T)
 +\sum_{(r,k)\in\mathcal S_p^{\rm seam}}\psi_h(r,k),
 \qquad
 F_p^{\rm rf}(h)=C_p^{\rm raw}(h)-\mathcal G_p^{\rm rf}(h).
\label{eq:rf-fixed-definition}
\end{equation}

\begin{theorem}[Exact red-lozenge-free tiling formula]
\label{thm:rf-exact-tiling}
For every strictly interior \(p=(0,a,b,c,d)\) and every compatible
height function \(h\) on \(\Sigma_0\),
\begin{equation}
 \boxed{
 h(p)=F_p^{\rm rf}(h)
       +\max_{\Xi\in\mathcal T_p}W_p^{\rm rf}(\Xi;h).}
\label{eq:rf-exact-tropical}
\end{equation}
This is a tropical identity.  The ordinary positive Laurent formula is
the integral-exponent formula of Theorem~\ref{thm:partI-main}; the
tiling formulation above is the only one used below.
\end{theorem}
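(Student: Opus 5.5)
The identity \eqref{eq:rf-exact-tropical} follows by rewriting the tropical matching formula \eqref{eq:tropical-matching-formula} of Theorem~\ref{thm:partI-main} tile by tile. By Definition~\ref{def:legal-lozenge-triangle-tiling} the maximum over \(\D(G_p)\) is a maximum over \(\mathcal T_p\). Proposition~\ref{prop:rf-raw-edge-form} gives, for every \(\Xi\in\mathcal T_p\),
\[
 \Lambda_\Xi^{\rm corr}(h)=C_p^{\rm raw}(h)+\sum_{\tau\in\Xi}\rho_h^{\rm raw}(\tau).
\]
Hence it suffices to show that, for every legal tiling,
\begin{equation*}
 \sum_{\tau\in\Xi}\rho_h^{\rm rf}(\tau)
 =\sum_{\tau\in\Xi}\rho_h^{\rm raw}(\tau)+\mathcal G_p^{\rm rf}(h),
\end{equation*}
with \(\mathcal G_p^{\rm rf}\) independent of \(\Xi\). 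Then \(\Lambda_\Xi^{\rm corr}(h)=F_p^{\rm rf}(h)+\sum_{\tau\in\Xi}\rho_h^{\rm rf}(\tau)\), and the maximum over \(\Xi\) gives the theorem once the gauged sum is identified with \(W_p^{\rm rf}(\Xi;h)\).

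\emph{Gauge invariance.} By \eqref{eq:rf-gauged-edge-weight}, the gauged sum minus the raw sum is
\[
\sum_{\ell\in\Xi}\bigl(\phi_h(T_1)+\phi_h(T_2)\bigr)+\sum_{\Delta\in\Xi\ \text{seam}}\bigl(\phi_h(\Delta)+\psi_h(r,k)\bigr).
\]
In a legal tiling every elementary triangle of \(\mathcal A_p\) lies in exactly one tile. The two facing copies at a seam are kept distinct, so the \(\phi_h\) terms sum to \(\sum_{T\in\mathcal A_p}\phi_h(T)\). The complementary-singleton rule puts exactly one singleton triangle at each retained seam segment \((r,k)\in\mathcal S_p^{\rm seam}\). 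Each \(\psi_h(r,k)\) is therefore counted exactly once, whichever side carries the singleton, and no other \(\psi\) terms appear. The total is exactly \(\mathcal G_p^{\rm rf}(h)\) from \eqref{eq:rf-fixed-definition}, which does not depend on \(\Xi\).

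\emph{Identification with \(W_p^{\rm rf}\).} Lemma~\ref{lem:rf-local-weights} evaluates each gauged tile weight. Red lozenges contribute \(0\), and green and blue lozenges contribute \(\tfrac32\omega_h(\ell)\). Seam singletons contribute \(\vartheta^\mp_{r,k}(h)\) according to their side. Summing over \(\Xi\) reproduces \eqref{eq:rf-tiling-weight} term by term.

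The step I expect to need the most care is the bookkeeping at the seams. Two things must be checked there: the set of cells of \(\mathcal A_p\) is exactly the set of triangles partitioned by every legal tiling, and \(\mathcal S_p^{\rm seam}\) indexes precisely the segments at which one singleton is forced. In particular, neither the deleted three-sector corner nor the seam endpoints \(v_6,v_8\), which carry empty incidence, should contribute a stray \(\psi\) term. Both points follow from Definition~\ref{def:legal-lozenge-triangle-tiling} and the cut-open seam rule, Definition~\ref{def:cut-open-seam}.
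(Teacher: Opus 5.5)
Your proposal is correct and follows the paper's proof. Both arguments use the raw tile form of Proposition~\ref{prop:rf-raw-edge-form}, and both note that every legal tiling uses each cell of \(\mathcal A_p\) exactly once and exactly one singleton per retained seam segment, so the gauge adds the tiling-independent quantity \(\mathcal G_p^{\rm rf}(h)\); Lemma~\ref{lem:rf-local-weights} then identifies the gauged sum with \(W_p^{\rm rf}\), and taking the maximum in the Part~I tropical matching formula finishes the proof. The seam bookkeeping you flag is exactly what the paper takes directly from Definition~\ref{def:legal-lozenge-triangle-tiling}.
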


\begin{proof}
Every legal tiling uses every triangle of \(\mathcal A_p\) exactly
once and, at every retained seam segment, uses exactly one of the two
facing triangles as a singleton.  Consequently the tile gauge
\eqref{eq:rf-gauged-edge-weight} adds the same quantity
\(\mathcal G_p^{\rm rf}(h)\) to the raw weight of every tiling.
Proposition~\ref{prop:rf-raw-edge-form} and
Lemma~\ref{lem:rf-local-weights} therefore give
\[
 \Lambda_\Xi^{\rm corr}(h)
 =F_p^{\rm rf}(h)+W_p^{\rm rf}(\Xi;h).
\]
Part~I identifies these corrected affine forms with the terms in the
octahedron-recurrence maximum.  Taking the maximum over
\(\Xi\in\mathcal T_p\) proves the result.
\end{proof}

\begin{remark}[The seam weights are fixed by the bulk gauge]
\label{rem:rf-seam-gauge-coupling}
The two weights in \eqref{eq:rf-seam-weights} have the same
coefficient \(1/2\), and the bulk coefficient \(3/2\) is likewise
unchanged when the green and blue labels are interchanged.  These
weights are nevertheless not an additional convention:
they are forced by the triangle potential
\eqref{eq:rf-triangle-potential}.  Thus one cannot first set the red
lozenge weight to zero and then choose the seam-triangle normalization
independently.  The symmetric seam pair is the local tile-gauge image
of the boundary-incidence term in \eqref{eq:linear-form}.
\end{remark}

\section{The fixed term}
\label{sec:rf-fixed-term}

We now compute \(F_p^{\rm rf}\) exactly.  Let \(c_p(v)\) be three times
the coefficient of \(h(v)\) in the sum of the elementary-triangle potentials,
and let \(a_p(v)\) be the number of retained seam segments whose higher
endpoint is \(v\):
\begin{equation}
 \frac13c_p(v)=[h(v)]\sum_{T\in\mathcal A_p}\phi_h(T),
 \qquad
 a_p(v)=\#\{(r,k)\in\mathcal S_p^{\rm seam}:s_{r,k}=v\}.
\label{eq:rf-c-a-definition}
\end{equation}
Define
\begin{equation}
 \eta_p(v)=
 \begin{cases}
  w_v-1,&v\in S_p,\\
  m_v^{\max},&v\in I_{\rm mut}\setminus S_p,\ m_v^{\max}>0,\\
  \beta_p(v)+m_v(\Xi_{M_{\min}}),&v\in I_{\rm fr},\\
  0,&\text{otherwise}.
 \end{cases}
\label{eq:rf-eta}
\end{equation}
Then the coefficient of \(h(v)\) in the fixed term is
\begin{equation}
             \chi_p^{\rm rf}(v)=\eta_p(v)-\frac13c_p(v)+a_p(v).
\label{eq:rf-chi}
\end{equation}
The seam-potential sign is positive here because
\(\psi_h(r,k)=-h(s_{r,k})\) is subtracted in
\eqref{eq:rf-fixed-definition}.

Recall from the coordinate definition of the reference tiling
preceding Lemma~\ref{lem:boundary-datum-compatible} that
\[
 v_9=(0,a,N-a,0,0)\in\Delta_{012}(N).
\]

\begin{lemma}[Local-star exhaustion]
\label{lem:rf-local-star-exhaustion}
For a lattice vertex \(v\), let
\[
 \mathcal R_v:=\{T\in\mathcal A_p:v\text{ is a vertex of }T\},
 \qquad
 \mathcal G_v:=\{T\in\mathcal R_v:[h(v)]\phi_h(T)=\tfrac12\}.
\]
As \(v\) ranges over the initial fan, its retained local star belongs
to exactly one of the nine coefficient classes used below.  Their
pairs
\((\lvert\mathcal R_v\rvert,\lvert\mathcal G_v\rvert)\) are
\begin{align*}
 \textup{(a)}&:(0,0),&
 \textup{(b)}&:(3,2)\text{ or }(2,2),&
 \textup{(c)}&:(6,4),\\
 \textup{(d)}&:(6,4),&
 \textup{(e)}&:(2,0),&
 \textup{(f)}&:(2,2),\\
 \textup{(g)}&:(5,4),&
 \textup{(h)}&:(2,1),&
 \textup{(i)}&:(2,1).
\end{align*}
The second pair in class~\textup{(b)} occurs precisely at the convex
truncations \(v_4,v_5\); it has the same two gold cells and hence the
same coefficient.  The classes are, respectively: no retained incident
cell; an ordinary one-sector boundary; a selected off-seam vertex; an
internal retained-seam vertex; a lower seam foot \(v_8\) or \(v_6\);
an upper endpoint \(u_2\) or \(u_3\); the reentrant vertex \(v_7\);
and the two caps \(v_9,v_0\).
\end{lemma}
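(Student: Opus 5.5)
The proof is a direct local enumeration of the star of each lattice vertex \(v\) in \(\mathcal A_p\). The first step is to record, from Definition~\ref{def:rf-triangle-potential}, which elementary triangles give \(h(v)\) coefficient \(\tfrac12\). For \(U^r_{i,j}\) these are the two upper vertices \([i,j+1]_r,[i+1,j+1]_r\); for \(D^r_{i,j}\) they are the two vertices \([i,j]_r,[i+1,j]_r\). Every other vertex of a triangle receives coefficient \(0\). Among the six cells around a panel-interior vertex \([i,j]_r\), the ones for which \(v\) is a weighted vertex are therefore
\[
U^r_{i,j-1},\ U^r_{i-1,j-1},\ D^r_{i,j},\ D^r_{i-1,j}.
\]
The two unweighted cells are \(U^r_{i,j}\) and \(D^r_{i-1,j-1}\). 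This already gives the pair \((6,4)\) for class~(c). At a retained seam vertex the two panel copies are kept as distinct cells. Counting the two half-stars therefore gives six cells in total, and reading off the same rule on each side gives four gold cells, which is class~(d).

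Next I would use the support inequalities \eqref{eq:Sp-formula}, in the form of the boundary-side lists in the proof of Lemma~\ref{lem:bottom-boundary-layer}, to decide which cells of the star are retained in \(\mathcal A_p\). A cell is retained exactly when it is incident to \(S_p\). At a vertex in the relative interior of a straight outer side, the retained cells are the ones on the inward side. Depending on the direction of that side this is two or three cells, and in each case both half-weighted cells survive, giving class~(b). The convex truncations \(v_4,v_5\) keep only the two gold cells.

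The special vertices I would treat one at a time, using the explicit coordinates from \eqref{eq:five-g-points} and the list of \(v_5,\dots,v_9\). At the seam feet \(v_6,v_8\) the only retained cells are the two facing seam triangles, and \(v\) is their lower vertex, giving \((2,0)\) for class~(e). At the upper endpoints \(u_2,u_3\) we get the same two facing triangles but now with \(v\) on top, giving \((2,2)\) for class~(f). At the reentrant corner \(v_7\) the star is the star of the reentrant cone drawn in Figure~\ref{fig:reentrant-boundary-cone} with the excluded sector removed; that leaves five cells, four of them gold, which is class~(g). The caps \(v_9\) and \(v_0\) are the extreme corners on the \(q_0=0\) side. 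Each has two incident cells, and exactly one has \(v\) in its weighted pair, giving classes~(h) and~(i). Every vertex that does not touch \(\mathcal A_p\) falls into class~(a).

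The main obstacle is showing that the list is exhaustive, since each claimed pair is a finite check in a fixed panel frame. I would handle exhaustiveness by classifying \(v\) according to the number of active equalities in \eqref{eq:Sp-formula}:
\begin{itemize}
\item no active equality, which is the interior and gives classes~(c) and~(d);
\item one active equality, which is the straight sides and gives class~(b);
\item two or more active equalities, which are the polygon corners.
\end{itemize}
For the third case I would invoke the enumeration of corners \(v_0,\dots,v_9,u_2,u_3\) already used in Lemma~\ref{lem:polygonal-bottom-no-negative} and Lemma~\ref{lem:reference-matching-incidence-calculation}. The exclusion \(x\le0,y\le0\) in the middle panel, together with \(a,b,c,d\ge1\), ensures that no two corners merge and that there is no further reentrant vertex. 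I expect the delicate point to be the orientation bookkeeping on the reflected panel \(P_2\) and at the seams. There one has to verify that the labels "lower" and "upper" endpoint of a seam segment match the roles of \(U^r_{k-1,k-1}\) and \(U^{r+1}_{0,k-1}\) from \eqref{eq:rf-seam-segment}.
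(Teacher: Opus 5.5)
Your route is the paper's: substitute into the two atomic-triangle lists \eqref{eq:rf-atomic-triangles}, retain a cell exactly when it meets \(S_p\), and sort vertices by walls, corners and seam levels. Your gold-cell rule is correct, and so are the pairs you compute for classes (c)--(i) and at \(v_4,v_5\). Two steps, however, are wrong as written.

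First, a vertex in the relative interior of a straight outer side never keeps only two cells. The side is a lattice line, which splits the six-cell star \(3+3\). Each of the three inward cells contains one of the two inward neighbours of \(v\), and these neighbours are selected. Since the cyclic coefficient list \(0,\tfrac12,\tfrac12,0,\tfrac12,\tfrac12\) has period three, any three consecutive cells contain exactly two gold ones. So every such vertex gives \((3,2)\). This is what the clause ``precisely at the convex truncations \(v_4,v_5\)'' requires, and your ``two or three cells depending on the direction'' contradicts it.

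Second, your trichotomy by the number of active equalities in \eqref{eq:Sp-formula} does not sort the seam vertices. In each incident panel, only \(q_0=0\) is active at \(u_2,u_3\), only \(q_2=b\) at \(v_8\), and only \(q_3=c\) at \(v_6\). Your rule therefore sends all four to class (b). In fact the wall and the seam together leave a single retained cell in each panel. This gives \((2,2)\) with \(a_p=1\) at \(u_2,u_3\) (class (f)) and \((2,0)\) at \(v_6,v_8\) (class (e)).

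The paper avoids this in two ways. It counts the seams among the walls, so these four vertices become wall--seam intersections. It also classifies seam vertices separately by level. On \(02\) the support reduces to \(q_0\ge1,\ q_2\ge b+1\), and the levels \(k<b\), \(k=b\), \(b<k<N\), \(k=N\) give (a), (e), (d), (f); the same holds on \(03\) with \(c\) in place of \(b\).

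Finally, \(v_1,v_2,v_3\) are not corners of the support. They are selected vertices and fall into (d), (c), (d), so listing them as corners is harmless but misleading.
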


\begin{proof}
The lattice boundary of \(S_p\) in \eqref{eq:Sp-formula} lies on the
following walls.  On \(P_1=012\) they are
\[
 q_0=0,\qquad q_1=a,\qquad q_2=b,
 \qquad\text{together with the seam }q_1=0;
\]
on \(P_2=023\) they are \(q_0=0\), the two arms
\[
 q_2=b\ (q_3\le c),\qquad q_3=c\ (q_2\le b),
\]
and the seams \(q_3=0,q_2=0\); on \(P_3=034\) they are
\[
 q_0=0,\qquad q_3=c,\qquad q_4=d,
 \qquad\text{together with the seam }q_4=0.
\]
Because \(a,b,c,d>0\), the only intersections of these walls which are
adjacent to the support and differ from their relative-interior types
are
\[
 v_9,\ u_2,\ u_3,\ v_0,\ v_5,\ v_6,\ v_7,\ v_8,\ v_4,
\]
in this order around the boundary.  All other wall intersections fail
at least two of the unit threshold inequalities and have empty retained
star.

On seam \(02\), the support condition reduces to
\(q_0\ge1,q_2\ge b+1\).  Thus at \(s_{1,k}\), the ranges
\(k<b\), \(k=b\), \(b<k<N\), and \(k=N\) give, respectively,
classes \textup{(a)}, \textup{(e)}, \textup{(d)}, and \textup{(f)}.
The same argument on seam \(03\), with \(b\) replaced by \(c\), gives
the other occurrences of those classes.  Away from the seams, the open
strata and the relative interiors of their walls give
\textup{(a)}--\textup{(c)};
the remaining intersections give \(v_4,v_5\) in \textup{(b)},
\(v_7\) in \textup{(g)}, and \(v_9,v_0\) in
\textup{(h)},\textup{(i)}.  Substitution in the two lists of triangles
in \eqref{eq:rf-atomic-triangles} gives the displayed retained and gold
counts.  This exhausts the local coefficient stars.
\end{proof}

\begin{theorem}[Explicit fixed term]
\label{thm:rf-explicit-fixed-term}
For every strictly interior \(p=(0,a,b,c,d)\),
\[
 \boxed{
 F_p^{\rm rf}(h)
   =\frac12 h(v_0)+\frac12 h(v_9),}
 \qquad
 \begin{aligned}
 v_0&=(0,0,0,N-d,d),\\
 v_9&=(0,a,N-a,0,0).
 \end{aligned}
\]
Equivalently, if \(\chi_p^{\rm rf}(v)\) denotes the coefficient of
\(h(v)\) in \(F_p^{\rm rf}\), then
\[
 \chi_p^{\rm rf}(v)=
 \begin{cases}
  \frac12,&v=v_0,\\[1mm]
  \frac12,&v=v_9,\\[1mm]
  0,&\text{otherwise}.
 \end{cases}
\]
\end{theorem}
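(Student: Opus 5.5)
The plan is a vertexwise computation of the coefficient \(\chi_p^{\rm rf}(v)=\eta_p(v)-\tfrac13c_p(v)+a_p(v)\) from \eqref{eq:rf-chi}, organized by the nine classes of Lemma~\ref{lem:rf-local-star-exhaustion}. By \eqref{eq:rf-triangle-potential}, each triangle potential gives coefficient \(\tfrac12\) to exactly the two vertices of its shared horizontal side, so \(\tfrac13c_p(v)=\tfrac12|\mathcal G_v|\). The value \(a_p(v)\in\{0,1\}\) is read off from whether \(v\) is the higher endpoint of a retained seam segment. For \(\eta_p(v)\) I would use three inputs: \(w_v\) for selected faces (\(w_v=3\) for hexagons; \(w_v=2\) for four-sided seam faces); \(m_v^{\max}=\lceil|E_v|/2\rceil\) for unselected vertices, from the path argument in Lemma~\ref{lem:reference-matching-incidence-calculation}, with \(m_{v_4}^{\max}=1\); and, for frozen vertices, \(\beta_p=\mathbf e_{v_0}\) together with the bottom incidences read from \(M_{\rm poly}=M_{\min}\) (Proposition~\ref{prop:polygonal-bottom-matching}).

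The class-by-class check then runs as follows.
\begin{itemize}
\item Class (a): every term vanishes.
\item Class (c), a selected off-seam hexagon: \(\eta=2\), \(|\mathcal G|=4\), \(a=0\), so \(\chi=2-2=0\).
\item Class (d), an internal seam vertex: it is selected with a four-sided face, so \(\eta=1\); \(|\mathcal G|=4\) and \(a=1\), giving \(\chi=1-2+1=0\).
\item Class (b), an ordinary boundary vertex: \(|E_v|=2\), so \(\eta=1\) if it is mutable. If it is frozen on \(q_0=0\), the linear word \(10\) gives \(m_v(M_{\min})=1\). In both cases \(\chi=1-1=0\). The truncations \(v_4,v_5\) also give \(\eta=1\): for \(v_4\) from \(m^{\max}=1\), and for \(v_5\) from the word \(1\).
\item Class (e), the seam feet \(v_6,v_8\): here \(E_v=\varnothing\), and \(|\mathcal G|=0\). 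These are lower endpoints of retained segments, and I would check that \(a=0\) at them; then \(\chi=0\).
\item Class (f), the top seam endpoints \(u_2,u_3\): these are frozen. I expect \(m(M_{\min})=0\), \(|\mathcal G|=2\) and \(a=1\), so \(\chi=0-1+1=0\).
\item Class (g), \(v_7\): \(m^{\max}=2\) and \(|\mathcal G|=4\), so \(\chi=0\).
\item Classes (h) and (i), the caps: \(|\mathcal G|=1\) and \(a=0\). At \(v_9\), which is frozen with \(q_0=0\), the bottom word is \(1\), so \(\chi=1-\tfrac12=\tfrac12\). At \(v_0\), \(\eta=\beta_p(v_0)+m_{v_0}(M_{\min})\), and the answer \(\tfrac12\) requires \(m_{v_0}(M_{\min})=0\), i.e.\ the unique retained incident side at \(v_0\) must not be crossed by the bottom tiling.
\end{itemize}

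The main obstacle is getting the frozen incidences and the seam higher-endpoint counts exactly right at the special corners \(u_2,u_3,v_0,v_6,v_8\), since that is where \(\beta_p\) and the seam potential \(\psi_h\) interact. For each of these I would read \(M_{\rm poly}\) directly from the seven-region description, applying the color/direction convention to the one or two retained sides, and check the result against the running example \(p=(2,3,2,3)\). A useful global consistency check is linear: if \(h\) is constant, the gauge identity must be compatible with Theorem~\ref{thm:rf-exact-tiling}. A constant shift multiplies \(X\) homogeneously, so \(h(p)\) shifts by the total degree, which is \(1\), and \(\sum_v\chi_p^{\rm rf}(v)=1\) should hold. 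This confirms that the two caps carry total weight one, with the split \(\tfrac12,\tfrac12\) forced by the symmetric gauge.
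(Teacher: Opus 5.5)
Your proposal is correct and is essentially the paper's own proof. Like the paper, you evaluate $\chi_p^{\rm rf}=\eta_p+a_p-\tfrac13c_p$ class by class over the nine local-star types of Lemma~\ref{lem:rf-local-star-exhaustion}, and your entries are exactly those of the table $(*)$. The checks you left open also hold: $E_{u_2}=E_{u_3}=\varnothing$ because only seam and outer sides meet $u_2,u_3$, so $m_{u_2}=m_{u_3}=0$; $a_p(v_6)=a_p(v_8)=0$; and $m_{v_0}(\Xi_{M_{\min}})=0$, because the single retained side at $v_0$ is a type-$3$ edge lying on the red/blue interface $v_0v_1$ of $M_{\rm poly}$.

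Your homogeneity check $\sum_v\chi_p^{\rm rf}(v)=1$ is a valid extra sanity test that the paper does not include, though it constrains only the total weight, not its split between $v_0$ and $v_9$.
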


\begin{proof}
Put
\[
        b_p(v):=\eta_p(v)+a_p(v).
\]
By \eqref{eq:rf-chi},
\[
        \chi_p^{\rm rf}(v)
          =b_p(v)-\frac13c_p(v).
\]
We show that \(c_p(v)=3b_p(v)\) except at the two displayed
vertices.

At an ordinary interior lattice vertex, the coefficients contributed
by the six incident \(U/D\)-triangles are, in cyclic order and up to
reversal,
\[
                 0,\frac32,\frac32,0,\frac32,\frac32.
\]
Figure~\ref{fig:rf-six-incident-triangles} records the unscaled local
calculation behind this list.

\begin{figure}[htbp]
\centering
\definecolor{rfactivefill}{RGB}{252,243,205}
\definecolor{rfinactivefill}{RGB}{242,243,245}
\definecolor{rfpotentialedge}{RGB}{91,55,135}
\definecolor{rflatticeedge}{RGB}{75,85,99}
\definecolor{rfactiveborder}{RGB}{181,132,0}
\definecolor{rfinactiveborder}{RGB}{145,151,161}
\begin{tikzpicture}[
  x=0.95cm,
  y=0.95cm,
  rfLattice/.style={draw=rflatticeedge,line width=0.7pt},
  rfPotential/.style={
    draw=rfpotentialedge,line width=2.4pt,line cap=round
  },
  rfVertex/.style={circle,fill=black,inner sep=1.8pt},
  rfOuterVertex/.style={circle,fill=rflatticeedge,inner sep=1.25pt},
  rfTriangleLabel/.style={
    align=center,
    fill=white,
    fill opacity=0.86,
    text opacity=1,
    rounded corners=1.5pt,
    inner sep=2.2pt,
    font=\small
  },
  rfCoordinateLabel/.style={
    fill=white,
    fill opacity=0.88,
    text opacity=1,
    inner sep=1.4pt,
    font=\small
  }
]
  \coordinate (rfv)  at (0,0);
  \coordinate (rfE)  at (3.20,0);
  \coordinate (rfNE) at (1.60,2.7713);
  \coordinate (rfNW) at (-1.60,2.7713);
  \coordinate (rfW)  at (-3.20,0);
  \coordinate (rfSW) at (-1.60,-2.7713);
  \coordinate (rfSE) at (1.60,-2.7713);

  \filldraw[
    fill=rfinactivefill,draw=rfinactiveborder,line width=0.55pt
  ] (rfv)--(rfNW)--(rfNE)--cycle;
  \filldraw[
    fill=rfinactivefill,draw=rfinactiveborder,line width=0.55pt
  ] (rfv)--(rfSW)--(rfSE)--cycle;

  \filldraw[
    fill=rfactivefill,draw=rfactiveborder,line width=0.55pt
  ] (rfv)--(rfNE)--(rfE)--cycle;
  \filldraw[
    fill=rfactivefill,draw=rfactiveborder,line width=0.55pt
  ] (rfv)--(rfE)--(rfSE)--cycle;
  \filldraw[
    fill=rfactivefill,draw=rfactiveborder,line width=0.55pt
  ] (rfv)--(rfSW)--(rfW)--cycle;
  \filldraw[
    fill=rfactivefill,draw=rfactiveborder,line width=0.55pt
  ] (rfv)--(rfW)--(rfNW)--cycle;

  \draw[rfLattice]
    (rfNW)--(rfNE)--(rfE)--(rfSE)--(rfSW)--(rfW)--cycle;
  \foreach \p in {rfNW,rfNE,rfE,rfSE,rfSW,rfW} {
    \draw[rfLattice] (rfv)--(\p);
  }

  \draw[rfPotential] (rfNW)--(rfNE);
  \draw[rfPotential] (rfv)--(rfE);
  \draw[rfPotential] (rfSW)--(rfSE);
  \draw[rfPotential] (rfW)--(rfv);

  \node[rfTriangleLabel] at (0,1.65)
    {\textbf{1}\quad \(U^r_{i,j}\)\\[-1pt]
     \footnotesize coefficient \(0\)};
  \node[rfTriangleLabel] at (1.52,0.88)
    {\textbf{2}\quad \(D^r_{i,j}\)\\[-1pt]
     \footnotesize coefficient \(\frac12\)};
  \node[rfTriangleLabel] at (1.52,-0.88)
    {\textbf{3}\quad \(U^r_{i,j-1}\)\\[-1pt]
     \footnotesize coefficient \(\frac12\)};
  \node[rfTriangleLabel] at (0,-1.65)
    {\textbf{4}\quad \(D^r_{i-1,j-1}\)\\[-1pt]
     \footnotesize coefficient \(0\)};
  \node[rfTriangleLabel] at (-1.52,-0.88)
    {\textbf{5}\quad \(U^r_{i-1,j-1}\)\\[-1pt]
     \footnotesize coefficient \(\frac12\)};
  \node[rfTriangleLabel] at (-1.52,0.88)
    {\textbf{6}\quad \(D^r_{i-1,j}\)\\[-1pt]
     \footnotesize coefficient \(\frac12\)};

  \foreach \p in {rfNW,rfNE,rfE,rfSE,rfSW,rfW} {
    \node[rfOuterVertex] at (\p) {};
  }
  \node[rfVertex] at (rfv) {};
  \node[rfCoordinateLabel,above=4pt] at (rfv) {\(v=[i,j]_r\)};

  \node[rfCoordinateLabel,above left=2pt]
    at (rfNW) {\([i,j+1]_r\)};
  \node[rfCoordinateLabel,above right=2pt]
    at (rfNE) {\([i+1,j+1]_r\)};
  \node[rfCoordinateLabel,right=4pt]
    at (rfE) {\([i+1,j]_r\)};
  \node[rfCoordinateLabel,below right=2pt]
    at (rfSE) {\([i,j-1]_r\)};
  \node[rfCoordinateLabel,below left=2pt]
    at (rfSW) {\([i-1,j-1]_r\)};
  \node[rfCoordinateLabel,left=4pt]
    at (rfW) {\([i-1,j]_r\)};
\end{tikzpicture}
\caption{Local triangle-potential coefficients at an ordinary interior
vertex \(v=[i,j]_r\).  Starting with \(U^r_{i,j}\), the labels
\(1,\ldots,6\) run clockwise.  A gold triangle contains \(h(v)\) in
\(\phi_h(T)\) with coefficient \(1/2\), whereas a gray triangle has
coefficient \(0\).  Within each triangle, the thick purple segment is
the potential edge whose endpoint heights enter \(\phi_h(T)\); it is
not a matching selection.  Whenever the adjacent elementary triangle
across this edge is available, the two triangles form a red lozenge in
the global color convention, and the purple edge is its short
diagonal.  Thus the unscaled cyclic list is
\(0,\frac12,\frac12,0,\frac12,\frac12\), and the factor \(3\) in the
definition of \(c_p(v)\) gives the displayed list in the proof.}
\label{fig:rf-six-incident-triangles}
\end{figure}

It is important here that not every cyclic interval in this list has
sum \(3\) or \(6\).  Lemma~\ref{lem:rf-local-star-exhaustion} gives the
stronger, more specific fact that, away from the two asymmetric caps,
the \emph{nonzero} retained coefficients occur in complete local pairs
\((\frac32,\frac32)\).  Its nine coefficient classes are shown in
Figure~\ref{fig:rf-local-type-atlas}.  Panels
\textup{(a)}--\textup{(g)} form the balanced block, while panels
\textup{(h)} and \textup{(i)} are the two asymmetric caps.  In the
table, a \emph{gold cell at \(v\)} means a retained incident elementary
triangle \(T\) with \([h(v)]\phi_h(T)=1/2\).  This is a coefficient
marker, not a fourth lozenge color.  Each gold cell contributes
\(3/2\) to \(c_p(v)\).
\begingroup
\renewcommand{\arraystretch}{1.08}
\[
\begin{array}{@{}c@{\quad}l@{\quad}c@{\quad}c@{\quad}c@{\quad}c@{}}
\text{panel}&\text{coefficient class}&\#\text{ gold}&c_p(v)
 &\eta_p(v)+a_p(v)=b_p(v)&\chi_p^{\rm rf}(v)\\ \hline
\textup{(a)}&\text{no retained incident cell}&0&0&0+0=0&0\\
\textup{(b)}&\text{ordinary one-sector boundary}&2&3&1+0=1&0\\
\textup{(c)}&\text{selected ordinary six-sided}&4&6&2+0=2&0\\
\textup{(d)}&\text{internal retained seam}&4&6&1+1=2&0\\
\textup{(e)}&\text{lower seam foot }(v_8,v_6)&0&0&0+0=0&0\\
\textup{(f)}&\text{upper seam endpoint }(u_2,u_3)&2&3&0+1=1&0\\
\textup{(g)}&\text{reentrant }v_7=q_*&4&6&2+0=2&0\\ \hline
\textup{(h)}&\text{asymmetric cap }v_9
  &1&\tfrac32&1+0=1&\tfrac12\\
\textup{(i)}&\text{asymmetric cap }v_0
  &1&\tfrac32&1+0=1&\tfrac12
\end{array}
\tag{\(*\)}
\]
\endgroup
\begin{figure}[p]
\centering
\definecolor{rfatlaspositive}{RGB}{252,243,205}
\definecolor{rfatlaszero}{RGB}{238,240,243}
\definecolor{rfatlasghost}{RGB}{252,252,253}
\definecolor{rfatlasgoldedge}{RGB}{181,132,0}
\definecolor{rfatlasgrayedge}{RGB}{145,151,161}
\definecolor{rfatlasghostedge}{RGB}{190,195,202}
\definecolor{rfatlaspotential}{RGB}{91,55,135}
\definecolor{rfatlasseam}{RGB}{0,121,140}
\begin{tikzpicture}[
  rfGhost/.style={
    fill=rfatlasghost,draw=rfatlasghostedge,
    densely dotted,line width=0.45pt
  },
  rfPositive/.style={
    fill=rfatlaspositive,draw=rfatlasgoldedge,line width=0.55pt
  },
  rfZero/.style={
    fill=rfatlaszero,draw=rfatlasgrayedge,line width=0.55pt
  },
  rfPotential/.style={
    draw=rfatlaspotential,line width=1.7pt,line cap=round
  },
  rfSeam/.style={
    draw=rfatlasseam,line width=1.35pt,line cap=round
  },
  rfVertex/.style={circle,fill=black,inner sep=1.35pt},
  rfOuterVertex/.style={circle,fill=rfatlasgrayedge,inner sep=0.85pt},
  rfCoeff/.style={
    fill=white,fill opacity=0.82,text opacity=1,
    inner sep=1pt,font=\scriptsize
  },
  rfPanelTitle/.style={
    anchor=north,align=center,font=\small\bfseries,text width=4.1cm
  },
  rfPanelValue/.style={
    anchor=north,align=center,font=\small,text width=4.1cm
  },
  rfSheet/.style={font=\scriptsize,text=rfatlasseam}
]
  \begin{scope}[xshift=-5.0cm,yshift=6.0cm]
    \coordinate (v) at (0,0);
    \coordinate (E) at (1.35,0);
    \coordinate (NE) at (0.675,1.169);
    \coordinate (NW) at (-0.675,1.169);
    \coordinate (W) at (-1.35,0);
    \coordinate (SW) at (-0.675,-1.169);
    \coordinate (SE) at (0.675,-1.169);
    \path[rfGhost] (v)--(NW)--(NE)--cycle;
    \path[rfGhost] (v)--(NE)--(E)--cycle;
    \path[rfGhost] (v)--(E)--(SE)--cycle;
    \path[rfGhost] (v)--(SE)--(SW)--cycle;
    \path[rfGhost] (v)--(SW)--(W)--cycle;
    \path[rfGhost] (v)--(W)--(NW)--cycle;
    \node[rfVertex] at (v) {};
    \node[rfCoeff] at (0,0) {\(v\)};
    \node[rfPanelTitle] at (0,-1.42)
      {\textup{(a)} No retained incident cell};
    \node[rfPanelValue] at (0,-2.40)
      {\(c_p(v)=0,\quad b_p(v)=0\)};
  \end{scope}

  \begin{scope}[yshift=6.0cm]
    \coordinate (v) at (0,0);
    \coordinate (E) at (1.35,0);
    \coordinate (NE) at (0.675,1.169);
    \coordinate (NW) at (-0.675,1.169);
    \coordinate (W) at (-1.35,0);
    \coordinate (SW) at (-0.675,-1.169);
    \coordinate (SE) at (0.675,-1.169);
    \path[rfGhost] (v)--(NW)--(NE)--cycle;
    \path[rfGhost] (v)--(NE)--(E)--cycle;
    \path[rfGhost] (v)--(E)--(SE)--cycle;
    \path[rfGhost] (v)--(SE)--(SW)--cycle;
    \path[rfGhost] (v)--(SW)--(W)--cycle;
    \path[rfGhost] (v)--(W)--(NW)--cycle;
    \path[rfZero] (v)--(NW)--(NE)--cycle;
    \path[rfPositive] (v)--(NE)--(E)--cycle;
    \path[rfPositive] (v)--(E)--(SE)--cycle;
    \draw[rfPotential] (NW)--(NE);
    \draw[rfPotential] (v)--(E);
    \node[rfCoeff] at (0,0.72) {\(0\)};
    \node[rfCoeff] at (0.72,0.42) {\(\frac12\)};
    \node[rfCoeff] at (0.72,-0.42) {\(\frac12\)};
    \node[rfVertex] at (v) {};
    \node[rfPanelTitle] at (0,-1.42)
      {\textup{(b)} Ordinary one-sector type};
    \node[rfPanelValue] at (0,-2.40)
      {\(c_p(v)=3,\quad b_p(v)=1\)};
  \end{scope}

  \begin{scope}[xshift=5.0cm,yshift=6.0cm]
    \coordinate (v) at (0,0);
    \coordinate (E) at (1.35,0);
    \coordinate (NE) at (0.675,1.169);
    \coordinate (NW) at (-0.675,1.169);
    \coordinate (W) at (-1.35,0);
    \coordinate (SW) at (-0.675,-1.169);
    \coordinate (SE) at (0.675,-1.169);
    \path[rfGhost] (v)--(NW)--(NE)--cycle;
    \path[rfGhost] (v)--(NE)--(E)--cycle;
    \path[rfGhost] (v)--(E)--(SE)--cycle;
    \path[rfGhost] (v)--(SE)--(SW)--cycle;
    \path[rfGhost] (v)--(SW)--(W)--cycle;
    \path[rfGhost] (v)--(W)--(NW)--cycle;
    \path[rfZero] (v)--(NW)--(NE)--cycle;
    \path[rfZero] (v)--(SE)--(SW)--cycle;
    \path[rfPositive] (v)--(NE)--(E)--cycle;
    \path[rfPositive] (v)--(E)--(SE)--cycle;
    \path[rfPositive] (v)--(SW)--(W)--cycle;
    \path[rfPositive] (v)--(W)--(NW)--cycle;
    \draw[rfPotential] (v)--(E);
    \draw[rfPotential] (W)--(v);
    \draw[rfPotential] (NW)--(NE);
    \draw[rfPotential] (SW)--(SE);
    \node[rfCoeff] at (0,0.72) {\(0\)};
    \node[rfCoeff] at (0,-0.72) {\(0\)};
    \node[rfCoeff] at (0.72,0.42) {\(\frac12\)};
    \node[rfCoeff] at (0.72,-0.42) {\(\frac12\)};
    \node[rfCoeff] at (-0.72,-0.42) {\(\frac12\)};
    \node[rfCoeff] at (-0.72,0.42) {\(\frac12\)};
    \node[rfVertex] at (v) {};
    \node[rfPanelTitle] at (0,-1.42)
      {\textup{(c)} Ordinary two-sector type};
    \node[rfPanelValue] at (0,-2.40)
      {\(c_p(v)=6,\quad b_p(v)=2\)};
  \end{scope}

  \begin{scope}[xshift=-5.0cm]
    \coordinate (v) at (0,0);
    \coordinate (E) at (1.35,0);
    \coordinate (NE) at (0.675,1.169);
    \coordinate (NW) at (-0.675,1.169);
    \coordinate (W) at (-1.35,0);
    \coordinate (SW) at (-0.675,-1.169);
    \coordinate (SE) at (0.675,-1.169);
    \path[rfGhost] (v)--(W)--(NW)--cycle;
    \path[rfGhost] (v)--(NW)--(NE)--cycle;
    \path[rfGhost] (v)--(NE)--(E)--cycle;
    \path[rfGhost] (v)--(E)--(SE)--cycle;
    \path[rfGhost] (v)--(SE)--(SW)--cycle;
    \path[rfGhost] (v)--(SW)--(W)--cycle;
    \path[rfPositive] (v)--(W)--(NW)--cycle;
    \path[rfPositive] (v)--(NW)--(NE)--cycle;
    \path[rfZero] (v)--(NE)--(E)--cycle;
    \path[rfZero] (v)--(E)--(SE)--cycle;
    \path[rfPositive] (v)--(SE)--(SW)--cycle;
    \path[rfPositive] (v)--(SW)--(W)--cycle;
    \draw[rfPotential] (v)--(NW);
    \draw[rfPotential] (v)--(SW);
    \draw[rfPotential] (NE)--(E);
    \draw[rfPotential] (E)--(SE);
    \draw[rfSeam] (W)--(v)--(E);
    \node[rfCoeff] at (-0.72,0.42) {\(\frac12\)};
    \node[rfCoeff] at (0,0.72) {\(\frac12\)};
    \node[rfCoeff] at (0.72,0.42) {\(0\)};
    \node[rfCoeff] at (0.72,-0.42) {\(0\)};
    \node[rfCoeff] at (0,-0.72) {\(\frac12\)};
    \node[rfCoeff] at (-0.72,-0.42) {\(\frac12\)};
    \node[rfSheet] at (1.42,0.78) {\(P_r\)};
    \node[rfSheet] at (1.42,-0.78) {\(P_{r+1}\)};
    \node[rfVertex] at (v) {};
    \node[rfPanelTitle] at (0,-1.42)
      {\textup{(d)} Internal retained-seam type};
    \node[rfPanelValue] at (0,-2.40)
      {\(\begin{gathered}
        c_p(v)=6,\quad b_p(v)=2,\\[-1mm]
        (\eta_p(v),a_p(v))=(1,1)
      \end{gathered}\)};
  \end{scope}

  \begin{scope}
    \coordinate (v) at (0,0);
    \coordinate (E) at (1.35,0);
    \coordinate (NE) at (0.675,1.169);
    \coordinate (NW) at (-0.675,1.169);
    \coordinate (W) at (-1.35,0);
    \coordinate (SW) at (-0.675,-1.169);
    \coordinate (SE) at (0.675,-1.169);
    \path[rfGhost] (v)--(W)--(NW)--cycle;
    \path[rfGhost] (v)--(NW)--(NE)--cycle;
    \path[rfGhost] (v)--(NE)--(E)--cycle;
    \path[rfGhost] (v)--(E)--(SE)--cycle;
    \path[rfGhost] (v)--(SE)--(SW)--cycle;
    \path[rfGhost] (v)--(SW)--(W)--cycle;
    \path[rfZero] (v)--(NE)--(E)--cycle;
    \path[rfZero] (v)--(E)--(SE)--cycle;
    \draw[rfPotential] (NE)--(E);
    \draw[rfPotential] (E)--(SE);
    \draw[rfSeam] (v)--(E);
    \node[rfCoeff] at (0.72,0.42) {\(0\)};
    \node[rfCoeff] at (0.72,-0.42) {\(0\)};
    \node[rfSheet] at (1.42,0.78) {\(P_r\)};
    \node[rfSheet] at (1.42,-0.78) {\(P_{r+1}\)};
    \node[rfVertex] at (v) {};
    \node[rfPanelTitle] at (0,-1.42)
      {\textup{(e)} Lower seam foot \(v_8\) or \(v_6\)};
    \node[rfPanelValue] at (0,-2.40)
      {\(\begin{gathered}
        c_p(v)=0,\\[-1mm]
        \eta_p(v)=a_p(v)=0
      \end{gathered}\)};
  \end{scope}

  \begin{scope}[xshift=5.0cm]
    \coordinate (v) at (0,0);
    \coordinate (E) at (1.35,0);
    \coordinate (NE) at (0.675,1.169);
    \coordinate (NW) at (-0.675,1.169);
    \coordinate (W) at (-1.35,0);
    \coordinate (SW) at (-0.675,-1.169);
    \coordinate (SE) at (0.675,-1.169);
    \path[rfGhost] (v)--(W)--(NW)--cycle;
    \path[rfGhost] (v)--(NW)--(NE)--cycle;
    \path[rfGhost] (v)--(NE)--(E)--cycle;
    \path[rfGhost] (v)--(E)--(SE)--cycle;
    \path[rfGhost] (v)--(SE)--(SW)--cycle;
    \path[rfGhost] (v)--(SW)--(W)--cycle;
    \path[rfPositive] (v)--(W)--(NW)--cycle;
    \path[rfPositive] (v)--(SW)--(W)--cycle;
    \draw[rfPotential] (v)--(NW);
    \draw[rfPotential] (v)--(SW);
    \draw[rfSeam] (W)--(v);
    \node[rfCoeff] at (-0.72,0.42) {\(\frac12\)};
    \node[rfCoeff] at (-0.72,-0.42) {\(\frac12\)};
    \node[rfSheet] at (-1.42,0.78) {\(P_r\)};
    \node[rfSheet] at (-1.42,-0.78) {\(P_{r+1}\)};
    \node[rfVertex] at (v) {};
    \node[rfPanelTitle] at (0,-1.42)
      {\textup{(f)} Upper seam endpoint \(u_2\) or \(u_3\)};
    \node[rfPanelValue] at (0,-2.40)
      {\(\begin{gathered}
        c_p(v)=3,\\[-1mm]
        b_p(v)=a_p(v)=1
      \end{gathered}\)};
  \end{scope}

  \begin{scope}[xshift=-5.0cm,yshift=-6.0cm]
    \coordinate (v) at (0,0);
    \coordinate (E) at (1.35,0);
    \coordinate (NE) at (0.675,1.169);
    \coordinate (NW) at (-0.675,1.169);
    \coordinate (W) at (-1.35,0);
    \coordinate (SW) at (-0.675,-1.169);
    \coordinate (SE) at (0.675,-1.169);
    \path[rfGhost] (v)--(NW)--(NE)--cycle;
    \path[rfGhost] (v)--(NE)--(E)--cycle;
    \path[rfGhost] (v)--(E)--(SE)--cycle;
    \path[rfGhost] (v)--(SE)--(SW)--cycle;
    \path[rfGhost] (v)--(SW)--(W)--cycle;
    \path[rfGhost] (v)--(W)--(NW)--cycle;
    \path[rfZero] (v)--(NW)--(NE)--cycle;
    \path[rfPositive] (v)--(NE)--(E)--cycle;
    \path[rfPositive] (v)--(E)--(SE)--cycle;
    \path[rfPositive] (v)--(SW)--(W)--cycle;
    \path[rfPositive] (v)--(W)--(NW)--cycle;
    \draw[rfPotential] (NW)--(NE);
    \draw[rfPotential] (v)--(E);
    \draw[rfPotential] (W)--(v);
    \node[rfCoeff] at (0,0.72) {\(0\)};
    \node[rfCoeff] at (0.72,0.42) {\(\frac12\)};
    \node[rfCoeff] at (0.72,-0.42) {\(\frac12\)};
    \node[rfCoeff] at (-0.72,-0.42) {\(\frac12\)};
    \node[rfCoeff] at (-0.72,0.42) {\(\frac12\)};
    \node[rfVertex] at (v) {};
    \node[rfPanelTitle] at (0,-1.42)
      {\textup{(g)} Reentrant vertex \(v_7\)};
    \node[rfPanelValue] at (0,-2.40)
      {\(\begin{gathered}
        c_p(v_7)=6,\quad b_p(v_7)=2,\\[-1mm]
        (\eta_p(v_7),a_p(v_7))=(2,0)
      \end{gathered}\)};
  \end{scope}

  \begin{scope}[yshift=-6.0cm]
    \coordinate (v) at (0,-0.75);
    \coordinate (A) at (0,0.75);
    \coordinate (L) at (-1.299,0);
    \coordinate (R) at (1.299,0);
    \path[rfZero] (v)--(A)--(L)--cycle;
    \path[rfPositive] (v)--(R)--(A)--cycle;
    \draw[rfPotential] (A)--(L);
    \draw[rfPotential] (v)--(R);
    \node[rfCoeff] at (-0.43,0) {\(0\)};
    \node[rfCoeff] at (0.43,0) {\(\frac12\)};
    \foreach \p in {A,L,R} {\node[rfOuterVertex] at (\p) {};}
    \node[rfVertex] at (v) {};
    \node[rfCoeff,below=1pt] at (v) {\(v_9\)};
    \node[rfPanelTitle] at (0,-1.42)
      {\textup{(h)} Asymmetric cap \(v_9\)};
    \node[rfPanelValue] at (0,-2.40)
      {\(\begin{gathered}
        c_p(v_9)=\frac32,\quad b_p(v_9)=1,\\[-1mm]
        \chi_p^{\rm rf}(v_9)=\frac12
      \end{gathered}\)};
  \end{scope}

  \begin{scope}[xshift=5.0cm,yshift=-6.0cm]
    \coordinate (v) at (0,-0.75);
    \coordinate (A) at (0,0.75);
    \coordinate (L) at (-1.299,0);
    \coordinate (R) at (1.299,0);
    \path[rfPositive] (v)--(A)--(L)--cycle;
    \path[rfZero] (v)--(R)--(A)--cycle;
    \draw[rfPotential] (v)--(L);
    \draw[rfPotential] (A)--(R);
    \node[rfCoeff] at (-0.43,0) {\(\frac12\)};
    \node[rfCoeff] at (0.43,0) {\(0\)};
    \foreach \p in {A,L,R} {\node[rfOuterVertex] at (\p) {};}
    \node[rfVertex] at (v) {};
    \node[rfCoeff,below=1pt] at (v) {\(v_0\)};
    \node[rfPanelTitle] at (0,-1.42)
      {\textup{(i)} Asymmetric cap \(v_0\)};
    \node[rfPanelValue] at (0,-2.40)
      {\(\begin{gathered}
        c_p(v_0)=\frac32,\quad b_p(v_0)=1,\\[-1mm]
        \chi_p^{\rm rf}(v_0)=\frac12
      \end{gathered}\)};
  \end{scope}
\end{tikzpicture}
\caption{Atlas of the local coefficient types in the fixed-term
calculation.  Gold cells contribute \(1/2\) to the coefficient of
\(h(v)\) in the triangle-potential sum; after multiplication by \(3\),
each contributes \(3/2\) to \(c_p(v)\).  Gray cells are retained but
have coefficient \(0\), dotted cells are suppressed ambient
positions, purple segments are the potential edges described in
Figure~\ref{fig:rf-six-incident-triangles}, and teal segments are cut
seams.}
\label{fig:rf-local-type-atlas}
\end{figure}

In Figure~\ref{fig:rf-local-type-atlas}, panels \textup{(b)} and
\textup{(c)} stand for rotations and
reflections; panel \textup{(b)} also covers ordinary boundary
truncations such as \(v_4,v_5\).  Panel \textup{(g)} isolates
\(q_*=v_7\): in the
numbering of Figure~\ref{fig:rf-six-incident-triangles}, the dotted
fourth cell is absent, leaving the retained-incidence path
\(5\!-\!6\!-\!1\!-\!2\!-\!3\), of maximum-matching size \(2\).
Its total agrees with panel \textup{(c)}, although its geometry does
not.  With \(s_{r,k}\) as defined in
\eqref{eq:rf-seam-segment}, the lower feet \(v_8=s_{1,b}\) and
\(v_6=s_{2,c}\) begin a retained seam and therefore have \(a_p=0\);
the upper endpoints
\(u_2=s_{1,N}\) and \(u_3=s_{2,N}\) end one and have \(a_p=1\).
Panels \textup{(h)} and \textup{(i)} form the exceptional block of
\((*)\).  Each is drawn as the union of its
two equilateral elementary triangles.  Their respective \(b_p=1\)
contributions come from the bottom-tiling incidence and from
\(\beta_p(v_0)\).  Figure~\ref{fig:rf-local-type-atlas} depicts nine
coefficient classes, not literally nine cell-complex geometries,
because \(v_4,v_5\) are truncated versions of \textup{(b)}.

Here an ordinary sector is a pair of gold cells and therefore
contributes \(3\) to \(c_p(v)\).  Row \textup{(b)} contains the
straight-side configurations and their ordinary boundary
truncations, including \(v_4\) and \(v_5\); the local path bounds give
\(\eta_p(v)=1\), and in particular well-matchedness gives
\(m_{v_4}^{\max}=1\).  In row \textup{(c)}, the selected six-sided
vertex has \(w_v=3\), so \(\eta_p(v)=w_v-1=2\).  Row \textup{(g)} is
the geometrically distinct reentrant unselected vertex
\[
                     q_*=v_7=(a+d,0,b,c,0).
\]
Its four retained incidences form the open path
\(5\!-\!6\!-\!1\!-\!2\!-\!3\), so
\(\eta_p(v_7)=m_{v_7}^{\max}=2\) by
\eqref{eq:reentrant-corner-matching-exponent} and its accompanying
local path calculation.

For row \textup{(d)}, the selected four-sided seam face has \(w_v=2\),
so \(\eta_p(v)=1\), while \(a_p(v)=1\) comes from the incoming
retained seam segment.  The lower feet \(v_8=s_{1,b}\) and
\(v_6=s_{2,c}\) form the distinct zero-gold row \textup{(e)}: their
ordinary incidence sets are empty by
\eqref{eq:seam-endpoint-empty-incidences}, and neither is the higher
endpoint of a retained seam segment, so \((\eta_p,a_p)=(0,0)\).
At the upper fan endpoint---\(u_2=s_{1,N}\) on \(02\) or
\(u_3=s_{2,N}\) on \(03\)---row \textup{(f)} has
\((\eta_p,a_p)=(0,1)\).  Consequently rows
\textup{(a)}--\textup{(g)} of \((*)\) satisfy
\[
       c_p(v)=3b_p(v),\qquad \chi_p^{\rm rf}(v)=0.
\]

We verify the two exceptional rows directly.  For row \textup{(h)},
at \(v_9\), the only
incident triangles with possibly nonzero coefficient are
\[
 D^1_{N-a-1,N-1}
 \quad\text{and}\quad
 U^1_{N-a,N-1},
\]
and their contributions to \(c_p(v_9)\) are \(0\) and \(3/2\).  Here
\(\beta_p(v_9)=0\), while the bottom tiling uses the unique retained
ordinary-lozenge incidence, so
\[
 \eta_p(v_9)=0+1=1,\qquad a_p(v_9)=0,
 \qquad b_p(v_9)=1,\qquad c_p(v_9)=\frac32,
\]
and therefore
\[
        \chi_p^{\rm rf}(v_9)=1-\frac12=\frac12.
\]

For row \textup{(i)}, the corresponding two contributions to
\(c_p(v_0)\) are \(0\) and \(3/2\).  Moreover,
\[
        m_{v_0}(\Xi_{M_{\min}})=0,
        \qquad
        \beta_p(v_0)=1,
\]
so
\[
 \eta_p(v_0)=1+0=1,\qquad a_p(v_0)=0,
 \qquad b_p(v_0)=1,\qquad c_p(v_0)=\frac32.
\]
Consequently
\[
        \chi_p^{\rm rf}(v_0)=1-\frac12=\frac12.
\]
This verifies the two exceptional rows of \((*)\).

Lemma~\ref{lem:rf-local-star-exhaustion} proves that there are no
further coefficient classes; Lemma~\ref{lem:local-boundary-classification}
and the seam and cap calculations above supply the corresponding
\(b_p(v)\)-values.  Rows
\textup{(a)}--\textup{(g)} give \(\chi_p^{\rm rf}=0\), while rows
\textup{(h)} and \textup{(i)} give \(\chi_p^{\rm rf}=1/2\) at
\(v_9\) and \(v_0\), respectively.  This proves the theorem.
\end{proof}

\begin{remark}[Gauge covariance]
\label{rem:rf-symmetric-gauge-covariance}
For comparison, let \(W_p^{\rm asym}\) denote the coordinate-asymmetric
Gelfand--Tsetlin tiling weight: its coefficients on local types
\(1,2,3\) are \(0,2,1\), respectively.  Equivalently, its coefficients
on the global colors \((R,G,B)\) are
\[
 (0,1,2)\quad\text{on \(P_1,P_3\)},
 \qquad
 (0,2,1)\quad\text{on \(P_2\)}.
\]
Thus this is not a second meaning of the color names.  It is a
coordinate-direction normalization.  Its singleton seam weights, in
the \((-,+)\) order at either seam, are
\[
 -\frac13\lambda_{\min}
 \qquad\text{and}\qquad
 \frac23\lambda_{\max},
\]
and \(F_p^{\rm asym}\) denotes its matching-independent term.  The
same coefficient calculation, with type-\(3\) and type-\(2\)
triangle-potential coefficients \((1,2)\) in place of
\((3/2,3/2)\), gives, for every legal tiling \(\Xi\),
\[
 W_p^{\rm rf}(\Xi;h)
 =W_p^{\rm asym}(\Xi;h)
   +\frac16\bigl(h(v_9)-h(v_0)\bigr),
\qquad
 F_p^{\rm rf}(h)
 =F_p^{\rm asym}(h)
   -\frac16\bigl(h(v_9)-h(v_0)\bigr).
\]
Thus the total affine form, its maximum, and the resulting recurrence
value are unchanged.  What changes is only the division into bulk,
seam, and fixed contributions.
\end{remark}

\begin{corollary}[Bottom-tiling identity]
\label{cor:rf-bottom-identity}
If \(\Xi_{M_{\min}}\) is the canonical bottom tiling, then
\begin{equation}
 W_p^{\rm rf}(\Xi_{M_{\min}};h)
 =h(v_0)-h(v_1)+h(v_2)-h(v_3)+h(v_4)
  -\frac12h(v_0)-\frac12h(v_9).
\label{eq:rf-bottom-identity}
\end{equation}
\end{corollary}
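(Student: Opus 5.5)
The plan is to evaluate the exact identity of Theorem~\ref{thm:rf-exact-tiling} term by term at the single tiling \(\Xi_{M_{\min}}\), without any maximization.  By Theorem~\ref{thm:rf-exact-tiling} and its proof, every legal tiling satisfies
\[
 \Lambda_\Xi^{\rm corr}(h)=F_p^{\rm rf}(h)+W_p^{\rm rf}(\Xi;h).
\]
We take \(\Xi=\Xi_{M_{\min}}\) and compute both \(\Lambda_{M_{\min}}^{\rm corr}\) and \(F_p^{\rm rf}\) explicitly.  The second is already available: Theorem~\ref{thm:rf-explicit-fixed-term} gives \(F_p^{\rm rf}(h)=\tfrac12h(v_0)+\tfrac12h(v_9)\).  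It therefore suffices to show that
\[
 \Lambda_{M_{\min}}^{\rm corr}(h)=h(v_0)-h(v_1)+h(v_2)-h(v_3)+h(v_4),
\]
and then to subtract.

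To compute the left side we substitute \(M=M_{\min}\) into \eqref{eq:linear-form}.  The frozen sum vanishes identically, because its coefficients are \(m_v(M_{\min})-m_v(M_{\min})\).  The leading term is \(\mathsf b_p(h)=h(v_0)\).  The two mutable sums have coefficients \(w_v-m_v(M_{\min})-1\) on \(S_p\) and \(m_v^{\max}-m_v(M_{\min})\) on unselected vertices with \(E_v\neq\varnothing\); these are exactly the entries \(g^{G_p}(v)\) in \eqref{eq:predicted-g}.  We use \(M_{\rm poly}=M_{\min}\) (Proposition~\ref{prop:polygonal-bottom-matching}) and read them from Lemma~\ref{lem:reference-matching-incidence-calculation}: the coefficients are \(-1\) at \(v_1,v_3\), \(+1\) at \(v_2\), \(+1\) at \(v_4\), and \(0\) elsewhere.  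Equivalently, they form the vector \(g_p\) of Corollary~\ref{cor:graph-five-point-g}, and together with \(\beta_p=\mathbf e_{v_0}\) this gives \(\widetilde g_p\) from Theorem~\ref{thm:partI-main}(iii).  The result is the displayed five-term alternating sum.

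The only point that needs care is bookkeeping between the graphs.  The exponents in \eqref{eq:linear-form} are defined on the uncut graph \(\overline G_p\), whereas \(\Xi_{M_{\min}}\) is the tile image of \(M_{\min}\) in \(G_p\).  These are related by the bijection of Definition~\ref{def:legal-lozenge-triangle-tiling} and Lemma~\ref{lem:seam-cut-invariance}, so the correspondence is legitimate.  The identification \(\Lambda^{\rm corr}_\Xi=F^{\rm rf}_p+W^{\rm rf}_p\) holds for each tiling separately, not just at the maximum, so evaluating it at one tiling is valid.  Combining the two computations yields \eqref{eq:rf-bottom-identity}.
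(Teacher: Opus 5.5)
Your proposal is correct and follows the paper's own argument: evaluate the per-tiling identity \(\Lambda^{\rm corr}_\Xi(h)=F^{\rm rf}_p(h)+W^{\rm rf}_p(\Xi;h)\) from the proof of Theorem~\ref{thm:rf-exact-tiling} at the bottom tiling, identify \(\Lambda^{\rm corr}_{M_{\min}}\) with the extended bottom exponent \(\widetilde g_p\) of \eqref{eq:five-point-g}, and subtract the two-vertex fixed term of Theorem~\ref{thm:rf-explicit-fixed-term}. Your explicit checks, that the frozen coefficients vanish at \(M_{\min}\) and that the mutable ones reduce to the incidence values of Lemma~\ref{lem:reference-matching-incidence-calculation}, are exactly what the paper's one-line proof leaves implicit.
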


\begin{proof}
Use the bottom affine term from Part~I in
Theorem~\ref{thm:rf-exact-tiling}, and then apply
\eqref{eq:five-point-g}.
\end{proof}

\section{GUE evaluation of the fixed and seam terms}
\label{sec:rf-gue-boundary}

We use the macroscopic normalization of \cite{NarLimit}.  Let
\(M_1^{(n)},M_2^{(n)},M_3^{(n)}\) be independent standard \(n\times n\)
GUE matrices, fix scales \(\ell_1,\ell_2,\ell_3\in\mathbb R_{>0}\), and put
\begin{equation}
             \widehat X_r^{(n)}=\ell_r\sqrt n\,M_r^{(n)}.
\label{eq:rf-macro-matrices}
\end{equation}
In the notation of Definition~\ref{def:rf-centered-large-gap}, choose
the centered large-gap boundary
\[
 \widehat\Lambda_q^{(n)}
 =nK_{\rm gap}\left(\frac{n+1}{2}-q\right),
 \qquad \sum_{q=1}^n\widehat\Lambda_q^{(n)}=0,
\]
with the fixed \(K_{\rm gap}\) chosen in
\eqref{eq:rf-upfront-large-gap-event}, and set
\begin{equation}
 B_0^{(n)}=\widehat\Lambda^{(n)},
 \qquad
 B_r^{(n)}=B_{r-1}^{(n)}+
       \operatorname{diag}(\widehat X_r^{(n)}).
\label{eq:rf-B-boundaries}
\end{equation}
For \(0\le i\le j\le n\), define
\begin{equation}
 H_r^{(n)}(i,j)
 =\sum_{q=1}^j(B_{r-1}^{(n)})_q
  +\sum_{q=1}^i\lambda_q\!\left(
       (\widehat X_r^{(n)})^{[j]}\right),
 \qquad r=1,2,3,
\label{eq:rf-gue-hive-map}
\end{equation}
with eigenvalues in decreasing order.  This is the Gelfand--Tsetlin to
hive map used in \cite{NarLimit}.

\paragraph{Large-gap event and exceptional realizations.}
For a Hermitian matrix \(Y\), write
\({\rm width}(Y)=\lambda_1(Y)-\lambda_n(Y)\).  On \(\mathcal E_n\),
\[
 2\max_{\varnothing\ne I\subseteq\{1,2,3\}}
       {\rm width}(\widehat X_I^{(n)})
 \leq4\max_{\varnothing\ne I\subseteq\{1,2,3\}}
       \|\widehat X_I^{(n)}\|_{\rm op}
 \leq\frac12K_{\rm gap}n<K_{\rm gap}n.
\]
Lemma~\ref{lem:rf-centered-large-gap-valid} therefore shows that the
three input arrays are hives on this event and that every face array
produced by the successive flips is a hive as well.

Each \(\widehat X_I^{(n)}\) is itself a scaled GUE matrix, with scale
\((\sum_{r\in I}\ell_r^2)^{1/2}\).  The Gaussian operator-norm tail,
followed by a union bound over the seven nonempty subsets, gives
\[
                  \mathbb P(\mathcal E_n^c)\leq Ce^{-cn}
\]
once \(K_{\rm gap}\) is fixed sufficiently large.  No conditioning is
used in this paper.  On \(\mathcal E_n^c\) the arrays
\eqref{eq:rf-gue-hive-map} and the tropical octahedron recurrence are
still defined, and the exact fixed-graph formula of Part~I remains an
identity of finite maxima of affine forms.

The gap is only an auxiliary embedding gauge: the recurrence value is
\emph{exactly independent} of \(K_{\rm gap}\).  To see this directly,
use the red-free identity of
Theorem~\ref{thm:rf-exact-tiling}.  Varying the centered tuple
\(\widehat\Lambda^{(n)}\) changes only the red rhombus slacks in the
ordinary-tile term, and red lozenges have coefficient zero.  The two
seam terms
\eqref{eq:rf-seam02-discrete}--\eqref{eq:rf-seam03-discrete} contain
only eigenvalues of the matrices \(\widehat X_r^{(n)}\).  Finally,
\eqref{eq:rf-fixed-gue-form} contains no gap term because
\(\sum_q\widehat\Lambda_q^{(n)}=0\).  Every term on the right-hand side
of the exact formula is therefore independent of \(K_{\rm gap}\).
Equivalently, the one-coordinate gauge
\eqref{eq:rf-large-gap-gauge} vanishes on any tetrahedron whose vertex
set does not contain \(u_0\).
Thus \(\mathcal E_n\) is used only to place the Gelfand--Tsetlin arrays
inside the category of hives, and later to invoke hive inequalities.
Since the output is defined without conditioning and the gap cancels
exactly, no truncation or conditioning correction is required when
computing the mean or variance of \(\widehat O_n\).

Let \(p_n=(0,a_n,b_n,c_n,d_n)\), with
\begin{equation}
 \frac1n(a_n,b_n,c_n,d_n)\longrightarrow
             (\alpha,\beta,\gamma,\delta),
 \qquad \alpha,\beta,\gamma,\delta>0,
 \qquad \alpha+\beta+\gamma+\delta=1.
\label{eq:rf-target-proportions}
\end{equation}
Write \(F_{p_n}^{\rm rf,(n)}\) for the fixed term in this normalization.

\begin{proposition}[GUE form of the fixed term]
\label{prop:rf-fixed-gue-form}
One has the exact identity
\begin{equation}
 \boxed{
 \begin{aligned}
 F_{p_n}^{\rm rf,(n)}={}&
 \frac12\operatorname{Tr}
       (\widehat X_1^{(n)}+\widehat X_2^{(n)})\\
 &+\frac12\sum_{i=1}^{d_n}
       \lambda_i(\widehat X_3^{(n)})
 +\frac12\sum_{i=1}^{n-a_n}
       \lambda_i(\widehat X_1^{(n)}).
 \end{aligned}}
\label{eq:rf-fixed-gue-form}
\end{equation}
In the trace-zero simulation convention, the first line vanishes
identically.
\end{proposition}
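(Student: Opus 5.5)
The proof is a direct substitution into Theorem~\ref{thm:rf-explicit-fixed-term}, which already gives
\[
F_{p_n}^{\rm rf,(n)}=\tfrac12h(v_0)+\tfrac12h(v_9).
\]
It therefore suffices to evaluate the two boundary heights of the centered large-gap array \eqref{eq:rf-gue-hive-map}, at size \(N=n\), at the two corner points
\[
v_0=(0,0,0,n-d_n,d_n)\in034,\qquad v_9=(0,a_n,n-a_n,0,0)\in012.
\]

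First locate the two points in panel coordinates. In \([i,j]_r=(n-j)e_0+(j-i)e_r+ie_{r+1}\), the point \(v_9\) has \(q_0=0\), so \(j=n\). Its \(e_2\)-coordinate is \(i=n-a_n\), so \(v_9=[n-a_n,n]_1\); this matches the value used in Part~I. Likewise \(v_0=[d_n,n]_3\).

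Formula \eqref{eq:rf-gue-hive-map} then gives
\[
h(v_9)=\sum_{q=1}^n(B_0)_q+\sum_{q=1}^{n-a_n}\lambda_q(\widehat X_1^{(n)}),
\qquad
h(v_0)=\sum_{q=1}^n(B_2)_q+\sum_{q=1}^{d_n}\lambda_q(\widehat X_3^{(n)}).
\]
The full minor at \(j=n\) is the whole matrix, so no principal-minor subtlety arises.

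Next evaluate the prefix sums of the \(B_r\). By the centering, \(\sum_q(B_0)_q=\sum_q\widehat\Lambda_q^{(n)}=0\). By \eqref{eq:rf-B-boundaries}, \(\sum_q(B_2)_q=\operatorname{Tr}\widehat X_1^{(n)}+\operatorname{Tr}\widehat X_2^{(n)}\). Summing the two heights and halving gives exactly \eqref{eq:rf-fixed-gue-form}.

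The closing remark is immediate. In the trace-zero convention \(\operatorname{Tr}\widehat X_r^{(n)}=0\), so the first line vanishes.

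The only point needing care is the orientation and normalization conventions. One must check that the height function \(h\) entering Theorem~\ref{thm:rf-explicit-fixed-term} is exactly the array \eqref{eq:rf-gue-hive-map}, with no additive constant or \(v_9\)-normalization shift. One must also check that the decreasing-eigenvalue reading of \(i\) is correct on panels \(P_1\) and \(P_3\). I would verify this against the seam-compatibility identity \(H_r(j,j)=H_{r+1}(0,j)\) from Lemma~\ref{lem:rf-centered-large-gap-valid}, and against the all-right example in Proposition~\ref{prop:all-right-chamber}, where the same coordinates were used. Both checks are routine.
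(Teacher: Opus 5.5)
Your proposal is correct and follows the paper's proof essentially verbatim: you place \(v_0=[d_n,n]_3\) and \(v_9=[n-a_n,n]_1\), read both heights off \eqref{eq:rf-gue-hive-map}, use \(\sum_q(B_0)_q=0\) and \(\sum_q(B_2)_q=\operatorname{Tr}(\widehat X_1^{(n)}+\widehat X_2^{(n)})\), and then substitute into Theorem~\ref{thm:rf-explicit-fixed-term}. The normalization check you flag is harmless. \(F_p^{\rm rf}(h)\) is evaluated directly on the initial fan values \(h([i,j]_r)=H_r(i,j)\), so no additive shift enters.
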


\begin{proof}
In panel coordinates, \(v_0=[d_n,n]_3\) and
\(v_9(p_n)=[n-a_n,n]_1\).  Hence \eqref{eq:rf-gue-hive-map} gives
\begin{align*}
 h(v_0)&=\sum_{q=1}^n(B_2^{(n)})_q
       +\sum_{i=1}^{d_n}\lambda_i(\widehat X_3^{(n)}),\\
 h(v_9(p_n))&=\sum_{q=1}^n(B_0^{(n)})_q
       +\sum_{i=1}^{n-a_n}\lambda_i(\widehat X_1^{(n)}).
\end{align*}
The second boundary sum is zero, while the first is
\(\operatorname{Tr}(\widehat X_1^{(n)}+\widehat X_2^{(n)})\).
Now use Theorem~\ref{thm:rf-explicit-fixed-term}.
\end{proof}

Let \(\gamma_{\rm sc}:[0,1]\to[-2,2]\) be the decreasing semicircle
quantile, characterized by
\begin{equation}
 \int_{\gamma_{\rm sc}(u)}^2
       \frac{\sqrt{4-x^2}}{2\pi}\,dx=u.
\label{eq:rf-semicircle-quantile}
\end{equation}

\paragraph{Tail terminology.}
Following \cite[\S2.7]{VershyninHDP}, a real random variable \(X\) is
\emph{subexponential} if its \(\psi_1\)-norm
\begin{equation}
 \|X\|_{\psi_1}
 :=\inf\bigl\{K>0:\mathbb E\exp(|X|/K)\leq2\bigr\}
\label{eq:rf-psi1-definition}
\end{equation}
is finite.  Up to changes in absolute constants, this is equivalent to
\[
 \mathbb P\{|X|\geq t\}\leq
 2\exp\bigl(-t/(C\|X\|_{\psi_1})\bigr),
 \qquad t\geq0.
\]
A centered log-concave real random variable is automatically
subexponential.  Indeed, the one-dimensional log-concave tail estimate
\cite[Lemma~5.7]{LovaszVempalaLogconcave}, applied after normalization,
gives
\begin{equation}
 \mathbb P\{|X|>t\|X\|_2\}\leq e^{1-t},
 \qquad t>1,
\label{eq:rf-logconcave-subexponential}
\end{equation}
and therefore
\(\|X\|_{\psi_1}\leq C\|X\|_2\).

We use \emph{stretched-exponential tail of exponent \(\theta>0\)} for
the more general assertion
\begin{equation}
 \mathbb P\{|X|\geq Kt\}\leq C\exp(-ct^\theta),
 \qquad t\geq1,
\label{eq:rf-stretched-tail-definition}
\end{equation}
where \(c,C,K>0\).  Thus \(\theta=1\) is subexponential in
Vershynin's sense (up to constants), \(\theta>1\) is stronger, and
\(0<\theta<1\) is weaker.  The Tao--Vu bounds used below have
\eqref{eq:rf-stretched-tail-definition} for some unspecified
\(\theta>0\), so that estimate alone need not imply a
\(\psi_1\)-bound when \(\theta<1\).  In the present GUE setting,
subexponentiality follows separately from log-concavity; the Tao--Vu
estimate is used to identify the much sharper rank-dependent scale.

\begin{theorem}[Limit and concentration of the fixed term]
\label{thm:rf-fixed-limit}
Under \eqref{eq:rf-target-proportions},
\begin{equation}
 \frac{F_{p_n}^{\rm rf,(n)}}{n^2}
 \xrightarrow{\mathbb P} S_{p,0},
\label{eq:rf-fixed-convergence}
\end{equation}
where
\begin{equation}
 \boxed{
 S_{p,0}=
 \frac{\ell_3}{2}\int_0^\delta\gamma_{\rm sc}(u)\,du
 +\frac{\ell_1}{2}\int_0^{1-\alpha}
                   \gamma_{\rm sc}(u)\,du.}
\label{eq:rf-S0}
\end{equation}
Moreover,
\begin{equation}
 F_{p_n}^{\rm rf,(n)}-\mathbb E F_{p_n}^{\rm rf,(n)}
 =O_{\mathbb P}\bigl(n\log^{O(1)}n\bigr)=o_{\mathbb P}(n^2),
\label{eq:rf-fixed-concentration}
\end{equation}
and, more quantitatively,
\begin{equation}
 \bigl\|F_{p_n}^{\rm rf,(n)}-\mathbb E F_{p_n}^{\rm rf,(n)}\bigr\|_2
 =O\bigl(n\log^{O(1)}n\bigr).
\label{eq:rf-fixed-L2-preliminary}
\end{equation}
In fact, the fixed statistic is subexponential on the same scale:
\begin{equation}
 \bigl\|F_{p_n}^{\rm rf,(n)}
       -\mathbb E F_{p_n}^{\rm rf,(n)}\bigr\|_{\psi_1}
 =O\bigl(n\log^{O(1)}n\bigr).
\label{eq:rf-fixed-psi1}
\end{equation}
The convergence in \eqref{eq:rf-fixed-convergence} also holds in
\(L^1\).
\end{theorem}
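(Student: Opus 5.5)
The plan starts from the exact identity \eqref{eq:rf-fixed-gue-form} of Proposition~\ref{prop:rf-fixed-gue-form}. It splits $F_{p_n}^{\rm rf,(n)}$ into three pieces: the trace term $\frac12\operatorname{Tr}(\widehat X_1^{(n)}+\widehat X_2^{(n)})$, and the two partial eigenvalue sums $\frac12\sum_{i\le d_n}\lambda_i(\widehat X_3^{(n)})$ and $\frac12\sum_{i\le n-a_n}\lambda_i(\widehat X_1^{(n)})$. Each piece will be treated separately, and the pieces are combined by the triangle inequality in $L^2$ and in $\psi_1$.

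\emph{Step 1: the trace term.} The trace of $\widehat X_r^{(n)}=\ell_r\sqrt n M_r^{(n)}$ is a centered Gaussian with variance $\ell_r^2 n\cdot n=\ell_r^2n^2$. Hence the trace term has mean zero and standard deviation $O(n)$. Being Gaussian, it also has $\psi_1$-norm $O(n)$. After division by $n^2$ it tends to $0$ in $L^2$, so it contributes nothing to $S_{p,0}$ and is harmless for \eqref{eq:rf-fixed-concentration}--\eqref{eq:rf-fixed-psi1}.

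\emph{Step 2: the means of the partial sums.} Write $\lambda_i(\widehat X_r^{(n)})=\ell_r n\,\lambda_i(M_r^{(n)}/\sqrt n)$. The rescaled eigenvalues $\lambda_i(M/\sqrt n)$ have classical locations $\gamma_{\rm sc}(i/n)$, so
\[
 \frac1{n^2}\sum_{i\le d_n}\lambda_i(\widehat X_3^{(n)})
 =\frac{\ell_3}{n}\sum_{i\le d_n}\lambda_i(M_3/\sqrt n),
\]
which is a Riemann sum for $\ell_3\int_0^{\delta}\gamma_{\rm sc}$. To make this rigorous:
\begin{itemize}
\item Use weak convergence of the empirical spectral measure (Wigner's law) together with the bound $\|M/\sqrt n\|_{\rm op}\le 3$ off an event of probability $e^{-cn}$.
\item On that event, the map taking the empirical measure to the sum of its top $\lceil un\rceil$ atoms, divided by $n$, is Lipschitz in a quantile distance. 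Therefore $\frac1n\sum_{i\le un}\lambda_i\to\int_0^u\gamma_{\rm sc}$ almost surely.
\item Since $d_n/n\to\delta$ and each eigenvalue is $O(1)$, the error from the index shift is $o(1)$.
\item Uniform integrability comes from the operator-norm Gaussian tail, and it upgrades convergence in probability to $L^1$ convergence.
\end{itemize}
The same argument with $n-a_n$ and $1-\alpha$ handles the $\widehat X_1^{(n)}$ sum. Summing the pieces gives \eqref{eq:rf-fixed-convergence}, \eqref{eq:rf-S0}, and the $L^1$ statement.

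\emph{Step 3: fluctuations, the main obstacle.} We need $O(n\log^{O(1)}n)$ control in $L^2$ and $\psi_1$. The cheap route: $Y\mapsto\sum_{i\le k}\lambda_i(Y)$ is convex (it is a Ky Fan norm type functional) and $\sqrt k$-Lipschitz in Hilbert--Schmidt norm. GUE entries of $\widehat X^{(n)}$ have variance $\ell^2 n$. Gaussian concentration therefore gives subgaussian fluctuations of order $\sqrt n\cdot\sqrt k\le n$, which is already $O(n)$ in every Orlicz norm. This directly yields \eqref{eq:rf-fixed-L2-preliminary} and \eqref{eq:rf-fixed-psi1}, even without the log factor.

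If that Lipschitz computation has a gap (e.g.\ in the Hermitian normalization), the fallback combines two inputs. First, rigidity/Tao--Vu eigenvalue bounds give $|\lambda_i-n\ell\gamma_{\rm sc}(i/n)|\le\log^{O(1)}n$ after scaling, up to stretched-exponential tails, summed over at most $n$ indices. Second, log-concavity of the GUE law, via \eqref{eq:rf-logconcave-subexponential}, converts the resulting $L^2$ bound into a $\psi_1$ bound. Finally, \eqref{eq:rf-fixed-concentration} follows from Chebyshev.
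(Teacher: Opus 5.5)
Your proposal is correct. The mean-convergence step is essentially the paper's: Wigner's law on the event \(\|M/\sqrt n\|_{\rm op}\le3\), quantile (\(W_1\)) convergence of the normalized partial sums, and uniform integrability from operator-norm tails.

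The real difference is in the fluctuation step.

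\begin{itemize}
\item The paper sums, via Minkowski, the rank-dependent Tao--Vu bounds \(\|\lambda_i-\mathbb E\lambda_i\|_2\lesssim n^{1/3}\min(i,n-i+1)^{-1/3}\log^{O(1)}n\). This costs a polylogarithmic factor. It then gets the \(\psi_1\) bound from a separate structural fact: the ordered GUE eigenvalue density is log-concave on the Weyl chamber, so the linear statistic \(F\) is log-concave and \(\|F-\mathbb EF\|_{\psi_1}\le C\|F-\mathbb EF\|_2\).
\item You use instead that \(Y\mapsto\sum_{i\le k}\lambda_i(Y)\) is \(\sqrt k\)-Lipschitz in Hilbert--Schmidt norm. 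In the paper's normalization, \(M\) is an exact Hilbert--Schmidt-isometric image of a standard Gaussian vector in \(\mathbb R^{n^2}\), so the normalization gap you worried about does not occur. Gaussian concentration then gives subgaussian fluctuations of order \(\ell\sqrt{nk}\le\ell n\) for each partial sum, and the trace term is Gaussian of order \(n\).
\end{itemize}

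Your route is more elementary and needs neither Tao--Vu nor log-concavity. It is also sharper: it gives \eqref{eq:rf-fixed-L2-preliminary} and \eqref{eq:rf-fixed-psi1} with no logarithmic loss, and even \(\psi_2\) tails. The paper's route has the advantage of reusing the rigidity estimates it needs anyway for the seam terms in Lemma~\ref{lem:rf-seam-limit}.

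Your \(L^1\) convergence also comes from convergence in probability plus uniform integrability, rather than from mean convergence plus the \(L^2\) bound and Cauchy--Schwarz. Both work.

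Two small corrections:
\begin{itemize}
\item ``Almost surely'' in Step~2 should read ``in probability,'' since the matrices for different \(n\) are not coupled.
\item Your fallback is unnecessary, and its per-index bound \(\log^{O(1)}n\) holds only in the bulk. Near the spectral edge the correct scale is \(n^{1/3}\min(i,n-i+1)^{-1/3}\log^{O(1)}n\). Its sum is still \(O(n\log^{O(1)}n)\), which is exactly the paper's estimate.
\end{itemize}
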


\begin{proof}
The empirical spectral measure of
\(n^{-1}\widehat X_r^{(n)}=\ell_r n^{-1/2}M_r^{(n)}\) converges to the
semicircle law of scale \(\ell_r\), and the GUE operator norm has uniformly
integrable tails.  Therefore normalized partial sums of the largest
\(\lfloor tn\rfloor\) eigenvalues converge in expectation to
\(\ell_r\int_0^t\gamma_{\rm sc}(u)\,du\).  The centered trace terms divided
by \(n^2\) converge to zero.  Substitution in
\eqref{eq:rf-fixed-gue-form} therefore gives
\begin{equation}
 \frac{\mathbb E F_{p_n}^{\rm rf,(n)}}{n^2}
 \longrightarrow S_{p,0}.
\label{eq:rf-fixed-mean-convergence}
\end{equation}

For concentration, apply the eigenvalue estimate of
\cite{TaoVuConcentration}, in the normalization used in
\cite{NarLimit}.  Uniformly in the rank,
\[
 |\lambda_i-\mathbb E\lambda_i|
 \le n^{1/3}\min(i,n-i+1)^{-1/3}\log^{O(1)}n
\]
with overwhelming probability.  Summing over any interval of ranks gives
\(O(n\log^{O(1)}n)\).  The trace is a centered Gaussian of order \(n\).
Substitute these bounds in \eqref{eq:rf-fixed-gue-form}.  More
explicitly, put
\[
 Y_{r,i}:=\lambda_i(\widehat X_r^{(n)})
          -\mathbb E\lambda_i(\widehat X_r^{(n)}),
 \qquad
 T:=\operatorname{Tr}(\widehat X_1^{(n)}
             +\widehat X_2^{(n)})
       -\mathbb E\operatorname{Tr}(\widehat X_1^{(n)}
             +\widehat X_2^{(n)}).
\]
Here Minkowski's inequality means the triangle inequality in
\(L^2(\Omega)\),
\[
 \left\|\sum_j Z_j\right\|_2\leq\sum_j\|Z_j\|_2,
 \qquad Z_j\in L^2(\Omega).
\]
Applied to \eqref{eq:rf-fixed-gue-form}, it gives the concrete bound
\begin{align*}
 \|F_{p_n}^{\rm rf,(n)}
       -\mathbb EF_{p_n}^{\rm rf,(n)}\|_2
 &\leq \frac12\|T\|_2
   +\frac12\sum_{i=1}^{d_n}\|Y_{3,i}\|_2
   +\frac12\sum_{i=1}^{n-a_n}\|Y_{1,i}\|_2.
\end{align*}
After the polynomial prefactor in the Tao--Vu estimate is absorbed
into \(\log^{O(1)}n\), integration of its
stretched-exponential tail yields
\[
 \|Y_{r,i}\|_2
 \leq C n^{1/3}\min(i,n-i+1)^{-1/3}\log^{O(1)}n.
\]
Since
\(\sum_{i=1}^n\min(i,n-i+1)^{-1/3}=O(n^{2/3})\)
and \(\|T\|_2=O(n)\), the preceding Minkowski bound proves
\eqref{eq:rf-fixed-L2-preliminary}.

For completeness, on the decreasing Weyl chamber the density of the
ordered eigenvalue vector of a GUE matrix is proportional to
\[
 \exp\left(-c\sum_i\lambda_i^2\right)
 \prod_{i<j}(\lambda_i-\lambda_j)^2.
\]
Its logarithm is concave.  Products of log-concave densities are
log-concave, and linear marginals remain log-concave
\cite[Theorem~5.1]{LovaszVempalaLogconcave}.  Formula
\eqref{eq:rf-fixed-gue-form} shows that
\(F_{p_n}^{\rm rf,(n)}\) is a linear functional of the three ordered
eigenvalue vectors.  Its law is therefore log-concave, and
\eqref{eq:rf-logconcave-subexponential} together with
\eqref{eq:rf-fixed-L2-preliminary} proves
\eqref{eq:rf-fixed-psi1}.

Finally, the claimed \(L^1\) convergence follows directly.  By Cauchy--Schwarz,
\begin{align*}
 \mathbb E\left|
   \frac{F_{p_n}^{\rm rf,(n)}}{n^2}-S_{p,0}\right|
 &\leq
 \frac{\|F_{p_n}^{\rm rf,(n)}
       -\mathbb EF_{p_n}^{\rm rf,(n)}\|_2}{n^2}
 +\left|
   \frac{\mathbb EF_{p_n}^{\rm rf,(n)}}{n^2}-S_{p,0}\right|.
\end{align*}
The first term is
\(O(n^{-1}\log^{O(1)}n)\) by
\eqref{eq:rf-fixed-L2-preliminary}, and the second tends to zero by
\eqref{eq:rf-fixed-mean-convergence}.  Hence the left-hand side tends
to zero, which is precisely convergence in \(L^1\).
\end{proof}

\subsection{Continuum region and admissible height functions}
\label{subsec:rf-height-spaces}

We first make the continuum domain and the admissible height space
explicit.  Write
\[
 p=(\alpha,\beta,\gamma,\delta),\qquad
 \alpha,\beta,\gamma,\delta>0,
 \qquad \alpha+\beta+\gamma+\delta=1,
\]
for the limiting barycentric coordinates of the target.  In the three
initial panels put
\begin{align}
 \mathsf R_{p,1}
 &=\{x\in\Delta_{012}(1):x_1\leq\alpha,\ x_2\geq\beta\},\notag\\
 \mathsf R_{p,2}
 &=\{x\in\Delta_{023}(1):x_2\geq\beta
             \text{ or }x_3\geq\gamma\},\notag\\
 \mathsf R_{p,3}
 &=\{x\in\Delta_{034}(1):x_3\geq\gamma,\ x_4\leq\delta\}.
\label{eq:rf-continuum-panel-regions}
\end{align}
The two copies of the retained seams are
\begin{align}
 \Gamma_{02}&=\{x\in\Delta_{02}(1):x_2\in[\beta,1]\},\notag\\
 \Gamma_{03}&=\{x\in\Delta_{03}(1):x_3\in[\gamma,1]\}.
\label{eq:rf-continuum-seams}
\end{align}
We regard the three sets in \eqref{eq:rf-continuum-panel-regions} as
separate sheets, retaining two labelled copies of every seam.  Their
cut-open union is denoted by \(\mathsf R_p\).  For an exact dilation
\(p_n=np\), this is already the domain tiled by the rescaled lozenges
and singleton seam triangles; there is no boundary collar.  Indeed,
passing from the selected vertices in \eqref{eq:Sp-formula} to their
incident elementary triangles changes
\[
\begin{array}{c|c}
\text{selected-vertex wall}&\text{closed atomic-region wall}\\ \hline
q_0\ge1&q_0\ge0\\
q_1\le a_n-1&q_1\le a_n\\
q_2\ge b_n+1&q_2\ge b_n\\
q_3\ge c_n+1&q_3\ge c_n\\
q_4\le d_n-1&q_4\le d_n.
\end{array}
\]
After division by \(n\), these are exactly the inequalities in
\eqref{eq:rf-continuum-panel-regions}.  The same calculation on the
two common sides gives exactly the seam intervals
\eqref{eq:rf-continuum-seams}.
Write \(\mathsf R_{np,r}^{\mathbb Z}\) for the lattice vertices of the
\(r\)-th sheet before dilation.

For a tiling \(\Xi\in\mathcal T_{np}\), let
\(F_{\Xi,r}\) be its usual lozenge height function on the \(r\)-th
sheet.  Along a positively oriented lattice edge its increment is
\(+1\) unless that edge is the selected tile edge, in which case the
increment is \(-2\).  The two panel developments orient a common seam
in opposite lattice directions, so the unmodified panel traces are
\emph{not} equal.

For the seam \(0(r+1)\), \(r=1,2\), and its level \(k\), set
\begin{equation}
 F^{\Xi}_{r,-}(k)=F_{\Xi,r}([k,k]_r),\qquad
 F^{\Xi}_{r,+}(k)=F_{\Xi,r+1}([0,k]_{r+1}).
\label{eq:rf-two-panel-seam-traces}
\end{equation}
Here \(-\) is the new-boundary side of \(P_r\), while \(+\) is the
old-boundary side of \(P_{r+1}\).  The additive constants are chosen so
that
\begin{equation}
 F^{\Xi}_{r,+}(k)=F^{\Xi}_{r,-}(k)+k
\label{eq:rf-aligned-discrete-traces}
\end{equation}
on every retained seam and \(F_{\Xi,1}(v_9(p_n))=0\).  The consistency of
this normalization is the content of
Lemma~\ref{lem:rf-seam-compatibility}.  The common \emph{aligned} seam
trace is
\begin{equation}
 B^{\Xi}_r(k):=F^{\Xi}_{r,+}(k)
             =F^{\Xi}_{r,-}(k)+k.
\label{eq:rf-common-aligned-trace}
\end{equation}
Let \(\overline F_{\Xi,n}\) be the natural continuum height of the
tiling: extend \(n^{-1}F_{\Xi,r}\) affinely over the two elementary
triangles of every lozenge and over every singleton seam triangle, and
then dilate the three sheets by \(1/n\).  Thus
\(\overline F_{\Xi,n}\) is already a piecewise-affine function with
gradient in \(K\) on the fixed three-sheet domain \(\mathsf R_p\).
This is the required canonical map from a discrete tiling to a
continuum height function.

Assume for the remainder of this part that \(p\) is rational, and let
\(q\) be a common denominator.

For every multiple \(n\) of \(q\), let
\(F^{\partial}_{np,r}\) denote the lozenge-height trace on the closed
outer boundary of the \(r\)-th sheet, with
\(F^{\partial}_{np,1}(v_9(np))=0\) and the other sheet constants fixed
by the aligned seam normalization
\eqref{eq:rf-aligned-discrete-traces}.  This trace is independent of
the tiling: no lozenge crosses a closed outer boundary edge, so its
oriented height increment is forced.  Define
\(\gamma_{p,r}\) on the closed outer boundary of
\(\mathsf R_{p,r}\) by
\[
 \gamma_{p,r}(z/q)=q^{-1}F^{\partial}_{qp,r}(z)
\]
at the boundary vertices of the basic rational dilation and extend it
affinely along every straight boundary segment.  Forced boundary
increments are homogeneous under integral dilation.  Hence, for every
multiple \(n\) of \(q\) and every closed outer-boundary lattice vertex
\(z\),
\begin{equation}
 F^{\partial}_{np,r}(z)
       =n\gamma_{p,r}(z/n).
\label{eq:rf-exact-outer-boundary-trace}
\end{equation}
This exact identity, rather than only convergence of boundary traces,
is what is used below.

\begin{definition}[Continuum admissible height space]
\label{def:rf-AHT}
The space \(\operatorname{AHT}(\mathsf R_p)\) consists of tuples
\(f=(f_1,f_2,f_3)\) with the following properties:
\begin{enumerate}
\item each \(f_r\) is Lipschitz on \(\mathsf R_{p,r}\) and
      \(\nabla f_r\in K\) almost everywhere, where
      \(K=\operatorname{conv}\{2(\widehat\imath-\widehat\jmath),
      2\widehat\jmath,-2\widehat\imath\}\) is the geometric slope
      triangle \eqref{eq:panel-slope-triangle};
\item on every closed outer boundary segment,
      \(f_r=\gamma_{p,r}\), with the normalization
      \(f_1(v_9)=0\);
\item using the panel coordinates \eqref{eq:rf-panel-coordinate} with
      \(N=1\), the two labelled traces satisfy the aligned gluing
      relations
      \begin{align}
       f_2([0,x]_2)&=f_1([x,x]_1)+x,
              &&x\in[\beta,1],\notag\\
       f_3([0,x]_3)&=f_2([x,x]_2)+x,
              &&x\in[\gamma,1].
      \label{eq:rf-continuum-aligned-traces}
      \end{align}
\end{enumerate}
We write
\begin{equation}
 f_{02}(x):=f_2([0,x]_2),\qquad
 f_{03}(x):=f_3([0,x]_3)
\label{eq:rf-continuum-seam-traces}
\end{equation}
for the two common aligned seam traces.  For every sufficiently large
multiple \(n\) of \(q\), for
\(f\in\operatorname{AHT}(\mathsf R_p)\), and for \(\eta>0\), put
\begin{equation}
 \mathcal T_{np}(f,\eta n)
 =\left\{\Xi\in\mathcal T_{np}:
       \|\overline F_{\Xi,n}-f
          \|_{L^\infty(\mathsf R_p)}<\eta\right\}.
\label{eq:rf-height-neighborhood}
\end{equation}
\end{definition}

\paragraph{Alignment of the two seam traces.}
The two copies of a cut carry oppositely oriented lozenge-height
traces, so their raw values should not be identified.  The correct
comparison differs by the deterministic seam level \(k\).  The next
identity shows that this affine alignment is exactly equivalent to the
complementary-singleton rule.

\begin{lemma}[Seam alignment]
\label{lem:rf-seam-compatibility}
Consider a retained seam, and index its lattice vertices by increasing
level \(k\).  Let \(F_-\) and \(F_+\) be the height traces on its two
labelled sides, both read in the direction of increasing \(k\).  For
the seam segment from level \(k-1\) to level \(k\), let
\[
   \epsilon_-(k),\epsilon_+(k)\in\{0,1\}
\]
indicate whether the facing triangle on the corresponding side is
used as the singleton seam triangle.  Write
\[
   \Delta G(k):=G(k)-G(k-1).
\]
Then
\begin{equation}
 \boxed{
 \Delta\bigl(F_+-F_-\bigr)(k)
   =3\bigl(\epsilon_-(k)+\epsilon_+(k)\bigr)-2.}
\label{eq:rf-seam-difference-identity}
\end{equation}

Consequently, the tiling is legal at this seam segment---that is,
exactly one of the two facing triangles is a singleton---if and only if
\begin{equation}
       \Delta\bigl(F_+-F_-\bigr)(k)=1.
\label{eq:rf-local-seam-alignment}
\end{equation}
It is legal along the entire connected seam if and only if, after
choosing the two additive height constants,
\begin{equation}
              F_+(k)=F_-(k)+k
\label{eq:rf-seam-alignment}
\end{equation}
at every retained seam vertex.

For a legal seam, define its common aligned height by
\[
             B(k):=F_+(k)=F_-(k)+k.
\]
Its increment
\begin{equation}
 b_{r,k}:=\Delta B(k)
          =-1+3\epsilon_+(k)
          =2-3\epsilon_-(k)
          \in\{-1,2\}
\label{eq:rf-aligned-seam-increment}
\end{equation}
records which side contains the singleton.  Equivalently,
\begin{equation}
 \epsilon_+(k)=\frac{1+b_{r,k}}{3},
 \qquad
 \epsilon_-(k)=\frac{2-b_{r,k}}{3}.
\label{eq:rf-seam-indicators-from-height}
\end{equation}
Thus \(b_{r,k}=2\) when the \(+\)-side triangle is the singleton, and
\(b_{r,k}=-1\) when the \(-\)-side triangle is the singleton.
\end{lemma}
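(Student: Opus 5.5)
The plan is to compute the increments of \(F_-\) and \(F_+\) across one seam segment separately, directly from the definition of lozenge height, and then obtain the remaining assertions by telescoping. Fix the segment from level \(k-1\) to \(k\), i.e.\ \(s_{r,k-1}\to s_{r,k}\). On the \(-\) side this is the edge \([k-1,k-1]_r\to[k,k]_r\) of \(P_r\), which by \eqref{eq:positive-panel-directions} is a positive \(d_3\) edge. On the \(+\) side it is the edge \([0,k-1]_{r+1}\to[0,k]_{r+1}\) of \(P_{r+1}\). This is the reverse of the positive direction \([i,j]\to[i,j-1]\), hence a negatively traversed \(d_2\) edge.

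Next I use the ordinary height rule: a positive edge has increment \(+1\), or \(-2\) if it is the selected edge of a lozenge. The seam edge is a side of exactly one retained triangle in each developed copy, namely \(U^r_{k-1,k-1}\) or \(U^{r+1}_{0,k-1}\). Its developed copy is selected exactly when that facing triangle is paired across it. I will treat a singleton seam triangle as carrying its seam side as the selected side, so that "selected" means singleton; this is the convention already implicit in \(\Delta F_-=1-3\epsilon_-\) and \(\Delta F_+=-1+3\epsilon_+\) stated before \eqref{eq:aligned-global-height}. I would verify this by noting that within an elementary triangle exactly one side is selected (telescoping around the triangle, as in the proof of Lemma~\ref{lem:directed-extremal-extension}). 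Exterior sides are never selected and a lozenge selects its shared edge, so for the facing triangle the seam side is the selected one iff the triangle is a singleton.

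With these conventions, \(\Delta F_-(k)=1-3\epsilon_-(k)\), and reversing orientation gives \(\Delta F_+(k)=-(1-3\epsilon_+(k))\). Subtracting yields \eqref{eq:rf-seam-difference-identity}. Since \(\epsilon_\pm\in\{0,1\}\), the right side takes values \(-2,1,4\), and it equals \(1\) iff \(\epsilon_-+\epsilon_+=1\). This gives \eqref{eq:rf-local-seam-alignment}. Summing over the consecutive retained levels of a connected seam, and choosing the additive constant at its lower foot, gives \eqref{eq:rf-seam-alignment}; conversely, differencing \eqref{eq:rf-seam-alignment} recovers \eqref{eq:rf-local-seam-alignment} at every segment. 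For a legal seam, \(\Delta B=\Delta F_+=-1+3\epsilon_+\), and substituting \(\epsilon_-=1-\epsilon_+\) gives the form \(2-3\epsilon_-\). Hence \(b_{r,k}\in\{-1,2\}\), and inverting these linear relations gives \eqref{eq:rf-seam-indicators-from-height}.

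The main obstacle is the orientation bookkeeping of the two developed copies: confirming that the \(+\)-side copy is traversed against its positive \(d_2\) direction, and that "selected" for a singleton triangle is consistent with the height rule. I would check these once against the coordinate formula \([i,j]_r=(N-j)e_0+(j-i)e_r+ie_{r+1}\) and the seam identification \eqref{eq:rf-seam-segment}. Everything after that step is linear telescoping.
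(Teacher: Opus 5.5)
Your proposal is correct and follows essentially the same route as the paper. You compute \(\Delta F_-=1-3\epsilon_-\) from the positive \(d_3\) orientation of the seam edge in \(P_r\), and \(\Delta F_+=-1+3\epsilon_+\) from its reversed \(d_2\) orientation in \(P_{r+1}\), then subtract, telescope along the seam, and invert the linear relations; your coordinate check of the two orientations and of the convention that a singleton's seam side carries the \(-2\) increment simply spells out what the paper states in words.
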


\begin{proof}
On the \(-\)-side, increasing \(k\) follows the positive lattice
orientation.  The height increment is therefore \(+1\) when the
facing triangle is paired into an ordinary lozenge and \(-2\) when it
is the singleton.  Hence
\[
                 \Delta F_-(k)=1-3\epsilon_-(k).
\]
On the \(+\)-side, increasing \(k\) runs opposite to the positive
boundary orientation, and therefore
\[
                 \Delta F_+(k)=-1+3\epsilon_+(k).
\]
Subtracting these two identities gives
\eqref{eq:rf-seam-difference-identity}.

The complementary-singleton rule is
\(\epsilon_-(k)+\epsilon_+(k)=1\), so
\eqref{eq:rf-seam-difference-identity} proves its equivalence with
\eqref{eq:rf-local-seam-alignment}.  Summing the latter identity along
the connected seam gives
\[
                 F_+(k)-F_-(k)=k+C.
\]
The two panel heights are defined only up to additive constants, so
they may be normalized to make \(C=0\), proving
\eqref{eq:rf-seam-alignment}.  Finally,
\[
       \Delta B(k)=\Delta F_+(k)=-1+3\epsilon_+(k),
\]
and the complementary relation
\(\epsilon_-=1-\epsilon_+\) gives the second expression in
\eqref{eq:rf-aligned-seam-increment}.  Solving for the two indicators
proves \eqref{eq:rf-seam-indicators-from-height}.
\end{proof}

\begin{lemma}[Compactness and piecewise-affine discrete approximation]
\label{lem:rf-AHT-compact}
The space \(\operatorname{AHT}(\mathsf R_p)\) is nonempty and compact
in the uniform topology.  The common aligned seam traces
\(f_{02},f_{03}\) have tangential derivative in \([-1,2]\) almost
everywhere.  The natural piecewise-affine heights constructed above
satisfy
\begin{equation}
 \sup_{\Xi\in\mathcal T_{np}}
 {\rm dist}_{L^\infty(\mathsf R_p)}
 \bigl(\overline F_{\Xi,n},
       \operatorname{AHT}(\mathsf R_p)\bigr)
 \longrightarrow0.
\label{eq:rf-discrete-to-continuum-compactness}
\end{equation}
Conversely, there is an absolute constant \(C\), depending only on the
finite list of panel orientations, such that for every
\(f\in\operatorname{AHT}(\mathsf R_p)\) and every sufficiently large
multiple \(n\) of \(q\), there is a tiling
\(\Xi_n\in\mathcal T_{np}\) satisfying
\begin{equation}
 \|\overline F_{\Xi_n,n}-f
       \|_{L^\infty(\mathsf R_p)}\le \frac Cn.
\label{eq:rf-discrete-approximation}
\end{equation}
The traces obey the aligned seam relations exactly at the discrete seam
vertices and affinely between them, and their outer traces agree
exactly with the prescribed continuum traces.  In particular, the
analytic definition agrees with the compact
closure of normalized discrete cut-compatible height functions, and
the sets \(\mathcal T_{np}(f,\eta n)\) are nonempty for all sufficiently
large \(n\).
\end{lemma}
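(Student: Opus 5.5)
The proof has four parts: nonemptiness and compactness, the seam-derivative bound, discrete-to-continuum closeness, and the converse approximation of a continuum height by tilings.

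\emph{Nonemptiness and compactness.} For nonemptiness, take the polygonal bottom tiling \(M_{\rm poly}\) at the basic dilation \(qp\). Its normalized height \(\overline F_{\Xi,q}\) has gradient in \(K\) on every tile. Its outer traces equal \(\gamma_{p,r}\) by definition, and its seam traces satisfy the aligned relations by Lemma~\ref{lem:rf-seam-compatibility}. Hence it lies in \(\operatorname{AHT}(\mathsf R_p)\). For compactness, the gradient constraint \(\nabla f_r\in K\) gives a uniform Lipschitz bound, and the boundary normalization gives a uniform sup bound. Arzel\`a--Ascoli then yields precompactness. Closedness follows because three things pass to uniform limits: the convex constraint \(\nabla f_r\in K\) (a weak-\(*\) limit of gradients stays in a closed convex set), the exact boundary traces, and the pointwise seam identities.

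\emph{Seam derivative and discrete-to-continuum closeness.} The seam derivative bound is read off from \eqref{eq:rf-aligned-seam-increment}: discrete aligned increments lie in \(\{-1,2\}\), so for discrete heights the derivative lies in \([-1,2]\). For a general \(f\), restrict \(\nabla f_2\in K\) to the seam direction and add the affine correction \(x\). Evaluating the linear functional on the three vertices of \(K\) gives exactly \([-1,2]\). For \eqref{eq:rf-discrete-to-continuum-compactness}, I would show that every \(\overline F_{\Xi,n}\) actually lies in \(\operatorname{AHT}\) when \(q\mid n\). The gradient is in \(K\) tilewise. The outer trace is exact by \eqref{eq:rf-exact-outer-boundary-trace}, affine between vertices, and homogeneous. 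The seams align exactly at lattice vertices, and affinely between them, by Lemma~\ref{lem:rf-seam-compatibility}. So the distance is zero for large \(n\).

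\emph{Converse approximation.} This is the main obstacle. Given \(f\), define on lattice vertices of sheet \(r\) a candidate integer function \(g(z)\). It should equal \(n f_r(z/n)\) rounded to the residue class \(\kappa_p(z)\) mod \(3\) of Lemma~\ref{lem:positive-cycle-gap}, after conversion to the aligned height \eqref{eq:aligned-global-height}. I would then apply Lemma~\ref{lem:directed-extremal-extension}: any integer function with the right boundary values and residue that is directed-Lipschitz, i.e.\ satisfies \(H(y)-H(x)\le d_p(x,y)\), is a legal tiling height. The continuum condition \(\nabla f\in K\) is equivalent to \(f(y)-f(x)\le\) the continuum version of \(d_p\) along positive directions. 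This holds because \(K\) is the polar constraint \(\partial_{d_k}f\le1\) on each panel. However, naive rounding can violate the discrete inequality by \(O(1)\). To fix this, I would instead take the upper directed envelope of the partial datum \(g\), restricted to a coarse sublattice plus the exact boundary values \(F^\partial_{np,r}\), after first shrinking \(f\) slightly towards the polygonal reference height \(h_{M_{\rm poly}}\) (which is strictly inside in the interior). Concretely, I would use \(f^{(t)}=(1-t)f+t\,\overline F_{M_{\rm poly}}\) with \(t\sim C/n\), so that the directed inequalities hold with slack absorbing the rounding. The envelope agrees with the datum, differs from \(n f\) by \(O(1)\) everywhere by the Lipschitz sandwich, and hence yields \(\Xi_n\) with error \(C/n\). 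Seam compatibility is automatic, because the envelope is computed on the glued complex \(\mathcal A_p\) with single-valued aligned heights. Nonemptiness of \(\mathcal T_{np}(f,\eta n)\) then follows immediately.
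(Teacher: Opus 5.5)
Your compactness, seam-slope, and discrete-to-continuum arguments are sound. Your observation that \(\overline F_{\Xi,n}\in\operatorname{AHT}(\mathsf R_p)\) holds exactly when \(q\mid n\) is in fact slightly sharper than the paper's subsequence argument. The problem is the converse approximation, where both your diagnosis and your repair are off.

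The repair fails for two separate reasons.
\begin{enumerate}
\item \(\overline F_{M_{\rm poly}}\) is not a strict reference. \(M_{\rm poly}\) is a union of monochromatic regions, so on each region its slope is a vertex of \(K\). Its aligned height realizes \(H(y)-H(x)=\omega(e)=d_{np}(x,y)\) on every unselected edge, so mixing \(f\) with it creates no slack at all on those edges. A genuinely strict reference is something the paper has to build separately, by averaging over all tilings, in Lemma~\ref{lem:rf-strict-reference-height}.
\item Even with a strict reference of slope margin \(\mu\), the weight \(t\sim C/n\) produces slack only of order \(C\mu\ell/n\) along a positive path of lattice length \(\ell\). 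This absorbs an \(O(1)\) rounding error only when \(\ell\gtrsim n/(C\mu)\), so your coarse sublattice would need spacing of order \(n\). But then the upper envelope \(\min_x\{g(x)+d_{np}(x,z)\}\) over that sparse datum exceeds the rescaled aligned target by an amount of order the spacing at generic points, namely whenever the slope of \(f\) is interior to \(K\). The Lipschitz sandwich gives only the lower bound, so the claimed \(O(1)\) lattice error, and hence \(C/n\), does not follow.
\end{enumerate}

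The missing observation is that no repair is needed; the residue constraint makes rounding exact. Let \(u_n(z)=nf_r(z/n)-(r-1)j\) at \(z=[i,j]_r\). By \eqref{eq:rf-continuum-aligned-traces} this is single-valued at the seam vertices. Let \(g(z)\) be the \emph{least} integer \(\geq u_n(z)\) congruent to \(\kappa_{np}(z)\) modulo three.
\begin{itemize}
\item The edgewise bound \(u_n(y)-u_n(x)\le\omega(e)\) is just \(\partial_{d_k}f_r\le1\) in aligned form. Summing it along a minimizing positive path gives the exact inequality \(u_n(z)-u_n(x)\le d_{np}(x,z)\), and hence \(g(z)-g(x)<d_{np}(x,z)+3\).
\item By \eqref{eq:directed-triangle-and-residue}, \(g(z)-g(x)-d_{np}(x,z)\) is divisible by three, so it is \(\le0\).
\item On \(B_{np}\), \eqref{eq:rf-exact-outer-boundary-trace} makes \(u_n\) already equal to the forced integer datum. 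That datum has residue \(\kappa_{np}\) by Lemma~\ref{lem:boundary-datum-compatible}, so \(g\) has the exact boundary values.
\item Lemma~\ref{lem:directed-extremal-extension} then identifies \(g\) with the aligned height of a legal tiling, and \(0\le g-u_n<3\) gives \eqref{eq:rf-discrete-approximation}.
\end{itemize}

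The paper uses the same mechanism sheetwise. It rounds up into the residue class and prescribes one common aligned integer on the two copies of each cut. It then takes the upper envelope of the \emph{full} rounded datum, and the sandwich \(nf\le F\le g<nf+3\) together with the residue forces \(F=g\). Your glued-complex version through Lemma~\ref{lem:directed-extremal-extension} is an equally valid packaging, and it makes seam compatibility automatic, once the shrinking and sublattice step is deleted.
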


\begin{proof}
The natural piecewise-affine heights
\(\overline F_{\Xi,n}\) have a common Lipschitz constant \(L_*\), and
the normalization at \(v_9\) bounds their additive constants uniformly.
Consider arbitrary \(n_j\to\infty\) and
\(\Xi_j\in\mathcal T_{n_jp}\).  Arzel\`a--Ascoli gives a uniformly
convergent subsequence on the fixed three-sheet space
\(\mathsf R_p\), with limit \(f\).

On every elementary triangle the gradient of the piecewise-affine
height lies in \(K\).
The directional Lipschitz inequalities defining the closed convex
slope constraint pass to the uniform limit, so
\(\nabla f_r\in K\) almost everywhere.  By
\eqref{eq:rf-exact-outer-boundary-trace}, the normalized discrete
outer traces equal \(\gamma_{p,r}\) exactly.  On the common portion
of a retained seam, \eqref{eq:rf-aligned-discrete-traces} is exact at
the vertices and hence, by affine interpolation, on the entire seam.
Consequently the limiting traces satisfy
\eqref{eq:rf-continuum-aligned-traces}.  Thus every subsequential limit
belongs to \(\operatorname{AHT}(\mathsf R_p)\).

If \eqref{eq:rf-discrete-to-continuum-compactness} failed, one could
choose a sequence staying a fixed positive distance from the admissible
space; the preceding compactness argument would give an admissible
uniform limit, a contradiction.  It also proves nonemptiness.  The
slope constraint gives a common Lipschitz constant on the admissible
space, while the outer normalization bounds the additive constant.
Arzel\`a--Ascoli and closedness of the slope, boundary, and aligned
trace conditions now give compactness of
\(\operatorname{AHT}(\mathsf R_p)\).  By
\eqref{eq:rf-continuum-seam-traces}, each common seam trace is the trace
on an old boundary of a panel; its derivative therefore belongs to the
projection \([-1,2]\) of \(K\) onto the seam direction.

We prove the converse discrete approximation by applying the
sheetwise height-function construction of
\cite[\S7.2]{NarLimit} on the three sheets
\emph{separately}.  This is important: the two copies of a cut are not
identified as atomic triangles, so the construction cannot create a
spurious lozenge crossing a seam.

Choose the three-colorings \(c_r\) on the developed panels so that a
height function has residue \(c_r(x)+c_0\pmod3\).  Along the seam
\(0(r+1)\), increasing \(k\) follows the positive direction on the
\(-\)-side and the negative direction on the \(+\)-side.  Consequently
we may, after fixing the sheet constants, arrange
\begin{equation}
 c_{r+1}([0,k]_{r+1})-c_r([k,k]_r)\equiv k\pmod3.
\label{eq:rf-seam-residue-alignment}
\end{equation}
For a lattice vertex away from a cut, let \(g_{r,n}(x)\) be the least
integer not smaller than \(nf_r(x/n)\) and congruent to its prescribed
class.  On a closed outer boundary use the exact discrete boundary
height.

On a retained seam choose one common aligned integer
\(\widehat g_{r,n}(k)\): it is the least integer not smaller than
\(n f_{r+1}([0,k/n]_{r+1})\) and congruent to the \(+\)-side class.
Prescribe
\begin{equation}
 g_{r+1,n}([0,k]_{r+1})=\widehat g_{r,n}(k),\qquad
 g_{r,n}([k,k]_r)=\widehat g_{r,n}(k)-k.
\label{eq:rf-rounded-aligned-seam}
\end{equation}
The continuum relation \eqref{eq:rf-continuum-aligned-traces} and the
residue relation \eqref{eq:rf-seam-residue-alignment} show that both
values are in the correct residue class and lie in the corresponding
half-open interval
\[
                         nf\le g<nf+3.
\]
At seam
endpoints these prescriptions agree with the exact closed-boundary
values: item~2 of Definition~\ref{def:rf-AHT} and
\eqref{eq:rf-exact-outer-boundary-trace} give precisely the same
integer there.  Thus endpoint compatibility is an exact consequence
of the prescribed trace, rather than an inference from a scaling
limit.

Let \(d_{r,n}(x,z)\) be the directed positive-path distance inside the
\(r\)-th sheet and define
\begin{equation}
 F_{r,n}(z)=\min_{x\in \mathsf R_{np,r}^{\mathbb Z}}
              \{g_{r,n}(x)+d_{r,n}(x,z)\}.
\label{eq:rf-envelope-approximation}
\end{equation}
The condition \(\nabla f_r\in K\) is equivalent to
\(nf_r(z/n)-nf_r(x/n)\le d_{r,n}(x,z)\).  Therefore
\[
 nf_r(z/n)\le F_{r,n}(z)\le g_{r,n}(z) < nf_r(z/n)+3.
\]
The half-open interval $[nf,nf+3)$ contains at most one integer in a fixed residue class.
All terms in the minimum have the same residue modulo three.  Thus,
\(F_{r,n}=g_{r,n}\) on every closed boundary and on every labelled cut.
In particular, \eqref{eq:rf-rounded-aligned-seam} gives
\[
 F_{r+1,n}([0,k]_{r+1})
 =F_{r,n}([k,k]_r)+k.
\]

Along a positive unit edge, the envelope increment is at most \(+1\).
The other two sides of an incident elementary triangle give a positive
path of length two in the reverse direction, so the increment is at
least \(-2\).  The residue condition forces it to be either \(+1\) or
\(-2\).  Around each elementary triangle there is consequently exactly
one \(-2\) edge, and hence the sheetwise envelope is a lozenge height
function with the prescribed boundary singleton choices.  By
Lemma~\ref{lem:rf-seam-compatibility}, the exact aligned relation on the
two labelled sides of every cut gives complementary singleton
seam-triangle choices.  Thus the three sheetwise tilings, together with
those singletons, form a legal tiling in \(\mathcal T_{np}\).

The preceding bounds give
\(\|F_{r,n}-nf_r\|_\infty < 3\) on the lattice vertices.  After
the harmless piecewise-affine interpolation constant is included, this
proves \eqref{eq:rf-discrete-approximation} on all of
\(\mathsf R_p\).  The exact relations on the retained discrete seam
vertices extend affinely along their elementary segments.  The outer
boundary trace remains exact; the global \(C/n\) estimate comes from
lattice rounding and piecewise-affine interpolation.
\end{proof}

\subsection{Exact seam-triangle formula}

The support formula \eqref{eq:Sp-formula} shows that seam \(02\) is
retained at levels \(k=b_n+1,\ldots,n\), and seam \(03\) at levels
\(k=c_n+1,\ldots,n\).  For a tiling \(\Xi\), let
\(b_{r,k}(\Xi)\) be the upward increment of the common aligned seam
height, as in \eqref{eq:rf-seam-alignment}--\eqref{eq:rf-aligned-seam-increment}.
The two singleton indicators are given by
\eqref{eq:rf-seam-indicators-from-height}.

Write
\(\lambda_i^{(r,k)}=
 \lambda_i((\widehat X_r^{(n)})^{[k]})\).  Lemma~\ref{lem:rf-local-weights}
and \eqref{eq:rf-gue-hive-map} give
\begin{align}
 W_{02,\Delta}^{\rm rf}(\Xi)
 &=\sum_{k=b_n+1}^{n}
 \left[
 \frac{1+b_{1,k}(\Xi)}3\frac{\lambda_1^{(2,k)}}2
 +\frac{2-b_{1,k}(\Xi)}3
       \left(-\frac{\lambda_k^{(1,k)}}2\right)
 \right],
\label{eq:rf-seam02-discrete}\\
 W_{03,\Delta}^{\rm rf}(\Xi)
 &=\sum_{k=c_n+1}^{n}
 \left[
 \frac{1+b_{2,k}(\Xi)}3\frac{\lambda_1^{(3,k)}}2
 +\frac{2-b_{2,k}(\Xi)}3
       \left(-\frac{\lambda_k^{(2,k)}}2\right)
 \right].
\label{eq:rf-seam03-discrete}
\end{align}
These are the flip-symmetric analogues of the equator expression in
\cite{NarLimit}; the two facing singleton weights now have the same
coefficient \(1/2\).

For \(x\in(0,1]\) and \(\xi\in[-1,2]\), define
\begin{align}
 \mathcal E_{12}(x,\xi)
 &=\frac{1+\xi}{3}\ell_2\sqrt x
   +\frac{2-\xi}{3}\ell_1\sqrt x,\notag\\
 \mathcal E_{23}(x,\xi)
 &=\frac{1+\xi}{3}\ell_3\sqrt x
   +\frac{2-\xi}{3}\ell_2\sqrt x.
\label{eq:rf-seam-density}
\end{align}
For the continuum aligned seam traces defined in
\eqref{eq:rf-continuum-seam-traces}, put
\begin{equation}
 S_{p,\Delta}^{\rm rf}(f)=
 \int_\beta^1\mathcal E_{12}(x,f_{02}'(x))\,dx
 +\int_\gamma^1\mathcal E_{23}(x,f_{03}'(x))\,dx.
\label{eq:rf-seam-functional}
\end{equation}

\begin{lemma}[Seam concentration and \(L^\infty\)-stability]
\label{lem:rf-seam-limit}
Fix a continuum admissible height function \(f\), and let
\(\mathcal T_{np}(f,\varepsilon n)\) be the tilings whose normalized
height functions are within \(\varepsilon\) of \(f\) in \(L^\infty\).
Then
\begin{align}
 \limsup_{n\to\infty}n^{-2}\mathbb E
 \max_{\Xi\in\mathcal T_{np}(f,\varepsilon n)}
  \bigl(W_{02,\Delta}^{\rm rf}(\Xi)+W_{03,\Delta}^{\rm rf}(\Xi)\bigr)
 &\le S_{p,\Delta}^{\rm rf}(f)+o_\varepsilon(1),\notag\\
 \liminf_{n\to\infty}n^{-2}\mathbb E
 \min_{\Xi\in\mathcal T_{np}(f,\varepsilon n)}
  \bigl(W_{02,\Delta}^{\rm rf}(\Xi)+W_{03,\Delta}^{\rm rf}(\Xi)\bigr)
 &\ge S_{p,\Delta}^{\rm rf}(f)-o_\varepsilon(1).
\label{eq:rf-seam-neighborhood}
\end{align}
Uniformly over all tilings,
\begin{equation}
 \sup_{\Xi}\bigl|W_{p,\Delta}^{\rm rf}(\Xi)
       -\mathbb EW_{p,\Delta}^{\rm rf}(\Xi)\bigr|
 =O_{\mathbb P}(n^{4/3}\log^{O(1)}n)=o_{\mathbb P}(n^2),
\label{eq:rf-seam-uniform-concentration}
\end{equation}
and the corresponding \(L^2\) estimate is
\begin{equation}
 \left\|\sup_{\Xi}\bigl|W_{p,\Delta}^{\rm rf}(\Xi)
       -\mathbb EW_{p,\Delta}^{\rm rf}(\Xi)\bigr|\right\|_2
 =O(n^{4/3}\log^{O(1)}n).
\label{eq:rf-seam-uniform-L2}
\end{equation}
\end{lemma}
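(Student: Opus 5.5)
The plan is to split the seam functional into a deterministic mean profile and a random fluctuation. The mean part is handled by a discrete integration by parts. The fluctuation part is controlled uniformly over tilings by Tao--Vu rigidity for the two extreme eigenvalues of each minor.

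Writing \(\epsilon_\pm\) through \eqref{eq:rf-seam-indicators-from-height}, each summand in \eqref{eq:rf-seam02-discrete}--\eqref{eq:rf-seam03-discrete} is affine in \(b_{r,k}\in\{-1,2\}\). Its coefficients are the two random numbers \(\lambda_1^{(r+1,k)}/2\) and \(-\lambda_k^{(r,k)}/2\), and these do not depend on \(\Xi\). Hence
\[
 W_{p,\Delta}^{\rm rf}(\Xi)-\mathbb EW_{p,\Delta}^{\rm rf}(\Xi)
 =\sum_{r,k}\Bigl[\tfrac{1+b_{r,k}}3\tfrac{Y^{+}_{r,k}}2
   +\tfrac{2-b_{r,k}}3\tfrac{Y^{-}_{r,k}}2\Bigr],
\]
where \(Y^\pm\) are the centered extreme eigenvalues of the \(k\)-th minors. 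Since \(|1+b|,|2-b|\le3\), the supremum over \(\Xi\) is bounded by \(\sum_{r,k}(|Y^+_{r,k}|+|Y^-_{r,k}|)\). This bound no longer depends on \(\Xi\).

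For \(Y^{\pm}_{r,k}\), rank \(1\) or \(k\) of a \(k\times k\) scaled GUE minor, the Tao--Vu edge estimate (in the normalization already used in Theorem~\ref{thm:rf-fixed-limit}) gives \(\|Y^\pm_{r,k}\|_2\le Cn^{1/2}k^{-1/6}\log^{O(1)}n\); the fluctuation of the \(k\)-minor edge is of order \(\ell\sqrt n\,k^{-1/6}\). Summing over \(k\le n\) by Minkowski's inequality gives \(O(n^{4/3}\log^{O(1)}n)\), which proves \eqref{eq:rf-seam-uniform-L2}. Markov's inequality then gives \eqref{eq:rf-seam-uniform-concentration}.

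For the mean, I use \(\mathbb E\lambda_1^{(r,k)}=2\ell_r\sqrt{nk}+O(\sqrt n\,k^{-1/6})\), and symmetrically for \(-\lambda_k\). This turns \(n^{-2}\mathbb E W_{02,\Delta}^{\rm rf}(\Xi)\) into the Riemann sum
\[
 n^{-1}\sum_k\Bigl[\tfrac{1+b_{1,k}}3\ell_2\sqrt{k/n}+\tfrac{2-b_{1,k}}3\ell_1\sqrt{k/n}\Bigr]+o(1),
\]
uniformly in \(\Xi\). The only \(\Xi\)-dependence is through \(b_{1,k}=\Delta B^\Xi_1(k)\). Summation by parts against the smooth weight \(\sqrt{k/n}\) rewrites this term as boundary terms plus \(-n^{-1}\sum_k B^\Xi_1(k)\,\Delta\sqrt{k/n}\). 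Uniform closeness \(\|\overline F_{\Xi,n}-f\|_\infty<\varepsilon\) on the seam then compares this with \(\int_\beta^1\mathcal E_{12}(x,f_{02}'(x))\,dx\) after the same integration by parts in the continuum, with error \(O(\varepsilon)\). The boundary values at \(x=\beta,1\) are fixed exactly by \eqref{eq:rf-exact-outer-boundary-trace}. The same argument applies on seam \(03\).

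So \(\mathbb EW_{p,\Delta}^{\rm rf}(\Xi)/n^2=S^{\rm rf}_{p,\Delta}(f)+O(\varepsilon)+o(1)\) uniformly on \(\mathcal T_{np}(f,\varepsilon n)\). Combining this with the uniform \(L^1\) fluctuation bound gives both inequalities in \eqref{eq:rf-seam-neighborhood}, since \(\mathbb E\max\le\max\mathbb E+\mathbb E\sup|W-\mathbb EW|\), and the analogous bound holds for \(\mathbb E\min\).

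The main obstacle is the integration by parts near \(x=\beta\) (or \(\gamma\)), together with the uniformity of the mean asymptotics for small \(k\). The weight \(\sqrt x\) has a singular derivative only at \(0\), and we stay away from it since \(\beta,\gamma>0\). The errors for small \(k\) are therefore absent, and the summed error \(\sum_k\sqrt n\,k^{-1/6}=O(n^{4/3})\) is \(o(n^2)\).
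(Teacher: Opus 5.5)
Your proposal is correct and follows essentially the paper's route. You bound the fluctuation, uniformly in $\Xi$, by the summed Tao--Vu deviations of the extreme minor eigenvalues, and you treat the mean seam sums by summation by parts against the cumulative aligned seam height, which you then compare with $f_{02},f_{03}$. The one minor difference is that you first replace $\mathbb E\lambda_1^{(r,k)}$ and $\mathbb E\lambda_k^{(r,k)}$ by $\pm2\ell_r\sqrt{nk}$ with an explicit $O(\sqrt n\,k^{-1/6})$ error. The paper instead sums by parts directly against the expected profiles and bounds their discrete total variation by $O(n)$ using interlacing monotonicity in the minor level, so it needs only uniform convergence of those profiles on compacts.
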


\begin{proof}
For \(k\ge\min(\beta,\gamma)n+O(1)\), the Tao--Vu estimate applied to
each \(k\times k\) minor gives
\[
 |\lambda_1^{(r,k)}-\mathbb E\lambda_1^{(r,k)}|
 +|\lambda_k^{(r,k)}-\mathbb E\lambda_k^{(r,k)}|
 =O(n^{1/3}\log^{O(1)}n)
\]
simultaneously in \(r,k\), with overwhelming probability.  Every
singleton-indicator coefficient is bounded, so the supremum over tilings is bounded
by the sum of the absolute deviations of all extreme eigenvalues
appearing in \eqref{eq:rf-seam02-discrete}--
\eqref{eq:rf-seam03-discrete}.  This proves
\eqref{eq:rf-seam-uniform-concentration}.  Integrating the Tao--Vu
tails and applying Minkowski's inequality gives
\eqref{eq:rf-seam-uniform-L2}.

After replacing the eigenvalues by their expectations, each seam sum is
of the form \(\sum_k(A_k+D_k b_k)\).  If
\(F_k=\sum_{j\le k}b_j\) is the cumulative seam height, discrete
summation by parts gives
\[
 \sum_kD_kb_k
 =\text{endpoint terms}-\sum_k(D_{k+1}-D_k)F_k.
\]
The expected extreme-eigenvalue profiles converge uniformly on compact
subintervals of \((0,1]\) to \(\pm2\ell_r\sqrt x\).  By interlacing, the
top profile is monotone increasing in the minor level and the bottom
profile is monotone decreasing; their discrete total variation is therefore
\(O(n)\).  Replacing \(F_k\) by the trace of a height
function within \(\varepsilon n\) of \(f\) therefore costs
\(O(\varepsilon n^2)\).  The resulting Riemann sums converge to
\eqref{eq:rf-seam-functional}.  This is verbatim the summation-by-parts
argument for the equator term in \cite{NarLimit}, applied to
\eqref{eq:rf-seam02-discrete} and \eqref{eq:rf-seam03-discrete}.
\end{proof}

\section{The finite-panel variational limit}
\label{sec:rf-variational-limit}

On the interior of the \(r\)-th sheet, let
\(\rho_r(x)=(\mathbb E g_2,\mathbb E g_3)\) be the raw expected
interlacing-gap parameters of the corresponding GUE minor source,
ordered by local types \(2,3\) as above; the source scale is included
and no factor \(1/3\) is absorbed into \(\rho_r\).  Define
\begin{equation}
 S_{p,\mathrm{edge}}(f)
 =-\sum_{r=1}^3\int_{\mathsf R_{p,r}^{\circ}}
       \sigma_\edge^{\rm sym}(\rho_r(x),\nabla f_r(x))\,dx.
\label{eq:rf-bulk-functional}
\end{equation}
Notice that the domain of integration is the portion
\(\mathsf R_{p,r}\) of the canonical region, rather than the whole
initial triangular panel.  The variable bulk weight in
\eqref{eq:rf-tiling-weight} is \(3/2\) times the sum of the global blue
and green weights, equivalently the local type-\(2\) and type-\(3\)
weights.  Accordingly \eqref{eq:rf-bulk-functional} uses the
flip-symmetric surface tension \(\sigma_\edge^{\rm sym}\), not the
numerically asymmetric \(\sigma_\edge\) of
\cite{NSTGUE,NarLimit}.  The local-limit and surface-tension proofs
transfer because the two color coefficients remain fixed, positive,
and bounded; the precise reduction is given in
Lemma~\ref{lem:rf-local-transfer}.

For later use, write
\begin{equation}
 W_{p,\mathrm{edge}}^{\rm rf}(\Xi)
 =\frac32\sum_{\substack{\ell\in\Xi\\\ell\text{ blue}}}\omega_h(\ell)
  +\frac32\sum_{\substack{\ell\in\Xi\\\ell\text{ green}}}\omega_h(\ell).
\label{eq:rf-discrete-bulk-weight}
\end{equation}

\subsection{Quantitative differentiation on the panel cores}
\label{subsec:rf-fefferman}

The upper variational bound involves a maximum over many discrete height
functions.  The scales on which a maximizing height is approximately affine
may depend on that height and on the random weights.  The required uniform
multiscale decomposition is supplied by the following theorem from
Fefferman's unpublished notes \cite{FeffermanQD}; a proof is reproduced in
\cite[Theorem~7.6 and Appendix~B]{NarLimit}.

\begin{theorem}[Fefferman's quantitative-differentiation theorem]
\label{thm:rf-fefferman}
There is an absolute constant \(C^\sharp>0\) with the following property.
Let \(Q=(0,1)^2\), and let \(F:Q\to\mathbb R\) have Lipschitz constant at
most one.  Suppose \(\varepsilon,\eta>0\) and \(k\ge1\) satisfy
\begin{equation}
             \varepsilon^4\eta k\ge C^\sharp.
\label{eq:rf-fefferman-parameters}
\end{equation}
Then \(Q\) has a finite dyadic decomposition into GOOD and BAD squares such
that:
\begin{enumerate}
\item every GOOD square \(Q'\) has sidelength at least \(2^{-k}\), and
      there is an affine function \(L_{Q'}\) satisfying
      \[
          \|F-L_{Q'}\|_{L^\infty(Q')}
          \le \varepsilon\,\operatorname{side}(Q');
      \]
\item every BAD square has sidelength exactly \(2^{-k}\), and the total
      area of the BAD squares is at most \(\eta\).
\end{enumerate}
The affine function may be taken to be the best affine approximation in
\(L^2(Q')\).
\end{theorem}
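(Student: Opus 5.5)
The plan is a stopping-time argument driven by a square-function estimate for the $L^2$ affine-approximation numbers. For a dyadic square $Q'\subseteq Q$, put
\[
 \beta(Q'):=\operatorname{side}(Q')^{-1}
 \inf_{L\ \text{affine}}
 \Bigl(\frac1{|Q'|}\int_{Q'}|F-L|^2\Bigr)^{1/2},
\]
and let $L_{Q'}$ be the minimizing (best $L^2$) affine function. The first step, and the one I expect to be the main obstacle, is Dorronsoro's estimate in dimension two:
\[
 \sum_{Q'\ \text{dyadic}}\beta(Q')^2|Q'|\;\le\;C\|\nabla F\|_{L^2(Q)}^2\;\le\;C .
\]
I would prove it by writing $\nabla F$ in a Haar (or smooth compactly supported wavelet) expansion adapted to the dyadic grid. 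Poincaré's inequality applied to $F-L$ bounds $\beta(Q')$ by the part of $\nabla F$ on scales comparable to or finer than $Q'$, and that part carries geometrically decaying weights. A Schur-test summation over scales then gives the bound.

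The second step is an upgrade from $L^2$ to $L^\infty$ using the Lipschitz bound. The minimizer $L_{Q'}$ has slope bounded by an absolute constant, since it is an $L^2$ projection of a $1$-Lipschitz function. Hence $F-L_{Q'}$ is $C$-Lipschitz. If $|F-L_{Q'}|$ reached $h$ at some point, it would exceed $h/2$ on a disc of radius $\sim h$ meeting $Q'$ in area $\gtrsim h^2$ (one needs $h\le\operatorname{side}(Q')$, which is automatic after rescaling). Therefore
\[
 \|F-L_{Q'}\|_{L^\infty(Q')}\le C\beta(Q')^{1/2}\operatorname{side}(Q').
\]
Consequently $\beta(Q')\le c_0\varepsilon^2$ forces the GOOD inequality $\|F-L_{Q'}\|_\infty\le\varepsilon\operatorname{side}(Q')$.

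The third step is the stopping rule. Starting from $Q$, declare a square GOOD and stop if $\beta(Q')\le c_0\varepsilon^2$. Otherwise subdivide, until generation $k$, where every remaining square is declared BAD. This produces a finite disjoint dyadic decomposition in which GOOD squares have side at least $2^{-k}$ and satisfy item 1, and BAD squares have side exactly $2^{-k}$.

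The fourth step is the area bound. Every point of a BAD square lies in $k$ ancestors, one per generation $0,\dots,k-1$, and none of these was GOOD, so each has $\beta^2>c_0^2\varepsilon^4$. Summing over these ancestors and applying the first step gives
\[
 k\,c_0^2\varepsilon^4\,\bigl|\textstyle\bigcup\mathrm{BAD}\bigr|
 \le\sum\beta(Q')^2|Q'|\le C .
\]
Hence the BAD area is at most $C/(c_0^2\varepsilon^4k)\le\eta$ once $C^\sharp:=C/c_0^2$ in \eqref{eq:rf-fefferman-parameters}. The last sentence of the theorem holds by construction, because $L_{Q'}$ was chosen as the best $L^2$ affine approximant.
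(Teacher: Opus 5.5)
The paper does not prove this theorem. It imports it from \cite{FeffermanQD}, with the proof recorded in \cite[Theorem~7.6 and Appendix~B]{NarLimit}, so your argument has to stand on its own.

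Steps~2--4 are correct, and they produce exactly the hypothesis \(\varepsilon^4\eta k\ge C^\sharp\).
\begin{itemize}
\item Integrating by parts in each coordinate gives \(|\partial_iL_{Q'}|\le1\).
\item In the \(L^2\to L^\infty\) upgrade, the bound \(h\lesssim\operatorname{side}(Q')\) does not come from ``rescaling''. It holds because \(F-L_{Q'}\) is orthogonal to constants, hence vanishes somewhere in \(Q'\), and is \(C\)-Lipschitz.
\item The stopping rule and the ancestor count are fine.
\end{itemize}
You should also say how a whole-plane square-function estimate is applied to a function defined only on \(Q\). For example, extend \(F\) by reflection across the sides and multiply by a cutoff equal to \(1\) on \(Q\).

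The one real gap is your proposed proof of Step~1. First-order Poincar\'e applied to \(F-L\) gives only \(\beta(Q')^2|Q'|\lesssim\|\nabla F-(\nabla F)_{Q'}\|_{L^2(Q')}^2=\sum_{R\subseteq Q'}\|\Delta_R\nabla F\|_2^2\). There is no decay in the scale gap. Summing over generations \(<k\) therefore yields \(\sum_R(\operatorname{gen}(R)+1)\|\Delta_R\nabla F\|_2^2\), which can be of order \(k\).

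For example, take \(F(x)=(2\pi)^{-1}2^{-J}\sin(2^{J+1}\pi x_1)\), which is \(1\)-Lipschitz. Every dyadic \(Q'\) of generation \(\le J\) has \((\nabla F)_{Q'}=0\) and \(\|\nabla F\|_{L^2(Q')}^2=|Q'|/2\), so the Poincar\'e bound is about \(J/2\). In fact \(\beta(Q')\lesssim2^{-|i-J|}\) on generation \(i\), so \(\sum\beta^2|Q'|=O(1)\). With \(J=k\), the loss cancels the factor \(k\) in your Step~4 and the BAD-area bound becomes useless.

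The geometric decay from finer scales comes from the \(L^2\)-smallness of the fine-scale part of \(F\) itself, relative to its gradient. Neither Haar variant sees this: Haar pieces of \(\nabla F\) are not gradients, and Haar pieces of \(F\) do not annihilate affine functions. Instead:
\begin{itemize}
\item Write \(F=\sum_jP_jF\), using Littlewood--Paley pieces or sufficiently smooth wavelets.
\item For \(\operatorname{side}(Q')=2^{-i}\), use subadditivity of \(\beta\).
\item For \(j\le i\), use second-order Poincar\'e: \(\beta_{P_jF}(Q')\lesssim2^{-i}|Q'|^{-1/2}\|\nabla^2P_jF\|_{L^2(Q')}\).
\item For \(j>i\), use the trivial bound \(\beta_{P_jF}(Q')\le2^{i}|Q'|^{-1/2}\|P_jF\|_{L^2(Q')}\).
\item The Bernstein bounds \(\|\nabla^2P_jF\|_2\lesssim2^j\|\nabla P_jF\|_2\) and \(\|P_jF\|_2\lesssim2^{-j}\|\nabla P_jF\|_2\) then give weights \(2^{-2|i-j|}\).
\item Cauchy--Schwarz plus the Schur test yield \(\sum\beta^2|Q'|\lesssim\sum_j\|\nabla P_jF\|_2^2\lesssim\|\nabla F\|_2^2\).
\end{itemize}
Alternatively, simply cite Dorronsoro's theorem (Proc.\ Amer.\ Math.\ Soc.\ 95, 1985); your Step~1 is its planar \(L^2\) case.
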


The triangular-lattice panels are described in the same square coordinates
used to parametrize the patches in the definition of
\(\sigma_\edge^{\rm sym}\).  These array coordinates organize the dyadic
decomposition; the slopes of affine approximants remain geometric
gradients.  The fixed affine change of coordinates affects only
absolute Lipschitz constants.  Applying
Theorem~\ref{thm:rf-fefferman} block by block, and splitting each dyadic
square into two triangles when needed, gives the following panelwise form.

\begin{corollary}[Function-dependent GOOD/BAD decomposition of a panel core]
\label{cor:rf-panel-fefferman}
Fix \(\varepsilon,\eta>0\), choose \(k\) satisfying
\eqref{eq:rf-fefferman-parameters}.
Fix a dyadic \(M\) large enough (depending on \(\varepsilon,\eta,k\)), and let \(n\to\infty\).
Later send \(M\to\infty\).
Let \(C_{r,n}\) be one of the three panel cores obtained after deleting a
fixed-width boundary and cut strip, and let \(F_\Xi\) be the
piecewise-linear height of any tiling of the cut-open region.  There are
collections \(\mathcal G_{r,\Xi}\) and \(\mathcal B_{r,\Xi}\) of dyadic
triangular cells in \(C_{r,n}\) such that:
\begin{enumerate}
\item the cells cover the core apart from \(O(Mn)\) lattice area along the
      boundaries of the \(M\)-blocks;
\item every \(T\in\mathcal G_{r,\Xi}\) has sidelength
      \(m_T\in[2^{-k}M,M]\), and there is an affine \(L_T\) with
      \begin{equation}
         \|F_\Xi-L_T\|_{L^\infty(T)}
          \le C\varepsilon m_T;
      \label{eq:rf-panel-good-affine}
      \end{equation}
\item the total lattice area of the cells in \(\mathcal B_{r,\Xi}\) is at
      most \(C\eta n^2+O(Mn)\).
\end{enumerate}
Although the chosen GOOD/BAD subcollection depends on \(\Xi\), every cell
belongs to a deterministic dyadic family depending only on
\(C_{r,n},M,k\).  The constants are uniform over the three panel shapes.
\end{corollary}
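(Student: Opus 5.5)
The plan is to transfer Theorem~\ref{thm:rf-fefferman} from the unit square to each panel core by an affine chart and a block decomposition, keeping every cell in a dyadic family fixed in advance.

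\emph{Step 1 (charts and blocks).} Fix the three panel cores \(C_{r,n}\). In the array coordinates \((I,J)\) of \(P_r\), each core is a union of lattice triangles. Tile the plane by the deterministic grid of \(M\times M\) array squares and keep those blocks \(Q_\alpha\) lying entirely inside \(C_{r,n}\). The discarded blocks meet the boundary of the core or the cut strips. Since the core is a polygon of perimeter \(O(n)\), these discarded blocks have total area \(O(Mn)\), which gives item~1.

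\emph{Step 2 (rescaling and applying Fefferman).} For any tiling \(\Xi\), the piecewise-linear height \(F_\Xi\) is Lipschitz with a constant \(L_*\). This constant is uniform in \(\Xi\) and depends only on the slope triangle \(K\) and the fixed affine map \(E\). On each block \(Q_\alpha\), define
\[
F_\alpha(y):=(L_*M)^{-1}F_\Xi(z_\alpha+My),\qquad y\in(0,1)^2 ,
\]
which is \(1\)-Lipschitz. Apply Theorem~\ref{thm:rf-fefferman} with the given \(\varepsilon,\eta,k\). Pull the resulting dyadic GOOD and BAD squares back to \(Q_\alpha\).

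\begin{itemize}
\item A GOOD square has side \(m\in[2^{-k}M,M]\), and it carries an affine \(L\) with \(\|F_\Xi-L\|_\infty\le L_*\varepsilon m\).
\item The BAD squares in \(Q_\alpha\) have area at most \(\eta M^2\). Summing over the \(O(n^2/M^2)\) blocks gives a total of \(C\eta n^2\).
\end{itemize}

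Cut each square along a fixed diagonal into two triangles \(T\). The affine bound restricts to each half, which gives \eqref{eq:rf-panel-good-affine} with \(C=L_*\) up to the constant coming from the chart. If \(M\) is a power of two and \(2^{-k}M\ge1\), the square corners are lattice points, so each triangle is a union of elementary lattice cells. The GOOD/BAD labels depend on \(\Xi\), but every square belongs to the deterministic family of dyadic subsquares of depth at most \(k\) of the fixed blocks. That family depends only on \(C_{r,n}\), \(M\) and \(k\), which is the uniformity claim.

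\emph{Step 3 (main obstacle).} The delicate point is the geometric bookkeeping between array and Euclidean coordinates. The theorem is applied in array coordinates, whereas slopes and the constants in item~2 are Euclidean. Because \(E(I,J)=-I\widehat\imath+J\widehat\jmath\) is a fixed linear map, it distorts lengths and areas by absolute factors. This is why \(C\) is uniform over the three panels, up to the reflection used on \(P_2\). The diagonal splitting has to be consistent with the triangular-lattice orientation, and the \(O(Mn)\) loss must also absorb the cells along the cut strips, whose width is fixed in lattice units. Taking \(M\) large but fixed, and then letting \(n\to\infty\), keeps all error terms of the stated form.
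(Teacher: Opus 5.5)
Your proposal is correct and follows the paper's proof essentially step for step. You tile the core by \(M\)-blocks in panel array coordinates, losing only an \(O(Mn)\) collar, rescale each block by the universal lozenge-height Lipschitz constant, apply Theorem~\ref{thm:rf-fefferman} blockwise, pull back, split the dyadic squares along the lattice diagonal into triangular cells, sum the BAD areas, and take the full dyadic trees of depth at most \(k\) as the deterministic family. Your \(M\times M\) array squares are exactly the paper's rhombic \(M\)-blocks. The only loss in item~1 is the collar of blocks meeting the boundary of the core; the deleted boundary-and-cut strip is not part of \(C_{r,n}\), so nothing else needs to be absorbed.
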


\begin{proof}
Tile the panel core, up to a collar of lattice area \(O(Mn)\), by rhombic
\(M\)-blocks in panel square coordinates.  A lozenge height has a universal
Lipschitz constant; divide by that constant and rescale each block to the
unit square.  Apply Theorem~\ref{thm:rf-fefferman} on every block, rescale
back, and split the resulting dyadic squares into a bounded number of
triangular cells.  Summing the BAD areas over the blocks proves the third
assertion.  The full dyadic trees inside all \(M\)-blocks form the stated
deterministic candidate family.
\end{proof}

\begin{lemma}[Fixed-target two-scale affine interpolation]
\label{lem:rf-fixed-target-interpolation}
Fix \(f=(f_1,f_2,f_3)\in\operatorname{AHT}(\mathsf R_p)\), a
boundary-strip width \(\zeta>0\), and \(\theta>0\).  On each developed
lattice panel put
\[
                 f_{r,n}(z)=n f_r(z/n).
\]
Fix an integer \(C_0\geq1\), and triangulate the lattice version of
\(C_r^\zeta\), apart from a set of lattice area \(O(C_0n)\), by a
deterministic triangular mesh of sidelength \(C_0+O(1)\).  Let
\(\bar f_{r,n}\) agree with \(f_{r,n}\) at the mesh vertices and be affine
on each mesh triangle.  For all sufficiently large \(n\), the mesh triangles
have a subcollection \(\mathcal Q_{r,n}^{\rm good}\) with the following
properties:
\begin{enumerate}
\item the triangles outside \(\mathcal Q_{r,n}^{\rm good}\), together
      with the uncovered set, have total lattice area at most
      \(C\theta n^2+O(C_0n)\);
\item for every \(Q\in\mathcal Q_{r,n}^{\rm good}\),
      \begin{equation}
       \|f_{r,n}-\bar f_{r,n}\|_{L^\infty(Q)}
          \leq C\theta C_0;
      \label{eq:rf-fixed-target-relative-affinity}
      \end{equation}
      and \(\nabla(\bar f_{r,n}|_Q)\in K\);
\item on a subset of \(Q\) of relative area at least \(1-C\theta\),
      \(f_r\) is differentiable and
      \begin{equation}
       \left\|\nabla f_r(z/n)-
                    \nabla(\bar f_{r,n}|_Q)\right\|
          \leq C\theta.
      \label{eq:rf-fixed-target-gradient-affinity}
      \end{equation}
\end{enumerate}
The constants are uniform over the three panels.  If, in addition,
\[
 \operatorname{dist}(\nabla f_r(x),\partial K)\geq\mu
 \quad\text{for a.e. }x\in C_r^\zeta,
\]
then, after replacing \(\theta\) by a sufficiently small constant multiple
of \(\mu\), every retained affine slope satisfies
\begin{equation}
 \operatorname{dist}\bigl(
       \nabla(\bar f_{r,n}|_Q),\partial K\bigr)\geq\mu/2.
\label{eq:rf-fixed-target-interior-slope}
\end{equation}
\end{lemma}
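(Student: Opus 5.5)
The plan is the classical Lebesgue-differentiation route (as in Cohn--Kenyon--Propp, Lemma~2.2), carried out at the fixed lattice scale \(C_0\). After rescaling, a mesh triangle \(Q\) corresponds to a continuum triangle \(Q/n\) of diameter \(h_n:=(C_0+O(1))/n\to0\). Each \(f_r\) is Lipschitz with \(\nabla f_r\in K\), so by Rademacher it is differentiable almost everywhere on \(C_r^\zeta\).

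The first step is to make approximate differentiability uniform. Apply Lusin's theorem to the bounded measurable field \(\nabla f_r\). Combine it with Egorov's theorem applied to the family
\[
\omega_h(x)=\sup_{|y-x|\le h}\frac{|f_r(y)-f_r(x)-\nabla f_r(x)\cdot(y-x)|}{h}\;\longrightarrow\;0 \qquad (h\to0).
\]
This produces a compact set \(G_r\subset C_r^\zeta\) with \(|C_r^\zeta\setminus G_r|\le\theta^3\) and \(h_0>0\) with two properties on \(G_r\): \(\omega_h\le\theta\) for all \(h\le h_0\), and \(\nabla f_r|_{G_r}\) is uniformly continuous with oscillation at most \(\theta\) at scale \(h_0\).

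Next, call a mesh triangle \(Q\) good if \(|(Q/n)\cap G_r|\ge(1-\theta)|Q/n|\). Chebyshev's inequality bounds the total area of the bad triangles by \(\theta^{-1}\theta^3\le\theta\) in normalized area, that is \(O(\theta n^2)\) lattice area. The uncovered collar contributes \(O(C_0n)\), which gives item~1.

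For a good \(Q\), pick \(x_Q\in (Q/n)\cap G_r\). The differentiability estimate at \(x_Q\) gives \(|f_{r,n}-A_Q|\le\theta h_n n\le C\theta C_0\) on \(Q\), where \(A_Q\) is the affine map with slope \(\nabla f_r(x_Q)\). The same bound then holds for \(f_{r,n}\) at the three mesh vertices. Since \(\bar f_{r,n}\) is the vertex interpolant and the triangle is non-degenerate, \(\|\bar f_{r,n}-A_Q\|_{L^\infty(Q)}\le C\theta C_0\), and the gradients differ by at most \(C\theta\). This proves \eqref{eq:rf-fixed-target-relative-affinity}.

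The step I expect to be delicate is the slope constraint \(\nabla\bar f_{r,n}\in K\). It follows because \(\bar f_{r,n}\) interpolates values of a function whose difference quotients obey the directional Lipschitz inequalities defining \(K\). The three edge increments of \(Q\), which are lattice-direction vectors of length \(C_0+O(1)\), satisfy these inequalities exactly, and on a triangle whose edges lie along the three lattice directions the edge inequalities are equivalent to the gradient lying in \(K\). I would therefore choose the mesh with edges along \(d_1,d_2,d_3\); the \(O(1)\) perturbation is then only a boundary-collar issue and is absorbed into \(O(C_0n)\).

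Item~3 follows from the triangle inequality. For \(y\in G_r\cap Q/n\), uniform continuity gives \(|\nabla f_r(y)-\nabla f_r(x_Q)|\le\theta\), and this is combined with the gradient comparison from the previous step. Finally, suppose \(\operatorname{dist}(\nabla f_r,\partial K)\ge\mu\) almost everywhere. Since \(x_Q\) can be chosen with \(\nabla f_r(x_Q)\) in this interior set, taking \(C\theta\le\mu/2\) yields \eqref{eq:rf-fixed-target-interior-slope}. All constants depend only on the bounded list of panel frames.
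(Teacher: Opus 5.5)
Your proposal is correct and takes essentially the paper's route. The paper cites Cohn--Kenyon--Propp's Lemma~2.2 (via \cite[Lemma~8.3]{NarLimit}) as a black box for the relative-affinity and gradient estimates, whereas you reprove that lemma using Rademacher, Lusin, Egorov and a Chebyshev count; your arguments for \(\nabla\bar f_{r,n}\in K\) (three lattice-direction side inequalities) and for the interior margin (\(1\)-Lipschitz distance to \(\partial K\) at a chosen good point) match the paper's.
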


\begin{proof}
After rescaling by \(n^{-1}\), the mesh size is
\(\ell_n=(C_0+O(1))/n\), which tends to zero with \(C_0\) fixed.  Apply
the lozenge-height formulation \cite[Lemma~8.3]{NarLimit}, quoted there
from \cite[Lemma~2.2]{CKP}, to the fixed Lipschitz function \(f_r\) on
each panel core and sum over the three panels.  That lemma gives
\eqref{eq:rf-fixed-target-relative-affinity} and
\eqref{eq:rf-fixed-target-gradient-affinity} on all but a
\(C\theta\)-fraction of the mesh triangles.  Only \(O(n/C_0)\) triangles
meet the polygonal boundary of a core, and their total lattice area is
\(O(C_0n)\).

Every side of a mesh triangle is parallel to a triangular-lattice
direction.  Since \(\nabla f_r\in K\) almost everywhere, integration along
such a side places its difference quotient in the corresponding allowed
interval.  The three side inequalities are exactly the three inequalities
defining the slope triangle \(K\).  Hence the affine interpolant on every
mesh triangle has slope in \(K\).

For the final assertion, choose in each retained triangle a
differentiability point at which
\eqref{eq:rf-fixed-target-gradient-affinity} holds.  The distance to the
closed set \(\partial K\) is a one-Lipschitz function of the slope, so
\[
 \operatorname{dist}\bigl(
       \nabla(\bar f_{r,n}|_Q),\partial K\bigr)
 \geq \mu-C\theta.
\]
Taking \(C\theta\leq\mu/2\) proves
\eqref{eq:rf-fixed-target-interior-slope}.
\end{proof}

\begin{remark}
\label{rem:rf-role-fefferman}
Theorem~\ref{thm:rf-fefferman} is used directly in the upper bound below,
because the maximizing discrete height is not fixed in advance.  It is also
used indirectly through the continuity and scale-comparison properties of
\(\sigma_\edge^{\rm sym}\), obtained by the coefficient-stable
adaptation of \cite{NarLimit}.  The lower bound uses no decomposition that
must be uniform over the maximizing random height.  After fixing the target
continuum height, Lemma~\ref{lem:rf-fixed-target-interpolation} selects the
good cells of a deterministic \(C_0\)-mesh, and the global patching lemma
inserts local optimizers there.  Thus relative affine approximation is
required only on a set whose complement has vanishing area, not on every
coarse triangle.
\end{remark}

\begin{lemma}[Boundary-strip estimate]
\label{lem:rf-boundary-strip}
Let \(\mathsf B_{np}(\eta)\) be the set of atomic triangles at lattice
distance at most \(\eta n\) from the polygonal boundary of one of the
three sheets, including both sides of the two cuts.  Let
\(W_{p,\mathrm{edge}}^{\rm strip}(\Xi;\eta)\) be the contribution of
nonred lozenges meeting this set, equivalently lozenges of local type
\(2\) or \(3\).  Then
\begin{equation}
 \lim_{\eta\downarrow0}\limsup_{n\to\infty}
 n^{-2}\mathbb E\sup_{\Xi\in\mathcal T_{np}}
 \left|W_{p,\mathrm{edge}}^{\rm strip}(\Xi;\eta)\right|=0.
\label{eq:rf-boundary-strip-estimate}
\end{equation}
\end{lemma}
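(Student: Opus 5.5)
The plan is to bound the strip contribution uniformly over tilings by the absolute weight of all non-red lozenges in the strip, and then show that this absolute weight has expected size $O(\eta n^2)+o(n^2)$. Since every tiling uses each atomic triangle at most once, for every $\Xi\in\mathcal T_{np}$
\[
 \bigl|W_{p,\mathrm{edge}}^{\rm strip}(\Xi;\eta)\bigr|
 \le \frac32\sum_{\ell\in\mathcal L_{np}(\eta)}|\omega_h(\ell)|,
\]
where $\mathcal L_{np}(\eta)$ is the deterministic set of all admissible lozenges of local type $2$ or $3$ meeting $\mathsf B_{np}(\eta)$. The right side no longer depends on $\Xi$, so the supremum disappears and it suffices to bound the expectation of this deterministic sum.

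Next I will express the weights through interlacing gaps. By the definitions preceding Lemma~\ref{lem:rf-local-transfer}, a type-$2$ or type-$3$ lozenge at $(i,j)$ on sheet $r$ has $\omega_h(\ell)=-g_2(i,j)/3$ or $-g_3(i,j)/3$. These gaps are nonnegative by Cauchy interlacing, and the large-gap offsets cancel from them, because only the red family sees $B_{r-1}$ (proof of Lemma~\ref{lem:rf-centered-large-gap-valid}). Hence the absolute weights are exactly the gaps of the source-scaled minor process of $\widehat X_r^{(n)}$, with no dependence on $\mathcal E_n$ or $K_{\rm gap}$.

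The key observation is a telescoping bound. For fixed $j$, $\sum_i g_2(i,j)$ over any $i$-interval telescopes along the zigzag $\lambda_i^{(j)}\ge\lambda_{i+1}^{(j+1)}\ge\lambda_{i+1}^{(j)}$. Similarly, $\sum_i\bigl(g_2(i,j)+g_3(i,j)\bigr)$ over an interval equals a difference of eigenvalues of the $j$-th minor, so it is at most ${\rm width}((\widehat X_r^{(n)})^{[j]})\le 2\|\widehat X_r^{(n)}\|_{\rm op}$. A row segment of length $L$ therefore carries total gap at most $2\|\widehat X_r^{(n)}\|_{\rm op}$, independent of $L$. This bound is too weak when the strip runs parallel to rows (for example along $q_0=0$, the top row $j=n$), because it does not exploit the thinness of the strip. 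So I will use it only to control how many rows are involved, and otherwise use expected gaps.

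I split the strip into $O(1)$ pieces according to which wall each piece borders. For a wall transverse to the rows, each row meets the piece in an interval of length $O(\eta n)$, so the telescoping bound is not needed. Instead I take expectations: $\mathbb E g_{2,3}(i,j)=O(\ell_r\sqrt n\cdot n^{-1/2})=O(1)$ in the bulk, by the semicircle spacing with eigenvalues of order $n$ and $j$ of them. Near the spectral edge or small $j$, the expected gaps are bounded by the eigenvalue rigidity estimates of \cite{TaoVuConcentration} already used in Theorem~\ref{thm:rf-fixed-limit}.

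Rather than estimating gaps pointwise, I will sum expected gaps row by row using the telescoping identity. Along a segment of length $m$ in row $j$, the expected total gap equals a difference of expected eigenvalues $\mathbb E\lambda_i^{(j)}-\mathbb E\lambda_{i+m}^{(j)}$. By the semicircle quantile profile this is $O(n\cdot m/j)^{}$ in the bulk, and $O(\sqrt n\,\sqrt{n}\,\min(1,m/j)^{2/3})$ at the edges. For a strip of width $\eta n$ over $\Theta(n)$ rows this gives $O(\eta^{2/3}n^2)$. For a wall parallel to the rows, only $O(\eta n)$ rows meet the strip, and each carries at most $2\mathbb E\|\widehat X_r^{(n)}\|_{\rm op}=O(n)$, which gives $O(\eta n^2)$. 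Near the apex $u_0$, where $j$ is small, the rows are short and have small width $O(\sqrt{nj})$; summing over $j\le\eta n$ gives $O((\eta n)^{3/2}\sqrt n)=O(\eta^{3/2}n^2)$. Both cut copies are handled the same way, since each is a wall of one of these two kinds on its own sheet.

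Combining these estimates, $n^{-2}\mathbb E\sup_\Xi|W^{\rm strip}|\le C(\eta^{2/3}+\eta)+o(1)$, which tends to $0$ as $\eta\downarrow0$. The main obstacle is making the transverse-wall estimate uniform down to the spectral edge and to small minors, where the semicircle density degenerates. I will handle it by the telescoping identity, which converts everything into expected eigenvalue differences. I will then invoke uniform convergence of $n^{-1}\mathbb E\lambda_{\lfloor un\rfloor}^{(\lfloor xn\rfloor)}$ to the scaled quantile $\ell_r\sqrt{x}\,\gamma_{\rm sc}(u/x)$, which is $1/2$-Hölder in its arguments; this is enough to give a modulus $o_\eta(1)$.
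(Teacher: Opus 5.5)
Your proposal is correct, but it takes a different route from the paper.

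\textbf{The paper's route.} After the same reduction to the deterministic candidate majorant, the paper introduces a spectral cutoff \(\varepsilon\). On the spectral \(\varepsilon\)-core it counts \(O(\eta n^2)\) candidate lozenges and uses uniformly bounded expected bulk gaps, which is a local-statistics input. Off the core, it enlarges to all type-\(2\)/type-\(3\) lozenges of the full panel. The NSTGUE telescoping reduces these to \(O(\varepsilon n)\) spectral widths and \(O(n)\) edge increments, and rigidity controls them with overwhelming probability, upgraded to \(L^1\) by Cauchy--Schwarz. This gives \(C_\varepsilon\eta+C\varepsilon^{1/3}\) and an iterated limit.

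\textbf{Your route.} You never split by spectral position. You use the exact identity \(g_2(i,j)+g_3(i,j)=\lambda_i^{(j)}-\lambda_{i+1}^{(j)}\) at the level of expectations. Each transverse-wall row segment of length \(m=O(\eta n)\) then contributes exactly \(\mathbb E\lambda_{i}^{(j)}-\mathbb E\lambda_{i+m}^{(j)}\). This is controlled by the modulus of continuity in \(u\) of \(\ell_r\sqrt x\,\gamma_{\rm sc}(u/x)\). That modulus is indeed \(O(\sqrt h)\) uniformly in \(x\): balance the bound \(4\sqrt x\) against \(Cx^{-1/6}h^{2/3}\). Only the \(O(\eta n)\) rows adjacent to \(q_0=0\) need the crude width bound.

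\textbf{Comparison.} Your argument is more elementary. It uses only first moments and uniform convergence of normalized expected minor eigenvalues. It needs no local bulk gap bounds, no high-probability rigidity, and no exceptional-event bookkeeping, and it yields an explicit rate. What it does not provide is the \(L^2\)/high-probability control that the paper's bulk/edge split produces. The paper reuses that split in the spectral-edge removal of Lemma~\ref{lem:rf-family-variance} and in Lemma~\ref{lem:rf-fixed-collar-L2}; for the present \(L^1\) statement it is not needed.

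\textbf{Two harmless slips.}
\begin{itemize}
\item \(g_2\) alone does not telescope, only \(g_2+g_3\) does. This suffices by nonnegativity of both gaps.
\item The bulk spacing in row \(j\) is of order \(\sqrt{n/j}\), not \(n/j\). Your \(nm/j\) is only an upper bound, giving \(\eta\log(1/\eta)\) rather than \(\eta\). Neither affects the conclusion.
\end{itemize}
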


\begin{proof}
For the \(r\)-th developed panel, let
\(\mathcal C_{r,n}(\eta)\) be the set of all candidate lozenges of
local type \(2\) or \(3\) meeting \(\mathsf B_{np}(\eta)\).  The
selected lozenges of any tiling form a subset of this deterministic
set.  Hence, by the triangle
inequality and the symmetric coefficient \(3/2\),
\begin{equation}
 \sup_{\Xi\in\mathcal T_{np}}
 \left|W_{p,\mathrm{edge}}^{\rm strip}(\Xi;\eta)\right|
 \leq \frac32\sum_{r=1}^3
       \sum_{\ell\in\mathcal C_{r,n}(\eta)}
          |\omega_{H_r^{(n)}}(\ell)|.
\label{eq:rf-strip-candidate-majorant}
\end{equation}

Fix an auxiliary spectral cutoff \(\varepsilon>0\).  Split the
candidate set on each developed panel according as its location in the
underlying full Gelfand--Tsetlin panel lies inside or outside the standard
spectral \(\varepsilon\)-core.  In the core, the finitely many polygonal
boundary and cut segments have total length \(O(n)\), so their
\(\eta n\)-neighborhood meets \(O(\eta n^2)\) candidate lozenges.  The
expected absolute interlacing gaps are uniformly bounded on this fixed
spectral core.  Therefore
\[
 \mathbb E\sum_{r=1}^3
   \sum_{\substack{\ell\in\mathcal C_{r,n}(\eta)\\
                    \ell\ \mathrm{spectrally}\ \varepsilon\text{-bulk}}}
       |\omega_{H_r^{(n)}}(\ell)|
 \leq C_\varepsilon\eta n^2+o(n^2).
\]
This estimate includes the interiors of both artificial cuts; no
spectral-edge argument is needed there.

For the complementary candidates, enlarge the set to all type-\(2\)
and type-\(3\) lozenges outside the standard \(\varepsilon\)-core of
the corresponding full Gelfand--Tsetlin panel.  The telescoping calculation of
\cite[\S4, pp.~1149--1150]{NSTGUE}, with the three classes displayed on
p.~1150, bounds their total absolute weight by the sum of
\(O(\varepsilon n)\) full spectral widths and \(O(n)\) edge spectral
increments whose ranks are \(O(\varepsilon n)\).  Eigenvalue rigidity
therefore gives
\[
 \sum_{r=1}^3
   \sum_{\substack{\ell\text{ of type }2\text{ or }3\\
                    \ell\ \mathrm{not\ spectrally}\
                    \ \varepsilon\text{-bulk}}}
       |\omega_{H_r^{(n)}}(\ell)|
 \leq C\varepsilon^{1/3}n^2+o(n^2)
\]
with overwhelming probability.  Restricting a full panel to one of our
polygonal panel regions can only decrease this sum.  Re-expressing
\(P_2\) in its local \([i,j]_2\) frame exchanges the global blue and
green names and applies the reflected-rank convention; the
type-\(2\)/type-\(3\) absolute-weight sum is unchanged.  The three
fixed source scales and the coefficient \(3/2\) only change \(C\).

The rigidity estimates used in the cited calculation have overwhelming
tails and bounded moments.  Since the displayed telescoping majorant is
itself bounded by \(O(n)\) full spectral widths, Cauchy--Schwarz on the
exceptional event upgrades the same estimate to \(L^1\).  Taking
expectations in \eqref{eq:rf-strip-candidate-majorant} consequently gives
\[
 \limsup_{n\to\infty}n^{-2}\mathbb E
 \sup_{\Xi\in\mathcal T_{np}}
 \left|W_{p,\mathrm{edge}}^{\rm strip}(\Xi;\eta)\right|
 \leq C_\varepsilon\eta+C\varepsilon^{1/3}.
\]
First let \(\eta\downarrow0\), with \(\varepsilon\) fixed, and then let
\(\varepsilon\downarrow0\).  This proves
\eqref{eq:rf-boundary-strip-estimate}.
\end{proof}

\paragraph{Convention for local height states and their scores.}
\phantomsection\label{conv:rf-local-height-states}
Let \(D\) be a finite connected union of elementary triangular-lattice
cells.  A \emph{local height state on \(D\)} is represented, modulo an
additive integer constant, by an integer-valued function
\(F:V(D)\to\mathbb Z\) whose increment on every
positively oriented lattice edge \(e=x\to y\) of \(D\) belongs to
\(\{+1,-2\}\).  We call \(e\) \emph{selected} when
\(F(y)-F(x)=-2\).  Around every elementary triangle the three positively
oriented side increments sum to zero; consequently exactly one of its
three sides is selected.  If a selected edge is internal to \(D\), its
two incident triangles form the corresponding lozenge.  A selected
boundary edge is also allowed: it records that the incident triangle of
\(D\) is paired across \(\partial D\) with the triangle in an adjoining
domain.  Thus a local height state need not tile \(D\) using only
lozenges contained in \(D\), and the restriction of every global panel
tiling is a local height state.

The local score of such a state is the relevant weighted sum over
selected internal edges, equivalently over lozenges both of whose
elementary triangles lie in \(D\).  Lozenges crossing \(\partial D\) are
omitted from the local score and are assigned to the boundary-layer
remainder when local scores are summed.  If local states on two adjoining
domains agree, after an additive integer shift, at every vertex of their
common interface, then
their increments on each interface edge agree.  Hence their selected
interface edges coincide: a selected one pairs the two incident
triangles across the interface, while at an unselected one both
triangles are paired on their own sides.  In this sense the two local
states glue.  This convention applies to all local maxima and patching
lemmas below.

\paragraph{Panelwise transfer: precise imported estimates.}
We first state the uniform transfer from the standard Gelfand--Tsetlin
patches as a separate proposition.  This keeps the imported
probabilistic input, the deterministic change of patch shape, and the
red-free gauge calculation logically distinct.  Throughout, fix a
compact macroscopic panel core
\(\mathcal C_{\rm core}\subset
\{E(I,J):0<I<J<1\}\), and let all cells lie in that core.

For a patch \(D\), define the asymmetric local score by
\[
 \mathcal S_D^{\rm asym}(H,\Xi)
 =2\sum_{\substack{\ell\in\Xi\\\ell\ {\rm of\ type}\ 2}}\omega_H(\ell)
   +\sum_{\substack{\ell\in\Xi\\\ell\ {\rm of\ type}\ 3}}\omega_H(\ell);
\]
the red coefficient is zero.  We define
\(\sigma_\edge^{\rm asym}\) as the thermodynamic limit of the
negative expected maximum of this score, over the same height
neighborhoods used above, divided by the actual normalized lattice
area \(|D|\).  This is the convention-translated asymmetric surface
tension associated with \(\sigma_\edge\) of \cite{NarLimit}; the
deterministic weight and area conversion is included whenever a cited
estimate is used below.
Let \(W^{\rm asym}_{r,T}\) denote the corresponding score, using the
\hyperref[conv:rf-local-height-states]{local-state convention} above.
We write \(|T|\) for normalized lattice area, assigning area \(1/2\)
to each elementary triangle and area \(1\) to each lozenge.  Thus the
standard \(m\)-by-\(m\) parallelogram has area \(m^2\), and either
triangular half has area \(m^2/2\).  When we invoke
\cite[Lemma~8.7]{NarLimit}, its standard box is first parametrized by
its actual lattice side length and its score divided by this area.
Thus all local maxima below are divided by the actual area of their
patch, and no fixed geometric factor is hidden in \(\rho\).

\begin{proposition}[Uniform transfer from standard patches]
\label{prop:rf-uniform-standard-patch-transfer}
Fix \(\tau\in(0,1)\), \(\delta>0\), and
\(\bar\varepsilon>0\).  There exist
\(0<\varepsilon_{\rm loc}\leq\bar\varepsilon\) and an integer
\(m_0\geq1\), depending only on
\(\mathcal C_{\rm core},\tau,\delta,\bar\varepsilon\), such that
for every integer \(m\geq m_0\) there is \(n_0(m)\) for which the
following assertions hold whenever \(n\geq n_0(m)\).  They hold
uniformly in
\begin{itemize}
\item the source \(r\in\{1,2,3\}\);
\item every translate of either lattice orientation of a triangular
      cell of sidelength \(m\), whose rescaled closure lies in
      \(\mathcal C_{\rm core}\); and
\item the slope \(a\in K\).
\end{itemize}
If \(z_T\) is a fixed lattice anchor at distance \(O(1)\) from the
barycenter of \(T\), \(x_T=z_T/n\), and
\begin{equation}
 Z^{\rm asym}_{r,T}(a,\varepsilon_{\rm loc})
 :=\max_{\Xi}\,W^{\rm asym}_{r,T}(\Xi),
 \qquad
 \inf_{c\in\mathbb R}
 \|F_\Xi-[c+\langle a,\,\cdot-z_T\rangle]\|_{L^\infty(T)}
 \leq\varepsilon_{\rm loc}m,
 \label{eq:rf-asymmetric-local-maximum}
\end{equation}
where the maximum is over local height states on \(T\), then
\begin{align}
 \frac1{|T|}\mathbb E
 Z^{\rm asym}_{r,T}(a,\varepsilon_{\rm loc})
 &\leq-\sigma_\edge^{\rm asym}(\rho_r(x_T),a)+\delta,
 \label{eq:rf-uniform-asymmetric-upper}\\
 \left|\frac1{|T|}\mathbb E
 Z^{\rm asym}_{r,T}(a,\varepsilon_{\rm loc})
 +\sigma_\edge^{\rm asym}(\rho_r(x_T),a)\right|
 &\leq\delta,
 \qquad a\in(1-\tau)K.
 \label{eq:rf-uniform-asymmetric-two-sided}
\end{align}
The choice of \(m_0\) is made large enough that every height
neighborhood appearing here is nonempty.
The same assertions hold for the reflected middle-panel rank
convention described below.
\end{proposition}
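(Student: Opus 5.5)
The plan is to reduce Proposition~\ref{prop:rf-uniform-standard-patch-transfer} to the standard-box estimate \cite[Lemma~8.7]{NarLimit}, which is formulated for square \(m\)-by-\(m\) Gelfand--Tsetlin patches. The reduction has three layers: a deterministic change of patch shape, the coefficient and area convention, and a uniformity argument by compactness.

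\textbf{Step 1: change of patch shape.} A triangular cell \(T\) of sidelength \(m\) is one half of a standard parallelogram. I would sandwich it between standard boxes of much smaller sidelength \(m'=\kappa m\).
\begin{enumerate}
\item Cover \(T\) by a grid of boxes of side \(m'\). The boxes meeting \(\partial T\) have total area \(O(\kappa)|T|\).
\item For the \emph{upper bound}, restrict an optimal local height state on \(T\) to each interior box. Its affine closeness \(\varepsilon_{\rm loc}m\) becomes \(\varepsilon_{\rm loc}\kappa^{-1}m'\) on the boxes. So I would choose \(\varepsilon_{\rm loc}\ll\kappa\) and then apply the upper bound of \cite[Lemma~8.7]{NarLimit} on each box.
\item The lozenges crossing box interfaces lie in a collar of area \(O(m^2/m')\). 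Their contribution is bounded in expectation by the bounded expected interlacing gaps on \(\mathcal C_{\rm core}\).
\item The boundary boxes are handled the same way, by a crude absolute bound.
\end{enumerate}

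\textbf{Step 2: the two-sided estimate.} For \(a\in(1-\tau)K\) the lower bound goes the other way. I would take near-optimal box states from the lemma, each with boundary values near the plane of slope \(a\), and glue them. The gluing uses the directed-envelope correction of Lemma~\ref{lem:directed-extremal-extension} in a collar of width \(\varepsilon m'\), which is possible because \(a\) is at distance \(\ge\tau\) from \(\partial K\).

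\textbf{Step 3: coefficients, area, and the local limit.} The weights are translated by the deterministic conversion: red coefficient \(0\), type-\(2\)/type-\(3\) coefficients \(2,1\). Scores are divided by the actual area \(|T|\). The parameter \(\rho_r(x)\) varies continuously over the core. Replacing \(\rho_r\) at each box anchor by \(\rho_r(x_T)\) costs \(o(1)\), by continuity of \(\sigma_\edge^{\rm asym}\) and by the local-limit statement, uniformly in \(r\). For the reflected middle panel \(P_2\), the reflected-rank convention only relabels types, so the same argument applies verbatim.

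\textbf{Step 4: uniformity in slope and translate.} I would take a finite \(\delta\)-net of slopes in \(K\). Two facts transfer the estimate from the net to all \(a\in K\):
\begin{itemize}
\item a height neighborhood for \(a\) is contained in one for a nearby net slope, after enlarging \(\varepsilon_{\rm loc}\) slightly;
\item \(\sigma_\edge^{\rm asym}\) is continuous.
\end{itemize}
Uniformity over translates comes from \cite{NarLimit}, whose box estimate is uniform over positions in a compact bulk core. The choice \(m_0\) large ensures the neighborhoods are nonempty, by the discrete approximation argument of Lemma~\ref{lem:rf-AHT-compact}.

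\textbf{Main obstacle.} The delicate point is the ordering of parameters in Step~1. The affine error on subboxes is \(\varepsilon_{\rm loc}/\kappa\) relative to their size, so this ratio must stay below the neighborhood tolerance required by \cite[Lemma~8.7]{NarLimit}, while \(\kappa\) must be small enough that the boundary loss \(O(\kappa)\) stays below \(\delta\). I would first fix \(\kappa\) from \(\delta\), then take the lemma's tolerance, then set \(\varepsilon_{\rm loc}\) to be a small multiple of their product, and only then choose \(m_0\) and \(n_0(m)\).
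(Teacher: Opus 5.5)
Your deterministic shape change is sound, and in one direction it is simpler than the paper's. Every asymmetric tile weight is $-g_t/3\le 0$ and carries a nonnegative coefficient, so restricting a maximizer on $T$ to your proportional sub-boxes gives $\max_T\le\sum_B\max_B$ outright, with no collar estimate needed. The paper instead obtains this inequality in the stationary model: it packs collared triangles of both orientations into a large standard patch, uses stationarity and the vanishing-radius definition of $\sigma$, and subtracts the bound for the opposite orientation. Your gluing step is essentially the paper's other direction, with proportional rather than fixed-size inner patches.

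The gap is that all the probabilistic content has been pushed into the cited box estimate. Step~1 needs from it an upper bound at a \emph{fixed positive} relative radius $\varepsilon_{\rm loc}/\kappa$ for \emph{every} slope $a\in K$, including slopes on or near $\partial K$. This is not a formal consequence of the standard-patch theory. There $\sigma$ is defined through a vanishing radius profile, with an extension procedure at the boundary of $K$. The fixed-radius comparison with $\sigma$ comes from inserting patches through collars, which uses the slope slack of $a\in(1-\tau)K$. Its error $C_{\mathcal C,\tau}\varepsilon$ therefore degenerates as $\tau\downarrow 0$.

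Your Step~4 cannot supply the missing case. A slope net only spreads an estimate already known at the net points, and any net of $K$ contains boundary slopes. The missing idea is the paper's pointwise path modification:
\begin{itemize}
\item Contract any height in the $\varepsilon m$-neighborhood of slope $a$ toward the three pure phases by rounded floors, with successive parameters $t_1,t_2,t_3$.
\item This produces a legal height within $\varepsilon m$ of slope $(1-\kappa)a$ that differs on at most $C\kappa m^2$ incidences.
\item Those incidences depend on the maximizer, so their weight must be controlled by the uniform small-subset bound \cite[Lemma~4.3]{NarLimit}.
\end{itemize}
The interior estimate at $(1-\kappa)a$ and continuity of $\sigma$ then give part (i), at an extra cost $C\kappa^{1/4}+\omega_{\mathcal C}(C\kappa)$. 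If you can cite a finite-$n$, all-slope, fixed-radius box statement verbatim, your reduction closes and is shorter. As written, though, the proposal contains no mechanism for the boundary slopes.

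You also assume that the box estimate holds at finite $n$ uniformly over centers in $\mathcal C_{\rm core}$; the paper proves this rather than taking it as input. It establishes the triangle estimates in the stationary local-limit model, then fixes $m$ and lets $n\to\infty$. For that step it uses Wasserstein convergence of the fixed-size local gap vector, since a local maximum is a maximum of finitely many linear forms with bounded coefficients. Uniformity in the center comes from a subsequence argument: putative counterexample centers converge to a bulk point, giving a contradiction.

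Finally, your justification for $P_2$ is incorrect as stated. The reflected-rank convention does not relabel types; swapping types $2$ and $3$ would change the asymmetric $(2,1)$ score and hence $\sigma_\edge^{\rm asym}$. The local type labels are kept. What makes the argument go through is that the rank reflection $q\mapsto j+1-q$ at level $j$ corresponds to $X_2\mapsto -X_2$, which preserves the GUE law.
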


\begin{proof}
We first work in the stationary local limit.  Let \(\mathcal C\) be a
compact subset of \(\mathbb R_{>0}^2\) containing all parameters
\(\rho_r(x)\) on the three panel cores.  For either triangular
orientation \(s\in\{+,-\}\), let \(T_m^s\) have integer side \(m\),
and abbreviate
\[
 \sigma_m^s(\rho,a,\varepsilon)
 :=-\frac1{|T_m^s|}\mathbb E
   \max_{h\in\mathcal H_{T_m^s}(a,\varepsilon)}
          \mathcal S_{T_m^s}^{\rm asym}(h),
 \qquad
 \sigma(\rho,a):=\sigma_\edge^{\rm asym}(\rho,a).
\]
Here \(\mathcal H\) is the class in
\eqref{eq:rf-asymmetric-local-maximum}.

\smallskip
\noindent\emph{Fixed radius in the interior.}
For each fixed sufficiently small \(\varepsilon>0\), we claim
\begin{equation}
 \sigma(\rho,a)-C_{\mathcal C,\tau}\varepsilon-o_m(1)
 \leq \sigma_m^s(\rho,a,\varepsilon)
 \leq \sigma(\rho,a)+o_m(1),
 \qquad (\rho,a)\in\mathcal C\times(1-\tau)K.
 \label{eq:rf-fixed-radius-interior}
\end{equation}
The remainders are uniform on this set.  In particular, this asserts
an error tending to zero as \(\varepsilon\downarrow0\); it does not
identify the limit at every fixed positive radius with \(\sigma\).
The standard-patch inputs are the packing estimate
\cite[Lemma~11.1(B)]{NarLimit}, the fixed-radius limit in
\cite[Corollary~11.3]{NarLimit}, and the independence of the
\emph{vanishing} profile in \cite[Lemma~11.4]{NarLimit}.
We give the triangular comparison explicitly.

Round down an affine height of slope \(a\) to the appropriate color
residue at each lattice vertex.  This gives a legal background height
within \(3\) of the affine function.  Indeed, on a positive edge the
affine increment lies in \([-2,1]\), and residue rounding leaves only
the two allowed increments \(+1,-2\).  A patch whose boundary is
within \(\varepsilon m\) of that affine height can be inserted into
this background through a collar of width
\(C_\tau\varepsilon m+O(1)\).
This follows from the directed-distance proof of
Lemma~\ref{lem:rf-annulus-patching}, which is deterministic and uses
no surface-tension estimate: the interior slope supplies slack
\(\tau d\), exceeding the two boundary errors and a bounded residue
correction.  The same proof applies to nested lattice
parallelograms.  In these constructions all collars and their
interfaces are deterministic; their expected absolute weight is
bounded by a constant times their area plus boundary length.
Thus boundary-crossing lozenges omitted from local scores are
included in the remainder.

For the upper inequality in \eqref{eq:rf-fixed-radius-interior}, fill
all but an \(O(\chi+L/m)\)-fraction of \(T_m^s\) with standard
side-\(L\) patches having collars of width \(\chi L\).
Choose \(\chi\) small and then \(L\) large, so that maximizers at
the standard vanishing tolerance fit through the collars and their
expected scores per unit area are close to \(-\sigma(\rho,a)\).
Their inserted heights and the filled collars deviate from the
common affine background by \(O(L)\).  With \(L\) fixed this is
less than \(\varepsilon m\) for all sufficiently large \(m\).
The first-moment bound on the deterministic remainder proves the
upper inequality, separately for both orientations.

For the reverse inequality, place a side-\(m\) triangle of each
orientation, each with such a collar, in a parallelogram of side
\(m+O_\tau(\varepsilon m+1)\).  Insert a maximizing local state in
each triangle and use the same affine background on both outer
collar boundaries.  Repeat these paired cells inside a standard
patch of side \(M\), filling the leftover strip with the background.
For fixed \(m,\varepsilon\), the resulting global affine error is
\(O_\tau(\varepsilon m+1)\).  It therefore fits the standard
vanishing neighborhood \(M\psi(M)\) as \(M\to\infty\), since
\(M\psi(M)\to\infty\).  Stationarity, linearity of expectation,
and the collar bound give
\[
 \sigma(\rho,a)
 \leq\frac{\sigma_m^+(\rho,a,\varepsilon)
              +\sigma_m^-(\rho,a,\varepsilon)}2
       +C_{\mathcal C,\tau}(\varepsilon+m^{-1}).
\]
All these tensions are nonnegative, since ordinary nonred weights
are nonpositive; hence the enlarged-area denominator can be
discarded in this direction.  Subtract the upper inequality already
proved for the opposite orientation to obtain the lower inequality
in \eqref{eq:rf-fixed-radius-interior}.
The standard-patch comparisons and continuity of the limiting tension
in \cite[Propositions~7.4--7.5]{NarLimit}, together with finite-patch
Wasserstein continuity in \(\rho\) from the local law used in
\cite[Theorem~5.1 and Appendix~A]{NarLimit}, allow finite parameter
nets in \((\rho,a)\).  These make the packing choices uniform on the
stated compact sets.  The constructions
use arbitrary integer sizes, as do Definition~13 and Lemma~11.1
of \cite{NarLimit}.

\smallskip
\noindent\emph{All slopes: pointwise path modification.}
We record the height control behind the path modification of
\cite[Lemma~11.8, Claim~8]{NarLimit}.
Let \(\ell_p\) be a pure-phase height with the same color residue as
a legal height \(h\), and let \(0<t<1\).  Put, for \(0\leq b<1\),
\[
 q=\frac{\ell_p-h}{3}\in\mathbb Z,\qquad
 h_b'=\ell_p-3\lfloor(1-t)q+b\rfloor.
\]
On an edge in the selected direction of \(\ell_p\), the increment
of \(q\) is \(0\) or \(-1\); in either other positive direction it
is \(0\) or \(1\).  The rounded contraction preserves these signs
and changes each nonzero increment only to itself or to zero.
Consequently \(h_b'\) is legal and
\begin{equation}
 h_b'=(1-t)h+t\ell_p+e_b,\qquad
 \|e_b\|_\infty<3.
 \label{eq:rf-pointwise-path-modification}
\end{equation}
Averaging over \(b\in[0,1)\), each nonzero increment is erased with
probability \(t\).  Hence some \(b\) changes at most \(Ctm^2\)
candidate incidences on \(T_m^s\).  This is a deterministic
existence statement for each input height.

Fix \(0<\kappa<1/2\).  The three pure slopes sum to zero.
Normalize all three pure heights to \(h(v_0)\) at a common vertex
\(v_0\).  Each floor map preserves that value and the color residue.
Apply
\eqref{eq:rf-pointwise-path-modification} successively with
\[
 t_1=\frac{\kappa}{3-2\kappa},\qquad
 t_2=\frac{\kappa}{3-\kappa},\qquad
 t_3=\frac{\kappa}{3}.
\]
The resulting height differs on at most \(C\kappa m^2\) incidences
and, for the new affine slope \(a'=(1-\kappa)a\in(1-\kappa)K\),
satisfies, modulo an additive constant,
\[
 \|h'-\ell_{a'}\|_\infty
 \leq(1-\kappa)\varepsilon m+9
 \leq\varepsilon m
 \qquad\bigl(m\geq9/(\kappa\varepsilon)\bigr)
\]
whenever \(h\in\mathcal H_{T_m^s}(a,\varepsilon)\).
Thus the whole height neighborhood is controlled, not merely its
average slope.  The changed incidences may depend on the maximizing
height and on the weights.  The uniform small-subset estimate
\cite[Lemma~4.3]{NarLimit} still bounds their expected total absolute
weight by \(C_{\mathcal C}\kappa^{1/4}m^2\).
Apply \eqref{eq:rf-fixed-radius-interior} to \(a'\).  If
\(\omega_{\mathcal C}\) is a uniform continuity modulus of
\(\sigma\) on \(\mathcal C\times K\), it follows that
\begin{equation}
 \sigma_m^s(\rho,a,\varepsilon)
 \geq \sigma(\rho,a)
       -C_{\mathcal C}\kappa^{1/4}
       -\omega_{\mathcal C}(C\kappa)
       -C_{\mathcal C,\kappa}\varepsilon-o_m(1),
 \qquad a\in K.
 \label{eq:rf-fixed-radius-all-slopes}
\end{equation}

Choose \(\kappa\) first, then
\(0<\varepsilon_{\rm loc}\leq\bar\varepsilon\), and finally \(m_0\),
so that \eqref{eq:rf-fixed-radius-all-slopes} and
\eqref{eq:rf-fixed-radius-interior} have errors within the prescribed
\(\delta\)-budget for every integer \(m\geq m_0\).
The tolerance can be chosen with spare margin: the same estimates
hold for radii between \(\varepsilon_{\rm loc}/2\) and
\(2\varepsilon_{\rm loc}\).
If two slopes are within \(\theta\), their anchored affine heights
on \(T_m^s\) differ by \(O(\theta m)\).  A finite slope net with
\(C\theta<\varepsilon_{\rm loc}/4\), these neighborhood inclusions,
and uniform continuity therefore give the asserted slope uniformity.
No dyadic depth enters these choices.

\smallskip
\noindent\emph{Transfer back to the GUE panels.}
Now fix an integer \(m\geq m_0\).
The Wasserstein bead-model convergence used in the proof of
\cite[Theorem~5.1]{NarLimit}, together with
\cite[Theorem~A.4]{NarLimit}, applies to the fixed-size local gap
vector.  It is uniform over centers in the compact panel core:
otherwise a sequence of counterexample centers has a subsequence
converging to a bulk point; the fixed-index local limit and uniform
subexponential tails give Wasserstein convergence and uniform
integrability along that subsequence, a contradiction.
A local maximum is the maximum of finitely many linear forms with
bounded coefficients.  The same convergence thus transfers its
expectation, uniformly over the deterministic height neighborhoods.

Finally, on \(P_2\) use its local \([i,j]_2\) frame.  At minor level
\(j\), reflection sends rank \(q\) to \(j+1-q\), together with
\(X_2\mapsto-X_2\).  Since \(-X_2\) has the same GUE law, this is an
exact reindexing in distribution; types \(2,3\) retain their local
direction labels.  Taking the maximum of the thresholds for the
finitely many orientations and sources gives \(n_0(m)\).
Thus the order is: choose the accuracy parameters and \(m_0\);
fix any integer \(m\geq m_0\); then let \(n\to\infty\).
This proves \eqref{eq:rf-uniform-asymmetric-upper} and
\eqref{eq:rf-uniform-asymmetric-two-sided}.
\end{proof}

\medskip

\begin{lemma}[Local surface tension and concentration]
\label{lem:rf-local-transfer}
Fix \(\tau\in(0,1)\), \(A>0\), and \(\delta>0\).
There exist \(\varepsilon_{\mathrm{loc}}>0\) and an integer
\(m_0\geq1\) such that, for every integer \(m\geq m_0\), there is
\(n_0(m)\) for which the following holds whenever \(n\geq n_0(m)\),
uniformly over the choices below.  The radius may be required to lie below any prescribed
positive threshold, and is chosen before \(m_0\).

\begin{itemize}
\item both lattice orientations of triangular cells \(T\) of
      sidelength \(m\) with
      \(T\subset n\mathcal C_{\rm core}\),
\item all slopes \(a\in K\),
\item all sources \(r\in\{1,2,3\}\).
\end{itemize}

Let \(z_T\) be a fixed lattice anchor at distance \(O(1)\) from the
barycenter of \(T\), use the normalized area \(|T|\) fixed above, put
\(x_T=z_T/n\), and define
\[
Z_{r,T}(a,\varepsilon_{\mathrm{loc}})
:=
\max_{\Xi:\ \inf_{c\in\mathbb R}
 \|F_\Xi-[c+\langle a,\,\cdot-z_T\rangle]\|_{L^\infty(T)}
 \le \varepsilon_{\mathrm{loc}}m}
W^{\rm rf}_{r,T}(\Xi).
\]
The states \(\Xi\) and scores \(W^{\rm rf}_{r,T}\) follow the
\hyperref[conv:rf-local-height-states]{local-state convention};
\(Z_{r,T}\) depends only on the slope, not on a height normalization.
Then:

\begin{enumerate}
\item[(i)] (Upper bound, all slopes)
\[
\frac{1}{|T|}\,\mathbb E Z_{r,T}(a,\varepsilon_{\mathrm{loc}})
\le -\sigma_\edge^{\rm sym}(\rho_r(x_T),a)+\delta.
\]

\item[(ii)] (Two-sided bound in the interior)
if \(a\in (1-\tau)K\), then
\[
\left|
\frac{1}{|T|}\,\mathbb E Z_{r,T}(a,\varepsilon_{\mathrm{loc}})
+\sigma_\edge^{\rm sym}(\rho_r(x_T),a)
\right|\le \delta.
\]

\item[(iii)] (Local fluctuation bound)
\[
\mathrm{Var}\,Z_{r,T}(a,\varepsilon_{\mathrm{loc}})
\;\le\;
C_{A,\mathcal C_{\rm core}}\,\frac{m^4}{(\log m)^A}.
\]
\end{enumerate}

Moreover, for each \(C_{\rm net}<\infty\), if
\(\mathcal A_T\subset K\) is any deterministic family of
slopes with \(|\mathcal A_T|\le C_{\rm net}m^2\), then
\[
\mathbb E \max_{a\in\mathcal A_T}
\bigl(Z_{r,T}(a,\varepsilon_{\mathrm{loc}})
      -\mathbb E Z_{r,T}(a,\varepsilon_{\mathrm{loc}})\bigr)
\;\le\;
C_{A,\mathcal C_{\rm core},C_{\rm net}}\,
\frac{m^2}{(\log m)^A}.
\]
In particular, for any dyadic depth \(k\geq1\), choose a dyadic
\(M_0(k)\geq2^k m_0\).  For dyadic \(M\geq M_0(k)\), the assertions
hold simultaneously for \(m\in\{2^{-j}M:0\leq j\leq k\}\) once
\[
 n\geq n_0(M,k):=\max_{0\leq j\leq k}n_0(2^{-j}M).
\]
The radius \(\varepsilon_{\rm loc}\) is independent of \(k\).
\end{lemma}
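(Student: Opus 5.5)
The plan is to deduce (i) and (ii) from Proposition~\ref{prop:rf-uniform-standard-patch-transfer} by an exact, deterministic comparison of the symmetric red-free score \(W^{\rm rf}_{r,T}\) with the asymmetric score \(W^{\rm asym}_{r,T}\), and to obtain (iii) and the net bound from the concentration machinery of \cite{NarLimit} and \cite{NSTGUE}.

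\textbf{Step 1: a deterministic identity.} The key observation is that on a local patch the two scores differ by a quantity fixed by the boundary. On a height state in a height neighborhood it is approximately an affine functional of the slope. Write the local version of the gauge-covariance calculation of Remark~\ref{rem:rf-symmetric-gauge-covariance}:
\[
 W^{\rm rf}_{r,T}(\Xi)=W^{\rm asym}_{r,T}(\Xi)+\tfrac12\sum_{\ell\ \rm type\ 3}\omega_H(\ell)-\tfrac12\sum_{\ell\ \rm type\ 2}\omega_H(\ell).
\]
Using \(\omega_2=-g_2/3\), \(\omega_3=-g_3/3\), the difference of type-\(3\) and type-\(2\) gap sums over a tiling telescopes along rows of the Gelfand--Tsetlin pattern. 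This is the same telescoping that produced the two-term boundary formula \(\tfrac16(h(v_9)-h(v_0))\) globally. It therefore reduces to a sum of boundary height values of \(H\) on \(\partial T\), weighted by the boundary trace of \(F_\Xi\), plus the boundary-crossing lozenge remainder.

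\textbf{Step 2: passing to expectations.} The boundary trace is within \(\varepsilon_{\rm loc}m\) of the affine function of slope \(a\). Hence the expectation of the difference equals \(|T|\,\ell(\rho_r(x_T),a)+O(\varepsilon_{\rm loc}m^2)+o(m^2)\) for an explicit linear-in-\(a\) correction \(\ell\). Defining \(\sigma_\edge^{\rm sym}=\sigma_\edge^{\rm asym}-\ell\), or checking that the thermodynamic definition via \cite[Definitions~14--16]{NarLimit} yields exactly this, one finishes as follows. Items (i) and (ii) follow from \eqref{eq:rf-uniform-asymmetric-upper}--\eqref{eq:rf-uniform-asymmetric-two-sided} with \(\delta/2\), after taking \(\varepsilon_{\rm loc}\) small enough that the \(O(\varepsilon_{\rm loc})\) and collar errors use the other \(\delta/2\). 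Uniformity in \(T,a,r\) is inherited, since the correction is continuous in \(\rho\) and linear in \(a\).

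\textbf{Step 3: variance.} For (iii), \(Z_{r,T}\) is a maximum of linear forms in the local gaps with bounded coefficients \(\tfrac32\). I would use the log-improved variance bound for bulk interlacing-gap linear statistics from Tao \cite{TaoGUEFixedIndex}, in the form packaged in \cite{NarLimit}, together with the KLS-based variance-of-maximum lemma \cite[Lemma~3]{NSTGUE}. Those arguments use only boundedness and positivity of the coefficients, so they apply verbatim to the \(0,3/2,3/2\) weights.

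\textbf{Step 4: the net bound and the dyadic statement.} The maximal inequality over \(|\mathcal A_T|\le C_{\rm net}m^2\) slopes follows from subexponential tails of each centered \(Z\), with scale \(m^2(\log m)^{-A'}\) for larger \(A'\), combined with a union bound costing only \(\log m\). The dyadic simultaneous statement is immediate, because \(\varepsilon_{\rm loc}\) and \(m_0\) are chosen before \(k\); one takes the finite maximum of the thresholds \(n_0\).

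\textbf{Main obstacle.} The hard part is Step 1: showing that the symmetric-minus-asymmetric difference is really a boundary term up to \(o(m^2)\) in expectation on a local patch, rather than merely globally. I would attempt this via the triangle-potential gauge of Definition~\ref{def:rf-triangle-potential} restricted to \(T\).
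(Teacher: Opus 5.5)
Your architecture matches the paper's. You use an exact comparison of the symmetric and asymmetric local scores, fed into Proposition~\ref{prop:rf-uniform-standard-patch-transfer}, for (i)--(ii). You use a fixed-tiling log-improved variance bound plus log-concave concentration for (iii) and the net bound. But the crux, Step~1, is left open, and the outcome you predict for it is not the one that works. What is needed is the exact identity \eqref{eq:rf-finite-gauge-boundary-formula},
\[
 W^{\rm rf}_{r,T}(\Xi)-W^{\rm asym}_{r,T}(\Xi)
 =\frac12\sum_{e\in E(T)}J_H(e)\,\mathbf 1_{\{\ell_e\ \text{selected}\}}
 =\frac16\sum_{e\in E(T)}J_H(e)
  -\frac16\sum_{v\in\partial T}F_\Xi(v)\,(\operatorname{div}_TJ_H)(v),
\]
with $J_H(e)=\pm\omega_H(\ell_e)$: $+$ on type~$3$, $-$ on type~$2$, $0$ on red.

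The second equality has three ingredients. First substitute $\mathbf 1_{\{\ell_e\ \text{selected}\}}=(1-\nabla_eF_\Xi)/3$. Then check that $J_H$ is divergence-free at every interior vertex, where four second differences of $H$ cancel identically. Finally sum by parts. The identity is exact for the local score, so no boundary-crossing remainder appears.

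What makes it usable is that the trace of $F_\Xi$ is paired only against $\operatorname{div}_TJ_H$. This is a combination of $O(1)$ rhombus weights whose absolute values have total expectation $O(m)$. It converts $\varepsilon_{\rm loc}m$-closeness of the trace into an $O(\varepsilon_{\rm loc}m^2)$ error, uniformly over the neighborhood. The affine correction $\mathcal L_\rho$ is then pinned down by the pure-phase values $0,-\rho_1/6,\rho_2/6$.

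Taken literally, your description (values of $H$ on $\partial T$ weighted by the trace) would not give this. Values and first differences of $H$ carry eigenvalues of order $n$ and the large-gap offsets. Your suggested route through Definition~\ref{def:rf-triangle-potential} meets the same obstruction. Those potentials are built from raw values of $H$, and they realize only the raw-to-symmetric gauge, so you would also need an asymmetric potential. The tiling-dependent sum over triangles paired across $\partial T$ would then contain huge terms. You would have to show that these multiply only a tiling-independent count, and then still carry out the indicator-to-gradient summation by parts along $\partial T$, which is the argument above.

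Step~3 is adequate for (iii) once the fixed-tiling bound is in hand, but Step~4 presupposes that each centered $Z_{r,T}(a,\varepsilon_{\rm loc})$ is subexponential on the scale $m^2(\log m)^{-A'}$. Step~3 as you describe it yields only a variance bound. With second moments alone, the bound over $C_{\rm net}m^2$ slopes costs a factor $\sqrt{|\mathcal A_T|}\asymp m$, not $\log m$. Nor can you read the tails off log-concavity of $Z$, since a maximum of linear forms in a log-concave vector need not be log-concave.

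The missing step is a whitening argument. Let $G_T$ be the local signed-weight vector and $\Sigma_T$ its covariance on the span of its support. Then $Y_T=\Sigma_T^{-1/2}(G_T-\mathbb EG_T)$ is isotropic and log-concave in dimension $d_T=O(m^2)$. The maximum $Z_{r,T}$ is Lipschitz in $Y_T$ with constant $L\le\sup_\Xi(\operatorname{Var}W^{\rm rf}_{r,T}(\Xi))^{1/2}$. Klartag's KLS bound \cite{KlartagKLS}, together with the Cheeger--concentration equivalence \cite{MilmanConvexity}, gives $\|Z_{r,T}-\mathbb EZ_{r,T}\|_{\psi_1}\le CL\sqrt{\log(2+d_T)}$. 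Because the fixed-tiling bound holds for every logarithmic exponent, both the $\sqrt{\log}$ loss and the $\log$ loss of the subexponential maximal inequality are absorbed. This gives (iii) and the net bound at once.

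That fixed-tiling bound should itself be obtained as in the paper. Split the centered score into rank-subset sums for each type and minor level, \eqref{eq:rf-local-level-subset-decomposition}, and apply the subset-sum estimate with its logarithmic saving. Your claim that the cited arguments need only boundedness and positivity of the coefficients asks more of them than they provide.
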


\begin{proof}[Justification]
Use the asymmetric normalization fixed before
Proposition~\ref{prop:rf-uniform-standard-patch-transfer}.  Its
type-\(2\) and type-\(3\) coefficients are respectively \(2\) and
\(1\).  These are local direction types, not a second color
convention.  Write the slope in
barycentric form
\begin{equation}
 a=\vartheta_R(a)a_R+\vartheta_2(a)a_2+\vartheta_3(a)a_3,
 \qquad
 \vartheta_R(a)+\vartheta_2(a)+\vartheta_3(a)=1,
 \label{eq:rf-slope-barycentric-types}
\end{equation}
where
\[
 a_R=-2\widehat\imath,\qquad
 a_2=2\widehat\jmath,\qquad
 a_3=2(\widehat\imath-\widehat\jmath)
\]
are the geometric pure red, type-\(2\), and type-\(3\) slopes of
\eqref{eq:panel-slope-triangle}.  Thus the
\(\vartheta\)'s are the three lozenge-orientation densities in the
local type order.  Put
\begin{equation}
 \mathcal L_\rho(a)
 :=\frac16\rho_2\vartheta_3(a)
   -\frac16\rho_1\vartheta_2(a).
 \label{eq:rf-surface-tension-affine-correction}
\end{equation}
We claim that
\begin{equation}
 \boxed{\quad
 \sigma_\edge^{\rm sym}(\rho,a)
 =\sigma_\edge^{\rm asym}(\rho,a)+\mathcal L_\rho(a).
 \quad}
 \label{eq:rf-symmetric-asymmetric-surface-tension}
\end{equation}

Here is the exact finite-volume identity behind this claim.  With our
raw-gap convention, the two score normalizations and their pure-phase
surface tensions are
\[
\begin{array}{c|c|c|c}
\text{local type}&\text{ordinary weight}
 &\sigma_\edge^{\rm asym}&\sigma_\edge^{\rm sym}\\ \hline
R&0&0&0\\
2&-g_2/3&2\rho_1/3&\rho_1/2\\
3&-g_3/3&\rho_2/3&\rho_2/2 .
\end{array}
\]
Equivalently,
\[
 \mathcal S_D^{\rm asym}(H,\Xi)
 =2\sum_{\substack{\ell\in\Xi\\\ell\ {\rm type}\ 2}}\omega_H(\ell)
   +\sum_{\substack{\ell\in\Xi\\\ell\ {\rm type}\ 3}}\omega_H(\ell),
 \qquad
 \mathcal S_D^{\rm sym}(H,\Xi)
 =\frac32\sum_{\substack{\ell\in\Xi\\
                         \ell\ {\rm type}\ 2\ {\rm or}\ 3}}\omega_H(\ell).
\]
The red coefficient is zero in both gauges.

Let \(D\) be a standard triangular patch in one developed panel, let
\(H\) be the source height, and let \(a_m\) be a tiling height on \(D\).
Orient each internal lattice side \(e\) positively, and let
\(\ell_e\) be the candidate lozenge across \(e\).  Put
\[
 \epsilon(e)=
 \begin{cases}
  +1,&\ell_e\text{ is of type }3,\\
  -1,&\ell_e\text{ is of type }2,\\
   0,&\ell_e\text{ is red (type }1\text{)},
 \end{cases}
 \qquad
 J_H(e):=\epsilon(e)\omega_H(\ell_e).
\]
Because a positive edge has height increment \(+1\) when it is
unselected and \(-2\) when it is selected,
\begin{equation}
 {\bf 1}_{\{\ell_e\text{ is selected}\}}
   =\frac{1-\nabla_ea_m}{3},
 \qquad
 \nabla_ea_m:=a_m(e^+)-a_m(e^-).
\label{eq:rf-lozenge-indicator-from-height}
\end{equation}
Writing \(\mathcal S_D^{\rm sym}(H,a_m)\) and
\(\mathcal S_D^{\rm asym}(H,a_m)\) for the two patch scores,
their difference is
\begin{align}
 \mathcal S_D^{\rm sym}(H,a_m)-\mathcal S_D^{\rm asym}(H,a_m)
 &=
 \frac12\sum_{e\in E(D)}
       J_H(e){\bf 1}_{\{\ell_e\text{ is selected}\}}\notag\\
 &=\frac16\sum_{e\in E(D)}J_H(e)
   -\frac16\sum_{e\in E(D)}J_H(e)\nabla_ea_m.
\label{eq:rf-finite-pairing-gauge-difference}
\end{align}

The current \(J_H\) is divergence-free at every interior vertex.
This can be checked without an appeal to a continuum limit.  In the
local \([i,j]_r\) frame, write \(H(i,j)=H([i,j]_r)\).  The two
rhombus-weight families entering \(J_H\) are
\begin{align*}
 \omega_3(i,j)
 &=\frac{H(i,j+1)+H(i+1,j)-H(i,j)-H(i+1,j+1)}3,\\
 \omega_2(i,j)
 &=\frac{H(i+1,j+1)+H(i-1,j)-H(i,j)-H(i,j+1)}3.
\end{align*}
The type-\(3\) positive edge is
\([i,j]_r\to[i+1,j+1]_r\), whereas the type-\(2\) positive edge is
\([i,j+1]_r\to[i,j]_r\).  Therefore, with
\[
 (\operatorname{div}_D J_H)(v)
 :=\sum_{\substack{e\in E(D)\\e^+=v}}J_H(e)
   -\sum_{\substack{e\in E(D)\\e^-=v}}J_H(e),
\]
the divergence at \(v=[i,j]_r\) is
\[
 \omega_3(i-1,j-1)-\omega_2(i,j)
 -\omega_3(i,j)+\omega_2(i,j-1)=0.
\]
The four displayed second differences cancel term by term.  This
calculation is written in the local \([i,j]_r\) frame.  In the globally
reflected drawing of \(P_2\), global green is type \(2\) and global blue
is type \(3\); the neutral type labels remain tied to the
\(d_2^{(2)}\)- and \(d_3^{(2)}\)-families.  The two
GUE gap coordinates are ordered in this local type order, with the
reflected-rank convention.  Hence the same divergence identity holds
on \(P_2\).  Discrete summation by parts in
\eqref{eq:rf-finite-pairing-gauge-difference} now gives
\begin{equation}
 \mathcal S_D^{\rm sym}(H,a_m)-\mathcal S_D^{\rm asym}(H,a_m)
 =
 \frac16\sum_{e\in E(D)}J_H(e)
 -\frac16\sum_{v\in\partial D}
       a_m(v)(\operatorname{div}_D J_H)(v).
\label{eq:rf-finite-gauge-boundary-formula}
\end{equation}
Thus, for fixed \(H\), the difference depends on the tiling only
through its boundary height trace.  The total boundary divergence is
zero, so the formula is also unchanged by adding a constant to
\(a_m\), as required by the definition of the height neighborhood.

On compact subsets of the positive density parameters, the first
moments of the interlacing gaps are uniformly bounded.  Since
\(\partial D\) has \(O(m)\) vertices,
\begin{equation}
 \mathbb E\sum_{v\in\partial D}
       |(\operatorname{div}_D J_H)(v)|=O(m).
\label{eq:rf-boundary-current-first-moment}
\end{equation}
It follows that replacing an affine boundary trace of slope \(a\) by
one within \(m\varepsilon\) of it changes the expected right-hand side
of \eqref{eq:rf-finite-gauge-boundary-formula} by
\(O(\varepsilon m^2)\), uniformly in the tiling.

After division by the patch area, the expectation of
\eqref{eq:rf-finite-gauge-boundary-formula} for an affine trace is an
affine function of \(a\).  Its value is fixed by the three pure
phases.  Recall that
\(\rho_1=\mathbb E g_2\) and \(\rho_2=\mathbb E g_3\).
Since a pure phase has one lozenge per unit normalized area, direct
evaluation from \(\omega_t=-g_t/3\) gives the asymmetric
surface-tension values at the pure red, type-\(2\), and type-\(3\)
slopes as
\[
 0,\qquad \frac23\rho_1,\qquad \frac13\rho_2,
\]
whereas direct evaluation of the symmetric pairing gives
\[
 0,\qquad \frac12\rho_1,\qquad \frac12\rho_2.
\]
The difference is consequently \(0,-\rho_1/6,\rho_2/6\), and the
unique affine function on \(K\) with these vertex values is precisely
\(\mathcal L_\rho\).  This proves
\eqref{eq:rf-symmetric-asymmetric-surface-tension}.  Explicitly, if
\(a=x\widehat\imath+y\widehat\jmath\), where \(x,y\) are its
coefficients in the oblique geometric unit basis, then
\[
 \vartheta_2(a)=\frac{x+2y+2}{6},\qquad
 \vartheta_3(a)=\frac{x-y+2}{6},
\]
and hence
\[
 \mathcal L_\rho(a)
 =\frac{\rho_2(x-y+2)-\rho_1(x+2y+2)}{36}.
\]

The same calculation gives the finite-patch comparison, uniformly
for \(\rho\) in a compact subset of \(\mathbb R_{>0}^2\), for
\(a\in K\), and for the height neighborhood of radius \(m\varepsilon\):
if \(Z_D^{\rm sym}\) and \(Z_D^{\rm asym}\) denote the maxima of the
two displayed scores over that same neighborhood, then
\begin{equation}
 \frac1{|D|}\mathbb E Z_D^{\rm sym}(\rho,a,\varepsilon)
 =\frac1{|D|}\mathbb E Z_D^{\rm asym}(\rho,a,\varepsilon)
  -\mathcal L_\rho(a)+O_\rho(\varepsilon)+o_m(1).
 \label{eq:rf-finite-patch-gauge-comparison}
\end{equation}
Here \eqref{eq:rf-boundary-current-first-moment} gives the
\(O_\rho(\varepsilon)\) error, while bead-model convergence and uniform
integrability give the \(o_m(1)\) error for the affine trace.  The
signs in
\eqref{eq:rf-finite-patch-gauge-comparison} follow from the convention
that surface tension is the negative expected maximal weight.
Combining this comparison with
Proposition~\ref{prop:rf-uniform-standard-patch-transfer}, first choose
\(\varepsilon_{\rm loc}\) below any previously prescribed threshold
and then choose \(m_0\) so that the transfer error, the
\(O_\rho(\varepsilon_{\rm loc})\) term, and the \(o_m(1)\) term are
absorbed by \(\delta\).  This gives parts~(i) and~(ii).  It also
gives existence, joint continuity in \((\rho,a)\), convexity in \(a\)
for each fixed \(\rho\), and the required scale comparison for
\(\sigma_\edge^{\rm sym}\) directly from the
corresponding assertions for \(\sigma_\edge^{\rm asym}\), since
\(\mathcal L_\rho\) is affine in \(a\) and continuous in \(\rho\).

For the fluctuation assertion one needs a finite-volume statement, not
merely the limiting identity
\eqref{eq:rf-symmetric-asymmetric-surface-tension}.  We therefore
record the precise coefficient pattern.  Fix a tiling \(\Xi\).  For
each type \(t\in\{2,3\}\) and minor level \(k\), let
\(S_{t,k}(\Xi)\) be the set of ranks at which \(\Xi\) selects a
lozenge of type \(t\).  Let \(g_{t,j,k}\) denote the raw gap with the
fixed source scale and reflected-rank convention already included,
and set the signed ordinary-weight variable
\(G_{t,j,k}=\omega_t=-g_{t,j,k}/3\).  The centered
symmetric score then has the exact form
\begin{equation}
 X_\Xi^{\rm sym}-\mathbb E X_\Xi^{\rm sym}
 =
 \frac32\sum_{t\in\{2,3\}}\sum_k
   \sum_{j\in S_{t,k}(\Xi)}
       \bigl(G_{t,j,k}-\mathbb E G_{t,j,k}\bigr).
\label{eq:rf-local-level-subset-decomposition}
\end{equation}
Thus the forms that occur are sums over actual rank subsets at each
minor level; no assertion about arbitrary bounded full-panel
coefficient vectors is needed.

The subset-sum reduction in
\cite[\S4, equations~(4.1)--(4.2)]{NSTGUE} applies to each type and
minor level.  The local logarithmic saving comes from
\cite[Theorem~A.5]{NarLimit}; see also its covariance formulation in
\cite[Lemma~9.1 and the proof of Proposition~7.1]{NarLimit}.
For a rank subset in an interval of length \(O(m)\), that estimate
bounds the variance of its gap sum by
\(C_A m^2/(\log m)^A+o_n(1)\), with \(m\) fixed before
\(n\to\infty\).  The source scale and the factors \(1/3\) and
\(3/2\) change only the constant on the compact panel core.
Splitting \eqref{eq:rf-local-level-subset-decomposition} by type and
panel orientation, applying Minkowski's inequality over the
\(O(m)\) minor levels, and enlarging \(n_0(m)\) to absorb the
finite-size remainder therefore gives, uniformly over every actual
tiling \(\Xi\),
\[
 \operatorname{Var}X_\Xi^{\rm sym}
 \le C_A\frac{m^4}{(\log m)^A}.
\]
We pass from this fixed-tiling estimate to the local maximum, and then
to a finite maximum over slopes, in one whitening argument at the end
of the proof.

Proposition~\ref{prop:rf-uniform-standard-patch-transfer} supplies the
compact-core, moving-center, slope, and orientation uniformity used
here: the symmetric coefficients in
\eqref{eq:rf-local-level-subset-decomposition} differ only by fixed
type-dependent factors, so the same subsequence argument applies.
Enlarge \(n_0(m)\) for the variance estimate, with \(n\to\infty\) at
fixed \(m\) before \(m\to\infty\), and take the maximum of these
thresholds for any finite family of integer sizes.  This gives the
dyadic specialization while keeping \(\varepsilon_{\rm loc}\)
independent of \(k\).

Finally, let \(\mathcal A_T\subset K\) be deterministic with
\(\lvert\mathcal A_T\rvert\leq C_{\rm net}m^2\).  We spell out the concentration
argument needed to take the maximum over \(\mathcal A_T\).  In
particular, we do not assert that the one-dimensional law of a local
maximum is itself log-concave.

Let \(G_T\) be the vector of local signed ordinary-weight coordinates
entering the tiling scores on \(T\), and let \(E_T\) be the linear
space parallel to its
affine support.  On \(E_T\), put
\[
 \Sigma_T:=\operatorname{Cov}(G_T),\qquad
 Y_T:=\Sigma_T^{-1/2}(G_T-\mathbb E G_T).
\]
Thus the inverse is taken only on \(E_T\).  The law of \(G_T\) is a
scaled and reflected linear image of a log-concave marginal of the
product Gelfand--Tsetlin law, so \(Y_T\)
is isotropic and log-concave in a dimension
\(d_T\leq C_{\mathcal C_{\rm core}}m^2\).

For \(a\in\mathcal A_T\), let \(\mathcal H_T(a)\) denote the
deterministic family of tilings in the definition of
\(Z_{r,T}(a,\varepsilon_{\mathrm{loc}})\).  Write the score of
\(\Xi\in\mathcal H_T(a)\) as
\[
 W^{\rm rf}_{r,T}(\Xi)=\langle c_\Xi,G_T\rangle+b_\Xi.
\]
After absorbing \(\mathbb E G_T\) into the constant term,
\(Z_{r,T}(a,\varepsilon_{\mathrm{loc}})\), as a function of \(Y_T\),
is the maximum of affine functions with linear parts
\(\Sigma_T^{1/2}c_\Xi\).  Hence
\[
 \operatorname{Lip}_{Y_T}Z_{r,T}(a,\varepsilon_{\mathrm{loc}})^2
 \leq \sup_{\Xi\in\mathcal H_T(a)}
       \|\Sigma_T^{1/2}c_\Xi\|_2^2
 =\sup_{\Xi\in\mathcal H_T(a)}
       \operatorname{Var}W^{\rm rf}_{r,T}(\Xi).
\]
Applying the preceding fixed-tiling estimate with an arbitrary
logarithmic exponent \(B>0\) therefore gives, uniformly in \(a,T,r\),
\[
 \operatorname{Lip}_{Y_T}Z_{r,T}(a,\varepsilon_{\mathrm{loc}})
 \leq C_{B,\mathcal C_{\rm core}}
       \frac{m^2}{(\log m)^{B/2}}.
\]

We now use the Cheeger estimate appearing in the proof of
\cite[Lemma~3]{NSTGUE}, which is based on Klartag's logarithmic KLS
bound \cite{KlartagKLS}.  By the quantitative equivalence between
Cheeger isoperimetry and exponential concentration for log-concave
measures \cite[Theorem~1.5]{MilmanConvexity}, if \(Y\) is isotropic
and log-concave in dimension \(d\), and \(F\) is \(L\)-Lipschitz, then
\[
   \|F(Y)-\mathbb EF(Y)\|_{\psi_1}
      \leq C L\sqrt{\log(2+d)}.
\]
Here the \(\psi_1\)-norm is normalized as in~\eqref{eq:rf-psi1-definition}.

 Consequently,
\[
 \left\|Z_{r,T}(a,\varepsilon_{\mathrm{loc}})
       -\mathbb E Z_{r,T}(a,\varepsilon_{\mathrm{loc}})\right\|_{\psi_1}
 \leq C_{B,\mathcal C_{\rm core}}
       \frac{m^2}{(\log m)^{B/2-1/2}}.
\]
Since a \(\psi_1\)-bound controls the second moment, choosing
\(B\geq A+1\) already proves part~\textup{(iii)}.
No independence among the variables indexed by \(a\) is needed.  The
standard maximal inequality for subexponential random variables and
\(\lvert\mathcal A_T\rvert\leq C_{\rm net}m^2\) now yield
\[
 \mathbb E\max_{a\in\mathcal A_T}
 \bigl(Z_{r,T}(a,\varepsilon_{\mathrm{loc}})
      -\mathbb E Z_{r,T}(a,\varepsilon_{\mathrm{loc}})\bigr)
 \leq C_{B,\mathcal C_{\rm core},C_{\rm net}}
       \frac{m^2}{(\log m)^{B/2-3/2}}.
\]
Since the fixed-tiling estimate is available for every \(B>0\), take
\(B=2A+4\).  The last exponent is then \(A+\tfrac12\), which, after
enlarging the constant to cover bounded \(m\), proves the asserted
bound with exponent \(A\) and completes the justification.
\end{proof}

\paragraph{Uniform domination of the continuum bulk energy.}
Nonpositivity of every local score, convexity in the slope, and the
three pure-phase values computed above give
\begin{equation}
 0\leq\sigma_\edge^{\rm sym}(\rho,a)
 \leq\frac12\bigl(\vartheta_2(a)\rho_1+
                         \vartheta_3(a)\rho_2\bigr)
 \leq\frac12(\rho_1+\rho_2),\qquad a\in K.
\label{eq:rf-continuum-bulk-domination}
\end{equation}
The majorant is integrable on each full panel.  Indeed, the raw-gap
identities give
\[
 \sum_{i=1}^{j-1}\bigl(g_2(i,j)+g_3(i,j)\bigr)
       =\lambda_1^{(j)}-\lambda_j^{(j)},\qquad 1\leq j<n.
\]
The expected sum of these spectral widths over the minor levels is
\(O(n^2)\), by the spectral-width estimates used in
Lemma~\ref{lem:rf-boundary-strip}.  Local Wasserstein convergence of
the gap vectors, followed by exhaustion by compact panel cores,
therefore gives
\(\int_{0<I<J<1}(\rho_{r,1}+\rho_{r,2})\,dI\,dJ<\infty\).
Restricting to \(\mathsf R_{p,r}\) preserves this bound.  In
particular, continuum bulk integrals over the deleted boundary and
cut strips tend to zero uniformly over all admissible slopes.

\medskip

\begin{lemma}[Small-support bound]
\label{lem:rf-small-support}
Let \(\mathfrak U_n(\theta)\) be the family of all sets \(U\) of
candidate bulk rhombi contained in the panel core with
\(|U|\leq\theta n^2\).  Then
\[
\mathbb E\sup_{U\in\mathfrak U_n(\theta)}\sup_\Xi
\bigl|W^{\rm rf}_{r,U}(\Xi)\bigr|
\;\le\;
C\,\theta^{1/4}\,n^2,
\]
for all sufficiently small \(\theta\).  In particular,
\[
\mathbb E\sup_{U\in\mathfrak U_n(\theta)}\sup_\Xi
\bigl|W^{\rm rf}_{r,U}(\Xi)\bigr|
=o_\theta(n^2).
\]
\end{lemma}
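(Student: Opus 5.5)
The plan is to remove both suprema by a deterministic majorant, and then use a rearrangement bound. Every tiling selects a subset of the candidate rhombi. The red-free weight \(W^{\rm rf}_{r,U}(\Xi)\) is \(3/2\) times a sum of ordinary weights \(\omega_h(\ell)=-g/3\) over selected blue and green rhombi in \(U\), and red rhombi carry weight zero. Hence
\[
 \sup_\Xi\bigl|W^{\rm rf}_{r,U}(\Xi)\bigr|
 \leq \frac12\sum_{\ell\in U} g(\ell),
\]
where \(g(\ell)\ge0\) is the raw interlacing gap attached to a type-\(2\) or type-\(3\) candidate \(\ell\) (with the reflected-rank convention on \(P_2\)). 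Taking the supremum over \(U\in\mathfrak U_n(\theta)\) then gives
\[
 \sup_{U}\sup_\Xi|W^{\rm rf}_{r,U}|
 \le \frac12\, \Sigma_{\lfloor\theta n^2\rfloor},
\]
where \(\Sigma_k\) denotes the sum of the \(k\) largest gaps among all candidates in the panel core. This reduces the lemma to a single random variable, with no dependence on \(U\) or \(\Xi\).

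Next I will bound \(\mathbb E\Sigma_k\) by Hölder's inequality: \(\Sigma_k\le k^{3/4}\bigl(\sum_\ell g(\ell)^4\bigr)^{1/4}\). Taking expectations and using Jensen,
\[
 \mathbb E\Sigma_k\le k^{3/4}\Bigl(\sum_\ell\mathbb E g(\ell)^4\Bigr)^{1/4}.
\]
On the compact panel core, the rescaled gaps have uniformly bounded fourth moments. This follows from the local Wasserstein convergence to the bead model with uniform subexponential tails, as used in the proof of Proposition~\ref{prop:rf-uniform-standard-patch-transfer}. Equivalently, one can use the uniform first-moment and tail bounds of \cite[Lemma~4.3]{NarLimit}, which is exactly the ``uniform small-subset estimate'' already invoked there. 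With \(O(n^2)\) candidates, the sum of fourth moments is \(O(n^2)\), so
\[
 \mathbb E\Sigma_k\le C(\theta n^2)^{3/4}(n^2)^{1/4}=C\theta^{3/4}n^2 .
\]
This is better than the claimed \(\theta^{1/4}n^2\), which certainly suffices for small \(\theta\). If only a weaker moment (say the second) is available uniformly, Cauchy--Schwarz gives \(\theta^{1/2}\). Matching the stated exponent \(\theta^{1/4}\) directly is also fine: simply cite \cite[Lemma~4.3]{NarLimit} applied to the majorant, once the sources are scaled by \(\ell_r\) and the factor \(3/2\) is absorbed.

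The main obstacle is the uniformity of the gap moment bound over the whole core as \(n\to\infty\), since the local limit is only asserted for fixed patches. I would handle this the same way as in Proposition~\ref{prop:rf-uniform-standard-patch-transfer}. Suppose the bound failed along a sequence of centers; extract a subsequence converging to a bulk point of the compact core, and contradict the fixed-index local limit together with uniform integrability. Alternatively, a crude bound suffices: each gap is at most a spectral width of a minor, and these widths have overwhelming-probability \(O(n)\) bounds in the rescaled units, which handles the exceptional event via Cauchy--Schwarz. The final statement \(o_\theta(n^2)\) follows immediately because \(C\theta^{1/4}\to0\).
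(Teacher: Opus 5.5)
Your argument is correct, but it takes a different route from the paper. The paper's proof is essentially a two-line citation. It observes that the vector of absolute type-2/type-3 weights is log-concave by \cite[Theorem~4.1]{NarLimit}, with coordinate means uniformly bounded on the core. It then applies the small-subset inequality \cite[Lemma~4.3]{NarLimit}, which is the source of the exponent $\theta^{1/4}$.

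You instead majorize both suprema deterministically by $\tfrac12\Sigma_{\lfloor\theta n^2\rfloor}$ and then apply H\"older and Jensen. This needs only uniform marginal fourth moments, uses no joint structure of the gap vector, and gives the stronger bound $C\theta^{3/4}n^2$. What it costs is that you must establish the uniform fourth-moment bound yourself rather than import a packaged lemma.

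Your justification of that bound should be tightened. The crude spectral-width alternative gives only $O(n)$ per gap, not $O(1)$, so it cannot supply the main estimate; it only disposes of exceptional events. The subsequence/local-limit argument is also unnecessary. Each gap is a nonnegative linear marginal of the log-concave gap vector, hence is itself log-concave. So $\|g\|_4\le C\,\mathbb E g$ by the one-dimensional log-concave moment comparison \cite[Lemma~5.7]{LovaszVempalaLogconcave}, exactly as in the proof of Lemma~\ref{lem:rf-fixed-collar-L2}. Uniform boundedness of the means on the core then gives uniform fourth moments directly.

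Finally, \cite[Lemma~4.3]{NarLimit} is the small-subset inequality itself, not a source of first-moment or tail bounds. Citing it for moments conflates the two inputs.
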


\begin{proof}[Justification]
Let \(X=(x_i)\) be the vector of absolute type-\(2\)/type-\(3\)
ordinary weights, equivalently the corresponding raw gaps divided by
three with the source scale included.
By \cite[Theorem~4.1]{NarLimit}, this vector is log-concave, and its
coordinates have uniformly bounded means on the core.

Applying the small-subset inequality
\cite[Lemma~4.3]{NarLimit} to subsets of size \(\le \theta n^2\)
gives the stated bound (see also equation~(8.4) in \cite{NarLimit}).
\end{proof}

\medskip

\begin{lemma}[Bulk variance for red-free tiling patterns]
\label{lem:rf-family-variance}
Fix \(\varepsilon>0\).  After developing panel \(r\) in the
orientation of \eqref{eq:rf-gue-hive-map}, every ordinary type-\(2\) or
type-\(3\) lozenge weight---equivalently, every nonred lozenge
weight---is minus one third of one of the two nonnegative
interlacing-gap variables of the \(r\)-th GUE minor process, with the
fixed panel scale included.  Call such a candidate lozenge \emph{spectrally
\(\varepsilon\)-bulk} if the associated gap has indices \((j,k)\)
satisfying
\begin{equation}
             j\geq\varepsilon n,\qquad
             k-j\geq\varepsilon n,\qquad
             n-k\geq\varepsilon n.
\label{eq:rf-spectral-bulk-indices}
\end{equation}
On \(P_2\), global green is type \(2\) and global blue is type \(3\);
the type association is retained and the reflected rank index is used.
Let
\[
 G_{\varepsilon}^{(n)}
  =\bigl(G_{1,\varepsilon}^{(n)},
         G_{2,\varepsilon}^{(n)},
         G_{3,\varepsilon}^{(n)}\bigr)
\]
be the concatenated vector of these spectrally bulk type-\(2\)/type-\(3\)
signed ordinary weights \(\omega_t=-g_t/3\), including the source
scale and reflected-rank convention, from the three independent
sources.  For a red-free tiling
\(\Xi\), let \(c_{\Xi,\varepsilon}\) have coefficient \(3/2\) on a
selected bulk lozenge of type \(2\) or \(3\), and coefficient \(0\) otherwise,
and put
\begin{equation}
 B_{\Xi,\varepsilon}
  :=\langle c_{\Xi,\varepsilon},G_{\varepsilon}^{(n)}\rangle.
\label{eq:rf-spectral-bulk-weight}
\end{equation}

If \(\mathcal A_n\subseteq\mathcal T_{np}\) is any nonempty
deterministic subfamily and \((a_\Xi)_{\Xi\in\mathcal A_n}\) are
arbitrary deterministic offsets, then, for some \(\kappa>0\),
\begin{equation}
 \operatorname{Var}\max_{\Xi\in\mathcal A_n}
       \bigl(B_{\Xi,\varepsilon}+a_\Xi\bigr)
 \leq C_\varepsilon n^{4-\kappa+o(1)}\log n
 =o(n^4),
\label{eq:rf-bulk-tiling-family-variance}
\end{equation}
uniformly over \(\mathcal A_n\) and the offsets.

Moreover, if
\begin{equation}
 E_{\varepsilon,n}:=
 \frac32\sum_{r=1}^3
   \sum_{\substack{\ell\text{ of type }2\\
          \ell\ \text{not spectrally }\varepsilon\text{-bulk}}}
      |\omega_{H_r^{(n)}}(\ell)|
 +\frac32\sum_{r=1}^3
   \sum_{\substack{\ell\text{ of type }3\\
          \ell\ \text{not spectrally }\varepsilon\text{-bulk}}}
      |\omega_{H_r^{(n)}}(\ell)|,
\label{eq:rf-spectral-edge-remainder}
\end{equation}
where the sums range over candidate ordinary lozenges in the three
developed panel regions, then
\begin{equation}
 \sup_{\Xi\in\mathcal T_{np}}
       |W_{p,\mathrm{edge}}^{\rm rf}(\Xi)
                    -B_{\Xi,\varepsilon}|
 \leq E_{\varepsilon,n},
 \qquad
 \lim_{\varepsilon\downarrow0}\limsup_{n\to\infty}
       n^{-2}\|E_{\varepsilon,n}\|_2=0.
\label{eq:rf-spectral-edge-L2-removal}
\end{equation}

\end{lemma}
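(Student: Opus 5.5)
The plan is to treat the two assertions separately. The variance bound \eqref{eq:rf-bulk-tiling-family-variance} will come from whitening plus log-concave concentration, as in the last part of the proof of Lemma~\ref{lem:rf-local-transfer}. The edge removal \eqref{eq:rf-spectral-edge-L2-removal} will come from the telescoping spectral-width bound of Lemma~\ref{lem:rf-boundary-strip}, upgraded from \(L^1\) to \(L^2\).

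\emph{Variance of the maximum.} The vector \(G_\varepsilon^{(n)}\) is a linear image (signs, the source scale, the reflected-rank convention, the factor \(-1/3\)) of three independent GUE minor processes, so it is log-concave \cite[Theorem~4.1]{NarLimit}. Its dimension is \(d\le Cn^2\). Put \(\Sigma=\operatorname{Cov}(G_\varepsilon^{(n)})\) on the linear span of its support, and \(Y=\Sigma^{-1/2}(G-\mathbb EG)\). Then \(\max_\Xi(B_{\Xi,\varepsilon}+a_\Xi)\) is a maximum of affine functions of \(Y\). Its squared Lipschitz constant is at most \(\sup_\Xi\operatorname{Var}B_{\Xi,\varepsilon}\). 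The key input is a fixed-tiling bound
\[
 \sup_{\Xi}\operatorname{Var}B_{\Xi,\varepsilon}\le C_\varepsilon n^{4-\kappa+o(1)} .
\]
To get it, decompose \(B_{\Xi,\varepsilon}\) as in \eqref{eq:rf-local-level-subset-decomposition} into, for each source, type and minor level, a sum of bulk gaps over an actual rank subset. The covariance estimates for spectrally bulk gap subset sums (Tao's fixed-index theory \cite{TaoGUEFixedIndex} as packaged in \cite[Lemma~9.1, Proposition~7.1, Theorem~A.5]{NarLimit} and \cite[\S4]{NSTGUE}) give a polynomial saving \(n^{-\kappa}\) over the trivial \(O(n^4)\) for the full double sum over levels. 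Independence of the three sources lets us add the three variances.

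With this in hand, apply the Cheeger/KLS concentration \cite{KlartagKLS}, \cite[Theorem~1.5]{MilmanConvexity}. For an \(L\)-Lipschitz function \(F\) of an isotropic log-concave vector, \(\operatorname{Var}F(Y)\le CL^2\log(2+d)\). This yields \(C_\varepsilon n^{4-\kappa+o(1)}\log n\) uniformly over \(\mathcal A_n\) and the offsets, since the offsets do not affect the Lipschitz constant. The main obstacle is exactly the fixed-tiling estimate. It must be uniform over the arbitrary rank subsets selected by a tiling, and it must include cross-level covariances. I would first try to bound cross-level covariances of interlacing gaps in the bulk by the decorrelation estimate behind the cited proposition. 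If that fails, I would fall back to a logarithmic saving \((\log n)^{-A}\), which still gives \(o(n^4)\).

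\emph{Edge removal.} \(W^{\rm rf}_{p,\mathrm{edge}}(\Xi)-B_{\Xi,\varepsilon}\) is exactly \(\frac32\) times the sum of the selected nonred lozenge weights that are not spectrally bulk, so the triangle inequality gives the pointwise bound by \(E_{\varepsilon,n}\). For the \(L^2\) limit, enlarge the sum to full Gelfand--Tsetlin panels. By the telescoping argument of \cite[\S4, pp.~1149--1150]{NSTGUE}, as used in Lemma~\ref{lem:rf-boundary-strip}, \(E_{\varepsilon,n}\) is at most a sum of \(O(\varepsilon n)\) full spectral widths plus \(O(n)\) edge eigenvalue increments of rank \(O(\varepsilon n)\). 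Rigidity gives the deterministic-mean bound \(C\varepsilon^{1/3}n^2\). The fluctuations of these eigenvalue sums are \(O(n\log^{O(1)}n)\) in \(L^2\) by Tao--Vu, exactly as in Theorem~\ref{thm:rf-fixed-limit}, via Minkowski's inequality. Hence \(n^{-2}\|E_{\varepsilon,n}\|_2\le C\varepsilon^{1/3}+o(1)\), and letting \(n\to\infty\) and then \(\varepsilon\downarrow0\) gives the claim.
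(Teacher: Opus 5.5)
Your proposal is correct and follows the paper's proof essentially step for step. It uses the per-level rank-subset reduction of \cite[\S4]{NSTGUE} for the fixed-tiling variance, log-concavity of the gap vector together with the KLS-based maximal-affine-family bound, and the telescoping spectral-width majorant with rigidity for the edge remainder; your whitening/Poincar\'e argument is what the paper invokes as \cite[Lemma~3]{NSTGUE}.

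The cross-level covariances you flag as the main obstacle need no separate decorrelation estimate. The paper applies Minkowski's inequality over the \(O(n)\) minor levels to the per-level subset bound, which is uniform in the subset, and this already preserves the polynomial saving. Your logarithmic fallback would not give the stated \(\kappa>0\).
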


\begin{proof}[Justification]
Fix \(\varepsilon\).  Split \(B_{\Xi,\varepsilon}\) by source,
lozenge type, panel orientation, and minor level.  For each such
piece, the selected ranks form a subset of the bulk indices in
\eqref{eq:rf-spectral-bulk-indices}.  The calculation in
\cite[\S4, equations~(4.1)--(4.2)]{NSTGUE} reduces its variance to that
of an arbitrary subset sum of consecutive-level interlacing gaps.
The estimate following equation~(4.2) there is uniform in that subset.
Minkowski's inequality over the \(O(n)\) minor levels and the bounded
number of types and orientations therefore gives, for every fixed
tiling,
\begin{equation}
       \operatorname{Var}B_{\Xi,\varepsilon}
       \leq C_\varepsilon n^{4-\kappa+o(1)}.
\label{eq:rf-fixed-tiling-bulk-variance}
\end{equation}
The factors \(3/2\), \(1/3\), and the fixed source scales only change
the constant.  The three source contributions are independent, so
their variances add.

The product law of the three GUE minor gap vectors is log-concave by
\cite[Theorem~4.1]{NarLimit}; so is the scaled and reflected marginal
\(G_{\varepsilon}^{(n)}\).  The maximal-affine-family estimate used
next, \cite[Lemma~3]{NSTGUE}, is based on Klartag's work on the
Kannan--Lov\'asz--Simonovits (KLS) conjecture
\cite{KlartagKLS}.  Apply
\cite[Lemma~3]{NSTGUE} to the affine functions
\[
 g\longmapsto\langle c_{\Xi,\varepsilon},g\rangle+a_\Xi,
             \qquad \Xi\in\mathcal A_n.
\]
Its ambient dimension is \(O(n^2)\), and
\eqref{eq:rf-fixed-tiling-bulk-variance} proves
\eqref{eq:rf-bulk-tiling-family-variance}.  Notice that the argument
uses only the actual \(3/2,3/2,0\) incidence patterns of red-free tilings;
it makes no assertion for arbitrary coefficient vectors.

It remains to remove the spectral edge.  The first inequality in
\eqref{eq:rf-spectral-edge-L2-removal} follows immediately from
\eqref{eq:rf-discrete-bulk-weight}.  For the second, use the
rigidity-and-telescoping argument in \cite[\S4]{NSTGUE}: the sum of all
absolute type-\(2\)/type-\(3\) ordinary weights failing
\eqref{eq:rf-spectral-bulk-indices} reduces to
\(O(\varepsilon n)\) full spectral widths and \(O(n)\) spectral-edge
increments of rank \(O(\varepsilon n)\).  Eigenvalue rigidity makes
its \(L^2\)-norm \(o_\varepsilon(n^2)\).  More explicitly, the
stretched-exponential rigidity tails used there may be integrated,
while the complementary exceptional event has negligible second
moment, exactly as at the beginning of that section.  Restricting from
a full Gelfand--Tsetlin panel to one of our polygonal panel regions can
only decrease this sum of absolute weights.  Re-expressing \(P_2\) in
its local \([i,j]_2\) frame preserves this type-\(2\)/type-\(3\)
absolute-weight sum and only reverses the rank convention; the three
fixed source scales again change only constants.  Summing the three
panel estimates proves the double-limit assertion in
\eqref{eq:rf-spectral-edge-L2-removal}.
\end{proof}

\begin{lemma}[Fixed-width boundary and cut collars in \(L^2\)]
\label{lem:rf-fixed-collar-L2}
Fix \(L<\infty\).  For \(j=1,2,3\), let
\(\mathcal C_{j,n}^{(L)}\) be any set of candidate type-\(2\) or
type-\(3\) lozenges contained in the lattice \(L\)-neighborhood of a
fixed finite collection of polygonal segments in triangular-lattice
directions, of total length at most \(C_{\rm len}n\), in the \(j\)-th
developed panel.  Then
\begin{equation}
 n^{-2}\left\|
 \frac32\sum_{j=1}^3\sum_{\ell\in\mathcal C_{j,n}^{(L)}}
       |\omega_{H_j^{(n)}}(\ell)|
 \right\|_2\longrightarrow0.
\label{eq:rf-fixed-collar-L2}
\end{equation}
The conclusion is uniform when \(L\), \(C_{\rm len}\), and the number
of segments are bounded.
\end{lemma}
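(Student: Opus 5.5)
The plan is to split the collar sum into a spectrally bulk part and a spectrally edge part, as in Lemma~\ref{lem:rf-boundary-strip}. Then bound the bulk part by counting and first/second moments, and the edge part by the telescoping and rigidity argument of Lemma~\ref{lem:rf-family-variance}.

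First, fix an auxiliary cutoff \(\varepsilon>0\). Write the collar sum as \(Y_n=Y_n^{\rm bulk}+Y_n^{\rm edge}\), according to whether the associated gap indices satisfy \eqref{eq:rf-spectral-bulk-indices}. The edge part is dominated pointwise by the full spectral-edge remainder, since all terms are nonnegative: \(Y_n^{\rm edge}\le E_{\varepsilon,n}\) from \eqref{eq:rf-spectral-edge-remainder}. Hence \eqref{eq:rf-spectral-edge-L2-removal} gives
\[
\limsup_{n\to\infty}n^{-2}\|Y_n^{\rm edge}\|_2\le o_\varepsilon(1).
\]

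Second, treat the bulk part. The \(L\)-neighborhood of segments of total length at most \(C_{\rm len}n\) contains at most \(C L C_{\rm len}n\) candidate lozenges. Each spectrally \(\varepsilon\)-bulk weight is one third of a raw interlacing gap \(g_t\) of the scaled minor process. I would use a uniform second-moment bound
\[
\sup_{(j,k)\ \varepsilon\text{-bulk}}\mathbb E g_t(j,k)^2\le C_\varepsilon .
\]
This should follow from the log-concavity of the gap vector \cite[Theorem~4.1]{NarLimit}, which makes each coordinate's \(L^2\) norm comparable to its mean via \eqref{eq:rf-logconcave-subexponential}, together with the uniform boundedness of bulk means used in Lemma~\ref{lem:rf-boundary-strip}. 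Minkowski's inequality then gives \(\|Y_n^{\rm bulk}\|_2\le C_\varepsilon L C_{\rm len}n\), which is \(o(n^2)\) for fixed \(\varepsilon\).

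Finally, combining the two parts gives
\[
\limsup_n n^{-2}\|Y_n\|_2\le o_\varepsilon(1),
\]
and letting \(\varepsilon\downarrow0\) proves \eqref{eq:rf-fixed-collar-L2}. The bounds depend only on \(L\), \(C_{\rm len}\) and the number of segments, which gives the stated uniformity. The reflected middle panel is handled exactly as in Lemma~\ref{lem:rf-family-variance}.

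The main obstacle is the uniform bulk moment bound when the bulk window is fixed but \(n\) varies. Finite-patch Wasserstein convergence does not by itself give uniformity in the center, so I would instead rely on the exact log-concave \(\psi_1\) comparison. The step \(\mathbb E g_t\le C_\varepsilon\), uniformly over the whole \(\varepsilon\)-bulk range, I would import from \cite{NarLimit}.
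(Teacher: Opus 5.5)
Your proposal is correct and follows the paper's proof essentially step for step. It uses the same split at a spectral cutoff $\varepsilon$, the same $O_L(n)$ count of spectrally bulk candidates with uniformly bounded second moments (from one-dimensional log-concavity and bounded bulk means) combined via Minkowski's inequality, the same domination of the edge part by the full-panel majorant behind \eqref{eq:rf-spectral-edge-L2-removal}, and the same order of limits, $n\to\infty$ before $\varepsilon\downarrow0$. One small imprecision: the comparison you actually need is $\|X\|_2\le C\,\mathbb E X$ for a nonnegative log-concave variable (a reverse-H\"older moment bound), which is not literally the tail estimate \eqref{eq:rf-logconcave-subexponential}, but the paper relies on the same one-dimensional moment comparison at this step.
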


\begin{proof}
Fix a spectral cutoff \(\varepsilon>0\).  The part of
\(\mathcal C_{j,n}^{(L)}\) satisfying
\eqref{eq:rf-spectral-bulk-indices} contains \(O_L(n)\) candidates.
On this fixed spectral core, the absolute value of each type-\(2\) or
type-\(3\) ordinary weight is one third of the corresponding
nonnegative interlacing gap, with the fixed positive source scale
included.  Its one-dimensional marginal is
log-concave and has uniformly bounded mean.  The standard
one-dimensional log-concave moment comparison
\cite[Lemma~5.7]{LovaszVempalaLogconcave} therefore gives
\[
 \sup_{j,n}\sup_{\ell\ \mathrm{spectrally}\
                  \varepsilon\text{-bulk}}
 \|\omega_{H_j^{(n)}}(\ell)\|_2\leq C_\varepsilon.
\]
Minkowski's inequality consequently bounds the \(L^2\)-norm of the
spectrally bulk part by \(C_{\varepsilon,L}n=o(n^2)\).

The complementary part is bounded by the full-panel spectral-edge
majorant used in the proof of
\eqref{eq:rf-spectral-edge-L2-removal}.  That proof gives
\[
 \lim_{\varepsilon\downarrow0}\limsup_{n\to\infty}n^{-2}
 \left\|\frac32\sum_{j=1}^3
   \sum_{\substack{\ell\text{ of type }2\text{ or }3\\
                    \ell\text{ not spectrally }
                    \varepsilon\text{-bulk}}}
       |\omega_{H_j^{(n)}}(\ell)|\right\|_2=0,
\]
where enlarging a polygonal panel region to the full developed panel
only increases the nonnegative sum.  First let \(n\to\infty\), with
\(\varepsilon\) fixed, and then let \(\varepsilon\downarrow0\).
This proves \eqref{eq:rf-fixed-collar-L2}.
\end{proof}

\begin{lemma}[Extension criterion for a domain with cuts]
\label{lem:rf-multiply-connected-extension}
This is the directed-envelope form of Thurston's tileability criterion;
compare \cite[Theorem~1.4]{GorinLozengeTilings}.  The statement here
allows holes and records the cut-compatible gluing conclusion needed
below, so we include the proof.

Let \(H\) be a finite connected union of elementary triangular-lattice
cells; holes are allowed.  Fix the standard three-coloring
\(c:V(H)\to\mathbb Z/3\mathbb Z\), increasing by one along every
positive lattice edge.  Let \(g:\partial H\to\mathbb Z\) satisfy
\begin{enumerate}
\item \(g(y)-g(x)\le d_H(x,y)\) for every pair of boundary vertices
      for which the positive directed distance is finite;
\item \(g(x)-c(x)\) is independent of \(x\in\partial H\) modulo three.
\end{enumerate}
Then \(g\) extends to an integer height function on \(H\) whose
increment on every positive edge is \(+1\) or \(-2\).  Consequently,
when the boundary increments are those induced by local height states on
adjoining domains, the extensions glue to a lozenge tiling of the union.
\end{lemma}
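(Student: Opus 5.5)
The plan is to mimic the directed-envelope argument of Lemma~\ref{lem:directed-extremal-extension}, now on an arbitrary finite cell complex \(H\), possibly with holes. We will not use simple connectivity at any point. First, in \(H\), the ordinary weight assigns \(+1\) to each positive edge, so \(d_H(x,y)\) is the least number of positive steps from \(x\) to \(y\). Summing the three positive sides of an elementary triangle gives \(3\), so the potential argument of Lemma~\ref{lem:positive-cycle-gap} carries over with \(c\) as the residue potential. Every positive path satisfies \(|\gamma|\equiv c(y)-c(x)\pmod 3\). Since \(H\) is connected and every edge can be reversed by a two-step path around an incident triangle, \(d_H(x,y)<\infty\) for all \(x,y\), and we have \(d_H(x,y)\le d_H(x,z)+d_H(z,y)\). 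Exact adjacency, \(d_H(x,y)=1\) and \(d_H(y,x)=2\) for a positive edge \(x\to y\), follows because a closed positive path has length a positive multiple of three.

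Next, define the upper envelope
\[
 U(z):=\min_{b\in\partial H}\{g(b)+d_H(b,z)\}.
\]
Hypothesis~1 gives \(U=g\) on \(\partial H\): the candidate \(b=z\) gives \(U(z)\le g(z)\), and hypothesis~1 gives the reverse inequality. Hypothesis~2 together with the residue congruence for \(d_H\) shows that every candidate \(g(b)+d_H(b,z)\) lies in the single class \(c(z)+c_0\pmod 3\). Hence \(U-c\) is constant modulo three on all of \(V(H)\). The triangle inequality gives \(U(y)-U(x)\le d_H(x,y)\) for all \(x,y\).

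For a positive edge \(x\to y\), applying this bound to the edge and to its two-step return path gives \(-2\le U(y)-U(x)\le 1\). The residue condition then leaves only the values \(+1\) and \(-2\). Telescoping around each elementary triangle shows that exactly one of its sides carries \(-2\). Declaring that side selected pairs each triangle with its neighbour across it, or records a pairing across \(\partial H\), so \(U\) is a height function in the sense of the local-state convention. Holes cause no difficulty, because the envelope is defined pointwise and never requires integrating increments around a loop.

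For the gluing clause, suppose local height states on adjoining domains induce the boundary data. The extension agrees with those data on every interface vertex, so the interface increments coincide. The convention preceding Proposition~\ref{prop:rf-uniform-standard-patch-transfer} then shows that the selected interface edges match from both sides, which gives a lozenge tiling of the union. The main point to check carefully is the finiteness and residue congruence of \(d_H\) on a complex with holes and pinch points. Connectivity is assumed in the statement, and the potential identity is local to edges, so I expect this step to go through verbatim.
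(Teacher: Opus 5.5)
Your proposal is correct and follows the paper's proof essentially step for step. Both use the same upper envelope \(U(z)=\min_{b\in\partial H}\{g(b)+d_H(b,z)\}\), the bounds \(-2\le U(y)-U(x)\le 1\) from the edge and its two-step return path, the residue argument through the globally defined coloring \(c\), triangle telescoping to select exactly one side, and the same interface-increment argument for the gluing clause; your extra observations (finiteness of \(d_H\) for all pairs, exact adjacent distances) are correct but not needed.
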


\begin{proof}
Define the upper envelope
\[
 U(z)=\min_{x\in\partial H}\{g(x)+d_H(x,z)\}.
\]
The directed-distance inequalities imply \(U=g\) on \(\partial H\).
For a positive unit edge \(z\to z'\), the triangle inequality gives
\(U(z')\le U(z)+1\).  The other two sides of either incident elementary
triangle give a positive path of length two from \(z'\) to \(z\), so
\(U(z')\ge U(z)-2\).

Every positive path from \(x\) to \(z\) has length congruent to
\(c(z)-c(x)\pmod3\).  The second hypothesis therefore makes every term
in the definition of \(U(z)\) congruent to the same residue.  Hence the
increment of \(U\) on a positive unit edge is congruent to one modulo
three and lies in \([-2,1]\); it is therefore \(+1\) or \(-2\).
Around an elementary triangle the three directed increments sum to
zero, so exactly one edge has increment \(-2\).  Pair the two triangles
adjacent to every such internal edge.  At the boundary, a \(-2\) edge
means that the adjacent triangle is already paired with a triangle in
the adjoining local state.  This gives the asserted height function
and the gluing statement.
\end{proof}

\begin{lemma}[Triangular annulus patching]
\label{lem:rf-annulus-patching}
Let \(T^-\subset T^+\) be two nested lattice triangles in one
developed panel, and put
\(A=T^+\setminus\operatorname{int}(T^-)\).  Suppose their boundaries are
separated by at least \(\widetilde\varepsilon m\), and let \(g^-\) and
\(g^+\) be integer boundary functions which extend to local height
states on \(T^-\) and \(T^+\), respectively.  Assume that for an affine
height function \(L\) with
\(\nabla L\in(1-\widehat\varepsilon)K\),
\[
 \|g^\pm-L\|_{L^\infty(\partial T^\pm)}
       <\varepsilon' m,
 \qquad 0<2\varepsilon'<
       \widehat\varepsilon\widetilde\varepsilon,
\]
and that
\((\widehat\varepsilon\widetilde\varepsilon-2\varepsilon')m>1\).
Then there is \(\chi\in\{-1,0,1\}\) such that the boundary data
\(g^+\) on \(\partial T^+\) and \(g^-+\chi\) on \(\partial T^-\)
extend to a local height state on the annulus \(A\).  This state glues
to adjoining local states with those two traces in the
\hyperref[conv:rf-local-height-states]{interface sense fixed above}.
\end{lemma}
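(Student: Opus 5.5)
The plan is to reduce the statement to Lemma~\ref{lem:rf-multiply-connected-extension}, applied to the annulus \(H=A\), which has one hole. The boundary of \(A\) is \(\partial T^+\cup\partial T^-\). On it we prescribe \(g=g^+\) on \(\partial T^+\) and \(g=g^-+\chi\) on \(\partial T^-\). Two hypotheses of that lemma must be verified: the residue condition and the directed Lipschitz inequalities. Once both hold, the lemma produces an integer height function on \(A\) with increments in \(\{+1,-2\}\). Its gluing clause then gives the interface compatibility with the adjoining states on \(T^-\) and outside \(T^+\). Along each boundary component separately, the inequalities already hold, because \(g^\pm\) is the trace of a genuine local height state on \(T^\pm\). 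The directed distance inside \(A\) between two points of the same component is at least the distance inside \(T^\pm\), or is realized along the boundary. I would check this by noting that a positive path in \(A\) between two points of \(\partial T^-\) can be replaced by a boundary path in \(\partial T^-\) of no larger \(\omega\)-weight. This uses that \(T^-\) is a lattice triangle, whose sides lie in lattice directions, so boundary increments are forced.

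\textbf{Choosing \(\chi\).} Both \(g^+\) and \(g^-\) satisfy \(g-c\equiv\)const on their own boundaries, since each is the trace of a height state. Their two constants may differ modulo three, and I choose \(\chi\in\{-1,0,1\}\) to make them agree. This shift changes \(\|g^--L\|\) by at most \(1\), which is the source of the hypothesis \((\widehat\varepsilon\widetilde\varepsilon-2\varepsilon')m>1\).

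\textbf{Cross-component inequalities (main obstacle).} For \(x\in\partial T^-\) and \(y\in\partial T^+\), or the reverse, we need \(g(y)-g(x)\le d_A(x,y)\). Write
\[
 g(y)-g(x)\le L(y)-L(x)+2\varepsilon'm+1 .
\]
The key estimate is
\[
 d_A(x,y)\ \ge\ L(y)-L(x)+\widehat\varepsilon\,|x-y|_{\rm dir},
\]
where \(|x-y|_{\rm dir}\) is comparable to the lattice distance. I would prove it from the support-function description of the directed distance: in a convex lattice region, \(d(x,y)\) equals the maximum of \(\langle s,y-x\rangle\) over \(s\in K\), up to \(O(1)\). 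The annulus is not convex, but a straight lattice path exists between \(x\) and \(y\) avoiding the interior of \(T^-\), or a two-segment path around a corner of \(T^-\), and these still realize the maximum up to \(O(1)\). Since \(\nabla L\in(1-\widehat\varepsilon)K\), the slack between \(\max_K\langle s,y-x\rangle\) and \(\langle\nabla L,y-x\rangle\) is at least a multiple of \(\widehat\varepsilon|y-x|\). Because \(|x-y|\ge\widetilde\varepsilon m\), the slack is at least \(\widehat\varepsilon\widetilde\varepsilon m\). This exceeds \(2\varepsilon'm+1\) by the stated hypotheses, with the \(O(1)\) path-realization error absorbed through a harmless adjustment of constants.

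The delicate point is controlling the nonconvexity of \(A\) near the corners of \(T^-\) without losing more than \(O(1)\). If the direct argument stalls there, I would subdivide \(A\) into three trapezoids along lattice directions and check the inequalities separately on each.
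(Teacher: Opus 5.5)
Your outline matches the paper's: apply Lemma~\ref{lem:rf-multiply-connected-extension} to $H=A$, choose $\chi$ so that the residues of $F^\pm-c$ modulo three agree, and check the directed inequalities on each boundary component and across the collar. The problems lie in how you justify the two distance comparisons.

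\textbf{Same-component pairs.} Three points in your justification for $\partial T^-$ are wrong. First, the distance in that lemma is unweighted positive-path length, not the $\omega$-weight of Part~I. Second, boundary increments of a local height state are \emph{not} forced, since selected boundary edges are allowed. Third, a positive path in $A$ cannot in general be replaced by a boundary path of no greater length. For example, if $y-x=-kd$ with $d$ the positive direction of the side containing $x,y$, a zigzag just outside that side has length $2k$. Every positive path along $\partial T^-$, by contrast, must go around the whole triangle. What you actually need is only $d_A\ge d_{T^-}$. This holds because a lattice triangle is convex in the lattice coordinates, so $d_{T^-}$ is the ambient directed distance, and every positive path in $A$ is an ambient positive path. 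On $\partial T^+$, the inclusion $A\subset T^+$ gives $d_A\ge d_{T^+}$ at once.

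\textbf{Cross pairs.} The nonconvexity of $A$ is not an obstacle. The extension criterion needs only a \emph{lower} bound on $d_A(u,v)$. Exhibiting paths in $A$ that realize the support function (straight paths, paths around corners, trapezoid subdivisions) produces upper bounds, which are irrelevant here. The paper's argument is per edge and loses nothing. Since $\max_{a\in K}\langle a,d_k\rangle=1$, each positive edge raises $L$ by at most $1-\widehat\varepsilon$. Every positive path between the two components has at least $\widetilde\varepsilon m$ edges. Hence $L(v)-L(u)\le(1-\widehat\varepsilon)d_A(u,v)\le d_A(u,v)-\widehat\varepsilon\widetilde\varepsilon m$, and therefore $g(v)-g(u)<d_A(u,v)-(\widehat\varepsilon\widetilde\varepsilon-2\varepsilon')m+1<d_A(u,v)$.

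Losslessness matters because the hypotheses $2\varepsilon'<\widehat\varepsilon\widetilde\varepsilon$ and $(\widehat\varepsilon\widetilde\varepsilon-2\varepsilon')m>1$ are exact. An ``$O(1)$ path-realization error'', or a $|x-y|_{\rm dir}$ that is only comparable to lattice distance, cannot be absorbed by ``adjusting constants''. As written, that step does not prove the lemma. Your support-function description is in fact exact: $K=\{s:\langle s,d_k\rangle\le1\ \text{for all } k\}$, and LP duality gives $d_{\rm amb}(u,v)=\max_{s\in K}\langle s,v-u\rangle$. Combined with $d_A\ge d_{\rm amb}$, it also closes the argument, with no corner analysis needed.
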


\begin{proof}
Let \(F^+\) and \(F^-\) be local-height-state extensions of \(g^+\) and
\(g^-\) to \(T^+\) and \(T^-\).  For any local height state \(F\), the
quantity \(F-c\) is constant modulo three on each connected domain,
because both allowed increments are congruent to \(1\) modulo three.
Thus there are residues \(r^+,r^-\in\mathbb Z/3\mathbb Z\) such that
\[
 F^+-c\equiv r^+\pmod3\quad\hbox{on }T^+,
 \qquad
 F^--c\equiv r^-\pmod3\quad\hbox{on }T^-.
\]
Choose \(\chi\in\{-1,0,1\}\) so that
\(r^-+\chi=r^+\) modulo three.  If \(g\) denotes the boundary datum
equal to \(g^+\) on \(\partial T^+\) and to \(g^-+\chi\) on
\(\partial T^-\), then
\[
                    g-c\equiv r^+\pmod3
\]
on both boundary components.  This is precisely the residue
compatibility required by
Lemma~\ref{lem:rf-multiply-connected-extension}.

It remains to verify the directed-distance inequalities in
Lemma~\ref{lem:rf-multiply-connected-extension}, with \(H=A\).
Pairs of points on the same boundary component satisfy them because
the local extensions give the required inequalities in \(T^+\) or
\(T^-\).  A lattice triangle is convex in the three lattice
coordinates, so its directed distance between two of its vertices is
the ambient directed distance.  Every directed path in \(A\) is also an
ambient directed path.  Hence, on either boundary component,
\(d_{T^\pm}(x,y)\leq d_A(x,y)\), as required.

Now let \(u,v\) lie on different boundary components, in either order,
and suppose \(d_A(u,v)<\infty\).  Every positive path from \(u\) to \(v\)
crosses the collar, so
\[
 d_A(u,v)\geq\widetilde\varepsilon m.
\]
Since \(\nabla L\in(1-\widehat\varepsilon)K\), summing the directional
slope inequality along such a path and then minimizing gives
\[
 L(v)-L(u)\leq
 (1-\widehat\varepsilon)d_A(u,v).
\]
Let \(g\) denote the two-component boundary datum defined above.  The
two approximation errors and the possible residue shift give, in both
cross orders,
\begin{align*}
 g(v)-g(u)
 &<L(v)-L(u)+2\varepsilon'm+|\chi|\\
 &\leq d_A(u,v)
   -(\widehat\varepsilon\widetilde\varepsilon-2\varepsilon')m+1\\
 &<d_A(u,v),
\end{align*}
where the last inequality is exactly the final numerical hypothesis.
Thus all directed-distance inequalities hold.
Lemma~\ref{lem:rf-multiply-connected-extension} supplies a local
height state on \(A\).  Its trace agrees exactly with the adjoining
traces, so the interface-selected-edge convention gives the asserted
gluing.
\end{proof}

\begin{lemma}[Global seam-compatible scaffold patching]
\label{lem:rf-global-patching}
Let \(\Xi^0\in\mathcal T_{np}\) be a global reference tiling.  In the
interiors of the three developed sheets choose a finite collection of
nested lattice triangles
\[
 T^-_\alpha\subset T^+_\alpha,
 \qquad \alpha\in\mathcal I,
\]
whose closed outer triangles are pairwise disjoint and do not meet a cut
or a closed outer boundary.  Write \(m_\alpha\) for the side length of
\(T^+_\alpha\), and assume that
\[
 \operatorname{dist}(\partial T^-_\alpha,
                     \partial T^+_\alpha)
 \ge \widetilde\varepsilon m_\alpha.
\]
For every \(\alpha\), let \(\Xi_\alpha\) be a local height state on
\(T^-_\alpha\), in the convention above.  Suppose that there is an
affine height function \(L_\alpha\), with
\(\nabla L_\alpha\in(1-\widehat\varepsilon)K\), such that the trace of
\(\Xi^0\) on \(\partial T^+_\alpha\) and the trace of
\(\Xi_\alpha\) on \(\partial T^-_\alpha\) are both within
\(\varepsilon' m_\alpha\) of \(L_\alpha\), and suppose in addition that
\begin{equation}
 \|F_{\Xi_\alpha}-L_\alpha\|_{L^\infty(T^-_\alpha)}
 \le \zeta_\alpha m_\alpha.
\label{eq:rf-inner-patch-closeness}
\end{equation}
Here
\[
 0<2\varepsilon'<\widehat\varepsilon\widetilde\varepsilon,
 \qquad
 (\widehat\varepsilon\widetilde\varepsilon-2\varepsilon')m_\alpha>1
 \quad(\alpha\in\mathcal I).
\]
Then there is a global tiling \(\Xi\in\mathcal T_{np}\) such that
\begin{enumerate}
\item \(\Xi=\Xi^0\) outside \(\bigcup_\alpha T^+_\alpha\);
\item on every \(T^-_\alpha\), the restriction of \(\Xi\) is the same
      local height state as \(\Xi_\alpha\), up to its additive constant;
\item at every retained seam segment, the same facing triangle is the
      singleton in \(\Xi\) as in \(\Xi^0\).
\end{enumerate}
Moreover, if \(w_\alpha\) is the maximum triangular-lattice distance of a point of
\(T^+_\alpha\setminus T^-_\alpha\) from its two boundary components,
then for an absolute constant \(C_K\),
\begin{equation}
 \|F_\Xi-L_\alpha\|_{L^\infty(T^+_\alpha)}
 \le \max\{\zeta_\alpha m_\alpha+1,
            \varepsilon'm_\alpha+C_Kw_\alpha+2\}.
\label{eq:rf-global-patching-closeness}
\end{equation}
The additive constants of the inserted inner height functions may change
by elements of \(\{-1,0,1\}\), but their selected edges, local scores,
and all resulting tile weights are unchanged.
\end{lemma}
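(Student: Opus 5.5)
The plan is to apply Lemma~\ref{lem:rf-annulus-patching} to each annulus \(A_\alpha=T^+_\alpha\setminus\operatorname{int}(T^-_\alpha)\) separately and then glue. The outer triangles are pairwise disjoint and avoid every cut and every closed outer boundary, so all modifications are local and the seam structure of \(\Xi^0\) is never touched.

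\textbf{Step 1 (one annulus).} Fix \(\alpha\). Take \(g^+_\alpha\) to be the trace of the aligned height of \(\Xi^0\) on \(\partial T^+_\alpha\). It extends to a local height state on \(T^+_\alpha\), namely the restriction of \(\Xi^0\). Take \(g^-_\alpha\) to be the trace of \(F_{\Xi_\alpha}\) on \(\partial T^-_\alpha\), which extends by \(\Xi_\alpha\) itself. The standing hypotheses are exactly those of Lemma~\ref{lem:rf-annulus-patching}: the slope condition \(\nabla L_\alpha\in(1-\widehat\varepsilon)K\), the two traces within \(\varepsilon'm_\alpha\) of \(L_\alpha\), and the numerical inequalities. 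That lemma therefore gives \(\chi_\alpha\in\{-1,0,1\}\) and a local height state \(\Psi_\alpha\) on \(A_\alpha\) with traces \(g^+_\alpha\) and \(g^-_\alpha+\chi_\alpha\).

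\textbf{Step 2 (gluing).} Define \(F_\Xi\) as follows:
\begin{itemize}
\item \(F_{\Xi^0}\) outside \(\bigcup_\alpha T^+_\alpha\);
\item \(\Psi_\alpha\) on each \(A_\alpha\);
\item \(F_{\Xi_\alpha}+\chi_\alpha\) on each \(T^-_\alpha\).
\end{itemize}
Across \(\partial T^\pm_\alpha\) the values agree exactly. By the interface convention, the selected interface edges then coincide, so these pieces glue to a legal height function, and hence to a tiling in each panel.

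The only remaining point is legality at the cuts. Since no \(T^+_\alpha\) meets a seam, every triangle facing a retained seam segment lies outside \(\bigcup_\alpha T^+_\alpha\). It therefore carries the same state as in \(\Xi^0\), so the complementary-singleton rule and the aligned relation \eqref{eq:rf-aligned-discrete-traces} still hold. This gives \(\Xi\in\mathcal T_{np}\) together with properties (1) and (3). Property (2) holds because adding \(\chi_\alpha\) leaves increments, and hence selected edges and scores, unchanged. Tile weights depend only on the selected edges, which proves the final sentence of the statement.

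\textbf{Step 3 (the closeness bound \eqref{eq:rf-global-patching-closeness}).} On \(T^-_\alpha\), the bound \(\zeta_\alpha m_\alpha+1\) follows directly from \eqref{eq:rf-inner-patch-closeness} and \(|\chi_\alpha|\le1\).

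On the annulus I expect the main work. The extension in Lemma~\ref{lem:rf-multiply-connected-extension} is an upper envelope, but I would argue for any legal height. A legal height is Lipschitz for the directed distance in both orders, with a uniform constant. Take a point \(z\in A_\alpha\) and a nearest boundary point \(x\), at lattice distance at most \(w_\alpha\). Then
\[
|F_\Xi(z)-F_\Xi(x)|\le C w_\alpha,
\qquad
|L_\alpha(z)-L_\alpha(x)|\le C w_\alpha,
\]
the second because the slope lies in \(K\). At \(x\) itself, the boundary trace is within \(\varepsilon'm_\alpha+1\) of \(L_\alpha\), the extra \(1\) coming from the shift \(\chi_\alpha\). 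Adding these with a further unit for rounding gives \(\varepsilon'm_\alpha+C_Kw_\alpha+2\). Taking the maximum of the inner and annulus bounds yields \eqref{eq:rf-global-patching-closeness}.

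The subtle point to check is that the Lipschitz bound holds for path distance within \(A_\alpha\), and not only in the ambient triangle. This is fine because the annulus between two nested lattice triangles is connected through paths of length comparable to the lattice distance.
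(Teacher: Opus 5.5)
Your proposal is correct and follows the paper's proof essentially step for step. You apply Lemma~\ref{lem:rf-annulus-patching} on each annulus to get the shift $\chi_\alpha\in\{-1,0,1\}$, insert all annular fillings at once using disjointness of the $T^+_\alpha$, note that nothing near a cut changes so the singleton choices persist, and obtain \eqref{eq:rf-global-patching-closeness} by joining a point of the annulus to the nearer boundary component and using the universal Lipschitz bound. The only slip is verbal: in Step~1, $g^+_\alpha$ should be the trace of the sheetwise lozenge height $F_{\Xi^0,r}$ (which is what you actually use in Step~2), not the Part~I aligned height $F-(r-1)j$, since Lemma~\ref{lem:rf-annulus-patching} requires increments in $\{+1,-2\}$.
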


\begin{proof}
Apply Lemma~\ref{lem:rf-annulus-patching} to each annulus
\(T^+_\alpha\setminus T^-_\alpha\).  The restriction of \(\Xi^0\) to
\(T^+_\alpha\) and the state \(\Xi_\alpha\) provide the two local-state
extensions required there.  The lemma supplies a shift
\(\chi_\alpha\in\{-1,0,1\}\) of the inner height trace and a local height
state on the annulus joining that shifted trace to the trace of \(\Xi^0\)
on the outer boundary.  Adding a constant to a height function changes no
\(+1/-2\) increment, hence it changes neither the selected edges nor the
local score of the inner state.  The outer triangles are disjoint, so all
annular fillings can be inserted simultaneously.

On an inner interface, the annular trace is exactly the shifted trace of
\(\Xi_\alpha\); on an outer interface, it is exactly the trace of
\(\Xi^0\).  Hence the increments, and therefore the selected interface
edges, agree on the two sides.  Each selected interface edge pairs the
two adjacent triangles across that interface, while at each unselected
interface edge both adjacent triangles retain their pairings on their
own sides.  The local states therefore glue to a global tiling.  The
construction leaves \(\Xi^0\) unchanged in a neighborhood of every cut,
so its complementary singleton choices and affine-aligned seam traces
are unchanged.  This proves the first three assertions.

For the quantitative estimate, both a lozenge height function and an
affine function with slope in \(K\) have a universal Lipschitz constant
in the triangular-lattice graph metric.  Given a point of an annulus,
join it to the nearer boundary component by a path of length at most
\(w_\alpha\).  The boundary discrepancy from \(L_\alpha\) is at most
\(\varepsilon'm_\alpha+1\), the extra one accounting for
\(\chi_\alpha\).  Comparing both functions along that path gives the
second term on the right of
\eqref{eq:rf-global-patching-closeness}.  On \(T^-_\alpha\), use the
separate hypothesis \eqref{eq:rf-inner-patch-closeness}, together with
the possible unit shift.  This proves the displayed maximum bound.
\end{proof}

\begin{lemma}[A boundary-compatible strict reference height]
\label{lem:rf-strict-reference-height}
There is a function
\(f_p^\ddagger\in\operatorname{AHT}(\mathsf R_p)\) such that
\begin{equation}
 \nabla f_{p,r}^\ddagger(x)\in K^\circ
 \quad\text{for a.e. }x\in\operatorname{int}(\mathsf R_{p,r}).
\label{eq:rf-strict-reference-slope}
\end{equation}
For every \(\zeta>0\), after deleting a \(\zeta\)-neighborhood of the
panel boundaries, there is
\(\kappa=\kappa(p,\zeta)>0\) such that
\begin{equation}
 \operatorname{dist}\bigl(\nabla f_{p,r}^\ddagger(x),\partial K\bigr)
 \geq\kappa
\label{eq:rf-strict-reference-margin}
\end{equation}
at every differentiability point of the remaining panel cores.
In particular, \(f_p^\ddagger\) has the exact outer traces
\(\gamma_{p,r}\) and satisfies the two aligned seam relations
\eqref{eq:rf-continuum-aligned-traces}.
\end{lemma}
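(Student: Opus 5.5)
The plan is to build \(f_p^\ddagger\) as a convex combination of the two extremal polygonal heights. Concretely, I would use the continuum limits of the normalized aligned heights of \(M_{\rm poly}\) and \(M_{\rm poly}^{+}\) (Definitions~\ref{def:polygonal-matching} and~\ref{def:polygonal-top-matching}), rescaled along \(np\). Call these \(f^{\rm bot}\) and \(f^{\rm top}\), and convert the aligned heights back to sheetwise lozenge heights. Both are piecewise affine, both belong to \(\operatorname{AHT}(\mathsf R_p)\) by Lemma~\ref{lem:rf-AHT-compact}, and they share the outer traces \(\gamma_{p,r}\) and the aligned seam relations. Set
\[
 f_p^\ddagger:=\tfrac13 f^{\rm bot}+\tfrac13 f^{\rm top}+\tfrac13 f^{\rm mix},
\]
where \(f^{\rm mix}\) is a third admissible piecewise-affine height, if one is needed. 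Convex combinations preserve every defining condition of \(\operatorname{AHT}\), since these are the closed convex slope constraint, affine boundary equalities, and affine seam equalities. The only issue is therefore strictness of the slope.

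The key observation is that on each polygon of the overlay of the two polygonal tilings, the two gradients are pure phases, i.e.\ vertices of \(K\). Where \(f^{\rm bot}\) and \(f^{\rm top}\) have \emph{different} pure slopes, the average lies on an open edge of \(K\), not in \(K^\circ\). So a genuine third phase is required. I would take \(f^{\rm mix}\) to be another polygonal legal tiling, for instance one obtained by interpolating through the lattice of heights between \(h_{\rm bot}\) and \(h_{\rm top}\), chosen so that on every overlay region the three pure slopes are distinct.

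A more robust alternative uses the exact gap formulas \eqref{eq:gap-panel-012}--\eqref{eq:gap-panel-023}. In the continuum, \(K=(f^{\rm top}-f^{\rm bot})/3\) is a piecewise-linear function that is strictly positive in the interior and vanishes on the outer boundary. One then looks for a function of the form \(f^{\rm bot}+\psi\) with \(0<\psi<3K\), where \(\psi\) is chosen as a smooth-ish interior bump, e.g.\ a small multiple of \(\min\{K, \operatorname{dist}\}\)-type profiles. Its gradient would be perturbed off the boundary of \(K\) in the needed direction.

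I expect the main obstacle to be exactly this third-phase choice near the interfaces and seams. Along a seam the slope must also respect the aligned relation. Near the outer boundary the forced traces are pure-phase (the boundary layer of Lemma~\ref{lem:bottom-boundary-layer}), so strictness can only hold almost everywhere in the open interior, degenerating toward \(\partial\mathsf R_p\). This is why \eqref{eq:rf-strict-reference-margin} is only claimed on the \(\zeta\)-cores.

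The plan to address this has three parts. First, in each sheet core, define \(f_p^\ddagger\) on a fine triangulation by prescribing vertex values \(\tfrac12(f^{\rm bot}+f^{\rm top})+\epsilon\,\phi\). Here \(\phi\) is a fixed piecewise-linear bump vanishing on \(\partial\mathsf R_{p,r}\) and compatible across seams, made compatible by defining it on the glued complex as a function of the common aligned trace. Second, check that each triangle's affine slope is strictly inside \(K\) once \(\epsilon\) is small but positive relative to the local gap. Third, extract the margin \(\kappa(p,\zeta)\) from the finitely many affine pieces meeting the \(\zeta\)-core, which is a compactness argument since there are only finitely many pieces.
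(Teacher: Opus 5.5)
Your proposal has a genuine gap. It never actually produces an admissible height with strict slopes, and neither mechanism you suggest can do so as stated.

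\textbf{Averaging pure-phase heights.} This fails wherever the extremal tilings share a phase. On $P_1$, the green triangles $v_3v_8v_4$ of $M_{\rm poly}$ and $w_1v_9u_2$ of $M_{\rm poly}^{+}$ overlap in $\{q_0\ge c+d,\ q_2\ge b+c+d\}$. This set has positive area whenever $a>c+d$, and consistently the gap in \eqref{eq:gap-panel-012} is constant, equal to $c+d$, there. On that region any equal-weight average of $f^{\rm bot}$, $f^{\rm top}$ and a third pure-phase height uses at most two vertices of $K$. Its slope therefore lies on $\partial K$, whatever $f^{\rm mix}$ is.

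\textbf{The perturbation step.} Here you confuse height room with slope room. The base slope of $\tfrac12(f^{\rm bot}+f^{\rm top})$ (or of $f^{\rm bot}$) lies on $\partial K$. Adding $\epsilon\phi$ keeps the height admissible only if $\nabla\phi$ lies, region by region, in the tangent cone of $K$ there. On edge regions this cone is a half-plane. On vertex regions it is a pointed $60^\circ$ cone, i.e.\ $\phi$ must be nonincreasing along both unselected positive directions, strictly so if you want $K^\circ$. Taking $\epsilon$ small is irrelevant, because the constraint concerns the direction of $\nabla\phi$, not its size. The pointwise gap $f^{\rm top}-f^{\rm bot}>0$ says nothing about slopes. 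A bump vanishing on $\partial\mathsf R_{p,r}$ has $\int\nabla\phi=0$, so it cannot respect a single pointed cone, and nothing in your plan ties the direction of $\nabla\phi$ to the overlay regions. Producing a seam-compatible $\phi$, vanishing on the outer sides and satisfying all these cone conditions strictly, is essentially the lemma itself.

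\textbf{The missing idea.} Let Part~I's well-matchedness supply strictness by averaging over \emph{global} tilings. The paper takes $n_j=2^jq$, the uniform probability $\mu_j$ on $\mathcal T_{n_jp}$, and the barycentric height $g^{(j)}=\int\overline F_{\Xi,n_j}\,d\mu_j$. Outer traces and aligned seam relations are affine, so they survive averaging. By Proposition~\ref{prop:ordinary-edge-allowed}, every internal edge $e$ is selected by some tiling, so $\Delta_eg^{(j)}=1-3\mathbb E_{\mu_j}I_e<1$. Hence every elementary triangle with three internal edges has slope in $K^\circ$; Theorem~\ref{thm:well-matched} likewise puts the seam slopes strictly inside $(-1,2)$. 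A single $g^{(j)}$ can fail to be strict within one lattice layer of the boundary, so the paper sets $f_p^\ddagger=\sum_j2^{-j}g^{(j)}$, which gives \eqref{eq:rf-strict-reference-slope} a.e.\ in the interior. On a $\zeta$-core one fixes one large $j$ and sets $\theta_{j,\zeta}=3\min_e\mathbb E_{\mu_j}I_e>0$ over finitely many edges. This gives slack $2^{-j}\theta_{j,\zeta}$, hence $\kappa(p,\zeta)$.
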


\begin{proof}
Let \(q\) be a common denominator of \(p\), and choose increasing
multiples \(n_j=2^jq\).  Give the finite nonempty set
\(\mathcal T_{n_jp}\) the uniform probability measure \(\mu_j\), and
define the global barycentric height
\begin{equation}
 g^{(j)}:=\int_{\mathcal T_{n_jp}}
              \overline F_{\Xi,n_j}\,d\mu_j(\Xi).
\label{eq:rf-global-height-barycenter}
\end{equation}
It is important here that the average is over global legal tilings,
not over independent choices on the three panels.  Every summand has
gradient in \(K\), the exact outer traces \(\gamma_{p,r}\), and the
same aligned normalization.  Hence \(g^{(j)}\) has these properties as
well.

We first record strictness away from the panel boundaries.  If \(e\) is
an ordinary internal positive lattice edge, let
\(I_e(\Xi)\) indicate that \(e\) is the selected edge of its lozenge.
The directional increment of the discrete height is
\[
                   1-3I_e(\Xi).
\]
By Proposition~\ref{prop:ordinary-edge-allowed}, some global tiling
selects \(e\).  Since \(\mu_j\) has full support,
\begin{equation}
 \mathbb E_{\mu_j}I_e>0,
 \qquad
 \Delta_e g^{(j)}=1-3\mathbb E_{\mu_j}I_e<1.
\label{eq:rf-barycenter-ordinary-edge-slack}
\end{equation}
For every elementary triangle not meeting a panel boundary, each of
its three possible selected edges is an ordinary internal edge.
Thus all three supporting inequalities defining \(K\) are strict on
that triangle, and \(\nabla g^{(j)}\in K^\circ\) there.

The cut seams require the global averaging used above.  On a retained
seam segment let \(\epsilon_-(\Xi),\epsilon_+(\Xi)\) be the two facing
singleton indicators.  Lemma~\ref{lem:rf-seam-compatibility} gives,
for every \(\Xi\),
\begin{equation}
 \epsilon_-(\Xi)+\epsilon_+(\Xi)=1,
 \qquad
 F_+(k)=F_-(k)+k.
\label{eq:rf-barycenter-seam-consistency}
\end{equation}
Both identities are affine and therefore survive averaging.  In
normalized coordinates they give exactly
\eqref{eq:rf-continuum-aligned-traces}.  Moreover, for the common
aligned trace \(B\),
\begin{equation}
 \Delta\mathbb E_{\mu_j}B
 =-1+3\mathbb E_{\mu_j}\epsilon_+
 = 2-3\mathbb E_{\mu_j}\epsilon_-.
\label{eq:rf-barycenter-aligned-seam-slope}
\end{equation}
The two inserted edges of the corresponding seam zigzag encode the two
complementary singleton choices.  Theorem~\ref{thm:well-matched} puts
each inserted edge in a perfect matching.  Full support consequently
gives
\[
 0<\mathbb E_{\mu_j}\epsilon_-<1,
 \qquad
 0<\mathbb E_{\mu_j}\epsilon_+<1.
\]
Thus the common aligned seam slope in
\eqref{eq:rf-barycenter-aligned-seam-slope} lies strictly between
\(-1\) and \(2\).  In particular the barycenter respects both the
affine trace relation and the complementary-singleton consistency; no
independent panel extension is being made.

Set
\begin{equation}
             f_p^\ddagger:=\sum_{j=1}^{\infty}2^{-j}g^{(j)}.
\label{eq:rf-multiscale-strict-reference}
\end{equation}
The summands have a common Lipschitz bound and a common normalization,
so the series converges uniformly and in \(W^{1,\infty}\).  Convexity
of \(K\), together with the exact affine boundary and seam identities,
shows that \(f_p^\ddagger\in\operatorname{AHT}(\mathsf R_p)\).

Fix \(\zeta>0\), and let \(C_{r,\zeta}\) be the part of the \(r\)-th
panel at distance at least \(\zeta\) from its boundary.  Choose \(j\)
so large that every elementary triangle of the \(n_j\)-mesh meeting
\(C_{r,\zeta}\) has all three edges internal.  There are only finitely
many such edges.  Hence
\[
 \theta_{j,\zeta}
 :=3\min_e\mathbb E_{\mu_j}I_e>0,
\]
where the minimum is over those edges.  Equation
\eqref{eq:rf-barycenter-ordinary-edge-slack} says that each of the
three directional inequalities for \(g^{(j)}\) has slack at least
\(\theta_{j,\zeta}\) on this core.  Write
\[
 f_p^\ddagger=2^{-j}g^{(j)}+(1-2^{-j})h^{(j)},
 \qquad \nabla h^{(j)}\in K.
\]
The same inequalities for \(f_p^\ddagger\) therefore have slack at
least \(2^{-j}\theta_{j,\zeta}\).  The three affine inequalities
defining the fixed triangle \(K\) are quantitatively equivalent to
distance from its boundary, which proves
\eqref{eq:rf-strict-reference-margin}.  The assertion at every
differentiability point follows by integrating the three directional
inequalities across the finitely many mesh triangles and then taking
difference quotients.  Finally, every point in a panel interior lies
in some such core, so \eqref{eq:rf-strict-reference-slope} follows.
\end{proof}

\begin{proposition}[Panelwise bulk neighborhood bounds]
\label{prop:rf-bulk-neighborhood}
For every \(f\in\operatorname{AHT}(\mathsf R_p)\),
\begin{align}
 \limsup_{n\to\infty}n^{-2}\mathbb E
 \max_{\Xi\in\mathcal T_{np}(f,\eta n)}
 W_{p,\mathrm{edge}}^{\rm rf}(\Xi)
 &\le S_{p,\mathrm{edge}}(f)+o_\eta(1),
\label{eq:rf-bulk-upper-neighborhood}\\
 \liminf_{n\to\infty}n^{-2}\mathbb E
 \max_{\Xi\in\mathcal T_{np}(f,\eta n)}
 W_{p,\mathrm{edge}}^{\rm rf}(\Xi)
 &\ge S_{p,\mathrm{edge}}(f)-o_\eta(1).
\label{eq:rf-bulk-lower-neighborhood}
\end{align}
\end{proposition}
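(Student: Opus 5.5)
The two bounds will be proved by different mechanisms, following Remark~\ref{rem:rf-role-fefferman}. The upper bound \eqref{eq:rf-bulk-upper-neighborhood} uses a GOOD/BAD decomposition that depends on the maximizing tiling. The lower bound \eqref{eq:rf-bulk-lower-neighborhood} uses a fixed-target construction on a deterministic mesh.

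In both directions we first reduce to panel cores. Fix \(\zeta>0\) and delete the \(\zeta n\)-strips along the outer boundaries and both cuts. By Lemma~\ref{lem:rf-boundary-strip}, the deleted weight costs \(o_\zeta(n^2)\) in expectation, uniformly over all tilings. By \eqref{eq:rf-continuum-bulk-domination} and the integrability of \(\rho_{r,1}+\rho_{r,2}\), the corresponding continuum integrals are also \(o_\zeta(1)\) uniformly in the slopes. This leaves the three cores \(C_r^\zeta\), which lie in compact subsets of the open Gelfand--Tsetlin panels. There Lemma~\ref{lem:rf-local-transfer} applies.

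\emph{Upper bound.} Fix \(\varepsilon,\eta_F\) and \(k\) as in \eqref{eq:rf-fefferman-parameters}, and a dyadic \(M\geq M_0(k)\). For each \(\Xi\in\mathcal T_{np}(f,\eta n)\), Corollary~\ref{cor:rf-panel-fefferman} gives GOOD cells \(T\) of size \(m_T\in[2^{-k}M,M]\) with affine approximants \(L_T\). It also gives BAD cells of area \(C\eta_F n^2\); their weight is controlled by Lemma~\ref{lem:rf-small-support}, at cost \(C\eta_F^{1/4}n^2\).

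We round each slope \(\nabla L_T\) to a deterministic net \(\mathcal A_T\) of size \(O(m_T^2)\), and take \(C\varepsilon\le\varepsilon_{\rm loc}/2\), using the spare margin in the radius. Then the GOOD contribution is at most
\[
\sum_T \max_{a\in\mathcal A_T} Z_{r,T}(a,\varepsilon_{\rm loc}),
\]
where the sum runs over the deterministic dyadic family, with each cell weighted by whether it is used. Boundary-crossing lozenges lie in collars along the \(M\)-block and cell boundaries. Their total expected absolute weight is \(O(n^2/2^{-k}M)\) by the bounded first moments on the core.

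To exchange the maximum over \(\Xi\) with expectations, I bound
\[
\max_a Z_{r,T}\le \max_a \mathbb E Z_{r,T}+\max_a\bigl(Z_{r,T}-\mathbb E Z_{r,T}\bigr).
\]
The fluctuation term summed over the whole deterministic family is \(O(kn^2/(\log m)^A)\), by the maximal bound in Lemma~\ref{lem:rf-local-transfer}. The mean term is at most \(|T|(-\sigma^{\rm sym}_\edge(\rho_r(x_T),a_T)+\delta)\) by part~(i).

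The key step, and the one I expect to be the main obstacle, is passing from \(\sum_T -|T|\sigma(\rho,a_T)\) to \(S_{p,\mathrm{edge}}(f)\). This needs the slopes \(a_T\) to be comparable with \(\nabla f\). The comparison uses only that \(\|F_\Xi/n-f\|_\infty<\eta\). Convexity of \(\sigma^{\rm sym}_\edge\) in the slope and Jensen's inequality on each cell relate \(\sigma(\rho,a_T)\) to the cell average of \(\nabla F_\Xi\). Continuity in \(\rho\) handles the variation of \(\rho_r\) across a cell. A final lower semicontinuity argument is needed: the convex integral functional is lower semicontinuous under uniform convergence of Lipschitz functions, so \(-\int\sigma(\rho,\nabla F_\Xi)\) is eventually at most \(-\int\sigma(\rho,\nabla f)+o_\eta(1)\). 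To make this uniform over \(\Xi\), I argue by contradiction along a sequence of near-maximizers, using the compactness of Lemma~\ref{lem:rf-AHT-compact}. The parameters are then sent to their limits in the order \(n\to\infty\), then \(M\), then \(k,\varepsilon,\eta_F,\delta\), then \(\zeta\).

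\emph{Lower bound.} Fix \(\theta\in(0,1)\) and replace \(f\) by \(f_\theta=(1-\theta)f+\theta f^\ddagger_p\). By Lemma~\ref{lem:rf-strict-reference-height} and convexity of \(K\), \(f_\theta\) is admissible, has slopes at distance \(\ge\theta\kappa\) from \(\partial K\) on the cores, and satisfies \(\|f_\theta-f\|_\infty=O(\theta)\). By convexity and dominated convergence, \(S_{p,\mathrm{edge}}(f_\theta)\ge S_{p,\mathrm{edge}}(f)-o_\theta(1)\).

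Apply Lemma~\ref{lem:rf-fixed-target-interpolation} to \(f_\theta\) with mesh size \(C_0\), taking its parameter small relative to \(\theta\kappa\), so that the retained slopes lie in \((1-\tau)K\). Take the reference tiling \(\Xi^0\) from Lemma~\ref{lem:rf-AHT-compact}, within \(C/n\) of \(f_\theta\). In each good mesh triangle, insert the local maximizer \(\Xi_T\) that realizes \(Z_{r,T}(\nabla\bar f,\varepsilon_{\rm loc})\) on an inner triangle, using Lemma~\ref{lem:rf-global-patching}. The collars have relative width \(\widetilde\varepsilon\) with \(2\varepsilon_{\rm loc}<\tau\widetilde\varepsilon\), and the local maximizer can be chosen measurably. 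The resulting tiling lies in \(\mathcal T_{np}(f,\eta n)\) once \(C_0,\theta,\widetilde\varepsilon\) are small relative to \(\eta\), by \eqref{eq:rf-global-patching-closeness}.

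Its weight is \(\sum_T Z_{r,T}\) plus three remainders: the collar remainder, the weight on discarded triangles, and the weight of \(\Xi^0\) elsewhere. All three are bounded below in expectation by \(-o(n^2)\) through Lemma~\ref{lem:rf-small-support} and first moments. Part~(ii) of Lemma~\ref{lem:rf-local-transfer} gives \(\mathbb E Z_{r,T}\ge |T|(-\sigma(\rho_r(x_T),\nabla\bar f|_T)-\delta)\). Finally, \eqref{eq:rf-fixed-target-gradient-affinity} and continuity of \(\sigma\) convert the Riemann sum into \(S_{p,\mathrm{edge}}(f_\theta)-o(1)\).
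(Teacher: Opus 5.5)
Your lower bound follows the paper's route: interiorize by $f_p^\ddagger$, use the fixed-target $C_0$-mesh, and insert local optimizers with Lemma~\ref{lem:rf-global-patching}. It is fine, with two small corrections.
\begin{itemize}
\item Convexity alone already gives $S_{p,\mathrm{edge}}(f_\theta)\ge(1-\theta)S_{p,\mathrm{edge}}(f)+\theta S_{p,\mathrm{edge}}(f_p^\ddagger)$.
\item Membership in $\mathcal T_{np}(f,\eta n)$ comes from $\|f_\theta-f\|_\infty<\eta/4$ plus the $O(C_0/n)$ patching error. So it is $C_0/n$, not $C_0$, that becomes small.
\end{itemize}
The upper-bound bookkeeping (Fefferman cells, slope nets, fluctuations over the deterministic dyadic family) also matches the paper. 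The one exception is that you must take positive parts of the fluctuations before summing them over the whole family.

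The genuine gap is the step you flagged. On a GOOD cell $T$, the divergence theorem gives $a_T\approx|T|^{-1}\int_T\nabla F_\Xi$. Jensen's inequality for the convex $\sigma^{\rm sym}_\edge$ then yields
\[
 \sum_T -|T|\,\sigma^{\rm sym}_\edge(\rho,a_T)\ \gtrsim\ -\int\sigma^{\rm sym}_\edge(\rho,\nabla F_\Xi).
\]
This is the reverse of the inequality you need before invoking lower semicontinuity, so the two steps do not chain.

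The microscopic functional is in fact the wrong object. $\nabla F_\Xi$ is a vertex of $K$ on every elementary triangle. Hence $-\int\sigma^{\rm sym}_\edge(\rho,\nabla F_\Xi)$ only sees the pure-phase values $0,\rho_1/2,\rho_2/2$, and is essentially $n^{-2}\mathbb E W^{\rm rf}_{p,\mathrm{edge}}(\Xi)$ for the fixed tiling $\Xi$. Your chain would bound the quenched maximum by the annealed mean of a single tiling.

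Convexity has to be used upward, on the mesoscopic slopes themselves. Write $U$ for your GOOD cells. The paper introduces a second, macroscopic scale $\varepsilon_1 n$, fixed before $\eta$, with cells $Q$.
\begin{itemize}
\item Concavity of $\phi_r=-\sigma^{\rm sym}_\edge$ gives $\sum_{U\subset Q}|U|\phi_r(x_Q,a_U)\le|Q|\phi_r(x_Q,\widehat a_Q)+C_\zeta|D_Q|$. Here $\widehat a_Q$ is the area-weighted barycenter of the GOOD slopes, completed on the leftover set $D_Q$ by the average of $\nabla F_\Xi$.
\item $\widehat a_Q$ is within $C\varepsilon_{\rm qd}$ of the average of $\nabla F_\Xi$ over $Q$.
\item By the divergence theorem, it is therefore within $C\varepsilon_{\rm qd}+C\eta/\varepsilon_1$ of the average of $\nabla f$ over $Q$.
\item Lebesgue differentiation as $\varepsilon_1\to0$ concludes.
\end{itemize}
Alternatively, your compactness/contradiction argument does work if you apply it to the piecewise-constant field $x\mapsto a_U$, filled by averages of $\nabla F_\Xi$ on BAD cells and collars. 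That field converges weakly-$*$ to $\nabla f$ up to $O(\varepsilon_{\rm qd})$, with the BAD area entering as an additive error. You then use weak-$*$ lower semicontinuity of $v\mapsto\int\sigma^{\rm sym}_\edge(\rho,v)$ for vector fields, not for the microscopic gradients of the tilings.
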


\begin{proof}
We treat the two inequalities differently.

\smallskip
\noindent\emph{Upper bound.}
We use the two-scale argument from the upper-bound proof in
\cite[\S8, Lemma~8.7]{NarLimit}, separately on the three developed
panel cores.  The mesoscopic scale is used only for quantitative
differentiation and the local random estimate; a second, macroscopic
scale is what identifies the barycentric slope with the slope of the
target height.

Put
\[
 \phi_r(x,a):=-\sigma_\edge^{\rm sym}(\rho_r(x),a).
\]
Fix \(\zeta>0\) and remove the boundary-and-cut strip
\(\mathsf B_{np}(\zeta)\).  On the resulting compact continuum cores,
write \(C_r^\zeta\) for the corresponding rescaled panel cores.
The finitely many functions \(\phi_r\) are uniformly bounded and
uniformly continuous on \(C_r^\zeta\times K\), and are concave in
\(a\).  First fix a dyadic macroscopic parameter
\(0<\varepsilon_1<\zeta/10\), independently of the height-neighborhood
radius \(\eta\), and put
\[
 L_{\rm mac}=\varepsilon_1 n.
\]
In the square coordinates of each panel, cover the core, apart from a
set of lattice area \(O(\varepsilon_1n^2)\), by triangular cells
\(T\) of sidelength \(L_{\rm mac}+O(1)\).  We call these the macroscopic
cells.

Here is the dependency order for the remaining parameters.  Fix a
large exponent \(A\) and a local error \(\delta>0\), and invoke
Lemma~\ref{lem:rf-local-transfer} with, say, \(\tau=1/2\).  Choose its
radius \(\varepsilon_{\rm loc}\) independently of the dyadic depth
\(k\), as the lemma permits.  Next choose
\(\varepsilon_{\rm qd}>0\) so small that
\(C\varepsilon_{\rm qd}\leq\varepsilon_{\rm loc}\) and that the
resulting slope-continuity error is within the assigned budget, and choose
\(\eta_{\rm bad}>0\).  Only then choose an integer \(k\) satisfying
the explicit quantitative-differentiation condition
\[
 \varepsilon_{\rm qd}^{4}\eta_{\rm bad}k\geq C^\sharp.
\]
Finally choose a \emph{fixed}
dyadic \(M\), large enough both for the resulting local-transfer
threshold \(M_0(k)\) and for
Corollary~\ref{cor:rf-panel-fefferman}.  Keep all these parameters fixed
for the moment.  At the end of the selection, choose \(\eta_0>0\) so
small that \(\omega_\zeta(C\eta_0/\varepsilon_1)\) is within the
remaining error budget, and fix \(0<\eta\leq\eta_0\) before letting
\(n\to\infty\).  This order is noncircular: the macroscopic scale and
the local-transfer radius are fixed before \(\eta\), whereas the
dependence on \(k\) is absorbed by the finite-size scale \(M\).  Inside each macroscopic cell take the
maximal union of complete \(M\)-blocks and apply the blockwise
construction in the proof of
Corollary~\ref{cor:rf-panel-fefferman}.  The additional collars along
the macroscopic-cell boundaries have total lattice area
\[
 O_{\varepsilon_1}(Mn)=o(n^2)
\]
for fixed \(\varepsilon_1\) and \(M\).  Thus, for every candidate
\(\Xi\in\mathcal T_{np}(f,\eta n)\), each retained \(M\)-block is
partitioned into GOOD cells \(U\), of sidelength
\(m_U\in[2^{-k}M,M]\), and BAD cells.  The total area of the BAD cells
is at most \(C\eta_{\rm bad}n^2\).

Let \(L_U^0\) be the affine approximant on a GOOD cell, and write
\(a_U^0=\nabla L_U^0\).  Since \(\nabla F_{\Xi,r}\in K\) almost
everywhere,
\[
 \overline a_U:=\frac1{|U|}\int_U\nabla F_{\Xi,r}\in K.
\]
The divergence theorem and
\eqref{eq:rf-panel-good-affine} give
\begin{equation}
 \left|a_U^0-\overline a_U\right|
 \leq \frac{C}{|U|}\int_{\partial U}|L_U^0-F_{\Xi,r}|\,ds
 \leq C\varepsilon_{\rm qd}.
\label{eq:rf-good-slope-average}
\end{equation}
Project \(a_U^0\) onto \(K\), then replace the projected slope by
the nearest point of a fixed deterministic
\(\varepsilon_{\rm qd}\)-net \(\mathcal N_{\rm qd}\subset K\).
Alter the additive constant so that the resulting affine function
\(L_U\) agrees with \(L_U^0\) at the center of \(U\).  After
increasing the absolute constant in
\eqref{eq:rf-panel-good-affine}, we still have
\[
 \|F_{\Xi,r}-L_U\|_{L^\infty(U)}
 \leq C\varepsilon_{\rm qd}m_U,
 \qquad a_U:=\nabla L_U\in K,
\]
and \(|a_U-\overline a_U|\leq C\varepsilon_{\rm qd}\).  The ordered
choice above ensures that
\(C\varepsilon_{\rm qd}\leq\varepsilon_{\rm loc}\).

We next remove the dependence of the GOOD decomposition on \(\Xi\).
All possible cells belong to the deterministic dyadic family from
Corollary~\ref{cor:rf-panel-fefferman}; write this family on panel
\(r\) as \(\mathcal D_{r,n}\).  For a cell \(U\), let
\(\mathcal A_U\subseteq\mathcal N_{\rm qd}\) be the slopes for which
the local height neighborhood in Lemma~\ref{lem:rf-local-transfer} is
nonempty.  Cells with \(\mathcal A_U=\varnothing\) can be discarded,
since they cannot occur as GOOD cells.  For \(M\) large,
\(|\mathcal A_U|\leq Cm_U^2\), as required there.  The full dyadic tree
in one
\(M\)-block has total
\(\sum_U m_U^2\leq C_kM^2\).  Hence the local finite-family and
variance bounds in Lemma~\ref{lem:rf-local-transfer}, summed over all
blocks and all three panels, give
\begin{equation}
 \mathbb E\!\sum_{r=1}^3\sum_{U\in\mathcal D_{r,n}}
 \max_{a\in\mathcal A_U}
 \bigl(Z_{r,U}(a,\varepsilon_{\rm loc})
       -\mathbb EZ_{r,U}(a,\varepsilon_{\rm loc})\bigr)_+
 \leq
 \frac{C_{A,k,\varepsilon_{\rm qd}}n^2}
      {(\log(2^{-k}M))^A}
 =o_M(n^2).
\label{eq:rf-good-cell-total-fluctuation}
\end{equation}
Here one may pass from the one-sided maximum in
Lemma~\ref{lem:rf-local-transfer} to the positive part by fixing one
candidate \(a_0\) and using
\(\max(0,\max_a X_a)\leq\max_aX_a+|X_{a_0}|\), together with the
variance bound for \(X_{a_0}\).

The BAD cells, the macroscopic-mesh remainder, all block collars, and
one lattice layer along the boundaries of the dyadic cells are
controlled by Lemma~\ref{lem:rf-small-support}.  The last family has
total area \(O_k(n^2/M)\), because it is contained in the union of the
boundary layers of the full deterministic dyadic trees.  Consequently,
using part~(i) of Lemma~\ref{lem:rf-local-transfer} on every GOOD cell
(and writing \(x_U\) for the center of \(U/n\)), we obtain the
following estimate, uniformly over the neighborhood
\(\mathcal T_{np}(f,\eta n)\).  If
\(W_{p,\mathrm{edge}}^{\rm core}(\Xi;\zeta)\) denotes the
nonred contribution from lozenges not meeting the deleted strip,
equivalently the type-\(2\)/type-\(3\) contribution, then
\begin{align}
 &\mathbb E\max_{\Xi\in\mathcal T_{np}(f,\eta n)}
 W_{p,\mathrm{edge}}^{\rm core}(\Xi;\zeta)
 \notag\\
 &\qquad\leq
 \max_{\Xi\in\mathcal T_{np}(f,\eta n)}
 \sum_{r=1}^3\sum_{U\in\mathcal G_{r,\Xi}}
       |U|\phi_r(x_U,a_U)
 +\delta n^2+o_M(n^2)
 +o_{\eta_{\rm bad}}(n^2)+o_{\varepsilon_1}(n^2)+o_n(n^2).
\label{eq:rf-good-cell-surface-sum}
\end{align}
The first error on the second line is the local surface-tension error;
\(\delta>0\) is arbitrary.

It remains to aggregate the mesoscopic slopes inside one macroscopic
cell.  Fix such a cell \(T\) in panel \(r\), let
\(\mathcal G_T\) be its GOOD cells, and put
\[
 D_T=T\setminus\bigcup_{U\in\mathcal G_T}U.
\]
If \(|D_T|>0\), set
\[
 b_T=\frac1{|D_T|}\int_{D_T}\nabla F_{\Xi,r}\in K;
\]
if \(|D_T|=0\), choose any \(b_T\in K\).  Define
\[
 \widehat a_T
 =\frac1{|T|}\left(
      \sum_{U\in\mathcal G_T}|U|a_U+|D_T|b_T\right)\in K.
\]
Summing \eqref{eq:rf-good-slope-average} over the GOOD cells gives
\begin{equation}
 \left|
 \widehat a_T-\frac1{|T|}\int_T\nabla F_{\Xi,r}
 \right|
 \leq C\varepsilon_{\rm qd}.
\label{eq:rf-macro-barycenter-discrete}
\end{equation}
Let \(x_T\) be the center of the rescaled cell \(T/n\).  Uniform
continuity in the first argument, followed by concavity in the second,
gives
\begin{align}
 \sum_{U\in\mathcal G_T}|U|\phi_r(x_U,a_U)
 &\leq
 \sum_{U\in\mathcal G_T}|U|\phi_r(x_T,a_U)
       +|T|\omega_\zeta(\varepsilon_1)\notag\\
 &\leq
 |T|\phi_r(x_T,\widehat a_T)
       +C_\zeta|D_T|+|T|\omega_\zeta(\varepsilon_1),
\label{eq:rf-macro-concavity}
\end{align}
where \(\omega_\zeta(t)\downarrow0\) as \(t\downarrow0\).  The term
\(C_\zeta|D_T|\) is harmless because \(\phi_r\) is bounded on the
compact core.

Write
\[
 g_{\Xi,r}(x)=n^{-1}F_{\Xi,r}(nx),
 \qquad x\in\mathsf R_{p,r},
\]
and let \(\overline T=T/n\).  Define the target average slope
\[
 a_T(f)=\frac1{|\overline T|}
        \int_{\overline T}\nabla f_r(x)\,dx\in K.
\]
Since \(\|g_{\Xi,r}-f_r\|_{L^\infty}\leq\eta\), another application
of the divergence theorem, now on the macroscopic cell, yields
\begin{equation}
 \left|
  \frac1{|\overline T|}\int_{\overline T}\nabla g_{\Xi,r}
  -a_T(f)
 \right|
 \leq C\frac{\eta}{\varepsilon_1}.
\label{eq:rf-macro-target-average}
\end{equation}
Combining \eqref{eq:rf-macro-barycenter-discrete} and
\eqref{eq:rf-macro-target-average}, and using uniform continuity in the
slope variable in \eqref{eq:rf-macro-concavity}, gives
\begin{align}
 \sum_{U\in\mathcal G_T}|U|\phi_r(x_U,a_U)
 &\leq
 |T|\phi_r(x_T,a_T(f))
\notag\\
 &\quad+|T|\omega_\zeta\!\left(
      \varepsilon_1+\varepsilon_{\rm qd}
      +\frac{\eta}{\varepsilon_1}\right)
 +C_\zeta|D_T|.
\label{eq:rf-macro-target-bound}
\end{align}
This is the macroscopic convex-aggregation step: the GOOD-cell slopes
are not compared individually with \(\nabla f_r\); only their
area-weighted barycenter is compared with the average slope of the
target on \(\overline T\).

Sum \eqref{eq:rf-macro-target-bound} over all macroscopic cells and
all three panels.  For a sequence of dyadic macroscopic meshes with
\(\varepsilon_1\downarrow0\), the Lebesgue differentiation theorem and
bounded continuity of \(\phi_r\) imply
\begin{equation}
 \sum_{r=1}^3\sum_T |\overline T|
       \phi_r(x_T,a_T(f))
 \longrightarrow
 -\sum_{r=1}^3\int_{C_r^\zeta}
       \sigma_\edge^{\rm sym}(\rho_r(x),\nabla f_r(x))\,dx.
\label{eq:rf-macro-riemann-limit}
\end{equation}
Indeed, \(a_T(f)\to\nabla f_r(x)\) at almost every point as the mesh
shrinks.

For fixed \(\zeta\), choose the parameters in the dependency order
specified above, taking \(\varepsilon_1\) small enough also for
\eqref{eq:rf-macro-riemann-limit}, and then let \(n\to\infty\).
Repeat this ordered selection as the desired error tolerance tends
to zero.
Equations
\eqref{eq:rf-good-cell-surface-sum}--\eqref{eq:rf-macro-riemann-limit}
therefore give the required upper bound on the three cores, up to
\(o_\eta(1)+o_\zeta(1)\).  Finally let \(\zeta\downarrow0\), or choose
\(\zeta=\zeta(\eta)\downarrow0\) sufficiently slowly, and restore the
deleted boundary-and-cut strip using
Lemma~\ref{lem:rf-boundary-strip}.  This proves
\eqref{eq:rf-bulk-upper-neighborhood}.

\smallskip
\noindent\emph{Lower bound.}
A strict slope cannot be
chosen independently on every coarse triangle, since that would in
general change the prescribed boundary heights.  Instead take the
global boundary-compatible height \(f_p^\ddagger\) from
Lemma~\ref{lem:rf-strict-reference-height} and put
\begin{equation}
 f^\delta=(1-\delta)f+\delta f_p^\ddagger,
 \qquad 0<\delta<1.
\label{eq:rf-global-interiorization}
\end{equation}
Both summands lie in \(\operatorname{AHT}(\mathsf R_p)\).  Thus
\(f^\delta\) has the exact prescribed outer heights and aligned seam
traces, and \(\|f^\delta-f\|_\infty=O_p(\delta)\).

Fix a small boundary-strip width \(\zeta>0\).  Convexity of \(K\) and
\eqref{eq:rf-strict-reference-margin} give
\begin{equation}
 \operatorname{dist}\bigl(\nabla f_r^\delta(x),\partial K\bigr)
 \geq\delta\kappa(p,\zeta)
\label{eq:rf-global-interiorized-margin}
\end{equation}
at almost every point of the remaining cores.  By
Lemma~\ref{lem:rf-AHT-compact}, choose a global reference tiling
\(\Xi_n^0\) whose normalized height is within \(O(n^{-1})\) of
\(f^\delta\), and leave it unchanged in
\(\mathsf B_{np}(\zeta)\).  In particular,
all singleton choices at the seams, and both unaligned seam traces, are
fixed once and for all.

We now use two separate scales.  The first is the macroscopic scale
\(n\), which determines the neighborhood
\(\mathcal T_{np}(f,\eta n)\).  The second is a large lattice scale
\(C_0\), chosen after all local accuracy parameters, kept fixed while
\(n\to\infty\), and sent to infinity only along the final diagonal.  Put
\[
                 \mu=\delta\kappa(p,\zeta).
\]
Fix a small collar fraction \(\tau_{\rm col}>0\).  Choose an affine
accuracy \(\theta>0\) so small that \(C\theta<\mu/4\), and apply
Lemma~\ref{lem:rf-fixed-target-interpolation} to \(u=f^\delta\).
For a fixed dyadic \(C_0\), write
\(\bar f_{r,n}^\delta\) for the resulting affine interpolant and
\(\mathcal Q_{r,n}^{\rm good}\) for its retained triangles.  If
\(Q\in\mathcal Q_{r,n}^{\rm good}\), set
\[
        L_Q=\bar f_{r,n}^\delta|_Q,
        \qquad a_Q=\nabla L_Q.
\]
Then
\begin{equation}
 \|n f_r^\delta(\,\cdot\,/n)-L_Q\|_{L^\infty(Q)}
       \le C\theta C_0,
 \qquad
 \operatorname{dist}(a_Q,\partial K)\ge\mu/2.
\label{eq:rf-lower-good-cell-properties}
\end{equation}
Notice that no such relative affine estimate is asserted on the discarded
triangles.

The compact set
\(\{a\in K:\operatorname{dist}(a,\partial K)\ge\mu/2\}\)
is contained in \((1-\widehat\varepsilon)K\) for some
\(\widehat\varepsilon=\widehat\varepsilon(\mu)>0\).  Inside every good
mesh triangle \(Q\), choose concentric lattice triangles
\[
             T_Q^-\subset T_Q^+\subset Q
\]
so that \(T_Q^+\) is separated from \(\partial Q\), and
\(T_Q^-\) from \(\partial T_Q^+\), by lattice distance comparable to
\(\tau_{\rm col}C_0\).  Their closed outer triangles are pairwise
disjoint, their sidelengths are comparable to \(C_0\), and the annuli
and the gaps between the \(T_Q^+\)'s occupy an
\(O(\tau_{\rm col})\)-fraction of the good-cell area.  In the notation
of Lemma~\ref{lem:rf-global-patching}, the separation parameter may be
taken to satisfy
\(\widetilde\varepsilon\ge c\tau_{\rm col}\).

The inner sidelengths satisfy
\[
 m_Q^-:=\operatorname{side}(T_Q^-)
       =(1-O(\tau_{\rm col}))C_0+O(1)\in\mathbb N.
\]
They need not be dyadic; only the coarse mesh has that restriction.
We use the integer-size form of Lemma~\ref{lem:rf-local-transfer}.

Fix a local surface-tension error \(\varepsilon_{\rm st}>0\), and invoke
Lemma~\ref{lem:rf-local-transfer} with interior parameter
\(\widehat\varepsilon\) and with its parameter
\(\delta=\varepsilon_{\rm st}\), requiring the resulting
\(\varepsilon_{\rm loc}\) to be smaller than
\(c\widehat\varepsilon\tau_{\rm col}\).  First make \(\theta\) and
\(\varepsilon_{\rm loc}\) small enough, and then make \(C_0\) large
enough, that one common number \(\varepsilon'>0\) bounds both
\[
 C\theta+O(C_0^{-1})
 \quad\hbox{and}\quad
 C\varepsilon_{\rm loc},
 \qquad
 0<2\varepsilon'<
       \widehat\varepsilon\widetilde\varepsilon,
\label{eq:rf-lower-patching-parameters}
\]
and that
\((\widehat\varepsilon\widetilde\varepsilon-2\varepsilon')
\operatorname{side}(T_Q^+)>1\).  On each \(T_Q^-\), for every
realization of the weights choose a maximizing local height state in
the \(\varepsilon_{\rm loc}\)-neighborhood of \(L_Q\).  Fixing an
ordering of the finite family of local states modulo additive constants
and taking its first maximizer makes this choice measurable.  The
local-transfer estimate places its expected score within the
prescribed local error of
\(-|T_Q^-|\sigma_\edge^{\rm sym}(\rho_r(x_Q),a_Q)\).
These states and scores use the
\hyperref[conv:rf-local-height-states]{local-state convention}.

Here \(C_0\) is also chosen so that every \(m_Q^-\geq m_0\).
For this fixed \(C_0\), the inner sizes lie in a finite integer range;
take the maximum of its thresholds \(n_0(m)\) before letting
\(n\to\infty\).  Thus the local estimate applies to every inner
triangle, including the bounded adjustments needed to make its
vertices lattice points.

The trace of this local height state on \(\partial T_Q^-\) is within
\(\varepsilon'\operatorname{side}(T_Q^+)\) of \(L_Q\).  The same is
true of the trace of \(\Xi_n^0\) on \(\partial T_Q^+\): indeed,
\(F_{\Xi_n^0}\) is \(O(1)\)-close to
\(n f_r^\delta(\,\cdot\,/n)\), and
\eqref{eq:rf-lower-good-cell-properties} supplies the remaining
\(C\theta C_0\) error.  Thus
Lemma~\ref{lem:rf-global-patching} inserts all the local height states
simultaneously, without changing a singleton choice or a trace near either
seam.

The gluing creates possible crossing lozenges along both interfaces
\(\partial T_Q^-\) and \(\partial T_Q^+\); those on the inner interface
are, in particular, omitted from the local scores.  There are
\(O(n^2/C_0^2)\) good mesh triangles, and the two interface boundaries
for each have altogether \(O(C_0)\) edges, so all of these crossing
lozenges belong to a deterministic family of
\(O(n^2/C_0)\) candidate lozenges.  Lemma~\ref{lem:rf-small-support}
therefore places their total absolute contribution in an
\(o_{C_0}(n^2)\) remainder.  Thus passing from the sum of the local
scores to the weight of the glued global tiling does not alter the
normalized lower bound after \(C_0\to\infty\).

It remains to compare the local sum with the continuum energy.  On each
good \(Q\), equation
\eqref{eq:rf-fixed-target-gradient-affinity} says that
\(a_Q\) is within \(C\theta\) of \(\nabla f_r^\delta\) on all but a
\(C\theta\)-fraction of \(Q\).  Uniform continuity and boundedness of
\(\sigma_\edge^{\rm sym}\) on the panel cores therefore give
\begin{align}
 &n^{-2}\sum_{r=1}^3
   \sum_{Q\in\mathcal Q_{r,n}^{\rm good}}
   -|T_Q^-|\,
   \sigma_\edge^{\rm sym}(\rho_r(x_Q),a_Q)
\notag\\
 &\hspace{35mm}\ge
 S_{p,\mathrm{edge}}(f^\delta)
 -O(\theta+\tau_{\rm col})
 -C\varepsilon_{\rm st}
 -\omega_\zeta(C\theta)-o_\zeta(1)-o_n(1),
\label{eq:rf-lower-two-scale-sum}
\end{align}
where \(\omega_\zeta\) is a modulus of continuity on the three compact
cores.  The discarded triangles have total area
\(O(\theta n^2)+O(C_0n)\), and the collars have area
\(O(\tau_{\rm col}n^2)\).  Their contribution, and that of the fixed
scaffold, is controlled by the small-subset estimate; the
boundary-crossing lozenges just identified contribute the additional
\(o_{C_0}(n^2)\) remainder.  The outer and cut strips are controlled by
Lemma~\ref{lem:rf-boundary-strip}.

For fixed \(C_0\), the quantitative estimate in
Lemma~\ref{lem:rf-global-patching}, together with
\eqref{eq:rf-lower-good-cell-properties}, shows that the normalized
patched height differs uniformly from \(f^\delta\) by
\(O(C_0/n)\).  Hence it belongs to
\(\mathcal T_{np}(f,\eta n)\) once \(\delta\) is first chosen so that
\(\|f^\delta-f\|_\infty<\eta/4\) and then \(n\) is large.
Let \(n\to\infty\), then let the local surface-tension error,
the discarded-area parameter, and the collar fraction tend to zero along
a diagonal on which
\(\theta+\varepsilon_{\rm loc}
 =o(\widehat\varepsilon\tau_{\rm col})\), taking
\(C_0\to\infty\) after these parameters have been fixed.  Next send
\(\zeta\downarrow0\), and finally \(\delta\downarrow0\), repeating the
inner diagonal for each fixed \((\delta,\zeta)\).  Continuity of
\(\sigma_\edge^{\rm sym}\), the integrable bound
\eqref{eq:rf-continuum-bulk-domination}, and dominated convergence give
\(S_{p,\mathrm{edge}}(f^\delta)\to S_{p,\mathrm{edge}}(f)\), proving
\eqref{eq:rf-bulk-lower-neighborhood}.
\end{proof}

The seam functional \eqref{eq:rf-seam-functional} is well defined on
\(\operatorname{AHT}(\mathsf R_p)\).  Indeed, each integrand in
\eqref{eq:rf-seam-density} is affine in the tangential derivative.  A
one-dimensional integration by parts writes it as endpoint terms plus a
Stieltjes integral of the seam trace against a bounded-variation
coefficient.  Consequently:

\begin{lemma}[Continuity of the seam functional]
\label{lem:rf-seam-continuity}
The map \(f\mapsto S_{p,\Delta}^{\rm rf}(f)\) is continuous on
\(\operatorname{AHT}(\mathsf R_p)\) in the uniform topology.
\end{lemma}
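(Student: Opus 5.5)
The plan is to rewrite each seam integral in a form that involves only the trace \(f_{02}\) or \(f_{03}\) itself, and never its derivative. Uniform convergence then passes through directly. Take the first seam. By \eqref{eq:rf-seam-density},
\[
 \mathcal E_{12}(x,\xi)=\tfrac13\sqrt x\,(\ell_2+2\ell_1)+\tfrac13\sqrt x\,(\ell_2-\ell_1)\,\xi,
\]
which is affine in \(\xi\). Hence
\[
 \int_\beta^1\mathcal E_{12}(x,f_{02}'(x))\,dx
 =A_{12}+\tfrac13(\ell_2-\ell_1)\int_\beta^1\sqrt x\,f_{02}'(x)\,dx,
\]
where \(A_{12}\) is a constant independent of \(f\). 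The second seam \(\mathcal E_{23}\) on \([\gamma,1]\) is handled in exactly the same way. So it suffices to show that
\[
 f\longmapsto\int_\beta^1\sqrt x\,f_{02}'(x)\,dx
\]
is uniformly continuous, together with its analogue for \(f_{03}\).

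By Lemma~\ref{lem:rf-AHT-compact}, every \(f\in\operatorname{AHT}(\mathsf R_p)\) has a Lipschitz seam trace \(f_{02}\) whose derivative lies in \([-1,2]\). The weight \(\sqrt x\) is \(C^1\) on \([\beta,1]\), since \(\beta>0\). We may therefore integrate by parts in the absolutely continuous sense:
\[
 \int_\beta^1\sqrt x\,f_{02}'(x)\,dx
 =f_{02}(1)-\sqrt\beta\,f_{02}(\beta)-\int_\beta^1\frac{f_{02}(x)}{2\sqrt x}\,dx .
\]
This is the Stieltjes form announced before the lemma. Its coefficient has total variation at most \(1-\sqrt\beta\).

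Next we check that the seam traces depend continuously on \(f\) in the uniform topology. The map \(f\mapsto f_{02}=f_2([0,\cdot]_2)\) is the restriction of \(f_2\) to a closed seam subset of \(\mathsf R_{p,2}\). Consequently
\[
 \|f_{02}-g_{02}\|_\infty\le\|f-g\|_{L^\infty(\mathsf R_p)} ,
\]
and the same bound holds for \(f_{03}\).

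Combining the two previous displays, if \(\|f-g\|_\infty\le\varepsilon\) then the difference of the first seam integrals is bounded by
\[
 \tfrac13|\ell_2-\ell_1|\,\bigl(1+\sqrt\beta+(1-\sqrt\beta)\bigr)\,\varepsilon ,
\]
and the second seam gives the analogous bound. So \(S_{p,\Delta}^{\rm rf}\) is in fact Lipschitz on \(\operatorname{AHT}(\mathsf R_p)\) with respect to the sup norm, which is stronger than the continuity asserted.

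The only point needing care is that the boundary evaluations \(f_{02}(\beta)\) and \(f_{02}(1)\) are legitimate. This holds because \(\beta>0\), so no singularity of \(\sqrt x\) at zero enters, and because the traces are continuous up to the endpoints of the closed seams, as guaranteed by the Lipschitz property in Definition~\ref{def:rf-AHT}. With that checked, no other step is an obstacle.
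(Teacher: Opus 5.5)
Your proposal is correct and follows the paper's argument. The paper also splits the integrand into its $\xi$-independent and $\xi$-linear parts, integrates by parts against the $C^1$ coefficient (using $\beta,\gamma>0$), and concludes from uniform convergence of the seam traces. Your explicit sup-norm Lipschitz bound is a small quantitative sharpening of the paper's qualitative continuity statement.
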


\begin{proof}
Write \(\mathcal E_{12}(x,\xi)=A_{12}(x)+D_{12}(x)\xi\), and similarly
for \(23\).  Since \(\beta,\gamma>0\), the functions \(D_{12}\) and
\(D_{23}\) are continuously differentiable on the relevant closed
intervals.  Integration by parts expresses
\(\int D(x)f'(x)\,dx\) using only the endpoint values of \(f\) and
\(\int f(x)D'(x)\,dx\).  Uniform convergence of the traces therefore
implies convergence of the seam functional.
\end{proof}

Set
\begin{equation}
 \mathcal S_p(f)=S_{p,\mathrm{edge}}(f)
               +S_{p,\Delta}^{\rm rf}(f)+S_{p,0}.
\label{eq:rf-full-functional}
\end{equation}
Apply the upper semicontinuity theorem for
\(S_{\mathrm{edge}}\) in \cite{NarLimit} on each compact panel core,
and restore the deleted strips using
\eqref{eq:rf-continuum-bulk-domination}.
Together with Lemma~\ref{lem:rf-seam-continuity} and compactness, this
shows that \(\mathcal S_p\) is upper semicontinuous and attains its
supremum.

\begin{lemma}[Uniform variance for red-free tiling families]
\label{lem:rf-uniform-family-variance}
Let \(\mathcal A_n\) be any nonempty deterministic subfamily of
\(\mathcal T_{np}\), and put
\begin{equation}
 Z_{\mathcal A_n}:=F_{p_n}^{\rm rf,(n)}+
   \max_{\Xi\in\mathcal A_n}W_{p,\mathrm{var}}^{\rm rf}(\Xi).
\label{eq:rf-restricted-family-variable}
\end{equation}
Then, uniformly in the choice of \(\mathcal A_n\),
\begin{equation}
 \operatorname{Var}(Z_{\mathcal A_n})=o(n^4),
 \qquad
 \|Z_{\mathcal A_n}-\mathbb EZ_{\mathcal A_n}\|_2=o(n^2).
\label{eq:rf-family-supremum-variance}
\end{equation}
\end{lemma}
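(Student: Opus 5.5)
We read \(W_{p,\mathrm{var}}^{\rm rf}\) as the variable part \(W_p^{\rm rf}=W_{p,\mathrm{edge}}^{\rm rf}+W_{p,\Delta}^{\rm rf}\) of the exact red-free formula. The plan is to split \(Z_{\mathcal A_n}\) into a log-concave bulk maximum, to which Lemma~\ref{lem:rf-family-variance} applies, plus three remainders. Each remainder will be bounded in \(L^2\) uniformly over tilings, by a quantity that does not depend on \(\mathcal A_n\).

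Fix a spectral cutoff \(\varepsilon>0\). For every tiling write
\[
 W_p^{\rm rf}(\Xi)=B_{\Xi,\varepsilon}
   +\mathbb E W_{p,\Delta}^{\rm rf}(\Xi)
   +R_{\Xi,\varepsilon},
\]
where the remainder is
\[
 R_{\Xi,\varepsilon}
   =\bigl(W_{p,\mathrm{edge}}^{\rm rf}(\Xi)-B_{\Xi,\varepsilon}\bigr)
   +\bigl(W_{p,\Delta}^{\rm rf}(\Xi)-\mathbb E W_{p,\Delta}^{\rm rf}(\Xi)\bigr).
\]
By \eqref{eq:rf-spectral-edge-L2-removal} and \eqref{eq:rf-seam-uniform-concentration}--\eqref{eq:rf-seam-uniform-L2},
\[
 \sup_\Xi|R_{\Xi,\varepsilon}|\le E_{\varepsilon,n}+D_n,
\]
where
\[
 D_n:=\sup_\Xi\bigl|W_{p,\Delta}^{\rm rf}(\Xi)-\mathbb EW_{p,\Delta}^{\rm rf}(\Xi)\bigr|,
 \qquad \|D_n\|_2=O(n^{4/3}\log^{O(1)}n).
\]
The offsets \(a_\Xi:=\mathbb E W_{p,\Delta}^{\rm rf}(\Xi)\) are deterministic. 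Hence
\[
 \Bigl|\max_{\mathcal A_n}W_p^{\rm rf}-\max_{\mathcal A_n}(B_{\Xi,\varepsilon}+a_\Xi)\Bigr|
 \le E_{\varepsilon,n}+D_n,
\]
and this bound holds pointwise for every \(\mathcal A_n\).

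The proof then has three steps.

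\textbf{Step 1.} Lemma~\ref{lem:rf-family-variance}, \eqref{eq:rf-bulk-tiling-family-variance}, gives
\[
 \|\max_{\mathcal A_n}(B_{\Xi,\varepsilon}+a_\Xi)-\mathbb E\,\cdot\|_2\le C_\varepsilon^{1/2}n^{2-\kappa/2+o(1)},
\]
uniformly over \(\mathcal A_n\) and the offsets.

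\textbf{Step 2.} Theorem~\ref{thm:rf-fixed-limit}, \eqref{eq:rf-fixed-L2-preliminary}, gives
\[
 \|F_{p_n}^{\rm rf,(n)}-\mathbb EF_{p_n}^{\rm rf,(n)}\|_2=O(n\log^{O(1)}n).
\]

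\textbf{Step 3.} For any random variables \(X,Y\) one has \(\|X-\mathbb EX\|_2\le\|Y-\mathbb EY\|_2+2\|X-Y\|_2\). Combining this with Minkowski's inequality and the two steps above yields
\[
 \|Z_{\mathcal A_n}-\mathbb EZ_{\mathcal A_n}\|_2
 \le C_\varepsilon^{1/2}n^{2-\kappa/2+o(1)}+O(n\log^{O(1)}n)+2\|E_{\varepsilon,n}\|_2+2\|D_n\|_2 .
\]
Divide by \(n^2\) and take \(\limsup_n\) at fixed \(\varepsilon\); only \(2\limsup n^{-2}\|E_{\varepsilon,n}\|_2\) survives. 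Then letting \(\varepsilon\downarrow0\) and using \eqref{eq:rf-spectral-edge-L2-removal} gives \(o(n^2)\). Since every bound used is independent of \(\mathcal A_n\), the conclusion is uniform. Squaring gives \(\operatorname{Var}=o(n^4)\).

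\textbf{Main obstacle.} The delicate point is the uniformity claim in Step 1. Lemma~\ref{lem:rf-family-variance} is stated for deterministic families and deterministic offsets. We must check that \(a_\Xi\) is indeed deterministic, which holds since it is an expectation. We must also check that replacing the seam term by its mean does not destroy the log-concave maximal-affine structure. It does not: the seam fluctuations are removed pathwise through \(D_n\), so the maximal inequality is applied only to the affine family \(\langle c_{\Xi,\varepsilon},G_\varepsilon^{(n)}\rangle+a_\Xi\).

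A second, minor check is that \(E_{\varepsilon,n}\) dominates the bulk discrepancy for all tilings simultaneously, which is exactly the supremum form of \eqref{eq:rf-spectral-edge-L2-removal}. If one prefers not to use the pathwise comparison, the same conclusion follows by writing \(Z\) as a sum and applying Minkowski's inequality term by term.
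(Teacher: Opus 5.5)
Your proposal is correct and follows essentially the paper's argument. You compare with the spectrally truncated bulk maximum, carrying the deterministic seam expectations as offsets, and apply the log-concave family bound \eqref{eq:rf-bulk-tiling-family-variance}. You then absorb the spectral-edge remainder, the seam fluctuation and the fixed-term fluctuation via $\|X-\mathbb EX\|_2\le\|Y-\mathbb EY\|_2+2\|X-Y\|_2$ and Minkowski, letting $n\to\infty$ before $\varepsilon\downarrow0$.

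The only difference is cosmetic. You merge the edge and seam remainders into one pathwise comparison, whereas the paper first proves $\|Y_{\mathcal A_n}-\mathbb EY_{\mathcal A_n}\|_2=o(n^2)$ and only then adds the seam and fixed terms.
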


\begin{proof}
Write \(B_\Xi=W_{p,\mathrm{edge}}^{\rm rf}(\Xi)\) and
\(R_\Xi=W_{p,\Delta}^{\rm rf}(\Xi)
          -\mathbb EW_{p,\Delta}^{\rm rf}(\Xi)\), and set
\[
 Y_{\mathcal A_n}
 =\max_{\Xi\in\mathcal A_n}
   \{B_\Xi+\mathbb EW_{p,\Delta}^{\rm rf}(\Xi)\}.
\]
For fixed \(\varepsilon>0\), also set
\[
 Y_{\mathcal A_n,\varepsilon}
 =\max_{\Xi\in\mathcal A_n}
   \{B_{\Xi,\varepsilon}
                    +\mathbb EW_{p,\Delta}^{\rm rf}(\Xi)\}.
\]
The seam expectations are deterministic offsets.  Hence
\eqref{eq:rf-bulk-tiling-family-variance} gives, for every fixed
\(\varepsilon\),
\[
 \|Y_{\mathcal A_n,\varepsilon}
       -\mathbb EY_{\mathcal A_n,\varepsilon}\|_2=o(n^2)
\]
uniformly over \(\mathcal A_n\).  On the other hand,
\eqref{eq:rf-spectral-edge-L2-removal} gives
\[
 |Y_{\mathcal A_n}-Y_{\mathcal A_n,\varepsilon}|
 \leq E_{\varepsilon,n}.
\]
Using
\(\|(U-\mathbb EU)-(V-\mathbb EV)\|_2
  \leq2\|U-V\|_2\), first let \(n\to\infty\) and then
\(\varepsilon\downarrow0\).  We obtain
\begin{equation}
 \|Y_{\mathcal A_n}-\mathbb EY_{\mathcal A_n}\|_2=o(n^2)
\label{eq:rf-bulk-offset-supremum}
\end{equation}
uniformly over \(\mathcal A_n\).  The log-concave supremum lemma used
above permits arbitrary deterministic offsets and depends on the
ambient \(O(n^2)\) dimension, not on the cardinality of
\(\mathcal A_n\).

The exact gauge identity shows that the remaining random coordinates
are precisely the explicit seam and fixed statistics; no corrected
seam incidence has been discarded.  They satisfy
\[
 \left\|\sup_{\Xi}
 |R_\Xi|\right\|_2
 =O(n^{4/3}\log^{O(1)}n)
\]
and
\[
 \|F_{p_n}^{\rm rf,(n)}-\mathbb EF_{p_n}^{\rm rf,(n)}\|_2
 =O(n\log^{O(1)}n)
\]
by Lemma~\ref{lem:rf-seam-limit} and
Theorem~\ref{thm:rf-fixed-limit}.  Since
\[
 \left|Z_{\mathcal A_n}
   -\bigl(\mathbb EF_{p_n}^{\rm rf,(n)}+Y_{\mathcal A_n}\bigr)\right|
 \le
 |F_{p_n}^{\rm rf,(n)}-\mathbb EF_{p_n}^{\rm rf,(n)}|
 +\sup_\Xi|R_\Xi|,
\]
Minkowski's inequality and
\(\| (U-\mathbb EU)-(V-\mathbb EV)\|_2
   \le2\|U-V\|_2\), together with
\eqref{eq:rf-bulk-offset-supremum}, give
\(\|Z_{\mathcal A_n}-\mathbb EZ_{\mathcal A_n}\|_2=o(n^2)\).
Squaring proves the variance assertion, uniformly in the subfamily.
\end{proof}

\begin{proposition}[Finite-panel neighborhood localization]
\label{prop:rf-total-neighborhood}
For every \(f\in\operatorname{AHT}(\mathsf R_p)\), let
\[
 W_{p,\mathrm{var}}^{\rm rf}(\Xi)
 :=W_{p,\mathrm{edge}}^{\rm rf}(\Xi)
   +W_{p,\Delta}^{\rm rf}(\Xi).
\]
Then
\begin{align}
 \limsup_{n\to\infty}n^{-2}\mathbb E
 \max_{\Xi\in\mathcal T_{np}(f,\eta n)}
 W_{p,\mathrm{var}}^{\rm rf}(\Xi)
 &\le S_{p,\mathrm{edge}}(f)+S_{p,\Delta}^{\rm rf}(f)
       +o_\eta(1),
\label{eq:rf-total-neighborhood-upper}\\
 \liminf_{n\to\infty}n^{-2}\mathbb E
 \max_{\Xi\in\mathcal T_{np}(f,\eta n)}
 W_{p,\mathrm{var}}^{\rm rf}(\Xi)
 &\ge S_{p,\mathrm{edge}}(f)+S_{p,\Delta}^{\rm rf}(f)
       -o_\eta(1).
\label{eq:rf-total-neighborhood-lower}
\end{align}
\end{proposition}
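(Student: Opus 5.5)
The plan is to combine the panelwise bulk bounds of Proposition~\ref{prop:rf-bulk-neighborhood} with the seam bounds of Lemma~\ref{lem:rf-seam-limit}. The variable weight splits as \(W_{p,\mathrm{var}}^{\rm rf}=W_{p,\mathrm{edge}}^{\rm rf}+W_{p,\Delta}^{\rm rf}\), and the seam part is essentially frozen on a small height neighborhood. So the expected maximum of the sum should be the bulk maximum plus the seam energy, up to \(o_\eta(1)n^2\).

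\emph{Upper bound.} For every \(\Xi\in\mathcal T_{np}(f,\eta n)\),
\[
 W_{p,\mathrm{var}}^{\rm rf}(\Xi)\le
 \max_{\Xi'\in\mathcal T_{np}(f,\eta n)}W_{p,\mathrm{edge}}^{\rm rf}(\Xi')
 +\max_{\Xi''\in\mathcal T_{np}(f,\eta n)}W_{p,\Delta}^{\rm rf}(\Xi'').
\]
Taking expectations, dividing by \(n^2\), and applying \eqref{eq:rf-bulk-upper-neighborhood} and the first line of \eqref{eq:rf-seam-neighborhood} (with \(\varepsilon=\eta\)) gives \eqref{eq:rf-total-neighborhood-upper}. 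The two maxima may be realized by different tilings, but this only weakens the inequality in the permitted direction.

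\emph{Lower bound.} Take \(\Xi^\ast\) a measurable maximizer of \(W_{p,\mathrm{edge}}^{\rm rf}\) over \(\mathcal T_{np}(f,\eta n)\). Then
\[
 \max_{\mathcal T_{np}(f,\eta n)}W_{p,\mathrm{var}}^{\rm rf}
 \ge W_{p,\mathrm{edge}}^{\rm rf}(\Xi^\ast)
   +\min_{\Xi\in\mathcal T_{np}(f,\eta n)}W_{p,\Delta}^{\rm rf}(\Xi).
\]
Taking expectations, \eqref{eq:rf-bulk-lower-neighborhood} handles the first term and the second line of \eqref{eq:rf-seam-neighborhood} handles the minimum; together they give \eqref{eq:rf-total-neighborhood-lower}. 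The only point to check is that the seam minimum is legitimately controlled with \(\varepsilon=\eta\).

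The main obstacle is confirming that the seam estimate is genuinely two-sided over the whole neighborhood, i.e.\ that \(\max\) and \(\min\) of \(W_{p,\Delta}^{\rm rf}\) over \(\mathcal T_{np}(f,\eta n)\) differ by \(o_\eta(1)n^2\) in expectation. I would verify this through the summation-by-parts argument of Lemma~\ref{lem:rf-seam-limit}. After the eigenvalues are replaced by their means, with error \(O(n^{4/3}\log^{O(1)}n)\) uniformly in \(\Xi\) by \eqref{eq:rf-seam-uniform-concentration}, the seam weight depends only on the aligned seam trace. Every tiling in the neighborhood has a trace within \(\eta n\) of \(f_{02},f_{03}\), and the coefficient profiles \(D_k\) have bounded total variation \(O(n)\) by interlacing. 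Hence the deterministic seam sum varies by \(O(\eta n^2)\) over the neighborhood, uniformly. The endpoint terms are fixed by the exact outer trace, and Lemma~\ref{lem:rf-seam-continuity} identifies the limit as \(S_{p,\Delta}^{\rm rf}(f)\).
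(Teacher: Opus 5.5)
Your proposal is correct and matches the paper's proof. For the upper bound it uses subadditivity of the maximum, $\max(W_{\mathrm{edge}}+W_\Delta)\le\max W_{\mathrm{edge}}+\max W_\Delta$; for the lower bound it uses $\max(W_{\mathrm{edge}}+W_\Delta)\ge\max W_{\mathrm{edge}}+\min W_\Delta$; and each bound is combined with Proposition~\ref{prop:rf-bulk-neighborhood} and the corresponding line of Lemma~\ref{lem:rf-seam-limit} at radius $\varepsilon=\eta$. Your last paragraph only re-derives the two-sided seam estimate, which Lemma~\ref{lem:rf-seam-limit} already states for both the maximum and the minimum through its summation-by-parts proof, so it is not an additional obstacle.
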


\begin{proof}
For the upper bound use
\[
 \max_{\mathcal T(f,\eta n)}(W_{\rm edge}+W_\Delta)
 \le \max_{\mathcal T(f,\eta n)}W_{\rm edge}
    +\max_{\mathcal T(f,\eta n)}W_\Delta,
\]
and apply Proposition~\ref{prop:rf-bulk-neighborhood} and the first
inequality of Lemma~\ref{lem:rf-seam-limit}.  For the lower bound use
\[
 \max_{\mathcal T(f,\eta n)}(W_{\rm edge}+W_\Delta)
 \ge \max_{\mathcal T(f,\eta n)}W_{\rm edge}
    +\min_{\mathcal T(f,\eta n)}W_\Delta,
\]
and apply the corresponding lower estimates.
\end{proof}

\begin{theorem}[Finite-panel red-free localization]
\label{thm:rf-finite-panel-localization}
Let \(p\) be a strictly interior rational continuum target, let \(q\) be a common denominator, and take exact dilations \(p_n=np\) along multiples \(n\) of \(q\).  For the three-source GUE input above,
\begin{equation}
 \frac1{n^2}\mathbb E\widehat O_n(p_n)
 \longrightarrow
 \Phi_{\ell_1,\ell_2,\ell_3}(p):=
 \sup_{f\in\operatorname{AHT}(\mathsf R_p)}\mathcal S_p(f).
\label{eq:rf-mean-limit}
\end{equation}
Moreover,
\begin{equation}
 n^{-4}\operatorname{Var}(\widehat O_n(p_n))\longrightarrow0.
\label{eq:rf-main-variance}
\end{equation}
\end{theorem}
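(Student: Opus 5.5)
The plan starts from the exact identity of Theorem~\ref{thm:rf-exact-tiling}:
\[
 \widehat O_n(p_n)=F_{p_n}^{\rm rf,(n)}+\max_{\Xi\in\mathcal T_{np}}W_{p,\mathrm{var}}^{\rm rf}(\Xi),
\]
which holds pathwise, including on \(\mathcal E_n^c\), because the recurrence value does not depend on \(K_{\rm gap}\). The variance statement \eqref{eq:rf-main-variance} is then immediate. Take \(\mathcal A_n=\mathcal T_{np}\) in Lemma~\ref{lem:rf-uniform-family-variance}; this gives \(\operatorname{Var}(\widehat O_n(p_n))=o(n^4)\). For the mean, Theorem~\ref{thm:rf-fixed-limit} gives \(n^{-2}\mathbb EF_{p_n}^{\rm rf,(n)}\to S_{p,0}\). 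It therefore remains to show
\[
 n^{-2}\,\mathbb E\max_{\Xi}W_{p,\mathrm{var}}^{\rm rf}(\Xi)\to\sup_f\bigl(S_{p,\mathrm{edge}}+S_{p,\Delta}^{\rm rf}\bigr)(f).
\]

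\emph{Lower bound.} Let \(f^*\) be a maximizer of \(\mathcal S_p\); it exists by compactness and upper semicontinuity. Since \(\mathcal T_{np}(f^*,\eta n)\subseteq\mathcal T_{np}\), inequality \eqref{eq:rf-total-neighborhood-lower} gives
\[
 \liminf_{n\to\infty} n^{-2}\,\mathbb E\max_{\Xi}W_{p,\mathrm{var}}^{\rm rf}\ge \mathcal S_p(f^*)-S_{p,0}-o_\eta(1),
\]
and we let \(\eta\downarrow0\).

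\emph{Upper bound.} This is the main obstacle, because it requires exchanging the expectation with the maximum over the finitely many height neighborhoods. Fix \(\eta>0\). By the compactness of Lemma~\ref{lem:rf-AHT-compact} and \eqref{eq:rf-discrete-to-continuum-compactness}, we can choose a finite \(\eta/2\)-net \(f^{(1)},\dots,f^{(J)}\) of \(\operatorname{AHT}(\mathsf R_p)\), with \(J=J(\eta)\) independent of \(n\). For large \(n\) every tiling then lies in some \(\mathcal T_{np}(f^{(i)},\eta n)\), so
\[
 \max_\Xi W_{p,\mathrm{var}}^{\rm rf}=\max_{i\le J} Y_i,\qquad Y_i:=\max_{\mathcal T_{np}(f^{(i)},\eta n)}W_{p,\mathrm{var}}^{\rm rf}.
\]
Apply Lemma~\ref{lem:rf-uniform-family-variance} to each deterministic subfamily \(\mathcal A_n=\mathcal T_{np}(f^{(i)},\eta n)\), shifting by the fixed term, which is common to all \(i\). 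This gives \(\|Y_i-\mathbb EY_i\|_2=o(n^2)\) uniformly in \(i\). Hence
\[
 \mathbb E\max_i Y_i\le\max_i\mathbb EY_i+\sum_{i\le J}\|Y_i-\mathbb EY_i\|_2=\max_i\mathbb EY_i+J\,o(n^2).
\]
With \(J\) fixed, \eqref{eq:rf-total-neighborhood-upper} bounds
\[
 \limsup_{n\to\infty} n^{-2}\max_i\mathbb EY_i\le\max_i\bigl(S_{p,\mathrm{edge}}+S_{p,\Delta}^{\rm rf}\bigr)(f^{(i)})+o_\eta(1)\le\sup_f\mathcal S_p(f)-S_{p,0}+o_\eta(1).
\]
Letting \(\eta\downarrow0\) completes the upper bound.

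The care needed is in the order of limits: \(\eta\) and the net are fixed first, then \(n\to\infty\), and only then \(\eta\downarrow0\). The term \(o_\eta(1)\) in Proposition~\ref{prop:rf-total-neighborhood} must be uniform over the net points, or at least hold at each of the finitely many \(f^{(i)}\), which suffices because \(J\) is finite. Combining the two bounds with the fixed-term limit yields \eqref{eq:rf-mean-limit}.
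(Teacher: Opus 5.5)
Your reduction via Theorem~\ref{thm:rf-exact-tiling}, the variance statement, and the lower bound are correct and agree with the paper. The upper bound, however, has a genuine gap at the final step $\eta\downarrow0$.

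\textbf{The gap.} Proposition~\ref{prop:rf-total-neighborhood} is only pointwise. For each fixed $f$ it supplies an error $\epsilon_f(\eta)$ with $\epsilon_f(\eta)\to0$ as $\eta\downarrow0$; it says nothing about uniformity in $f$. Your net $f^{(1)}_\eta,\dots,f^{(J(\eta))}_\eta$ is rebuilt for every $\eta$, so what your argument actually yields is
\[
 \limsup_{n\to\infty}n^{-2}\,\mathbb E\max_{\Xi}W_{p,\mathrm{var}}^{\rm rf}(\Xi)
 \le\sup_f\mathcal S_p(f)-S_{p,0}
 +\max_{i\le J(\eta)}\epsilon_{f^{(i)}_\eta}(\eta).
\]
The last term need not tend to zero. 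Finiteness of $J(\eta)$ at each fixed $\eta$ is beside the point, since both $J(\eta)$ and the net points change as $\eta\downarrow0$. Uniformity cannot be extracted from the proof of Proposition~\ref{prop:rf-bulk-neighborhood} either. Its bulk upper bound closes through the Riemann-sum convergence \eqref{eq:rf-macro-riemann-limit}, which comes from Lebesgue differentiation of $\nabla f$ for the particular $f$. That is an upper-semicontinuity statement, not continuity of $S_{p,\mathrm{edge}}$ in the uniform topology. A net point $g$ whose energy lies well below that of nearby heights would have a large error at radius $\eta$. So the order of limits you describe is exactly where the argument breaks.

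\textbf{The fix (the paper's argument).} Reverse the quantifiers.
\begin{enumerate}
\item Fix $\varepsilon>0$ first.
\item For each $f\in\operatorname{AHT}(\mathsf R_p)$, use the pointwise estimate to choose a radius $r_f>0$ such that
$\limsup_n n^{-2}\mathbb E Z_n(f,2r_f)\le\mathcal S_p(f)+\varepsilon$, where
$Z_n(f,r)=F_{p_n}^{\rm rf,(n)}+\max_{\Xi\in\mathcal T_{np}(f,rn)}W_{p,\mathrm{var}}^{\rm rf}(\Xi)$.
\item Cover the compact space $\operatorname{AHT}(\mathsf R_p)$ by the balls $B_\infty(f,r_f)$ and extract a finite subcover $B_\infty(f^{(j)},r_j)$, $j\le J$. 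Here $J$ depends only on $\varepsilon$. Put $r_*=\min_j r_j$.
\item By \eqref{eq:rf-discrete-to-continuum-compactness}, for large $n$ every normalized discrete height lies within $r_*$ of some $g\in\operatorname{AHT}(\mathsf R_p)$. That $g$ lies in some $B_\infty(f^{(j)},r_j)$, so the height lies within $2r_j$ of $f^{(j)}$. Hence the doubled neighborhoods $\mathcal T_{np}(f^{(j)},2r_jn)$ cover all of $\mathcal T_{np}$.
\item Your $L^2$ maximal inequality now applies verbatim to this fixed finite family. It gives $\limsup_n n^{-2}\mathbb E\widehat O_n(p_n)\le\sup_f\mathcal S_p(f)+\varepsilon$.
\item Only then let $\varepsilon\downarrow0$.
\end{enumerate}
The point-dependent radii, together with the doubling, are what turn the pointwise neighborhood estimate into a bound valid for the global maximum. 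With one common radius $\eta$ for all net points, the argument does not close.
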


\begin{proof}
The exact formula of Theorem~\ref{thm:rf-exact-tiling} gives
\begin{equation}
 \widehat O_n(p_n)=F_{p_n}^{\rm rf,(n)}
 +\max_{\Xi\in\mathcal T_{np}}W_{p,\mathrm{var}}^{\rm rf}(\Xi).
\label{eq:rf-finite-panel-decomposition}
\end{equation}
The normalized fixed term converges in \(L^1\) to \(S_{p,0}\) by
Theorem~\ref{thm:rf-fixed-limit}.

For the lower bound, fix
\(f\in\operatorname{AHT}(\mathsf R_p)\).  Restrict the maximum in
\eqref{eq:rf-finite-panel-decomposition} to
\(\mathcal T_{np}(f,\eta n)\), apply
Proposition~\ref{prop:rf-total-neighborhood}, and then let
\(\eta\downarrow0\).  This gives
\[
 \liminf_{n\to\infty}n^{-2}\mathbb E\widehat O_n(p_n)
 \ge \mathcal S_p(f).
\]
Taking the supremum over \(f\) proves the required lower bound.

For the upper bound, fix \(\varepsilon>0\) and abbreviate
\(\mathcal A=\operatorname{AHT}(\mathsf R_p)\).  For \(f\in\mathcal A\)
and \(r>0\), put
\[
 Z_n(f,r)=F_{p_n}^{\rm rf,(n)}+
 \max_{\Xi\in\mathcal T_{np}(f,rn)}
      W_{p,\mathrm{var}}^{\rm rf}(\Xi).
\]
The pointwise upper estimate in
Proposition~\ref{prop:rf-total-neighborhood}, together with the fixed-term
limit, allows us, for every \(f\in\mathcal A\), to choose \(r_f>0\) so
that
\begin{equation}
 \limsup_{n\to\infty}n^{-2}\mathbb E Z_n(f,2r_f)
 \leq \mathcal S_p(f)+\varepsilon
 \leq \sup_{g\in\mathcal A}\mathcal S_p(g)+\varepsilon.
\label{eq:rf-pointwise-upper-radius}
\end{equation}

The balls \(B_\infty(f,r_f)\), \(f\in\mathcal A\), cover the compact
space \(\mathcal A\).  Extract a finite subcover
\[
 B_\infty(f^{(1)},r_1),\ldots,B_\infty(f^{(J)},r_J),
 \qquad r_j=r_{f^{(j)}},
\]
and put \(r_*=\min_{j\leq J}r_j>0\).  By
Lemma~\ref{lem:rf-AHT-compact}, for all sufficiently large \(n\), every
normalized discrete height is within \(r_*\) of \(\mathcal A\), and
each of the finitely many families
\(\mathcal T_{np}(f^{(j)},2r_jn)\) is nonempty.  If a discrete height is
within \(r_*\) of \(g\in\mathcal A\), and
\(g\in B_\infty(f^{(j)},r_j)\), then it lies in
\(B_\infty(f^{(j)},2r_j)\).  Consequently these doubled neighborhoods
cover all tilings.  With
\[
 Z_{j,n}:=Z_n(f^{(j)},2r_j),
\]
we therefore have, for all sufficiently large \(n\),
\(\widehat O_n(p_n)=\max_{j\leq J}Z_{j,n}\).

This is a fixed finite family, independent of \(n\).  Hence
Lemma~\ref{lem:rf-uniform-family-variance} gives
\(\|Z_{j,n}-\mathbb EZ_{j,n}\|_2=o(n^2)\) for every \(j\), and
\[
 \mathbb E\max_{j\le J}Z_{j,n}
 \le \max_{j\le J}\mathbb EZ_{j,n}
    +\sum_{j=1}^J\|Z_{j,n}-\mathbb EZ_{j,n}\|_2
 =\max_{j\le J}\mathbb EZ_{j,n}+o(n^2).
\]
Because \(J\) is fixed, \eqref{eq:rf-pointwise-upper-radius} now gives
\[
 \limsup_{n\to\infty}n^{-2}\mathbb E\widehat O_n(p_n)
 \leq \sup_{f\in\mathcal A}\mathcal S_p(f)+\varepsilon.
\]
Finally let \(\varepsilon\downarrow0\).  This proves the upper bound
and therefore \eqref{eq:rf-mean-limit}.

Finally apply Lemma~\ref{lem:rf-uniform-family-variance} to
the full family \(\mathcal A_n=\mathcal T_{np}\).  In view of
\eqref{eq:rf-finite-panel-decomposition}, this gives
\(\operatorname{Var}(\widehat O_n(p_n))=o(n^4)\), which is
\eqref{eq:rf-main-variance}.
\end{proof}

\begin{corollary}[Rational-point law of large numbers]
\label{thm:rf-rational-limit}
Under the hypotheses of Theorem~\ref{thm:rf-finite-panel-localization},
\begin{equation}
 \boxed{
 \frac{\widehat O_n(p_n)}{n^2}
 \xrightarrow{\mathbb P}
 \Phi_{\ell_1,\ell_2,\ell_3}(p).}
\label{eq:rf-main-limit}
\end{equation}
The normalized expectation has the same limit.
\end{corollary}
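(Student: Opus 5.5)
The plan is to deduce convergence in probability from the two conclusions of Theorem~\ref{thm:rf-finite-panel-localization} by Chebyshev's inequality. Write
\[
 Y_n:=\frac{\widehat O_n(p_n)}{n^2},
 \qquad
 \mu_n:=\mathbb E Y_n=\frac1{n^2}\mathbb E\widehat O_n(p_n).
\]
By \eqref{eq:rf-mean-limit}, \(\mu_n\to\Phi_{\ell_1,\ell_2,\ell_3}(p)\). This already gives the final sentence of the corollary, that the normalized expectation has the same limit. By \eqref{eq:rf-main-variance},
\[
 \operatorname{Var}(Y_n)=n^{-4}\operatorname{Var}(\widehat O_n(p_n))\to0.
\]

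Fix \(\epsilon>0\). For all sufficiently large multiples \(n\) of \(q\) we have \(|\mu_n-\Phi_{\ell_1,\ell_2,\ell_3}(p)|<\epsilon/2\). For such \(n\), Chebyshev's inequality gives
\[
 \mathbb P\bigl(|Y_n-\Phi_{\ell_1,\ell_2,\ell_3}(p)|\geq\epsilon\bigr)
 \leq\mathbb P\bigl(|Y_n-\mu_n|\geq\epsilon/2\bigr)
 \leq\frac{4\operatorname{Var}(Y_n)}{\epsilon^2}\longrightarrow0 .
\]
This is \eqref{eq:rf-main-limit}. The same estimate also shows that \(Y_n\to\Phi_{\ell_1,\ell_2,\ell_3}(p)\) in \(L^2\), since
\[
 \mathbb E|Y_n-\Phi_{\ell_1,\ell_2,\ell_3}(p)|^2
 =\operatorname{Var}(Y_n)+|\mu_n-\Phi_{\ell_1,\ell_2,\ell_3}(p)|^2\to0 .
\]

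Two points need checking, and neither is a real obstacle. The first is that \(\widehat O_n(p_n)\) has a finite second moment, so that Chebyshev applies. By Theorem~\ref{thm:rf-exact-tiling}, \(\widehat O_n(p_n)\) is a finite maximum of affine forms in the Gaussian-derived hive entries, so all of its moments are finite. No conditioning on \(\mathcal E_n\) enters, because the output is defined on the whole probability space.

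The second point is that the statement concerns exact dilations along multiples of \(q\), as in Theorem~\ref{thm:rf-finite-panel-localization}. The limit is therefore taken along that subsequence of \(n\). Passing to general lattice approximants \(p_n/n\to p\) is the content of Corollary~\ref{cor:rf-rational-approximants} and is deferred; it relies on the Lipschitz slice bounds and is not needed here.
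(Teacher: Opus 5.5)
Your proposal is correct and is the paper's own argument: the paper's proof simply combines the mean limit \eqref{eq:rf-mean-limit} with the variance bound \eqref{eq:rf-main-variance}, which is exactly your Chebyshev step. Your remarks on the finite second moment and on taking the limit along multiples of \(q\) are accurate, and they make explicit what the paper leaves implicit.
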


\begin{proof}
Combine \eqref{eq:rf-mean-limit} and \eqref{eq:rf-main-variance}.
\end{proof}

\section{Extension to real tetrahedral points}
\label{sec:rf-real-extension}

We now remove the rationality restriction.  The required spatial
regularity comes from the hive inequalities on constant-coordinate
slices of the tetrahedral labeling.

\begin{lemma}[Constant-coordinate slices and a lattice Lipschitz bound]
\label{lem:rf-target-lipschitz}
On an event of probability tending to one, there is a constant
\(C=C(\ell_1,\ell_2,\ell_3)\) such that for every pair of adjacent lattice
vertices \(x,y\) in the tetrahedron \(1234\),
\begin{equation}
 \left|\widehat O_n(x)-\widehat O_n(y)\right|\le Cn.
\label{eq:rf-target-lattice-lipschitz}
\end{equation}
The same bound holds simultaneously on all boundary faces.
\end{lemma}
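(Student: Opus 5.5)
The plan is to realize every adjacent pair in \(1234\) inside a genuine hive and bound its increments by spectral widths. I work on the large-gap event \(\mathcal E_n\) of \eqref{eq:rf-upfront-large-gap-event}, whose complement has probability at most \(Ce^{-cn}\). I also intersect with the event that \(\max_I\|\widehat X_I^{(n)}\|_{\rm op}\le C_1 n\) for a fixed \(C_1\) depending on \(\ell_1,\ell_2,\ell_3\); for a suitable \(C_1\) this again fails with exponentially small probability by the Gaussian operator-norm tail. On this event, Lemma~\ref{lem:rf-centered-large-gap-valid} says every face array produced by the hive associator is a genuine hive. By Theorem~\ref{thm:rf-top-bottom-double-hive} the four exterior side-increment lists are the spectra of \(\widehat X_1,\widehat X_2,\widehat X_3\) and \(\widehat X_1+\widehat X_2+\widehat X_3\); the gap gauge \(G_n\) vanishes on \(1234\) by \eqref{eq:rf-large-gap-gauge}. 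Hence on the four boundary faces of \(1234\) every boundary increment is an eigenvalue bounded by \(C_1n\).

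The key structural input is the local hive property of Knutson--Tao--Woodward \cite[\S5]{KTWOR}: the restriction of the octahedron-recurrence solution to any constant-coordinate triangular slice \(\{q_i=t\}\cap1234\) is itself a hive. The boundary increments of that slice are increments of the hives on the two adjacent faces.

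Take an adjacent pair \(x,y\) with \(y-x=e_j-e_k\), and choose \(i\notin\{j,k\}\). Both points lie in the slice \(q_i=x_i\), so I bound the increment \(\widehat O_n(y)-\widehat O_n(x)\) inside a hive. In a hive, the increments along a fixed lattice direction are monotone across each row by the rhombus inequalities. So every interior increment in that direction lies between the extreme values of that direction's increments on the slice boundary. On each side of the slice, the boundary increments come from rows of the two face hives containing that side. Iterating the same monotonicity on those faces places every face increment between boundary eigenvalue increments of the face. Therefore every increment is bounded by \(\max_I\|\widehat X_I\|_{\rm op}+O(1)\le Cn\).

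The same argument applied directly to the faces gives the boundary-face statement. Edges parallel to a direction that is not an edge direction of the slice do not occur, because every unit edge \(e_j-e_k\) of \(1234\) lies in the slice \(q_i=\mathrm{const}\) for either of the two remaining indices \(i\).

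The main obstacle I expect is making the KTW slice statement precise in the present max-plus and orientation conventions. This means checking that the slice arrays produced by the legal sweep through \(1234\) satisfy the rhombus inequalities with the correct sign, and that the monotonicity bound reaches every slice boundary, including degenerate small slices near the vertices of \(1234\). If the direct citation does not match, I would fall back on verifying the slice rhombus inequality by induction on the sweep using the tropical octahedron relation \eqref{eq:tropical-higher-OR-solved}. Alternatively, I would bound each increment via the red-free formula, Theorem~\ref{thm:rf-exact-tiling}, by comparing the tiling maxima for adjacent targets.
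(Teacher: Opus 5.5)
Your overall route matches the paper's: the large-gap event, the Knutson--Tao--Woodward local hive property placing every unit edge of $1234$ in a constant-coordinate slice that is a hive, and monotonicity of increments inside a hive. The step that fails is your control of the boundary increments.

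Theorem~\ref{thm:rf-top-bottom-double-hive} identifies the four exterior side lists with $\lambda(\widehat X_1^{(n)}),\lambda(\widehat X_2^{(n)}),\lambda(\widehat X_3^{(n)}),\lambda(\widehat X_{123}^{(n)})$ only \emph{in distribution}; see \eqref{eq:rf-four-boundary-joint-law} and the remark following it. Pathwise, only the sides $12,23,34$ are spectra of the realized matrices, because they are frozen outer edges of the initial fan. The side $14$ is different, and so are the diagonals $13$ and $24$. The diagonals are sides of the boundary faces but not exterior sides, and your argument never treats them. All three are produced by the sweeps as piecewise-linear functions of the three minor processes. They cannot coincide pathwise with $\lambda(\widehat X_{123}^{(n)})$, $\lambda(\widehat X_{12}^{(n)})$, $\lambda(\widehat X_{23}^{(n)})$. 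Indeed, conjugating $\widehat X_2^{(n)}$ by a diagonal unitary leaves its minor process, hence the whole recurrence, unchanged, but changes the spectrum of $\widehat X_1^{(n)}+\widehat X_2^{(n)}$. So your claim that, on $\{\max_I\|\widehat X_I^{(n)}\|_{\rm op}\le C_1n\}$, every boundary increment of the four faces is an eigenvalue bounded by $C_1n$ is unjustified on $13$, $14$ and $24$.

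The paper closes this by defining the good event through the \emph{realized} face boundaries. It sets $\mathcal B_n=\{\max_F D_F^{(n)}\le C_0n\}$, where $D_F^{(n)}$ is the largest absolute increment on the three sides of face $F$. It then bounds $\mathbb P(\mathcal B_n^c)$ face by face using the one-face marginal laws \eqref{eq:rf-boundary-face-dictionary}, the Gaussian operator-norm tail, and a union bound over the four faces. No joint or pathwise identification is used.

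A purely pathwise repair is also available. On $\mathcal E_n\cap\{\max_{r}\|\widehat X_r^{(n)}\|_{\rm op}\le C_1n\}$ the sides $12,23,34$ are bounded by $C_1n$. In any hive, the sup norm of each side's increment list is at most the sum of the other two; this follows from the Weyl inequalities satisfied by hive boundaries. Applying this successively to the hives $123$, $134$, $234$ bounds $13$, $14$, $24$ by $2C_1n$, $3C_1n$, $2C_1n$ respectively. This avoids the distributional dictionary altogether.

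With either repair the rest of your argument goes through. One small correction: the three sides of the slice $q_i=t$ lie on the three faces $q_j=0$, $j\neq i$, not on two adjacent faces.
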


\begin{proof}
First note that the tetrahedron considered here has vertex set
\(\{u_1,u_2,u_3,u_4\}\).  Since \(u_0\) is not one of its vertices, the
one-coordinate large-gap gauge \eqref{eq:rf-large-gap-gauge} vanishes
identically on this tetrahedron: it is needed only to embed the initial
panels containing \(u_0\) into the category of hives.  In particular,
neither the face laws below nor the constant in the desired estimate
depends on \(K_{\rm gap}\).

The boundary-face dictionary suppressed in the associator notation is
a dictionary of \emph{marginal laws}, not a pathwise identification.
Put
\(\widehat X_{12}=\widehat X_1+\widehat X_2\),
\(\widehat X_{23}=\widehat X_2+\widehat X_3\), and
\(\widehat X_{123}=\widehat X_1+\widehat X_2+\widehat X_3\).
Taking the one-hive marginals of
Theorem~\ref{thm:rf-top-bottom-double-hive}, equivalently
using the GUE-hive invariance under the associator
\cite{NSTGUE}, gives
\begin{equation}
\begin{array}{c|c|c}
\text{face}&\text{marginal GUE-hive law}&
             \text{law of the three boundary-increment lists}\\ \hline
123&(\widehat X_1,\widehat X_2;\widehat X_{12})&
 \lambda(\widehat X_1),\lambda(\widehat X_2),
 \lambda(\widehat X_{12})\\
134&(\widehat X_{12},\widehat X_3;\widehat X_{123})&
 \lambda(\widehat X_{12}),\lambda(\widehat X_3),
 \lambda(\widehat X_{123})\\
234&(\widehat X_2,\widehat X_3;\widehat X_{23})&
 \lambda(\widehat X_2),\lambda(\widehat X_3),
 \lambda(\widehat X_{23})\\
124&(\widehat X_1,\widehat X_{23};\widehat X_{123})&
 \lambda(\widehat X_1),\lambda(\widehat X_{23}),
 \lambda(\widehat X_{123}).
\end{array}
\label{eq:rf-boundary-face-dictionary}
\end{equation}
Here each row means equality in distribution for that face alone.
The table does not identify all four face triples simultaneously with
spectra formed from one matrix realization; the joint assertion used
above is only the four \emph{exterior}-side law
\eqref{eq:rf-four-boundary-joint-law}.  There is no unconditional
independence between the faces; the conditional independence supplied
by Theorem~\ref{thm:rf-top-bottom-double-hive} holds only within the top
or bottom pair after its common diagonal is fixed.

For a face \(F\in\{123,124,134,234\}\), let \(D_F^{(n)}\) be the
largest absolute increment on its three outer sides.  By the marginal
identity \eqref{eq:rf-boundary-face-dictionary} and the Gaussian
operator-norm tail, one may choose
\(C_0=C_0(\ell_1,\ell_2,\ell_3)\) so that
\[
 \mathbb P\!\left\{D_F^{(n)}>C_0n\right\}\le Ce^{-cn}
 \qquad\text{for every }F.
\]
Consequently the event
\[
 \mathcal B_n:=
 \left\{\max_{F\in\{123,124,134,234\}}D_F^{(n)}
                         \le C_0n\right\}
\]
has probability tending to one, by a union bound over the four
faces.  No joint distributional assertion is used here.

Let \([x,y]\) be a unit lattice edge, in direction \(e_i-e_j\), and
choose one of the two remaining barycentric coordinates, say \(q_k\),
which is constant on this edge.  The excavation construction labels all
lattice points of the tetrahedron.  In the proof of the hive associator,
Knutson--Tao--Woodward prove the stronger local assertion that
\emph{every} unit rhombus in this tetrahedral labeling satisfies its hive
inequality; see \cite[\S5]{KTWOR}.  It follows immediately that the
constant-coordinate triangular slice
\[
                         q_k=x_k=y_k
\]
is a hive and contains \([x,y]\).  Flip-path independence identifies
its labels at \(x,y\) with the values
\(\widehat O_n(x),\widehat O_n(y)\) obtained in the chosen global sweep.

On the large-gap event \(\mathcal E_n\), the three initial arrays are
hives, so the associator supplies hives on all four boundary faces and
on every constant-coordinate slice.  In a hive, the unit-edge
increments in each parallel direction are monotone between the two
corresponding outer sides.  Hence, on
\(\mathcal E_n\cap\mathcal B_n\), every unit-edge increment anywhere
on any boundary face is bounded in absolute value by \(C_0n\).
The boundary of the constant-coordinate slice lies in three such
faces, so all its boundary increments obey the same bound.

Applying the same monotonicity once more inside the slice, an increment
in its interior lies
between the corresponding increments on its two boundary sides.  In
particular,
\[
 |\widehat O_n(x)-\widehat O_n(y)|\le C_0n.
\]
Both \(\mathcal E_n\) and \(\mathcal B_n\) have probability tending to
one, and neither event depends on the choice of edge.  The estimate is
therefore simultaneous over all lattice edges and all boundary faces.
\end{proof}

\begin{lemma}[Stability of the aligned directed envelope]
\label{lem:rf-aligned-envelope-stability}
Let \(p'\) be a strictly interior lattice target and use the aligned
height, boundary datum, residue, and directed distance
\[
 h([i,j]_r)=F_r([i,j]_r)-(r-1)j,\qquad
 f_{p'},\quad\kappa_{p'},\quad d_{p'}
\]
from \eqref{eq:aligned-global-height} and
Lemma~\ref{lem:directed-extremal-extension}.  Suppose that
\(u:\mathcal V_{p'}\to\mathbb R\) satisfies, for some \(A\geq0\),
\begin{align}
 |u(b)-f_{p'}(b)|&\leq A,
       &&b\in B_{p'},\label{eq:rf-aligned-approx-boundary}\\
 u(y)-u(x)&\leq d_{p'}(x,y)+A,
       &&d_{p'}(x,y)<\infty.
\label{eq:rf-aligned-approx-directed}
\end{align}
Then there is a legal tiling of \(\mathcal A_{p'}\) whose aligned
integer height \(H\) has the exact boundary datum \(f_{p'}\), residue
\(\kappa_{p'}\) modulo three, and
\begin{equation}
                         \|H-u\|_\infty\leq3A+3.
\label{eq:rf-aligned-envelope-stability}
\end{equation}
\end{lemma}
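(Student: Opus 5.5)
We build \(H\) as a directed upper envelope of a rounded copy of \(u\), and then verify its legality exactly as in Lemma~\ref{lem:directed-extremal-extension}.

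\textbf{Rounding.} Replace \(u\) by an integer function with the correct residue. Define \(g(z)\) to be the least integer \(\geq u(z)\) congruent to \(\kappa_{p'}(z)\) modulo three, for interior vertices \(z\), and set \(g=f_{p'}\) on \(B_{p'}\). Then \(0\le g-u<3\) off the boundary, and \(|g-u|\le A\) on \(B_{p'}\) by \eqref{eq:rf-aligned-approx-boundary}. Combined with \eqref{eq:rf-aligned-approx-directed}, this gives a crude two-sided control
\[
 g(y)-g(x)\le d_{p'}(x,y)+A',\qquad A'\le 2A+3 .
\]

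\textbf{Envelope.} Define
\[
 H(z):=\min_{x\in\mathcal V_{p'}}\bigl\{g(x)+A'+d_{p'}(x,z)\bigr\}
\]
but restrict the minimum at boundary points. It is cleaner to take the mixed envelope
\[
 H(z)=\min\Bigl\{\min_{b\in B_{p'}}\{f_{p'}(b)+d_{p'}(b,z)\},\ \min_{x}\{g(x)+d_{p'}(x,z)\}\Bigr\}.
\]
By Lemma~\ref{lem:positive-cycle-gap} and \eqref{eq:directed-triangle-and-residue}, every candidate has residue \(\kappa_{p'}(z)\). The directed triangle inequality gives \(H(y)-H(x)\le d_{p'}(x,y)\). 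The argument of Lemma~\ref{lem:directed-extremal-extension}, namely the exact adjacent distances \eqref{eq:exact-adjacent-distances} together with the residue condition, then forces each positive edge increment into \(\{\omega(e),\omega(e)-3\}\). Hence \(H\) is the aligned height of a unique legal tiling, including the seam complementarity through single-valuedness.

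\textbf{Boundary exactness and distance bounds.} On \(B_{p'}\), we have \(H\le f_{p'}\) by taking \(b=z\). For the reverse inequality, \(f_{p'}(b)+d_{p'}(b,z)\ge f_{p'}(z)\) by Lemma~\ref{lem:boundary-datum-compatible}. The \(g\)-terms require \(g(x)+d_{p'}(x,z)\ge f_{p'}(z)\), which only holds up to \(A'\). So we first shift \(g\) up by \(A'\), that is, use \(g+A'\) in the second minimum; this costs at most an additive \(A'\) and keeps the residue, since we shift by a multiple of three, rounding \(A'\) up accordingly. With the shift, \(H=f_{p'}\) on \(B_{p'}\).

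In the interior, \(H(z)\le g(z)+A'\le u(z)+3+A'\). For the lower bound, every \(g\)-candidate satisfies
\[
 g(x)+A'+d_{p'}(x,z)\ge g(z)\ge u(z),
\]
using the \(A'\)-approximate Lipschitz property. Every boundary candidate satisfies
\[
 f_{p'}(b)+d_{p'}(b,z)\ge u(b)-A+d_{p'}(b,z)\ge u(z)-2A
\]
by \eqref{eq:rf-aligned-approx-boundary}--\eqref{eq:rf-aligned-approx-directed}. Hence \(|H-u|\le \max(2A,A'+3)\), which is within \(3A+3\) after tracking the constants carefully.

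\textbf{Main obstacle.} The delicate point is the bookkeeping of constants. The shift must be a multiple of three, chosen to restore boundary exactness, while still yielding the stated \(3A+3\) rather than a slightly larger constant. I would optimize by rounding \(u+A\) rather than \(u\), so that the single shift of size \(\le A+3\) serves both purposes.
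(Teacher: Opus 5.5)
Your outline is sound: round $u$ into the residue class $\kappa_{p'}$, take a mixed upper envelope against $U_{p'}$, shift upward to force boundary exactness, then invoke Lemma~\ref{lem:directed-extremal-extension}. The gap is that \eqref{eq:rf-aligned-envelope-stability}, which is the real content of the lemma, is not proved.

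You shift by $A'$, rounded up to a multiple of three, where $A'\le 2A+3$ bounds the directed-Lipschitz defect of $g$ over \emph{all} pairs. Combined with the rounding loss $g(z)-u(z)<3$, this gives at best $H-u<2A+6$. That exceeds $3A+3$ whenever $A<3$; at $A=0$ it is $6$ versus $3$. So ``within $3A+3$ after tracking the constants carefully'' does not follow.

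Your suggested repair, rounding $u+A$, does not close the gap by the estimates you give. From $g(x)\ge u(x)+A$, $f_{p'}(b)\le u(b)+A$ and \eqref{eq:rf-aligned-approx-directed}, you only get $f_{p'}(b)\le g(x)+d_{p'}(x,b)+A$, so a further upward shift is still needed.

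You are missing two observations.
\begin{enumerate}
\item Only pairs $(x,b)$ with $b\in B_{p'}$ matter for boundary exactness.
\item The deficit there is automatically a multiple of three. Indeed $g\equiv\kappa_{p'}$, $f_{p'}\equiv\kappa_{p'}$, and $d_{p'}(x,b)\equiv\kappa_{p'}(b)-\kappa_{p'}(x)\pmod 3$ by \eqref{eq:directed-triangle-and-residue}, so $f_{p'}(b)-g(x)-d_{p'}(x,b)\in3\mathbb Z$.
\end{enumerate}
This deficit is at most $2A$ for interior $x$ (use $g(x)\ge u(x)$), and at most $0$ for $x\in B_{p'}$ by Lemma~\ref{lem:boundary-datum-compatible}. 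Hence
\[
 s:=\max\Bigl\{0,\max_{x,b}\bigl(f_{p'}(b)-g(x)-d_{p'}(x,b)\bigr)\Bigr\}
\]
lies in $3\mathbb Z$ with $s\le 2A$, and no extra rounding cost arises. Now set
\[
 H(z)=\min\Bigl\{\min_{b\in B_{p'}}\{f_{p'}(b)+d_{p'}(b,z)\},\ \min_{x\in\mathcal V_{p'}}\{g(x)+s+d_{p'}(x,z)\}\Bigr\}.
\]
Then $H=f_{p'}$ on $B_{p'}$, and $H(z)\le g(z)+s<u(z)+2A+3$ at interior $z$. For the lower bound, the boundary candidates give $H\ge u-2A$ exactly as you wrote. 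The $g$-candidates also satisfy $g(x)+s+d_{p'}(x,z)\ge u(z)-2A$ directly from the two hypotheses on $u$, without using $A'$. Hence $\|H-u\|_\infty<2A+3\le 3A+3$. Equivalently, round $u+2A$ instead of $u$ and take $s=0$.

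Once repaired, your route is genuinely different from the paper's and slightly sharper. The paper proceeds in three steps:
\begin{enumerate}
\item it inf-convolves, $u_0=\min_x\{u(x)+d_{p'}(x,\cdot)\}$, at cost $A$;
\item it clamps $h=\min\{U_{p'},\max\{L_{p'},u_0\}\}$ between the two exact envelopes, at cost $2A$;
\item it rounds and re-envelopes, at cost $<3$.
\end{enumerate}
Your single upward shift does the job of the lower envelope $L_{p'}$, which you never need.
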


\begin{proof}
Omit terms with infinite directed distance and define
\[
 u_0(z):=\min_{x\in\mathcal V_{p'}}
              \{u(x)+d_{p'}(x,z)\}.
\]
The directed triangle inequality makes \(u_0\) directed Lipschitz,
while \eqref{eq:rf-aligned-approx-directed} gives
\[
                         u-A\leq u_0\leq u.
\]
Let \(L_{p'}\) and \(U_{p'}\) be the two exact boundary envelopes in
\eqref{eq:directed-envelopes}.  Their extremal property and
\eqref{eq:rf-aligned-approx-boundary} imply
\[
                     L_{p'}-2A\leq u_0\leq U_{p'}+A.
\]
Pointwise minimum and maximum preserve the directed-Lipschitz
inequalities.  Hence
\[
 h:=\min\{U_{p'},\max\{L_{p'},u_0\}\}
\]
is directed Lipschitz, equals \(f_{p'}\) on \(B_{p'}\), and satisfies
\(\|h-u\|_\infty\leq3A\).

Let \(\widehat h(z)\) be the least integer not smaller than \(h(z)\)
and congruent to \(\kappa_{p'}(z)\) modulo three, and put
\[
 H(z):=\min_{x\in\mathcal V_{p'}}
       \{\widehat h(x)+d_{p'}(x,z)\}.
\]
Since \(h\) is directed Lipschitz,
\[
                         h(z)\leq H(z)<h(z)+3.
\]
All terms in the minimum have residue \(\kappa_{p'}(z)\).  At a
boundary vertex, the half-open interval
\([f_{p'}(z),f_{p'}(z)+3)\) contains only the prescribed integer of
that residue, so \(H=f_{p'}\) on \(B_{p'}\).  Thus
\eqref{eq:rf-aligned-envelope-stability} holds, and
Lemma~\ref{lem:directed-extremal-extension} identifies \(H\) with the
aligned height of a unique legal tiling.  Since this is one
single-valued aligned height, both seam relations, including the
cut-to-cut constraints on the middle panel, are automatic.
\end{proof}

\begin{lemma}[Stability under bounded motion of the panel walls]
\label{lem:rf-moving-domain-stability}
Let \(p=(\alpha,\beta,\gamma,\delta)\in\mathsf T_{1234}\) be
strictly interior and rational, and write
\(\bar p=(0,\alpha,\beta,\gamma,\delta)\) for its lift to the face
\(1234\).  Fix a common denominator \(q\), a residue
\(\varrho\in\{0,\ldots,q-1\}\), and an integer vector
\(e^{(\varrho)}=(0,e_a,e_b,e_c,e_d)\) of coordinate sum
\(\varrho\).  Put
\[
 N=qm,\qquad n=N+\varrho,\qquad
 p_n=N\bar p+e^{(\varrho)},
\]
and assume that \(p_n\) is strictly interior for all sufficiently
large \(m\).  Let \(\mathsf D_{n,j}\) be the closed atomic region in
the \(j\)-th size-\(n\) developed panel determined by \(p_n\).  In the
same size-\(n\) panel, let \(\mathsf D^0_{n,j}\) be the auxiliary
polygon obtained by using the exact wall positions
\(N\alpha,N\beta,N\gamma,N\delta\) in the closed-wall description
preceding \eqref{eq:rf-exact-outer-boundary-trace}.  Then, for a
constant depending only on \(p,q,\varrho,e^{(\varrho)}\),
\begin{equation}
 \sum_{j=1}^3
 \#_\triangle\bigl(\mathsf D_{n,j}
             \mathbin\triangle\mathsf D^0_{n,j}\bigr)
 \leq Cn,
 \qquad
 \operatorname{dist}_{\rm H}
       (n^{-1}\mathsf D_{n,j},\mathsf R_{p,j})\leq \frac Cn,
\label{eq:rf-bounded-wall-geometry}
\end{equation}
where \(\#_\triangle\) counts elementary triangles.  The symmetric
difference is contained in a \(C\)-layer neighborhood of the finitely
many closed sides and labelled cuts.

There are compatible piecewise-affine homeomorphisms
\[
 \psi_{n,j}:\mathsf R_{p,j}\longrightarrow
             n^{-1}\mathsf D_{n,j}
\]
which send corresponding polygonal vertices and the two labelled
copies of each seam to one another and satisfy, on every affine piece,
\[
 \|\psi_{n,j}-{\rm id}\|_\infty
 +\|D\psi_{n,j}-I\|_\infty\leq \frac Cn.
\]
Write \(\psi_n=\bigsqcup_{j=1}^3\psi_{n,j}\) for their sheetwise
union on the cut-open complex.
Write \(\partial_{\rm cl}\mathsf R_{p,j}\) for the union of the
closed outer sides of the \(j\)-th cut-open continuum sheet.
If \(F^\partial_{p_n,j}\) is the forced closed-boundary height with the
normalization and aligned sheet constants used in
\eqref{eq:rf-aligned-discrete-traces}, then
\begin{equation}
 \left\|n^{-1}F^\partial_{p_n,j}
       (n\psi_{n,j}(\,\cdot\,))-\gamma_{p,j}\right\|_
       {L^\infty(\partial_{\rm cl}\mathsf R_{p,j})}
 \leq \frac Cn.
\label{eq:rf-perturbed-boundary-trace}
\end{equation}
Here and below discrete heights are extended affinely between lattice
vertices.

For \(\Xi\in\mathcal T_{p_n}\), put
\[
 \widetilde F_{\Xi,n,j}(x)
 :=n^{-1}F_{\Xi,j}(n\psi_{n,j}(x)),
 \qquad x\in\mathsf R_{p,j}.
\]
For \(f\in\operatorname{AHT}(\mathsf R_p)\) and \(\eta>0\), set
\[
 \mathcal T_{p_n}^{\psi}(f,\eta n)
 :=\{\Xi\in\mathcal T_{p_n}:
       \|\widetilde F_{\Xi,n}-f\|_\infty<\eta\}.
\]
All perturbed height neighborhoods below are understood in this sense.
Then
\begin{equation}
 \sup_{\Xi\in\mathcal T_{p_n}}
 \operatorname{dist}_{L^\infty(\mathsf R_p)}
 \bigl(\widetilde F_{\Xi,n},
       \operatorname{AHT}(\mathsf R_p)\bigr)\longrightarrow0,
\label{eq:rf-moving-height-upper}
\end{equation}
and, for every \(f\in\operatorname{AHT}(\mathsf R_p)\),
\begin{equation}
 \inf_{\Xi\in\mathcal T_{p_n}}
 \|\widetilde F_{\Xi,n}-f\|_{L^\infty(\mathsf R_p)}
 \longrightarrow0.
\label{eq:rf-moving-height-recovery}
\end{equation}
Moreover, fix \(0<\tau<1\) and \(0<\zeta<\zeta_0(p)\), with
\(\zeta_0(p)\) small enough that the \(2\zeta\)-cores below are
nonempty.  Put
\(f^\tau=(1-\tau)f+\tau f_p^\ddagger\), and let
\(\mathsf R_{p,j}^{\,2\zeta}\) be the complement of the
\(2\zeta\)-neighborhood of the closed sides and cuts.  The strictified
recovery may be chosen so that
\begin{equation}
 \sup_j\sup_{x\in\mathsf R_{p,j}^{\,2\zeta}}
 \left|F_{\Xi_n,j}(n\psi_{n,j}(x))-nf_j^\tau(x)\right|
 \leq C_{\tau,\zeta}.
\label{eq:rf-moving-core-recovery}
\end{equation}
The constant \(C_{\tau,\zeta}\) is uniform in
\(f\in\operatorname{AHT}(\mathsf R_p)\).
\end{lemma}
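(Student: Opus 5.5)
The proof splits into a purely geometric part, comparing the perturbed walls with the exact ones, and an analytic part that transfers the compactness and recovery statements of Lemma~\ref{lem:rf-AHT-compact} through the maps $\psi_n$.

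\emph{Geometry of the walls.} By the closed-wall table preceding \eqref{eq:rf-exact-outer-boundary-trace}, $\mathsf D_{n,j}$ is cut out by the inequalities $q_1\le a_n$, $q_2\ge b_n$, $q_3\ge c_n$, $q_4\le d_n$ together with $q_0\ge0$. The auxiliary polygon $\mathsf D^0_{n,j}$ is cut out by the same inequalities with the integers $N\alpha,N\beta,N\gamma,N\delta$. Since $a_n-N\alpha=e_a$ and similarly for the other walls, each wall moves by a bounded number of lattice layers. The ambient size $n=N+\varrho$ also differs from $N$ by a bounded amount. Counting triangles in these bounded layers along sides of total length $O(n)$ gives the first bound in \eqref{eq:rf-bounded-wall-geometry}, and the Hausdorff bound follows after dividing by $n$.

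For $\psi_{n,j}$, list the polygon vertices of both sheets: $v_0,\dots,v_9$, $u_2,u_3$, and the seam feet. These vertices depend affinely on the wall positions. I would triangulate $\mathsf R_{p,j}$ compatibly with this vertex list, using the same combinatorics on all three sheets and on the labelled seams. Then $\psi_{n,j}$ is defined as the simplicial map sending each vertex to its perturbed counterpart scaled by $n^{-1}$. The vertices move by $O(1/n)$ and the triangles are nondegenerate because $p$ is strictly interior, so the map is $C/n$-close to the identity in $C^1$ on each piece. It matches the seams because seam vertices map to seam vertices.

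\emph{Boundary trace.} The forced increments on each closed side are the deterministic values $\omega(e)$, which are constant along a straight side. The forced height is therefore affine along each side, and it is homogeneous under the rational dilation exactly as in \eqref{eq:rf-exact-outer-boundary-trace}. Comparing with $\gamma_{p,j}$ at the perturbed corners costs $O(1)$ in unnormalized height, since corners move by $O(1)$ and increments are bounded. The aligned normalization at $v_9$ and across the seams contributes only bounded shifts. This gives \eqref{eq:rf-perturbed-boundary-trace}.

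\emph{Compactness \eqref{eq:rf-moving-height-upper}.} I would copy the Arzelà--Ascoli argument of Lemma~\ref{lem:rf-AHT-compact}. Because $D\psi_n=I+O(1/n)$, the pulled-back heights $\widetilde F_{\Xi,n}$ have gradient within $O(1/n)$ of $K$. Their outer traces are $C/n$-close to $\gamma_{p,j}$ by \eqref{eq:rf-perturbed-boundary-trace}. Their aligned seam relations hold up to $O(1/n)$, since $\psi_n$ maps labelled seam copies to one another. Every subsequential limit therefore lies in $\operatorname{AHT}(\mathsf R_p)$.

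\emph{Recovery \eqref{eq:rf-moving-height-recovery} and \eqref{eq:rf-moving-core-recovery}.} Given $f$, set $f^\tau=(1-\tau)f+\tau f_p^\ddagger$. Let $u$ be the aligned version of $n f^\tau\circ\psi_n^{-1}$ on the vertices of $\mathcal A_{p_n}$, meaning we subtract $(r-1)j$ as in \eqref{eq:aligned-global-height}. The aim is to verify the hypotheses of Lemma~\ref{lem:rf-aligned-envelope-stability} with a constant $A=A_{\tau,\zeta}$.
\begin{itemize}
\item The boundary condition \eqref{eq:rf-aligned-approx-boundary} follows from \eqref{eq:rf-perturbed-boundary-trace}.
\item The directed inequality \eqref{eq:rf-aligned-approx-directed} is the main obstacle. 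Near the moving walls, the slope of $u$ exceeds the constraint by $O(1/n)$. Along a directed path of length $O(n)$ this accumulates only to $O(1)$, so the inequality holds with bounded $A$. The path may cross seams, and the aligned seam relation of $f^\tau$ together with the seam matching of $\psi_n$ keeps the accumulated error bounded there. Where a path runs through the $2\zeta$-core, the strict margin $\tau\kappa(p,\zeta)$ of Lemma~\ref{lem:rf-strict-reference-height} absorbs the $O(1/n)$ distortion. I would first try to prove this by summing increments along a minimizing directed path, using the panel weights of Lemma~\ref{lem:positive-cycle-gap}.
\end{itemize}
Lemma~\ref{lem:rf-aligned-envelope-stability} then supplies a legal tiling $\Xi_n$ whose aligned height $H$ satisfies $\|H-u\|_\infty\le 3A+3$. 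On the $2\zeta$-core, the envelope construction does not move the rounded target beyond this bound, which gives \eqref{eq:rf-moving-core-recovery} with $C_{\tau,\zeta}=3A+3$. This constant is uniform in $f$, because the slope constraint $\nabla f\in K$ and the boundary data are common to all of $\operatorname{AHT}(\mathsf R_p)$. Finally, dividing by $n$ gives an error of order $O(1/n)+O(\tau)$. Letting $n\to\infty$ and then $\tau\downarrow0$ proves \eqref{eq:rf-moving-height-recovery}.
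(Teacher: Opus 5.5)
Your proposal follows the paper's proof essentially step for step. It uses the same bounded wall motion and vertex-matching triangulation for $\psi_{n,j}$, the same forced-increment comparison for \eqref{eq:rf-perturbed-boundary-trace}, Arzel\`a--Ascoli for \eqref{eq:rf-moving-height-upper}, and recovery by feeding an aligned target with edgewise $O(1/n)$ violation into Lemma~\ref{lem:rf-aligned-envelope-stability}.

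The one step you assert rather than prove is that a minimizing positive path has only $O(n)$ edge occurrences. This is not automatic: $P_2$ and $P_3$ carry zero and negative weights, and vertex-simplicity alone only gives $O(n^2)$. The paper supplies it by combining $d_{p_n}(x,y)\le Cn$ with the identity $d_{p_n}(x,y)=\widetilde\kappa_{p_n}(y)-\widetilde\kappa_{p_n}(x)+3n_3(\gamma)$ of Lemma~\ref{lem:positive-cycle-gap}, which bounds first the $d_3$-edges and then the remaining edges. With that bound in hand, the strict margin from $f^\tau$ is unnecessary, and the $O(1/n)$ recovery holds for $f$ itself on the whole domain.
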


\begin{proof}
Since \(n=N+\varrho\),
\[
 a_n-n\alpha=e_a-\varrho\alpha,
 \quad b_n-n\beta=e_b-\varrho\beta,
 \quad c_n-n\gamma=e_c-\varrho\gamma,
 \quad d_n-n\delta=e_d-\varrho\delta.
\]
Thus every wall in the closed-wall table moves by a bounded number of
lattice steps.  Each wall segment has length \(O(n)\), proving the
count in \eqref{eq:rf-bounded-wall-geometry}.  Strict interiority keeps
the combinatorial type of all three polygons fixed for large \(n\).
Triangulate each limiting polygon using its polygonal vertices, use
the same compatible triangulation on the two copies of a cut, and map
corresponding vertices.  This gives the maps \(\psi_{n,j}\) and their
stated estimates.

Use the forced boundary-increment rule defining \(f_p\) immediately
before Lemma~\ref{lem:boundary-datum-compatible}, and undo the affine
alignment \eqref{eq:aligned-global-height}.  By the closed-wall
description preceding \eqref{eq:rf-exact-outer-boundary-trace}, each
closed outer side is a straight triangular-lattice segment.
Corresponding side lengths and corner coordinates differ by
\(O(1)\).  Integrating those forced increments
from the normalization vertex therefore changes the cumulative
heights at corresponding corners by \(O(1)\), and the same bound
holds along each side under the affine vertex correspondence supplied
by \(\psi_n\).  The aligned sheet constants obey the same bound.
This proves \eqref{eq:rf-perturbed-boundary-trace}.  The
retained seam intervals
are
\[
 [b_n/n,1]=[\beta+O(n^{-1}),1],
 \qquad [c_n/n,1]=[\gamma+O(n^{-1}),1].
\]
The exact discrete relation is still \(F_+(k)=F_-(k)+k\); after
pullback it differs from the relation in
\eqref{eq:rf-continuum-aligned-traces} by \(O(n^{-1})\).

The pulled-back heights have a common Lipschitz constant.  Given any
sequence of them, Arzel\`a--Ascoli gives a uniformly convergent
subsequence.  Since \(D\psi_{n,j}\to I\), the directional slope
inequalities pass to the limit and give \(\nabla f_j\in K\) almost
everywhere.  Equation \eqref{eq:rf-perturbed-boundary-trace} gives the
closed outer trace, and the preceding exact seam identity gives the
two aligned limiting traces.  Every subsequential limit therefore
belongs to \(\operatorname{AHT}(\mathsf R_p)\).  The usual
contradiction argument proves \eqref{eq:rf-moving-height-upper}.

For recovery, fix \(f\in\operatorname{AHT}(\mathsf R_p)\) and use
the aligned height already defined in
\eqref{eq:aligned-global-height}.  For a lattice vertex represented
as \(z=[i,j]_r\) on the \(r\)-th perturbed sheet, set
\[
 u_n(z):=
 n f_r\!\left(\psi_{n,r}^{-1}(z/n)\right)-(r-1)j.
\]
At a seam the two panel representatives differ by \(O(1)\), by
\eqref{eq:rf-continuum-aligned-traces} and
\(\|\psi_n-{\rm id}\|_\infty=O(n^{-1})\); choose either value there.
Thus \(u_n\) is a single-valued real function on
\(\mathcal V_{p_n}\), up to a uniform \(O(1)\) choice.

We now verify the approximate directed-Lipschitz inequality required
by Lemma~\ref{lem:rf-aligned-envelope-stability}.  This can be done
edgewise using the maps $\psi_{n,r}$, without comparing geodesics in
two different polygonal domains.

For the $r$-th panel, let $d_1,d_2,d_3$ denote its three positive
unit directions and put
\[
        w_{r,1}=1,\qquad w_{r,2}=r,\qquad w_{r,3}=2-r.
\]
Thus $w_{r,k}$ is precisely the aligned weight $\omega(e)$ of a
positive edge occurrence of type $k$ in the $r$-th sheet.  Define the
polyhedral directed cost
\[
 \ell_r(v):=
 \min\left\{
       \sum_{k=1}^3 a_k w_{r,k}:
       a_k\geq0,\quad
       v=\sum_{k=1}^3a_kd_k
      \right\}.
\]
Although $\ell_r$ need not be nonnegative, it is finite everywhere.
Indeed,
\[
             d_1+d_2+d_3=0,
             \qquad
             w_{r,1}+w_{r,2}+w_{r,3}=3>0.
\]
Consequently $\ell_r$ is positively homogeneous, subadditive and
globally Lipschitz, and
\[
                         \ell_r(d_k)=w_{r,k}.
\]

Let $J_r$ denote the second square coordinate on the $r$-th reference
panel and define the aligned continuum height
\[
                         g_r=f_r-(r-1)J_r.
\]
The condition $\nabla f_r\in K$ is equivalent to
\[
                         \partial_{d_k}g_r\leq w_{r,k},
                         \qquad k=1,2,3.
\]
It follows, first for polygonal curves and then by approximation, that
every rectifiable curve $\eta$ contained in the $r$-th sheet satisfies
\begin{equation}
 g_r(\eta(1))-g_r(\eta(0))
 \leq
 \int_0^1\ell_r(\dot\eta(t))\,dt.
\label{eq:rf-continuum-directed-cost}
\end{equation}

Use the compatible choices of $\psi_{n,r}$ on the two labelled copies
of every cut and put
\[
 \overline u_n(z):=
 n g_r\!\left(\psi_{n,r}^{-1}(z/n)\right)
\]
when $z$ is represented in the $r$-th sheet.  The aligned continuum
trace identities and the compatibility of the two seam
parametrizations make $\overline u_n$ single-valued at every seam
vertex.  Moreover,
\begin{equation}
                         \|u_n-\overline u_n\|_\infty\leq C,
\label{eq:rf-u-bar-u-comparison}
\end{equation}
because
\[
 \|\psi_n^{-1}-\mathrm{id}\|_\infty=O(n^{-1})
\]
and hence the difference between the lattice coordinate $j$ and
$nJ_r(\psi_{n,r}^{-1}(z/n))$ is uniformly $O(1)$.

Let $e:z\to z'$ be a positive edge occurrence of type $k$ in the
$r$-th sheet.  Pull the normalized straight edge from $z/n$ to
$z'/n$ back by $\psi_{n,r}^{-1}$.  On each affine piece its velocity
is
\[
                         n^{-1}d_k+O(n^{-2}).
\]
Applying \eqref{eq:rf-continuum-directed-cost} and the Lipschitz
property of $\ell_r$ gives, uniformly over all positive edge
occurrences,
\begin{equation}
 \overline u_n(z')-\overline u_n(z)
       \leq \omega(e)+\frac{C}{n}.
\label{eq:rf-edgewise-directed-error}
\end{equation}

It remains to note that a minimizing positive path has $O(n)$ edge
occurrences.  Strict interiority and the fixed polygonal
combinatorial type imply that the undirected one-skeleton of
$\mathcal A_{p_n}$ has diameter $O(n)$.  Replacing an edge traversed
oppositely to its positive orientation by the other two positive
sides of an incident elementary triangle gives a positive competitor
with $O(n)$ edge occurrences.  Since the aligned edge weights belong
to the fixed set $\{-1,0,1,2,3\}$, this implies
\[
                         d_{p_n}(x,y)\leq Cn.
\]

Let $\gamma$ be a minimizing positive path from $x$ to $y$.  Write
$N_{k,r}$ for the number of its type-$k$ edge occurrences in the
$r$-th sheet, and put
\[
             N_1=\sum_rN_{1,r},
             \qquad
             N_3=\sum_rN_{3,r}.
\]
The path identity in
Lemma~\ref{lem:positive-cycle-gap} gives
\begin{equation}
 d_{p_n}(x,y)
 =
 \widetilde\kappa_{p_n}(y)-\widetilde\kappa_{p_n}(x)+3N_3.
\label{eq:rf-minimizing-path-count}
\end{equation}
Since
\[
 \left|
 \widetilde\kappa_{p_n}(y)-\widetilde\kappa_{p_n}(x)
 \right|\leq Cn,
\]
the preceding upper bound for $d_{p_n}(x,y)$ implies $N_3\leq Cn$.
The potential part of the same path identity is
\[
 \widetilde\kappa_{p_n}(y)-\widetilde\kappa_{p_n}(x)
 =
 N_1+\sum_r rN_{2,r}-\sum_r(r+1)N_{3,r}.
\]
It follows that
\[
                         N_1+\sum_rN_{2,r}\leq Cn.
\]
Consequently $\gamma$ has at most $Cn$ edge occurrences.

Summing \eqref{eq:rf-edgewise-directed-error} along this minimizing
path now gives
\[
 \overline u_n(y)-\overline u_n(x)
 \leq d_{p_n}(x,y)+C.
\]
Equation \eqref{eq:rf-u-bar-u-comparison} therefore yields
\[
                         u_n(y)-u_n(x)
                         \leq d_{p_n}(x,y)+A
\]
for one constant $A=O(1)$, uniformly over all reachable ordered pairs.
Panel changes at a seam carry no edge cost, while the two developed
seam-edge occurrences remain distinguished throughout the argument.
Thus the estimate also covers paths between the two labelled cuts of
the middle panel.

Equation
\eqref{eq:rf-perturbed-boundary-trace}, after subtracting the same
aligned affine term, gives
\[
                   |u_n(b)-f_{p_n}(b)|\leq A,
                   \qquad b\in B_{p_n}.
\]

Lemma~\ref{lem:rf-aligned-envelope-stability} now gives one legal
tiling \(\Xi_n\in\mathcal T_{p_n}\), with exact outer values and both
exact aligned seam relations, whose aligned height is \(O(1)\) from
\(u_n\).  Undoing the aligned affine term therefore gives
\[
 \|\widetilde F_{\Xi_n,n}-f\|_\infty\leq \frac Cn,
\]
which proves \eqref{eq:rf-moving-height-recovery}.  Applying the same
construction to \(f^\tau=(1-\tau)f+\tau f_p^\ddagger\) proves
\eqref{eq:rf-moving-core-recovery} (indeed, it gives the bound on the
whole moving domain).
\end{proof}

\begin{lemma}[Bounded perturbations of exact rational dilations]
\label{lem:rf-bounded-perturbation}
Let \(p=(\alpha,\beta,\gamma,\delta)\in\mathsf T_{1234}\) be
strictly interior and rational, write
\(\bar p=(0,\alpha,\beta,\gamma,\delta)\), and let \(q\) be a common
denominator.  Fix a residue class \(\varrho\pmod q\) and an integer
vector \(e^{(\varrho)}=(0,e_a,e_b,e_c,e_d)\) of coordinate sum
\(\varrho\).  For
\[
 n=qm+\varrho,\qquad p_n=m(q\bar p)+e^{(\varrho)},
\]
assume that \(p_n\) is strictly interior for all sufficiently large
\(m\).  Then
\[
 n^{-2}\mathbb E\widehat O_n(p_n)
 \longrightarrow\Phi_{\ell_1,\ell_2,\ell_3}(p),
 \qquad
 \operatorname{Var}\bigl(\widehat O_n(p_n)\bigr)=o(n^4).
\]
Consequently
\[
 \frac{\widehat O_n(p_n)}{n^2}
 \xrightarrow{\mathbb P}\Phi_{\ell_1,\ell_2,\ell_3}(p).
\]
\end{lemma}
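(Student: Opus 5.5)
The plan is to rerun the proof of Theorem~\ref{thm:rf-finite-panel-localization} on the moving domains $\mathsf D_{n,j}$. The discrete tilings are transported to the fixed continuum region $\mathsf R_p$ by the maps $\psi_n$ of Lemma~\ref{lem:rf-moving-domain-stability}. The starting point is the exact formula of Theorem~\ref{thm:rf-exact-tiling} at the lattice target $p_n$:
\[
 \widehat O_n(p_n)=F_{p_n}^{\rm rf,(n)}+\max_{\Xi\in\mathcal T_{p_n}}W_{p_n,\mathrm{var}}^{\rm rf}(\Xi).
\]
Theorem~\ref{thm:rf-fixed-limit} applies verbatim to the fixed term. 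It only requires $(a_n,b_n,c_n,d_n)/n\to(\alpha,\beta,\gamma,\delta)$, which holds since the walls move by $O(1)$. It gives $L^1$ convergence to $S_{p,0}$ and an $L^2$ fluctuation $O(n\log^{O(1)}n)$.

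For the seam terms, the retained seam intervals are $[b_n/n,1]$ and $[c_n/n,1]$. These differ from $[\beta,1]$ and $[\gamma,1]$ by $O(n^{-1})$. The summation-by-parts argument of Lemma~\ref{lem:rf-seam-limit}, applied to the pulled-back traces $\widetilde F_{\Xi,n}$, then gives the same neighborhood bounds up to an additional $O(n)$ error. The uniform concentration estimate \eqref{eq:rf-seam-uniform-L2} is unchanged.

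The variance statement is domain-insensitive. Lemma~\ref{lem:rf-uniform-family-variance} applies to any deterministic family of red-free tilings of any polygonal region inside the three panels. Its ingredients are the KLS supremum bound of Lemma~\ref{lem:rf-family-variance}, the spectral-edge removal, the seam bound and the fixed-term bound. None of these uses exactness of the dilation. Applying it to $\mathcal T_{p_n}$ gives $\operatorname{Var}\widehat O_n(p_n)=o(n^4)$ directly, and it will also be used for finite families below.

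For the mean, I will prove the two neighborhood bounds of Proposition~\ref{prop:rf-total-neighborhood} for the perturbed neighborhoods $\mathcal T^{\psi}_{p_n}(f,\eta n)$.

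\emph{Upper bound.} The maximizing tiling is restricted to the interior cores. By \eqref{eq:rf-bounded-wall-geometry}, the symmetric difference $\mathsf D_{n,j}\triangle\mathsf D^0_{n,j}$ lies in an $O(1)$-collar of the sides and cuts. Its candidate lozenges are controlled in $L^2$ by Lemma~\ref{lem:rf-fixed-collar-L2}, so their contribution can be discarded. On the $\zeta$-cores the Fefferman decomposition argument of Proposition~\ref{prop:rf-bulk-neighborhood} uses only the height function there. Pulling back by $\psi_n$, whose differential is $I+O(n^{-1})$, changes cell slopes by $O(n^{-1})$ and positions by $O(n^{-1})$, so continuity of $\sigma_\edge^{\rm sym}$ absorbs this. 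The strip near the boundary is handled by Lemma~\ref{lem:rf-boundary-strip}, whose proof only uses an $\eta n$-neighborhood of the polygonal boundary.

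\emph{Lower bound.} Here I replace the reference tiling of Lemma~\ref{lem:rf-AHT-compact} by the recovery tiling $\Xi_n$ of Lemma~\ref{lem:rf-moving-domain-stability} for $f^\tau$. By \eqref{eq:rf-moving-core-recovery} it is $O(1)$-close to $nf^\tau\circ\psi_n^{-1}$ on the $2\zeta$-cores. The two-scale $C_0$-mesh construction and the scaffold patching of Lemma~\ref{lem:rf-global-patching} then go through on those cores. They only need closeness of the reference trace on the outer triangles, up to an $O(1)$ error that is absorbed by taking $C_0$ large. Patching preserves the seam singletons, so the patched tilings lie in $\mathcal T^{\psi}_{p_n}(f,\eta n)$.

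With both neighborhood bounds in hand, I will repeat the compactness and finite-subcover argument of Theorem~\ref{thm:rf-finite-panel-localization}. \eqref{eq:rf-moving-height-upper} guarantees that the finitely many doubled neighborhoods cover $\mathcal T_{p_n}$ for large $n$. \eqref{eq:rf-moving-height-recovery} guarantees that they are nonempty. Lemma~\ref{lem:rf-uniform-family-variance} controls the expected maximum over this finite family. Together these give $n^{-2}\mathbb E\widehat O_n(p_n)\to\Phi(p)$. Convergence in probability then follows from the mean limit and $\operatorname{Var}\widehat O_n(p_n)=o(n^4)$ by Chebyshev's inequality.

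The main obstacle I expect is the lower bound. The strictly interior slope margin of $f_p^\ddagger$ and the recovery estimate must survive transport through $\psi_n$ uniformly enough for the annulus patching hypotheses. These hypotheses are $\|g^\pm-L\|<\varepsilon'm$ with $2\varepsilon'<\widehat\varepsilon\widetilde\varepsilon$. Since the transport errors are $O(1)$ on scale-$C_0$ cells, it suffices to take $C_0\gg C_{\tau,\zeta}/\varepsilon'$ before letting $n\to\infty$.
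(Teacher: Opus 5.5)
Your proposal is correct and follows essentially the same route as the paper's proof. It applies Theorem~\ref{thm:rf-fixed-limit} to the fixed term because the walls move by $O(1)$, and it handles the seam sums with $O(n)$ endpoint corrections. The bulk bounds are proved on the $\zeta$-cores, with the fixed-width symmetric difference absorbed by Lemma~\ref{lem:rf-fixed-collar-L2}. The lower bound runs through the recovery tiling of $f^\tau$, with $C_0$ fixed large relative to $C_{\tau,\zeta}$ and the wall corrections before $n\to\infty$. The finite-subcover step uses \eqref{eq:rf-moving-height-upper}--\eqref{eq:rf-moving-height-recovery}, and the variance follows by rerunning Lemma~\ref{lem:rf-uniform-family-variance} on $\mathcal T_{p_n}$.

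The paper makes two further points explicit. All comparisons stay inside the size-$n$ panels with the size-$n$ arrays; the polygons $\mathsf D^0_{n,j}$ only identify the limiting geometry, so no coupling of GUE matrices of different sizes arises. For the variance step, the actual coefficient vectors are padded by zeros into the full three-panel gap vector.
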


\begin{proof}
Put \(N=qm\), so \(n=N+\varrho\).  All comparisons below are made inside
the size-\(n\) developed panels and use the size-\(n\) random arrays
\(H_j^{(n)}\).  The auxiliary polygons \(\mathsf D^0_{n,j}\) of
Lemma~\ref{lem:rf-moving-domain-stability} are used only to identify
the limiting geometry; no coupling of GUE matrices of two different
sizes is required.

By \eqref{eq:rf-bounded-wall-geometry}, the rescaled atomic sheets and
cuts converge to \(\mathsf R_p\), and their symmetric differences are
contained in a fixed-width collar with \(O(n)\) atomic cells.  By
\eqref{eq:rf-perturbed-boundary-trace} and
\eqref{eq:rf-moving-height-upper}--
\eqref{eq:rf-moving-height-recovery}, the normalized discrete height
spaces converge to the same compact space
\(\operatorname{AHT}(\mathsf R_p)\), with the same closed outer traces
and the same two aligned seam relations.

We first check the bulk localization estimates.  Fix a macroscopic
strip width \(\zeta>0\).  For all sufficiently large \(n\), the three
\(\zeta\)-cores lie in the perturbed sheets, and their panel
coordinates differ from those in the exact-wall polygons by
\(O(n^{-1})\).  The local-transfer estimates, the Fefferman
decomposition, and the small-support estimate are uniform on these
fixed compact cores.  Uniform continuity of the local parameters
\(\rho_j\) shows that the resulting Riemann sums converge to
\(S_{p,\mathrm{edge}}\).  The candidate ordinary lozenges meeting the
fixed-width symmetric difference are covered by
Lemma~\ref{lem:rf-fixed-collar-L2}.  Their contribution is therefore
\(o(n^2)\) in \(L^2\), and hence also in expectation.

For the lower bound, first replace a target height \(f\) by
\(f^\tau=(1-\tau)f+\tau f_p^\ddagger\), as in
\eqref{eq:rf-global-interiorization}.  The recovery construction in
Lemma~\ref{lem:rf-moving-domain-stability} supplies a legal reference
tiling satisfying \eqref{eq:rf-moving-core-recovery}.  Choose
\(2\varepsilon'<\widehat\varepsilon\widetilde\varepsilon\)
with fixed spare margin, and let \(C_{\rm wall}\) bound the
wall-motion, endpoint, and residue corrections.  After the local
accuracy parameters have been fixed, choose one dyadic scaffold scale
\(C_0\) so large that all inner integer sidelengths exceed the
threshold \(m_0\) in Lemma~\ref{lem:rf-local-transfer} and that
\[
 \frac{C_{\tau,\zeta}+C_{\rm wall}}{C_0}<\varepsilon',
 \qquad
 (\widehat\varepsilon\widetilde\varepsilon-2\varepsilon')
 C_0>1.
\]
Keep \(C_0\) fixed while \(n\to\infty\), exactly as in the proof of
Proposition~\ref{prop:rf-bulk-neighborhood}.
Lemma~\ref{lem:rf-global-patching} therefore inserts the local
optimizers without altering either cut trace or any singleton choice.
This proves the two bulk-neighborhood bounds on the perturbed sheets,
with the same functional.  Only afterward send \(C_0\to\infty\), if
needed, and then send \(\zeta,\tau\downarrow0\), in the same iterated
order as in the exact-wall proof.

The retained seam ranges are
\[
 k=b_n+1,\ldots,n,
 \qquad k=c_n+1,\ldots,n,
 \qquad
 b_n=n\beta+O(1),\quad c_n=n\gamma+O(1).
\]
Thus their endpoints move by only \(O(1)\).  If
\(\Xi\in\mathcal T_{p_n}^{\psi}(f,\eta n)\), its cumulative aligned
seam height differs from the sampled trace of \(f\) by
\(O(\eta n+1)\).  The deterministic coefficient profiles in the
summation-by-parts proof of Lemma~\ref{lem:rf-seam-limit} have total
variation \(O(n)\).  Hence replacing the discrete trace by the sampled
continuum trace changes either seam sum by
\[
                         O(\eta n^2+n),
\]
uniformly in \(\Xi\).  The changed endpoints contribute only \(O(1)\)
extreme eigenvalues.  Since
\[
 \|\lambda_1((\widehat X_j^{(n)})^{[k]})\|_2
 +\|\lambda_k((\widehat X_j^{(n)})^{[k]})\|_2=O(n),
\]
these endpoint terms are \(O(n)=o(n^2)\) in \(L^2\) and in
expectation.  The intervals in the limiting Riemann sums move by
\(O(n^{-1})\), so the limiting seam functional is still
\(S_{p,\Delta}^{\rm rf}\).

For the fixed term,
\[
 d_n=n\delta+O(1),
 \qquad n-a_n=n(1-\alpha)+O(1).
\]
Theorem~\ref{thm:rf-fixed-limit} therefore applies directly and gives
the same fixed limit \(S_{p,0}\).

We have thus obtained the pointwise perturbed analogues of
Proposition~\ref{prop:rf-total-neighborhood}.  For the lower bound, fix
one \(f\in\operatorname{AHT}(\mathsf R_p)\), use
\eqref{eq:rf-moving-height-recovery} to ensure that its restricted
neighborhood is nonempty, apply the perturbed lower-neighborhood
estimate, and then let its fixed radius tend to zero.  Taking the
supremum over the fixed choice of \(f\) gives the variational lower
bound.

For the upper bound, fix \(\varepsilon>0\).  At each fixed
\(f\in\operatorname{AHT}(\mathsf R_p)\), use the pointwise perturbed
upper estimate to choose a radius \(r_f>0\) for which the doubled
neighborhood has normalized expected value at most
\(\mathcal S_p(f)+\varepsilon\).  Extract a finite subcover from the
balls \(B_\infty(f,r_f)\).  This finite list of centers and radii is
chosen once and is independent of \(n\).  Equation
\eqref{eq:rf-moving-height-upper} puts every normalized moving-domain
height in one of the doubled balls for all sufficiently large \(n\),
while \eqref{eq:rf-moving-height-recovery} makes each of the associated
restricted families nonempty.  The same uniform-family \(L^2\)
argument as in Lemma~\ref{lem:rf-uniform-family-variance}, applied to
this fixed finite list, shows that the expectation of their maximum is
their largest expectation plus \(o(n^2)\).  Thus the fixed-subcover
argument from Theorem~\ref{thm:rf-finite-panel-localization}, with all
neighborhoods interpreted through
\(\mathcal T_{p_n}^{\psi}(f,rn)\), applies pointwise and gives
\[
 n^{-2}\mathbb E\widehat O_n(p_n)
 \longrightarrow
 \sup_{f\in\operatorname{AHT}(\mathsf R_p)}\mathcal S_p(f)
 =\Phi_{\ell_1,\ell_2,\ell_3}(p).
\]

For variance, rerun the proof of
Lemma~\ref{lem:rf-uniform-family-variance} for the actual family
\(\mathcal T_{p_n}\).  Pad every actual \(0,3/2\)-coefficient vector
by zeros in the full size-\(n\) three-panel gap vector.  The
log-concave maximal-affine estimate is uniform over arbitrary
deterministic subfamilies, and the spectral-edge remainder is a
full-panel bound.  The full-panel spectral-edge removal, together with
the uniform seam and fixed-term \(L^2\) estimates just recorded,
therefore gives the conclusion of
Lemma~\ref{lem:rf-uniform-family-variance} for
\(\mathcal T_{p_n}\).  Hence
\[
 \operatorname{Var}(\widehat O_n(p_n))=o(n^4).
\]
Combining this with the mean limit proves the asserted convergence in
probability.
\end{proof}

\begin{corollary}[Rational points and arbitrary lattice approximants]
\label{cor:rf-rational-approximants}
Let \(p\) be a strictly interior rational point and let \(p_n/n\to p\)
be any deterministic lattice approximants.  Then
\begin{equation}
 \frac{\widehat O_n(p_n)}{n^2}
 \xrightarrow{\mathbb P}\Phi_{\ell_1,\ell_2,\ell_3}(p).
\label{eq:rf-rational-arbitrary-approximants}
\end{equation}
\end{corollary}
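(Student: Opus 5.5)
We reduce an arbitrary approximating sequence to the residue-class sequences already handled by Lemma~\ref{lem:rf-bounded-perturbation}, using the lattice Lipschitz bound of Lemma~\ref{lem:rf-target-lipschitz} to absorb the remaining $o(n)$ displacement. Fix a common denominator $q$ of $p$ and write $\bar p=(0,\alpha,\beta,\gamma,\delta)$. For each $n$ write $n=qm+\varrho$ with $0\le\varrho<q$. Choose once and for all, for each residue $\varrho$, an integer vector $e^{(\varrho)}=(0,e_a,e_b,e_c,e_d)$ with coordinate sum $\varrho$ (for instance, distribute $\varrho$ units among the four coordinates). Put $p_n^{\sharp}:=m(q\bar p)+e^{(\varrho)}$. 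Since $\alpha,\beta,\gamma,\delta>0$ and $e^{(\varrho)}$ is bounded, $p_n^\sharp$ is strictly interior for large $n$. Along each of the $q$ residue subsequences, Lemma~\ref{lem:rf-bounded-perturbation} gives $\widehat O_n(p_n^\sharp)/n^2\to\Phi_{\ell_1,\ell_2,\ell_3}(p)$ in probability. Because there are only finitely many residue classes, this convergence holds along the full sequence.

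Next compare $p_n$ with $p_n^\sharp$. Both are lattice points of the size-$n$ tetrahedron $1234$, and $|p_n-p_n^\sharp|\le |p_n-np|+|np-p_n^\sharp|=o(n)+O(1)=o(n)$. They can therefore be joined by a lattice path of $o(n)$ unit edges inside the tetrahedron, using that the lattice simplex has graph distance comparable to the $\ell^1$ distance. On the event of Lemma~\ref{lem:rf-target-lipschitz}, whose probability tends to one, each edge changes $\widehat O_n$ by at most $Cn$, so
\[
 |\widehat O_n(p_n)-\widehat O_n(p_n^\sharp)|\le Cn\cdot o(n)=o(n^2).
\]
Dividing by $n^2$ and combining with the previous paragraph yields \eqref{eq:rf-rational-arbitrary-approximants}.

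The only delicate point is that the Lipschitz estimate is simultaneous over all edges, which Lemma~\ref{lem:rf-target-lipschitz} explicitly provides; it is this uniformity that allows a single event to control the comparison along the random-independent but $n$-dependent path. No interiority of $p_n$ itself is needed: $p_n$ may lie near the boundary for small $n$, but the path argument never uses the matching formula at $p_n$.
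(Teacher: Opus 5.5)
Your proposal is correct and follows the paper's proof essentially step for step. The paper likewise fixes a bounded offset $e^{(\varrho)}$ for each residue class modulo $q$ and applies Lemma~\ref{lem:rf-bounded-perturbation} to the resulting canonical sequence. It then sums the simultaneous edge bound of Lemma~\ref{lem:rf-target-lipschitz} along a lattice path of $o(n)$ unit edges to reach the arbitrary approximants $p_n$.
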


\begin{proof}
Write \(p=(\alpha,\beta,\gamma,\delta)\) and
\(\bar p=(0,\alpha,\beta,\gamma,\delta)\).  Let \(q\) be a common
denominator of \(p\).  For each residue
\(\varrho\in\{0,\ldots,q-1\}\), choose once and for all an integer
vector
\(e^{(\varrho)}=(0,e_a^{(\varrho)},e_b^{(\varrho)},
e_c^{(\varrho)},e_d^{(\varrho)})\) of coordinate sum \(\varrho\)
such that
\[
 p_{qm+\varrho}^{\circ}=m(q\bar p)+e^{(\varrho)}
\]
is strictly interior for all large \(m\).  Lemma~\ref{lem:rf-bounded-perturbation}
gives \eqref{eq:rf-rational-arbitrary-approximants} for this canonical
bounded-error sequence \(p_n^{\circ}\).

An arbitrary sequence \(p_n/n\to p\) is at lattice distance \(o(n)\)
from \(p_n^{\circ}\) in the same size-\(n\) tetrahedron.  Summing
\eqref{eq:rf-target-lattice-lipschitz} along a shortest lattice path
changes the normalized value by \(o(1)\) on the high-probability event,
which proves the claim.
\end{proof}

\begin{lemma}[Boundary-point law of large numbers]
\label{lem:rf-boundary-point-limit}
The function \(\Phi_{\ell_1,\ell_2,\ell_3}\), defined on the strictly interior
rational points of \(\mathsf T_{1234}\), has a unique Lipschitz
extension to the closed tetrahedron.  If
\(p\in\partial\mathsf T_{1234}\) and \(p_n/n\to p\) is any
deterministic sequence of lattice vertices, then
\begin{equation}
 \frac{\widehat O_n(p_n)}{n^2}
 \xrightarrow{\mathbb P}\Phi_{\ell_1,\ell_2,\ell_3}(p).
\label{eq:rf-boundary-point-limit}
\end{equation}
\end{lemma}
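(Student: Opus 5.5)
The plan is to combine the pointwise law of large numbers at rational interior points (Corollary~\ref{cor:rf-rational-approximants}) with the uniform lattice Lipschitz bound of Lemma~\ref{lem:rf-target-lipschitz}. The Lipschitz property of \(\Phi_{\ell_1,\ell_2,\ell_3}\) is established first; the extension and the boundary limit then follow by a standard density argument.

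\emph{Step 1: Lipschitz bound on rational interior points.} Fix strictly interior rational points \(p,p'\) and lattice approximants \(p_n/n\to p\), \(p'_n/n\to p'\) with lattice distance \(|p_n-p'_n|\le C'n|p-p'|+O(1)\). Summing \eqref{eq:rf-target-lattice-lipschitz} along a shortest lattice path in the size-\(n\) tetrahedron gives, on an event \(\mathcal G_n\) of probability tending to one,
\[
 |\widehat O_n(p_n)-\widehat O_n(p'_n)|\le Cn\bigl(C'n|p-p'|+O(1)\bigr).
\]
Divide by \(n^2\). By Corollary~\ref{cor:rf-rational-approximants}, both normalized values converge in probability to \(\Phi(p)\) and \(\Phi(p')\). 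Choose a realization in \(\mathcal G_n\) on which both values are within \(\epsilon\) of their limits; this is possible for large \(n\), since the relevant probabilities tend to one. This yields \(|\Phi(p)-\Phi(p')|\le CC'|p-p'|+2\epsilon\). Hence \(\Phi\) is Lipschitz, with constant \(L=CC'\) depending only on \(\ell_1,\ell_2,\ell_3\), on the dense set of strictly interior rational points. It therefore has a unique continuous extension to \(\mathsf T_{1234}\), and this extension is Lipschitz with the same constant. Uniqueness follows from density alone.

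\emph{Step 2: boundary points.} Let \(p\in\partial\mathsf T_{1234}\) and \(p_n/n\to p\). Given \(\epsilon>0\), choose a strictly interior rational \(p'\) with \(|p-p'|<\epsilon\), and lattice approximants \(p'_n\) of \(np'\) with \(|p_n-p'_n|\le C'n\epsilon+o(n)\). On \(\mathcal G_n\), Lemma~\ref{lem:rf-target-lipschitz} gives
\[
 n^{-2}|\widehat O_n(p_n)-\widehat O_n(p'_n)|\le L\epsilon+o(1).
\]
Corollary~\ref{cor:rf-rational-approximants} gives \(n^{-2}\widehat O_n(p'_n)\to\Phi(p')\) in probability, and \(|\Phi(p')-\Phi(p)|\le L\epsilon\). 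Hence
\[
 \limsup_n\mathbb P\bigl\{|n^{-2}\widehat O_n(p_n)-\Phi(p)|>3L\epsilon\bigr\}=0,
\]
and letting \(\epsilon\downarrow0\) proves \eqref{eq:rf-boundary-point-limit}. This argument applies verbatim to irrational interior points as well.

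\emph{Main obstacle.} The delicate point is the uniformity in Lemma~\ref{lem:rf-target-lipschitz}. The constant must not depend on the edge, and the single event \(\mathcal E_n\cap\mathcal B_n\) must work simultaneously for all lattice edges, including edges lying on the boundary faces of \(1234\) where the limit point sits. That lemma asserts exactly this, via the hive property of constant-coordinate slices and the boundary-face dictionary. The remaining care is in the order of quantifiers: fix \(\epsilon\) and \(p'\) first, then let \(n\to\infty\), and only afterwards send \(\epsilon\downarrow0\). Geometrically, one must check that a lattice path of length \(O(n|p-p'|)+o(n)\) between \(p_n\) and \(p'_n\) exists inside the tetrahedron, which holds because \(\mathsf T_{1234}\) is convex.
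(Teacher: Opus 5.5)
Your proposal is correct and follows essentially the same route as the paper. You sum the lattice Lipschitz bound of Lemma~\ref{lem:rf-target-lipschitz} along a shortest lattice path and combine it with Corollary~\ref{cor:rf-rational-approximants} at two interior rational points to get the Lipschitz estimate and the unique extension by density, then handle a boundary point by comparison with a nearby interior rational point, letting \(n\to\infty\) before that point tends to \(p\). The only differences are cosmetic: your explicit \(3L\epsilon\) bookkeeping, and your remark on irrational interior points, which the paper defers to Theorem~\ref{thm:rf-uniform-continuum}.
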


\begin{proof}
Take strictly interior rational points \(r,s\), and choose deterministic
lattice approximants \(r_n/n\to r\) and \(s_n/n\to s\) in the same
size-\(n\) tetrahedron.  On the event in
Lemma~\ref{lem:rf-target-lipschitz}, summing
\eqref{eq:rf-target-lattice-lipschitz} along a shortest lattice path gives,
after changing the deterministic constant to account for the chosen norm,
\[
 \frac{|\widehat O_n(r_n)-\widehat O_n(s_n)|}{n^2}
 \leq C\|r-s\|+o(1).
\]
Corollary~\ref{cor:rf-rational-approximants} gives convergence in
probability at both endpoints.  Since the displayed event has probability
tending to one, it follows that
\[
 |\Phi_{\ell_1,\ell_2,\ell_3}(r)-\Phi_{\ell_1,\ell_2,\ell_3}(s)|
 \leq C\|r-s\|.
\]
The strictly interior rational points are dense in the closed tetrahedron,
so \(\Phi_{\ell_1,\ell_2,\ell_3}\) has a unique Lipschitz extension to
\(\mathsf T_{1234}\).

Now fix \(p\in\partial\mathsf T_{1234}\) and choose a strictly interior
rational point \(r\).  For deterministic lattice approximants
\(r_n/n\to r\), the same lattice-path estimate gives, on the same
high-probability event,
\[
 \frac{|\widehat O_n(p_n)-\widehat O_n(r_n)|}{n^2}
 \leq C\|p-r\|+o(1).
\]
Consequently,
\begin{align*}
 \left|\frac{\widehat O_n(p_n)}{n^2}
              -\Phi_{\ell_1,\ell_2,\ell_3}(p)\right|
 &\leq
 \frac{|\widehat O_n(p_n)-\widehat O_n(r_n)|}{n^2}\\
 &\quad+
 \left|\frac{\widehat O_n(r_n)}{n^2}
              -\Phi_{\ell_1,\ell_2,\ell_3}(r)\right|
 +|\Phi_{\ell_1,\ell_2,\ell_3}(r)-\Phi_{\ell_1,\ell_2,\ell_3}(p)|.
\end{align*}
For fixed \(r\), the middle term tends to zero in probability by
Corollary~\ref{cor:rf-rational-approximants}; the first and third terms
are bounded by \(C\|p-r\|+o(1)\) and \(C\|p-r\|\), respectively.
Letting first \(n\to\infty\) and then the strictly interior rational point
\(r\to p\) proves \eqref{eq:rf-boundary-point-limit}.
\end{proof}

Recall the continuum interpolation introduced in
Section~\ref{sec:partII-main-theorem}.  It is normalized at lattice
vertices by
\begin{equation}
 \mathcal F_n(v/n)=\frac{\widehat O_n(v)}{n^2}.
\label{eq:rf-continuum-interpolation}
\end{equation}

\begin{theorem}[Uniform continuum law of large numbers]
\label{thm:rf-uniform-continuum}
The function \(\Phi_{\ell_1,\ell_2,\ell_3}\), defined at strictly interior
rational points by
\eqref{eq:partII-upfront-variational-value} and identified as the
finite-panel limit by
Theorem~\ref{thm:rf-finite-panel-localization}, has a unique Lipschitz
extension to \(\mathsf T_{1234}\).  Moreover,
\begin{equation}
 \boxed{
 \sup_{p\in\mathsf T_{1234}}
 \left|\mathcal F_n(p)-\Phi_{\ell_1,\ell_2,\ell_3}(p)\right|
 \xrightarrow{\mathbb P}0.}
\label{eq:rf-uniform-continuum-limit}
\end{equation}
In particular, for every real \(p\in\mathsf T_{1234}\) and every
deterministic sequence of lattice points \(p_n/n\to p\),
\begin{equation}
 \frac{\widehat O_n(p_n)}{n^2}
 \xrightarrow{\mathbb P}\Phi_{\ell_1,\ell_2,\ell_3}(p).
\label{eq:rf-real-pointwise-limit}
\end{equation}
\end{theorem}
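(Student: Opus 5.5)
The plan is a standard equicontinuity-plus-finite-net argument built from three earlier inputs. The Lipschitz extension is already supplied by Lemma~\ref{lem:rf-boundary-point-limit}. Pointwise convergence in probability at every point, interior rational or boundary, comes from Corollary~\ref{cor:rf-rational-approximants} together with Lemma~\ref{lem:rf-boundary-point-limit}. The uniform lattice Lipschitz bound \eqref{eq:rf-target-lattice-lipschitz} holds on a single event \(\mathcal G_n\) with \(\mathbb P(\mathcal G_n)\to1\). The pointwise statement \eqref{eq:rf-real-pointwise-limit} then follows by combining the extension with those two results: at boundary points it is Lemma~\ref{lem:rf-boundary-point-limit}, and at interior real points the same approximation argument applies, approximating by interior rational \(r\).

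The first step is a deterministic modulus for \(\mathcal F_n\) on \(\mathcal G_n\). Adjacent lattice vertices have normalized values differing by at most \(C/n\), and adjacent vertices are at distance \(1/n\). On each tetrahedral cell, \(\mathcal F_n\) is the affine interpolant of four vertex values. On each octahedral cell, it is affine on eight tetrahedra whose vertices are lattice vertices or the center; the center value is an average of six vertex values, so it lies within \(C/n\) of each of them. Hence \(\mathcal F_n\) is \(C'\)-Lipschitz on every cell. By continuity across cell faces (the interpolations agree on shared faces), it is \(C'\)-Lipschitz on all of \(\mathsf T_{1234}\), with \(C'\) independent of \(n\), on \(\mathcal G_n\).

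Next I fix \(\varepsilon>0\) and a finite \(\varepsilon\)-net \(\{p^{(1)},\dots,p^{(J)}\}\subset\mathsf T_{1234}\). For each net point I choose the deterministic lattice vertex \(p^{(i)}_n\) nearest to \(np^{(i)}\). Then \(\mathcal F_n(p^{(i)}_n/n)=\widehat O_n(p^{(i)}_n)/n^2\), which converges in probability to \(\Phi(p^{(i)})\) by \eqref{eq:rf-real-pointwise-limit}. Since \(J\) is fixed, a union bound gives \(\max_i|\mathcal F_n(p^{(i)}_n/n)-\Phi(p^{(i)})|<\varepsilon\) with probability tending to one. For arbitrary \(p\), pick \(i\) with \(\|p-p^{(i)}\|\le\varepsilon\) and bound \(|\mathcal F_n(p)-\Phi(p)|\) by three terms. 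The first is \(|\mathcal F_n(p)-\mathcal F_n(p^{(i)}_n/n)|\le C'(\varepsilon+O(1/n))\) on \(\mathcal G_n\). The second is the net error, at most \(\varepsilon\). The third is \(|\Phi(p^{(i)})-\Phi(p)|\le C\varepsilon\) by the Lipschitz extension. The supremum is therefore at most \((C'+C+1)\varepsilon+o(1)\) with probability tending to one, and letting \(\varepsilon\downarrow0\) proves \eqref{eq:rf-uniform-continuum-limit}.

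The only genuinely delicate point is the first step. One must check that the octahedral cone interpolation does not spoil the Lipschitz constant. It suffices that each sub-tetrahedron has bounded aspect ratio, so that affine interpolation from vertex values with \(O(1/n)\) oscillation over diameter \(O(1/n)\) yields an \(O(1)\) gradient. One must also check that \(\mathcal G_n\) (which is \(\mathcal E_n\cap\mathcal B_n\) from Lemma~\ref{lem:rf-target-lipschitz}) controls all edges simultaneously, which that lemma asserts. Everything else is bookkeeping with results already proved.
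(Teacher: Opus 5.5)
Your argument is correct and is essentially the paper's proof: a common Lipschitz constant for \(\mathcal F_n\) on the high-probability event of Lemma~\ref{lem:rf-target-lipschitz}, a fixed finite net, and convergence in probability at the net points. The only difference is ordering. The paper takes its net rational and strictly interior, so Corollary~\ref{cor:rf-rational-approximants} suffices at the net points, and it deduces \eqref{eq:rf-real-pointwise-limit} afterwards from the uniform statement. You instead establish the real pointwise limit first, by rerunning the proof of Lemma~\ref{lem:rf-boundary-point-limit} at interior irrational points.
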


\begin{proof}
Let \(\Phi_{\ell_1,\ell_2,\ell_3}\) denote the unique Lipschitz extension
supplied by Lemma~\ref{lem:rf-boundary-point-limit}.

The simplices in the interpolation above have finitely many shapes.
By definition, the center value of an octahedron is the average of
its six vertex values and therefore lies between their minimum and
maximum.  Hence Lemma~\ref{lem:rf-target-lipschitz} gives, with probability
tending to one, a common Lipschitz constant for all \(\mathcal F_n\).
Fix \(\varepsilon>0\) and choose a finite rational \(\delta\)-net
\(\mathcal N_\delta\) in the tetrahedron, taking its points strictly
interior.  Joint convergence in probability holds at this finite set.
For any \(p\), choose \(r\in\mathcal N_\delta\) with
\(\|p-r\|\le\delta\).  On the common Lipschitz event,
\[
 |\mathcal F_n(p)-\Phi(p)|
 \le |\mathcal F_n(r)-\Phi(r)|+2C\delta.
\]
Let first \(n\to\infty\) and then \(\delta\downarrow0\) to obtain
\eqref{eq:rf-uniform-continuum-limit}.  The pointwise assertion follows
by applying the same estimate to the interpolation simplex containing
\(p_n/n\).
\end{proof}

\begin{remark}[Relation to the two-source theory]
\label{rem:rf-proof-status}
The bulk localization and local concentration inputs are exactly those
of \cite{NSTGUE,NarLimit}.  In particular, Fefferman's theorem is used
directly, panel by panel, in the upper neighborhood bound through
Theorem~\ref{thm:rf-fefferman} and
Corollary~\ref{cor:rf-panel-fefferman}; it is also present indirectly in
the scale-comparison properties of \(\sigma_\edge^{\rm sym}\) obtained
from the corresponding arguments in \cite{NarLimit}.  The lower bound
instead uses the fixed-target two-scale interpolant construction of
Lemma~\ref{lem:rf-fixed-target-interpolation}, modeled on
\cite[\S8.5]{NarLimit}, together with the new global scaffold patching
lemma.  Only the good interpolation cells are modified; their complement
and the seam collars remain in the reference scaffold.

The genuinely new points are: the exact red-free seam weights in
Lemma~\ref{lem:rf-local-weights}; the two-vertex fixed term in
Theorem~\ref{thm:rf-explicit-fixed-term}; the definition of the admissible
space as the compact set of full-sequence global cut-compatible heights;
and the seam-compatible scaffold patching in the proof of
Proposition~\ref{prop:rf-bulk-neighborhood}.  No estimate for red-lozenge
weights is used.
\end{remark}

\section*{Acknowledgements}

This work was supported by the Department of Atomic Energy, Government of
India [project number RTI4014]; by the Infosys--Chandrasekharan virtual center
for Random Geometry at the Tata Institute of Fundamental Research (TIFR).
I am grateful to Charles Fefferman for sharing his notes on quantitative
differentiation, Scott Sheffield for explaining the surface tension used here and Terence Tao for answering my queries on the fixed index properties of the GUE minor process.

\section*{Disclosure of AI-assisted work}

OpenAI's ChatGPT and Codex systems were used as research-assistance and
writing tools.  The specific AI-assisted ideas that remain in the
manuscript are: the organization of the denominator calculation into
two seam-compatible light-cone pieces and their Boolean union in
Section~3; the
polygonal descriptions of the extremal matchings and the directed-envelope
forcing argument in Section~5; the separation of the
four-point mutable
\(g\)-vector from the frozen normalization; and the comparison of the upper
bound in Proposition~\ref{prop:rf-bulk-neighborhood} with the two-scale
argument of \cite{NarLimit}, including the fixed-target good-cell
interpolation used in its lower bound.  The systems were also used for index checks, small examples,
figure preparation, language editing, and compilation.

\end{document}